\newtheorem{theorem}{Theorem}[section] 
\newtheorem{proposition}[theorem]{Proposition}
\newtheorem{lemma}[theorem]{Lemma}
\newtheorem{corollary}[theorem]{Corollary}
\newtheorem{definition}[theorem]{Definition}
\newtheorem{remark}[theorem]{Remark}
\newtheorem{example}[theorem]{Example}
\author{Thierry L\'evy}
\address{CNRS, \'Ecole Normale Sup\'erieure -- 45, rue d'Ulm, F-75005 Paris}
\email{levy@dma.ens.fr}
\urladdr{http://www.dma.ens.fr/~levy/}
\title[Markovian holonomy fields]{Two-dimensional Markovian holonomy fields}
\def\A{{\mathcal A}}
\def\Ad{{\rm Ad}}
\def\Aut{{\rm Aut}}
\def\Ax#1{{\rm A}$_{#1}$}
\def\AxD#1{{\rm D}$_{#1}$}
\def\BRB{{\mathcal{R}}}
\def\BRBm{{\sf BB}}
\def\BS{{\mathscr B}}
\def\build#1_#2^#3{\mathrel{\mathop{\kern 0pt#1}\limits_{#2}^{#3}}}
\def\C{{\mathcal C}}
\def\CEdge{{\sf CE}}
\def\CK{{\mathbb C}}
\def\CLoop{{\sf CL}}
\def\Conj{{\rm Conj}}
\def\Const{{\rm Const}}
\def\CS{{\mathscr C}}
\def\Curve{{\sf C}}
\def\D{{\rm D}}
\def\DF{{\sf DF}}
\def\E{{\mathbb E}}
\def\e{{\sf e}}
\def\Edge{{\sf E}}
\def\epsilon{{\varepsilon}}
\def\F{{\mathbb F}}
\def\f{{\sf f}}
\def\fr{{\rm fr}}
\def\G{{\mathbb G}}
\def\Gen{{\mathscr G}}
\def\GG{{\sf Gr}}
\def\gg{{\mathfrak g}}
\def\GPath{{\sf A}}
\def\H{{\mathcal{H}}}
\def\HF{{\sf HF}}
\def\hol{{\rm mon}}
\def\Hol{{\rm Mon}}
\def\HS{{\mathscr H}}
\def\I{{\mathcal I}}
\def\irrep{{\rm Irr}}
\def\id{{\rm id}}
\def\Lab{{\rm Lab}}
\def\LE{{\rm LE}}
\def\Leb{{\rm Leb}}
\def\Loop{{\sf L}}
\def\lra{\longrightarrow}
\def\m{{\sf m}}
\def\M{{\mathcal M}}
\def\MF{{\sf MF}}
\def\NK{{\mathbb N}}
\def\NMF{{\sf NMF}}
\def\NRBm{{\sf NRB}}
\def\NBRBm{{\sf NBB}}
\def\O{{\mathcal O}}
\def\p{{\sf p}}
\def\P{{\mathbb P}}
\def\Path{{\sf P}}
\def\PPath{{\sf PP}}
\def\PPP{{\Xi}}
\def\PCurve{{\sf PC}}
\def\pt{\slash\!\slash}
\def\R{{\sf{R}}}
\def\Ram{{\rm Ram}}
\def\RB{{\mathcal{R}}}
\def\RBm{{\sf RB}}
\def\rg{{\sf g}}
\def\RK{{\mathbb R}}
\def\RL{{\sf RL}}
\def\Sk{{\rm Sk}}
\def\Spl{{\sf Spl}}
\def\Sy{{\mathfrak S}}
\def\Sym{{\rm Sym}}
\def\U{{\sf U}}
\def\V{{\mathbb V}}
\def\v{{\sf v}}
\def\vol{{\rm vol}}
\def\X{{\mathcal X}}
\def\Y{{\sf F}}
\def\Z{{\mathbb Z}}
\def\zz{{\mathfrak z}}
\begin{document}

\frontmatter

\begin{abstract}
We define a notion of Markov process indexed by curves drawn on a compact surface and taking its values in a compact Lie group. We call such a process a two-dimensional Markovian holonomy field. The prototype of this class of processes, and the only one to have been constructed before the present work, is the canonical process under the Yang-Mills measure, first defined by Ambar Sengupta \cite{SenguptaAMS} and later by the author \cite{LevyAMS}. The Yang-Mills measure sits in the class of Markovian holonomy fields very much like the Brownian motion in the class of L\'evy processes. We prove that every regular Markovian holonomy field determines a L\'evy process of a certain class on the Lie group in which it takes its values, and construct, for each L\'evy process in this class, a Markovian holonomy field to which it is associated. When the Lie group is in fact a finite group, we give an alternative construction of this Markovian holonomy field as the monodromy of a random ramified principal bundle. This is in heuristic agreement with the physical origin of the Yang-Mills measure as the holonomy of a random connection on a principal bundle.
\end{abstract}

\begin{altabstract}
Ce travail est consacr\'e \`a la d\'efinition et \`a l'\'etude d'une classe de processus stochastiques index\'es par des chemins trac\'es sur une surface, qui prennent leurs valeurs dans un groupe de Lie compact et qui satisfont une propri\'et\'e d'ind\'ependance conditionnelle analogue \`a la propri\'et\'e de Markov. Nous appelons ces processus des champs d'holonomie markoviens bidimensionnels. L'exemple fondamental de cette sorte de processus est le processus canonique sous la mesure de Yang-Mills, qui a \'et\'e construite d'abord par Ambar Sengupta \cite{SenguptaAMS} puis plus tard par l'auteur \cite{LevyAMS}. C'est aussi le seul champ d'holonomie markovien qui ait \'et\'e construit avant ce travail. Le processus canonique sous la mesure de Yang-Mills est assez exactement aux champs d'holonomie markoviens ce que le mouvement brownien est aux processus de L\'evy. Deux de nos principaux r\'esultats affirment qu'\`a tout champ d'holonomie markovien suffisamment r\'egulier est associ\'e un processus de L\'evy d'une certaine classe sur le groupe de Lie dans lequel il prend ses valeurs et r\'eciproquement que pour tout processus de L\'evy dans cette classe il existe un champ d'holonomie markovien auquel il est associ\'e. Dans le cas particulier o\`u le groupe de Lie consid\'er\'e est un groupe fini, nous parvenons \`a r\'ealiser ce champ d'holonomie markovien comme la monodromie d'un fibr\'e principal ramifi\'e al\'eatoire. Ceci nous rapproche de l'interpr\'etation originelle de la mesure de Yang-Mills, issue de la th\'eorie quantique des champs, comme mesure de probabilit\'es sur l'espace des connexions sur un fibr\'e principal.
\end{altabstract}

\subjclass{60G60, 60J99, 60G51, 60B15, 57M20, 57M12, 81T13, 81T27}
\keywords{Random field, Markov process, L\'evy process, compact Lie group, graph on a surface, rectifiable path, ramified covering, Yang-Mills, lattice gauge theory, continuous limit} 
\altkeywords{Champ al\'eatoire, processus de Markov, processus de L\'evy, groupe de Lie compact, graphe sur une surface, chemin rectifiable, rev\^etement ramifi\'e, Yang-Mills, th\'eorie de jauge sur r\'eseau, limite continue}

\maketitle
\setcounter{tocdepth}{3}
\tableofcontents

\mainmatter

\chapter*{Introduction}

The elementary theory of Markov processes establishes a correspondence between several types of objects among which transition semigroups and stochastic processes. These stochastic processes can take their values in fairly general spaces, but they are usually indexed by a subset of the real numbers, for the Markov property relies on the distinction between past and future. In the present work, we investigate a correspondence between certain transition semigroups and another kind of stochastic processes, where the notions of past and future are replaced by the notions of inside and outside. The processes that we consider are indexed by curves, or rather loops, drawn on a surface, and they take their values in a compact Lie group. We call them (two-dimensional) Markovian holonomy fields. They are Markovian in the following sense: if some piece of a surface is bounded by a finite collection of loops, then the values of the process on loops located inside this piece and outside this piece are independent, conditionally on the value of the process on the finite collection of loops which bounds this piece.

\section{A $1$-dimensional analogue}

Let us start by discussing the $1$-dimensional analogues of Markovian holonomy fields, which are just Markov processes looked at from a slightly unusual point of view. Let us choose a transition semigroup $P=(P_t)_{t\geq 0}$ on some state space $\X$. For each $t\geq 0$, $P_t(x,dy)$ is a transition kernel on $\X\times \X$. Under suitable assumptions, we can associate to $P$ a homogeneous Markov process with values in $\X$, which we denote by $X$. This Markov process is not really a single stochastic process, it is rather a collection of processes, essentially one for each initial condition at a specific time. In fact, if we consider $X$ restricted to segments, we can say that to each segment $[a,b]\subset \RK$ and each initial condition $x \in \X$ we associate a process $(X_t)_{t\in[a,b]}$ with values in $\X$ such that $X_a=x$ almost surely. Among  the structure implied by the fact that $[a,b]$ is a subset of $\RK$, what we really use is the topological structure of this interval, its orientation and our ability to measure the distance between any two of its points. Of course, in the present $1$-dimensional setting, this structure suffices to characterize the interval up to translation, and the last sentence may seem pointless. Its content should however become clearer in the 2-dimensional setting. Let us push the abstraction a little further and try to define, for all compact $1$-dimensional manifold $M$, a process $(X_t)_{t\in M}$ with values in $\X$. As we have just observed, we need an orientation of $M$ and a way to measure distances. Moreover, if $M$ is not connected, we want the restrictions of our process to the connected components of $M$ to be independent. So, let $M$ be an oriented compact connected Riemannian $1$-dimensional manifold. There are not so many choices: $M$ is either homeomorphic to a segment or to a circle, it has a certain positive total length, and this information characterizes it completely up to orientation-preserving isometry.  If $M$ is a segment of length $L$, it is isometric to $[0,L]$ and there is no difficulty in defining the process $(X_t)_{t\in M}$ given an initial condition. Before turning to the case of the circle, let us interpret the Markov property of $X$ in terms of these $1$-dimensional manifolds. 

Let $M_1$ and $M_2$ be two manifolds as above, isometric to segments. Let $M_1\cdot M_2$ denote the manifold obtained by identifying the final point of $M_1$ with the initial point of $M_2$. It is still homeomorphic to a segment. Choose an initial condition $x\in \X$. We are able to construct two stochastic processes indexed by $M_1\cdot M_2$. Firstly, we can take $M_1\cdot M_2$ as a segment on its own and simply consider the process $(X_t)_{t\in M_1\cdot M_2}$ with initial condition $x$. But we can also proceed as follows. For all segment $M$ and all $x\in\X$, let ${\mathcal L}(x,M)$ denote the distribution of the process $(X_t)_{t\in M}$ with initial condition $x$. Let us also denote by ${\mathcal L}(x,M,dy)$ the disintegration of ${\mathcal L}(x,M)$ with respect to the value of $X$ at the final point of $M$. Then the probability measure $\int_\X {\mathcal L}(x,M_1,dy) {\mathcal L}(y,M_2)$
is the distribution of a process indexed by the disjoint union of $M_1$ and $M_2$ which takes the same value at the final point of $M_1$ and the initial point of $M_2$. It can thus be identified with the distribution of a process indexed by $M_1\cdot M_2$. It is exactly the content of the Markov property of $X$ that the two measures that we have considered are equal: 
\begin{equation}\label{surgery intervals}
\int_\X {\mathcal L}(x,M_1,dy) {\mathcal L}(y,M_2) = {\mathcal L}(x,M_1\cdot M_2).
\end{equation} \index{Kolmogorov-Chapman equation}
This relation is an instance of the Kolmogorov-Chapman equation. This example illustrates in the simplest possible way the fact that the Markov property can be nicely formulated in terms of surgery of manifolds, in this case concatenation of intervals. Manifolds of dimension $1$ undergo another kind of surgery, when the two endpoints of a single interval are glued together (see Figure \ref{surg}). If we try to mimic (\ref{surgery intervals}), we are tempted to define the distribution of a process indexed by the circle $S^1$ of length $L$, seen as the interval $[0,L]$ of which the endpoints have been identified, by 
\begin{equation}
{\mathcal L}(S^1) = \int_\X {\mathcal L}(x,[0,L],dx),
\end{equation}
which unfortunately is meaningless. Still, this formula is consistent with the fact that a circle has no boundary, so that there is no initial condition to specify. What we are pretending to define here is a bridge, a probability measure on closed trajectories in $\X$, from the transition semigroup $(P_t)_{t\geq 0}$. 

\begin{figure}[h!]
\begin{center}
\scalebox{1}{\includegraphics{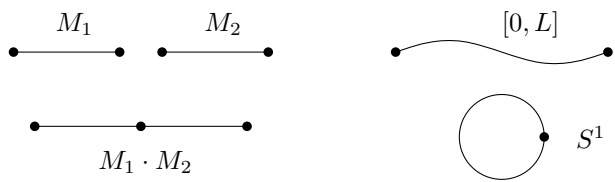}}
\end{center}
\caption{ The surgery of $1$-dimensional manifolds.}\label{surg}
\end{figure}

Without aiming at the greatest possible generality, let us describe a situation in which this is possible. We assume that $\X$ is a smooth finite-dimensional manifold, for example a vector space or a Lie group, which carries a Borel probability measure $\mu$ which is stationary for the semi-group $P$. Finally, we assume that for all $t>0$ and all $x\in \X$, the measure $P_t(x,dy)$ has a continuous density with respect to $\mu$, which we denote by $y\mapsto Q_t(x,y)$. In this situation, it is possible to define bridges of the Markov process $X$ between any two points of $\X$. Hence, for each segment $[0,L]$, it is possible to define the finite measure ${\mathcal L}(x,[0,L],y)$ which is the conditional distribution of $(X_t)_{t\in[0,L]}$ given $X_0=x$ and $X_L=y$, multiplied by the real number $Q_L(x,y)$. With this definition, ${\mathcal L}(x,[0,L],y)$ is not a probability measure in general but the relation ${\mathcal L}(x,[0,L],dy)={\mathcal L}(x,[0,1],y) \mu(dy)$ holds. We can then define the process indexed by a circle of length $L$ by setting
\begin{equation}
{\mathcal L}(S^1) = \int_\X {\mathcal L}(x,[0,L],x)\mu(dx).
\end{equation}
The identification of a process indexed by circle $S^1$ with a process indexed by $[0,L]$ requires the choice of a base point on $S^1$ but the stationarity of $\mu$ implies that the resulting definition of ${\mathcal L}(S^1)$ is independent of this choice.

Let us summarize this discussion of the $1$-dimensional case. Starting with a Markovian transition semigroup on $\X$ with good properties, we have been able to associate to each compact oriented $1$-dimensional Riemannian manifold, endowed with boundary conditions if it is homeomorphic to a segment, a stochastic process with values in $\X$ indexed by the points of this manifold. This collection of processes exhibits a Markovian behaviour with respect to the operations of cutting or concatenation of manifolds.

\section{The transition kernels of Markovian holonomy fields}

A $2$-dimensional Markovian holonomy field is a $2$-dimensional analogue of the object just described. It is a collection of stochastic processes, one for each compact surface endowed with boundary conditions and some way of measuring areas. For each such surface, the process is indexed by a set of loops drawn on this surface. Moreover, these processes satisfy Markovian properties with respect to the operations of cutting surfaces along curves or gluing them along boundary components. 

Before explaining this more carefully, let us discuss the important fact that loops can be concatenated when they have the same origin. We are interested in stochastic processes which satisfy a property of additivity with respect to the concatenation of loops. This requires that they take their values in a group and since this group will usually not be assumed to be Abelian, and denoted multiplicatively, we will rather call this a property of multiplicativity. Let us give a precise definition. If $M$ is a $2$-dimensional manifold and $m$ is a point of $M$, let $\Loop_m(M)$ be a set of loops on $M$ based at $m$. We will discuss later which loops exactly we wish to consider. To each loop $l\in \Loop_m(M)$ we can associate the inverse loop $l^{-1}$, which is simply $l$ traced backwards. Also, to each pair of loops $l_1,l_2 \in \Loop_m(M)$ we can associate their concatenation which we denote by $l_1l_2$. Let $G$ be a group, which plays the role of the space $\X$ above. A stochastic process $(H_l)_{l\in \Loop_m(M)}$ with values in $G$ is said to be multiplicative if
\begin{align} \index{concatenation}
\forall l\in \Loop_m(M)&, \; H_{l^{-1}} = H_l^{-1}  \mbox{ a.s.}, \label{i: inv} \\
\forall l_1,l_2 \in \Loop_m(M) &, \; H_{l_1l_2}=H_{l_2} H_{l_1} \mbox{ a.s. } \label{i: multi}
\end{align}
We will also explain later why we chose to reverse the order on the right-hand side of this equality. For the moment, let us describe the transitions of a Markovian holonomy field. 

Just as an interval has two extremities, a surface has a boundary which is topologically a disjoint union of circles. Let us consider a surface, with a certain number of boundary components. In contrast with the $1$-dimensional case, even if the surface is oriented, its boundary components are indistinguishable from a topological point of view: any permutation of the boundary components can be realized by an orientation-preserving homeomorphism. So, let us arbitrarily declare that some of these components are incoming and the other are outgoing. Then we have a picture of our surface as realizing a cobordism, that is, a topological transition, between two sets of circles (see Figure \ref{transtop}). If the surface is oriented, then we orient incoming boundary components negatively and outcoming boundary components positively. This matters because, according to (\ref{i: inv}), the boundary conditions on the incoming components are associated with oriented loops: to each oriented incoming boundary component we associate an element of $G$. 

\begin{figure}[h!]
\begin{center}
\scalebox{0.9}{\includegraphics{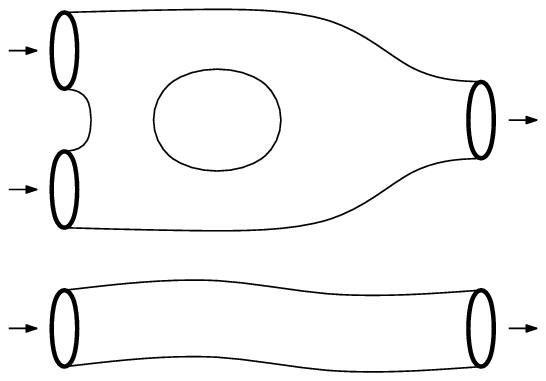}}
\caption{A surface seen as a topological transition between two collections of circles}\label{transtop}
\end{center}
\end{figure}

The comparison with the $1$-dimensional case indicates that we are lacking a measure of time on our surface. The analogue of the ability to measure distances is the ability to measure areas. On a surface, this requires much less than a Riemannian metric, only a Borel measure with suitable regularity properties. We have now isolated the necessary structure: a Markovian holonomy field takes its values in a group which we denote by $G$, and its transitions occur along surfaces with distinguished incoming and outgoing boundaries endowed with a measure of area. The associated transition kernels describe the distribution of the process on the outgoing circles conditionally on its values on the incoming ones. 
\index{cobordism}

There is a lot more variety of situations than in the $1$-dimensional case, but fortunately for us, this variety has been very well understood for about a century and is easy to describe. Up to homeomorphism, a connected compact surface is characterized by the fact that it is orientable or not, by the number of connected components of its boundary, and by a single other topological invariant called its genus which can be any non-negative integer if the surface is orientable and any positive integer if it is not. Moreover, the only invariant of a smooth measure of area under diffeomorphisms is its total area. 
It turns out that, when one deals with orientable and non-orientable surfaces at the same time, the genus is not the most convenient way to label the possible topological types of surfaces. We prefer to work with what we call the reduced genus, which is simply the genus if the surface is not orientable, and twice the classical genus if it is orientable. The main advantage of the reduced genus is that it is additive with respect to the operation of connected sum.

Let us denote by $\M(G^{p-q})$ the space of probability measures on $G^{p-q}$. Then the transition kernel of a Markovian holonomy field can be listed as follows:
\begin{equation}
P^\pm_{p,g,t}(x_1,\ldots,x_q, dx_{q+1},\ldots,dx_p) : G^q\to \M(G^{p-q}),
\end{equation}
where the sign indicates the orientability of the surface, $g$ is its reduced genus, $t$ its total area and $p$ the number of its boundary components, of which $q$ are incoming and $p-q$ outgoing. 

If $q=0$, then the transition kernel is a measure on $G^p$, if $q=p$ it is a function on $G^p$ and if $p=0$, it is a real number. In order to avoid the problems that we have encountered with the circle in the $1$-dimensional case, we will make fairly strong assumptions. Firstly, we will assume that $G$ is a compact Lie group. Such a group carries a unique probability measure invariant by translations, which we denote by $dx$. We will also assume that the transition kernels of the Markovian holonomy field that we consider can be put under the form
\begin{equation}
P^\pm_{p,g,t}(x_1,\ldots,x_q, dx_{q+1},\ldots,dx_p)=Z^\pm_{p,g,t}(x_1^{-1},\ldots,x_q^{-1},x_{q+1},\ldots,x_p)  dx_{q+1}\ldots dx_p 
\end{equation}
for some functions $Z^\pm_{p,g,t}$ which are called the partition functions of the holonomy field. The exponents that we have introduced take care of the issue of orientation. They restore the symmetry between the boundary components of a surface, so that the partition functions are invariant by permutation of their arguments.

The possibility of gluing together boundary components of one or two surfaces leads to infinitely many variants of the Kolmogorov-Chapman equation which give as many relations between the partition functions. For instance, consider a cylinder of area $s$ with one incoming circle and one outgoing circle. A cylinder has genus (both classical and reduced) $0$ and the transition kernel associated to this surface is $P^+_{2,0,s}(x,dy)$. By gluing the incoming circle of this cylinder along an outgoing circle of an arbitrary surface, we do not change this surface up to homeomorphism, we only increase its area by $s$. Hence, we have the following relation between transition kernels:
\begin{eqnarray}
\int_G P^\pm_{p,g,t}(x_1,\ldots,x_q, dx_{q+1},\ldots,dx_{p-1},dx) P^+_{2,0,s}(x,dx_p) =&& \nonumber \\ &&\hskip -4cm P^{\pm}_{p,g,t+s}(x_1,\ldots,x_q, dx_{q+1},\ldots,dx_p).\label{i: glue1}
\end{eqnarray}
As another example, let us consider an orientable surface with at least one incoming and one outgoing circle. If we glue these circles one along the other in such a way that the result is still orientable, then we obtain a surface with two less boundary components, a classical genus increased by $1$, hence a reduced genus increased by $2$, and the same area. This example is reminiscent of the situation where we obtained a circle by identifying the two endpoints of a segment. It is thus not surprising that in this case, the Markov property is best expressed in terms of the partition functions, rather than the transition kernels.
It reads
\begin{equation}\label{i: glue2}
\int_G Z^+_{p,g,t}(x_1,\ldots,x_{p-2},x,x^{-1}) \; dx= Z^+_{p-2,g+2,t}(x_1,\ldots,x_{p-2}).
\end{equation}
If we identify an outgoing circle with an incoming one in a non-orientable surface, then the reduced genus of the surface is also increased by $2$. In this case, the Kolmogorov-Chapman equation is
\begin{equation}\label{i: glue3}
\int_G Z^-_{p,g,t}(x_1,\ldots,x_{p-2},x,x^{-1}) \; dx= Z^-_{p-2,g+2,t}(x_1,\ldots,x_{p-2}).
\end{equation}
A more unusual topological operation consists in identifying a boundary component of a surface with itself by an orientation-preserving involution (see Figure \ref{fig gluings} page \pageref{fig gluings}). Up to homeomorphism, this operation is equivalent to gluing a M\"{o}bius band along this boundary component. The resulting surface is always non-orientable and its reduced genus is increased by one. The corresponding relation for partition functions is
\begin{equation}\label{i: glue4}
\int_G Z^\pm_{p,g,t}(x_1,\ldots,x_{p-1},x^2) \; dx= Z^-_{p-1,g+1,t}(x_1,\ldots,x_{p-1}).
\end{equation}

The equalities (\ref{i: glue1}), (\ref{i: glue2}), (\ref{i: glue3}) and (\ref{i: glue4}) essentially exhaust the types of relations that hold between the partition functions. Using these relations, we will prove that the whole set of partition functions of a Markovian holonomy field is determined by just those associated to a disk, a M\"{o}bius band and a three-holed sphere of arbitrary area. Indeed, any surface can be built from these three elementary bricks by a finite number of gluings. In fact, if the Markovian holonomy field is regular enough, then we will prove that all its transition kernels are  determined by the sole transition kernels associated to disks. These transition kernels $(P^+_{1,0,t}(dx))_{t>0}$  constitute a one-parameter family of probability measures on $G$ which turns out to form a continuous convolution semi-group, hence the collection of $1$-dimensional marginals of a L\'evy process on $G$. \\

We are now able to give an idea of two of our main results. After giving an axiomatic definition of a $2$-dimensional Markovian holonomy field with values in a compact Lie group $G$ (Definition \ref{def MHF}), we will prove, under a suitable regularity assumption, that there is a classical L\'evy process with values in $G$ associated with each Markovian holonomy field (Proposition  \ref{hol levy}). This L\'evy process is characterized by the fact that its $1$-dimensional distributions are the transition kernels of the holonomy field associated to disks. Moreover, all partition functions of the holonomy field can be expressed in terms of these $1$-dimensional distributions (Proposition \ref{explicit Z}). Then, we will prove that for all L\'evy process which satisfies some regularity properties, there exists a Markovian holonomy field to which this L\'evy process is associated (Theorem \ref{levy HF}). We do not settle in this work the question of the unicity of a Markovian holonomy field with a given associated L\'evy process. 

\section{Markovian holonomy fields and the Yang-Mills measure}

The Yang-Mills measure is to Markovian holonomy fields what the Brownian motion is to L\'evy processes. It is indeed a Markovian holonomy field, whose associated L\'evy process is the Brownian motion on $G$. It is also the only Markovian holonomy field to have been constructed before the present work. The Yang-Mills measure has been the object of mathematical work since around 1990. It has been first constructed on an arbitrary compact surface by Ambar Sengupta \cite{SenguptaAMS}. The author has later given a different construction of essentially the same measure in \cite{LevyAMS}. One of the by-products of the present work is to provide another construction of the Yang-Mills measure, which really is very close to that given in \cite{LevyAMS}.
Yet, we would like to emphasize several aspects in which it differs from the previous ones. 
\index{Brownian motion!on $G$}

The first difference is a slight shift of point of view which we have already illustrated. Instead of considering a surface, choosing boundary conditions and constructing a process indexed by some class of loops on this specific surface, we now consider as one single object a whole collection of processes indexed by loops on all possible surfaces. The advantage of this view is that is leads quite naturally to an axiomatic characterization of Markovian holonomy fields. The one that we propose is inspired by the classical definition of a Markov process and by the axiomatic definition of a topological quantum field theory. 

Another difference lies in the class of loops that we consider on a surface. Indeed, on a surface $M$ endowed with boundary conditions, a measure of the area, and a base point $m$, the Yang-Mills measure produces a $G$-valued stochastic process $(H_l)_{l\in \Loop_m(M)}$ for some class of loops $\Loop_m(M)$, of which we had promised a discussion earlier. In the two previous constructions of the Yang-Mills measure mentioned above, the loops to which one was able to attach a random variable were finite concatenations of very special curves, for example segments of submanifolds in the author's construction. On the other hand, it was proved in \cite{LevyAMS} that the mapping $l\mapsto H_l$ is continuous in $L^1$ norm on the class of loops considered there endowed with the topology of uniform convergence and convergence of the length. This suggested that it should be possible to associate a random variable at least to each loop of finite length. This is indeed what we achieve in the present construction, thus giving more coherence between the regularity property of the stochastic process and the set of loops on which it is defined. 
Let us point out that it is not necessary to be able to measure lengths in order to decide if a loop has finite length. Indeed, a diffeomorphism does not alter the fact that a curve has finite or infinite length. Thus, the definition of $\Loop_m(M)$ as the set of loops on $M$ with finite length based at $m$ does not require the choice of a Riemannian metric on $M$, a smooth structure is more than enough. It is natural to wonder if one could define the Yang-Mills measure or any other Markovian holonomy field for a significantly larger class of loops. We believe that this is not possible with only the techniques used in this work, and probably very difficult anyway. We shall discuss this in relation with the Brownian motion indexed by loops defined below.

The third important difference between this work and the existing constructions of the Yang-Mills measure concerns the group in which the stochastic process takes its values, which we have already denoted by $G$. Since the Yang-Mills measure is associated with the Brownian motion on $G$, it was natural to assume that it was a connected group. In the present work, the Brownian motion is replaced by a L\'evy process, thus a process with jumps. This opens the possibility of considering non-connected compact Lie groups and in particular finite groups. In the case of finite groups, the L\'evy processes have a very simple structure and we are able to give a completely geometrical picture of Markovian holonomy fields, at least the ones that we are able to construct. We will discuss this in more detail later after recalling briefly the physical and geometrical motivation for the study of Markovian holonomy fields.

\section{The real Brownian motion indexed by loops}
\index{Brownian motion!indexed by loops}

In this paragraph, we present an example of a stochastic process which does not exactly fit into our definition of Markovian holonomy fields, but is in a sense simpler and should absolutely be kept in mind as a fundamental example. We will also take this example as an opportunity to finish our introductory discussion of the role of loops with finite length.

Consider the plane $\RK^2$ endowed with the Lebesgue measure denoted by $dx$. Let $W:L^2(\RK^2,dx)\to {\mathcal G}$ be a white noise, that is, an isometry into a Hilbert space of centred real Gaussian random variables.
Let $l:[0,1]\to \RK^2$ be a continuous loop. For each point $x \in \RK^2-l([0,1])$, the topological index of $l$ with respect to $x$ is an integer denoted by ${\sf n}_l(x)$ and defined, if we identify $\RK^2$ with $\CK$, by
\begin{equation}\label{nl}
{\sf n}_l(x)=\frac{1}{2i\pi} \oint_{\tilde l} \frac{dz}{z-x},
\end{equation}
where $\tilde l$ is any piecewise smooth loop which is uniformly close enough to $l$, for example closer than the distance of $x$ to the range of $l$. Let us say that the loop $l$ {\em winds mildly} if its range is negligible and ${\sf n}_l\in L^2(\RK^2,dx)$.
In this case, it is legitimate to define 
\begin{equation} \label{def W}
B_l=W({\sf n}_l).
\end{equation}

\begin{definition} The stochastic process $\{B_l : l \mbox{ winds mildly }\}$ is called the Brownian motion indexed by loops on $\RK^2$.
\end{definition}

To each subset $D$ of the plane, we may associate a $\sigma$-field ${\mathcal F}_D$ which we define by
\begin{equation}
{\mathcal F}_D=\sigma\left(B_l : l \mbox{ winds mildly, }l([0,1])\subset D\right).
\end{equation}

If $J$ is a Jordan curve in $\RK^2$, then we denote respectively by ${\rm int}(J)$ and ${\rm ext}(J)$ the bounded and unbounded connected components of $\RK^2\setminus J$, which we call the interior and exterior of $J$. 
A basic property of the white noise $W$ is that it associates independent random variables to functions with disjoint supports. This implies the following property of the Brownian motion indexed by loops. 

\begin{proposition} Let $J_1$ and $J_2$ be two Jordan curves with disjoint interiors. Then the $\sigma$-fields ${\mathcal F}_{{\rm int}(J_1)}$ and ${\mathcal F}_{{\rm int}(J_2)}$ are independent.
\end{proposition}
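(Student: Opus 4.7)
The plan is to reduce independence of the two $\sigma$-fields to $L^2$-orthogonality of the index functions $\mathsf{n}_l$, exploiting the isometry property of the white noise $W$ and the fact that the family $\{B_l\}$ lives in a Gaussian Hilbert space.

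The core topological step is the following assertion: if $l$ winds mildly and $l([0,1])\subset {\rm int}(J_1)$, then $\mathsf{n}_l$ vanishes Lebesgue-almost everywhere on $\RK^2\setminus {\rm int}(J_1)$. To see this, recall that $\mathsf{n}_l$ is defined on the open set $\RK^2\setminus l([0,1])$ and is integer-valued and locally constant there (by formula (\ref{nl}), via the uniform approximation by piecewise smooth loops, which have locally constant integer winding numbers). Now $\overline{{\rm ext}(J_1)}= J_1\cup {\rm ext}(J_1)=\RK^2\setminus {\rm int}(J_1)$ is disjoint from $l([0,1])$, and it is connected (as the closure of the unbounded connected component of $\RK^2\setminus J_1$). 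On this connected set $\mathsf{n}_l$ is therefore constant; the value of the constant is $0$, since from (\ref{nl}) one sees that $\mathsf{n}_l(x)\to 0$ as $|x|\to\infty$.

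Consequently, if $l\subset {\rm int}(J_1)$ and $l'\subset {\rm int}(J_2)$ are two mildly winding loops, then $\mathsf{n}_l$ and $\mathsf{n}_{l'}$ are supported in the disjoint open sets ${\rm int}(J_1)$ and ${\rm int}(J_2)$, so that $\langle \mathsf{n}_l,\mathsf{n}_{l'}\rangle_{L^2(\RK^2,dx)}=0$. The isometry $W\colon L^2(\RK^2,dx)\to \mathcal{G}$ therefore gives $\E[B_l B_{l'}]=0$.

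To promote this to independence of the $\sigma$-fields, I would use that the whole family $\{B_l : l\text{ winds mildly}\}$ sits inside the Gaussian space $\mathcal{G}$, hence every finite sub-family is jointly centred Gaussian. Picking finitely many loops $l_1,\ldots,l_n$ in ${\rm int}(J_1)$ and $l'_1,\ldots,l'_m$ in ${\rm int}(J_2)$, the vectors $(B_{l_i})_i$ and $(B_{l'_j})_j$ are jointly Gaussian with all cross-covariances zero by the previous step, hence independent. A standard Dynkin / monotone class argument (the cylinder events of the form $\{(B_{l_1},\ldots,B_{l_n})\in A\}$ form a $\pi$-system generating $\mathcal{F}_{{\rm int}(J_1)}$, and similarly for $J_2$) then upgrades this to independence of the full $\sigma$-fields $\mathcal{F}_{{\rm int}(J_1)}$ and $\mathcal{F}_{{\rm int}(J_2)}$.

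The only non-routine step is the topological lemma on the support of $\mathsf{n}_l$; everything else is the standard white-noise / Gaussian-independence machinery. Note that one does not need $J_1$ or $J_2$ to be Lebesgue-negligible, since the argument shows that $\mathsf{n}_l$ vanishes pointwise on the whole complement of ${\rm int}(J_1)$, curve included.
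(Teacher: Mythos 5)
Your proof is correct and follows essentially the same route as the paper, which simply observes that the support of ${\sf n}_l$ is contained in ${\rm int}(J_1)$ and invokes the independence property of white noise for functions with disjoint supports. You have merely filled in the two details the paper leaves implicit — the topological argument that ${\sf n}_l$ vanishes on the connected set $\RK^2\setminus {\rm int}(J_1)$, and the Gaussian/$\pi$-system upgrade from orthogonality to independence of $\sigma$-fields — both of which are carried out correctly.
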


\begin{proof}If $l$ is a loop whose range is contained in ${\rm int}(J_1)$, then the support of ${\sf n}_{l}$ is also contained in ${\rm int}(J_1)$. The same holds for $J_2$ and the result follows.  \end{proof}

Let us now prove a Markov property. 

\begin{proposition} Let $J$ be a Jordan curve with negligible range. The $\sigma$-fields ${\mathcal F}_{\overline{{\rm int}(J)}}$ and ${\mathcal F}_{\overline{{\rm ext}(J)}}$ are independent conditionally on $B_J$. 
\end{proposition}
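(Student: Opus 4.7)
The plan is to exploit the Gaussian structure of $W$ together with an orthogonal decomposition of $L^2(\RK^2,dx)$ determined by the curve $J$. Since the range of $J$ is Lebesgue-negligible, I would write
\begin{equation*}
L^2(\RK^2,dx)=\RK\cdot \mathbbm{1}_{{\rm int}(J)}\oplus V_{\rm in}\oplus V_{\rm ext},
\end{equation*}
where $V_{\rm in}$ denotes the orthogonal complement of $\mathbbm{1}_{{\rm int}(J)}$ inside $L^2({\rm int}(J),dx)$ and $V_{\rm ext}=L^2({\rm ext}(J),dx)$. The image under $W$ of these three mutually orthogonal subspaces yields three jointly independent Gaussian families, and I will denote by $\mathcal{H}_0$, $\mathcal{H}_{\rm in}$ and $\mathcal{H}_{\rm ext}$ the $\sigma$-fields they generate. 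Choosing an orientation of $J$ so that ${\sf n}_J=\mathbbm{1}_{{\rm int}(J)}$, one has $B_J=W(\mathbbm{1}_{{\rm int}(J)})$ and $\mathcal{H}_0=\sigma(B_J)$; the opposite orientation only replaces $B_J$ by $-B_J$, which does not affect the $\sigma$-field it generates.

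The substantive step is to establish the two inclusions $\mathcal{F}_{\overline{{\rm int}(J)}}\subset\sigma(\mathcal{H}_0,\mathcal{H}_{\rm in})$ and $\mathcal{F}_{\overline{{\rm ext}(J)}}\subset\sigma(\mathcal{H}_0,\mathcal{H}_{\rm ext})$. For this I would invoke the topological fact that, for any loop $\lambda$, the function ${\sf n}_\lambda$ is constant on each connected component of $\RK^2\setminus\lambda([0,1])$ and vanishes on the unbounded component. If $l$ winds mildly and has range in $\overline{{\rm int}(J)}$, then ${\rm ext}(J)$ is a connected unbounded subset of $\RK^2\setminus l([0,1])$, so ${\sf n}_l$ vanishes on ${\rm ext}(J)$; hence ${\sf n}_l\in L^2({\rm int}(J),dx)=\RK\cdot\mathbbm{1}_{{\rm int}(J)}\oplus V_{\rm in}$, so $B_l\in\sigma(\mathcal{H}_0,\mathcal{H}_{\rm in})$. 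Dually, if $l'$ has range in $\overline{{\rm ext}(J)}$, then ${\rm int}(J)$ is a connected subset of $\RK^2\setminus l'([0,1])$ on which ${\sf n}_{l'}$ takes some constant integer value $n_{l'}$; writing ${\sf n}_{l'}=n_{l'}\mathbbm{1}_{{\rm int}(J)}+{\sf n}_{l'}\mathbbm{1}_{{\rm ext}(J)}$ and applying $W$ gives $B_{l'}=n_{l'}B_J+W({\sf n}_{l'}\mathbbm{1}_{{\rm ext}(J)})\in\sigma(\mathcal{H}_0,\mathcal{H}_{\rm ext})$.

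Finally, I would conclude by the elementary fact that if $\mathcal{H}_0$, $\mathcal{H}_{\rm in}$, $\mathcal{H}_{\rm ext}$ are jointly independent $\sigma$-fields, then $\sigma(\mathcal{H}_0,\mathcal{H}_{\rm in})$ and $\sigma(\mathcal{H}_0,\mathcal{H}_{\rm ext})$ are conditionally independent given $\mathcal{H}_0=\sigma(B_J)$. The only step that is not completely formal is the topological lemma on the constancy of the winding number on connected components of the complement of the range of a loop; this however is immediate from the integral formula (\ref{nl}) and the Jordan curve theorem, and I do not expect a serious obstacle there. All of the probabilistic content of the proof is absorbed into the orthogonality of the three subspaces of $L^2(\RK^2,dx)$ above, which itself follows from the Lebesgue-negligibility of $J$ and the definition of $V_{\rm in}$.
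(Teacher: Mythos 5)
Your proof is correct and follows essentially the same route as the paper: both rest on the observations that loops in $\overline{{\rm int}(J)}$ have winding number supported in ${\rm int}(J)$, that loops in $\overline{{\rm ext}(J)}$ have winding number constant on ${\rm int}(J)$, and that the resulting orthogonal decomposition of $L^2(\RK^2,dx)$ relative to the line spanned by ${\bf 1}_{{\rm int}(J)}$ yields conditional independence of the Gaussian $\sigma$-fields given $B_J$. The only cosmetic difference is that you decompose the ambient $L^2$ space first and check the inclusions afterwards, whereas the paper starts from the subspaces generated by the winding numbers and verifies the orthogonality of their complements modulo $L=\RK\cdot{\sf n}_J$; you also spell out the final Gaussian conditional-independence step that the paper leaves implicit.
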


\begin{proof}By the Jordan curve theorem, a Jordan curve with negligible range winds mildly. Let $L$ be the line in $L^2(\RK,dx)$ generated by ${\sf n}_J={\bf 1}_{{\rm int}(J)}$. Let $H_{\rm{in}}$ (resp. $H_{\rm{out}}$) be the closed linear subspace of $L^2(\RK,dx)$ generated by ${\sf n}_l$ for $l$ which winds mildly and $l([0,1])\subset \overline{{\rm int}(J)}$ (resp. $l([0,1])\subset \overline{{\rm ext}(J)}$). The inclusions $H_{\rm{in}} \subset \{ f\in L^2(\RK,dx) : {\rm supp}(f)\subset {\rm int}(J) \} \mbox{ and } H_{\rm{out}} \subset \{ f\in L^2(\RK,dx) : f \mbox{ is constant on } {\rm int}(J)\}$
are straightforward. They are actually equalities but we do not need this fact. In particular, $H_{\rm in}\cap H_{\rm out}=L$. Moreover, with the notation $\ominus$ for the orthogonal complement, we have 
$$H_{\rm in}\ominus L \subset \{ f\in L^2(\RK,dx) : {\rm supp}(f)\subset {\rm int}(J) \mbox{ and } \int_J f(x) \; dx =0\},$$
$$H_{\rm out}\ominus L \subset \{ f\in L^2(\RK,dx) : f =0 \mbox{ on } {\rm int}(J)\}.$$
In particular, the orthogonality relation $H_{\rm in}\ominus L \perp H_{\rm out}\ominus L$ holds and the result follows. \end{proof}

It is illuminating to discuss the role of loops of finite length with the example of the Brownian motion indexed by loops in mind. A loop with finite length admits a Lipschitz continuous parametrization. Hence, its range has Hausdorff dimension $1$, unless it is constant. In any case, its range is negligible. The fact that its topological index is square-integrable is not at all obvious. It is granted by a generalization of the isoperimetric inequality discovered by T. Banchoff and W. Pohl. We denote the length of $l$ by $\ell(l)$.

\begin{theorem}[Banchoff-Pohl] Let $l:[0,1]\to \RK^2$ be a Lipschitz continuous loop. Then
$$4\pi \int_{\RK^2} {\sf n}_l(x)^2 \; dx \leq \ell(l)^2.$$
\end{theorem}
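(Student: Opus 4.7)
The plan is to recognize the Banchoff--Pohl inequality as an instance of the sharp two-dimensional $L^{1}$-Sobolev (Sobolev--BV) inequality, applied to the winding number function itself, whose distributional gradient is the measure carried by $l$.

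First, I would reduce to the case where $l$ is piecewise $C^{1}$, by approximating the Lipschitz loop by (mollified or piecewise linear) loops $l_n$ with $l_n \to l$ uniformly and $\ell(l_n)\to\ell(l)$; one then checks that ${\sf n}_{l_n}\to {\sf n}_l$ almost everywhere and that the ${\sf n}_{l_n}$ are uniformly bounded with supports in a common compact set, so the convergence also holds in $L^{2}(\RK^{2})$ by dominated convergence. Both sides of the claimed inequality are then stable under this limit.

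Next, I would identify the distributional gradient of ${\sf n}_l$ when $l$ is piecewise $C^{1}$. Since ${\sf n}_l$ is integer-valued, locally constant off the trace of $l$, and jumps by $\pm 1$ when crossing $l$ transversely, a direct computation (equivalent to Stokes' theorem tested against a smooth vector field) yields
\[
\nabla {\sf n}_l \;=\; \bigl(R\, l'(t)\bigr)\, \delta_{l(t)}\, dt
\]
as a vector-valued distribution, where $R$ denotes the rotation of $\RK^{2}$ by $\pi/2$. In particular the total mass of the measure $\nabla {\sf n}_l$ satisfies
\[
\|\nabla {\sf n}_l\|_{{\rm TV}} \;\leq\; \int_{0}^{1} |l'(t)|\, dt \;=\; \ell(l),
\]
with possible strict inequality when portions of $l$ retrace themselves and cancel in the gradient measure.

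Finally, I would invoke the sharp two-dimensional $L^{1}$-Sobolev inequality: for every compactly supported $f \in BV(\RK^{2})$,
\[
\|f\|_{L^{2}(\RK^{2})} \;\leq\; \frac{1}{2\sqrt{\pi}}\, \|\nabla f\|_{L^{1}(\RK^{2})}.
\]
This is classical and follows from the coarea formula combined with the Euclidean isoperimetric inequality $P(A)^{2} \geq 4\pi\,|A|$; equality is attained by characteristic functions of disks, which pins down the constant. Applied to $f={\sf n}_l$, one obtains
\[
4\pi \int_{\RK^{2}} {\sf n}_l(x)^{2}\, dx \;\leq\; \|\nabla {\sf n}_l\|_{{\rm TV}}^{2} \;\leq\; \ell(l)^{2},
\]
which is exactly the stated inequality. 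The main obstacle in carrying out this plan rigorously lies in the second step: justifying the formula for $\nabla {\sf n}_l$ when $l$ is merely Lipschitz (so $l'$ exists only almost everywhere) and accounting for multiplicities at self-intersections and self-overlaps of the trace; the approximation argument and the sharp Sobolev inequality are otherwise standard tools.
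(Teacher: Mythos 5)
Your argument is sound, but note first that the paper does not prove this theorem at all: it only cites the original article of Banchoff and Pohl, whose proof is integral-geometric and restricted to $C^2$ loops, and the paper of Vogt for an elementary proof in the rectifiable case. Your route -- reading the winding number as a compactly supported integer-valued $BV$ function whose gradient is the (rotated) current carried by $l$, so that $\|D{\sf n}_l\|_{\rm TV}\leq \ell(l)$, and then applying the sharp two-dimensional $L^1$-Sobolev inequality $\|f\|_{L^2}\leq \tfrac{1}{2\sqrt{\pi}}\|Df\|_{\rm TV}$ -- is a legitimate and well-known alternative. It buys a very short conceptual proof at the cost of importing the coarea formula and the isoperimetric inequality with sharp constant, whereas Vogt's argument stays entirely elementary; your proof also makes transparent why the constant $4\pi$ is the isoperimetric one, and why cancellation in the gradient measure (retraced arcs) can only improve the inequality.

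Two small repairs. First, in the approximation step the functions ${\sf n}_{l_n}$ are \emph{not} uniformly bounded in general (a polygonal approximant can wind many times near a point of the trace, and $|{\sf n}_{l_n}(x)|$ is only controlled by $\ell(l_n)/(2\pi\, d(x,l_n))$, which blows up near the curve), so dominated convergence is not available. This costs you nothing: since ${\sf n}_{l_n}\to{\sf n}_l$ almost everywhere (off the negligible trace of $l$) and the inequality is already established for each $l_n$, Fatou's lemma gives
$$4\pi\int_{\RK^2}{\sf n}_l^2\;dx\;\leq\;\liminf_n\, 4\pi\int_{\RK^2}{\sf n}_{l_n}^2\;dx\;\leq\;\liminf_n\,\ell(l_n)^2\;=\;\ell(l)^2,$$
which is all you need; $L^2$ convergence of ${\sf n}_{l_n}$ is neither needed nor true in general. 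Second, you are right that the identification of $D{\sf n}_l$ is the delicate point for merely Lipschitz $l$, and your reduction to the piecewise linear case is the correct way to avoid it: there, ${\sf n}_{l_n}$ is bounded with compact support, the complement of the trace has finitely many components, and the generalized Green formula $\int_{\RK^2}{\sf n}_{l_n}\,(\partial_1 Q-\partial_2 P)\,dx=\int_0^1\bigl(P(l_n)\,\dot l_{n,1}+Q(l_n)\,\dot l_{n,2}\bigr)\,dt$ is elementary, identifying $D{\sf n}_{l_n}$ as the pushforward of $R\,\dot l_n(t)\,dt$, whose total variation is at most $\ell(l_n)$ because pushforward does not increase mass. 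With these adjustments the proof is complete.
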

\index{Banchoff-Pohl inequality}

The original reference for this theorem is the article of T. Banchoff and W. Pohl \cite{BanchoffPohl}. They prove the inequality for a loop of class $C^2$. An elementary proof of the inequality for rectifiable curves can be found in a paper by A. Vogt \cite{Vogt}.

Of course, there are many loops with infinite length whose topological index is square-integrable, for instance fractal Jordan curves or simply loops of infinite length whose range is contained in a line. It would probably be difficult to characterize the set of loops which wind mildly in a way which differs significantly from its definition. Nevertheless, 
on the scale of roughness given by the $p$-variation, as defined by L.C. Young \cite{Young}, the space of rectifiable loops is the largest which contains only loops which wind mildly. Recall that the $p$-variation of a loop $l:[0,1]\to \RK^2$ is defined as the supremum over all subdivisions $\{t_0\leq \ldots \leq t_r\}$ of $[0,1]$ of the quantity $\left(\sum_i \parallel l(t_{i+1})-l(t_i) \parallel^p\right)^{\frac{1}{p}}$. A loop has finite length if and only if it has finite $1$-variation. A loop with finite $p$-variation for $p<2$ has negligible range.

\begin{proposition} There exists a loop $l:[0,1]\to \RK^2$ such that $l$ has finite $p$-variation for all $p>1$ and $\int_{\RK^2} {\sf n}_l(x)^2 \; dx=+\infty$.
\end{proposition}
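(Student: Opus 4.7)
The example will be a countable concatenation of excursions from a common base point. Fix a base $b>1$ large enough, set $r_k=b^{-k}$, $n_k=b^k$, and $x_k=(1/k,0)\in\RK^2$, with $b$ chosen so that the closed disks $\bar B(x_k,r_k)$ are pairwise disjoint. Let the \emph{excursion} $P_k$ consist of the straight segment from $0$ to the point $(1/k-r_k,0)\in\partial B(x_k,r_k)$, followed by $n_k$ traversals of $\partial B(x_k,r_k)$, followed by the reverse segment back to $0$. Define $l:[0,1]\to\RK^2$ as the concatenation $P_1\cdot P_2\cdot P_3\cdots$, giving the $k$-th excursion a sub-interval of length $2^{-k-1}$ and setting $l(1)=0$. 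Within each excursion the ``going'' and ``returning'' segments cancel in the winding, so ${\sf n}_l\equiv n_k$ on $B(x_k,r_k)$ and ${\sf n}_l\equiv 0$ elsewhere; hence
\[
\int_{\RK^2}{\sf n}_l(x)^2\,dx=\sum_{k\geq 1}n_k^2\,\pi r_k^2=\pi\sum_{k\geq 1}1=+\infty.
\]

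Fix $p>1$. The elementary bound $V_p(\gamma)^p\leq(\mathrm{diam}\,\gamma)^{p-1}\,\ell(\gamma)$ applied to each sub-piece of $P_k$ gives $V_p(\mathrm{segment})^p\leq (1/k)^p$ and $V_p(\mathrm{circle\ part})^p\leq(2r_k)^{p-1}\cdot 2\pi r_k n_k=2^p\pi\,b^{k(1-p)}$; combined with the three-fold sub-additivity $V_p(\gamma_1\gamma_2\gamma_3)^p\leq 2^{2(p-1)}\sum_{i=1}^3 V_p(\gamma_i)^p$, this yields $V_p(P_k)^p\leq C_p\bigl((1/k)^p+b^{k(1-p)}\bigr)$, so $\sum_k V_p(P_k)^p<\infty$.

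The main step is to control $V_p(l)^p$ by this series despite the countable concatenation; the trick is that all the junctions $P_k\cdot P_{k+1}$ happen at the common base point $o=0$. For any finite partition $\pi$ of $[0,1]$, classify its sub-intervals as \emph{intra-piece} (contained in some $P_k$) or \emph{cross-piece} (containing at least one junction). The intra-piece contribution to $\sum_i|l(t_{i+1})-l(t_i)|^p$ is bounded by $\sum_k V_p(P_k)^p$ by the definition of $V_p$. For a cross-piece interval $(t_i,t_{i+1})$, the presence of a junction gives $|l(t_{i+1})-l(t_i)|\leq|l(t_i)-o|+|l(t_{i+1})-o|=|l(t_i)|+|l(t_{i+1})|$, hence $|l(t_{i+1})-l(t_i)|^p\leq 2^{p-1}(|l(t_i)|^p+|l(t_{i+1})|^p)$. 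Among the partition points lying inside a given $P_k$ only the leftmost and rightmost can be endpoints of cross-piece intervals (the others are surrounded by intra-piece intervals), and $|l(t_j)|\leq 1/k+r_k$ when $t_j\in P_k$; so the cross-piece contribution is at most $2^{p+1}\sum_k(1/k+r_k)^p$, which is again finite for $p>1$. Altogether $V_p(l)^p<\infty$ for every $p>1$ while $\int{\sf n}_l^2=+\infty$, which proves the proposition. The main subtlety is exactly this last step: a naive iteration of $V_p(\gamma_1\gamma_2)^p\leq 2^{p-1}(V_p(\gamma_1)^p+V_p(\gamma_2)^p)$ produces a constant growing geometrically with the number of concatenated pieces, which is useless here; the common base-point structure is what allows a uniform per-piece bound to go through.
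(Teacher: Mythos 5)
Your proof is correct, but it uses a genuinely different example from the paper's. The paper takes the nested circles of radius $1/n$ through the origin, tangent to the horizontal axis, each traversed once; the winding number then equals $n$ on the lune between the $n$-th and $(n+1)$-st circles, and $\int {\sf n}_l^2 = \pi\sum_n n^2(n^{-2}-(n+1)^{-2})\sim \sum_n 2/n$ diverges, while $V_p(l)^p$ is comparable to $\sum_n n^{-p}<\infty$. You instead use pairwise disjoint circles of radius $b^{-k}$ traversed $b^k$ times each, tied to a common base point by out-and-back segments, so that $\int {\sf n}_l^2=\pi\sum_k 1$ diverges for the crudest possible reason and the $L^2$ computation is immediate (the cancellation of the out-and-back segments correctly disposes of the fact that the segments cross the later disks). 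The price is the connecting segments, the disjointness condition on $b$ (take $b$ a large integer so that the multiplicities $b^k$ are whole numbers), and a slightly more involved bookkeeping; the gain is that your write-up confronts head-on the one point the paper elides, namely why the $p$-variation of an \emph{infinite} concatenation is controlled by the sum of the $p$-variations of the pieces. Your observation that naive iteration of $V_p(\gamma_1\gamma_2)^p\leq 2^{p-1}(V_p(\gamma_1)^p+V_p(\gamma_2)^p)$ is useless and that the common base point rescues the estimate (cross-piece increments are absorbed by $|l(t_i)|^p+|l(t_{i+1})|^p$ with at most two contributing partition points per piece) is exactly the argument that is also implicitly needed to justify the paper's one-line claim about $V_p$, since there too the loops are concatenated at a common base point. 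Only cosmetic slips remain: the time lengths $2^{-k-1}$ sum to $1/2$ rather than $1$, so either rescale or let $l$ be constant on $[1/2,1]$; neither affects the winding number nor the $p$-variation.
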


\begin{figure}[h!]
\begin{center}
\scalebox{0.5}{\includegraphics{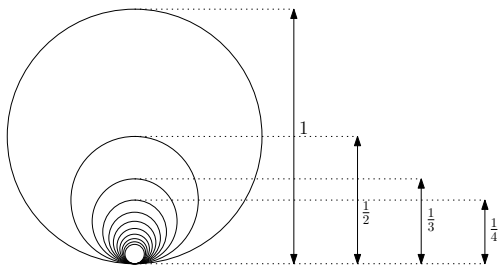}}
\caption{This loop has finite $p$-variation for all $p>1$ but its topological index is not square-integrable.}
\end{center}
\end{figure}

\begin{proof}For each $n\geq 1$, let $l_n$ be the loop based at the origin which goes once along the circle of radius $\frac{1}{n}$ through the origin, tangent to the horizontal axis and contained in the upper half-plane. Assume that $l_n$ is parametrized at constant speed by an interval of length $2^{-n}$. Let $l$ be the uniform limit of the finite concatenations $l_1 \ldots l_n$ as $n$ tends to infinity. This limit exists because the radii of the circles $l_n$ tend to $0$ as $n$ tends to infinity. For all $p>1$, the $p$-variation of $l$ raised to the power $p$ is equal, up to some constant, to $\sum_{n\geq 1} n^{-p}$, hence it is finite. On the other hand, the squared $L^2$ norm of ${\sf n}_l$ is $\pi \sum_{n\geq 1} n^2 ( n^{-2}-(n+1)^{-2})=+\infty$. \end{proof}

\section{Markovian holonomy fields and gauge fields}

The original motivation for the study of processes indexed by loops is issued from theoretical physics, indeed from quantum field theory and more precisely from quantum gauge theories. Let us explain this with the example of electrodynamics. The classical theory of electrodynamics, as established in the second half of the nineteenth century, is summarized by the Lorentz law and Maxwell's equations. Maxwell's equations relate the electric and the magnetic field to the density of electric charge and the density of electric current in space. In order to derive these equations from a principle of least action,  which is usually the first step in the procedure of quantization of a physical theory, it is convenient to express the electric and magnetic fields in terms of a scalar potential and a vector potential. These potentials are not uniquely defined by the fields and this indeterminacy is called the gauge symmetry of the theory. It turns out that the geometric nature of the pair formed by the scalar and vector potentials is that of a connection on a principal bundle with structure group $U(1)$ over the space-time. At the level of rigour of this discussion, we do not make a serious mistake by identifying this object with a differential $1$-form on the space-time. This $1$-form is usually denoted by $A$ and called the gauge field. The exterior differential of the gauge field is  a mixture of the electric and magnetic fields called the electromagnetic field and denoted by $F$. The relation $F=dA$ implies the equality $dF=0$, which is equivalent to the two homogeneous Maxwell equations. The two inhomogeneous equations can be put under the form $*d*F=J$, where $*$ is the Hodge operator on space-time associated with the Minkowski metric, and $J$ is a differential $1$-form built from the densities of charge and current.
\index{connection!on a principal bundle}

Most importantly for us, the gauge field, the object in terms of which the classical electrodynamics is best described, is a differential $1$-form on space-time. The most natural way to evaluate a $1$-form is to integrate it along paths. In the case of electrodynamics, the gauge symmetry of the theory implies that any two gauge fields which differ by a total differential describe the same physics. This indicates that the natural gauge-invariant functionals of the gauge field are in fact its integrals along loops.

If instead of doing quantum mechanics we prefer to do statistical mechanics, then we turn the Minkowski space-time into a Euclidean space-time and put on the space of gauge fields the Gibbs measure corresponding to the action which gives back Maxwell's equations through the least action principle. In an empty space-time, this action is called the Yang-Mills action and it is essentially the squared $L^2$ norm of the electromagnetic field. The natural way to study random gauge fields is to do so through their integrals along loops, and this constitutes indeed a stochastic process indexed by loops. The Markov property of such a process, in this physical context, reflects the following property of locality of the Yang-Mills action: if the space-time is partitioned into several regions, then each region contributes to the action by a quantity which can be computed from the values of the field inside this region only. 

In general, the random object that we are studying is thus an analogue of the electromagnetic field, or rather of the gauge field formed by the scalar potential of the electric field and the vector potential of the magnetic field. Let us give an idea of the physical meaning of the gauge field $A$. This field interacts with particles which carry an analogue of electric charge. In fact, the $1$-form $A$ takes its values in the Lie algebra of a Lie group $G$ and the charge of a particle is mathematically a linear action of $G$ on some vector space in which the wave function of the particle takes its values. For example, in the case of electrodynamics, the group is $U(1)$, the wave functions take their values in $\CK$ and for a particle of charge $ne$, where $-e$ is the charge of the electron, the group $U(1)$ acts on $\CK$ be the representation $e^{i\theta} \cdot z = e^{in\theta}z$. The exponential of the integral of the gauge field along a certain loop, as an element of $U(1)$, describes the modification of the phase of the wave function of a particle which travels along $l$. More generally, let $l:[0,1]\to M$ be a loop. Assume that $G$ is a Lie group and that $A$ is a differential $1$-form on $M$ with values in the Lie algebra of $G$. Then, under fairly weak regularity assumptions, the differential equation \index{charge of a particle}
\begin{equation}\label{i: holonomy}
\left\{\begin{array}{ll} h_0=1 \cr \dot h_t h_t^{-1} = -A(\dot l_t) , \;  t \in[0,1] \end{array}\right.
\end{equation}
has a unique solution $h:[0,1]\to G$. The element $h_1$ determined by (\ref{i: holonomy}) is called the holonomy of $A$ along $l$ and this is why we call our processes holonomy fields. The action of this element $h_1$ of $G$ on the left on the vector space in which the wave function of a particle takes its values determines how the state of a particle is modified when it travels along $l$. The reversed order in the right-hand side of (\ref{i: multi}) is due to the fact that the state of a particle travelling along the concatenation of $l_1$ and $l_2$ is modified first by the transformation due to the displacement along $l_1$ and then by the transformation due to the displacement along $l_2$. 
\index{concatenation} \label{ordre inverse}

\section{Finite groups, gauge fields and ramified coverings}

When the group $G$ is finite, the interpretation that a Markovian holonomy field is a reflection of a probability measure on a space of connections or differential $1$-forms with values in the Lie algebra of $G$ becomes awkward. Indeed, this Lie algebra is the null vector space. Topologically, a principal bundle with finite structure group is a covering and it carries a unique connection, which is flat. If $M$ is simply connected, the holonomy along any loop is the unit element of $G$. On the other hand, there exist Markovian holonomy fields with values in finite groups, which are non-trivial processes indexed by loops, even on the sphere $S^2$.

When $G$ is finite, the correct geometric picture is the following: a Markovian holonomy field with values in $G$ is the monodromy of a random ramified $G$-bundle. By a ramified $G$-bundle, we mean a ramified covering whose regular fibres are endowed with a free transitive action of $G$, or equivalently, a principal $G$-bundle over the complement of a finite set. There is still a unique connection on a ramified principal bundle and this connection is flat at each point which is not a ramification point, but each ramification point acts like a macroscopic amount of curvature concentrated at a single point. For instance, if $n\geq 2$ is an integer, then the mapping $z\mapsto z^n$ from $\CK$ to itself is naturally a ramified $\Z/n\Z$-bundle. The group $\Z/n\Z=\{e^{\frac{2i k \pi}{n}} : k\in\{0,\ldots,n-1\}\}$ acts by multiplication on $\CK^*$, freely and transitively on the fibres of the covering map $z\mapsto z^n$. Any loop which goes once positively around a disk which contains $0$ has monodromy $e^{\frac{2i  \pi}{n}}$, no matter how small this disk. This is consistent with the picture of a concentration of curvature at $0$. 
\index{connection!on a principal bundle}

The idea that the Yang-Mills measure, or a Markovian holonomy field, with values in a connected Lie group is a probability measure on a space of connections looked at through its holonomy is only a guide for the intuition and has no firm rigorous ground. On the contrary, the fact that a large class of Markovian holonomy fields with values in a finite group $G$ can be realized as the monodromy of a random ramified $G$-bundle is a theorem that we will prove. In the intuitive picture of the Yang-Mills measure, the curvature of the random connection is supposed to have the distribution of a white noise. The correct distribution of the ramified $G$-bundles which correspond with a given Markovian holonomy field can be roughly described as follows: first choose the ramification locus by throwing a Poisson point process on the surface with intensity the measure of area, then give a weight to every ramified $G$-bundle with this ramification locus which depends on its monodromy at each ramification point and the L\'evy process associated to the Markovian holonomy field. 

\section{Organization}

The present work consists of five chapters. In Chapters 1 and 2, we develop the tools and prove most of the technical results that we use in our study of Markovian holonomy fields. The first chapter covers the topology of surfaces and their surgery, the topological space of paths on a surface, the fundamental notion of graph on a surface, which we treat both topologically and combinatorially, and finishes with a discussion of Riemannian metrics. The second chapter introduces the space of multiplicative functions of paths with its measurable structures and its uniform measure. The last section is devoted to a study of the free group of loops in a graph in relation with this uniform measure.

In Chapter 3, we define Markovian holonomy fields and their discrete analogues. We prove the first central result of this work (Theorem \ref{main existence}) which encapsulates in an abstract way and extends the procedure which allowed us, in our previous construction of the Yang-Mills measure, to take the continuous limit of a discrete gauge theory. In our present language, we prove that every regular discrete Markovian holonomy field can be extended in a unique way to a regular Markovian holonomy field. 

In Chapter 4, we prove that a regular Markovian holonomy field with values in a compact Lie group $G$ determines a classical L\'evy process in $G$, which in turn determines completely the partition functions of the holonomy field (Propositions \ref{hol levy} and \ref{explicit Z}). We then prove that each L\'evy process of a wide class can be obtained in this way (Theorem \ref{levy HF}). Whether or not two distinct Markovian holonomy fields can have the same associated L\'evy process is a natural question which we do not settle here.

In Chapter 5, we prove that when the group $G$ is finite, the Markovian holonomy field constructed in the previous chapter is the monodromy process of a random ramified covering (Theorem \ref{holo mono}). In fact, most of the chapter is devoted to the construction of this random ramified covering. \\

The choice that we have made of concentrating to the extent possible the technical results in the first two chapters has the obvious drawback that the results exposed there often lack their real motivation, and that a linear reading of these two chapters may not be very rewarding. We hope that this is compensated by the fact that the study of Markovian holonomy fields themselves is much more straightforward than it would be if one had to constantly interrupt the exposition to prove technical results. In order to allow as much as possible the possibility of jumping from a section to another, we have included an index of notation which should be helpful in locating the first occurrence of a notation or a symbol. 

\section{Bibliography}

The original paper of C.N. Yang and R. L. Mills where they introduced non-Abelian gauge theories appeared in 1954 \cite{YangMills}. An explicit description of the Yang-Mills measure on a lattice was given by A. Migdal in 1975 \cite{Migdal}. The importance of the $2$-dimensional Yang-Mills theory in relation with the geometry of the moduli space of flat connections was emphasized by E. Witten in two papers published in the early 1990's \cite{Witten1,Witten2}.  

An introduction to some aspects of the physics of gauge theories accessible to a mathematician can be found in the book by J. Baez and J. Muniain \cite{BaezMuniain}. The book by D. Bleecker \cite{Bleecker} is a mathematical exposition of the theory of connections on a principal bundle with a strong bias towards gauge theories. The mathematical reference on connections is  the book by S. Kobayashi and K. Nomizu \cite{KobayashiNomizu}.


The mathematical study of the Yang-Mills measure was developed around 1985 by S. Albeverio, R. H\o{}egh-Krohn and H. Holden \cite{AlbeverioHKH1,AlbeverioHKH2}, L. Gross \cite{Gross1,Gross2}, C. King and A. Sengupta \cite{GKS}, B. Driver \cite{Driver1,Driver2}. The idea that one could construct a variant of the Yang-Mills measure associated to a L\'evy process which is not necessarily the Brownian motion was already present, twenty years ago, in the work of Albeverio et al. \cite{AlbeverioHKH2}. A. Sengupta gave the first construction of the measure on the sphere $S^2$ in 1992 \cite{SenguptaS2} and on an arbitrary compact surface in 1996 \cite{SenguptaAMS}.  The author gave another construction on an arbitrary surface in 2003 \cite{LevyAMS}, later refined to take non-trivial principal bundles into account \cite{LevyNTB}. In a different direction, around 1990, D. Fine has investigated  the Yang-Mills measure in \cite{Fine1,Fine2} with a focus on the geometrical structure of the space of connections on a principal bundle. 
\index{Brownian motion!on $G$}
\index{connection!on a principal bundle}

The fact that gauge theories with finite structure groups are related to random ramified coverings had already been observed by A. D'Adda and P. Provero \cite{dAddaProvero}. Ramified coverings are also important in relation with the large $N$ limit of the 2-dimensional $U(N)$ Yang-Mills theory, for a reason which is not directly related to the content of the present work. This relation has been revealed by D. Gross, in collaboration with W. Taylor \cite{GrossTaylor} and A. Matytsin \cite{GrossMatytsin}. It has recently been investigated by the author \cite{LevyAIM}.

\chapter{Surfaces and graphs}

In this chapter, we introduce the tools of topology and geometry of surfaces that we use in the rest of this work. We set up the notation, collect the necessary classical results and prove less classical ones. After a short review of compact surfaces, we describe their surgery and study in some detail the paths and graphs drawn on them. In particular, we describe carefully the boundary of a face of a graph. Then we define the group of reduced loops based at a point in a graph and recall why it is free. In the next chapter, we will prove the existence of sets of generators of this group with specific properties. Finally, we discuss Riemannian metrics on surfaces in relation with our problem.

\section{Surfaces}\label{sec: surfaces}
\subsection{Classification of surfaces}
\label{subsec: classif}

Let us start by recalling the definition of a surface.

\begin{definition} A {\em topological compact surface} is a Hausdorff compact topological space in which every point admits a neighbourhood homeomorphic to $\RK^2$ or to $\RK_+ \times \RK$.

A {\em smooth compact surface}, or simply a compact surface, is a topological compact surface equipped with a structure of smooth 2-dimensional manifold with boundary. 
\end{definition}

The distinction between topological and smooth surfaces is not essential, as the following result shows.

\begin{theorem} Any topological compact surface is homeomorphic to a smooth compact surface. Moreover, two smooth compact surfaces are diffeomorphic if and only if they are homeomorphic. 
\end{theorem}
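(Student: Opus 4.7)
The plan is to combine three classical ingredients: triangulability of topological surfaces, smoothing of PL structures in dimension two, and the topological classification of compact surfaces together with uniqueness of smoothings.

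For the first assertion, I would invoke Radó's theorem (1925) that every topological compact surface admits a triangulation, i.e.\ is homeomorphic to a finite simplicial complex of dimension $2$ with the local structure of a $2$-manifold with boundary. This result is special to dimensions $\leq 3$ and supplies a PL structure. I would then promote the PL structure to a smooth one by the standard smoothing procedure in low dimension: one constructs a smooth atlas on the underlying set of the triangulated surface by combining affine charts on open stars of triangles with explicit smoothings of the cone-like singularities at edges and vertices, glued by a partition of unity; alternatively one may appeal to Whitehead's theorem that every PL manifold of dimension $\leq 7$ carries a compatible smooth structure. By construction the resulting smooth surface is homeomorphic to the original topological one.

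For the second assertion, the ``only if'' direction is immediate since every diffeomorphism is a homeomorphism. For the ``if'' direction, I would reduce to the classification: every compact connected topological surface is homeomorphic to exactly one of a list of standard models indexed by orientability, (reduced) genus, and number of boundary components, and each such model carries an obvious smooth realization (a sphere with disks removed, with handles or crosscaps attached, etc.). It thus suffices to prove that any smooth structure on a compact surface is diffeomorphic to the standard one on the same topological model. This can be done by smoothly triangulating the given smooth surface, matching it to the standard model via a PL homeomorphism supplied by the PL version of the classification theorem, and then smoothing this PL homeomorphism to a diffeomorphism using Whitehead's smoothing theorem for PL maps in low dimension.

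The main obstacle is the smoothing step, both in its existence form (producing a smooth atlas out of a triangulation) and in its uniqueness form (smoothing a PL homeomorphism between two smooth surfaces). These are standard but non-trivial results whose proofs belong to the piecewise-linear topology of surfaces rather than to the content of this monograph, so the cleanest course of action is to cite them from the classical references (Radó, Moise, Munkres, Hirsch, Whitehead) and to use the classification of surfaces as a black box, emphasizing that in dimension $2$ all these low-dimensional smoothing phenomena are well-behaved and the statement in the theorem admits no exotic counterexamples.
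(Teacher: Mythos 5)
The paper offers no proof of this statement: it is quoted as a classical black-box fact (the monograph's own toolbox for surfaces consists of citations to Massey, Moise and Mohar--Thomassen), so there is nothing to compare your argument against line by line. Your outline is the standard and correct route --- Rad\'o's triangulation theorem to get a PL structure, existence and uniqueness of compatible smooth structures in dimension $2$ (Moise, Munkres, Whitehead), and the classification of compact surfaces to handle the ``homeomorphic implies diffeomorphic'' direction --- and correctly identifies the smoothing steps (both for structures and for PL homeomorphisms) as the genuinely non-trivial ingredients to be cited rather than reproved. The only minor imprecision is the attribution of the dimension $\leq 7$ smoothing-existence statement to Whitehead alone (the obstruction-theoretic results are due to Munkres and Hirsch--Mazur; Whitehead's theorem goes from smooth to PL), but this has no bearing on the dimension-$2$ case you need.
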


The classification theorem for compact surfaces is thus the same for smooth and topological surfaces, and we describe it now. We warn the reader that we are using a slightly unorthodox convention about the genus of a surface.

Let us describe two infinite series of surfaces. The first series is built from the torus, which is the Cartesian product of two circles. For each even integer $g\geq 0$ and each integer $p\geq 0$, let $\Sigma^+_{p,g}$ be the surface obtained by removing $p$ pairwise disjoint open disks from the connected sum of $\frac{g}{2}$ tori. For $g=0$, the surface $\Sigma^+_{p,0}$ is a sphere with $p$ holes. The second series is built from the projective plane, which is the quotient of the unit sphere of $\RK^3$ by the group of isometries $\{{\rm id},-{\rm id}\}$. For each integer $g\geq 1$ and each $p\geq 0$, let $\Sigma^-_{p,g}$ be the surface obtained by removing $p$ pairwise disjoint open disks from the connected sum of $g$ projective planes.

Recall that a smooth compact surface is {\em orientable} if it carries a non-vanishing differential 2-form. We say that a topological compact surface is orientable if a smooth compact surface to which it is homeomorphic is orientable. 

\begin{theorem}\label{classif} Any connected orientable topological compact surface is homeomorphic to one and exactly one of the surfaces $\{\Sigma^+_{p,g} : p,g\geq 0, g \mbox{ even}\}$. Any connected non-orientable compact surface is homeomorphic to one and exactly one of the surfaces $\{\Sigma^-_{p,g} : p\geq 0, g\geq 1\}$. Any oriented smooth compact surface admits an orientation-reversing diffeomorphism.
\end{theorem}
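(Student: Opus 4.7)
The plan is to work in the smooth category throughout, since the theorem immediately preceding this one identifies the smooth and topological classifications and reduces non-orientability to a question about any smoothing. The starting point is Radó's theorem, which guarantees that every compact surface $\Sigma$ admits a finite triangulation. After choosing a spanning tree of the dual graph of the triangulation and cutting $\Sigma$ open along all edges not in the tree (together with its boundary edges), one obtains a single polygonal disk $D$ whose boundary is labelled by a cyclic word $w$ in letters $a_1^{\pm 1}, b_1^{\pm 1}, \ldots, c_1, \ldots, c_p$; the $a$'s and $b$'s appear exactly twice (matched identifications in the interior of $\Sigma$) and the $c_i$'s appear only once, tracing out the $p$ boundary circles of $\Sigma$ after reattachment.

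The existence part of the classification then reduces to the standard cut-and-paste normal-form algorithm on such polygonal words. By merging cancelling pairs, grouping each matched pair $a \ldots a^{-1}$ into a handle block $a b a^{-1} b^{-1}$, and applying the celebrated identity
$$ a a b c b^{-1} c^{-1} \;\sim\; a a b b c c $$
to convert any handle adjacent to a crosscap into three crosscaps, one reduces $w$ to one of the two canonical forms
$$ a_1 b_1 a_1^{-1} b_1^{-1} \cdots a_k b_k a_k^{-1} b_k^{-1}\, c_1 \cdots c_p \qquad \text{or} \qquad a_1 a_1 \cdots a_g a_g\, c_1 \cdots c_p. $$
These are precisely the polygonal presentations of $\Sigma^+_{p,2k}$ and $\Sigma^-_{p,g}$, so $\Sigma$ is homeomorphic to one of the listed models. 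I would cite this algorithm from a standard reference (Massey, Hirsch, or Seifert--Threlfall) rather than reproduce its many case distinctions.

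For uniqueness, the key invariants are orientability and the Euler characteristic $\chi(\Sigma^\pm_{p,g}) = 2 - g - p$, both of which can be read off any polygonal presentation. Orientability separates the two families; within each family, the number $p$ of boundary components is itself a homeomorphism invariant (boundary is preserved under homeomorphism), and $\chi$ then pins down the reduced genus $g$. For the last assertion, I would realise each $\Sigma^+_{p,g}$ as a smoothly embedded compact submanifold of $\RK^3$ symmetric under reflection across a coordinate plane $P$: take a standard sphere with $g/2$ handles attached in symmetric pairs and $p$ open disks removed in positions symmetric about $P$ (or lying on $P$ itself). The restriction of the orthogonal reflection across $P$ is then a smooth involution of the embedded surface which reverses orientation, and the claim transports to any oriented smooth compact surface by pulling back along an orientation-preserving diffeomorphism to this model.

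The main obstacle is the polygonal normal-form reduction itself, which is long and involves many case distinctions but is entirely classical; I would deliberately offload it to the literature, since none of the later developments in this work depend on the internal combinatorics of the reduction.
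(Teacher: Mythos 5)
Your proposal is a correct sketch of the classical triangulate--cut--normalize proof, which is exactly the argument the paper relies on: Theorem \ref{classif} is stated without proof here, and the same cut-and-paste reduction of polygonal words is later invoked explicitly from Massey in the proof of Proposition \ref{graph to graph}. The only slip is in your canonical forms for surfaces with boundary, where the free letters should appear in conjugated blocks $d_1 c_1 d_1^{-1}\cdots d_p c_p d_p^{-1}$ (as the paper writes them) rather than as $c_1\cdots c_p$, since otherwise the vertex identifications need not produce $p$ distinct boundary circles; this lies inside the part you deliberately offload to the literature, and the rest (uniqueness via orientability, $\p$ and $\chi$; the reflection argument for the orientation-reversing diffeomorphism) is sound.
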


We call the integer $g$ which appears in this classification the {\em genus} of a surface. For orientable surfaces, it is twice the number which is usually called the genus. The advantage of our convention is illustrated by Proposition \ref{connected sum}. We denote the genus of a surface $M$ by ${\rg}(M)$ and the number of connected components of its boundary by  $\p(M)$
\index{surface! genus}
\index{genus|see{surface}}

With this notation, we have $\rg(\Sigma^\pm_{p,g})=g$ and $\p(\Sigma^\pm_{p,g})=p$. Let us define a binary operation $\wedge$ on $\{+,-\}$ by setting $+\wedge +=+$ and $+\wedge -=-\wedge +=-\wedge -=-$. If $M_1$ and $M_2$ are two compact topological surfaces, we denote by $M_1 \# M_1$ the connected sum of $M_1$ and $M_2$ which is the surface obtained by removing a small disk from $M_1$ and $M_2$ and gluing the two resulting surfaces along the boundaries of these disks. Of course, this surface is defined up to homeomorphism only. 

\begin{proposition}\label{connected sum} Let $\Sigma^\epsilon_{p,g}$ and $\Sigma^{\epsilon'}_{p',g'}$ be two surfaces of the list described above. Then
\begin{equation}\label{somme connexe}
\Sigma^\epsilon_{p,g}\# \Sigma^{\epsilon'}_{p',g'}=\Sigma^{\epsilon\wedge \epsilon'}_{p+p',g+g'}.
\end{equation}
\end{proposition}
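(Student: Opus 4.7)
The plan is to identify $M := \Sigma^\epsilon_{p,g} \# \Sigma^{\epsilon'}_{p',g'}$ among the surfaces of the list via the classification theorem \ref{classif}. Since $M$ is a compact connected surface, it is homeomorphic to a unique $\Sigma^\delta_{P,G}$ on the list, and one needs only compute the three invariants $\delta$, $P$, $G$ by inspection.

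First, the orientability invariant $\delta$. A non-orientable summand contains a M\"obius band; this band survives the connected sum (which only modifies a small disk away from it), so $M$ is non-orientable as soon as one of the summands is. Conversely, if both summands are orientable, orientations on them can be arranged to match along the circle used in the gluing (composing, if needed, the boundary identification with an orientation-reversing self-homeomorphism of the circle), producing an orientation on $M$. Hence $\delta = \epsilon \wedge \epsilon'$. Second, the number of boundary components $P$: the connected sum removes an open disk from the \emph{interior} of each summand and glues the two newly created boundary circles together, cancelling them. The original $p + p'$ boundary components are left untouched, so $P = p + p'$.

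Third, the genus $G$. The cleanest route is through the Euler characteristic. Removing an open $2$-disk from a surface lowers $\chi$ by $1$, so $\chi(\Sigma^\epsilon_{p,g}) = \chi(\Sigma^\epsilon_{0,g}) - p$. A standard cell decomposition (or direct use of the classification) gives $\chi(\Sigma^+_{0,g}) = 2 - g$ for even $g\geq 0$ and $\chi(\Sigma^-_{0,g}) = 2 - g$ for $g\geq 1$; this uniform formula $\chi = 2 - g - p$ is precisely the payoff of the reduced-genus convention. Moreover, the connected sum satisfies $\chi(M_1 \# M_2) = \chi(M_1) + \chi(M_2) - 2$, because one removes two disks (each contributing $-1$ to $\chi$) and the common boundary circle has $\chi = 0$. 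Applied to $M$, this yields
$$\chi(M) = (2 - g - p) + (2 - g' - p') - 2 = 2 - (g + g') - (p + p').$$
Since also $\chi(\Sigma^\delta_{P,G}) = 2 - G - P$ and we already know $P = p + p'$, we conclude $G = g + g'$.

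The main (rather modest) obstacle is making the Euler-characteristic bookkeeping precise on non-orientable surfaces and on surfaces with boundary; once that is in place, everything follows by inspection. A final sanity check is that the triple $(\delta, P, G) = (\epsilon\wedge\epsilon', p+p', g+g')$ actually labels a surface in the list: if $\delta = +$, then $\epsilon = \epsilon' = +$ forces $g, g'$ even, hence $G$ even; if $\delta = -$, then at least one of $g, g'$ is $\geq 1$, hence $G \geq 1$.
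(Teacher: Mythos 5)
Your proof is correct, but it takes a genuinely different route from the paper's. The paper disposes of the boundary count and the orientability in one sentence each, observes that the reduced genus is clearly additive when the two summands have the same orientability type, and isolates the mixed case (exactly one summand orientable) as the only nontrivial point; that case is then reduced, via commutativity and associativity of $\#$, to the classical homeomorphism between the connected sum of a projective plane and a torus and the connected sum of three projective planes (Lemma 7.1 of Massey). You instead compute the genus uniformly through the Euler characteristic: the identities $\chi(\Sigma^\pm_{p,g})=2-g-p$ and $\chi(M_1\# M_2)=\chi(M_1)+\chi(M_2)-2$, together with the classification theorem (Theorem \ref{classif}), which guarantees that the triple $(\delta,P,G)$ is a complete invariant, force $G=g+g'$ with no case analysis at all. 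What your approach buys is precisely the avoidance of the $P^2\#T^2\cong 3P^2$ lemma and of any case distinction on orientability types; it also makes explicit why the reduced-genus convention is the right one, namely that it is the convention for which $\chi=2-g-p$ holds uniformly across both families. What it costs is the need to know that $\chi$ is a homeomorphism invariant of compact surfaces with boundary and to verify the two Euler-characteristic identities (both routine Mayer--Vietoris computations, as you indicate). Your closing sanity check that $(\epsilon\wedge\epsilon',p+p',g+g')$ actually indexes a surface of the list is a worthwhile addition that the paper leaves implicit.
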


\begin{proof}It is clear that $\Sigma^\epsilon_{p,g}\# \Sigma^{\epsilon'}_{p',g'}$ has $p+p'$ boundary components. Moreover, the connected sum of two manifolds of the same dimension is orientable if and only if both manifolds are. The only non-trivial point is that the value of the reduced genus is correct when exactly one of the two surfaces 
$\Sigma^\epsilon_{p,g}$ and $\Sigma^{\epsilon'}_{p',g'}$ is orientable. In this case, since the operation of connected sum is commutative and associative, this boils down to the fact that the connected sum of a projective plane and a torus is homeomorphic to the connected sum of three projective planes. This is a classical result, proved as Lemma 7.1 in \cite{Massey}. 
\end{proof}

It is useful to keep in mind that if $M$ is a non-orientable compact topological surface, then the connected sum of $M$ with the torus $\Sigma^+_{0,2}$ is homeomorphic to the connected sum of $M$ with the Klein bottle $\Sigma^-_{0,2}$.

The fundamental groups of surfaces are most easily described by generators and relations. We denote by $\langle x_1,\ldots,x_n | r_1,\ldots,r_m \rangle$ the group generated by $x_1,\ldots,x_n$ subject to the relations $r_1,\ldots,r_m$.

\begin{theorem}\label{pi 1 surf} The fundamental groups of compact surfaces are, up to isomorphism, the following.\\
1. $\pi_1(\Sigma^+_{0,0})=\{1\}$ and for all $g\geq 1$, 
$$\pi_1(\Sigma^+_{0,2g})=\langle a_1,b_1,\ldots,a_g,b_g | [a_1,b_1] \ldots [a_g,b_g]=1\rangle.$$
2. For all $g\geq 0$ and all $p\geq 1$, $\pi_1(\Sigma^+_{p,2g})$ is free of rank $2g+p-1$.\\
3. For all $g\geq 1$, $\pi_1(\Sigma^-_{0,g})=\langle a_1,\ldots,a_g | a_1^2 \ldots a_g^2=1\rangle$.\\
4. For all $g\geq 0$ and all $p\geq 1$, $\pi_1(\Sigma^-_{p,g})$ is free of rank $g+p-1$.\\
\end{theorem}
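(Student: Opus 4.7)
The plan is to handle the closed cases (parts 1 and 3) via the standard polygonal CW model of a surface, then reduce the bordered cases (parts 2 and 4) to the fact that a surface with non-empty boundary has the homotopy type of a finite graph.

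\textbf{Closed surfaces.} The sphere $\Sigma^+_{0,0}$ is simply connected by a direct van Kampen argument applied to its cover by two overlapping disks. For $g\geq 1$, I would invoke the polygonal normal form (the same ``normal form'' theorem that underlies the classification Theorem \ref{classif} and is proved in Massey): $\Sigma^+_{0,2g}$ is homeomorphic to a $4g$-gon whose boundary edges are identified pairwise according to the word $a_1 b_1 a_1^{-1} b_1^{-1} \cdots a_g b_g a_g^{-1} b_g^{-1}$, and $\Sigma^-_{0,g}$ is a $2g$-gon whose boundary is identified according to $a_1^2 \cdots a_g^2$. In both cases the quotient is naturally a CW complex with one $0$-cell, $2g$ (resp. $g$) $1$-cells, and a single $2$-cell attached along the relator word. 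The fundamental group of such a CW $2$-complex is obtained from the free group on the $1$-cells by killing the class of the attaching map, as follows from van Kampen's theorem applied to the open $2$-cell and a small open neighbourhood of the $1$-skeleton. This gives exactly the presentations stated in parts 1 and 3.

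\textbf{Surfaces with boundary.} The key fact here is that a compact connected surface $M$ with non-empty boundary deformation retracts onto a finite $1$-dimensional CW complex. A concrete way to see it: represent the ambient closed surface $\Sigma^\pm_{0,g}$ as a polygon with boundary identifications as above and place the first of the $p$ open disks strictly inside the polygon; the punctured polygon radially deformation retracts onto its boundary, which descends in the quotient to a deformation retraction of $\Sigma^\pm_{1,g}$ onto a wedge of $2g$ (resp. $g$) circles. The remaining $p-1$ disks can be taken disjointly from this spine; removing each is equivalent up to homotopy to wedging on one more circle, since a disk-with-a-hole deformation retracts onto its outer boundary, which can be connected to the existing spine by an arc. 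Once $M$ has the homotopy type of a graph, $\pi_1(M)$ is free of rank $1-\chi(M)$. Since $\chi(\Sigma^+_{p,2g})=2-2g-p$ and $\chi(\Sigma^-_{p,g})=2-g-p$ (Euler characteristic is additive under removal of open disks), we obtain the ranks $2g+p-1$ and $g+p-1$ claimed in parts 2 and 4.

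\textbf{Main obstacle.} The real content lies in the two classical black boxes used above: the polygonal normal form for closed surfaces, and the existence of a graph spine for bordered surfaces. Both are proved in standard references on surface topology (Massey, already cited for Proposition \ref{connected sum}), so I would simply quote them rather than reprove the cut-and-paste and triangulation-collapse arguments. With those granted, the rest is a routine van Kampen computation on a CW $2$-complex together with an Euler-characteristic count.
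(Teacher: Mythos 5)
The paper states Theorem \ref{pi 1 surf} as a classical fact and gives no proof of its own, deferring (as for the classification theorem) to standard references such as Massey. Your argument is the standard one from those references --- polygonal normal form plus van Kampen for the closed cases, deformation retraction onto a graph spine plus an Euler-characteristic count for the bordered cases --- and it is correct; in particular the rank computations $1-\chi(\Sigma^+_{p,2g})=2g+p-1$ and $1-\chi(\Sigma^-_{p,g})=g+p-1$ check out. The only place worth tightening is the inductive removal of the remaining $p-1$ disks: one should place each new disk in a chart disjoint from the current spine and note that the complement of an open disk in such a chart retracts onto the old spine wedged with one new circle joined by an arc, but this is routine and exactly what the cited sources do.
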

\index{surface!classification}

It follows from this theorem that a compact surface is not characterized up to homeomorphism by its fundamental group. For example, a sphere with three holes and a torus with one hole both have a fundamental group which is free of rank 3. However, {\em closed} surfaces, that is, surfaces without boundary, are indeed characterized by their fundamental group.

If $M$ is a closed compact surface, then the reduced genus of $M$ is the minimal number of generators in a presentation of the fundamental group of $M$. On the other hand, if $M$ has a non-empty boundary, then its fundamental group is free of rank $\rg(M)+\p(M)-1$.

The boundary of a compact surface is a finite union of circles. If a surface is oriented, then every connected component of $\partial M$ carries an induced orientation, such that the surface stays on the left of a person walking along the boundary in the positive direction.

\begin{definition} Let $M$ be a compact surface. We denote by $\BS(M)$ the set of connected components of the boundary of $M$, each taken twice, once with each orientation. If $M$ is oriented, we denote by $\BS^+(M)$ the subset of $\BS(M)$ formed by the oriented connected components of $\partial M$ which bound $M$ positively.
\end{definition}
\index{BAB@$\BS(M)$}

Any diffeomorphism of a compact surface induces a diffeomorphism of its boundary. We need to know which diffeomorphisms of the boundary can be obtained in this way. For this, observe that if the boundary of an oriented surface $M$ has $p$ connected components, then, among the $2^p$ distinct orientations of $\partial M$, the $2$ distinct orientations of $M$ determine $2$ preferred orientations. We say that a diffeomorphism of $\partial M$ is orientation-preserving if it preserves these orientations and orientation-reversing if it exchanges them. Of course, if $p\geq 2$, then there exist diffeomorphisms of $\partial M$ which are neither orientation-preserving nor orientation-reversing.

\begin{theorem}\label{diffeo bord} Let $M$ be a smooth compact surface. If $M$ is non-orien\-ta\-ble, then any diffeomorphism of $\partial M$ can be extended to a diffeomorphism of $M$. If $M$ is orientable, then any orientation-preserving (resp. orientation-reversing) diffeomorphism of $\partial M$ can be extended to an orientation-preserving (resp. orientation-reversing) diffeomorphism of $M$. 
\end{theorem}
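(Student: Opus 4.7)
Factor any boundary diffeomorphism $\phi\colon\partial M\to\partial M$ into three elementary types and handle them separately: (a) self-diffeomorphisms fixing each boundary component setwise and restricting to an orientation-preserving map on each, (b) transpositions of two boundary components, and (c) orientation-reversing involutions on a single boundary component, which only need to be dealt with in the non-orientable case. Any $\phi$ is a composition of such moves, and extensions compose. If $M$ is orientable and $\phi$ is orientation-reversing on $\partial M$, Theorem~\ref{classif} furnishes an orientation-reversing diffeomorphism of $M$ whose boundary trace, composed with $\phi$, gives an orientation-preserving boundary map, reducing that sub-case to (a) and (b).

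\textbf{Extending (a) and (b).} For (a), I would apply the collar neighborhood theorem to obtain a smooth embedding $c\colon\partial M\times[0,1]\hookrightarrow M$ with $c(\cdot,0)=\mathrm{id}$. Because $\mathrm{Diff}^+(S^1)$ is path-connected, each restriction $\phi|_{C_i}$ admits a smooth isotopy $(\phi^i_t)_{t\in[0,1]}$ from itself to the identity, stationary near $t=1$. Define $\tilde\phi(c(x,t))=c(\phi^i_t(x),t)$ on the collar and extend by the identity outside; the result is smooth, and orientation-preserving when $M$ is oriented. For (b), to swap two components $C_i,C_j$, join them by a smoothly embedded arc $\alpha$ with interior in the interior of $M$; a closed regular neighborhood of $C_i\cup\alpha\cup C_j$ is a pair of pants $P\subset M$ with $C_i,C_j$ as two of its boundary circles and a third circle $C'$ in the interior of $M$. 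On a symmetric planar model of $P$ the orientation-preserving half-turn swaps $C_i$ and $C_j$ and is the identity near $C'$; transplanting this map to $P$ and extending by the identity on $M\setminus P$ realises the transposition.

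\textbf{Extending (c), non-orientable case.} For a boundary circle $C$ of the non-orientable surface $M$, I would build a diffeomorphism $\psi\colon M\to M$ restricting to a prescribed orientation-reversing involution on $C$ and to the identity on $\partial M\setminus C$. Non-orientability supplies a smoothly embedded orientation-reversing simple loop $\beta$ in the interior of $M$, with a M\"obius-band tubular neighborhood $U$; join $C$ to $\partial U$ by a smoothly embedded arc $\gamma$ lying in the interior of $M$ except at its endpoint on $C$. A regular neighborhood $N$ of $C\cup\gamma\cup U$ is a compact non-orientable surface whose boundary is $C$ together with a single circle $C'$ in the interior of $M$; by Theorem~\ref{classif} and an Euler-characteristic count, $N$ is identified with a standard non-orientable surface with two boundary components. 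On this model, the classical cross-cap slide (\emph{Y}-homeomorphism) provides a diffeomorphism inducing an orientation-reversing involution on $C$ and equal to the identity near $C'$; extension by the identity on $M\setminus N$ yields $\psi$.

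\textbf{Main obstacle.} The decisive step is (c): both the identification of the topological type of $N$ and the construction on $N$ of a diffeomorphism with the prescribed boundary behavior. The latter is the classical cross-cap slide of Lickorish, but making it explicit requires a careful local model and some care to ensure the map is the identity \emph{on a neighborhood} of $C'$ so that the extension by the identity is smooth. Once this building block is in hand, the assembly of the full diffeomorphism from steps (a), (b) and the orientation-reversal provided by Theorem~\ref{classif} is routine.
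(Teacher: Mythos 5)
The paper states Theorem~\ref{diffeo bord} without proof, as one of the classical facts it merely collects in Chapter~1, so there is no in-paper argument to compare yours against; your proposal has to stand on its own, and in essence it does — it is the standard proof. The reduction to the three elementary moves is sound, step (a) is the usual collar-plus-isotopy argument using the path-connectedness of $\mathrm{Diff}^+(S^1)$, and step (c) correctly isolates the one genuinely non-trivial ingredient: the boundary slide of $C$ along a one-sided curve (equivalently, cap $C$ with a disk and push that disk around an orientation-reversing loop), which reverses the orientation of $C$ while fixing a neighbourhood of the rest of the boundary. The only assertion that is literally false as written is in (b): the half-turn of the symmetric planar pair of pants is \emph{not} the identity near the third boundary circle $C'$ — it rotates $C'$ by $\pi$ — so extending by the identity on $M\setminus P$ would not even be continuous. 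The fix is immediate and uses exactly the device of step (a): rotation by $\pi$ lies in the identity component of $\mathrm{Diff}^+(S^1)$, so damp the half-turn to the identity across a collar of $C'$ inside $P$ before extending; note also that (b) tacitly uses connectedness of $M$ to join $C_i$ to $C_j$ by an arc, which is indeed needed for the statement to hold. Finally, your Euler-characteristic identification of $N$ with $\Sigma^-_{2,1}$ is correct but dispensable: the disk-pushing diffeomorphism can be built directly in a regular neighbourhood of $C\cup\gamma\cup\beta$ without invoking Theorem~\ref{classif}, which slightly shortens the delicate part you rightly flag.
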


\subsection{Surgery of surfaces}
\label{s:surgery}

Surfaces undergo natural operations of surgery such as cutting along a curve or gluing one or two boundary components. When one performs gluings and wants to keep track of where they have occurred, one ends up with surfaces which carry marks. On the other hand, when one cuts a surface along one or several curves, a convenient way of keeping track of what has been done is to maintain an involution of the set of boundary components of the current surface.

\begin{definition} A {\em marked surface} is a pair $(M,\CS)$, where $M$ is a smooth
compact surface and ${\mathscr C}$ is a finite collection of pairwise disjoint
oriented smooth $1$-dimensional submanifolds of the interior of $M$, such that an oriented curve
belongs to $\CS$ if and only if the same curve with the opposite orientation belongs to $\CS$. The elements of $\CS$ are called {\em marks}.

The marked surface $(M,{\mathscr C})$ is said to be {\em oriented} if every connected component of $M$
is orientable and oriented.
\end{definition}
\index{MAM@$(M,\CS)$}\index{CCC@$\CS$}
\index{surface!marked}

Let us emphasize that on a marked surface, even an oriented one, the marks do not carry a preferred orientation. The group $\Z/2\Z$ acts on $\CS\cup \BS(M)$ by reversing the orientation, and we denote this action by $b\mapsto b^{-1}$.

\begin{definition} A {\em tubular pattern} is a triple $(M,\CS,\tau)$ where $(M,\CS)$ is a marked surface and $\tau$ is an involution of $\BS(M)$ which commutes to the orientation reversal, that is, which satisfies $\tau(b^{-1})=\tau(b)^{-1}$. If $\CS=\varnothing$, the tubular pattern is said to be {\em split}.
\end{definition}
\index{pattern!tubular}

Let $(M,\CS,\tau)$ be a tubular pattern. Choose $b\in\BS(M)$. If $\tau(b)$ is a component of $\partial M$ distinct from $b$, then $b$ is meant to be identified with $\tau(b)$ by an orientation-preserving diffeomorphism. If $\tau(b)=b$, then $b$ is not meant to be glued or altered in any way. Finally, if $\tau(b)=b^{-1}$, then $b$ is meant to be glued on itself according to an orientation-preserving involution. Of course, this way of encoding the possible gluing operations is purely conventional.

We define now the basic operation of surgery, which is the operation of gluing. 

\begin{definition}\label{def gluing} Let $(M,\CS,\tau)$ and $(M',\CS',\tau')$ be two tubular patterns. A smooth mapping $f:M'\to M$ is called an {\em  elementary gluing} if one of the following sets of conditions is satisfied.

1. The mapping $f$ is the quotient map which identifies $b'$ with $\tau'(b')$ by an orientation-preserving diffeomorphism for some $b'\in \BS(M')$ such that $\{b',{b'}^{-1}\} \neq \{\tau'(b'),\tau'(b')^{-1}\}$. Moreover, $f(\CS' \cup \{b',{b'}^{-1}\})=\CS$ and, on $\BS(M')-\{b',{b'}^{-1},\tau'(b'),\tau'({b'})^{-1}\}$, $\tau \circ f = f \circ \tau'$.  Such a gluing is called {\em binary}, and the pair of curves $\{f(b'),f({b'}^{-1})\}$ is called its {\em joint}.

2. The mapping $f$ is the quotient map which identifies the points of $b'$ by pairs according to an orientation-preserving smooth involution for some $b'\in\BS(M')$ such that $\tau(b')={b'}^{-1}$. Moreover, $f(\CS' \cup \{b',{b'}^{-1}\})=\CS$ and, on $\BS(M')-\{b',{b'}^{-1}\}$, $\tau \circ f = f \circ \tau'$. In this case, the gluing is called {\em unary} and the pair of curves $\{f(b'),f({b'}^{-1})\}$ is called its {\em joint}.

A {\em gluing} is a map which can be written as the composition of several elementary gluings. A gluing is {\em complete} if the involution of the set of boundary components of the target surface is the identity.
\end{definition}
\index{binary|see{gluing}}
\index{unary|see{gluing}}
\index{gluing}
\index{surface!surgery}
\index{splitting}

Up to homeomorphism of the underlying surfaces and disregarding the markings, performing a unary gluing along a boundary component is equivalent to gluing a M\"{o}bius band along this boundary component. The result of this operation is never orientable. 

\begin{figure}[h!]
\begin{center}
\scalebox{0.8}{\includegraphics{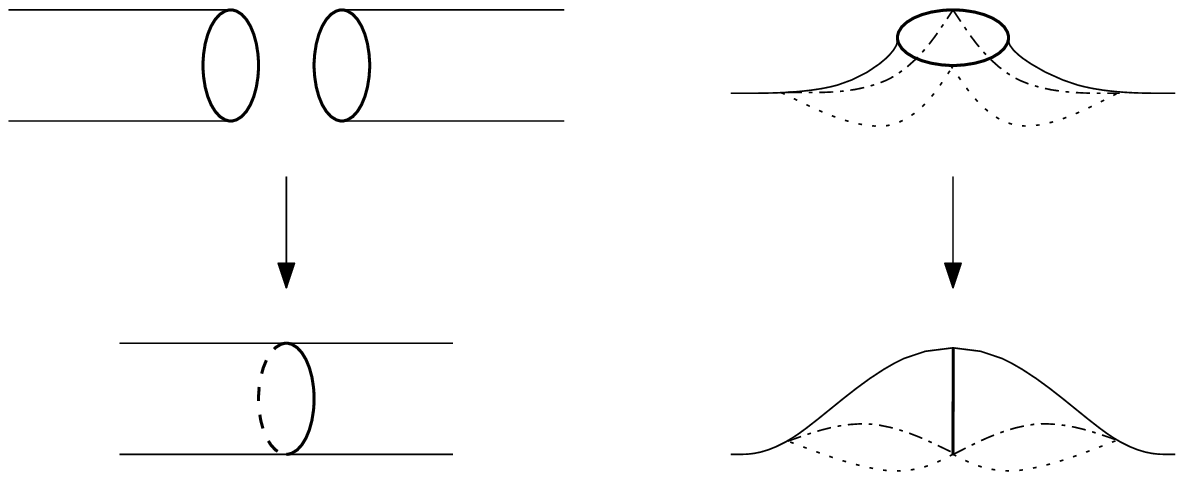}}
\caption{Binary and unary gluings.}\label{fig gluings}
\end{center}
\end{figure}

The other basic surgery operation is that of splitting. It is really the same thing as a gluing, looked at in the other direction.

\begin{proposition}\label{def splitting} Let $(M,\CS,\tau)$ be a tubular pattern with $\CS\neq \varnothing$. Choose $\{l,l^{-1}\} \subset \CS$. Then there exists a tubular pattern $(\Spl_l(M),\Spl_l(\CS),\Spl_l(\tau))$, and an elementary  gluing $f:\Spl_l(M)\to M$ such that the joint of $f$ is $\{l,l^{-1}\}$. Moreover, this gluing is unique up to isomorphism: if $(M',\CS',\tau')$ and $f':M'\to M$ satisfy the same properties, then there exists a diffeomorphism $\psi : \Spl_l(M)\to  M'$ such that $\psi(\Spl_l(\CS)))=\CS'$, $\psi\circ \tau = \tau'\circ \psi$ and $f'\circ \psi = f$.
\end{proposition}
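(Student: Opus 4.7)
The plan is to construct $\Spl_l(M)$ by cutting $M$ open along the curve $l$ and to verify that this cutting procedure is the inverse, up to isomorphism, of an elementary gluing.

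First I would invoke the tubular neighborhood theorem to identify a neighborhood $U$ of $l$ in $M$ with a model neighborhood of the zero section of the normal bundle of $l$. Since $l$ is a circle, this bundle is either trivial, in which case $U \cong S^1 \times (-1,1)$ (the \emph{two-sided} case), or non-trivial, in which case $U$ is diffeomorphic to an open M\"obius band (the \emph{one-sided} case). Define $\Spl_l(M)$ as the surface obtained from $M \setminus l$ by metric completion in this local model: in the two-sided case, replace $U \setminus l$ by the disjoint union of two half-open cylinders $S^1 \times [0,1)$ and $S^1 \times (-1,0]$, producing two new boundary components, each diffeomorphic to $l$; in the one-sided case, replace the punctured M\"obius band by one half-open cylinder $S^1 \times [0,1)$ whose boundary circle is a non-trivial double cover of $l$. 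Let $f : \Spl_l(M) \to M$ be the canonical surjection, which restricts to a diffeomorphism on the complement of the new boundary components and, on these components, coincides with the identification onto $l$ (in the two-sided case) or with the double cover onto $l$ (in the one-sided case).

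Next I would equip $\Spl_l(M)$ with a marking and involution. Set $\Spl_l(\CS) = \CS \setminus \{l,l^{-1}\}$, with orientations inherited from $\CS$. Define $\Spl_l(\tau)$ on $\BS(\Spl_l(M))$ so as to agree with $\tau$ on the boundary components pulled back from $\partial M$, and on the new boundary components: in the two-sided case, swap the two new components (and their orientation reversals) by the orientation-preserving diffeomorphism inherited from the cylinder model; in the one-sided case, send the new component to its orientation reversal. A direct comparison with Definition \ref{def gluing} then shows that $f$ is an elementary gluing, binary in the first case and unary in the second, with joint $\{l,l^{-1}\}$.

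For uniqueness, let $f' : M' \to M$ be a second elementary gluing with joint $\{l,l^{-1}\}$. Away from the joint, $f$ and $f'$ are diffeomorphisms onto $M \setminus l$, so $(f')^{-1} \circ f$ defines a diffeomorphism between the complements of the new boundaries in $\Spl_l(M)$ and $M'$. It remains to extend this diffeomorphism across the joints. Here one uses the uniqueness up to isotopy of tubular neighborhoods of $l$ in $M$: the germ of $f$ near its joint is determined, up to an orientation-preserving diffeomorphism of a collar neighborhood of a circle, by whether the normal bundle of $l$ is trivial, and likewise for $f'$. Since both $f$ and $f'$ invert the same local gluing data near $l$, the identification extends smoothly across the joint, yielding the required diffeomorphism $\psi$.

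The main obstacle is the smoothness of $\psi$ across the joint: the construction of $\Spl_l(M)$ requires a choice of collar, and one must verify that the smooth structure obtained is independent of this choice. This reduces to the standard fact that two tubular neighborhoods of a compact submanifold are isotopic through bundle diffeomorphisms, which ensures both the existence of the local smooth extension and the compatibility of $\psi$ with the markings $\Spl_l(\CS)$ and $\CS'$ and the involutions $\Spl_l(\tau)$ and $\tau'$.
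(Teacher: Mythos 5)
Your construction is sound, but it is worth knowing that the paper does not actually prove this proposition: the author explicitly declines, writing that the result ``lacks concise rigorous proof'' and would require a ``lengthy and uninstructive'' argument, deferring to Moise (who defines the splitting operation by a picture) and offering only one remark, namely that $\Spl_l(M)$ can be realized as the completion of the metric space $(M\setminus l,d)$ for any Riemannian distance $d$ on $M$. Your tubular-neighbourhood approach supplies exactly the argument the paper omits, and it has the advantage of making the smooth structure on the cut surface explicit and of deriving, rather than merely asserting, the dichotomy that governs the two cases: trivial normal bundle of $l$ gives two new boundary circles and a binary gluing, while the M\"obius case gives one new boundary circle double-covering $l$ and a unary gluing --- this is precisely the criterion the paper states informally after the proposition ($l$ one-sided iff every neighbourhood $V$ of $l$ has $V\setminus l$ connected iff $l$ is the equator of an embedded M\"obius band). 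The points you flag as the main obstacles (smoothness of $\psi$ across the joint, independence of the collar choice) are indeed where the real work lies, and the reduction to isotopy uniqueness of tubular neighbourhoods is the standard way to dispose of them. Two small things to tighten if you write this out in full: $\Spl_l(\tau)$ is an involution of the set $\BS(\Spl_l(M))$ of oriented boundary components, not a diffeomorphism, so in the binary case you should simply declare that it exchanges the two new components (compatibly with orientation reversal) and equals $f^{-1}\circ\tau\circ f$ elsewhere; and in the uniqueness step you should record that the intertwining $\psi\circ\Spl_l(\tau)=\tau'\circ\psi$ on the old components and the equality $\psi(\Spl_l(\CS))=\CS'$ are automatic from the compatibility conditions $\tau\circ f=f\circ\tau'$ and $f(\CS'\cup\{b',b'^{-1}\})=\CS$ built into Definition \ref{def gluing}, once $\psi$ is known to carry joint to joint.
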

\index{SASpl@$\Spl_l(M)$}

This result is intuitively obvious and yet it lacks concise rigorous proof. Considering that E. Moise, in the reference book \cite{Moise}, defines the splitting operation by a picture (Figure 21.1), we feel excused for not trying what would be a lengthy and uninstructive proof. Instead, let us make some comments on this result. 

The surface $\Spl_l(M)$ can be defined as the topological space underlying the universal completion of the metric space $(M\setminus l,d)$, where $d$ is the restriction to $M\setminus l$ of an arbitrary Riemannian distance on $M$. The universal property of the completion actually provides us with a continuous gluing map.

It is easy to determine from the pair $(M,\{l,l^{-1}\})$ if the gluing is binary or unary. In fact, the gluing is unary if and only if $l$ does not admit an orientable neighbourhood, which is equivalent to the fact that for every neighbourhood $U$ of $l$ there exists  a neighbourhood $V\subset U$ such that $V\setminus l$ is connected. Another equivalent statement is that $l$ admits a compact neighbourhood which is homeomorphic to a M\"{o}bius band of which $l$ is an equator. 

When these equivalent properties do not hold and the gluing is binary, there is an issue of orientation about the way in which the two boundary components of $\Spl_l(M)$ which are glued together are identified. If $M$ is orientable, then $\Spl_l(M)$ is also orientable, because a gluing performed on a non-orientable surfaces always results in another non-orientable surface, and the identification must be orientation-reversing. If $M$ is non-orientable, then two situations arise. Either $\Spl_l(M)$ is orientable, in which case the identification must be orientation-preserving, or $\Spl_l(M)$ is non-orientable. In this last case, any identification of the two boundary components is convenient. This is not in contradiction with the uniqueness part of the statement, since thanks to Theorem \ref{diffeo bord}, any diffeomorphism of the boundary of a non-orientable compact surface can be extended to the whole surface.\\

Let $(M,\CS)$ be a marked surface. By successively applying Proposition \ref{def splitting} to the marks of $M$, one eventually gets a tubular pattern with no marks, from which one can reconstruct $(M,\CS)$. 

\begin{proposition}\label{complete splitting} Let $(M,\CS)$ be a marked surface. Let $(M,\CS,{\rm id})$ be the associated tubular pattern. There exists a tubular pattern $(M',\varnothing,\tau')$ and a smooth mapping $f:M'\to M$ which is a complete gluing in the sense of Definition \ref{def gluing}. This gluing is unique up to isomorphism.
The pattern $(M',\varnothing,\tau')$ is called a {\em split tubular pattern} of $(M,\CS)$. 
\end{proposition}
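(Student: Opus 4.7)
The plan is to iterate Proposition \ref{def splitting} once per pair of marks in $\CS$, and to deduce uniqueness from the uniqueness clause of Proposition \ref{def splitting} by induction.

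For existence, I proceed by induction on $n = |\CS|/2$. If $n = 0$, take $M' = M$, $\tau' = \id$ and $f = \id_M$; this is a complete gluing because the involution on the target $(M, \varnothing, \id)$ is $\id$. If $n \geq 1$, I choose any pair $\{l, l^{-1}\} \subset \CS$ and apply Proposition \ref{def splitting} to the tubular pattern $(M, \CS, \id)$: this yields a tubular pattern $(\Spl_l(M), \Spl_l(\CS), \Spl_l(\tau))$ whose mark set has $n-1$ unoriented curves, together with an elementary gluing $f_1 : \Spl_l(M) \to M$ of joint $\{l, l^{-1}\}$. I iterate this construction inside the resulting tubular pattern --- the subsequent splittings occur inside tubular patterns whose involution is no longer the identity, but Proposition \ref{def splitting} applies to any tubular pattern with non-empty mark set. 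After $n$ steps, I obtain a sequence $(M_k, \CS_k, \tau_k)$, $0 \leq k \leq n$, with $(M_0, \CS_0, \tau_0) = (M, \CS, \id)$ and $\CS_n = \varnothing$, linked by elementary gluings $f_k : M_k \to M_{k-1}$. The composition $f := f_1 \circ \cdots \circ f_n$ is a gluing by definition; its source $(M_n, \varnothing, \tau_n)$ has no marks, and its target has trivial involution, so $f$ is complete.

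For uniqueness, I take another split tubular pattern $(M'', \varnothing, \tau'')$ with complete gluing $g : M'' \to M$, and decompose $g = g_1 \circ \cdots \circ g_m$ into elementary gluings with intermediate targets $(M_{k-1}'', \CS_{k-1}'', \tau_{k-1}'')$. The defining identity of an elementary gluing forces $|\CS_{k-1}''| = |\CS_k''| + 2$, so summing over $k$ gives $2m = |\CS|$, hence $m = n$, and the joints of the $g_k$ correspond bijectively to the $n$ unoriented curves of $\CS$. Two splittings performed along disjoint marks commute up to canonical isomorphism --- this is transparent from the realisation of $\Spl_l(M)$ as the underlying space of the completion of $M \setminus l$, since two disjoint curves can be removed simultaneously --- so after reordering I may assume that $g_1$ and $f_1$ share the joint $\{l, l^{-1}\}$. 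The uniqueness part of Proposition \ref{def splitting} then provides a diffeomorphism identifying $\Spl_l(M)$ with the target of $g_1$ and conjugating $f_1$ to $g_1$, and induction on $n$ yields the desired $\psi : M' \to M''$ with $g \circ \psi = f$.

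The main obstacle in this strategy is the reordering step in the uniqueness argument, that is, making precise the commutativity up to canonical isomorphism of splittings performed along two disjoint marks. Everything else reduces either to a direct invocation of Proposition \ref{def splitting} or to elementary bookkeeping on the cardinalities of mark sets through successive elementary gluings.
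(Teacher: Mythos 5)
Your proposal is correct and follows exactly the route the paper intends: the paper offers no formal proof of this proposition, only the preceding remark that one obtains the split pattern ``by successively applying Proposition \ref{def splitting} to the marks of $M$,'' which is precisely your iteration, and your uniqueness argument is the natural induction on the uniqueness clause of that same proposition. The commutativity of splittings along disjoint marks, which you rightly flag as the only delicate point, is at the same level of (un)rigour as Proposition \ref{def splitting} itself, which the paper explicitly declines to prove in detail.
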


\begin{definition}\label{split genus} Let $(M,\CS)$ be a marked surface. Let $(M',\varnothing,\tau')$ be a split tubular pattern of $(M,\CS)$. Assume that $M'$ is connected. We define the {\em split reduced genus} of $(M,\CS)$ as the reduced genus of $M'$. We denote it by ${\sf sg}(M,\CS)$.
\end{definition}
\index{surface!split genus}
\index{split genus|see{surface}}

\section{Curves and paths}
\subsection{Definitions} In the theory of Markovian holonomy fields, curves and paths on a surface play the role of points of a time interval for classical Markov processes. We warn the reader that the words {\em curve} and  {\em path} are not interchangeable in this work: a path is a curve with finite length.

\begin{definition}
Let $M$ be a topological compact surface.  A {\em parametrized curve} on $M$ is a continuous
curve $c:[0,1]\to M$ which is either constant on $[0,1]$ or constant on no open sub-interval of $[0,1]$. The set of parametrized curves is denoted by $\PCurve(M)$.

Two parametrized curves on $M$ are said to be {\em equivalent} if they differ by
an increasing homeomorphism of $[0,1]$. An equivalence class is simply called a {\em
curve} and the set of curves on $M$ is denoted by $\Curve(M)$. 

A {\em continuous loop} is a curve whose endpoints coincide.  A
continuous loop is said to be {\em simple} if it is injective on $[0,1)$.
\end{definition}
\index{curve}

If $c$ is a curve, then we denote respectively by $\underline{c}$ and $\overline{c}$ the starting and finishing point of $c$.  We denote its inverse by $c^{-1}$. It is defined as the class of the parametrized curve $t\mapsto c(1-t)$, which does not depend on the particular parametrization of $c$. The concatenation of curves is defined in the usual way. It is only partially defined on $\Curve(M)$ but associative whenever this makes sense. 
\index{concatenation}

The space $\Curve(M)$ is too large for many of our purposes. Let us define another space of curves which we call {\em paths}. Let $M$ be a smooth compact surface endowed with a Riemannian metric. Let $c:[0,1]\to M$ be a Lipschitz
continuous curve. Then the derivative of $c$ is defined almost-everywhere and its norm is
bounded above. We are going to consider curves whose speed is also bounded {\em below} by a
positive constant. Since the range of a curve is compact, this notion would be independent of the choice
of the Riemannian metric on any smooth manifold.

\begin{definition} \label{def path} Let $M$ be a smooth compact surface. A {\em parametrized path} on $M$ is a continuous
curve $c:[0,1]\to M$ which is either constant or Lipschitz continuous with speed bounded below by
a positive constant. The set of parametrized paths is denoted by $\PPath(M)$.

Two parametrized paths on $M$ are said to be {\em equivalent} if they differ by
an increasing bi-Lipschitz homeomorphism of $[0,1]$. An equivalence class is simply called a {\em
path} and the set of paths on $M$ is denoted by $\Path(M)$. 

A {\em loop} is a path whose endpoints coincide. The set of  loops is denoted by $\Loop(M)$. A
loop is said to be {\em simple} if it is injective on $[0,1)$.
\end{definition}
\index{PApath@$\PPath(M),\Path(M)$}
\index{LAloop@$\Loop(M)$}
\index{path}
\index{loop}
\index{loop!simple}

We use for paths the same notation as for curves. If $c$ is a path, we denote its endpoints by $\underline{c}$ and $\overline{c}$, and its inverse by $c^{-1}$. The concatenation of paths is also associative whenever it is defined. When $M$ is endowed with a specific Riemannian metric, we usually identify
$\Path(M)$ with the subset of $\PPath(M)$ consisting of paths which are parametrized at constant
speed. 

While the inclusion $\PPath(M)\subset \PCurve(M)$ does not determine an inclusion $\Path(M)\subset \Curve(M)$, because we are not using the same equivalence relation on parametrized curves and parametrized paths, it is true that a path, as a set of parametrized curves, is a subset of a unique curve. Moreover, two parametrized paths which are equivalent as parametrized curves are also equivalent as parametrized paths. Hence, there is a natural injection $\Path(M)\subset \Curve(M)$ which we use without further comment.

Let us define a relation on $\Loop(M)$ by saying that two loops $l_{1},l_{2}$ are related if and only if
there exists $c,d\in \Path(M)$ such that $l_{1}=cd$ and $l_{2}=dc$. It is not difficult to check
that this is an equivalence relation.

\begin{definition} Let $M$ be a smooth compact surface. A {\em cycle} is an equivalence class of loops for the relation on $\Loop(M)$ just defined. We call {\em non-oriented cycle} a pair $\{l,l^{-1}\}$
where $l$ is a cycle. We say that a cycle is {\em simple} if one of its representatives (hence all) are simple loops. 
\end{definition}
\index{cycle}

A cycle is simply a loop from which one has forgotten the origin. It is important to observe that an oriented $1$-dimensional submanifold of $M$ determines a simple cycle. Another definition derived from that of loops and that will be useful is the following.

\begin{definition} A path $l\in \Path(M)$ is called a {\rm lasso} if there exists a path $s$ and
a simple loop $m$ such that $c = sms^{-1}$. 
\end{definition}
\index{lasso}
\index{lasso!spoke}
\index{lasso!meander}

\begin{lemma} Let $l$ be a lasso. There exist a unique path $s$ and a unique simple loop $m$
such that $l=sms^{-1}$. The path $s$ is called the {\em spoke} of $l$ and the simple loop $m$ the {\em meander} of $l$.
\end{lemma}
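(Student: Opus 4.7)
The plan is to work with an arc-length parametrization of $l$ and exploit the reflection symmetry that any decomposition $l = sms^{-1}$ imprints on the ends of the parametrization, then use the injectivity of the simple loop $m$ to pin down the length of $s$ uniquely. Concretely, let $L = \ell(l)$ and fix the canonical arc-length parametrization $l : [0, L] \to M$. Given any decomposition $l = sms^{-1}$ with $a = \ell(s)$, uniqueness of the arc-length representative in a path identifies $s$ with $l|_{[0,a]}$, $m$ with $l|_{[a, L-a]}$, and $s^{-1}$ with $l|_{[L-a, L]}$; since the last piece is the reverse of the first, both at unit speed, this forces the reflection identity $l(t) = l(L - t)$ for every $t \in [0, a]$.

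Suppose now that $l = s_1 m_1 s_1^{-1} = s_2 m_2 s_2^{-1}$, and set $a_i = \ell(s_i)$; after relabelling, assume $a_1 \leq a_2$. The meanders $m_i$ are simple loops, hence non-constant, so $\ell(m_i) > 0$ and $a_i < L/2$. Applying the reflection identity to the second decomposition yields $l(t) = l(L - t)$ for all $t \in [0, a_2]$, while $m_1 = l|_{[a_1, L - a_1]}$ is injective on $[a_1, L - a_1)$. If $a_1 < a_2$, I would pick any $t \in (a_1, a_2)$: then $t < L/2 < L - t$, and both $t$ and $L - t$ lie strictly inside $(a_1, L - a_1)$, so they are two distinct points of $[a_1, L - a_1)$ with the same image under $l$, contradicting the simplicity of $m_1$. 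Hence $a_1 = a_2$, from which $s_1 = s_2$ and $m_1 = m_2$ follow immediately.

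The conceptual heart of the argument is that the ``out and back'' structure of a lasso translates into a reflection symmetry of the arc-length parametrization, which cannot reach into the loop without violating its simplicity. I expect the main nuisance to be the degenerate case where one of the $s_i$ is constant, so $a_i = 0$, which falls outside the strict-inequality reasoning above. This is disposed of separately: if $l$ is a simple loop equal to $m_1$ with constant $s_1$, then any competing decomposition with non-constant $s_2$ would force $l$ to pass through the tip $\overline{s_2}$ at two distinct interior times, contradicting the simplicity of $l$ itself. The remaining case $a_1 = a_2 = 0$ gives $l = m_1 = m_2$ and constant spokes based at the common endpoint, so uniqueness holds there trivially.
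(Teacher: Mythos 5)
Your proof is correct and follows essentially the same route as the paper's (much terser) argument: parametrize $l$ at constant speed and observe that the meander occupies an interval symmetric about the midpoint, whose endpoints are pinned down by the injectivity of the simple loop — the paper phrases this as "the meander is the restriction of $l$ to the largest interval of the form $[\frac{1}{2}-t,\frac{1}{2}+t)$ on which $l$ is injective." Your reflection identity $l(t)=l(L-t)$ and the contradiction argument are just a fully spelled-out version of why that characterization works.
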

\index{spoke|see{lasso}}
\index{meander|see{lasso}}

\begin{proof}Endow $M$ with a Riemannian metric. Assume that $l$ is parametrized at constant speed by $[0,1]$. Then the meander of $l$ is the restriction of $l$ to the largest interval of the form $[\frac{1}{2}-t,\frac{1}{2}+t)$ on which $l$ is injective. \end{proof}

We will use the topology of uniform convergence on $\Curve(M)$. 

\begin{definition} Let $M$ be a compact surface endowed with a Riemannian metric, whose
Riemannian distance we denote by $d$. Let $c_{1},c_{2}$ be two curves of $M$. We define
$$d_{\infty}(c_{1},c_{2})=\inf_{\rm param} \sup_{t\in [0,1]} d(c_{1}(t),c_{2}(t)),$$
where the infimum is taken over all parametrizations of $c_{1}$ and $c_{2}$.
\end{definition}

The distance $d_{\infty}$ depends on the Riemannian metric chosen on $M$. However, the
topology on $\Curve(M)$ does not.

\begin{lemma} Let $M$ be a compact surface. The distances on $\Curve(M)$ associated with any two Riemannian
metrics on $M$ are equivalent.
\end{lemma}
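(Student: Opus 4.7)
The plan is to establish the stronger statement that the two distances are not merely equivalent but bi-Lipschitz equivalent. The argument reduces, in the standard way, to a pointwise comparison of the two Riemannian metric tensors.

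First I would show that on the compact surface $M$, any two Riemannian metrics $g_1$ and $g_2$ are uniformly equivalent as bilinear forms: there exist constants $0 < c \leq C < \infty$ such that $c\, g_1(v,v) \leq g_2(v,v) \leq C\, g_1(v,v)$ for every tangent vector $v$ on $M$. This follows from the compactness of the unit sphere bundle of $(M,g_1)$, on which the continuous function $v \mapsto g_2(v,v)$ attains a positive minimum and a finite maximum.

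Second, I would use this pointwise comparison to compare the Riemannian distances $d_1$ and $d_2$ induced on $M$. Integrating the inequality above along any piecewise $C^1$ curve $\gamma$ yields $\sqrt{c}\, \ell_1(\gamma) \leq \ell_2(\gamma) \leq \sqrt{C}\, \ell_1(\gamma)$, where $\ell_i$ denotes length with respect to $g_i$. Taking the infimum over piecewise $C^1$ curves joining two given points $x, y$ of $M$ gives
\[
\sqrt{c}\, d_1(x,y) \leq d_2(x,y) \leq \sqrt{C}\, d_1(x,y),
\]
with the convention that both distances equal $+\infty$ when $x$ and $y$ lie in different connected components of $M$.

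Finally, I would transfer this to $\Curve(M)$. Let $c_1, c_2 \in \Curve(M)$ and denote by $d_\infty^{(i)}$ the distance on $\Curve(M)$ built from $d_i$. The set of pairs of parametrizations of $c_1$ and $c_2$ over which the infimum is taken depends only on the equivalence relation used to define $\Curve(M)$, not on the metric; so for any such pair the pointwise inequality $\sqrt{c}\, d_1(c_1(t),c_2(t)) \leq d_2(c_1(t),c_2(t)) \leq \sqrt{C}\, d_1(c_1(t),c_2(t))$ passes to the supremum over $t$, and then to the infimum over parametrizations, yielding $\sqrt{c}\, d_\infty^{(1)}(c_1,c_2) \leq d_\infty^{(2)}(c_1,c_2) \leq \sqrt{C}\, d_\infty^{(1)}(c_1,c_2)$, which proves the lemma. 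There is no serious obstacle; the only point requiring a moment's care is the case of curves in different components, but this is handled by the convention that makes both $d_\infty^{(i)}$ infinite simultaneously.
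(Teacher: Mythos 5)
Your proposal is correct and follows the same route as the paper, whose entire proof is the one-line observation that compactness of $M$ makes any two Riemannian distances equivalent; you have simply spelled out the standard details (pointwise comparison on the unit sphere bundle, integration along curves, and passage to the sup/inf defining $d_\infty$) and obtained the slightly stronger bi-Lipschitz form. Nothing further is needed.
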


\begin{proof}Since $M$ is compact, the Riemannian distances on $M$ determined by any two Riemannian metrics
are equivalent. \end{proof}

On $\Path(M)$, we will use a topology which is stronger than the trace of 
the uniform topology. We use an analogue of the topology of convergence in
$1$-variation of Lipschitz continuous paths, for which a sequence of Lipschitz continuous
paths in a Euclidean space converges if it converges uniformly and the sequence of the derivatives
of the paths converges in $L^1$. For the moment, we introduce a metric on $\Path(M)$ which depends
on a Riemannian metric on $M$ and is apparently weaker than the distance in $1$-variation.

When $c$ is a path on a Riemannian surface, we denote by $\ell(c)$ its length.

\begin{definition} Let $M$ be a compact surface endowed with a Riemannian metric. Let
$c_{1},c_{2}$ be two paths on $M$. We define
$$d_{\ell}(c_{1},c_{2})=d_{\infty}(c_{1},c_{2})+|\ell(c_{1})-\ell(c_{2})|.$$
\end{definition}

It is not obvious, and we will prove in the next section, that the topology induced by
$d_{\ell}$ on $\Path(M)$ does not depend on the Riemannian metric on $M$. Moreover, we will
prove that this topology can be metrized by a distance for which $\Path(M)$ is a complete metric
space.

The topology on $\Path(M)$ induced by the distance $d_\ell$ is the only one that we consider in this work and it is always the one to which we refer when we say that a sequence of paths converges. We will often add a condition on the endpoints of the paths which we consider.

\begin{definition} \label{def cv fe} Let $(c_{n})_{n\geq 0}$ be a sequence of paths on $M$. Let $c$ be a path on
$M$. We say that $(c_{n})_{n\geq 0}$ {\em converges to $c$ with fixed endpoints} if \\
1. $d_{\ell}(c_{n},c)\build{\lra}_{}^{} 0$ as $n\to\infty$,\\
2. for all $n\geq 0$, $\underline{c_{n}}=\underline{c}$ and $\overline{c_{n}}=\overline{c}$.
\end{definition}
\index{path!convergence with fixed endpoints}

When $M$ is endowed with a Riemannian metric, we will also make use of piecewise geodesic paths.

\begin{definition}\label{def gpath}
Let $M$ be a compact surface endowed with a Riemannian metric $\gamma$. We define $\GPath_{\gamma}(M)$ as the subset of
$\Path(M)$ containing the piecewise geodesic paths, that is, the finite concatenations of segments
of geodesics. 
\end{definition}
\index{AAA@$\GPath{\gamma}(M)$}
\index{path!piecewise geodesic}

The letter $\GPath$ stands for {\it affine}, instead of the letter $G$ which will be used for
a lot of other things. We claim that $\GPath_\gamma(M)$ is dense in $\Path(M)$. Indeed, there is an obvious way to approximate an arbitrary path by piecewise geodesic ones. This definition will be useful at a later time.  

\begin{definition} \label{Dn} Let $M$ be a compact surface endowed with a Riemannian metric $\gamma$. Consider $c\in \Path(M)$, identified with a path parametrized at constant speed. Let $n\geq 0$ be an integer. Assume that $2^{-n}\ell(c)$
is smaller than the injectivity radius of $M$. For each $k\in \{0,\ldots,2^{n}-1\}$, let
$\sigma_{n,k}$ be the minimizing geodesic which joins $c(k2^{-n})$ to $c((k+1)2^{-n})$. Then
define $\D_{n}(c)$ by
$$\D_{n}(c)=\sigma_{n,0}\ldots \sigma_{n,2^n-1}.$$
\end{definition}
\index{DDD@$\D_{n}(c)$}
\index{path!dyadic approximation}

\begin{proposition}\label{A dense} Let $M$ be a compact surface endowed with a Riemannian metric $\gamma$. For all path $c\in \Path(M)$, the sequence $(\D_n(c))$ defined for $n$ large enough converges to $c$ with fixed endpoints.
In particular, the space $\GPath_\gamma(M)$ is dense in $\Path(M)$ for the convergence with fixed endpoints.
\end{proposition}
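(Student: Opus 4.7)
The plan is to fix a path $c \in \Path(M)$, parametrized at constant speed over $[0,1]$, so that $c$ is Lipschitz with constant $\ell(c)$. For all $n$ large enough that $2^{-n}\ell(c)$ is smaller than the injectivity radius of $(M,\gamma)$, the minimizing geodesics $\sigma_{n,k}$ are well-defined and unique, hence so is $\D_n(c)$. By construction, $\underline{\D_n(c)}=c(0)=\underline{c}$ and $\overline{\D_n(c)}=c(1)=\overline{c}$, so condition 2 of Definition \ref{def cv fe} holds.

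To establish $d_\ell(\D_n(c),c) \to 0$, I would prove the two bounds separately. First, for uniform convergence, I parametrize $\D_n(c)$ so that the geodesic segment $\sigma_{n,k}$ is traversed over the interval $[k2^{-n},(k+1)2^{-n}]$ at constant speed. Then $\D_n(c)(k2^{-n})=c(k2^{-n})$ for every $k$. On each such dyadic interval, $c$ stays in the closed ball of radius $\ell(c)2^{-n}$ around $c(k2^{-n})$ by the Lipschitz bound, and $\sigma_{n,k}$ stays in the same ball since it is a minimizing geodesic of length $d(c(k2^{-n}),c((k+1)2^{-n}))\leq \ell(c)2^{-n}$. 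Therefore
\begin{equation*}
d_\infty(\D_n(c),c) \leq 2\ell(c)\,2^{-n} \build\lra_{n\to\infty}^{} 0.
\end{equation*}

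For the convergence of lengths, I would use the fact that the length of $\D_n(c)$ equals the sum of chord lengths
\begin{equation*}
\ell(\D_n(c)) = \sum_{k=0}^{2^n-1} d\bigl(c(k2^{-n}),c((k+1)2^{-n})\bigr),
\end{equation*}
since each $\sigma_{n,k}$ is minimizing. The classical characterization of the length of a rectifiable curve as the supremum of such sums over all subdivisions, combined with the fact that these sums are non-decreasing under dyadic refinement (by the triangle inequality), gives $\ell(\D_n(c)) \nearrow \ell(c)$. Hence $|\ell(\D_n(c))-\ell(c)| \to 0$, and combined with the previous bound, $d_\ell(\D_n(c),c)\to 0$. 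Since each $\D_n(c)$ lies in $\GPath_\gamma(M)$, this also proves density.

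I do not anticipate a substantial obstacle here: the argument reduces to routine Lipschitz estimates on dyadic intervals together with the standard variational characterization of length. The only point requiring a little care is the consistent choice of parametrization of $\D_n(c)$ used to bound $d_\infty$, which must be one for which $c$ and $\D_n(c)$ agree at the dyadic times $k2^{-n}$; any such parametrization is admissible since $\Path(M)$ is defined up to increasing bi-Lipschitz change of parameter.
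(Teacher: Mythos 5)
Your proposal is correct and follows essentially the same route as the paper: the same parametrization of $\D_n(c)$ on dyadic intervals, the same uniform bound $d_\infty(\D_n(c),c)\leq 2\ell(c)2^{-n}$, and the same observation that $\ell(\D_n(c))\leq\ell(c)$ by minimality of the geodesic segments. The only (harmless) variation is in the length step: you obtain $\ell(\D_n(c))\to\ell(c)$ from the monotone convergence of chord sums along refining dyadic partitions (which, to be fully airtight, uses that the mesh tends to zero together with the uniform continuity of $c$), whereas the paper deduces $\liminf\ell(\D_n(c))\geq\ell(c)$ directly from the lower semi-continuity of length under uniform convergence.
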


\begin{proof}Let $n$ be large enough for the path $\D_n(c)$ to be defined. It has the same endpoints as $c$ by construction. Let us parametrize it in such a way that for each $k\in \{0,\ldots,2^{n}-1\}$, the restriction of $\D_n(c)$ to $[k 2^{-n},(k+1)2^{-n}]$ is the minimizing geodesic which joins $c(k2^{-n})$ to $c((k+1)2^{-n})$. It is straightforward that 
$$\sup_{t\in[0,1]} d(c(t),\D_n(c)(t)) \leq 2^{-n+1} \ell(c).$$
Hence, $\D_n(c)$ converges uniformly towards $c$. Since the length is lower semi-continuous with respect to pointwise convergence, this implies that $\liminf \ell(\D_n(c))\geq \ell(c)$. On the other hand, $\ell(\D_n(c))\leq \ell(c)$, hence $\ell(\D_n(c))$ converges to $\ell(c)$. \end{proof}

\subsection{The complete metric space of rectifiable paths}\label{complete rectif}

The goal of this section is to prove that the topology that we have introduced on $\Path(M)$ does not depend on a particular choice of a Riemannian metric on $M$ and can be metrized by a complete distance. Let us start by a negative result.

\begin{lemma} Let $M$ be a compact surface endowed with a Riemannian metric. The metric
space $(\Path(M),d_{\ell})$ is not complete. 
\end{lemma}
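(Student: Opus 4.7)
The plan is to exhibit a $d_\ell$-Cauchy sequence in $\Path(M)$ which has no limit, exploiting the strict lower semicontinuity of the length under uniform convergence. The key observation is that since $d_\ell\geq d_\infty$, any $d_\ell$-limit of a sequence $(c_n)$ must agree with its uniform limit and in addition satisfy $\ell(c)=\lim\ell(c_n)$; so it suffices to find a uniformly convergent sequence of paths whose lengths are bounded below strictly away from the length of the uniform limit.

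Concretely, I would pick any point $p\in M$ together with a geodesic normal chart $(x,y)$ around $p$, in which the metric is $g_{ij}(x,y)=\delta_{ij}+O(\|(x,y)\|^{2})$. For a small fixed $\epsilon>0$, consider the smooth paths
\[ c_n(t)=\left(\epsilon t,\frac{\epsilon}{n}\sin(2\pi n t)\right),\quad t\in[0,1], \]
whose Euclidean derivatives have norm $\epsilon\sqrt{1+4\pi^{2}\cos^{2}(2\pi n t)}\geq\epsilon$, so each $c_n$ lies in $\Path(M)$. The sequence converges uniformly to the geodesic segment $\sigma(t)=(\epsilon t,0)$ because $\sup_t\|c_n(t)-\sigma(t)\|\leq \epsilon/n$. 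Using $1$-periodicity after the substitution $u=nt$, the Euclidean length of $c_n$ equals
\[ \epsilon\int_{0}^{1}\sqrt{1+4\pi^{2}\cos^{2}(2\pi n t)}\,dt=\epsilon L, \]
with $L=\int_{0}^{1}\sqrt{1+4\pi^{2}\cos^{2}(2\pi u)}\,du>1$, independent of $n$. Since the ranges of the $c_n$ lie in a disk of Euclidean radius $O(\epsilon)$ around $p$, the pointwise comparison $\sqrt{g(v,v)}=|v|(1+O(\epsilon^{2}))$ yields $\ell(c_n)\in[\epsilon L(1-C\epsilon^{2}),\epsilon L(1+C\epsilon^{2})]$ for some constant $C=C(g)$, and similarly $\ell(\sigma)=\epsilon(1+O(\epsilon^{2}))$.

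Passing to a subsequence, I may assume $\ell(c_{n_k})$ converges to some $L^{*}\in[\epsilon L(1-C\epsilon^{2}),\epsilon L(1+C\epsilon^{2})]$. Then $(c_{n_k})$ is $d_\ell$-Cauchy because $d_\infty(c_{n_k},c_{n_l})\leq C'\epsilon(1/n_k+1/n_l)\to 0$ and its lengths converge. If it had a limit $c\in\Path(M)$, then $d_\ell\geq d_\infty$ would force $c=\sigma$, hence $L^{*}=\ell(\sigma)=\epsilon(1+O(\epsilon^{2}))$; but for $\epsilon$ chosen small enough the bounds give $L^{*}\geq\epsilon L(1-C\epsilon^{2})>\epsilon(1+C\epsilon^{2})\geq \ell(\sigma)$, a contradiction, so no such limit exists. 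The main delicate point is controlling the $g$-length of $c_n$ uniformly in $n$, which is handled by the $O(\|x\|^{2})$ bound on the metric correction in normal coordinates together with the elementary subsequence extraction.
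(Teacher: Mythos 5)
Your proof is correct and follows essentially the same strategy as the paper: produce a uniformly convergent oscillating sequence whose lengths stay bounded away from the length of the uniform limit, so the sequence is $d_\ell$-Cauchy but the only candidate limit fails the length condition. The paper's witness is slightly slicker --- it oscillates the \emph{parametrization} along a single geodesic segment, $c_n(t)=c(t+\tfrac{1}{n}\sin(2\pi nt))$, so all the paths have exactly the same length and no normal-coordinate estimates or subsequence extraction are needed --- but your transverse sine-wave construction works just as well.
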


\begin{proof}Let $c:[-\frac{1}{4},\frac{5}{4}]\to M$ be a segment of minimizing geodesic parametrized at
constant speed. For each
$n\geq 1$, define $c_{n}:[0,1]\to M$ by $c_{n}(t)=c(t+\frac{1}{n}\sin (2\pi n t))$. For all $n\geq 1$,
$d_{\infty}(c_{n},c_{|[0,1]})=\frac{1}{n}\ell(c_{|[0,1]})$. Moreover, for all $n\geq
1$, $\ell(c_{n})=4 \ell(c_{|[0,1]})$. Hence, the sequence $(c_{n})_{n\geq 1}$ is a
Cauchy sequence for $d_{\ell}$ which converges uniformly to $c_{|[0,1]}$. Its only possible limit is
$c_{|[0,1]}$, but $\ell(c_{n})$ does not converge to $\ell(c_{|[0,1]})$.\end{proof}

The main result of this section is the following.

\begin{proposition}\label{main topology} Let $M$ be a compact surface.\\
1. The topologies induced on $\Path(M)$ by the distances $d_{\ell}$ associated to any two
Riemannian metrics on $M$ are the same.\\
2. Endow $M$ with a Riemannian metric. There exists a metric $d_{1}$ on $\Path(M)$ which induces the
same topology as $d_{\ell}$ and such that $(\Path(M),d_{1})$ is a complete metric space.
\end{proposition}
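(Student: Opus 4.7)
The central tool is the canonical constant-speed parametrization (csp). Given a Riemannian metric $\gamma$, write $\bar c : [0,1] \to M$ for the unique parametrization of $c \in \Path(M)$ with $|\dot{\bar c}|_\gamma = \ell_\gamma(c)$ almost everywhere (or a constant map if $c$ is constant). The pivotal technical lemma is: \emph{if $c_n \to c$ in $(\Path(M), d_\ell)$, then $\bar c_n \to \bar c$ uniformly on $[0,1]$}. To prove it, observe that each $\bar c_n$ is $\ell_\gamma(c_n)$-Lipschitz; since $\ell_\gamma(c_n) \to L := \ell_\gamma(c)$, the family is uniformly Lipschitz, hence precompact in $C([0,1], M)$ by Arzel\`a--Ascoli. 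Let $c^*$ be a subsequential uniform limit; it is $L$-Lipschitz, and by reparametrization invariance plus uniform convergence, $d_\infty(c^*, \bar c) = 0$, so $c^*$ and $\bar c$ are equivalent as curves and share the length $L$. Combined with the Lipschitz bound this forces $|\dot{c^*}|_\gamma = L$ almost everywhere. Writing $c^* = \bar c \circ \sigma$ for an increasing homeomorphism $\sigma$ of $[0,1]$, the identity $\ell_\gamma(c^*|_{[a,b]}) = L(b-a) = L(\sigma(b) - \sigma(a))$ forces $\sigma = \mathrm{id}$. Uniqueness of cluster points then gives uniform convergence of the whole sequence.

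For Part 1, let $\gamma_1, \gamma_2$ be two Riemannian metrics. The $d_\infty$ part of $d_\ell$ is already metric-independent, so it suffices to show that $\ell_{\gamma_1}(c_n) \to \ell_{\gamma_1}(c)$ together with $d_\infty$-convergence implies $\ell_{\gamma_2}(c_n) \to \ell_{\gamma_2}(c)$. Reparametrize at $\gamma_1$-csp and isometrically embed $M \hookrightarrow \RK^N$ (Nash), so derivatives lie in a fixed Euclidean space with $|\dot{\bar c}_n|_{\RK^N} = |\dot{\bar c}_n|_{\gamma_1}$. The sub-claim yields $\bar c_n \to \bar c$ uniformly; uniform Lipschitz bounds combined with this uniform convergence give $\dot{\bar c}_n \to \dot{\bar c}$ weakly in $L^2([0,1], \RK^N)$ (pass to the limit in $\bar c_n(t) - \bar c_n(0) = \int_0^t \dot{\bar c}_n$). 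Moreover $\|\dot{\bar c}_n\|_{L^2}^2 = \ell_{\gamma_1}(c_n)^2 \to \ell_{\gamma_1}(c)^2 = \|\dot{\bar c}\|_{L^2}^2$, so uniform convexity of $L^2$ upgrades weak to strong convergence. Combined with continuity of $\gamma_2$ and uniform convergence of the base points, this yields
\[
\ell_{\gamma_2}(c_n) = \int_0^1 \sqrt{\gamma_2(\bar c_n)(\dot{\bar c}_n, \dot{\bar c}_n)}\, dt \longrightarrow \int_0^1 \sqrt{\gamma_2(\bar c)(\dot{\bar c}, \dot{\bar c})}\, dt = \ell_{\gamma_2}(c).
\]

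For Part 2, keep the isometric embedding $M \hookrightarrow \RK^N$ and define
\[
d_1(c_1, c_2) := \|\bar c_1 - \bar c_2\|_\infty + \|\dot{\bar c}_1 - \dot{\bar c}_2\|_{L^1([0,1], \RK^N)}.
\]
The inequality $d_1 \geq d_\ell$ is immediate (the sup dominates the Fr\'echet infimum, and $\|\dot{\bar c}_1 - \dot{\bar c}_2\|_{L^1} \geq |\ell_\gamma(c_1) - \ell_\gamma(c_2)|$), so $d_1$-convergence implies $d_\ell$-convergence; the converse follows from the sub-claim and the $L^2$ strong-convergence argument above, since $L^2([0,1]) \hookrightarrow L^1([0,1])$. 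For completeness, let $(c_n)$ be $d_1$-Cauchy: then $\bar c_n \to c^*$ uniformly and $\dot{\bar c}_n \to v$ in $L^1$, and $v = \dot{c^*}$ almost everywhere by passage to the limit in $\bar c_n(t) - \bar c_n(0) = \int_0^t \dot{\bar c}_n$. Since $|\dot{\bar c}_n|$ equals the constant $\ell_\gamma(c_n) \to L$, extracting an a.e.-convergent subsequence yields $|v| = L$ almost everywhere; hence $c^*$ is the csp of a valid element of $\Path(M)$ (non-constant if $L > 0$, constant otherwise), and $d_1(c_n, c^*) \to 0$.

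The principal obstacle is the sub-claim itself: $d_\infty$ is defined as an infimum over parametrizations and does not a priori localize on the canonical csp, as the non-completeness lemma above shows by producing a $d_\ell$-Cauchy sequence whose csp does not stabilize onto a valid path. The essential extra input is the convergence of lengths, which makes the csp uniformly Lipschitz with a specific limiting speed, providing exactly the compactness needed for Arzel\`a--Ascoli and the constant-speed rigidity that pins down the limit.
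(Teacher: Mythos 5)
Your proof is correct and follows the same overall architecture as the paper's (uniform convergence of constant-speed parametrizations, Nash embedding, $L^1$ convergence of derivatives, a $1$-variation-type distance shown to be complete), but the two hardest steps are handled by genuinely different means. For the uniform convergence of constant-speed parametrizations, the paper (Lemma \ref{dl unif}) shows that the arc-length functions $t\mapsto \ell(\widetilde{c_n}_{|[0,t]})$ converge pointwise to $t\ell(c)$ and upgrades this to uniform convergence by a Dini-type argument for monotone functions, then transfers back to the constant-speed parametrization; you instead invoke Arzel\`a--Ascoli on the uniformly Lipschitz family $\bar c_n$ and pin down the cluster point by constant-speed rigidity. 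The one step you should flesh out there is the passage from $d_\infty(c^*,\bar c)=0$ to equivalence of the two curves: this rests on Fr\'echet's characterization of distance-zero pairs via monotone reparametrizations, and the cleanest order of deduction is to first extract $\ell(c^*)=\ell(\bar c)=L$ from that characterization, then use the $L$-Lipschitz bound to get constant speed $L$ (hence non-degeneracy of $c^*$), and only then the increasing homeomorphism $\sigma=\id$. For the $L^1$ convergence of derivatives, the paper's Lemma \ref{dl d1 eucl} is a hands-on argument via the Lebesgue differentiation theorem and a covering of the set of good points, precisely because it only assumes uniform convergence of primitives of unit-sphere-valued functions; your replacement--weak $L^2$ convergence of $\dot{\bar c}_n$ (from convergence of primitives plus the uniform bound) together with convergence of the $L^2$ norms $\Vert\dot{\bar c}_n\Vert_{L^2}=\ell(c_n)\to\ell(c)$, upgraded to strong convergence by the Radon--Riesz property--is shorter and exploits the constant-speed normalization in a way the paper's lemma deliberately does not. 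Your extrinsic definition of $d_1$ through the embedding, rather than the paper's intrinsic $d_{TM}$, is immaterial for the statement as it only asks for the existence of some complete metric inducing the topology, and your completeness argument coincides with Lemma \ref{d1 complete}.
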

\index{path!convergence|)}

In order to prove this theorem, we define a new distance on $\Path(M)$, which is the analogue of
the distance in $1$-variation between Lipschitz continuous paths. Let $TM$ denote the tangent bundle of $M$. The Levi-Civita connection of $\gamma$ determines a splitting of the tangent bundle to $TM$ as $T(TM)=T^V(TM)\oplus T^H(TM)$.
The vertical part $T^V(TM)$ is the kernel of the derivative of the bundle map $\pi : TM\lra M$. It is canonically
identified with $TM$ by associating to $X\in T_{m}M$ the vector $\frac{d}{dt}_{|t=0}(Y+tX) \in T_{Y}(T_{m}M)$. The
horizontal part $T^H(TM)$ is mapped isomorphically onto $TM$ by the
differential of $\pi$ and the reciprocal mapping can be defined as follows. Consider $X, Y\in T_mM$. Let $c:(-1,1)\to M$ be a smooth curve such that $c(0)=m$ and $\dot c(0)=X$. Let $Y(t)$ be the unique vector field along $c$ such that $Y(0)=Y$ and $\nabla_{\dot c(t)}Y(\cdot)=0$ for all $t$. Then the element of $T^H_Y(T_mM)$ which is sent to $X$ by $T\pi$ is $\dot Y(0)$. 
\index{connection!Levi-Civita}

Since the tangent space to $TM$ at each vector $X\in T_mM$ splits into the direct sum of two subspaces isomorphic to $T_mM$, there is a natural Riemannian metric on $TM$, which we denote by $\gamma\oplus \gamma$. The corresponding Riemannian distance on $TM$ can be described as follows : if $m$ and $n$ are close enough on $M$ to be joined by a unique minimizing geodesic and if $X\in T_{m}M$, $Y\in T_{n}M$, then
$$d_{TM}(X,Y)=\left(d(m,n)^2+\parallel \pt_{[m,n]}(X)-Y\parallel^2\right)^{\frac{1}{2}},$$
where $\pt_{[m,n]}$ denotes the parallel transport along the unique minimizing geodesic from $m$ to
$n$.

\begin{definition} Let $M$ be a compact surface endowed with a Riemannian metric $\gamma$. Let
$c_{1},c_{2}$ be two paths on $M$. We define 
$$d_{1}(c_{1},c_{2})=\inf_{\rm param.} \left(\sup_{t\in [0,1]} d(c_{1}(t),c_{2}(t))  + \int_{0}^{1} d_{TM}(\dot c_{1}(t),\dot c_{2}(t))\right),$$
where the infimum is taken over all parametrizations of $c_{1}$ and $c_{2}$.

We define also
$$\overline{d_{1}}(c_{1},c_{2})=\sup_{t\in [0,1]} d(c_{1}(t),c_{2}(t)) + \int_{0}^{1} d_{TM}(\dot c_{1}(t),\dot c_{2}(t)),$$
where $c_{1}$ and $c_{2}$ are parametrized at constant speed.
\end{definition}

It is clear that the inequalities $d_{\ell}\leq d_{1}\leq \overline{d_{1}}$
hold. We are going to prove that these three metrics induce the same topology on $\Path(M)$. The
main result is the following.

\begin{proposition}\label{dl -> d1} Let $M$ be a compact surface endowed with a Riemannian metric.
Let $c$ be a path on $M$ and $(c_{n})_{n\geq 1}$ a sequence of paths such that
$d_{\ell}(c_{n},c)$ tends to $0$. Then $\overline{d_{1}}(c_{n},c)$ tends to $0$.
\end{proposition}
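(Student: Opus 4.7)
The plan is to combine a reparametrization argument, an isometric embedding, and a standard Hilbert-space trick.

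\textbf{Step 1: Uniform convergence at constant speed.} First I would show that when all paths are parametrized at constant speed, $\sup_{t \in [0,1]} d(c_n(t), c(t)) \to 0$. Since $d_\infty$ is defined as an infimum over parametrizations, the hypothesis $d_\ell(c_n, c) \to 0$ only supplies auxiliary parametrizations under which uniform closeness holds. I would pass to the arc-length parametrizations $\hat{c}_n : [0, \ell(c_n)] \to M$, which are $1$-Lipschitz, apply Arzel\`a--Ascoli to extract a uniform limit along any subsequence, and identify that limit with $\hat{c}$ using the $d_\infty$-hypothesis together with $\ell(c_n) \to \ell(c)$. Rescaling by $s \mapsto s/\ell(c_n)$ then yields uniform convergence at constant speed. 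This ensures in particular that $\overline{d_1}(c_n, c)$ is a meaningful quantity for $n$ large, since the parallel transports used in its definition are then well-defined.

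\textbf{Step 2: Isometric embedding and the comparison of $d_{TM}$ with a Euclidean distance.} Next I would invoke the Nash embedding theorem to realize $(M, \gamma)$ as an isometrically embedded submanifold $\iota(M) \subset \RK^N$. This preserves lengths, preserves constant-speed parametrizations, and converts the parallel transport $\pt_{[c_n(t), c(t)]}$ into a projection between nearby tangent $2$-planes of $\iota(M)$. Compactness of $M$ yields a constant $K$ with
\[
\bigl|D\iota\bigl(\pt_{[m,n]}(X)\bigr) - D\iota(X)\bigr|_{\RK^N} \leq K\, d(m,n)\, |X|_\gamma
\]
for all $m, n \in M$ sufficiently close and all $X \in T_m M$. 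Substituting into the definition of $d_{TM}$ gives
\[
d_{TM}\bigl(\dot{c}_n(t), \dot{c}(t)\bigr) \leq \bigl(1 + K |\dot{c}_n(t)|_\gamma\bigr)\, d(c_n(t), c(t)) + \bigl|(\iota \circ c_n)'(t) - (\iota \circ c)'(t)\bigr|_{\RK^N},
\]
so that after integration the first term is bounded by $d_\infty(c_n, c)\bigl(1 + K \ell(c_n)\bigr)$, which tends to zero by Step 1 and the hypothesis.

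\textbf{Step 3: Hilbert-space convergence of derivatives.} The heart of the proof is then the $L^1$-convergence of $(\iota \circ c_n)'$ to $(\iota \circ c)'$. At constant speed these Euclidean derivatives have constant norms equal to $\ell(c_n)$ and $\ell(c)$, so their $L^2$-norms converge. For any smooth $f : [0,1] \to \RK^N$, integration by parts gives
\[
\int_0^1 f \cdot (\iota \circ c_n)'\, dt = \bigl[f \cdot (\iota \circ c_n)\bigr]_0^1 - \int_0^1 f' \cdot (\iota \circ c_n)\, dt,
\]
which converges to the analogous expression for $c$ by the uniform convergence of Step 1. Density of smooth functions in $L^2$ combined with the uniform $L^2$-bound on $(\iota \circ c_n)'$ extends this to weak convergence in $L^2([0,1], \RK^N)$. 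In a Hilbert space, weak convergence together with convergence of norms forces strong convergence; thus $(\iota \circ c_n)' \to (\iota \circ c)'$ in $L^2$, and \emph{a fortiori} in $L^1$. Together with Step 2, this yields $\overline{d_1}(c_n, c) \to 0$.

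The main obstacle I anticipate is Step 1: translating $d_\infty$-convergence into uniform convergence of the canonical constant-speed parametrizations is not automatic, because one must match arc-length maps whose domains $[0, \ell(c_n)]$ vary with $n$. This step cannot be bypassed, since the very definition of $\overline{d_1}$ presupposes the canonical parametrization. Once this is handled, the remaining work is the geometric reduction to $\RK^N$ and the classical Radon--Riesz argument in Hilbert space.
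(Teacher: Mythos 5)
Your proposal is correct, and its skeleton (constant-speed uniform convergence, Nash embedding plus the parallel-transport comparison, then $L^1$ convergence of the Euclidean derivatives) matches the paper's. The genuine difference is in the crux, your Step 3. The paper isolates this as a separate Euclidean lemma about sphere-valued measurable functions whose primitives converge uniformly, and proves $L^1$ convergence by showing convergence in measure via Lebesgue's differentiation theorem and a covering argument with the sets $C_{p,q}$ — a somewhat laborious route, motivated by the paper's own remark that the hypotheses do not give almost-everywhere convergence. Your Radon--Riesz argument sidesteps pointwise considerations entirely: the derivatives have constant pointwise norm, so their $L^2$ norms converge; uniform convergence of the primitives gives weak $L^2$ convergence by testing against indicators of intervals (or by parts against smooth functions) together with the uniform $L^2$ bound; and weak convergence plus convergence of norms in a Hilbert space forces strong convergence. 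This is shorter and, in fact, proves the paper's Euclidean lemma in its full generality, since sphere-valued functions all have $L^2$ norm exactly $1$. What the paper's argument buys in exchange is only that it is self-contained at the level of measure theory, without appealing to the geometry of Hilbert space.

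One caution about your Step 1, which you rightly flag as the delicate point. The paper avoids Arzel\`a--Ascoli altogether: it takes the auxiliary parametrizations $\widetilde{c_n}$ furnished by the $d_\infty$ hypothesis, shows $\ell(\widetilde{c_n}|_{[0,t]})\to t\,\ell(c)$ for each $t$ by applying lower semicontinuity of length to both $[0,t]$ and $[t,1]$ and using $\ell(c_n)\to\ell(c)$, upgrades this to uniform convergence of the monotone partial-length functions, and then compares $\widetilde{c_n}(t)$ with $c_n(t)$ via the Lipschitz bound. Your compactness route needs to identify the sublimit $\tilde c$ of the constant-speed parametrizations with the constant-speed parametrization of $c$; saying this follows "using the $d_\infty$-hypothesis" glosses over the fact that vanishing Fr\'echet distance does not immediately give a reparametrization. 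The gap is closable — $\tilde c$ is $\ell(c)$-Lipschitz, so $\ell(\tilde c|_{[0,t]})\le t\,\ell(c)$ and $\ell(\tilde c|_{[t,1]})\le (1-t)\ell(c)$, while reparametrization-invariance of length forces $\ell(\tilde c)=\ell(c)$, hence equality throughout and $\tilde c$ is the arc-length parametrization — but this identification must be written out, and the paper's direct argument is the cleaner way to obtain the same conclusion.
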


Let us start with two preparatory lemmas.

\begin{lemma}\label{dl unif} Let $M$ be a compact surface endowed with a Riemannian metric.
Let $c$ be a path on $M$ and $(c_{n})_{n\geq 1}$ a sequence of paths such that
$d_{\ell}(c_{n},c)$ tends to $0$. Then, the paths $c_{n}$ and $c$ being parametrized at constant
speed, the uniform convergence holds:
$$\sup_{t\in [0,1]} d(c_{n}(t),c(t)) \build{\longrightarrow}_{n\to\infty}^{} 0.$$
\end{lemma}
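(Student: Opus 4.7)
The plan is to argue by contradiction, using the hypothesis on lengths to control, via compactness, the reparametrizations that witness the convergence in $d_\infty$. From $d_\ell(c_n,c)\to 0$ one extracts both $d_\infty(c_n,c)\to 0$ and $L_n:=\ell(c_n)\to L:=\ell(c)$. Unpacking the first, one can choose a sequence of increasing homeomorphisms $\gamma_n:[0,1]\to[0,1]$ such that, with both $c$ and $c_n$ parametrized at constant speed, $\delta_n:=\sup_t d(c(t),c_n(\gamma_n(t)))\to 0$. Since $c_n$ is $L_n$-Lipschitz,
$$d(c(t),c_n(t))\;\leq\;\delta_n+L_n|\gamma_n(t)-t|,$$
so the lemma reduces to proving $\sup_t|\gamma_n(t)-t|\to 0$. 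The degenerate case $L=0$ is immediate: $c$ is then constant, $L_n\to 0$ shrinks the diameter of $c_n$ to zero, and the $d_\infty$ hypothesis pins down the limit point.

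\medskip

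Assume toward a contradiction that $\sup_t d(c(t),c_n(t))\not\to 0$. By extracting subsequences one may suppose that there exist $\epsilon>0$ and $t_n\to t_\infty$ with $d(c(t_n),c_n(t_n))\geq\epsilon$; that $c_n\to\bar c$ uniformly for some $L$-Lipschitz map $\bar c:[0,1]\to M$ (Arzel\`a--Ascoli, $M$ compact and $L_n$ bounded); and that $\gamma_n\to\gamma$ pointwise on $[0,1]$ for some increasing function $\gamma$ with $\gamma(0)=0$ and $\gamma(1)=1$ (Helly's selection theorem). The identity $\bar c(\gamma(t))=c(t)$ then holds at every continuity point of $\gamma$, which form a dense subset of $[0,1]$.

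\medskip

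The crux is to upgrade this to $\gamma=\id$. Taking partitions of $[0,1]$ made of continuity points of $\gamma$ and using monotonicity of $\gamma$, the relation $\bar c\circ\gamma=c$ yields $\ell(\bar c)\geq\ell(c)=L$, while the $L$-Lipschitz bound on $[0,1]$ gives the reverse inequality; hence $\ell(\bar c)=L$ and $\|\dot{\bar c}\|=L$ almost everywhere, so $\bar c$ itself is at constant speed $L$. A flat interval of $\gamma$ would then make $c$ constant on an interval of positive length, which is impossible when $L>0$. The \emph{key step}, and the only one using the length hypothesis essentially, is excluding jumps: if $\gamma$ jumped at some $t_0$, skipping the interval $(a,b)$ with $a=\gamma(t_0^-)$ and $b=\gamma(t_0^+)$, constant speed would force $\ell(\bar c|_{[a,b]})=L(b-a)>0$, while the partition argument applied to $[0,t_0)$ and $(t_0,1]$ would give $\ell(\bar c|_{[0,a]})\geq L t_0$ and $\ell(\bar c|_{[b,1]})\geq L(1-t_0)$; adding these would yield $\ell(\bar c)\geq L+L(b-a)>L$, a contradiction. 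Thus $\gamma$ is a homeomorphism, and equating constant speeds in $c=\bar c\circ\gamma$ forces $\gamma=\id$.

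\medskip

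A classical result (sometimes attributed to P\'olya) says that monotone functions converging pointwise to a continuous monotone limit converge uniformly, so $\gamma_n\to\id$ uniformly along the selected subsequence. Hence $\gamma_n(t_n)\to t_\infty$, and from $c_n\to\bar c=c$ uniformly together with continuity of $c$ one gets $c_n(t_n)\to c(t_\infty)=\lim c(t_n)$, contradicting $d(c(t_n),c_n(t_n))\geq\epsilon$. The main obstacle is exactly the length-based exclusion of jumps of $\gamma$, which is where both halves of the hypothesis $d_\ell(c_n,c)\to 0$---uniform approximation via reparametrization and convergence of lengths---enter in an essential way.
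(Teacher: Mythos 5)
Your proof is correct. It differs from the paper's in structure rather than in substance: the paper argues directly, with no contradiction and no compactness extraction. There one fixes $t$, applies lower semi-continuity of the length to the two halves $\widetilde{c_n}|_{[0,t]}$ and $\widetilde{c_n}|_{[t,1]}$ of a near-optimal reparametrization of $c_n$, and uses $\ell(c_n)\to\ell(c)$ to pinch $\ell(\widetilde{c_n}|_{[0,t]})\to t\,\ell(c)$; the uniform convergence of these monotone arc-length functions (the same P\'olya/Dini-type fact you invoke) together with the $\ell(c_n)$-Lipschitz bound then yields exactly your inequality $d(c(t),c_n(t))\le\delta_n+L_n|\gamma_n(t)-t|$, with $\gamma_n(t)=\ell(\widetilde{c_n}|_{[0,t]})/\ell(c_n)$ made explicit. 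You instead extract limit objects $\bar c$ (Arzel\`a--Ascoli) and $\gamma$ (Helly) and identify $\gamma$ with the identity; your jump-exclusion estimate $\ell(\bar c)\ge Lt_0+L(b-a)+L(1-t_0)>L$ is the same ``no length can be lost on a subarc'' pinching, applied to the limit object rather than along the sequence. The direct route buys brevity: no subsequences, no limit curve, no need to check that $\bar c$ travels at constant speed. One small economy available in your write-up: once $\gamma=\mathrm{id}$ gives $\bar c=c$, the uniform convergence $c_n\to c$ along the selected subsequence already contradicts $d(c(t_n),c_n(t_n))\ge\epsilon$, so the final appeal to P\'olya's theorem for $\gamma_n\to\mathrm{id}$ is not actually needed.
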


\begin{proof}Let us parametrize $c$ and $c_n$ for all $n\geq 1$ at constant speed. Let us also choose for all $n$ a parametrization $\widetilde{c_n}$ of $c_n$ such that the uniform convergence  $\sup_{t\in [0,1]} d(\widetilde{c_n}(t),c(t)) \to 0$ holds as $n$ tends to infinity. Consider $t\in [0,1]$. Since ${\widetilde{c_n}}_{|[0,t]}$ and ${\widetilde{c_n}}_{|[t,1]}$ converge uniformly respectively to $c_{|[0,t]}$ and $c_{|[t,1]}$, we have 
$$\liminf \ell({\widetilde{c_n}}_{|[0,t]})\geq \ell(c_{|[0,t]})=t\ell(c) \mbox{ and } \liminf \ell({\widetilde{c_n}}_{|[t,1]})\geq \ell(c_{|[t,1]})=(1-t)\ell(c).$$
Since $\ell(\widetilde{c_n})$ tends to $\ell(c)$, this implies that $\ell({\widetilde{c_n}}_{|[0,t]})$ tends to $t\ell(c)$ as $n$ tends to infinity. This convergence holds for all $t\in [0,1]$ and, since the functions $t\mapsto \ell({\widetilde{c_n}}_{|[0,t]})$ are non-decreasing, a classical result ensures that the convergence is uniform:
$$\sup_{t\in [0,1]} |\ell(\widetilde{c_n}_{|[0,t]}) - t\ell(c) | \build{\lra}_{n\to \infty}^{} 0.$$
Now, for all $t\in [0,1]$, $\widetilde{c_n}(t)=c_n\left(\frac{\ell(\widetilde{c_n}_{|[0,t]})}{\ell(c_n)}\right)$. Since $c_n$ is $\ell(c_n)$-Lipschitz continuous, we have thus
$$d(\widetilde{c_n}(t),c_n(t))\leq | \ell(\widetilde{c_n}_{|[0,t]}) - t\ell(c_n) | \leq | \ell(\widetilde{c_n}_{|[0,t]}) - t\ell(c) | + t |\ell(c_n)-\ell(c)|.$$
The result follows easily. \end{proof}

The second lemma is the Euclidean version of Proposition \ref{dl -> d1}.

\begin{lemma}\label{dl d1 eucl} Let $N\geq 1$ be an integer. Consider $\RK^N$ endowed with its usual
Euclidean structure, with norm $\Vert \cdot \Vert$. Let $f$ and $(f_{n})_{n\geq 1}$ be
measurable functions from $[0,1]$ to the unit sphere
of $\RK^N$. Assume that the primitives of $f_{n}$ converge uniformly to the primitive of $f$ as
$n$ tends to infinity, that is, 
$$\sup_{t\in [0,1]} \left\Vert \int_{0}^{t} f_{n}(s)\; ds - \int_{0}^{t} f(s)\; ds \right\Vert 
\build{\longrightarrow}_{n\to\infty}^{} 0.$$
Then $f_{n}$ converges in $L^1$ towards $f$, that is,
$$\int_{0}^{1} \Vert f_{n}(t) -f(t) \Vert \; dt \build{\longrightarrow}_{n\to\infty}^{} 0.$$ 
 \end{lemma}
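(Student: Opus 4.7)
The plan is to interpret the hypothesis as weak convergence in $L^2([0,1],\RK^N)$ and then exploit the fact that all the functions involved take values of constant norm $1$, so that weak convergence plus convergence of norms automatically upgrades to strong convergence.

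First I would observe that the hypothesis, rewritten, says exactly that $\int_0^1 \mathbf{1}_{[0,t]}(s)(f_n(s)-f(s))\,ds \to 0$ uniformly in $t$. By linearity, this gives $\int_0^1 \phi(s)(f_n(s)-f(s))\,ds \to 0$ for every step function $\phi$ on $[0,1]$ (with values in $\RK$, applied componentwise). Since $\|f_n(s)\|=\|f(s)\|=1$ for almost every $s$, the family $(f_n)$ is bounded in $L^2([0,1],\RK^N)$, and step functions are dense in $L^2([0,1])$. A standard density-plus-boundedness argument therefore yields $\int_0^1 \langle g(s), f_n(s)-f(s)\rangle\,ds \to 0$ for every $g\in L^2([0,1],\RK^N)$; that is, $f_n \to f$ weakly in $L^2$.

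Next I would compute
\begin{equation*}
\int_0^1 \|f_n(t)-f(t)\|^2\,dt = \int_0^1 \|f_n(t)\|^2\,dt + \int_0^1 \|f(t)\|^2\,dt - 2\int_0^1 \langle f_n(t),f(t)\rangle\,dt.
\end{equation*}
The first two terms equal $1$ because $f_n$ and $f$ take values in the unit sphere. Taking $g=f\in L^2$ in the weak convergence just proved, the last inner product tends to $\int_0^1 \|f(t)\|^2\,dt = 1$. Hence $\int_0^1 \|f_n(t)-f(t)\|^2\,dt \to 0$, so $f_n \to f$ strongly in $L^2$. Finally, Cauchy--Schwarz on $[0,1]$ yields $\int_0^1 \|f_n(t)-f(t)\|\,dt \le \bigl(\int_0^1 \|f_n(t)-f(t)\|^2\,dt\bigr)^{1/2} \to 0$, which is the desired conclusion.

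No step is really hard; the only point to be a little careful about is the passage from testing against indicators $\mathbf{1}_{[0,t]}$ to testing against arbitrary $L^2$ functions. The key is that the $L^2$-norms of $f_n-f$ are bounded by $2$ uniformly in $n$, so once weak convergence is established on the dense subspace of step functions it automatically extends to all of $L^2$.
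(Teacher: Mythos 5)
Your proof is correct, and it takes a genuinely different route from the one in the paper. You recast the hypothesis as weak convergence of $f_n$ to $f$ in $L^2([0,1],\RK^N)$ (indicators $\mathbf{1}_{[0,t]}$ span the step functions, which are dense, and the sequence is bounded in $L^2$ since it is unit-sphere valued), and then you exploit the Hilbert-space identity $\Vert f_n-f\Vert_{L^2}^2=\Vert f_n\Vert_{L^2}^2+\Vert f\Vert_{L^2}^2-2\langle f_n,f\rangle_{L^2}$ together with $\Vert f_n\Vert_{L^2}=\Vert f\Vert_{L^2}=1$ to upgrade weak convergence to strong $L^2$ convergence (the Radon--Riesz property, which is immediate in a Hilbert space); Cauchy--Schwarz then gives the $L^1$ statement. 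The paper instead works locally: it uses Lebesgue's differentiation theorem to control $f(t)$ by its averages on small intervals, a covering argument by a maximal $h$-separated set, and the pointwise estimate that for unit vectors $\Vert f_n(\tau)-f(t)\Vert>\epsilon/2$ forces $1-\langle f_n(\tau),f(t)\rangle>\epsilon^2/8$, concluding convergence in measure and hence in $L^1$ by boundedness. Both arguments hinge on the same geometric input --- the unit-sphere constraint prevents oscillation from being invisible to the norm --- but your global $L^2$ packaging is considerably shorter, at the cost of invoking a little functional analysis (density of step functions, weak convergence) where the paper stays entirely elementary and self-contained. One small point worth making explicit if you write this up: the density argument needs vector-valued step functions, but these are handled exactly as you indicate, componentwise, using the uniform bound $\Vert f_n-f\Vert_{L^2}\leq 2$.
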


The difficulty of this lemma is that the assumptions do not imply that the sequence $(f_{n})$
converges almost-everywhere to $f$. For example, if $f$ is constant, $f_{n}$ can be constant
except on a small interval, which wanders around $[0,1]$, and inside which $f_{n}$ oscillates
rapidly around the value of $f$.\\

\begin{proof}Since all functions take their values in a bounded subset of $\RK^N$, it suffices to prove that
the sequence $(f_{n})_{n\geq 1}$ converges in measure to $f$, that is, denoting by $\Leb$ the
Lebesgue measure on $[0,1]$, to prove that
$$\forall \epsilon>0, \; \lim_{n\to \infty} \Leb(\{t\in [0,1] : \Vert f_{n}(t)- f(t)\Vert
>\epsilon\}) = 0.$$
According to Lebesgue's differentiation theorem,
\begin{equation}\label{leb diff}
\frac{1}{2h}\int_{t-h}^{t+h} f(\tau)\; d\tau \build{\lra}_{h\to 0}^{} f(t) \; \mbox{ for
a.e. } t\in (0,1).
\end{equation}
Let $p,q\geq 1$ be two integers. Set
$$C_{p,q}=\left\{t\in[0,1] : \forall h\leq \frac{1}{p}, \left\Vert \frac{1}{2h}\int_{t-h}^{t+h}
f(\tau)\; d\tau - f(t) \right\Vert\leq\frac{1}{q}\right\}.$$
The relation (\ref{leb diff}) is equivalent to the fact that for all $q\geq 1$, $\Leb(\bigcup_{p\geq
1} C_{p,q})=1$. Hence, for all $\alpha>0$ and all $q\geq 1$, there exists $p\geq 1$ such that
$\Leb(C_{p,q})\geq 1-\alpha$.

Let us fix $\epsilon>0$. Then, let us choose two reals $\alpha>0$, $r>0$ and an integer $q\geq 1$.
Let $p(q,\alpha)\geq 1$ be an integer such that $\Leb(C_{p(q,\alpha),q})\geq 1-\alpha$. Set
$h=\frac{1}{p(q,\alpha)}$. Let $n_{0}(r)\geq 1$ be an integer such that, 
$$\forall n\geq n_{0}(r),\; \sup_{t\in [0,1]} \left\Vert \int_{0}^{t} f_{n}(s)\; ds - \int_{0}^{t} f(s)\;
ds \right\Vert\leq \frac{1}{2r}.$$  
Choose $n\geq n_{0}(r)$ and $t\in C_{p(q,\alpha),q}$. Then
$$\left\Vert \frac{1}{2h}\int_{t-h}^{t+h} f_{n}(\tau) \; d\tau - f(t)\right\Vert
<\frac{1}{q}+\frac{1}{2hr}.$$
Since for all $s\in [0,1]$, $\Vert f_{n}(s)\Vert=\Vert f(s)\Vert=1$, we have
$$\forall \tau \in [t-h,t+h], \; \Vert f_{n}(\tau) - f(t)\Vert>\frac{\epsilon}{2} \Rightarrow
1-\langle f_{n}(\tau),f(t)\rangle >\frac{\epsilon^{2}}{8}.$$ 
Hence,
\begin{eqnarray*}
\frac{1}{2h}\Leb\left(\left\{\tau \in [t-h,t+h] : \Vert f_{n}(\tau)-f(t)\Vert
>\frac{\epsilon}{2}\right\}\right)&&\\
&& \hskip -2cm \leq  \frac{8}{\epsilon^2}\frac{1}{2h} \int_{t-h}^{t+h}\langle
f(t)-f_{n}(\tau), f(t)\rangle \; d\tau\\
&& \hskip -2cm\leq  \frac{8}{\epsilon^2} \left(\frac{1}{q}+\frac{1}{2hr}\right).
\end{eqnarray*}
The same inequality holds when $f_{n}$ is replaced by $f$. Hence, 
\begin{equation}\label{en t}
\frac{1}{2h}\Leb\left(\left\{\tau \in [t-h,t+h] : \Vert f_{n}(\tau)-f(\tau)\Vert
>\epsilon \right\}\right)\leq \frac {16}{\epsilon^2} \left(\frac{1}{q}+\frac{1}{2hr}\right).
\end{equation}
This inequality holds for every $t\in C_{p(q,\alpha),q}$. Consider a subset $T$ of
$C_{p(q,\alpha),q}$ such that any two distinct points of $T$ are at distance at least $h$. Take
$T$ to be maximal for inclusion among all subsets of $C_{p(q,\alpha),q}$ with this property. Then
by the assumption of separation of the points of $T$, $T$ has less than $\frac{1}{h}+1$ points and
by the maximality of $T$, the intervals $[t-h,t+h]$ with $t\in T$ cover $C_{p(q,\alpha),q}$. By
applying (\ref{en t}) at the points of $T$, we find
$$ \Leb(\{t \in C_{p(q,\alpha),q} : \Vert f_{n}(t)-f(t)\Vert >\epsilon\})\leq
\frac{32(1+h)}{\epsilon^2}\left(\frac{1}{q}+\frac{1}{2hr}\right).$$
Since $\Leb(C_{p(q,\alpha),q})\geq 1-\alpha$, and since $h=\frac{1}{p(q,\alpha)}\leq 1$, we have
finally proved that for all $\alpha>0$, $r>0$ and all $q\geq 1$, there exists $n_{0}(r)$ such that
$$\forall n\geq n_{0}(r), \; \Leb(\{t\in [0,1]: \Vert f_{n}(t)-f(t)\Vert>\epsilon\})\leq
\frac{64}{\epsilon^2}\left(\frac{1}{q}+\frac{p(q,\alpha)}{2r}\right)+\alpha.$$
By choosing $\alpha$ sufficiently small and $q$ sufficiently large, then $r$ such that
$\frac{p(q,\alpha)}{2r}$ is sufficiently small, this proves that the left-hand side of this
inequality can be made arbitrarily small by choosing $n$ sufficiently large. This is exactly the
desired convergence. \end{proof}

Let us now prove Proposition \ref{dl -> d1}.\\

\begin{proof}[Proof of Proposition \ref{dl -> d1}] Let us parametrize the paths $(c_{n})_{n\geq
1}$ and $c$ at constant speed. For each $n\geq 1$, set 
$U_{n}=\sup_{t\in [0,1]} d(c_{n}(t),c(t))$. By Lemma \ref{dl unif}, $U_{n}$ tends to $0$ as $n$
tends to infinity. Hence, we need to prove
that $\int_{0}^{1} d_{TM}(\dot c_{n}(t),\dot c(t))\; dt$ tends to $0$. Let us choose $n$ large
enough for $U_{n}$ to be smaller than the injectivity radius of $M$. Then
\begin{eqnarray*}
\int_{0}^{1} d_{TM}(\dot c_{n}(t),\dot c(t)) &=& \int_{0}^{1} \left(d(c_{n}(t),c(t))^2 + \left\Vert
\pt_{[c_{n}(t),c(t)]} \dot c_{n}(t) - \dot c(t)\right\Vert^2 \right)^{\frac{1}{2}} \; dt\\
& \leq  & U_{n} + \int_{0}^{1} \left\Vert \pt_{[c_{n}(t),c(t)]} \dot c_{n}(t)
- \dot c(t)\right\Vert \; dt.
\end{eqnarray*}
Nash's embedding theorem grants the existence of an isometric embedding of $M$ in a Euclidean
space. Let $i:M\to \RK^N$ be such an embedding. We denote its differential by $di$ and, using the
natural identification $T\RK^N\simeq \RK^N\times \RK^N$, we see $di$ as a map from $TM$ to
$\RK^N$. For all $X\in TM$, we have $\Vert di(X)\Vert_{\RK^N}=\Vert X \Vert$. Hence,
\begin{eqnarray*}
\left\Vert \pt_{[c_{n}(t),c(t)]} \dot c_{n}(t) - \dot c(t)\right\Vert &=& \left\Vert (di\circ
\pt_{[c_{n}(t),c(t)]})(\dot c_{n}(t)) - di(\dot c(t))\right\Vert_{\RK^N}\\
&&\hskip -2.5cm \leq \left\Vert (di\circ \pt_{[c_{n}(t),c(t)]})(\dot c_{n}(t)) - di(\dot c_{n}(t))\right\Vert_{\RK^N}
+ \left\Vert di(c_{n}(t)) - di(c(t))\right\Vert_{\RK^N}.
\end{eqnarray*}
Since $M$ is compact, any smooth function on $M\times M$ which vanishes on the diagonal
is dominated by the Riemannian distance. Let us apply this observation to a smooth function which
near the diagonal is defined by $(m,n)\mapsto \sup\{ \Vert (di\circ \pt_{[m,n]})(X)-di(X)\Vert_{\RK^N} :
X\in T_{m}M, \Vert X \Vert = 1\}$.  We find a constant $K>0$ such that for all $m,n\in M$ and all $X\in T_{m}M$, 
$$\Vert (di\circ \pt_{[m,n]})(X)-di(X)\Vert_{\RK^N}\leq K d(m,n) \Vert X \Vert.$$
Since the lengths of the paths $c_{n}$ converge, $L=\sup \{\ell (c_{n}) : n\geq 1\}$ is finite.
Hence, for $n$ large enough, we have
$$\int_{0}^{1} d_{TM}(\dot c_{n}(t),\dot c(t))\; dt \leq (1+KL) U_{n}
 + \int_{0}^{1} \left\Vert di(\dot c_{n}(t)) - di(\dot c(t))\right\Vert_{\RK^{N}}\; dt.$$
It suffices to prove that the last integral converges to $0$. As a mapping between metric spaces, $i$ is $1$-Lipschitz continuous. Hence, $i(c_{n})$ converges
uniformly to $i(c)$ as $n$ tends to infinity.  Since $i$ is a Riemannian isometry, $i(c_{n})$
and $i(c)$ are also parametrized at constant speed for all $n\geq 1$, respectively $\ell(c_{n})$
and $\ell(c)$. For all $n\geq 1$, define $f_{n}:[0,1]\to \RK^N$ by $f_{n}=\frac{1}{\ell(c_{n})}
di(\dot c_{n})$. Define also $f:[0,1]\to \RK^N$ by $f=\frac{1}{\ell(c)} di(\dot c)$.
These functions take their values in the unit sphere of $\RK^N$. Since $i(c_{n})$ converges
uniformly to $i(c)$ as $n$ tends to infinity and $\ell(c_{n})$ tends to $\ell(c)$, the primitives
of $(f_{n})_{n\geq 1}$ converge uniformly to the primitive  of $f$. By Lemma \ref{dl d1 eucl}, this
implies that $(f_{n})_{n\geq 1}$ converges in $L^1$ to $f$. Using again the fact that $\ell(c_{n})$
converges to $\ell(c)$, we find that the derivative of $i(c_{n})$
converges in $L^1$ to the derivative of $i(c)$:
\begin{equation}\label{L1 eucl}
\int_{0}^{1} \left\Vert di(\dot c_{n}(t)) - di(\dot c(t)) \right\Vert_{\R^N}
\build{\longrightarrow}_{n\to\infty}^{} 0.
\end{equation}
This is the expected convergence.
\end{proof}

\begin{lemma} \label{d1 d1} Let $M$ be a compact surface. The topology on $\Path(M)$ induced by
the distances $d_{1}$ associated to any two Riemannian metrics on $M$ are the same.
\end{lemma}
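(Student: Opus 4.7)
The plan is to reduce the statement to the metric-independence of the $d_\ell$-topology and then to use Proposition \ref{dl -> d1} to transfer length convergence from one Riemannian metric to another.

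First, I would observe that for any fixed Riemannian metric $\gamma$ on $M$, the three distances satisfy $d_\ell \le d_1 \le \overline{d_1}$, and Proposition \ref{dl -> d1} asserts that convergence in $d_\ell$ already implies convergence in $\overline{d_1}$. Consequently $d_\ell$, $d_1$ and $\overline{d_1}$ all generate the same topology on $\Path(M)$. It therefore suffices to prove that if $\gamma_1$ and $\gamma_2$ are two Riemannian metrics on $M$, then $d_{\ell,\gamma_1}$ and $d_{\ell,\gamma_2}$ induce the same topology.

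Fix such $\gamma_1,\gamma_2$ and suppose $d_{\ell,\gamma_1}(c_n,c)\to 0$. The uniform part of $d_\ell$ is metric-independent by the preceding lemma on $d_\infty$ (any two Riemannian distances on a compact surface are equivalent), so it remains only to show $\ell_{\gamma_2}(c_n)\to \ell_{\gamma_2}(c)$. I would then invoke Proposition \ref{dl -> d1} for $\gamma_1$ to upgrade the convergence to $\overline{d_{1,\gamma_1}}(c_n,c)\to 0$. Parametrizing everything at constant $\gamma_1$-speed, this says that $\int_0^1 d_{TM,\gamma_1}(\dot c_n(t),\dot c(t))\, dt \to 0$, so $\dot c_n \to \dot c$ in $L^1([0,1],TM)$, and in particular in measure on $[0,1]$.

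To conclude, note that $\phi:TM\to[0,\infty)$ defined by $\phi(X)=\Vert X\Vert_{\gamma_2}$ is continuous. Since $\Vert \dot c_n(t)\Vert_{\gamma_1}=\ell_{\gamma_1}(c_n)$ is bounded uniformly in $n$ and $t$, the velocities $\dot c_n(t)$ all lie in a fixed compact subset $K\subset TM$, on which $\phi$ is uniformly continuous and bounded. Hence the convergence in measure transfers, by dominated convergence, to $\phi(\dot c_n)\to\phi(\dot c)$ in $L^1[0,1]$, and integrating gives $\ell_{\gamma_2}(c_n)\to\ell_{\gamma_2}(c)$. The real content is Proposition \ref{dl -> d1}: once that input is in hand, the symmetry between $\gamma_1$ and $\gamma_2$ makes the rest formal.
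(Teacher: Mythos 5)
Your proof is correct and follows essentially the same route as the paper's: both hinge on Proposition \ref{dl -> d1} to upgrade $d_{\ell}$-convergence for the first metric to $L^1$-convergence of the velocities, and on the compactness of $\{X\in TM : \Vert X\Vert_{\gamma_1}\leq L\}$ to pass to the second metric. The only cosmetic difference is that the paper transfers the whole quantity $\sup_t d(c_n(t),c(t))+\int_0^1 d_{TM}(\dot c_n,\dot c)$ at once, using the equivalence of $d_{TM}$ and $d_{TM}'$ on that compact set, to obtain $d_{1,\gamma_2}$-convergence directly, whereas you transfer only the length via continuity of the $\gamma_2$-norm and then close the argument through the coincidence of the $d_\ell$- and $d_1$-topologies for each fixed metric.
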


\begin{proof} Consider two Riemannian metrics $\gamma$ and $\gamma'$ on $M$. We will denote with a prime the
quantities associated with $\gamma'$.

Let $c$ be a path and $(c_{n})_{n\geq 1}$ a sequence of paths such that $d_{1}(c_{n},c)$, and thus
also $d_{\ell}(c_{n},c)$, tend to $0$ as $n$ tends to infinity. Let us parametrize $c$ and
each
path $c_{n}$ at constant speed with respect to $\gamma$. By Proposition \ref{dl -> d1}, we have
$$ \sup_{t\in [0,1]} d(c_{n}(t),c(t))  + \int_{0}^{1} d_{TM}(\dot c_{n}(t),\dot c(t)) 
\build{\longrightarrow}_{n\to\infty}^{} 0.$$
Set $L=\sup \{\ell (c_{n}) : n\geq 1\}\geq \ell(c)$. On the compact subset $B_{L}(TM)=\{X\in TM:
\Vert X \Vert_{\gamma} \leq L\}$ of $TM$, the distances $d_{TM}$ and $d_{TM}'$ are
equivalent. Moreover, the distances $d$ and $d'$ on $M$ are also equivalent. It follows that
$$ \sup_{t\in [0,1]} d'(c_{n}(t),c(t)) + \int_{0}^{1} d_{TM}'(\dot c_{n}(t),\dot c(t)) 
\build{\longrightarrow}_{n\to\infty}^{} 0$$
for some parametrization of $c$ and the paths $c_{n}$. Hence, $d_{1}'(c_{n},c)$ tends to $0$. \end{proof}

\begin{lemma}\label{d1 complete} Let $M$ be a compact surface endowed with a Riemannian metric. The metric space
$(\Path(M),\overline{d_{1}})$ is complete.
\end{lemma}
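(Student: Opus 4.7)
The plan is to take a Cauchy sequence $(c_n)_{n\geq 1}$ in $(\Path(M),\overline{d_{1}})$, each taken in its constant-speed parametrization, and exhibit a limit in $\Path(M)$.

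First, since $d_\infty \leq \overline{d_{1}}$, the sequence is uniformly Cauchy and so converges uniformly to a continuous map $c:[0,1]\to M$. Next, I use that parallel transport is a fiberwise isometry, which yields the global inequality $d_{TM}(X,Y)\geq \bigl|\Vert X\Vert-\Vert Y\Vert\bigr|$. Since the $c_n$ have constant speed $\ell(c_n)$, integration gives $|\ell(c_n)-\ell(c_m)|\leq \overline{d_{1}}(c_n,c_m)$, so $(\ell(c_n))_n$ is Cauchy in $\RK$ and converges to some $L\geq 0$; in particular $\sup_n \ell(c_n)<\infty$.

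I then transfer the problem to Euclidean space via a Nash isometric embedding $i:M\to\RK^N$, as in the proof of Proposition \ref{dl -> d1}. The key uniform estimate $\Vert di\circ\pt_{[m,n]}(X)-di(X)\Vert_{\RK^N}\leq K\, d(m,n)\,\Vert X\Vert$, combined with the triangle inequality, gives
\begin{equation*}
\Vert di(\dot c_n(t))-di(\dot c_m(t))\Vert_{\RK^N}\leq K\, d(c_n(t),c_m(t))\,\ell(c_n)+d_{TM}(\dot c_n(t),\dot c_m(t)).
\end{equation*}
Integrating, and using that $(\ell(c_n))$ is bounded and $\overline{d_{1}}(c_n,c_m)\to 0$, the sequence $\bigl(di(\dot c_n)\bigr)_n$ is Cauchy in $L^1([0,1],\RK^N)$; let $f$ denote its $L^1$-limit. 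Passing to the limit in $i(c_n)(t)=i(c_n)(0)+\int_0^t di(\dot c_n(s))\,ds$, the curve $i\circ c$ is absolutely continuous with derivative $f$ a.e.; extracting an almost everywhere convergent subsequence shows $f(t)\in di(T_{c(t)}M)$ and $\Vert f(t)\Vert_{\RK^N}=L$ for almost every $t$.

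If $L>0$, then $c$ is Lipschitz with almost-everywhere speed equal to $L$, hence is a constant-speed path in $\Path(M)$ of length $L$; if $L=0$, then $f=0$ a.e.\ so $c$ is constant, and again $c\in\Path(M)$. To conclude, the same estimate used for the Cauchy argument, now comparing $c_n$ with the limit $c$ and combining the uniform convergence $c_n\to c$ with the $L^1$ convergence $di(\dot c_n)\to di(\dot c)$ just established, gives $\overline{d_{1}}(c_n,c)\to 0$. The delicate point is the joint control of base-point and tangent information encoded in $d_{TM}$: the Nash embedding is the clean device that translates, via the parallel-transport estimate, Euclidean $L^1$-convergence of derivatives back into convergence in $\overline{d_{1}}$; without it, one would have to argue intrinsically with the Sasaki metric on $TM$, which is possible but more cumbersome.
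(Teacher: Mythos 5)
Your proof is correct and follows essentially the same route as the paper's: uniform convergence to a limit curve, a Nash isometric embedding to transfer the derivatives to $\RK^N$, an $L^1$-Cauchy argument for $di(\dot c_n)$, and passage to the limit in the integral identity to recover a constant-speed Lipschitz limit. The only cosmetic differences are that the paper gets the convergence of lengths from $d_\ell\leq\overline{d_1}$ and gets $L^1$-Cauchyness from the Lipschitz character of $di$ on a bounded set of $TM$ (rather than your explicit parallel-transport estimate), and it concludes $\overline{d_1}(c_n,c)\to 0$ by citing Proposition \ref{dl -> d1} instead of rerunning the estimate.
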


\begin{proof}Let $(c_{n})_{n\geq 1}$ be a Cauchy sequence of $\Path(M)$ for the distance
$\overline{d_{1}}$. Let us parametrize all these paths at constant speed. They form a Cauchy
sequence for the uniform distance between continuous mappings from $[0,1]$ to $M$, so they converge
uniformly to some continuous mapping $c:[0,1]\to M$.

Let us use Nash's theorem again to find an isometric embedding $i:M\to \RK^N$. Since $i$ is $1$-Lipschitz
continuous, the sequence $(i(c_{n}))_{n\geq 1}$ of paths in $\RK^N$ converges uniformly to $i(c)$.

The sequence $(c_{n})_{n\geq 1}$ is in particular Cauchy for the distance $d_{\ell}$, so that the
sequence $(\ell(c_{n}))_{n\geq 1}$ converges to some real $l$. Set $L=\sup \{\ell (c_{n}) : n\geq
1\}<+\infty$. The restriction to the compact set $B_{L}(TM)=\{X \in TM :
\Vert X \Vert \leq L\}$ of the smooth mapping $di:TM \to \RK^{N}$ is Lipschitz continuous. Hence,
the sequence $(i(c_{n}))_{n\geq 1}$ of paths in $\RK^N$ is also a Cauchy sequence for the $L^1$
distance of the derivatives. Hence, the derivatives $di(\dot c_{n})$ converge in $L^1$ to some
function $f:[0,1]\to \RK^N$ which takes its values in the sphere of radius $\lim_{n\to \infty} \ell(c_{n})=l$.
Passing the equality $\int_{0}^{t} di(\dot c_{n}(s))=i(c_{n}(t))-i(c_{n}(0))$ to the limit, we find
that $f$ is the derivative of $i(c)$. Hence, $i(c)$ is a Lipschitz continuous path parametrized at
constant speed $l$, and so is $c$. In particular, $l=\ell(c)$.

Finally, the sequence $(c_{n})_{n\geq 1}$ satisfies $d_{\ell}(c_{n},c)\to 0$ as $n$ tends to
infinity. By Proposition \ref{dl -> d1}, this implies that 
$\overline{d_{1}}(c_{n},c)$ tends to $0$ as $n$ tends to infinity.\end{proof}

Let us collect the results that we have proved and deduce Proposition \ref{main topology}.\\

\begin{proof}[Proof of Proposition \ref{dl -> d1}] Since $d_\ell\leq d_1 \leq \overline{d_1}$ and by Proposition \ref{dl -> d1}, the three distances $d_\ell$, $d_1$ and $\overline{d_1}$ induce the same topology on $\Path(M)$. By Lemma \ref{d1 d1}, this topology does not depend on the Riemannian metric on $M$. By Lemma \ref{d1 complete}, it is the topology of a complete metric space. \end{proof}

\section{Graphs}
\subsection{Graphs and the sewing of patterns}
A graph on a surface is a finite set of paths or curves called {\em edges} and which satisfy several conditions. For Markovian holonomy fields, these finite sets of paths play the role of the finite sets of points in a time interval along which one considers the finite-dimensional marginals of a Markov process. The fact that most finite collections of paths are not the set of edges of graph leads to quite a lot of technical complication: Markovian holonomy fields are stochastic processes of which only a small number of finite-dimensional marginals can be described by a simple formula.

\begin{definition}  Let $M$ be a topological compact surface. A curve on $M$ which is injective or a simple continuous loop is called a {\em continuous edge}. The set of continuous edges on $M$ is
denoted by $\CEdge(M)$. 

Let $M$ be a smooth compact surface.  A path on $M$ which is injective or a simple loop is called an {\em edge}. The set of edges on $M$ is denoted by $\Edge(M)$. 
\end{definition}
\index{edge|see{graph}}
\index{graph!edge}

When we consider an edge or a continuous edge $e$, we will often abusively denote the range of $e$  by $e$ instead of $e([0,1])$.

\begin{definition}\label{graph1} Let $M$ be a connected compact surface (resp. a topological compact surface). A {\em pre-graph} on $M$ is a triple $\G=(\V,\E,\F)$, where\\
1. $\V$ is a finite subset of $M$,\\
2. $\E$ is a non-empty finite subset of $\Edge(M)$ (resp. $\CEdge(M)$), stable by inversion, such that $\V=\bigcup_{e\in
\E}\{\underline{e},\overline{e}\}$, and such that two edges
of $\E$ which are not each other's inverse meet, if at all, only at some of their endpoints,\\
3. $\F$ is the set of the connected components of $M-\bigcup_{e\in \E}e([0,1])$.\\
The elements of $\V,\E,\F$ are called the {\em vertices}, {\em edges} and {\em faces} of the pre-graph. 

A {\em graph} on $M$ is a pre-graph which satisfies the following condition:\\
4. Each face of $\G$ is homeomorphic to an open disk of $\RK^2$.

The {\em skeleton} of a pre-graph $\G$ is the subset of $M$ defined by $\Sk(\G)=\bigcup_{e\in \E} e([0,1])$. 
The set of paths (resp. curves) that can be obtained by concatenating edges of $\G$ is denoted by $\Path(\G)$ (resp. $\Curve(\G)$). The subset of $\Path(\G)$ (resp. $\Curve(\G)$) consisting of loops is denoted by $\Loop(\G)$ (resp. $\CLoop(\G)$). 
\index{SASk@$\Sk(\G)$}
\index{graph}
\index{skeleton|see{graph}}
\index{vertex|see{graph}}
\index{GAG@$\G$}
\index{EAE@$\E$}
\index{VAV@$\V$}
\index{FAAF@$\F$}

If $M$ is homeomorphic to a sphere and $m$ is a point of $M$, we include the exceptional triple $(\{m\},\varnothing,\{M-\{m\}\})$ in the set of graphs.

A graph on a non-connected surface is defined as the data of a graph on each connected component of this surface.

Let $(M,\CS)$ be a marked surface. Let $\G$ be a graph on $M$. We say that $\G$ is a graph on $(M,\CS)$ if
each cycle of $\CS$ is represented by a loop of $\Loop(\G)$.
\end{definition}

In the terminology of Mohar and Thomassen \cite{MoharThomassen}, what we call a graph on a topological surface is a cellular embedding of a combinatorial multigraph. It was proved by Rad\'o in 1925 that every surface can be triangulated. In particular, on every topological compact surface there exists a graph. On a Riemannian surface, the proof of the fact that there exists a triangulation given in \cite{MoharThomassen} is still valid if one
uses only piecewise geodesic paths. Hence, a Riemannian surface admits a piecewise geodesic
triangulation. This triangulation is a graph and, by adding some vertices, one may assume that the
edges of this graph are geodesic. Finally, a Riemannian surface admits a graph with geodesic edges.
\index{cellular embedding of a graph}

In order to analyze a pre-graph or a graph, an effective method consists in splitting it along some of its edges. This is very similar to the surgery of smooth marked surfaces described in Section \ref{s:surgery}. The operations described here are however less regular and best defined in the category of topological surfaces. 

\begin{definition}\label{pattern} A {\em pattern} is a triple $(M,\G,\iota)$, where $M$ is a
topological surface, $\G$ is a pre-graph on $M$ and $\iota$ is an involution of the set $\E$ of
edges of $\G$ such that for all edge $e\in \E$, $\iota(e)\neq e^{-1}$,
$\iota(e^{-1})=\iota(e)^{-1}$ and $e\not\subset \partial N \Rightarrow \iota(e)=e$. A pattern
$(M,\G,\iota)$ is {\em split} if $\Sk(\G)\subset 
\partial M$. 

Two patterns $(M,\G,\iota)$ and $(M',\G',\iota')$ are isomorphic if there exists a homeomorphism
$\psi:M\to M'$ such that $\psi(\G)=\G'$ and $\psi\circ \iota=\iota' \circ \psi$. 
 \end{definition}
\index{pattern}

A pattern is meant to be sewed according to the identifications determined by its involution. Our convention here is slightly simpler than in the case of tubular patterns, in that we exclude the case $\iota(e)=e^{-1}$ which was the purely conventional encoding of unary gluings. Here, an edge $e$ is always meant to be identified by an orientation-preserving homeomorphism with $\iota(e)$. 

When $f:M'\to M$ is a continuous mapping between two surfaces and $e'$ is a continuous edge on $M'$, we denote by $f(e')$ the curve $f\circ e'$. 


\begin{definition}\label{def sewing} Let $(M,\G,\iota)$ and $(M',\G',\iota')$ be two patterns. A continuous mapping $f:M'\to M$ is called an {\em  elementary sewing} if it is the quotient map which identifies $e'$ with $\iota'(e')$ by an orientation-preserving homeomorphism for some $e'\in \E'$. Moreover, it is required that $f(\E')=\E$ and, on $\E'-\{e',{e'}^{-1}, \iota(e'),\iota(e')^{-1}\}$, $\iota \circ f = f \circ \iota'$. 

The unoriented edge $\{f(e'),f(e')^{-1}\}$ is called the {\em joint} of the elementary sewing.

A {\em sewing} is a map which can be written as the composition of several elementary sewings. A sewing is {\em complete} if the involution of the set of edges of the target surface is the identity.
\end{definition}

We have results for sewings which are similar to those we had for gluings. In particular, sewings can always be performed and a surface can always be split along an edge, provided the interior of the edge does not meet the boundary of the surface. 

\begin{proposition}\label{def unsewing} 1. Let $(M',\G',\iota')$ be a pattern. Consider $e'\in\E'$ such that $\iota(e')\neq e'$. There exists a pattern $(M,\G,\iota)$ and an elementary sewing $f:M'\to M$ such that the joint of $f$ is $\{f(e'),f(e')^{-1}\}$. Moreover, this gluing is unique up to isomorphism: if $(M'',\G'',\iota'')$ and $f'':M'\to M''$ satisfy the same properties, then there exists an isomorphism $\psi : (M,\G,\iota)\to (M'',\G'',\iota'')$ such that $\psi\circ f=f''$.

2. Let $(M,\G,\iota)$ be a pattern. Choose $\{e,e^{-1}\} \subset \E$ such that $e\cap \partial M\subset \{\underline{e},\overline{e}\}$. Then there exists a pattern $(M',\G',\iota')$ and an elementary sewing $f:M'\to M$ such that the joint of $f$ is $\{e,e^{-1}\}$. Moreover, this gluing is unique up to isomorphism: if $(M'',\G'',\iota'')$ and $f'':M''\to M$ satisfy the same properties, then there exists an isomorphism $\psi : (M',\G',\iota')\to (M'',\G'',\iota'')$ such that $f''\circ \psi = f$.
\end{proposition}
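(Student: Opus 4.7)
The plan is to treat this proposition as the topological analogue of Proposition \ref{def splitting}, adapted to the graph setting where one splits or sews along an edge rather than along a mark. Since an edge $e'$ with $\iota'(e') \neq e'$ is always treated by an orientation-preserving identification (unary configurations being excluded by the requirement $\iota(e)\neq e^{-1}$ in Definition \ref{pattern}), the whole discussion can be carried out with binary-type operations only, which simplifies matters compared to the tubular case.

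For part 1, I would fix an orientation-preserving homeomorphism $\varphi : e' \to \iota'(e')$ and define $M$ as the quotient of $M'$ by the smallest equivalence relation identifying $x$ with $\varphi(x)$ for each $x\in e'$. Let $f : M'\to M$ be the quotient map. Away from $e'\cup \iota'(e')$, $f$ is a homeomorphism, so the surface structure is unambiguous; near an interior point of the joint, a pair of half-disk neighborhoods (one on each side of $e'$ and one on each side of $\iota'(e')$, since neither edge meets $\partial M'$ in its interior by the pattern hypothesis) is glued along a common arc, yielding a full disk neighborhood; at the endpoints of the joint a similar local analysis gives either a disk or a half-disk neighborhood depending on whether the endpoint is still on $\partial M$ after the sewing. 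Hence $M$ is a topological compact surface. I would then set $\E = f(\E' \setminus \{e', e'^{-1}, \iota'(e'), \iota'(e')^{-1}\}) \cup \{f(e'), f(e')^{-1}\}$, define $\iota$ by pushing $\iota'$ forward on the first part and by $\iota(f(e')) = f(e')$, and verify directly that $(M,\G,\iota)$ is a pattern, $\F$ being defined as the set of connected components of $M$ minus the new skeleton (which equals $f(\Sk(\G'))$). The hypotheses of Definition \ref{def sewing} are then built in by construction.

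For part 2, the idea is dual: choose a tubular open neighborhood $U$ of the interior of $e$ in $M$ such that $\overline U\cap \partial M\subset \{\underline e,\overline e\}$, and form $M'$ by cutting $M$ open along $e$. Concretely, fix an auxiliary Riemannian metric on $M$ and let $M'$ be the completion of $M\setminus e$ for the induced length metric, following exactly the recipe invoked after Proposition \ref{def splitting}. Since $\iota(e)\neq e^{-1}$ rules out unary splittings at the level of convention, a neighborhood of the interior of $e$ in $M$ has two local sides (the orientability of the tubular neighborhood is not needed, only that we are not collapsing $e$ onto itself), so two copies $e_1'$ and $e_2'$ of $e$ appear as boundary arcs of $M'$, and there is a canonical continuous map $f:M'\to M$ which is a homeomorphism off $e_1'\cup e_2'$ and identifies these two arcs via an orientation-preserving homeomorphism. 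Pulling back the edges of $\G$ other than $\{e, e^{-1}\}$ through $f$ and replacing $\{e,e^{-1}\}$ by $\{e_1',{e_1'}^{-1},e_2',{e_2'}^{-1}\}$ yields a pre-graph $\G'$; one defines $\iota'$ to extend $\iota$ on the unaffected edges and to satisfy $\iota'(e_1') = e_2'$. The verification that $(M',\G',\iota')$ is a pattern and $f$ is an elementary sewing with joint $\{e,e^{-1}\}$ is immediate.

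The main obstacle, and the reason for asserting uniqueness only up to isomorphism, is the ambiguity in the choice of the identification homeomorphism $\varphi$ used to perform the sewing (and symmetrically, the freedom in how the two arcs $e_1', e_2'$ are recognized as copies of $e$). Given two sewings $f : M'\to M$ and $f'' : M'\to M''$ with the same joint (for part 1), the composition $\psi_0 := f''\circ f^{-1}$ is a well-defined homeomorphism between the complements of the joint; I would then show it extends continuously across the joint to a homeomorphism $\psi : M\to M''$. The obstruction to extension is precisely that the two parametrizations of the joint coming from $f$ and $f''$ may differ, but both are orientation-preserving homeomorphisms of a common edge, so they differ by an orientation-preserving homeomorphism which, by the boundary extension part of Theorem \ref{diffeo bord} (whose topological analogue is classical; alternatively one isotopes directly in a collar neighborhood of the joint), can be absorbed by a self-homeomorphism of a thin bicollar of the joint. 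This produces the required isomorphism $\psi$ of patterns satisfying $\psi\circ f = f''$, and exactly the same collar argument gives the uniqueness statement in part 2.
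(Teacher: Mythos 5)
You should first be aware that the paper does not actually prove this proposition: it declares it ``obvious at a certain intuitive level'' and delegates assertion 1 to the fact, taken from Mohar and Thomassen, that identifying two boundary edges of a surface again yields a surface, and assertion 2 to the Jordan and Sch\"onfliess theorems, adding only a discussion of the case where $e$ is the equator of a M\"obius band. Your construction follows the same two routes (quotient for the sewing, metric completion for the splitting, exactly the recipe invoked after Proposition \ref{def splitting}), so you are fleshing out the intended argument rather than departing from it. Two remarks on the construction. In part 1 your parenthetical justification is backwards: the pattern axiom $\iota'(e')\neq e'\Rightarrow e'\subset\partial M'$ forces both $e'$ and $\iota'(e')$ to lie \emph{on} the boundary, and it is precisely this that makes the local picture two half-disks glued along their diameters; the phrase ``neither edge meets $\partial M'$ in its interior'' describes the hypothesis of part 2, not part 1. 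In part 2, the assertion that the interior of $e$ ``has two local sides'' is exactly the local flatness of a topological arc in a surface, i.e.\ the Sch\"onfliess input the paper cites; for a merely continuous edge this is a theorem, not a triviality, and it deserves to be named. Your completion construction does handle the M\"obius-equator case correctly (the two lifted edges then lie on a single new boundary circle), which is the one case the paper spells out explicitly.

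The genuine gap is in the uniqueness argument for part 1. The relation $\psi\circ f=f''$ determines $\psi$ completely on $M$ minus the joint, namely $\psi=f''\circ f^{-1}$ there, and continuity then forces its values on the joint; if $f$ and $f''$ are the quotients by two different orientation-preserving identifications $\varphi,\varphi'':e'\to\iota'(e')$, the two one-sided limits of $\psi$ at a joint point $f(x)$ are $f''(x)$ and $f''(\varphi(x))$, and these agree only if $\varphi''=\varphi$. Hence no $\psi$ satisfying $\psi\circ f=f''$ exists unless the two identifications coincide pointwise, and the collar-absorption step cannot repair this: absorbing the discrepancy in a bicollar amounts to precomposing with a self-homeomorphism $h$ of $M'$ supported near $e'$, so what you actually obtain is $\psi\circ f=f''\circ h$ for some pattern automorphism $h$ of $(M',\G',\iota')$, not the stated identity. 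Either one reads the proposition as comparing two models of the quotient by the \emph{same} identification, in which case uniqueness is the universal property of the quotient topology and no collar argument is needed, or the commuting condition must be weakened as above; your write-up should choose one of these and say so. Part 2 does not suffer from this defect: there both splittings project onto the same fixed $M$, the comparison map is canonically determined off the cut, and its continuous extension to the lifted edges is forced, so no reparametrization ambiguity arises and the collar argument is in fact superfluous.
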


Just as Proposition \ref{def splitting}, this result is obvious at a certain intuitive level but lacks a concise proof. The first assertion relies on the fact that the result of the identification of $e$ with $\iota(e)$ is always a compact surface. This fact is explained in  \cite{MoharThomassen}, at the beginning of Section 3.1. The second assertion relies on the Jordan curve theorem and on Sch\"onfliess' theorem, which asserts that the group of homeomorphisms of $\RK^2$ acts transitively on the set of parametrized Jordan curves. A self-contained exposition of the theorems of Jordan and Sch\"onfliess and of results which are very close to the forthcoming Proposition \ref{structure pregraph} can be found in the book of B. Mohar and C. Thomassen \cite{MoharThomassen}.

Let us only discuss the second assertion when the edge $e$ is a simple loop and the equator of a M\"obius band. In this case, the surface $M'$ has one more boundary component than $M$ and this boundary component is covered by two unoriented edges $e'_1$ and $e'_2$, which we may assume to be oriented in such a way that the concatenation $e'_1 e'_2$ makes sense. In this case, $e'_1 e'_2$ is a loop which represents the new boundary component of $M'$ and the involution $\iota$ exchanges $e'_1$ and $e'_2$.

Since a gluing is a special case of a sewing, Proposition \ref{def unsewing} implies that a pre-graph can be lifted through a splitting, in a way which is unique up to homeomorphism. It also implies the following result.

\begin{lemma} Let $M$ be a compact topological surface. The group of homeomorphisms of $M$ acts transitively on the set of injective continuous edges contained in the interior of $M$.
\end{lemma}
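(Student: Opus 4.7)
The hypothesis that the edge is injective rules out simple loops (whose endpoints coincide, so that injectivity on $[0,1]$ fails), so the statement concerns Jordan arcs with two distinct endpoints lying in ${\rm int}(M)$. My proof would combine Sch\"onflies' theorem, already invoked earlier in this section, with the topological homogeneity of 2-manifolds. One may also reduce to the case where $M$ is connected, by permuting connected components of a common homeomorphism type.

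The first step is a local normal form. For any injective continuous edge $e$ whose image lies in ${\rm int}(M)$, Sch\"onflies' theorem applied to a tubular neighborhood of the Jordan arc $e([0,1])$ provides a closed topological disk $D \subset {\rm int}(M)$ together with a homeomorphism $h : D \to \overline{B(0,1)}$ that sends $e$ to the standard diameter $t \mapsto (2t-1, 0)$. In particular, up to homeomorphism of pairs, $(D, e)$ is independent of the choice of $e$.

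The second step is an ambient transitivity argument. Given two injective continuous edges $e_1, e_2$ with associated disks $D_1, D_2$, I would produce $\phi \in {\rm Homeo}(M)$ with $\phi \circ e_1 = e_2$ in two moves. First, choose a path in ${\rm int}(M)$ joining a point of $D_1$ to a point of $D_2$ and cover it by a finite chain of disk charts; a classical isotopy argument (pushing $D_1$ along the chain one chart at a time) yields $\phi_0 \in {\rm Homeo}(M)$, supported in a compact subset of ${\rm int}(M)$, with $\phi_0(D_1) = D_2$, and by choosing this isotopy carefully one can require in addition that $\phi_0$ sends the endpoints of $e_1$ to those of $e_2$. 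Second, both $\phi_0 \circ e_1$ and $e_2$ are then diameters of $D_2$ with the same endpoints; a standard consequence of Sch\"onflies' theorem is that any two such diameters are related by a homeomorphism $\psi$ of $D_2$ equal to the identity on $\partial D_2$. Extending $\psi$ by the identity outside $D_2$ and setting $\phi = \psi \circ \phi_0$ yields the required homeomorphism.

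The main technical difficulty is arranging the isotopy in the first part of Step 2 to match not just the disks but also the marked boundary points corresponding to the endpoints of the arcs; this is handled by carrying along, at each stage of the chain of charts, both the disk and its pair of distinguished points on the boundary. All of the ingredients needed are classical consequences of Sch\"onflies' theorem and the path-connectedness of 2-manifolds.
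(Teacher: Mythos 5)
Your argument is correct in substance but takes a genuinely different route from the paper. The paper's proof is a one-line application of the splitting/sewing machinery just developed: splitting $M$ along the edge yields the pattern ``$M$ minus a disk'' with its boundary circle divided into two edges identified by the involution; since this pattern does not depend on the edge up to isomorphism, resewing transports one edge to the other. All the topology is thus outsourced to Proposition \ref{def unsewing} (and, implicitly, to the fact that removing a disk from a connected surface gives a well-defined result). You instead give a direct ambient argument: put the arc in normal form as a diameter of a disk, push one disk onto the other by a chain-of-charts homogeneity argument, and finish with a rel-boundary Sch\"onfliess argument inside a single disk. Your version is self-contained and does not need the pattern formalism, at the cost of invoking two classical facts (flatness of arcs, the disk-pushing lemma) that the paper never has to make explicit.

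Two points deserve care. First, ``Sch\"onfliess applied to a tubular neighbourhood of the Jordan arc'' is circular as phrased: a merely continuous injective curve is not known in advance to possess a tubular neighbourhood --- that is exactly the tameness you are trying to establish. The correct reference is the arc version of the Sch\"onfliess theorem (every arc in a $2$-manifold is flat, e.g.\ by extending it to a Jordan curve and applying the usual Sch\"onfliess theorem), which is true in dimension $2$ and should be cited as such. Second, the reduction to connected $M$ is not a harmless normalization: if $e_1$ and $e_2$ lie in non-homeomorphic components, no homeomorphism of $M$ can carry one to the other, so the statement genuinely requires $M$ connected (or the two edges to lie in homeomorphic components). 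This is a defect of the statement itself --- the paper's own proof has the same implicit restriction --- but your phrasing ``by permuting connected components of a common homeomorphism type'' should acknowledge that the hypothesis is needed rather than suggest the general case reduces to it.
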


\begin{proof}An injective continuous edge contained in the interior of $M$ determines a pre-graph on $M$. The associated split pattern is simply $M$ to which a disk has been removed. The boundary of this disk is the concatenation of two edges which are identified with each other's inverse by the involution. Hence, this split pattern does not depend, up to homeomorphism, on the edge. \end{proof}

\begin{corollary} \label{exist graph edge} Let $M$ be a topological compact surface. Let $e$ be an injective continuous edge contained in the interior of $M$. There exists a graph on $M$ of which $e$ is an edge.
\end{corollary}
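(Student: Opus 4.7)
The key input is the Lemma just proved: the homeomorphism group of $M$ acts transitively on the set of injective continuous edges contained in the interior of $M$. Consequently, it suffices to exhibit \emph{some} graph on $M$ possessing at least one injective edge $e_0$ whose range lies in the interior of $M$; transporting such a graph by a homeomorphism carrying $e_0$ to $e$ will then yield the desired graph containing $e$.

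To produce such a graph I would invoke the triangulation theorem for compact surfaces (Rad\'o), recalled in the text immediately after Definition \ref{graph1}, to obtain a graph $\H$ on $M$. If $\H$ already has an injective edge whose range is disjoint from $\partial M$, I take $\H'=\H$ and $e_0$ to be that edge. Otherwise (in particular, in the exceptional case where $M\simeq S^2$ and $\H$ has no edges, or in the case where $M$ has non-empty boundary and every edge of $\H$ sits inside $\partial M$), I refine $\H$ as follows. Pick any face $F$ of $\H$, which is by definition an open disk; pick a vertex $v\in\overline{F}\setminus F$ (such a vertex exists: in the exceptional case, the unique vertex of $\H$ lies in $\overline F\setminus F$; otherwise the edges bordering $F$ contribute their endpoints); pick an interior point $w$ of $F$; and join $v$ to $w$ by an injective arc $\alpha$ whose open portion $\alpha\setminus\{v\}$ lies in $F$. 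This last step rests on the standard fact, for cellular embeddings of graphs in surfaces (cf.\ the reference to Mohar--Thomassen in the text), that $\overline F$ is homeomorphic to a closed topological disk. Let $\H'$ be the triple obtained from $\H$ by adjoining $w$ to the set of vertices and $\{\alpha,\alpha^{-1}\}$ to the set of edges, and replacing the face $F$ by $F\setminus\alpha$. Then $\H'$ is a pre-graph whose skeleton is $\Sk(\H)\cup\alpha$, and $F\setminus\alpha$ is an open disk minus a slit attached to $\partial F$ at exactly one point, hence itself an open disk (concretely, via a model like $z\mapsto\sqrt{z}$ on the open unit disk). Therefore $\H'$ is a graph, and its edge $\alpha$ is injective and contained in the interior of $M$.

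Finally, by the preceding Lemma there is a homeomorphism $\phi:M\to M$ such that $\phi\circ\alpha$ equals $e$ up to an increasing reparametrization. The image $\phi(\H')$, whose vertex, edge and face sets are the images under $\phi$ of those of $\H'$, satisfies all the conditions of Definition \ref{graph1} because each is preserved by homeomorphisms: the incidence pattern is transported verbatim, and the faces remain open disks as homeomorphic images of open disks. Thus $\phi(\H')$ is a graph on $M$, and $e=\phi\circ\alpha$ is one of its edges.

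The main technical obstacle at this level of detail is the construction of the arc $\alpha$, which hinges on the tameness of faces of a graph on a surface (i.e.\ that $\overline F$ is a topological closed disk); the remainder of the argument reduces to checking that the defining conditions of a pre-graph and of a graph are preserved by adjoining a single interior arc and by pushing forward under a homeomorphism, both of which are routine.
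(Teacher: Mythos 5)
Your overall strategy coincides with the paper's: produce \emph{some} graph on $M$ having an edge contained in the interior, then transport it onto $e$ by a homeomorphism via the transitivity Lemma. The paper does this more tersely (it notes that, except when $M$ is a disk with $\Sk(\G)\subset\partial M$, the starting graph already has an edge whose interior avoids $\partial M$, and then subdivides to get an edge entirely in the interior). However, two steps of your version do not hold up as written. First, the ``standard fact'' on which you rest the construction of the arc $\alpha$ --- that $\overline F$ is a closed topological disk --- is false in general; the paper warns against exactly this at the beginning of Section \ref{bo fa}, and its accompanying figure exhibits a face whose closure is not a closed disk. The correct tool is the split pattern of Proposition \ref{structure pregraph}: the component $M'_F$ \emph{is} a closed disk, the sewing map $f$ carries its interior homeomorphically onto $F$ and its boundary onto the topological boundary of $F$, so you draw the arc in $M'_F$ from a boundary point above $v$ to an interior point and push it down by $f$. (This is how the paper itself produces such arcs, e.g.\ in the proof of Proposition \ref{aire faces}.)

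Second, your arc $\alpha$ starts at a vertex $v\in\overline F\setminus F$, which may lie on $\partial M$ --- and this happens precisely in one of the cases you single out, namely when every edge of $\H$ sits inside $\partial M$. Then $\alpha$ is \emph{not} contained in the interior of $M$ (its endpoint $v$ is a boundary point), and the transitivity Lemma, which concerns edges contained in the interior of $M$ endpoints included, does not apply to it. The repair is the paper's ``adding vertices if necessary'': subdivide $\alpha$ at an interior point and use the sub-edge joining that point to $w$, which is entirely contained in the interior of $M$. With these two corrections your argument reduces to the paper's proof.
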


\begin{proof}Let $\G$ be a graph on $M$. If $M$ is a disk and $\Sk(\G)\subset \partial M$, let us add to $\G$ a continuous edge whose interior is contained in the interior of $M$. In any other case, $\G$ contains an edge whose interior is contained in the interior of $M$. By adding vertices to $\G$ if necessary, we may assume that it has an edge, say $e_1$, contained in the interior of $M$. The image of $\G$ by a homeomorphism of $M$ which sends $e_1$ to $e$ is a graph on $M$ of which $e$ is an edge. \end{proof}

By successively applying Proposition \ref{def unsewing} in order to split all the edges of a pre-graph which are not located on the boundary, we end up with a split pattern.

\begin{proposition}\label{structure pregraph} Let $M$ be a topological compact surface. Let
$\G=(\V,\E,\F)$ be a pre-graph on $M$. Assume that each edge of $\G$ is either contained in $\partial M$ or has no interior point on $\partial M$. Endow $\E$ with the identity involution. There exists a split pattern $(M',\G',\iota)$ and a sewing $f:M'\to M$ such that $f(\E')=\E$. 
For each face $F$ of $\G$, $f^{-1}(F)$ is the interior of a connected component of $M'$ which we denote
by $M'_F$.  The sewing map $f$ applies $M'_F\setminus (\Sk(\G')\cap M'_F)$ homeomorphically onto $F$
and $\Sk(\G')\cap M'_F$ continuously onto the topological boundary of $F$.
We call $(M',\G',\iota,f)$ a split pattern of the pair $(M,\G)$.

If $(M'',\G'',\iota'')$ is another split pattern and $f'':M''\to M$ a sewing which sends $\G''$
to $\G$, then there exists an isomorphism of patterns $\psi:M'\to M''$ such that $f''\circ \psi=f$. 
Finally, if $M$ is oriented, then $M'$ can be oriented and the sewing map can be assumed to be orientation-preserving.
\end{proposition}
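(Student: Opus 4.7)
The plan is to argue by induction on the number $k$ of pairs $\{e,e^{-1}\}\subset \E$ which are not contained in $\partial M$, splitting one such pair at a time via Proposition~\ref{def unsewing}.2. For $k=0$, every edge of $\G$ already lies in $\partial M$, so $(M,\G,\id_\E)$ is itself a split pattern and $f=\id_M$ works. For $k>0$, I would choose such a pair $\{e,e^{-1}\}$. By hypothesis the interior of $e$ misses $\partial M$, so $e\cap \partial M\subset \{\underline e,\overline e\}$, which is precisely the condition required by Proposition~\ref{def unsewing}.2; this produces a pattern $(M_1,\G_1,\iota_1)$ and an elementary sewing $f_1:M_1\to M$ whose joint is $\{e,e^{-1}\}$. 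The pre-graph $\G_1$ is obtained from $\G$ by replacing $\{e,e^{-1}\}$ with the edges covering the new boundary arcs of $M_1$ created by the splitting, and these lie in $\partial M_1$. In particular $\G_1$ has $k-1$ interior pairs, so by the inductive hypothesis I obtain a split pattern $(M',\G',\iota')$ and a sewing $g:M'\to M_1$ with $g(\E')=\E_1$; then $f=f_1\circ g$ is the desired sewing and $f(\E')=\E$.

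For the face/component correspondence, the key observation is that an elementary sewing is bijective off its joint, so by induction $f:M'\to M$ restricts to a homeomorphism $M'\setminus \Sk(\G')\to M\setminus \Sk(\G)$ with $f^{-1}(\Sk(\G))=\Sk(\G')$. Since $(M',\G',\iota')$ is split, $\Sk(\G')\subset \partial M'$, so for each face $F\in \F$ the preimage $f^{-1}(F)$ is connected, open, and contained in the interior of a single connected component $M'_F$ of $M'$. Conversely, each connected component of $M'$ contributes one such open set, yielding the announced bijection between connected components of $M'$ and faces of $\G$; passing to closures gives the continuous surjection $\Sk(\G')\cap M'_F\to \partial F$ claimed in the statement.

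For uniqueness, given another split pattern $(M'',\G'',\iota'')$ with a sewing $f'':M''\to M$ such that $f''(\E'')=\E$, I would decompose $f''$ into a sequence of elementary sewings. Matching the elementary step of $f''$ that identifies a pair of edges in the preimage of $\{e,e^{-1}\}$ with $f_1$ through the uniqueness part of Proposition~\ref{def unsewing}.1 produces an isomorphism of patterns between $M_1$ and the corresponding intermediate surface in the $f''$ decomposition. Induction on the length of the sewing then assembles the required isomorphism $\psi:M'\to M''$ with $f''\circ\psi=f$. For orientability, if $M$ is oriented, each interior edge admits an orientable tubular neighbourhood, so at each step the identification of the two new boundary arcs can be made orientation-preserving; iterating, $M'$ inherits an orientation making $f$ orientation-preserving.

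The main obstacle I anticipate is not the induction itself but the bookkeeping in the face/component correspondence: one must establish $f^{-1}(\Sk(\G))=\Sk(\G')$ and verify that the interior of each connected component of $M'$ corresponds to exactly one face, which requires tracking carefully, through each elementary splitting, how edges and faces transform and ensuring that no new boundary arcs escape the skeleton of $\G_1$. The uniqueness argument is delicate for a parallel reason, since the decomposition of an arbitrary sewing into elementary ones is not canonical; reconciling two different orderings relies essentially on Proposition~\ref{def unsewing} being strong enough to compare intermediate patterns one elementary step at a time.
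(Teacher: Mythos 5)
Your proposal is correct and follows essentially the same route as the paper, which simply asserts that one obtains the split pattern ``by successively applying Proposition~\ref{def unsewing} in order to split all the edges of a pre-graph which are not located on the boundary'' and defers the topological details to Mohar--Thomassen; your induction on the number of interior edge pairs is exactly that strategy made explicit, and your treatment of the face/component correspondence, uniqueness, and orientation is consistent with the intended argument. In fact your write-up is more detailed than the paper's, which offers no formal proof of this proposition.
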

\index{pattern!split}

One of the simplest consequences of this result is that a pre-graph has a finite number of faces. Let us identify a simple condition under which the assumption on the edges of pre-graph made in Proposition \ref{structure pregraph} are satisfied.

\begin{lemma} \label{cycle couvert} Let $M$ be a topological compact surface. Let $\G$ be a pre-graph on $M$. Let $c$ be a subset of $M$ homeomorphic to a circle. Then $c$ is the image of a simple loop of $\CLoop(\G)$ if and only if $c\subset \Sk(\G)$. Moreover, if $c\subset\Sk(\G)$, then for each edge $e$ of $\G$, either $e$ is contained in $c$ or $e$ has no interior point on $c$.
\end{lemma}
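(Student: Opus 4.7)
The plan is first to dispatch the easy direction and then focus on the local analysis that drives both remaining claims. If $c$ is the image of a simple loop $l \in \CLoop(\G)$, then by definition $l$ is a concatenation of edges of $\G$, so its range is automatically contained in $\Sk(\G)$; this gives the ``only if'' direction.

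For the ``moreover'' assertion, I would take $c \subset \Sk(\G)$ and an edge $e$ with an interior point $p$ lying on $c$. Since $p$ is not a vertex, the pre-graph condition (non-inverse edges meet only at endpoints) ensures that a sufficiently small neighbourhood $U$ of $p$ in $M$ satisfies $\Sk(\G) \cap U \subset e$. Because $c$ is a topological circle, $c \cap U$ is an arc through $p$; since it is contained in the $1$-dimensional arc $e \cap U$ through $p$, the two arcs coincide near $p$ by invariance of domain. Hence the set of interior points of $e$ lying on $c$ is open in the interior of $e$; it is also closed there because $c$ is closed in $M$. The interior of $e$ is connected (an open arc, or a once-punctured circle if $e$ is a simple loop), so either no interior point of $e$ lies on $c$ or the whole interior does, and by closedness the whole edge $e$ then lies on $c$.

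For the ``if'' direction, let $\E_c = \{e \in \E : e \subset c\}$ and $\V_c = \V \cap c$. Each point of $c$ is either in the interior of some edge, which must then lie in $\E_c$ by the previous step, or is a vertex of $\G$ on $c$; in particular $c$ is the union of the edges in $\E_c$. At such a vertex $v \in \V_c$, the fact that $c$ is a $1$-manifold provides a neighbourhood of $v$ in $c$ that splits as two arcs emanating from $v$. Each arc, staying in $\Sk(\G)$, must initially run along some edge-end of $\G$ at $v$, and that edge therefore has interior points on $c$ and belongs to $\E_c$. The two arcs must use two distinct edge-ends at $v$ (counting the two ends of a loop based at $v$ as distinct), for otherwise they would coincide along an initial segment, contradicting the $1$-manifold structure of $c$ at $v$. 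So every vertex of $\V_c$ has exactly two edge-ends from $\E_c$; combined with $c = \bigcup_{e \in \E_c} e$ and the connectedness of $c$, this forces $(\V_c,\E_c)$ to trace $c$ as a single combinatorial cycle, and following this cycle yields a simple loop of $\CLoop(\G)$ whose image is $c$.

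The main obstacle is the local analysis at a vertex $v \in \V_c$: one must rule out that $c$ could re-enter $v$ in a third direction, or enter and leave along the same edge-end, by systematically using the $1$-manifold structure of $c$ together with the pre-graph condition that non-inverse edges are disjoint away from their endpoints. The interior-point step is, by contrast, a clean open-closed-connectedness argument once the local structure of $\Sk(\G)$ at a non-vertex point has been identified as a single arc.
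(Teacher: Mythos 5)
Your proof is correct and works at essentially the same level of rigour as the one in the text, but it is organized differently. The paper proves the two halves independently: for the ``if'' direction it first observes that $c$ must contain a vertex, parametrizes $c$ starting there, and shows by a connectedness argument that each complementary interval of the finite set $c\cap\V$ is covered by a single edge whose endpoints are the endpoints of that interval; for the ``moreover'' clause it examines the connected component $I$ of $\{s : e(s)\in c\}$ containing $t$ and shows that the endpoints of $e(I)$, unless they are the endpoints of $e$, would have to be vertices lying in the interior of $e$. You instead prove the ``moreover'' clause first, by an open-closed argument on $\{s\in(0,1) : e(s)\in c\}$ --- openness coming from the fact that near an interior point of $e$ the skeleton coincides with $e$, so an arc of $c$ through such a point must fill out a sub-arc of $e$ --- and then deduce the ``if'' direction from it: every non-vertex point of $c$ drags its whole edge into $c$, and the local degree-two structure of the circle at each vertex of $\V_c$ shows that exactly two edge-ends of $\E_c$ arrive there, so the edges contained in $c$ trace out a single combinatorial cycle. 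Both routes rest on the same two ingredients (non-inverse edges meet only at endpoints, plus connectedness of intervals), but yours makes the ``moreover'' statement the engine of the whole proof, at the price of a slightly more delicate analysis at the vertices (in particular the step from ``at least two'' to ``exactly two'' edge-ends, which again uses the pre-graph condition); the paper's direct parametrization of $c$ avoids that vertex analysis but has to repeat a connectedness argument in each half.
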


\begin{proof}One implication in the first assertion is obvious. Assume that $c\subset \Sk(\G)$. The image of $(0,1)$ by an edge is
homeomorphic to $(0,1)$, hence it cannot contain a subset homeomorphic to a circle. Thus, there is
at least one vertex on $c$. Let us choose a continuous parametrization of $c$ by $[0,1]$,
injective on $[0,1)$, such that $c(0)=c(1)\in \V$. The set $c\cap \V$ is finite and its
complement in $c$ is a
finite union of open intervals. Let $(a,b)$ be such an interval. Each point of $(a,b)$ belongs to one
single edge of $\G$. Assume that there exists $u,v$ with $a<u<v<b$, such that $u$ and $v$ do not
belong to the same edge. Since for each given edge, the subset of $[0,1]$ covered by that edge is
closed, there must be a point between $u$ and $v$ which is covered by at least two distinct edges.
Hence, $(a,b)$ is covered by a single edge. Both $a$ and $b$ must be vertices of this edge and the
result follows.

Let $e$ be an edge which has an interior point on $c$. Let us choose a parametrization of $e$ and $t\in (0,1)$ such that $e(t)\in c$. Let $I\subset [0,1]$ be the largest segment containing $t$ such that $e(I)\subset c$, that is, the connected component of $t$ in $\{s\in[0,1] : e(s)\in c\}$. Assume first that $I=\{t\}$. In this case, since $c$ is contained in $\Sk(\G)$, $e(t)$ belongs to the closure of another edge of $\G$, hence to another edge, and it is a vertex of $\G$. This is impossible since $t\notin\{0,1\}$ by assumption. Let us now assume that $I=[a,b]$ with $a<b$. Since $e$ is an edge, the equality $e(a)=e(b)$ can occur only if $a=0$ and $b=1$, in which case $e$ is contained in $c$. Actually, in this case, $e$ is a simple loop whose range is $c$. Assume now that $e(a)\neq e(b)$. Then $e(I)$ is a subset of $c$ homeomorphic to a segment. Since $c\subset \Sk(\G)$, each endpoint of this segment belongs to the range of another edge of $\G$. Hence $e(a)$ and $e(b)$ are vertices. This forces $a=0$, $b=1$ and in particular the fact that $e$ is contained in $c$. \end{proof}

In our definition of graphs, the focus is put on edges: a graph is a set of edges which satisfies certain properties. It is important that we find a robust criterion which tells us when a pre-graph satisfies the topological condition which makes it a graph. By a robust criterion, we mean a criterion which makes it obvious that a pre-graph whose edges are close to those of a graph is also a graph. Let us apply  Proposition \ref{structure pregraph} to establish such a criterion. 

\begin{proposition} \label{equiv disks} Let $M$ be a connected topological surface. Let $\G$ be a
 pre-graph on $M$. 
The following properties are equivalent.\\
4. Each face of $\G$ is homeomorphic to an open disk of $\RK^2$.\\
4'. The skeleton of $\G$ is connected, contains $\partial M$, and there exists $v\in \Sk(\G)$ such
that any loop in $M$ based at $v$ is homotopic to a loop whose image is contained in $\Sk(\G)$.

In particular, if $\G$ is a graph, then each connected component of $\partial M$ is the image of a loop of $\CLoop(\G)$.
\end{proposition}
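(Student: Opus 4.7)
The proposition splits into two implications plus a concluding statement, and I will use the split pattern $(M',\G',\iota,f)$ of Proposition \ref{structure pregraph} throughout; since $f$ restricts to a homeomorphism from the interior of $M'_F$ onto the face $F$, showing that $F$ is homeomorphic to an open disk of $\RK^2$ is equivalent to showing that the compact surface $M'_F$ is a closed $2$-disk.

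For the implication (4) $\Rightarrow$ (4'), I would establish three points in turn. First, no face can meet $\partial M$, for otherwise a point $x \in F \cap \partial M$ would have a neighborhood in $F$ homeomorphic to a half-plane, which is impossible since $F \cong \RK^2$ is a $2$-manifold without boundary (invariance of domain); hence $\partial M \subset \Sk(\G)$. Second, if $\Sk(\G)$ were disconnected, say $\Sk(\G) = S_1 \sqcup S_2$, then for each face $F$ the topological boundary $\partial F = f(\partial M'_F)$ would be the continuous image of the circle $\partial M'_F = S^1$, hence connected and lying entirely in one of $S_1, S_2$; grouping the closures of faces according to which component their boundary lies in would partition $M$ into two disjoint non-empty closed subsets, contradicting the connectedness of $M$. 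Third, when every face is a disk the maps $f:M'_F \to \overline{F}$ equip $M$ with a CW-complex structure having $\Sk(\G)$ as its $1$-skeleton and the faces as $2$-cells, so cellular approximation gives the surjectivity of $\pi_1(\Sk(\G),v) \to \pi_1(M,v)$ for every $v \in \Sk(\G)$, which is the desired homotopy assertion.

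The converse (4') $\Rightarrow$ (4) is the main obstacle. I would argue by contradiction: if some $M'_{F_0}$ is not a disk, then $\pi_1(M'_{F_0}) \neq 1$, so there exists a simple closed curve $\alpha$ in the interior of $M'_{F_0}$ that does not bound, and $f(\alpha)$ is a simple closed curve in $F_0$ representing a non-trivial class of $\pi_1(F_0)$. The plan is to compute $\pi_1(M)$ by iterated van Kampen along the filtration $M_k = \Sk(\G) \cup \overline{F_1} \cup \ldots \cup \overline{F_k}$, attaching the closures of the faces one at a time along the images in $\Sk(\G)$ of their boundary circles. The resulting presentation has $\pi_1(\Sk(\G))$ amalgamated with the groups $\pi_1(M'_{F_k})$ over boundary-circle subgroups, and a non-trivial element of $\pi_1(M'_{F_0})$ cannot lie in the image of $\pi_1(\Sk(\G),v)$, contradicting the surjectivity supplied by (4'). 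The delicate bookkeeping occurs when two boundary edges of a single $M'_F$ are sewn to each other under $f$; this is the standard topological characterization of $2$-cellular embeddings of graphs in surfaces, for which one may alternatively appeal to Mohar and Thomassen \cite{MoharThomassen}. The final assertion is then immediate: each connected component of $\partial M$ is a circle contained in $\Sk(\G)$ by (4'), and Lemma \ref{cycle couvert} shows that any subset of $\Sk(\G)$ homeomorphic to a circle is the image of a simple loop of $\CLoop(\G)$.
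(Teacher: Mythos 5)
Your proof of $(4)\Rightarrow(4')$ is sound and close to the paper's: the paper also deduces $\partial M\subset\Sk(\G)$ from the faces being boundaryless $2$-manifolds, derives connectedness of $\Sk(\G)$ from the connectedness of each facial boundary $f(\partial M'_F)$, and obtains the homotopy statement by retracting $M$ minus one point per face onto $\Sk(\G)$ (your cellular-approximation variant is an acceptable substitute). The final assertion is also handled as in the paper.

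The converse $(4')\Rightarrow(4)$, however, contains a genuine gap. Your contradiction rests on the claim that a non-trivial element of $\pi_1(M'_{F_0})$ cannot lie in the image of $\pi_1(\Sk(\G),v)$, and this is false. Take $M=T^2$ and let $\G$ consist of a single non-separating simple loop $a$ (with one vertex on it): the unique face $F_0$ satisfies $M'_{F_0}\cong$ annulus, so $\pi_1(M'_{F_0})\cong\Z$ is generated by the core circle, which is freely homotopic to $a$ in $M$; hence the \emph{entire} image of $\pi_1(F_0)$ in $\pi_1(M)$ lies inside the image of $\pi_1(\Sk(\G))=\langle a\rangle$, even though $F_0$ is not a disk. (The failure of surjectivity here is detected only by the "transverse" class $b$, which arises as an HNN stable letter, not as an element of $\pi_1(M'_{F_0})$.) More generally, any element of $\pi_1(M'_{F_0})$ conjugate into the subgroup generated by the boundary circles is automatically in the image of $\pi_1(\Sk(\G))$, and for an annulus that subgroup is everything; moreover the edge groups $\Z\to\pi_1(\Sk(\G))$ need not be injective (a facial cycle can be $ee^{-1}$), so the normal-form theorem for amalgamated products is not available, and disconnected $\partial M'_F$ produces HNN extensions rather than the amalgamations your presentation assumes. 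The paper avoids all of this by passing to the quotient $M/\Sk(\G)$: condition $4'$ forces the surjection $\pi_1(M,v)\to\pi_1(M/\Sk(\G),[v])$ of Lemma \ref{surj} to be trivial, hence $M/\Sk(\G)\cong M'/\partial M'$ is simply connected; since $\pi_1(M'/\partial M')$ is the free product of the groups $\pi_1(M'_F/\partial M'_F)$, each factor is trivial, and the classification of surfaces then identifies each $M'_F$ as a disk. Collapsing the skeleton, rather than decomposing $\pi_1(M)$ along it, is the step your argument is missing.
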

\index{graph!face!topology}
\index{face|see{graph}}

Of course, if $4'$ is satisfied for some $v\in \Sk(\G)$, it is satisfied for all such $v$. In the course of the proof, we use the following lemma.

\begin{lemma}\label{surj} Let $M$ be a topological surface. Let $\G$ be a topological pre-graph
on $M$. Assume that $\Sk(\G)$ is connected and contains $\partial M$. Let $v$ be a point of 
$\Sk(\G)$. Consider the quotient topological space $M/\Sk(\G)$, in which all the points of $\Sk(\G)$ are identified. Then the natural mapping $\pi_1(M,v)\to \pi_1(M/\Sk(\G),[v])$ is onto.
\end{lemma}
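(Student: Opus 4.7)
The plan is to exhibit a CW structure on $M$ in which $\Sk(\G)$ is a subcomplex, and then use cellular approximation to lift loops. First, I would extend the graph structure of $\Sk(\G)$ to a triangulation of $M$: each face of $\G$ is an open surface whose closure is a compact surface with boundary, which can be triangulated compatibly with the vertices and edges of $\Sk(\G)$ already lying on its frontier. This yields a CW structure on $M$ in which $\Sk(\G)$ is a subcomplex and every $0$-cell of $M$ is a vertex of $\Sk(\G)$; correspondingly, $M/\Sk(\G)$ inherits a CW structure with a single $0$-cell $[v]$ and, as its $1$-skeleton, a wedge of circles based at $[v]$, in bijection with the $1$-cells of $M$ not lying in $\Sk(\G)$.

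Let $\tilde\gamma:[0,1]\to M/\Sk(\G)$ be a loop based at $[v]$, representing an arbitrary element of $\pi_1(M/\Sk(\G),[v])$. By the cellular approximation theorem followed by a further homotopy within the $1$-skeleton, which is a wedge of circles, we may assume $\tilde\gamma$ has the form $\pi(e_1)\pi(e_2)\cdots \pi(e_k)$, where each $e_j$ is a $1$-cell of the CW structure on $M$ with both endpoints $\underline{e_j},\overline{e_j}$ in $\Sk(\G)$, and $\pi:M\to M/\Sk(\G)$ denotes the quotient map.

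Since $\Sk(\G)$ is a connected topological graph, hence path-connected, there exist paths $\alpha_j\subset \Sk(\G)$ from $\overline{e_j}$ to $\underline{e_{j+1}}$ for $1\le j<k$, together with $\alpha_0$ from $v$ to $\underline{e_1}$ and $\alpha_k$ from $\overline{e_k}$ to $v$. Then $\eta:=\alpha_0\, e_1\, \alpha_1\, e_2\, \alpha_2 \cdots e_k\, \alpha_k$ is a genuine loop in $M$ based at $v$. Since $\pi$ sends every $\alpha_j$ to the constant path at $[v]$, the image $\pi\circ\eta$ differs from $\pi(e_1)\cdots\pi(e_k)$ only by insertions of constants, hence is homotopic to $\tilde\gamma$; this shows $[\tilde\gamma]$ lies in the image of the natural map $\pi_1(M,v)\to\pi_1(M/\Sk(\G),[v])$, proving surjectivity. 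The main delicate step is the compatible triangulation producing the desired CW structure with $\Sk(\G)$ as a subcomplex containing all $0$-cells, but this is standard for topological surfaces and their pre-graphs.
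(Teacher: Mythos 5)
Your argument is sound in outline and takes a genuinely different route from the paper's. The paper does not introduce any cell structure: it passes to a split pattern $(M',\G',\iota,f)$ of $(M,\G)$, observes that $\Sk(\G)\supset\partial M$ forces $\Sk(\G')=\partial M'$ so that $M/\Sk(\G)\cong M'/\partial M'$, uses the fact that $[v]$ has a contractible neighbourhood (a bunch of disks with identified centres) to decompose an arbitrary loop at $[v]$ into a product of loops whose interiors avoid $[v]$, lifts each of these to a path in some component $M'_F$ running from boundary to boundary, and finally joins the endpoints of the pushed-forward paths inside the connected set $\Sk(\G)$. Your cellular-approximation argument replaces the local decomposition at $[v]$ by the statement that a loop in a CW complex with one $0$-cell can be homotoped into the $1$-skeleton and then written as a word in the $1$-cells; the last step (connecting endpoints inside $\Sk(\G)$) is identical in both proofs. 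What your version buys is a cleaner, more standard homotopy-theoretic reduction; what the paper's buys is that it needs no structure beyond the split pattern it has already built.

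The one step you yourself flag as delicate is, however, justified incorrectly as written. It is false in general that the closure of a face of a pre-graph is a compact surface with boundary whose frontier carries the induced graph structure: for a single non-separating simple closed curve on a torus, the unique face has closure equal to the whole torus, and an edge can be adjacent to the same face on both sides, so there is no way to triangulate ``the closure'' compatibly in the naive sense. The correct object to triangulate is the component $M'_F$ of the split pattern (a genuine compact surface with non-empty boundary, since $\Sk(\G')=\partial M'$), after which the cell structure is pushed forward to $M$ by the sewing map; a compact surface with non-empty boundary does admit a CW structure with all $0$-cells on the boundary, so your requirement that every $0$-cell lie in $\Sk(\G)$ can indeed be met. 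With that repair the proof is complete. (A minor remark: even if interior $0$-cells were unavoidable, your argument would survive, since at such a vertex consecutive $1$-cells of the edge path already share an endpoint in $M$ and no connecting arc in $\Sk(\G)$ is needed there.)
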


\begin{proof}Let $(M',\G',\iota,f)$ be a split pattern of $(M,\G)$. Since $\Sk(\G)$ contains $\partial M$,
we have $\Sk(\G')=\partial M'$. Hence, the ill-defined mapping $f^{-1}:M \to M'$ descends to a
well-defined mapping $f^{-1}:M/\Sk(\G) \to M'/\partial M'$, which is a homeomorphism. In particular,
$[v]$ admits a neighbourhood homeomorphic to a finite bunch of disks whose centres are identified,
thus a contractible neighbourhood in $M/\Sk(\G)$. Hence, any loop in $M/\Sk(\G)$ based at $[v]$ is
homotopic to a finite product of loops based at $[v]$ and whose interior does not visit $[v]$.
Choose a loop $l$ based at $[v]$ on $M/\Sk(\G)$. Assume that $l$ is homotopic to $l_1\ldots l_n$
and the interiors of $l_1,\ldots,l_n$ do not visit $[v]$.

Each loop $l_i$ corresponds via $f^{-1}$ to a loop in the space $M'_F/\partial M'_F$ for some
$F\in\F$. Such a loop can be lifted to a path $c_i$ on $M'_F$ which starts and finishes on
$\partial M'_F$ and stays in the interior of $M'_F$ in between. Since $\Sk(\G)$ contains $\partial M$,
the paths $f(c_i)$ are paths on $M$ which start and finish in $f(\partial M')=\Sk(\G)\cup \partial
M=\Sk(\G)$. Since $\Sk(\G)$ is connected, it is possible to connect their endpoints inside
$\Sk(\G)$ and thus to produce a loop in $M$ based at $v$ whose image in the quotient $M/\Sk(\G)$ is $l_1\ldots l_n$.\end{proof}

\begin{proof}[Proof of Proposition \ref{equiv disks}]  $4\Rightarrow 4'.$ Let us assume that the assumption $4$ is satisfied. Since the faces of $\G$
are homeomorphic to open disks, they contain no point of $\partial M$. Hence, the skeleton of $\G$
contains $\partial M$. Let $(M',\G',\iota,f)$ be a split pattern of $(M,\G)$. By
Proposition \ref{structure pregraph}, the interior of $M'_F$ is homeomorphic to an open disk for
each $F$, so that $M'_F$ is a closed disk. In particular, $\partial M'_{F}$ is connected. By
Proposition \ref{structure pregraph} again, it follows that the
boundary of each face is connected. If $\Sk(\G)$ was not connected, there would exist a face whose
boundary meets two distinct connected components of $\Sk(\G)$. The boundary of this face would not
be connected : this is impossible. 

For each face $F$ of $\G$,  choose a point $x_F$ in $F$. Choose $v\in\Sk(\G)$. It is well known
that any continuous loop in $M$ based at $v$ is homotopic to a loop which avoids the points
$x_F,F\in\F$. To see this, endow $M$ with a Riemannian metric. Then, any two loops which are closer
in uniform distance than the convexity radius of $M$ are homotopic to each other. In particular, any loop is homotopic to a
piecewise geodesic loop and it is possible to choose this loop such that it avoids the points
 $x_F,F\in\F$. By Proposition \ref{structure pregraph}, the skeleton of $\G$ is a retract by
deformation of $M-\{x_F:F\in\F\}$. Hence, any loop based at $v$ is homotopic in $M$ to a loop
which stays in the skeleton of $\G$.

$4'\Rightarrow 4.$ Let $(M',\G',\iota,f)$ be a split pattern of $(M,\G)$. Let $[v]$ denote the
class of $v$ in the quotient topological space $M/\Sk(\G)$. 
This class is nothing but $\Sk(\G)$. The third part of assumption 4' implies that the homomorphism $\pi_1(M,v)\to\pi_1(M/\Sk(\G),[v])$ induced by the quotient mapping is trivial. By Lemma \ref{surj} below, this homomorphism is
surjective. Hence, the assumption $4'$ implies that $M/\Sk(\G)$ is simply connected. Hence, it
implies that $M'/\partial M'$ is simply connected. The fundamental group of this space is isomorphic
 to the free product of the fundamental groups of the spaces $M'_F/\partial M'_F$. Hence, the
assumption $4'$ implies that each space $M'_F/\partial M'_F$ is simply connected. Up to
homeomorphism, there exist only two connected compact surfaces which, when all their
boundary points are identified to a single point, are simply connected : the sphere and the disk. Finally, the assumption $4'$ implies that all the surfaces $M'_F$ are homeomorphic to disks, that is, the assertion $4$. 

The last assertion follows from Lemma \ref{cycle couvert} and the fact that the skeleton of a graph covers $\partial M$. \end{proof}

\begin{corollary}\label{c: limit pregraph} Let $M$ be a topological compact surface. Let $\G=(\V,\E,\F)$ be a graph on $M$. For each $n\geq 0$, let $\G_n=(\V,\E_n,\F_n)$ be a pre-graph on $M$ equipped with a bijection $S_n:\E\to \E_n$ such that for all $e\in \E$, $S_n(e^{-1})=S_n(e)^{-1}$. Assume that for all $n\geq 0$ and all edge $e$ such that $e\subset \partial M$, $S_n(e)=e$. Assume also that for all $e\in \E$, the sequence $(S_n(e))_{n\geq 0}$ converges uniformly to $e$ with fixed endpoints.
Then, for $n$ large enough, $\G_n$ is a graph on $M$.
\end{corollary}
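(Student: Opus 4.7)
The plan is to verify criterion $4'$ of Proposition \ref{equiv disks} for $\G_n$ when $n$ is large enough. Two of the three conditions are essentially free. Since $S_n$ is a bijection preserving endpoints and commuting with inversion, the combinatorial incidence structure of $\G_n$ coincides with that of $\G$; in particular $\Sk(\G_n)$ is connected because $\Sk(\G)$ is. The inclusion $\partial M\subset \Sk(\G_n)$ follows from the hypothesis $S_n(e)=e$ for every boundary edge $e$, combined with the last sentence of Proposition \ref{equiv disks} applied to $\G$. What remains is the homotopy condition on loops.

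For this, I would construct, for $n$ large enough, a homeomorphism $\phi_n:M\to M$ with $\phi_n(e)=S_n(e)$ for every $e\in\E$. Such a map automatically fixes $\V$ pointwise and sends $\Sk(\G)$ bijectively onto $\Sk(\G_n)$, and the homotopy condition transports through it: choosing a vertex $v$ as provided by criterion $4'$ for $\G$, any loop at $v$ pulls back through $\phi_n^{-1}$ to a loop that is homotopic to one in $\Sk(\G)$, and pushing the homotopy forward by $\phi_n$ yields a homotopy of the original loop to a loop in $\Sk(\G_n)$. To construct $\phi_n$ I work one edge at a time. For each unoriented pair $\{e,e^{-1}\}$, choose a small open neighborhood $U_e$ of the image of $e$ homeomorphic to an open disk (after inserting an auxiliary vertex at an interior point of $e$ in the exceptional case where a regular neighborhood of $e$ is a M\"obius band). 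Shrink the $U_e$ so that $U_e\cap U_{e'}$ is empty when $e$ and $e'$ have no common endpoint and lies in a prescribed small disk about each shared endpoint otherwise. By the uniform convergence with fixed endpoints, for $n$ large $S_n(e)\subset U_e$ and $S_n(e)$ is a simple arc in $U_e$ sharing its endpoints with $e$. The classical fact that two simple arcs in a disk with the same endpoints are ambient-isotopic relative to those endpoints then yields a homeomorphism of $U_e$, equal to the identity outside a slightly smaller disk and on a fixed small neighborhood of each endpoint of $e$, that sends $e$ to $S_n(e)$. Extending each such homeomorphism by the identity outside $U_e$ and composing them over all edges produces $\phi_n$.

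The principal obstacle is arranging that the local adjustments performed near each edge do not interfere with one another at the vertices they share. The fixed-endpoint assumption is what makes this tractable: the endpoints themselves do not move, so each local homeomorphism can be chosen to be the identity on a fixed small neighborhood of each vertex, ensuring that the various moves have essentially disjoint support and compose unambiguously into a homeomorphism of $M$. The handling of edges with non-orientable regular neighborhoods and the verification that the composition is globally a homeomorphism are routine once this compatibility at vertices has been secured.
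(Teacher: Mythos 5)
Your reduction to criterion $4'$ of Proposition \ref{equiv disks}, and your treatment of the first two conditions (skeleton connected, skeleton containing $\partial M$), match the paper. The gap is in the homotopy condition, where you try to build an ambient homeomorphism $\phi_n:M\to M$ with $\phi_n(e)=S_n(e)$ for every $e$. Two problems. First, the local construction does not close up at the vertices: a homeomorphism of $U_e$ that is the identity on a fixed neighbourhood of an endpoint $v$ of $e$ can send $e$ to $S_n(e)$ only if $e$ and $S_n(e)$ already coincide on that neighbourhood, and the fixed-endpoint hypothesis fixes only the endpoints themselves, not the germs of the edges there. If instead you allow the local homeomorphisms to move points arbitrarily close to $v$, their supports overlap near $v$ and the adjustment made for one edge disturbs the already-adjusted images of the other edges incident to $v$, so the composition no longer sends each $e$ to $S_n(e)$. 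Second, and more fundamentally, such a $\phi_n$ need not exist under the stated hypotheses: a homeomorphism of $M$ carrying each $e$ to $S_n(e)$ would force $S_n$ to preserve the cyclic order of the outgoing edges at every vertex, and that is precisely the extra hypothesis the paper must add in Proposition \ref{aire faces} to get a conclusion of this strength --- uniform convergence alone does not supply it (see the remark after Proposition \ref{approx graph epsilon} about approximating edges spiralling near a shared vertex). You are attempting to prove something strictly stronger than the corollary asserts.

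The corollary only needs homotopy, not a homeomorphism, and that is how the paper argues the third condition: for $n$ large enough each $S_n(e)$ is uniformly so close to $e$ (closer, say, than the convexity radius of an auxiliary Riemannian metric) that it is homotopic to $e$ with fixed endpoints; hence any loop $e_1\ldots e_k$ in $\Sk(\G)$ based at a vertex is homotopic to the loop $S_n(e_1)\ldots S_n(e_k)$ in $\Sk(\G_n)$, and combining this with condition $4'$ for $\G$ gives condition $4'$ for $\G_n$. Replacing your second paragraph by this observation repairs the proof.
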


\begin{proof}We need to check that $\G_n$ satisfies the condition $4'$ of Proposition \ref{equiv disks} for $n$ large enough. Firstly, for all $n\geq 0$, the skeleton of $\G_n$ contains $\partial M$ because the skeleton of $\G$ does and every edge of $\G$ located on $\partial M$ is also an edge of $\G_n$. Then, for all $n\geq 0$, the skeleton of $\G_n$ is connected. Indeed, let $m$ and $m'$ be two points of $\Sk(\G_n)$. They can be respectively joined inside $\Sk(\G_n)$ to two vertices $v$ and $v'$, which are also vertices of $\G$. Since $\Sk(\G)$ is connected, there exist a curve $e_1\ldots e_k$ in $\Curve(\G)$ which joins $v$ to $v'$. The curve $S_n(e_1)\ldots S_n(e_k)$ joins $v$ to $v'$ inside $\Sk(\G_n)$.
Finally, for $n$ large enough and for all $e\in \E$, $S_n(e)$ is homotopic with fixed endpoints to $e$. By choosing a point of $\V$ as base point, we find that any loop in $\Sk(\G)$ is homotopic to a loop in $\Sk(\G_n)$. This finishes the proof. \end{proof}

\subsection{The boundary of a face}
\label{bo fa}
\index{boundary of a face|(}

Although a face in a graph is, by definition, homeomorphic to an open disk, its closure needs not be
homeomorphic to a closed disk and even when it is the case, the topological boundary of the face 
may not be homeomorphic to a circle. The boundary of a face of a graph can in fact be defined as a cycle in the graph and this is the notion which matters for us. The appropriate intuitive picture is that of someone walking
in the interior of the face, keeping her right hand on the boundary. If the surface is non-orientable, the boundary of the face is a non-oriented cycle.

\begin{figure}[h!]
\begin{center}
\scalebox{0.7}{\includegraphics{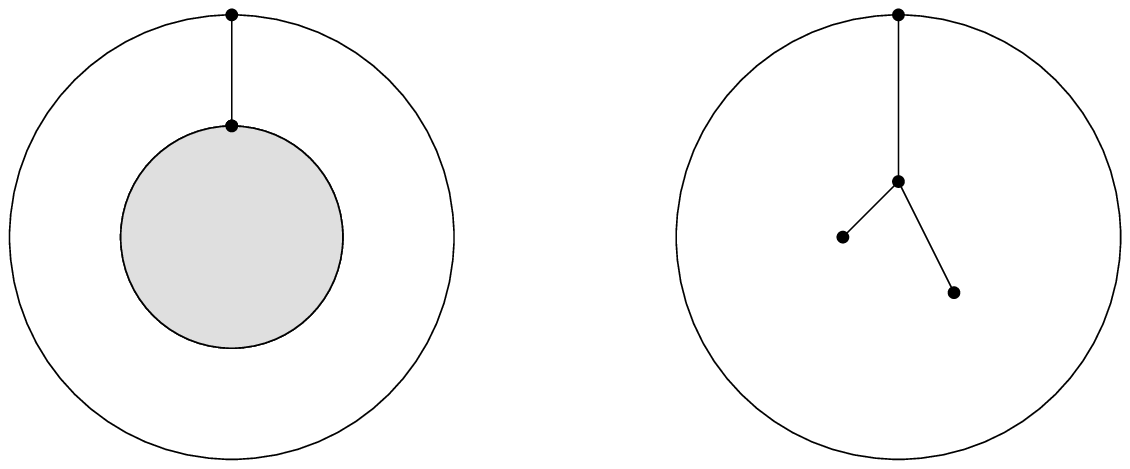}}
\caption{\small The closure of the white face of the first graph is not homeomorphic to a closed disk. The closure of the unique face of the second graph is homeomorphic to a closed disk, but its topological boundary is not homeomorphic to a circle.}  
\end{center}
\end{figure}

The following definition makes sense thanks to Proposition \ref{structure pregraph}, in particular the statement of uniqueness. 

\begin{definition}\label{graph polygon} Let $M$ be a topological compact surface. Let $\G$ be a
graph on $M$. Let $(M',\G',\iota,f)$ be a split pattern of $(M,\G)$. Let $F$ be a face of $\G$. Let $M'_F$ be the connected component of $M'$ such that $f(M'_F)=\overline{F}$.

If $M$ is oriented, then the {\em boundary} of $F$ is defined as the cycle $\partial F=f(\partial
M'_F)$ in $\CLoop(\G)$. If $M$ is not oriented, we may still orient $M'_F$ and the boundary of $F$ is defined as the unoriented cycle $\partial F=\{f(\partial M'_F),f(\partial M'_F)^{-1}\}$.

A cycle of the form $\partial F$ for some face $F$ is called a {\em facial cycle} of $(M,\G)$. 
\end{definition}
\index{DDdF@$\partial F$}
\index{graph!face!boundary (definition)}
\index{boundary of a face|see{graph}}
\index{cycle!facial}

This definition allows us to make sense of an edge adjacent to a face. 

\begin{definition} Let $M$ be a topological compact surface endowed with a graph $\G$. Let
$(M',\G',\iota,f)$ be a split pattern of $(M,\G)$. Let $F$ be a face of $\G$ and $M'_{F}$ the
corresponding connected component of $M'$. Let $e$ be an edge of $\G$. We say that the unoriented
edge $\{e,e^{-1}\}$ is adjacent to $F$ if there exists an edge $e'$ of
$\G'$ such that $e'\subset \partial M'_{F}$ and $f(e')\in\{e,e^{-1}\}$. 

If $M$ is oriented and $M'$ is oriented accordingly, we say that $e$ is adjacent to $F$ if there
exists an edge $e'$ with the same properties as above and $e'$ bounds $M'_{F}$ positively. 
\end{definition}

An unoriented edge is adjacent to a face if and only if it is contained in its topological closure. 
When $M$ is oriented, it follows from Proposition \ref{structure pregraph} that each oriented edge is
adjacent to exactly one face. It may however occur that $e$ and $e^{-1}$ are adjacent to the same
face. 

The content of the next result is that an edge which is adjacent to two distinct faces can be removed from a graph.

\begin{proposition} \label{erase edge} Let $(M,\CS)$ be a marked surface. Let $\G=(\V,\E,\F)$
be a graph on $(M,\CS)$. 

1. Let $e$ be an edge of $\G$ which is not contained in any curve of $\CS$. Assume that $e$
is adjacent to two distinct faces $F_{1}$ and $F_{2}$. Write $\partial F_{1}=ce$ and $\partial F_{2}=e^{-1}d$
for some $c,d\in\Path(\G)$. 

Then $\E\setminus\{e,e^{-1}\}$ is the set of edges of a graph on $(M,\CS)$, denoted by $\G\setminus e$, with the same
faces as $\G$,
except for the faces $F_{1}$ and $F_{2}$ which are replaced by $F=F_{1}\cup F_{2} \cup e((0,1))$.
Moreover, $\partial F=cd$.

2. Let $e$ be an edge of $\G$ which finishes at a vertex of degree $1$, that is, such that the terminal point of $e$ 
is the terminal point of no other edge of $\G$. Let $F$ be the unique face adjacent to $e$. Let $c\in \Path(\G)$ be such that $\partial F=cee^{-1}$ or $\partial F=ce^{-1}e$.

Then $\E\setminus\{e,e^{-1}\}$ is the set of edges of a graph on $(M,\CS)$, denoted by $\G\setminus e$, with the same
faces as $\G$, except for the face $F$ which is replaced by $F \cup e((0,1])$. Moreover, $\partial F=c$.
\end{proposition}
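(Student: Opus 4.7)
In both parts I will check that $\G\setminus e$, with edge set $\E\setminus\{e,e^{-1}\}$ and with vertex set and face set determined by this edge set through the conditions of Definition \ref{graph1}, is a graph on $(M,\CS)$, and then read off the boundary of the new face from a split pattern of $(M,\G)$. The pre-graph axioms (stability of the edge set under inversion, edges meeting only at endpoints) are immediately inherited from $\G$, so the substantive tasks are: (a) to identify the faces of $\G\setminus e$, (b) to show that each new face is homeomorphic to an open disk, and (c) to compute the new boundary cycle.

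\textbf{Part 1.} Because $e$ is adjacent to two \emph{distinct} faces, each endpoint of $e$ must also be the endpoint of some other edge of $\G$; otherwise $e$ and $e^{-1}$ would be forced to lie in the boundary of a common face at that degree-one vertex. Consequently $\V'=\V$ and $\Sk(\G\setminus e)=\Sk(\G)\setminus e((0,1))$. The decomposition $M\setminus\Sk(\G\setminus e) = (M\setminus\Sk(\G))\cup e((0,1))$ then identifies the connected components as the faces of $\G$ other than $F_1, F_2$, together with $F = F_1\cup e((0,1))\cup F_2$, which is connected since $e((0,1))\subset\overline{F_1}\cap\overline{F_2}$. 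To show $F$ is an open disk I will verify condition $4'$ of Proposition \ref{equiv disks}: $\Sk(\G\setminus e)$ contains $\partial M$ (since $e$ lies in the interior of $M$, an edge on $\partial M$ being adjacent to a single face only); it is connected because the path $c$, whose edges all lie in $\E\setminus\{e,e^{-1}\}$ (the edge $e^{-1}$ cannot appear in $\partial F_1=ce$ when $F_1\neq F_2$), provides a detour from $\overline{e}$ to $\underline{e}$; and because $\overline{F_1}$ is a closed disk bounded by the cycle $ce$, this cycle is null-homotopic in $M$, so $e\simeq c^{-1}$ with fixed endpoints, and any loop in $\Sk(\G)$ becomes, after replacing every $e$ by $c^{-1}$ and every $e^{-1}$ by $c$, a homotopic loop in $\Sk(\G\setminus e)$.

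\textbf{Boundary of $F$ and Part 2.} For the boundary computation I pass to a split pattern $(M',\G',\iota,f)$ of $(M,\G)$. The edge $e$ has two lifts in $\G'$, one on $\partial M'_{F_1}$ and one on $\partial M'_{F_2}$, exchanged by $\iota$; the boundary circles accordingly decompose as concatenations of lifts of $c$, respectively $d$, with these lifts of $e$. By Proposition \ref{def unsewing} the sewing $f$ factors as $f=f_1\circ f_0$, where $f_0$ is the elementary sewing that identifies the two lifts of $e$; the resulting pattern, with the identified edge deleted, is a split pattern of $(M,\G\setminus e)$. Its component $M''_F$ is the gluing of the two closed disks $M'_{F_1}$ and $M'_{F_2}$ along this common boundary arc, and its boundary circle is the concatenation of the remaining parts of $\partial M'_{F_1}$ and $\partial M'_{F_2}$, projecting to $cd$; this yields $\partial F=cd$. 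Part 2 proceeds along the same template. Now $\overline{e}$ is a vertex of degree one, which disappears with $e$ from the vertex set, giving $\Sk(\G\setminus e)=\Sk(\G)\setminus e((0,1])$. At $\overline{e}$ the boundary of the unique adjacent face $F$ must turn around, forcing $\partial F$ to have the form $c\,e\,e^{-1}$ or $c\,e^{-1}\,e$; condition $4'$ follows because these sub-words cancel trivially in homotopy. In the split pattern both lifts of $e$ lie on the same boundary circle $\partial M'_F$ and are adjacent, so the elementary sewing that identifies them folds the disk $M'_F$ along two adjacent arcs, producing another disk whose boundary projects to $c$.

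\textbf{Main obstacle.} The technical heart of the argument is the claim that the gluing and folding operations on the split pattern disks yield closed disks. In Part 1 this is the topological fact that gluing two closed disks along a single boundary arc produces a closed disk; the degenerate sub-case in which the glued arc is the entire boundary of both disks only arises when $M=S^2$ and $\G$ reduces to a single loop, and is covered by the exceptional sphere convention of Definition \ref{graph1}. In Part 2 it is the analogous fact that folding a disk along two adjacent boundary arcs of equal length yields a disk. Both statements are elementary but must be handled with care when $e$ is itself a loop of $\G$, which may require a brief case analysis according to whether $\underline{e}=\overline{e}$ or not.
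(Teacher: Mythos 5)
Your argument is essentially the paper's: the decisive step in both is to pass to a split pattern of $(M,\G)$, perform first the elementary sewing that identifies the two lifts of $e$, observe that gluing two closed disks along a boundary arc yields a closed disk (with the sphere case $\partial M'_{F_1}=e'_1$, $\partial M'_{F_2}=e'_2$ set aside exactly as you do), and read off that the resulting pattern with the interior edge deleted is a split pattern of $(M,\G\setminus e)$, whence $\G\setminus e$ is a graph and $\partial F=cd$. Your separate verification of condition $4'$ of Proposition \ref{equiv disks} is therefore redundant --- the split-pattern argument already shows every face is a disk --- though it is not wrong. One justification in that redundant portion is inaccurate: you assert that $\overline{F_1}$ is a closed disk bounded by $ce$, which the paper explicitly warns need not hold; what you actually need (and what is true) is only that the facial cycle $ce$, being the image of the boundary of the disk $M'_{F_1}$ under the continuous sewing map, is null-homotopic in $M$ rel its basepoint.

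There is one genuine omission: you never verify that $\G\setminus e$ is a graph \emph{on $(M,\CS)$}, i.e.\ that every cycle of $\CS$ is still represented by a loop of $\Loop(\G\setminus e)$. This is precisely where the hypothesis that $e$ is not contained in any curve of $\CS$ enters, and your proof never uses that hypothesis --- a reliable sign that something in the conclusion has not been checked. The paper closes this by Lemma \ref{cycle couvert}: since $e$ is not contained in any $l\in\CS$, it meets $l$ in at most finitely many points (only endpoints of $e$), so $\Sk(\G\setminus e)$ still contains a cofinite subset of $l$, hence all of $l$ by closedness, and the lemma then produces the representing loop. The step is short but it is part of the statement and must appear.
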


\begin{proof} The proofs of the two assertions are very similar. We prove only the first one. Let $(M',\G',\iota,f)$ be a split pattern of $(M,\G)$. By suitably orienting $M'$, we may assume that $e$ is the image by $f$ of an edge $e'_1$ which bounds $M'_{F_{1}}$ positively and $e^{-1}$ the image of an edge $e'_2=\iota(e'_1)$ which bounds $M'_{F_{2}}$ positively. Let us write $\partial M'_{F_1}=c' e'_1$ and $\partial M'_{F_2}=e'_2 d'$, where $c'$ and $d'$ are curves in $\G'$ which satisfy $f(c')=c$ and $f(d')=d$. 

Let us assume first that either $c'$ or $d'$ is not the constant curve, that is, that either $\partial M'_{F_1}\neq e'_1$ or $\partial M'_{F_2}\neq e'_2$. In this case, sewing $e'_1$ and $e'_2$ results in a new surface $M'_{F}$ which is still homeomorphic to a closed disk. 

\begin{figure}[h!]
\begin{center}
\scalebox{0.75}{\includegraphics{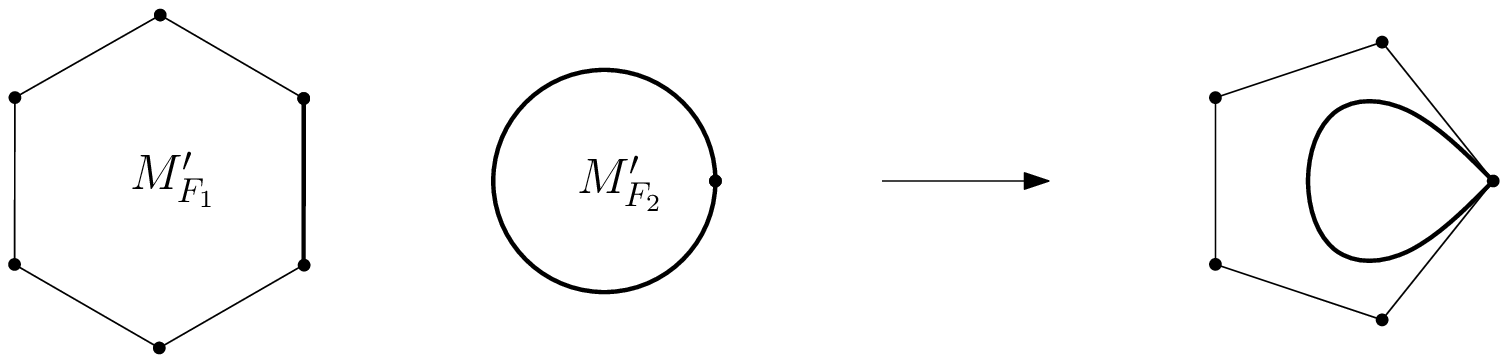}}
\caption{\small This picture illustrates the case $\partial M'_{F_1}=c'e'_1$ and $\partial M'_{F_2}=e'_2$ with $c'$ non-constant.}
\end{center}
\end{figure}

By removing the inner edge of this disk, we obtain a new split pattern $(M'',\G'',\iota'')$ with one connected component less than $M'$. The mapping $f'':M''\to M$ induced by $f$ is a complete sewing of this pattern, so that $(M'',\G'',\iota'',f'')$ is a split pattern of $(M,\G\setminus e)$. It follows on one hand that $\G\setminus e$ is a graph and on the other hand that the boundary of the new face $F$ is $f''(c'd')=f(c')f(d')=cd$. 

It remains to check that each curve of $\CS$ is represented by a loop of $\G\setminus e$. By Lemma \ref{cycle couvert}, it suffices to check that each curve of $\CS$ is contained in $\Sk(\G\setminus e)$. Consider $l\in \CS$. By the second assertion of Lemma \ref{cycle couvert}, the assumption that $e$ is not contained in any curve of $\CS$ ensures that $e$ has at most some of its endpoints on $l$. Hence, $\Sk(\G\setminus e)$ contains at least the complement of a finite set in $l$, hence $l$ itself because it is closed. This finishes the proof.

Let us now treat the case where $\partial M'_{F_1}= e'_1$ and $\partial M'_{F_2}=e'_2$. In this case, the image of $M'_{F_1}\cup M'_{F_2}$ by $f$ is a sphere of which $e$ is an equator. This sphere is a connected component of $M$ and, ignoring possible other connected components, we have $\E=\{e,e^{-1}\}$ and $\CS=\varnothing$. Hence, $\G\setminus e$ is indeed a graph, the exceptional graph with no edge and a single vertex. It has a unique face whose boundary is the constant curve at this vertex. \end{proof}

The difficulty with the definition of the boundary of a face given by Definition \ref{graph polygon} is the same that we encountered about the topological condition on the faces of a graph and that led us to state Proposition \ref{equiv disks}. It is not obvious from this definition that a small deformation of the edges of a graph cannot affect essentially the facial cycles. Since there will come a point in this work at which we will need to compare graphs with close edges, we need to be able to extract in
a fairly explicit and robust way the amount of combinatorial structure of a graph which determines
its facial cycles. 

The content of Sch\"onfliess theorem is that there is no local topological
invariant of a simple curve in a surface. Hence, in a graph, the only place where some local
structure arises is at the vertices. This structure at a given vertex is completely described by
the cyclic order of the edges which share this vertex as an endpoint. When the surface is
orientable, the information of these cyclic orders is in fact sufficient to determine completely the
facial cycles of the graph, hence, by Proposition \ref{structure pregraph}, the pair $(M,\G)$ up to
homeomorphism. When the surface is not orientable, a small amount of global information is needed
to recover the facial cycles. Before explaining this, let us describe precisely what we mean by the cyclic order of the
edges at a vertex. The characterization of this order which we establish now will be useful later.

In the next lemma, the surface is equipped with a differentiable structure because we need to consider a Riemannian metric on it. Nevertheless, the graph is allowed to have continuous edges. Also, the result is stated on a surface without boundary. If $M$ has a boundary, then the cyclic order of the edges at a vertex should be computed after a disk has been glued along each boundary component of $M$. 

\begin{lemma} \label{cyclic order}Let $M$ be a smooth surface without boundary. Let $\G$ be a graph on the topological surface underlying $M$. Let $v\in\V$ be a vertex. Let $e_1,\ldots,e_n$ be $n$ parametrized curves which represent the edges of $\G$ which share $v$ as their starting point. Let $\gamma$ be a Riemannian metric on $M$, whose Riemannian distance is denoted by $d$. Let $R$ be the injectivity radius of $\gamma$. 
Set $r_0= \min(\{R\}\cup\{d(v,e_i(\frac{1}{2})):i\in\{1,\ldots,n\}\})$. For each $r \in(0,r_0)$ and each $i\in\{1,\ldots,n\}$, define $s_i(r),t_i(r)\in [0,\frac{1}{2}]$ by
$$s_i(r)=\inf\left\{t\in\left[0,\frac{1}{2}\right] : d(v,e_i(t))= r\right\} \mbox{, } t_i(r)=\sup\left\{t\in\left[0,\frac{1}{2}\right] : d(v,e_i(t))= r\right\}.$$
 
If $M$ is not oriented, choose an orientation of the ball $B(v,r_0)$. For each $r\in(0,r_0)$, let $\omega_{first}(r)$ be the cyclic permutation of $\{e_1,\ldots,e_n\}$ corresponding to the cyclic order of the points $e_1(s_1(r)),\ldots,e_n(s_n(r))$ on the circle $C(v,r)$, oriented as the boundary of the ball $B(v,r)$. Similarly, let $\omega_{last}(r)$ be the cyclic permutation of $\{e_1,\ldots,e_n\}$ corresponding to the cyclic order of the points $e_1(t_1(r)),\ldots,e_n(t_n(r))$ on the circle $C(v,r)$. Then the following properties hold.\\
1. The cyclic order $\omega_{first}(r)$ does not depend on $r\in (0,r_0)$. We denote it simply by $\omega_{first}$. \\
2. There exists $r_1\in(0,r_0)$ such that for all $r\in (0,r_1)$, $\omega_{last}(r)=\omega_{first}$.
\end{lemma}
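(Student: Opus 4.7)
The plan is to work in the exponential chart at $v$: since $r_0 \leq R$, the exponential map $\exp_v$ identifies $B_{T_v M}(0, r_0)$ with $B(v, r_0)$ and is a radial isometry, so writing $\tilde e_i = \exp_v^{-1} \circ e_i$ (defined while $e_i$ stays in $B(v, R)$) transports the problem to a planar one in the oriented tangent plane, with $\|\tilde e_i(t)\| = d(v, e_i(t))$. The entire proof then rests on one topological fact which I will call the \emph{annulus principle}: in a closed planar annulus, $n$ pairwise disjoint simple arcs each joining the two boundary circles with interior in the open annulus induce the same cyclic order of their endpoints on the two circles, when each circle is oriented as the boundary of the disk it bounds around the centre. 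This is a standard consequence of Jordan--Schoenflies: the arcs and the two boundary circles decompose the annulus into $n$ topological disks, each bounded by two consecutive arcs and two boundary sub-arcs, and matching the faces identifies the two cyclic orders.

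For part~1, fix $0 < r < r' < r_0$. Because each $e_i$ is an edge, it visits $v$ only at $t=0$, so $d(v, e_i(\cdot))$ attains a positive minimum $\delta_i$ on the compact set $[s_i(r), 1/2]$; set $\delta = \min_i \delta_i > 0$ and pick any $r'' \in (0, \min(\delta, r))$. Let $w_i = \sup\{t \leq s_i(r) : \|\tilde e_i(t)\| = r''\}$. Since $r'' < \delta$, no $\tilde e_i$ hits $C(0, r'')$ during $[s_i(r), s_i(r')]$, so $w_i$ is also equal to $\sup\{t \leq s_i(r') : \|\tilde e_i(t)\| = r''\}$. A direct check using the minimality of $s_i$ and the maximality of $w_i$ shows that $\tilde e_i|_{[w_i, s_i(r)]}$ (resp.\ $\tilde e_i|_{[w_i, s_i(r')]}$) is a simple arc in the closed annulus $\{r'' \leq \|x\| \leq r\}$ (resp.\ $\{r'' \leq \|x\| \leq r'\}$) with interior in the open annulus; the $n$ arcs in each family are pairwise disjoint, from the graph axioms combined with the fact that the relevant parameter intervals lie strictly inside $(0, 1/2)$. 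Applying the annulus principle to each family identifies the cyclic order of the $\tilde e_i(w_i)$ on $C(0, r'')$ with both $\omega_{first}(r)$ and $\omega_{first}(r')$, whence these coincide.

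For part~2, fix any $r' \in (0, r_0)$. Continuity together with $e_i(0) = v$ and $e_i(t) \neq v$ for $t \in (0, 1/2]$ implies that $t_i(r) \to 0^+$ as $r \to 0^+$ for each $i$; hence there exists $r_1 > 0$ such that $t_i(r) < s_i(r')$ for every $i$ and every $r \in (0, r_1)$. For such $r$, one has $r < \|\tilde e_i\| < r'$ on $(t_i(r), s_i(r'))$, so $\tilde e_i|_{[t_i(r), s_i(r')]}$ is a simple arc in the closed annulus $\{r \leq \|x\| \leq r'\}$ with interior in the open annulus, and the $n$ arcs obtained in this way are pairwise disjoint by the same reasoning. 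The annulus principle yields $\omega_{last}(r) = \omega_{first}(r')$, and by part~1 the right-hand side equals $\omega_{first}$.

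The delicate point, sidestepped by the auxiliary radii in both parts, is that between its first hit $s_i(r)$ of $C(0, r)$ and its last exit $t_i(r)$ the curve $\tilde e_i$ may re-enter $B(0, r)$ arbitrarily many times, so one cannot compare $\omega_{first}(r)$ with either $\omega_{first}(r')$ or $\omega_{last}(r)$ inside a single annulus bounded by $C(0, r)$ and $C(0, r')$. Introducing $r'' \ll r$ in part~1 (small enough that no curve returns to $C(0, r'')$ after $s_i(r)$) and exploiting the smallness of $r$ in part~2 (to force $t_i(r) < s_i(r')$) produces in each case an annulus across which each relevant sub-arc travels exactly once, which is precisely the hypothesis the annulus principle requires.
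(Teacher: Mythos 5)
Your proof is correct and follows essentially the same route as the paper: both rest on the same annulus/cylinder straightening fact and on introducing auxiliary "last-exit" times (your $w_i$ is the paper's $u_i(r'',r)$) so that each sub-arc crosses an annulus exactly once. The only difference is organizational — the paper derives the single identity $\omega_{last}(r')=\omega_{mixed}(r',r)=\omega_{first}(r)$ and reads both assertions off it, whereas you treat the two parts separately — but the content is the same.
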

\index{graph!edge!cyclic order}

In the proof of this lemma, we take the following fact (which can  be deduced from Proposition \ref{structure pregraph}) for granted. On the compact cylinder $[0,1] \times S^1$, consider $n$ injective continuous curves $c_1,\ldots,c_n$ which do not meet each other. Assume that each curve starts at a point of $\{0\}\times S^1$ and finishes at a point of $\{1\}\times S^1$. Assume that no point of these curves other than their endpoints is located on the boundary of the cylinder. Then there exists an orientation-preserving  homeomorphism of the cylinder onto itself which sends each curve to a set of the form $[0,1]\times \{z\}$ for some $z \in S^1$. In particular, the cyclic order of the initial points of $c_1,\ldots,c_n$ on the circle $\{0\}\times S^1$ is the same as the cyclic order of their terminal points on the circle $\{1\}\times S^1$.\\

\begin{proof} Let us choose $r\in(0,r_0)$ and $r'\in(0,r)$. For each $i\in\{1,\ldots,n\}$, let us define 
$$u_i(r',r)=\sup\{t\in[0,s_i(r)] : d(v,e_i(t))= r'\}.$$
Thus, $c_i=e_i([u_i(r',r),s_i(r)])$ is an injective curve which joins $C(v,r')$ to $C(v,r)$ and stays in the annulus $r'\leq d(v,\cdot)\leq r$. Moreover, only the endpoints of $c_i$ lie on the boundary of the annulus. This annulus is homeomorphic to a cylinder and the curves $c_1,\ldots,c_n$ do not meet each other. 
According to the remark made before the proof, the cyclic order of the points $e_1(u_1(r',r)),\ldots,e_n(u_n(r',r))$ on the circle $C(v,r')$, which we denote by $\omega_{mixed}(r',r)$, is the same as the cyclic order of $e_1(s_1(r)),\ldots,e_n(s_n(r))$ on $C(v,r)$, which is by definition $\omega_{first}(r)$.

Set $r_1(r)=\min(\{r\}\cup \{d(v,e_i([s_i(r),\frac{1}{2}])):i\in\{1,\ldots,n\}\})$. Since the edges $e_1,\ldots,e_n$ are injective paths, $r_1(r)$ is a positive number and, for all $r'<r_1(r)$, $u_i(r',r)=t_i(r')$. Hence, for all $r\in(0,r_0)$ and $r'\in(0,r_1(r))$, 
$$\omega_{last}(r')=\omega_{mixed}(r',r)=\omega_{first}(r).$$
 Both assertions follow from this equality. \end{proof}

Let us describe informally the algorithm which one uses to computes the facial cycles of a graph. First, one has to land somewhere on the surface, near the boundary of a face and to grasp the nearest edge with either hand. Then, one walks forward to the next vertex without breaking the contact with the edge. There, one performs two operations. The first consists in changing the hand which holds the edge and turning one's body of a half-turn. One has now the vertex in one's back. The second operation consists in grasping with one's free hand the only outcoming edge at this vertex that one is not already holding and that is accessible without crossing any edge, and finally releasing the first edge. When one reaches a vertex at which there is only one outcoming edge, one turns around this vertex and walks back along the same edge, on the other side and holding it with the other hand. This process has to be iterated until one comes back to one's initial position. 

Formally, the facial cycles arise as the cycles of a certain permutation on a set which corresponds to the possible ways of our explorer holding an edge. We call this set a {\em framing} of the graph. 

\begin{definition} Let $M$ be a compact topological surface. Let $\G$ be a graph on $M$. An {\em orientation of the vertices of}  $\G$ is a collection $(U_v)_{v\in \V}$ of pairwise disjoint open subsets of $M$ such that for all vertex $v\in\V$, $U_v$ is an orientable and oriented neighbourhood of $v$.
\end{definition}
\index{graph!vertex!orientation}

Given an orientation of the vertices of $\G$, and for each edge $e$, we use the orientation of $U_{\underline{e}}$ to determine a left and a right of $e$, at least in the vicinity of $\underline{e}$. If $e$ is located on the boundary of $M$, we say that is bounds $M$ positively if $M$ is on the left of $e$.  
 
\begin{definition} Let $M$ be a topological compact surface. Let $\G$ be a graph on $M$. Let $(U_v)_{v\in \V}$ be an orientation of the vertices of $\G$. For each $e\in \E$, set
$$\fr(e)=\left\{\begin{array}{rl} \{-1,1\} & \mbox{ if } e \mbox{ is not contained in } \partial M, \\
             \{1\} & \mbox{ if } e\subset \partial M \mbox{ and } e \mbox{ bounds } M \mbox{ positively,}\\
            \{-1\} & \mbox{ if } e\subset \partial M \mbox{ and } e \mbox{ bounds } M \mbox{ negatively.}\\
                \end{array}\right.
$$
The {\em framing} of $\E$ is the subset $\fr(\E)$ of $\E\times \{-1,1\}$ defined by
$$\fr(\E)=\bigcup_{e\in\E} \{e\}\times \fr(e).$$
\end{definition}
\index{graph!framing}
\index{framing|see{graph}}

We have already mentioned that, without the assumption that $M$ is orientable, some amount of global information is needed to determine the facial cycles. 

\begin{definition} Let $M$ be a compact topological surface. Let $\G$ be a graph on $M$. Let $(U_{v})_{v\in\V}$ be an orientation of the vertices of $\G$. The {\em signature} of this orientation is the collection of signs $(\lambda_e)_{e\in \E} \in \{-1,1\}^{\E}$ defined as follows. For each edge $e$ which is a simple loop, set $\lambda_e=1$ if $e$ admits an orientable neighbourhood and $\lambda_e=-1$ otherwise. Then, for each edge $e$ such that $\underline{e}\neq \overline{e}$, consider an orientable neighbourhood $U_e$ of $e$ and set $\lambda_e=1$ if there exists an orientation of
$U_e$ compatible with the orientations of $U_{\underline{e}}$ and $U_{\overline{e}}$, and $\lambda_e=-1$ otherwise.
\end{definition}

If $M$ is orientable, then it is possible to choose an orientation of the vertices of $\G$ which is induced by an orientation of $M$. The signature of such an orientation is simply given by $\lambda_e=1$ for all $e\in\E$.

We are now ready to define the permutation on $\fr(\E)$ which determines the facial cycles.

\begin{definition}\label{fas} Let $M$ be a compact topological surface. Let $\G$ be a graph on $M$. Let $(U_{v})_{v\in\V}$ be an orientation of the vertices of $\G$. Let $\fr(\E)$ be the associated framing of $\G$. Let $(\lambda_e)_{e\in \E}$ be the signature of this orientation. 

The collection of the cyclic orders of the outcoming vertices at each vertex relatively to the orientation specified by the collection $(U_{v})_{v\in\V}$
is the the set of cycles of a unique permutation of $\E$ which we denote by $\sigma$. The involution $e\mapsto e^{-1}$ is another permutation of $\E$ which we denote by $\alpha$.

We define now three permutations $\bar\alpha$, $\bar\sigma$ and $\bar\varphi$ of $\fr(\E)$ as follows. Firstly, we set
$$\forall (e,\epsilon) \in \fr(\E),\; \bar\alpha(e,\epsilon)=(e^{-1},-\lambda_e \epsilon) \mbox{ and } \bar\sigma(e,\epsilon)=(\sigma^\epsilon(e),-\epsilon).$$
Then, we define $\bar\varphi$ by the relation $\bar\varphi\bar\alpha\bar\sigma={\rm id}$. Hence,
$$\forall (e,\epsilon) \in \fr(\E),\; \bar\varphi(e,\epsilon)=(\sigma^{-\lambda_e \epsilon}(e^{-1}),\lambda_e \epsilon).$$
\end{definition}
\index{graph!face!boundary (combinatorial description)}

It is easy to check that $\bar\sigma$ and $\bar\alpha$, hence $\bar\varphi$ take indeed their valued in $\fr(\E)$. The permutations $\bar\alpha$ and $\bar\sigma$ are both involutions. They correspond respectively to the first and second operations performed by our explorer after reaching a vertex. The permutation $\bar\varphi$ is the one whose cycles give the facial cycles of the graph.

\begin{proposition}\label{bord combi} Let $M$ be a connected topological surface. Let $\G$ be a graph on $M$. Let $(U_{v})_{v\in\V}$ be an orientation of the vertices of $\G$. Let $\fr(\E)$ be the associated framing of $\G$. Let $\bar\varphi$ be the permutation of $\fr(\E)$ defined in Definition \ref{fas}.

The range of the mapping which to each cycle $((e_1,\epsilon_1) \ldots (e_n,\epsilon_n))$ of the permutation $\bar\varphi$ associates the cycle $e_1\ldots e_n$ in $\G$ is exactly the set of the facial cycles of $\G$, taken once with each orientation.

If $M$ is orientable and oriented, and if the orientation of the sets $U_v$ is induced by the
orientation of $M$, then $\bar\varphi$ leaves $\fr(\E)\cap (\E\times \{1\})$ globally invariant.
Moreover, the set of cycles of the restriction of $\bar\varphi$ to $\fr(\E)\cap (\E\times \{1\})$
determines exactly the set of facial cycles which bound positively a face. These cycles are also those of the
permutation $\varphi=\sigma^{-1}\circ \alpha^{-1}$ on $\E$.
\end{proposition}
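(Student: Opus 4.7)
The plan is to realize $\bar\varphi$ geometrically as the ``next oriented edge'' permutation on the boundaries of the disks in a split pattern of $(M, \G)$. Let $(M', \G', \iota, f)$ be a split pattern of $(M, \G)$ provided by Proposition \ref{structure pregraph}. Each component $M'_F$ is a closed disk corresponding to a face $F$, and $f$ maps $\partial M'_F$ onto the topological boundary of $F$. Let $\E'$ denote the set of oriented edges of $\G'$; each element of $\E'$ lies on exactly one $\partial M'_F$.

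I would first define a bijection $\Phi : \fr(\E) \to \E'$. For $(e, \epsilon) \in \fr(\E)$, the orientation of $U_{\underline{e}}$ distinguishes a $(+1)$-side and a $(-1)$-side of $e$ near $\underline{e}$; in the interior case each side corresponds to one of the two lifts of $e$ to $\G'$, and in the boundary case the constraint on $\fr(e)$ selects the unique lift. Define $\Phi(e, \epsilon)$ to be this lift, oriented as $e$. The map is injective by construction, and a count of elements (four per interior unoriented edge of $\G$, two per boundary edge, on both sides) confirms bijectivity. A short argument shows that $\Phi$ intertwines edge-inversion on $\E'$ with the involution $\bar\alpha$: reversing the direction of a lift swaps its two sides at each endpoint, and transporting a side from $\underline{e}$ to $\overline{e}$ along the lift picks up a factor $\lambda_e$, yielding $\Phi(e, \epsilon)^{-1} = \Phi(e^{-1}, -\lambda_e \epsilon)$ in agreement with the definition of $\bar\alpha$.

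The central step is to verify that the ``next edge'' permutation on $\E'$, which sends an oriented edge on $\partial M'_F$ to the next one traversed in the same direction, corresponds via $\Phi$ to $\bar\varphi$. This reduces to a local computation at a lift $v'$ of $v = \overline{e}$. Writing $\tilde e' = \Phi(e, \epsilon)$, the vertex $v'$ lies in the sector at $v$ occupied by the face $F$, which by the previous paragraph sits on the $(\lambda_e \epsilon)$-side of $e$ at $v$ with respect to $U_v$, equivalently on the $(-\lambda_e \epsilon)$-side of the outgoing edge $e^{-1}$. Using the cyclic order $\sigma$ at $v$, the sector on a prescribed side of $e^{-1}$ is bounded by $e^{-1}$ and the edge $\sigma^{-\lambda_e \epsilon}(e^{-1})$; this second edge is thus $f(\tilde e'')$, where $\tilde e''$ is the next edge on $\partial M'_F$. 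A parallel check at the origin of $\tilde e''$ identifies its $\fr$-sign as $\lambda_e \epsilon$, matching the formula $\bar\varphi(e, \epsilon) = (\sigma^{-\lambda_e \epsilon}(e^{-1}), \lambda_e \epsilon)$.

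The main obstacle is tracking orientations consistently through the case $\lambda_e = -1$, where transport of ``left'' and ``right'' along $e$ is reversed; the factors of $\lambda_e$ in the formula for $\bar\varphi$ are exactly what absorbs this reversal. Once the local identification is in place, the cycles of $\bar\varphi$ correspond bijectively to the cycles of the ``next edge'' permutation on $\E'$, namely the oriented boundary loops $\partial M'_F$ with both orientations of $M'_F$; applying $f$ yields the facial cycles of $\G$ with both orientations, proving the first assertion. For the last assertion, when $M$ is oriented and each $U_v$ is induced from $M$'s orientation, every edge admits an orientable neighborhood compatible at both endpoints, so $\lambda_e = 1$ for all $e$; then $\bar\varphi(e, \epsilon) = (\sigma^{-\epsilon}(e^{-1}), \epsilon)$ preserves the second coordinate, and its restriction to $\fr(\E) \cap (\E \times \{+1\})$ acts as $e \mapsto \sigma^{-1}(e^{-1}) = \varphi(e)$. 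Choosing each $M'_F$ to be oriented compatibly with $M$'s orientation makes $\Phi$ carry $\fr(\E) \cap (\E \times \{+1\})$ onto the set of positively-oriented edges of the boundaries $\partial M'_F$, so the cycles of this restriction recover the facial cycles bounding their faces positively.
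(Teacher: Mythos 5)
Your proposal is correct, and it follows the route the paper itself gestures at (the paper offers no written proof, referring the reader to pictures and to Mohar--Thomassen): you identify $\fr(\E)$ with the oriented edges of the split pattern $(M',\G',\iota,f)$ and check that $\bar\varphi=\bar\sigma\circ\bar\alpha$ becomes the ``next edge along $\partial M'_F$'' permutation, whose cycles are exactly the disk boundaries with both orientations. The only delicate points --- the side-transport factor $\lambda_e$ in the verification that $\Phi$ intertwines inversion with $\bar\alpha$, and the edge/vertex count establishing bijectivity of $\Phi$ --- are exactly the ones you single out, and your treatment of them is sound.
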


The best proof of this result is probably the one which the reader will make for himself by drawing pictures. Another option is to read the section 3.3 of the book by B. Mohar and C. Thomassen \cite{MoharThomassen}, although their description of
the permutations is slightly less formal than ours. This whole discussion is also a variation on the theme of {\em ribbon
graphs} or {\em maps}, which are discussed extensively in \cite{LandoZvonkin}.

Let us apply Proposition \ref{bord combi} to prove a result in the same vein as Corollary \ref{c: limit pregraph}. If $A$ and $B$ are two subsets of a same set, we use the notation $A\dotplus B=(A\cup B)\setminus (A\cap B)$. 

\begin{proposition} \label{aire faces} Let $M$ be a connected compact topological surface. Let $\G=(\V,\E,\F)$ be a graph on $M$. For each $n\geq 0$, let $\G_n=(\V_n,\E_n,\F_n)$ be a graph on $M$ equipped with a bijection $S_n:\V\to \V_n$ and a bijection $S_n:\E\to \E_n$ such that for all $e\in \E$, $S_n(\underline{e})$ is the starting point of $S_n(e)$ and $S_n(e^{-1})=S_n(e)^{-1}$. We assume that for all $n\geq 0$ and all edge $e$ such that $e\subset \partial M$, $S_n(e)=e$. We assume also that for all $e\in \E$, the sequence $(S_n(e))_{n\geq 0}$ converges uniformly to $e$. Finally, we assume that for all $n\geq 0$ and for some orientation $(U_v)_{v\in \V}$ of the vertices of $\G$ such that $S_n(v)\in U_v$ for all $n\geq 0$ and all $v\in \V$, the cyclic order of the outcoming edges at every vertex is preserved by the bijection $S_n$.

Then for all $n\geq 0$, there exists a unique bijection $S_n:\F\to \F_n$ such that for all $F\in\F$, $\partial S_n(F)=S_n(\partial F)$. Moreover, for all $F\in\F$, one has
$$\limsup_{n\to\infty } (F\dotplus S_n(F)) = \bigcap_{n\geq 0} \bigcup_{m\geq n} (F\dotplus S_m(F))\subset \Sk(\G).$$
\end{proposition}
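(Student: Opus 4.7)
The plan is to deduce both assertions from the combinatorial description of facial cycles given by Proposition \ref{bord combi}. For each $n$, I orient the vertices of $\G_n$ by using the same open sets $(U_v)_{v\in\V}$, which is legitimate because $S_n(v)\in U_v$. Let $\sigma,\alpha,\bar\varphi$ and $\sigma_n,\alpha_n,\bar\varphi_n$ denote the permutations associated with $\G$ and $\G_n$ on the framings $\fr(\E)$ and $\fr(\E_n)$. Since $S_n$ fixes edges contained in $\partial M$ and is a bijection, it sends non-boundary edges to non-boundary edges, so it lifts to a natural bijection $\bar S_n:\fr(\E)\to\fr(\E_n)$ by $(e,\epsilon)\mapsto (S_n(e),\epsilon)$. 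The identity $S_n(e^{-1})=S_n(e)^{-1}$ gives $\alpha_n\circ \bar S_n=\bar S_n\circ\alpha$, and the preservation of cyclic orders gives $\sigma_n\circ \bar S_n=\bar S_n\circ\sigma$.

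The delicate point in the first assertion is the equality of signatures $\lambda_e=\lambda_{n,S_n(e)}$. By definition, $\lambda_e$ depends only on the local topology of a connected neighborhood of $e$ together with the chosen orientations on the neighborhoods $U_{\underline e}$ and $U_{\overline e}$. For $n$ large, $S_n(e)$ lies in any pre-assigned tubular neighborhood of $e$ and its endpoints lie in $U_{\underline e}$ and $U_{\overline e}$, so $e$ and $S_n(e)$ share a common neighborhood whose intersections with $U_{\underline e}$ and $U_{\overline e}$ are connected; the two signatures are then computed from identical data. Once this stability is established, $\bar S_n$ intertwines $\bar\varphi$ and $\bar\varphi_n$, and Proposition \ref{bord combi} yields a bijection between the cycles of these permutations, hence a bijection $S_n:\F\to\F_n$ satisfying $\partial S_n(F)=S_n(\partial F)$. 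Uniqueness follows from the fact that a face of a graph is determined by its (oriented or unoriented) facial cycle, as each cycle of $\bar\varphi$ corresponds to exactly one face.

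For the second assertion, I fix $x\in M\setminus\Sk(\G)$ and let $F$ be the face of $\G$ containing $x$; I will show that $x\in S_n(F)$ for $n$ large, the reverse inclusion at the level of $\limsup$ following by the same argument applied to $S_n(F)$ and $\G_n$. The uniform convergence of edges implies that $\Sk(\G_n)$ converges to $\Sk(\G)$ in Hausdorff distance, so $d(x,\Sk(\G_n))$ remains positive for $n$ large, placing $x$ in some face $F_n$ of $\G_n$. Pick an edge $e_1$ adjacent to $F$ and let $\epsilon_1\in\{-1,+1\}$ be such that $(e_1,\epsilon_1)$ belongs to the cycle of $\bar\varphi$ indexing $F$. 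Choose a point $m_0\in F$ very close to $\underline{e_1}$, lying in the angular sector at $\underline{e_1}$ corresponding to $F$. Because the (finitely many) outgoing edges at $S_n(\underline{e_1})$ converge uniformly to those at $\underline{e_1}$ while keeping the same cyclic order, for $n$ large the corresponding angular sector at $S_n(\underline{e_1})$ still contains $m_0$; by Proposition \ref{bord combi} applied to $\G_n$ and the intertwining property $\bar S_n\circ\bar\varphi=\bar\varphi_n\circ\bar S_n$, this sector is a portion of $S_n(F)$, so $m_0\in S_n(F)$.

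It remains to join $x$ to $m_0$ inside $F$. The set $F$ is open and connected, so it contains a path $\gamma$ from $x$ to $m_0$. Its image is compact and disjoint from $\Sk(\G)$, hence at positive distance from $\Sk(\G_n)$ for $n$ large by Hausdorff convergence, and therefore contained in a single face of $\G_n$. This face contains $m_0\in S_n(F)$ and $x\in F_n$, whence $F_n=S_n(F)$ and $x\in S_n(F)$. The main obstacle is the sector-identification step of the third paragraph: making it rigorous requires controlling the local geometry of $\G_n$ near each vertex simultaneously for all outgoing edges, which is where the preservation of cyclic orders, the stability of signatures under small perturbations, and the combinatorial description of Proposition \ref{bord combi} must all be combined.
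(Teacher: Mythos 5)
Your overall skeleton for the second assertion --- anchor a point $m_0$ of $F$ inside $S_n(F)$, then sweep the rest of $F$ into $S_n(F)$ by joining $x$ to $m_0$ through a compact subset of $F$ that eventually avoids $\Sk(\G_n)$ --- is exactly the paper's, and obtaining the bijection $\F\to\F_n$ by intertwining $\bar\varphi$ and $\bar\varphi_n$ through $S_n$ is also how the paper proceeds (it invokes Lemma \ref{faces meme bord} for uniqueness, and does not restrict to large $n$ as your signature-stability argument forces you to do, whereas the statement claims the bijection for every $n$). The genuine gap is the step you yourself flag: placing the anchor. Locating $m_0$ in an ``angular sector'' at a vertex and asserting that this sector is a portion of $S_n(F)$ does not follow from the hypotheses. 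Uniform convergence of $S_n(e)$ to $e$ confines $S_n(e)$ to a small neighbourhood of the range of $e$, but says nothing about its behaviour near the common vertex: the edges of $\G_n$ may spiral or oscillate there, so the scale at which the cyclic order of $\G_n$ (Lemma \ref{cyclic order}) actually describes the local face structure of $\G_n$ may shrink to $0$ with $n$. At the fixed distance from the vertex where $m_0$ sits, the region between $S_n(e_1)$ and its cyclic successor is indeed connected and contained in a single face of $\G_n$; but identifying that face with the one whose facial cycle is $S_n(\partial F)$ requires connecting this macroscopic region to the microscopic left-hand side of $S_n(e_1)$ without crossing $\Sk(\G_n)$ --- which is essentially the statement being proved. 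Proposition \ref{bord combi} is purely combinatorial and cannot bridge this mismatch of scales by itself.

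The paper resolves precisely this point with a different anchoring device: for each face $F$ it fixes a point $v_F$ in the interior of a single edge $e_F$ adjacent to $F$ and a transversal arc $\tilde f_F$ meeting $\Sk(\G)$ only at $v_F$ and ending at $m_F\in F$. For $n$ large this arc meets $\Sk(\G_n)$ only in $S_n(e_F)$; taking its last exit point $v_{F,n}$ from $\Sk(\G_n)$ and adjoining the terminal sub-arc as a new edge to both graphs produces augmented graphs $\G'$ and $\G'_n$ in which $m_F$ lies in the closure of exactly one face, and that face is pinned down because the cyclic-order preservation extends to the new vertices. The point of the construction is to move the delicate local analysis from a vertex, where several possibly tangled edges of $\G_n$ meet, to an interior point of a single edge, where Sch\"onfliess' theorem gives a clean two-sided picture. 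If you want to keep your vertex-based sector argument you must supply an analogous mechanism; as written, the ``main obstacle'' you identify is not overcome, and the proof is incomplete at its central step.
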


We use the following simple lemma.

\begin{lemma}\label{faces meme bord} Let $M$ be a connected compact surface endowed with a graph $\G$. 
If $M$ is non-orientable, then two faces cannot have the same bounding unoriented cycle, and if $M$ is oriented, then two faces cannot have the same oriented bounding cycle.

More specifically, assume that there exist two faces of $\G$ whose boundaries are equal as unoriented cycles. Then $M$ is homeomorphic to a sphere and $\Sk(\G)$ is homeomorphic to a circle. In particular, after choosing an orientation of $M$, the boundaries of the two faces as oriented cycles are each other's inverse.
\end{lemma}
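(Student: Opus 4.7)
The plan is to focus on the ``more specifically'' clause, which carries the substance; both general statements then follow. The non-orientable case of the first assertion follows by contrapositive of the ``more specifically'' clause, since $S^2$ is orientable. The oriented case follows directly from Proposition \ref{bord combi}, according to which distinct cycles of the permutation $\varphi$ correspond bijectively to oriented facial cycles, so two different faces cannot share their oriented cycles.

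So suppose $F_1 \neq F_2$ have the same unoriented bounding cycle. I would first argue that each unoriented edge $\{e,e^{-1}\}$ appearing in $\partial F_1$ is adjacent to $F_1$ on one side and to $F_2$ on the other: an unoriented edge has at most two adjacent faces, and if $F_1$ were adjacent on both sides then $\{e,e^{-1}\}$ would not be adjacent to $F_2$, contradicting that the unoriented cycle $\partial F_2 = \partial F_1$ contains this edge. In particular no edge of $\G$ can lie in $\partial M$, since a boundary edge is adjacent to only one face; combined with the inclusion $\partial M \subset \Sk(\G)$ from Proposition \ref{equiv disks}, this forces $\partial M = \varnothing$.

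I would next show that every edge of $\G$ appears in $\partial F_1$ and that $\F = \{F_1,F_2\}$. Suppose there were an ``outside'' edge, meaning one not appearing in $\partial F_1 = \partial F_2$: both of its adjacent faces are then distinct from $F_1$ and $F_2$. At any vertex of $\Sk(\G)$, two consecutive edges in the cyclic order bound a single angle belonging to one face adjacent to both; consequently a $\partial F_1$-edge and an outside edge cannot be consecutive at any vertex, which forces the two types of edges to form vertex-disjoint subgraphs of $\Sk(\G)$. Since $\Sk(\G)$ is connected by Proposition \ref{equiv disks} and $\partial F_1 \neq \varnothing$, no outside edge exists; a hypothetical third face would have its boundary entirely in outside edges, ruling it out too.

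The last step uses a split pattern $(M',\G',\iota,f)$ of $(M,\G)$ from Proposition \ref{structure pregraph}. Because $\F = \{F_1,F_2\}$, we have $M' = M'_{F_1} \sqcup M'_{F_2}$, a disjoint union of two closed disks, and because each edge of $\G$ is adjacent to both $F_1$ and $F_2$, the involution $\iota$ pairs each edge of $\partial M'_{F_1}$ with a unique edge of $\partial M'_{F_2}$. The orientation-preserving edge identifications assemble into a homeomorphism $\phi \colon \partial M'_{F_1} \to \partial M'_{F_2}$, and $M$ is the double of a disk along $\phi$, hence $M \cong S^2$. Moreover, the equivalence classes of the sewing are the two-point sets $\{x,\phi(x)\}$, so the quotient map restricts to a continuous bijection from $\partial M'_{F_1} \cong S^1$ onto $\Sk(\G)$, which is a homeomorphism by compactness; thus $\Sk(\G) \cong S^1$. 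The main subtle point is verifying that the piecewise edge identifications fit together into a global homeomorphism $\phi$, which however is inherent in the consistency of the sewing that reconstructs the manifold $M$ from $M'$.
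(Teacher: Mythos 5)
Your proof is correct and follows essentially the same route as the paper: pass to a split pattern, observe that every boundary edge of $M'_{F_1}$ must be $\iota$-paired with a boundary edge of $M'_{F_2}$ (since each such edge is adjacent to both faces), and conclude that $M$ is two disks glued along their boundaries, hence a sphere with skeleton a circle. The one point you defer to ``the consistency of the sewing''---that the edge-by-edge identifications cohere into a homeomorphism $\partial M'_{F_1}\to\partial M'_{F_2}$---is precisely where the paper concentrates its effort: because $c$ traverses each unoriented edge at most once, a representative of the unoriented cycle is determined by its first edge, which forces $\iota(e'_{1,i})=e'_{2,i}$ in matching cyclic order and hence the compatibility of the identifications at the vertices.
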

\index{graph!face!with the same bounding cycle}

\begin{proof}Let $F_1$ and $F_2$ denote two faces of $\G$ which share the same unoriented bounding cycle. Let $c$ be a simple loop which represents this cycle, oriented in an arbitrary way. Consider, in a split pattern $(M',\G',\iota,f)$ of $(M,\G)$, the two disks $M'_1$ and $M'_2$ corresponding to $F_1$ and $F_2$ respectively. They are bounded by the same number of edges, which is also the combinatorial length of $c$. Let $e'_{1,1},\ldots,e'_{1,n}$ and $e'_{2,1},\ldots,e'_{2,n}$ denote respectively the set of edges located on the boundary of $M'_1$ and $M'_2$, in such a way that $\partial M'_1=e'_{1,1}\ldots e'_{1,n}$ and $\partial M'_2=e'_{2,1}\ldots e'_{2,n}$. Each edge on the boundary of $M'_1$ is sent by $f$ to an edge of $\G$ which is also adjacent to $F_2$, hence is identified by $\iota$ with an edge bounding $M'_2$. 
We may assume that $\iota(e'_{1,1})=e'_{2,1}$. We may also assume that $f(e'_{1,1})=f(e'_{2,1})$ is the first edge traversed by $c$ and this characterizes fully $c$ among all representatives of the unoriented cycle $\partial F_1$. Indeed, $c$ traverses each unoriented edge of $\G$ at most once, for $\iota$ does never identify two distinct edges of the boundary of $M'_1$, or $M'_2$. Hence, $c=f(e'_{1,1},\ldots,e'_{1,n})=f(e'_{2,1},\ldots,e'_{2,n})$, so that $\iota(e'_{1,i})=e'_{2,i}$ for all $i\in\{1,\ldots,n\}$. The result follows.\end{proof}

\begin{proof}[Proof of Proposition \ref{aire faces}]  In the case where $M$ is a disk and $\Sk(\G)\subset \partial M$, $\G_n=\G$ for all $n$ and the result is true. In any other case, each face of $\G$ is bounded by at least one edge which is not contained in $\partial M$. For each face $F$ of $\G$, let us choose a point $m_F\in F$ and a point $v_F$ in the interior of an edge $e_F$ adjacent to $F$ and not contained in $\partial M$. Let us choose a continuous edge $\tilde f_F$ which crosses $\Sk(\G)$ exactly once at $v_F$, and finishes at $m_F$. We assume that $v_F$ is not the initial point of $\tilde f_F$. We denote by $f_F$ the portion of $\tilde f_F$ which joins $v_F$ to $m_F$. For $n\geq 0$ large enough, $\tilde f_F$ meets $\Sk(\G_n)$,  more precisely the edge $S_n(e_F)$ and only this edge. For such $n$, let $v_{F,n}$ be the last exit point of $\tilde f_F$ from $\Sk(\G_n)$. It is an interior point of $S_n(e_F)$. Let $S_n(f_F)$ be the portion of $\tilde f_F$ which joins $v_{F,n}$ to $m_F$.

Let us perform this construction for each face $F$, with the edges $\tilde f_F$ chosen to be pairwise disjoint. Let us define $\G'$ as the graph obtained from $\G$ by subdividing the edges $e_F$ at $v_F$ and adding the edges $f_F$. Also, for all $n\geq 0$, let $\G'_n$ be the graph obtained from $\G_n$ by subdividing the edges $S_n(e_F)$ at $v_{F,n}$ and adding the edges $S_n(f_F)$. We extend $S_n:\V\to\V_n$ to $S'_n:\V'\to \V'_n$ by setting $S_n(v_F)=v_{F,n}$, and extend also $S_n:\E\to \E_n$ to $S'_n:\E'\to \E'_n$  in the obvious way. It is not difficult to check that $v_{F,n}$ tends to $v_F$ as $n$ tends to infinity, and hence that $S'_n(f_F)$ converges uniformly to $f_F$. By considering a small ball around $v_F$, one checks also that the bijections $S'_n$ still preserve the cyclic order at each vertex, including $v_F$ and $m_F$. 

\begin{figure}[h!]
\begin{center}
\scalebox{1}{\includegraphics{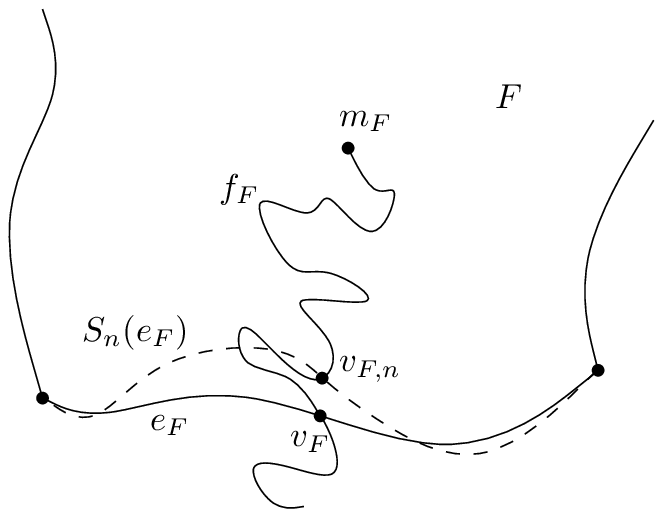}}
\caption{The construction of the graphs $\G'$ and $\G'_n$.}
\end{center}
\end{figure}

The faces of $\G$ and $\G'$ (respectively $\G_n$ and $\G'_n$) are obviously in bijective correspondence which we denote simply with a prime. For instance, for all $F\in \F$, the cycle which bounds $F'$ is deduced from $\partial F$ by the insertion of a sequence $f_F f_F^{-1}$. In particular, $F$ and $F'$ have the same topological closure. The boundary of $F'$ is the only facial cycle of $\G'$ which involves the edge $f_F$ and it involves no other edge of $\G'$ which is not contained in the skeleton of $\G$. Hence, any facial cycle of $\G'$ which involves $f_F$ must be the boundary of $F'$.

The advantage of this tedious construction is that for all $F\in \F$, $F'$ is the only face of $\F'$ whose closure contains the edge $f_F$ and in particular the vertex $m_F$.

Consider now $n\geq 1$ and a face $F$ of $\G$. Since $S_n$ preserves the cyclic order of the edges at each vertex, it follows from Proposition \ref{bord combi} that the cycle $S_n(\partial F)$ is a facial cycle of $\G_n$. By Lemma \ref{faces meme bord}, two faces of $\G_n$ cannot have the same boundary. Hence, $S_n(\partial F)$ is the boundary of a unique face of $\G_n$ which we denote by $S_n(F)$. By construction, the equality $\partial S_n(F)=S_n(\partial F)$ holds for all $F\in \F$. 

The same construction provides us with a bijection $S'_n$ between $\F'$ and $\F'_n$. Now consider $F\in \F$. By definition, $S'_n(F')$ is the face of $\G'_n$ whose boundary is the cycle $S_n(\partial F)$ in which $S'_n(f_F)S'_n(f_F)^{-1}$ has been inserted at the occurrence of the vertex $v_{F,n}$. Hence, $m_F$ belongs to the closure of $S'_n(F')$ and, by the discussion a few lines above, $S'_n(F')=S_n(F)'$. 

Now this implies that $m_F$ belongs to the closure of $S_n(F)'$, which is equal to the closure of $S_n(F)$. Since $m_F$ does not lie on the skeleton of $\G_n$, this implies that $m_F\in S_n(F)$.  

For each face $F$ of $\G$, let us choose a connected open subset $U_F$ of $F$ which contains $m_F$ and such that the closure of $U_F$ is contained in $F$. The last assumption implies that for $n$ large enough, $\Sk(\G_n)$ is disjoint from $U_F$. Hence, $U_F$ is contained in a unique face of $\G_n$, which must be $S_n(F)$.

In particular, $F\dotplus S_n(F)$ is contained in $(F\cup S_n(F))\setminus U_F$. Since $F$ nor $S_n(F)$ meet another subset of the form $U_{F_1}$ for some $F_1\in \F$, we deduce from this inclusion that, for $n$ large enough, $F\dotplus S_n(F)$ is contained in $M\setminus \bigcup_{F\in \F} U_F$. Hence, $\limsup (F\dotplus S_n(F))$ is contained in the same set. This inclusion holds for any choice of the sets $U_F$ and the result follows. \end{proof}

\index{boundary of a face|)}

\subsection{Adjunction of edges}

Proposition \ref{erase edge} gives us a way of removing some edges from a graph. We will also need a way of adding edges to a graph. The typical problem is the following: we are given a compact surface $M$ endowed with a graph $\G$, we consider a face $F$ of this graph, two vertices $v_1$ and $v_2$ on the boundary of $F$ and we would like to join them by a new edge whose interior is contained in $F$. 

If we are working in the category of topological surfaces and graphs with continuous edges, then Proposition \ref{structure pregraph} suffices to guarantee the existence of a continuous edge with the desired properties. However, if we are working with a graph with rectifiable edges and insist that the new edge be rectifiable too, then we need something more. The problem is a purely local one and we loose nothing by formulating it in the plane. The difficulty is that it seems not to be known whether a rectifiable edge can be straightened, even locally, by a bi-Lipschitz continuous homeomorphism of the plane (see for instance \cite{LuukkainenVaisala}). 

In this section and in this section only, we use the symbol $\partial$ to denote the topological boundary of a set. We denote by $\HS^1$ the 1-dimensional Hausdorff measure on $\RK^2$. 

\begin{proposition}\label{accessible} Let $K$ be a compact subset of $\RK^2$. Assume that $\partial K$ is connected and satisfies $\HS^1(\partial K)<+\infty$. Let $v$ be a point of $\partial K$. Let $m$ be a point of $\RK^2-K$. Assume that $v$ is curve-accessible from $m$, that is, that there exists a continuous curve $c:[0,1]\to \RK^2$ such that $c(0)=m$, $c(1)=v$ and $c([0,1))\cap K=\varnothing$. Then there exists an injective Lipschitz-continuous curve with the same properties as $c$.
\end{proposition}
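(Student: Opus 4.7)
The strategy is to first produce a Lipschitz (not necessarily injective) curve from $m$ to $v$ whose image meets $K$ only at $v$, and then to extract the required injective curve by the standard loop-erasing procedure, which removes only pieces of the parameter interval and hence preserves both Lipschitz continuity and the property of avoiding $K$ outside the endpoint. Since $c([0,1-\varepsilon])$ is compact and disjoint from the closed set $K$ for every $\varepsilon>0$, it lies at positive distance from $K$, and one can replace $c$ on such an initial sub-interval by a polygonal path whose segments stay in a thin tubular neighbourhood of $c([0,1-\varepsilon])$ and hence away from $K$. The real work is therefore to construct a rectifiable approach to $v$ along the final sub-interval.

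For this approach I choose radii $\rho_n=2^{-n}$ and a sequence of times $t_n\nearrow 1$ so that $c([t_n,1])\subset \overline{B}(v,\rho_n)$. For $n$ large enough, the points $c(t_{n-1})$ and $c(t_n)$ lie in the same connected component of the open set $B(v,\rho_{n-1})\setminus K$, as witnessed by the restriction of $c$ to $[t_{n-1},t_n]$. I would replace $c$ on each interval $[t_{n-1},t_n]$ by a rectifiable arc $\eta_n \subset B(v,\rho_{n-1})\setminus K$ of length at most $C\rho_{n-1}+C\HS^1(\partial K\cap A_n)$, where $A_n$ is a fixed annular enlargement of $B(v,\rho_{n-1})\setminus B(v,\rho_{n+1})$. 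Concatenating the $\eta_n$, the total length is finite: the geometric series $\sum_n C\rho_{n-1}$ converges, and each point of $\partial K\setminus\{v\}$ lies in $A_n$ for only a bounded number of indices $n$, so the second contribution is controlled by a finite multiple of $\HS^1(\partial K)<\infty$.

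The main obstacle is the following local connectivity lemma in the plane: given a compact set $K\subset\RK^2$ with $\HS^1(\partial K)<\infty$ and an open set $\Omega\subset\RK^2$, any two points lying in the same connected component of $\Omega\setminus K$ can be joined by a rectifiable arc in $\Omega\setminus K$ of length bounded by a constant times $\mathrm{diam}(\Omega)+\HS^1(\partial K\cap\Omega)$. I would prove this via a Whitney decomposition of $\Omega\setminus K$ by dyadic squares $Q$ satisfying $\mathrm{diam}(Q)\leq d(Q,K)\leq 4\,\mathrm{diam}(Q)$: the hypothesis $\HS^1(\partial K)<\infty$ implies that, at every dyadic scale, the total perimeter of the Whitney squares neighbouring $K$ is controlled by $\HS^1(\partial K\cap\Omega)$ up to a multiplicative constant, so that the Whitney skeleton inside $\Omega$ has finite total length. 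One then joins the two given points by travelling along the edges of this skeleton between the Whitney squares which contain them; such a path exists and is rectifiable because, in the dual graph of adjacent Whitney squares, the connected component of the starting square corresponds exactly to the connected component of the given point inside $\Omega\setminus K$. Once this lemma is in hand, applying it inside each annular region around $v$ produces the connectors $\eta_n$, and concatenation with the initial polygonal piece yields the desired Lipschitz curve, to which the loop-erasing procedure is finally applied to extract the injective rectifiable arc.
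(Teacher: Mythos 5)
Your overall architecture --- connect $m$ to $v$ by a chain of rectifiable arcs living in shrinking shells around $v$, sum the lengths, then extract an injective arc at the end --- is the same as the paper's, and the bookkeeping (geometric series plus bounded overlap of the annuli $A_n$) is fine. The gap is in your key connectivity lemma, specifically in the Whitney-decomposition argument. First, the assertion that $\HS^1(\partial K)<+\infty$ controls, at each dyadic scale, the number (hence total perimeter) of Whitney squares neighbouring $K$ is false for a general compact set: finite Hausdorff measure does not bound covering numbers (a countable compact set such as the closure of $\{(1/\log n,1/\log m)\}$ has $\HS^1=0$ but upper box dimension $2$). The bound $N(\partial K,r)\leq C\,\HS^1(\partial K)/r$ requires the \emph{connectedness} of $\partial K$ (each $r$-separated point then carries at least $r/2$ of measure in its $r/2$-ball), a hypothesis you never invoke. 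Second, and more seriously, even granting the per-scale bound $C\,\HS^1(\partial K\cap\Omega)$, this bound is scale-independent, so summing it over the infinitely many dyadic scales diverges: the Whitney skeleton of the complement of a segment of length $L$ already has infinite total length ($\sim 4L$ of perimeter at every scale). Your conclusion ``so that the Whitney skeleton inside $\Omega$ has finite total length'' is therefore wrong, and the routing argument collapses unless you can show the chosen dual-graph path meets only a summable amount of skeleton --- which is the actual difficulty, since the endpoints $c(t_n)$ may lie arbitrarily close to $K$ and force the path arbitrarily deep into the decomposition.

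The paper's Proposition \ref{jordan longueur} does the job your lemma is meant to do, but by a single-scale construction that sidesteps both problems: it covers $\partial U$ by finitely many balls of one fixed radius $\epsilon$ in generic position (their number bounded by $\HS^1(\partial U)/\epsilon$ precisely via the connectedness argument above), observes that the boundary of the union is a finite disjoint family of Jordan curves of total length at most $8\pi\,\HS^1(\partial U)$, and routes the path along a straight segment with a single detour along one of these Jordan curves. Because only one scale is involved, the length bound $100\,\HS^1(\partial U)$ comes for free. If you want to keep a multiscale approach you would need a genuinely new estimate on the length of the dual-graph path itself; as written, the proof does not go through.
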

\index{curve accessible}

We start by proving an intermediary result, whose content is that two points of the bounded connected component of the complement of a Jordan curve with finite length can be joined inside this component by a path with finite length controlled by the length of the Jordan curve.

\begin{proposition}\label{jordan longueur}  Let $U$ be a non-empty bounded connected open subset of $\RK^2$ with connected boundary. Assume that $\HS^1(\partial U)<+\infty$. For all $a,b\in U$, there exists a rectifiable path $c$ which joins $a$ to $b$ and such that $\ell(c)=\HS^1(c)\leq 100 \HS^1(\partial U)$. 
\end{proposition}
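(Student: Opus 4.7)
The plan is to approximate $U$ from inside by a union of small closed dyadic squares and exploit the rectilinear structure of the approximation. Since $U$ is open and connected in $\RK^2$, it is path-connected, so there is a continuous path $\gamma\colon[0,1]\to U$ joining $a$ to $b$. Its image is compact, hence at positive distance $d$ from $\partial U$. I will fix a dyadic scale $\delta$ with $\delta\sqrt{2}<d$ and let $V$ be the union of all closed dyadic squares of side $\delta$ entirely contained in $U$. Every dyadic square meeting $\gamma([0,1])$ lies in $V$, so $a$ and $b$ belong to a common connected component $V'$ of $V$, a (possibly multiply connected) closed rectilinear polygonal region.

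I will next bound the perimeter of $V'$. Because $V'$ is a union of entire dyadic squares, $\partial V'\subset\partial V$, and each edge of $\partial V'$ is a dyadic segment of length $\delta$ separating a square in $V$ from one outside $V$. Such an outside square either lies in $U^c$---in which case the separating edge lies in $\overline U\cap\overline{U^c}=\partial U$ and its length contributes directly to $\HS^1(\partial U)$---or meets $\partial U$. For the latter, I will use that the compact connected set $\partial U$ of finite $\HS^1$-measure is covered by at most $O(\HS^1(\partial U)/\delta)$ dyadic squares of side $\delta$ (a fact that follows from its arcwise connectedness by rectifiable arcs), so these boundary squares contribute at most $O(\HS^1(\partial U))$ to the perimeter of $V$. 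Combining both contributions yields $\HS^1(\partial V')\le C_1\,\HS^1(\partial U)$ for an absolute constant $C_1$.

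The heart of the argument is then to show that the inner distance in $V'$ between any two of its points is at most an absolute constant $C_2$ times $\HS^1(\partial V')$. First, the Euclidean diameter of $V'$ is bounded by $\HS^1(\partial V')/2$ via projection onto the coordinate axes (the horizontal diameter of $V'$ is at most half the total length of vertical edges of $\partial V'$, and symmetrically). I will then construct an explicit rectifiable path from $a$ to $b$ inside $V'$ along a rectilinear ``rook'' route consisting of edges of the $\delta$-grid lying in $V'$: the route will run parallel to $\partial V'$ at inner grid-distance $\delta$, and when $V'$ has holes, distinct boundary components will be connected through the interior at a cost controlled by the diameter. The total length of this rook path will be $O(\HS^1(\partial V'))$, and the route can be made simple by design. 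Concatenating with short segments from $a$ and $b$ to the nearest grid vertices of $V'$ yields a simple rectifiable path $c$ with $\ell(c)=\HS^1(c)$ and $\ell(c)\le C_1 C_2\,\HS^1(\partial U)$; with an appropriate choice of constants, this is at most $100\,\HS^1(\partial U)$.

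The main obstacle will be the inner-distance bound for connected rectilinear polygonal regions in terms of their perimeter, particularly in the multiply connected case: one must carefully design the rook route so that each detour around a hole contributes only a constant multiple of the length of that hole's boundary to the total path length.
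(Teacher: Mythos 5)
Your overall strategy (inner approximation of $U$ by a polyomino $V'$ of dyadic squares, a perimeter bound for $V'$ via the covering number of the connected set $\partial U$, then an intrinsic-diameter-versus-perimeter estimate) is genuinely different from the paper's, and its first two steps are sound: every grid square adjacent to a square of $V$ but not in $V$ must meet $\partial U$, and a compact connected set of finite $\HS^1$-measure meets at most $O(\HS^1(\partial U)/\delta)$ squares of side $\delta$, so $\HS^1(\partial V')\le C\,\HS^1(\partial U)$ with an explicit small constant. (Your first case, an outside square lying entirely in $U^c$, is vacuous: such a square would share an edge with a square contained in $U$, and that edge cannot lie in both $U$ and $U^c$.) The genuine gap is exactly the step you flag as the main obstacle: the claim that the intrinsic diameter of a connected, possibly multiply connected, polyomino is bounded by a constant times its perimeter is not proved, and the ``rook route'' sketch does not handle the holes — a straight chord can enter and leave the same non-convex hole many times, so naively detouring around each excursion multiplies that hole's boundary length by the number of excursions, which is not controlled by the perimeter. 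As written, the proposal therefore does not yet constitute a proof.

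The missing idea — which is also the engine of the paper's own proof — is that one should arrange for the region in which $a$ and $b$ are joined to be \emph{simply connected}; then the path ``straight segment from $a$ until the first hit of the boundary, arc of the (single) boundary curve to the last hit, straight segment to $b$'' has length at most $\mathrm{diam}+\mathrm{perimeter}\le\frac32\,\mathrm{perimeter}$ and the holes problem disappears. The paper achieves this by covering $\partial U$ with finitely many balls centred on a $2\epsilon$-net of $\partial U$ with radii in $(2\epsilon,4\epsilon)$ in generic position: their union $K$ is connected (each ball meets the connected set $\partial U$), so the bounded components of $\RK^2\setminus K$ are interiors of single Jordan curves, of total length at most $8\pi\,\HS^1(\partial U)$. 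In your setup the analogous fact is actually automatic and you should prove it rather than fight the multiply connected case: a bounded component $H$ of $\RK^2\setminus V'$ cannot meet $\partial U$ (since $\partial U$ is connected, disjoint from $V'$, and must lie in the same complementary component as the unbounded component of $U^c$), hence $\overline H\subset U$; but $\overline H$ is a union of grid squares each contained in $U$, hence in $V$, and adjacent to $V'$, hence in $V'$ — contradiction. So $V'$ has no holes, and your argument closes with the same segment–boundary arc–segment path, making the rook-route machinery unnecessary. With this repair your constants comfortably fit under $100$.
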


\begin{proof}Since $U$ is connected and open, it is arcwise connected. Let $\gamma$ be a continuous curve which joins $a$ to $b$ inside $U$. Set $\epsilon=\frac{1}{4} \min(d(\gamma([0,1]),\partial U),{\rm diam}(\partial U))>0$. Let $X$ be a maximal subset of $\partial U$ such that for all $x,y\in X$ with $x\neq y$, $d(x,y)\geq 2\epsilon$. The assumptions on $U$ imply that $\partial U$ is compact, so that $X$ is finite. Write $X=\{x_1,\ldots,x_n\}$. 

Let us say that a finite set of circles are in {\em generic position} if no two distinct of them are tangent and no three pairwise distinct of them have a common point. We claim that it is possible to choose $n$ positive real number $r_1,\ldots,r_n$ in the interval $(2\epsilon,4\epsilon)$ such that the boundaries of the balls $B_i=B(x_i,r_i)$ are in generic position.

Indeed, let us choose $r_1=3\epsilon$. There are only a finite number of values of $r_2$ (actually, two values) for which $\partial B_2$ is tangent to $\partial B_1$. Thus, we can choose $r_2 \in (2\epsilon,4\epsilon)$ such that $\partial B_1$ and $\partial B_2$ are not tangent. Assume that $r_1,\ldots,r_k$ have been chosen such that $\partial B_1,\ldots,\partial B_k$ are in generic position. Then there are only a finite number of values of $r_{k+1}$ for which $\partial B_1,\ldots,\partial B_{k+1}$ would not be in generic position. Hence, we can choose $r_{k+1}$ in $(2\epsilon,4\epsilon)$ such that this does not happen. 

Set $K=\bigcup_{i=1}^n \overline{B}(x_i,r_i)$. This is a compact set which contains $\partial U$, by maximality of $X$. Moreover, each connected component of $K$ meets $\partial U$, which is connected by assumption. Hence, $K$ itself is connected. Finally, $K$ does not meet $\gamma$, so that $a$ and $b$ are in the same connected component of $\RK^2\setminus K$.

Consider $x\in \partial K$. Then $x$ is on the boundary of one or two of the balls $B_1,\ldots,B_n$. In any case, $x$ admits a neighbourhood in which $\partial K$ is a simple curve composed of one or two arcs of circle. It follows that the boundary of $K$ is a compact set each point of which admits a neighbourhood homeomorphic to $\RK$. Hence, it is homeomorphic to a finite union of pairwise disjoint circles. 

\begin{figure}[h!]
\begin{center}
\scalebox{0.4}{\includegraphics{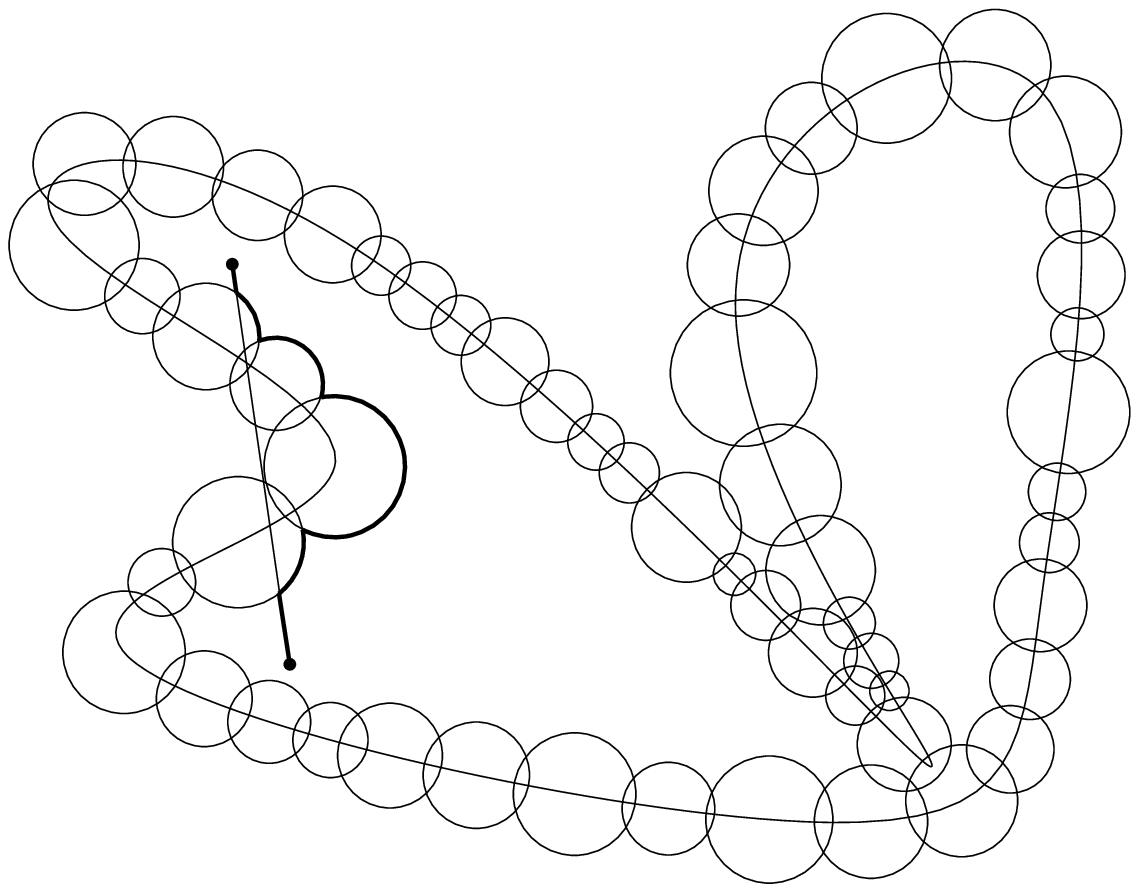}}
\caption{A path with controlled finite length between two points of the interior of a Jordan curve with finite length.}
\end{center}
\end{figure}

We have proved that $K$ is bounded by a finite collection of pairwise disjoint Jordan curves. Let us call {\em interior} of  a Jordan curve the bounded connected component of its complement. Since $K$ is bounded and connected, $\partial K$ is necessarily the union of a Jordan curve $J_0$ such that $K$ is contained in the interior of $J_0$, and a certain number of Jordan curves $J_1,\ldots,J_k$ whose interiors are disjoint and contained in the interior of $J_0$. Since $a$ and $b$ belong to the same bounded connected component of $\RK^2\setminus K$, they both lie in the interior of one of the curves $J_1,\ldots,J_k$, say $J_1$. 

Let us give a bound on the  length of $J_1$ by bounding the total length of $\partial K$. Since $2 \epsilon<{\rm diam}(\partial U)$, none of the balls $B(x_i,\epsilon)$ contains $\partial U$. Let us choose $i\in\{1,\ldots,n\}$ and set, for $y\in \RK^2$, $\pi_i(y)=d(x_i,y)$. Then $\pi_i(\partial U)$ is not contained in $(0,\epsilon)$, it is connected, and it contains $0$. Hence, $\pi_i(\partial U) \supset (0,\epsilon)$ and in fact $\pi_i(\partial U\cap B(x_i,\epsilon))\supset (0,\epsilon)$. It follows that $\HS^1(\partial U\cap B(x_i,\epsilon))\geq \epsilon$, so that $n\leq \frac{1}{\epsilon} \HS^1(\partial U)$ and finally $\HS^1(\partial K)\leq 8\pi \HS^1(\partial U)$.  

Let $s$ be the straight path from $a$ to $b$. Its length is smaller than ${\rm diam}(U)\leq{\rm diam}(\partial U)\leq \HS^1(\partial U)$. If $s$ does not meet $K$, the result is proved. Otherwise, let $a'$ and $b'$ be respectively the first and the last point at which $s$ meets $\partial K$. Let $c$ be the path obtained by concatenating the straight path from $a$ to $a'$, then an arc of $J_1$ from $a'$ to $b'$ and finally the straight path from $b'$ to $b$. The length of $c$ is bounded by $\ell(c)\leq \ell(s)+\ell(J_1) \leq 100 \HS^1(\partial U)$ as expected. \end{proof}

\begin{proof}[Proof of Proposition \ref{accessible}]  Since $\HS^1$ is a $\sigma$-additive measure, $\HS^1(K\cap B(v,r))$ tends to $\HS^1(\{v\})=0$ as $r$ tends to $0$. Let $(r_n)_{n\geq 0}$ be a decreasing sequence of positive reals such that 
\begin{equation} \label{serie converge}
\sum_{n\geq 0} (\HS^1(K\cap B(v,2r_n))+ 4\pi r_n) <+\infty.
\end{equation}
By shifting the sequence $(r_n)$ if necessary, we may assume that $B(v,2 r_0)$ does not contain $K$. Hence, for all $n\geq 0$, $K\cap \partial B(v,r_n)\neq \varnothing$. 

Let $c$ be a continuous curve which joins $m$ to $v$ and meets $K$ only at $v$. For each $n\geq 0$, let $m_n$ be the last point of the curve $c$ which is on $\partial B(v,r_n)$. Choose $n\geq 0$. Consider the compact set $K_n=(K\cap B(v,2r_n)) \cup \partial B(v,2r_n)$. It is connected, as the image of the connected set $K\cup \partial B(v,2r_n)$ by the projection on the closed convex set $B(v,2r_n)$. The points $m_n$ and $m_{n+1}$ belong to the same connected component of the complement of $K_n$ (see Figure \ref{figjfini}). Let $U_n$ denote this connected component. As a bounded connected component of the complement of a connected compact subset of $\RK^2$, $U_n$ has a connected boundary (see \cite{Wilder} p.47, where this property is called the Brouwer property of the sphere). Moreover, $\partial U_n$ is contained in $K_n$, hence $\HS^1(\partial U_n)\leq \HS^1(K\cap B(v,2r_n)) + 4\pi r_n$. So, by Proposition \ref{jordan longueur}, there exists a path $c_{n+1}$ which joins $m_n$ to $m_{n+1}$ and has length smaller than $100 \HS^1(\partial U_n)$. 
\index{Brouwer property of the sphere}

\begin{figure}[h!]
\begin{center}
\scalebox{0.8}{\includegraphics{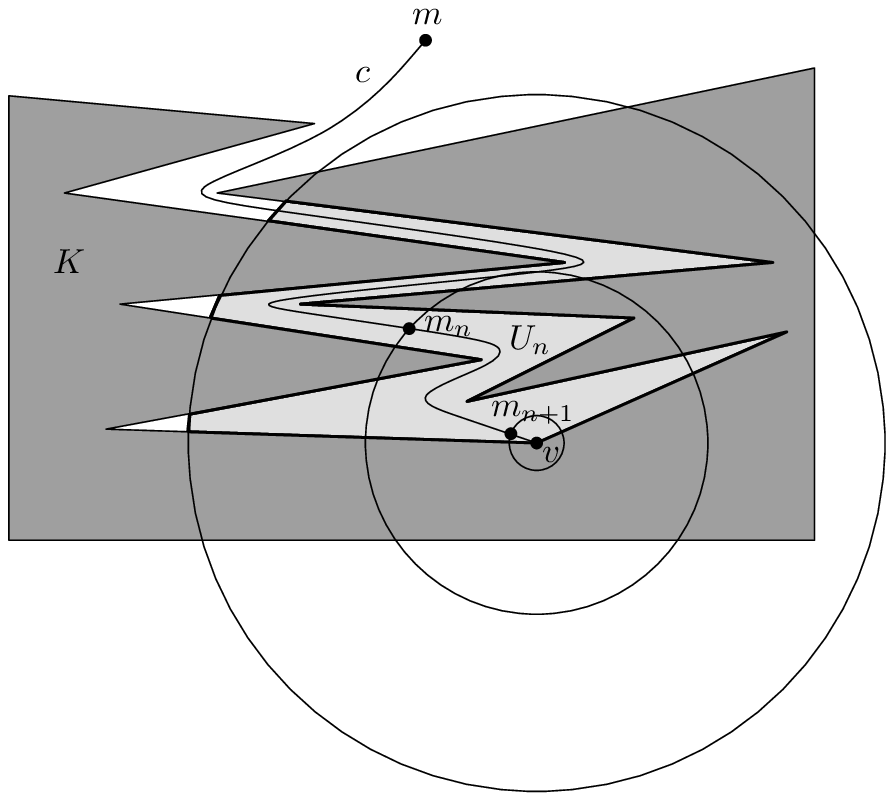}}
\caption{The path $c_{n+1}$ will join $m_n$ to $m_{n+1}$ inside $U_n$. }\label{figjfini}
\end{center}
\end{figure}

Finally, let $c_{0}$ be a path of finite length which joins $m$ to $m_0$. Set $C=\{v\}\cup \bigcup_{n\geq 0} c_n([0,1])$. It is not difficult to check that $C$ is closed. It contains $m$ and $v$, and satisfies $C\cap K=\{v\}$. By (\ref{serie converge}), $\HS^1(C)<+\infty$. However, $C$ needs not be an injective path. However, by a classical result (see \cite{David}, Proposition 14), there exists an injective Lipschitz-continuous path which joins $m$ to $v$ in $C$. Such a path is exactly what we were looking for. \end{proof}

We will make use of this result under the following form.

\begin{proposition}\label{add edge} Let $M$ be a compact surface. Let $\G$ be a graph on $M$. Let $F$ be a face of $\G$. Let $v_1$ and $v_2$ be two vertices which lie on the boundary of $F$. There exists an edge $e$ such that $\underline{e}=v_1$, $\overline{e}=v_2$ and $e((0,1))\subset F$. In particular, $\E\cup\{e,e^{-1}\}$ is the set of edges of a graph on $M$.
\end{proposition}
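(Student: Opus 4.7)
The plan is to construct $e$ in three stages: first, I would build rectifiable arcs approaching $v_1$ and $v_2$ from inside $F$ using Proposition~\ref{accessible}; then connect their interior endpoints by a smooth path in $F$; and finally extract an injective Lipschitz sub-arc from the resulting rectifiable curve.

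For the local step, for each $i=1,2$, I would choose a smooth bi-Lipschitz chart $\phi_i : U_i \to B_i \subset \RK^2$ sending $v_i$ to $0$, with $U_i = \phi_i^{-1}(B_i)$ small enough that it contains no other vertex of $\G$ and meets only edges of $\G$ having $v_i$ as an endpoint. Those edges enter $U_i$ as a finite family of pairwise disjoint rectifiable arcs from $v_i$ to $\partial U_i$; they divide $U_i$ into finitely many simply connected open sectors, each contained in a single face of $\G$. A pigeonhole argument applied to a sequence in $F$ converging to $v_i$ then produces a sector $S_i \subset F$ with $v_i \in \overline{S_i}$. Under $\phi_i$, the set $\phi_i(S_i)$ is a simply connected bounded open subset of $\RK^2$ whose topological boundary $K_i$ is a Jordan curve made of the $\phi_i$-images of two rectifiable edge arcs and an arc of the circle $\partial B_i$; in particular $K_i$ is compact and connected with $\HS^1(K_i) < \infty$. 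Applying Proposition~\ref{accessible} to $K=K_i$, $v=0$, and a point $m \in \phi_i(S_i)$ --- curve-accessibility of $0$ from $m$ inside $\phi_i(S_i)$ being clear since $\phi_i(S_i)$ is a connected component of $\RK^2 \setminus K_i$ homeomorphic to an open half-disk near $0$ --- would produce an injective Lipschitz path in $\phi_i(S_i) \cup \{0\}$ from $0$ to $m$; pulling back by $\phi_i^{-1}$ then yields an injective rectifiable path $c_i$ from $v_i$ to an interior point $m_i \in S_i \subset F$, with interior lying in $F$.

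For the global step, $F$ is an open connected subset of the smooth manifold $M$, so $m_1$ and $m_2$ can be joined inside $F$ by an injective smooth path $c_3$, which is rectifiable. The concatenation $c = c_1 c_3 c_2^{-1}$ is then a rectifiable continuous curve from $v_1$ to $v_2$ whose range lies in $F \cup \{v_1,v_2\}$ and meets $\partial F$ only at its endpoints, but which may fail to be injective. I would then invoke the classical result used in the proof of Proposition~\ref{accessible} (Proposition~14 of~\cite{David}): the range of $c$ is a closed set of finite one-dimensional Hausdorff measure containing a continuous path between $v_1$ and $v_2$, hence it contains an injective Lipschitz sub-path $e$ with the same endpoints. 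Since this range meets $\partial F$ only at $\{v_1,v_2\}$, the interior of $e$ lies in $F$. Finally, using a split pattern $(M',\G',\iota,f)$ of $(M,\G)$, the path $e$ lifts through $f$ to an injective arc in the closed disk $\overline{M'_F}$ joining two distinct boundary points; this arc cuts the disk into two disks whose $f$-images are the two new faces replacing $F$, showing that condition~4 of Definition~\ref{graph1} is satisfied and that $\E \cup \{e,e^{-1}\}$ is indeed the edge set of a graph.

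The main obstacle will be the local step, specifically ensuring that Proposition~\ref{accessible} produces a curve confined to the designated sector $S_i \subset F$ rather than escaping through $\partial U_i$ into another component. The critical choice is to take $K_i$ to be exactly $\partial \phi_i(S_i)$: by the Jordan curve theorem, $\phi_i(S_i)$ is then a connected component of $\RK^2 \setminus K_i$, so every continuous path in the complement of $K_i$ starting in $\phi_i(S_i)$ is forced to remain there. A secondary technical point is the verification that $K_i$ is a rectifiable Jordan curve, which rests on the rectifiability of the edges of $\G$, the bi-Lipschitz character of $\phi_i$, and the local ``spoke'' structure of $\G$ near $v_i$ obtained after shrinking $U_i$.
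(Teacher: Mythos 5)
Your global strategy (Proposition~\ref{accessible} near each vertex, a finite-length connection inside $F$, then Proposition~14 of \cite{David} to extract an injective Lipschitz sub-arc, and a split pattern to check condition~4) is the same as the paper's, and the global step and the final injectivization are fine. The problem is the local step, which as written contains several unjustified claims that are in fact false or unprovable for rectifiable edges. First, an edge having $v_i$ as an endpoint need not meet a small neighbourhood $U_i$ in a single arc from $v_i$ to $\partial U_i$: a rectifiable injective edge can re-enter every ball around $v_i$ arbitrarily many times (this is exactly the pathology that Lemma~\ref{cyclic order} is designed to handle via first/last exit times, and that the remark after Proposition~\ref{approx graph epsilon} warns about). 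Consequently $U_i\setminus\Sk(\G)$ may have infinitely many components, your pigeonhole argument does not produce a single component $S_i$ with $v_i\in\overline{S_i}$, and $\partial\phi_i(S_i)$ has no reason to be a Jordan curve made of two edge arcs and a circle arc. Second, and most seriously, the hypothesis of Proposition~\ref{accessible} that you declare ``clear'' --- curve-accessibility of $v_i$ from a point of $F$ --- is precisely the nontrivial input; it does not follow from $\phi_i(S_i)$ being a component of the complement of its boundary (boundary points of planar domains are not accessible in general), and your claim that $\phi_i(S_i)$ is ``homeomorphic to an open half-disk near $0$'' is unsupported.

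The paper avoids all of this with two moves you are missing. It obtains the continuous accessing curve from the split pattern: $M'_F$ is a closed disk mapped onto $\overline F$, so a radial curve in $M'_F$ projects to a continuous curve from a point of $F$ to $v_1$ meeting $\partial F$ only at its endpoint; this is what certifies curve-accessibility. It then applies Proposition~\ref{accessible} not to a sector boundary but to the compact set $K=(\Sk(\G)\cap B(v_1,r))\cup\partial B(v_1,r)$ for $r$ small enough that every edge issued from $v_1$ crosses the circle: this $K$ is connected with finite $\HS^1$ with no Jordan-curve analysis needed, and the resulting injective Lipschitz curve from $m$ to $v_1$ is trapped inside $B(v_1,r)$ by the circle and inside the face of $m$ (namely $F$) by the skeleton. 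If you replace your sector construction by these two steps, the rest of your argument goes through.
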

\index{graph!edge!adjunction}

In this statement, the assumption that the vertices lie on the boundary of the face $F$ can be understood in the topological sense or, as it is equivalent, in the sense that they are traversed by the facial cycle associated to $F$.\\

\begin{proof}Let us endow $M$ with a Riemannian metric. By the same result used at the end of the previous proof (\cite{David}, Proposition 14), it suffices to prove that there exists a compact subset $C$ of $M$ with finite $1$-dimensional Hausdorff measure which contains $v_1$ and $v_2$ and such that $C\setminus \{v_1,v_2\}\subset F$. Since $F$ is arcwise connected by paths of finite length (for instance piecewise geodesic paths), it suffices to prove that $v_1$, hence $v_2$, can be joined to at least one point of $F$ by a curve of finite length which has only its starting point outside $F$. 

For this, choose a point $n$ in $F$ and choose a continuous curve $c$ which joins $n$ to $v$ and has only its finishing point outside $F$. That such a curve exists is obvious in a split pattern of $\G$. Choose also $r>0$ such that the metric ball $B(v,r)$ is diffeomorphic to a disk and each edge starting from $v_1$  crosses the circle $\partial B(v_1,r)$. Choose a point $m$ on $c$ which $c$ traverses after its last entry time in the ball $B(v_1,r)$. 
By applying Proposition \ref{accessible} to $m$ and $(\Sk(\G)\cap B(v_1,r)) \cup \partial B(v_1,r)$ inside the ball $\overline{B}(v_1,r)$ smoothly identified with a ball in $\RK^2$, we find the desired curve with finite length. \end{proof}

\subsection{The group of loops in a graph}
\label{def RL} 

The concatenation of paths is not a group operation, even when it is restricted to a set of loops based at the same point, in which case all pairs of loops can be concatenated. The obstruction is the fact that if $c$ is a non-constant path, then there is no path $c'$ such that $cc'$ is constant. However, the path $cc^{-1}$ is equivalent to the constant path for a natural equivalence relation.

\begin{definition} Let $M$ be a compact topological compact surface. Two curves $c,c'\in\Curve(M)$ are {\em elementarily
equivalent} if there exist three curves $a,b,d$ such that $\{c,c'\}=\{ab,add^{-1}b\}$. We say that $c$ and $c'$ are {\em equivalent}
and write $c\simeq c'$ if there exists a finite chain $c=c_{0},\ldots,c_{n}=c'$ of curves such that
$c_{i}$ is elementarily equivalent to $c_{i+1}$ for each $i\in\{0,\ldots,n-1\}$.
\end{definition}
\index{curve!equivalence}
\index{path!equivalence}

This relation is an equivalence relation on $\Curve(M)$ similar to the equality of words in a free group, with the important difference that for the relation $\simeq$, there is no analog of the reduced form of a word, even if we restrict ourselves to rectifiable paths. For example, the
class of the rectifiable infinite polygonal path in the complex plane which joins
the points $0,e^{i\pi},0,2^{-1}e^{2^{-1} i\pi},$ $0, \ldots,0,2^{-n}e^{2^{-n}i\pi},0,\ldots$
contains no path of minimal length.

\begin{figure}[h!]
\begin{center}
\scalebox{1}{\includegraphics{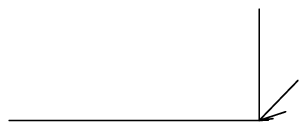}}
\caption{\small The equivalence class of this rectifiable path contains no shortest element.}
\end{center}
\end{figure}

B. Hambly and T. Lyons have defined in \cite{HamblyLyons} an
equivalence relation on rectifiable paths for which the path described above is equivalent to the
constant path equal to 0. This relation is strictly less fine than $\simeq$ and each class
contains a unique element of minimal length. We plan to investigate in a future work the
importance of this equivalence relation in the framework of the present theory. 

On a graph however, these subtleties do not arise. Until the end of this section, we work on smooth surfaces and consider paths instead of curves but all our results apply to graphs on topological surfaces.

\begin{lemma}\label{equiv graph graph} Let $M$ be a compact surface endowed with a graph $\G$. Let $c$ and $c'$ be two elements of $\Path(\G)$. Assume that $c$ and $c'$ are elementarily equivalent. Then there exist $a,b,d$ in $\Path(\G)$ such that $\{c,c'\}=\{ab,add^{-1}b\}$.
\end{lemma}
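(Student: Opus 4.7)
The plan is to analyze the point $x\in M$ where $a$ ends and $b$ begins in $c=ab$; this is also the base point of the spur $dd^{-1}$ in $c'=add^{-1}b$ (without loss of generality, up to swapping $c$ and $c'$). I would parametrize $c$ and $c'$ as edge concatenations $c=e_1\cdots e_n$ and $c'=f_1\cdots f_m$ of elements of $\E$; this equips each path with a natural set of \emph{vertex times}, namely the parameter values at which it passes through $\V$, which by definition of a graph are exactly the joints of the edge decomposition (since edges meet $\V$ only at their endpoints). The argument then splits according to whether $x$ lies in $\V$.

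\textbf{Case 1: $x\in\V$.} The times $t_0$ (in $c$) and $s_0<s_2$ (in $c'$, bracketing the spur $dd^{-1}$) at which the path sits at $x$ are all vertex times of the respective edge decompositions. Hence $a$ and $b$, viewed as sub-paths of $c$ between successive vertex times, are concatenations of edges and thus lie in $\Path(\G)$. The spur $c'|_{[s_0,s_2]}=dd^{-1}$ is likewise a sub-path of $c'$ between two vertex times. At its midpoint $s_1$, the path $c'$ reverses direction; since each edge of $\G$ is an injective path, $c'$ can reverse direction only at a vertex, so $c'(s_1)\in\V$, and $d=c'|_{[s_0,s_1]}$ is a concatenation of edges, i.e.\ $d\in\Path(\G)$. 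Take $(a',b',d')=(a,b,d)$.

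\textbf{Case 2: $x\notin\V$.} The plan is to show $d$ must be constant; then $c'=ab=c$, and one may take $a'=c$, $b'=d'=$ constant path at $\overline{c}\in\V$, all of which lie in $\Path(\G)$. Let $e$ be the unique unoriented edge of $\G$ whose interior contains $x$ (unique since distinct unoriented edges meet only at vertices), and write $e=\beta\alpha$ with $\beta$ joining $\underline{e}$ to $x$ and $\alpha$ joining $x$ to $\overline{e}$. Around both $s_0$ and $s_2$ the path $c'$ sits at $x$ while traversing an edge of $\G$ whose interior contains $x$, which must be $e$ up to orientation; consequently $c'$ is locally injective at $s_0$ and $s_2$. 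Local injectivity at $s_0$, combined with the fact that $a$ arrives at $x$ along $\beta$, forces $d$ to leave $x$ along $\alpha$. Consequently $d^{-1}$ ends by arriving at $x$ along $\alpha^{-1}$, i.e.\ from the $\overline{e}$ side; but local injectivity at $s_2$, combined with the fact that $b$ leaves $x$ along $\alpha$ (toward $\overline{e}$), forces $c'$ to arrive at $x$ at time $s_2$ from the opposite, $\underline{e}$ side. This contradiction rules out any non-constant $d$.

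The main obstacle will be Case 2: one must rule out a non-trivial spur attached at a non-vertex point of $c$. The resolution relies on the rigidity provided by injectivity of edges (no U-turn can occur inside an edge) together with the fact that distinct unoriented edges of $\G$ meet only at vertices, which pins down the local behavior of $c'$ near $x$ enough to derive the contradiction.
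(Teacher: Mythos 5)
Your proof is correct and follows essentially the same route as the paper's: reduce to showing that the cut points of the decomposition are vertices, use injectivity of edges to place the tip of the spur ($\overline{d}$) at a vertex, and rule out a non-trivial spur based at an interior point of an edge by a local analysis of the edge-germs available there. The only differences are cosmetic: the paper subdivides the graph at $m=\overline{a}$ and observes that a non-backtracking configuration would force three distinct outgoing edges at $m$, whereas you reach the same contradiction by tracking orientations along the two half-edges $\beta$ and $\alpha$, and you additionally make explicit the degenerate case of a constant $d$, which the paper's argument leaves implicit.
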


\begin{proof}Let $a,b,d$ be given by the definition of the fact that $c$ and $c'$ are elementarily equivalent. Let us assume that $c=ab$ and $c'=add^{-1}b$. Since $a$, $b$, $d$ are pieces of a path in $\G$, it suffices to show that their endpoints are vertices of $\G$ to show that they are themselves paths in $\G$. This is clear for $\underline{a}=\underline{c}$ and $\overline{b}=\overline{c}$. 

Let us say that the curve $c$ {\em backtracks} at a point $m\in M$ if there exists $t,\epsilon>0$ such that $(t-\epsilon,t+\epsilon)\subset [0,1]$ and a parametrization of $c$ such that $c(t)=m$ and for all $h\in[0,\epsilon)$, $c(t+h)=c(t-h)$. A point at which a path in $\G$ backtracks must be a vertex, hence $\overline{d}$, which is a point at which $c'$ backtracks, is a vertex. There remains to prove that $m=\overline{a}=\underline{d}=\underline{b}$ is a vertex. 

Let $\G'$ be the graph obtained by adding $m$ to the set of vertices of $\G$ and subdividing the edges of $\G$ accordingly. The graph $\G'$ has the same skeleton as $\G$. We claim that either $m$ is a backtracking point for $c$ or $c'$, or $\G'$ has at least three distinct outgoing edges at $m$. In both cases, it follows that $m$ is a vertex of $\G$.

Let $e_a^{-1}$ denote the last edge of $a$ as a path in $\G'$, and $e_d$ and $e_b$ the first edges of $d$ and $b$ as paths in $\G'$. Let us assume that $c$ does not backtrack at $m$. Then $e_a\neq e_b$. Let us assume that $c'$ does not backtrack at $m$ either. Then $ad$ and $d^{-1}b$ do not backtrack at $m$, so that $e_a\neq e_d$ and $e_d\neq e_b$. The claim is proved. \end{proof}

If $c$ is a path in a graph, we call {\em combinatorial length} of $c$ and denote by ${\rm length}(c)$ the number of edges which constitute $c$. A path is said to be {\em reduced} if it contains no sequence $ee^{-1}$ for some $e\in \E$. Equivalently, a path is reduced if it is not elementarily equivalent to a combinatorially shorter path.

\begin{corollary} Let $M$ be a compact surface endowed with a graph $\G$. Every class of equivalence of
$\Path(\G)$ contains a unique element of shortest combinatorial length, which is characterized by the fact that it is reduced.
\end{corollary}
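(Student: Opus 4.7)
The plan is to view paths in $\Path(\G)$ as words in the alphabet $\E$ and to obtain the result by a Newman-style confluence argument applied to the obvious reduction relation on such words. I would first introduce the relation $c \to c'$ on $\Path(\G)$ defined by the existence of a decomposition $c = a e e^{-1} b$ and $c' = a b$ with $e \in \E$ and $a, b \in \Path(\G)$ (allowing $a$ or $b$ to be empty). Each such step strictly decreases the combinatorial length, hence every sequence of reductions terminates at a reduced path. In particular, the existence of a reduced element in every equivalence class is clear, and any such element is automatically of minimal combinatorial length in its class.

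The substantive task is then uniqueness. First I would check that, on $\Path(\G)$, the relation $\simeq$ coincides with the reflexive, symmetric, transitive closure $\leftrightarrow^*$ of $\to$. The inclusion $\leftrightarrow^* \subset \simeq$ is immediate by taking $d=e$ a single edge in the definition of elementary equivalence. For the reverse inclusion, given $\{c,c'\}=\{ab, add^{-1}b\}$, Lemma \ref{equiv graph graph} forces $a$, $b$ and $d$ to lie in $\Path(\G)$, and writing $d = e_1\cdots e_k$ one passes from $ab$ to $add^{-1}b$ by successively inserting the pairs $e_1 e_1^{-1}$, then $e_2 e_2^{-1}$ inside, and so on, each insertion being an inverse reduction.

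The main obstacle is local confluence of $\to$, but it reduces to a small case analysis. Given two distinct reductions $c \to c_1$ and $c \to c_2$ cancelling pairs at positions $i < j$, either $j \geq i+2$ and the two cancellations are disjoint, so that performing the other one afterwards yields a common descendant $c_3$ of both $c_1$ and $c_2$; or $j = i+1$, which forces a local pattern $e e^{-1} e$, and a direct inspection shows $c_1 = c_2$. Combined with termination, Newman's lemma then yields full confluence of $\to$.

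Finally, I would conclude: each $\simeq$-class is a $\leftrightarrow^*$-class, hence admits by confluence a unique $\to$-normal form, that is, a unique reduced path. Since every non-reduced path admits a strictly length-decreasing reduction, this unique reduced element is at the same time the unique element of minimal combinatorial length, which is exactly the characterization asserted in the statement.
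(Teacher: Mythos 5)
Your proof is correct and is essentially the paper's argument in different packaging: the paper also reduces to single-edge insertions and erasures via Lemma \ref{equiv graph graph}, and its uniqueness argument is a hand-rolled peak elimination (a chain between two reduced paths doubly minimized over maximal length and number of maxima) whose critical case analysis — disjoint cancellations commute, and the overlap $ee^{-1}e$ yields the same result either way — is exactly your local confluence check. Invoking Newman's lemma rather than reproving it inline, and spelling out the decomposition $d=e_1\cdots e_k$ that the paper leaves implicit, are the only differences.
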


\begin{proof}By Lemma \ref{equiv graph graph}, two paths in a graph which are equivalent differ by a finite number of insertions or erasures of sequences $ee^{-1}$, where $e$ is an edge of $\G$. 

Let us consider an equivalence class of paths for the equivalence. This class contains paths of minimal combinatorial length. These paths are necessarily reduced. Thus, it suffices to prove that the given class contains only one reduced path. Assume that there are two distinct reduced paths, say $c$ and $c'$. Consider a chain of paths $c=c_{0},\ldots,c_{n}=c'$ obtained by successive erasures and insertions of sequences $ee^{-1}$ where $e\in \E$. Assume that this chain minimizes $\max \{{\rm length}(c_{0}),\ldots,{\rm length}(c_{n})\}$ among all chains from $c$ to $c'$ and that, among those
minimizers, it also minimizes the number of intermediate paths of maximal length. Consider an
integer $k$ such that $c_{k}$ has maximal length among $c_{0},\ldots,c_{n}$. Since $c_{0}$ and
$c_{n}$ are reduced, $c_{1}$ is deduced from $c_{0}$ by an insertion and $c_{n}$ from $c_{n-1}$ by
an erasure. Thus, $k\in\{1,\ldots,n-1\}$. So, $c_{k}$ is deduced from $c_{k-1}$ by an insertion
of, say, $ee^{-1}$ and $c_{k+1}$ from $c_k$ by an erasure of $ff^{-1}$. Let us assume that $e\notin\{f,f^{-1}\}$. Then the sequence $ff^{-1}$ is already present in $c_{k-1}$ and could have been
removed before the insertion of $ee^{-1}$, thus diminishing the number of intermediate paths of
maximal length. By assumption, this is impossible. Hence, $e=f$. Moreover, for the same reason, the sequence $ee^{-1}$
removed between $c_{k}$ and $c_{k+1}$ is not present in $c_{k-1}$. It cannot be the sequence
$ee^{-1}$ inserted between $c_{k-1}$ and $c_{k+1}$, for then $c_{k-1}=c_{k+1}$ and by removing $c_{k}$ and $c_{k+1}$ from the chain, we would again diminish the number of intermediate paths of
minimal length. Hence, exactly one of the letters inserted in $c_{k}$ is removed between $c_{k}$
and $c_{k+1}$. There are two cases and in both, it appears that $c_{k-1}=c_{k+1}$. This is again
impossible. Finally, there is exactly one reduced path in each tree-equivalence class of paths.
\end{proof}

We can now define the group of reduced loops.

\begin{definition} Let $M$ be a smooth surface endowed with a graph $\G$. Let $v$ be a vertex
of $\G$. We denote by $\RL(\G)$ (resp. $\RL_{v}(\G)$) the
subset of $\Loop(\G)$ formed by reduced loops (resp. reduced loops based at $v$). 

The set $\RL_{v}(\G)$ is a group for the operation of concatenation-reduction, which to  two loops $l_1$ and $l_2$ associates the unique reduced loop equivalent to $l_1l_2$. 
\end{definition}
\index{RARL@$\RL_{m}(\G)$}
\index{loop!reduced}
\index{group of reduced loops}

It is a classical fact that the group $\RL_v(\G)$ is free. At a later stage, we will spend some effort to find families of generators of this group which satisfy special properties. For the moment, let us simply recall why it is a free group, by using spanning trees.

\begin{definition} Let $M$ be a compact surface endowed with a graph $\G$. 

A {\em spanning tree} of $\G$ is a subset $T\subset \E$ such that $T=T^{-1}$ and such that by concatenating edges of $T$, one may construct a path from any vertex to any other but no simple cycle. 

For all vertices $v_{1},v_{2}$ of $\G$, we denote by $[v_{1},v_{2}]_{T}$ the unique injective path in $T$ which joins $v_1$ to $v_2$. For each edge $e$ of $\E$, we define the loop $l_{e,T}$  by setting $l_{e,T}=[v,\underline{e}]_{T} e [\overline{e},v]_{T}$.
\end{definition}

If $e\in T$, then $l_{e,T}$ is equivalent to the constant loop. Otherwise, it is reduced and in fact a lasso. Indeed, the paths $[v,\underline e]_{T}$ and $[v,\overline e]_{T}$ can be written in a unique way as $[v,w]_{T}[w,\underline e]_{T}$ and $[v,w]_{T}[w,\overline e]_{T}$ with $[w,\underline e]_{T}\cap [w,\overline e]_{T}=\{w\}$. Then $l_{e}$ is the lasso with
spoke $[v,w]_{T}$ and meander $[w,\underline e]_{T} e [\overline e,w]_{T}$.

\begin{definition} Let $M$ be a compact surface endowed with a graph $\G$. An {\em orientation} of $\G$ is a subset $\E^+$ of $\E$ such that for all $e\in \E$, exactly one of the two edges $e$ and $e^{-1}$ belongs to $\E^+$. If $M$ is oriented, $e$ is an edge which lies on the boundary of $M$ and which bounds $M$ positively, we insist that $e\in \E^+$.

Given an orientation $\E^+$ of $\G$ and a subset $Q\subset \E$, we use the notation $Q^+=Q\cap \E^+$. 
\end{definition}
\index{EAE@$\E^+$} 

Given a graph $\G$, we set $\v(\G)=\#\V$, $\e(\G)=\frac{1}{2}\#\E$ and $\f(\G)=\#\F$. The following lemma is classical. 
\index{VAv@$\v(\G)$}
\index{FAaf@$\f(\G)$}
\index{EAe@$\e(\G)$}

\begin{lemma}\label{lg is free} Let $M$ be a compact surface endowed with a graph $\G$. Let $v$ be a vertex of $\G$.
Let $T\subset \E$ be a spanning tree of $\G$. Let $\E^+$ be an orientation of $\G$. The group $\RL_{v}(\G)$
is freely generated by the loops $\{l_{e,T}:e\in (\E\setminus T)^+\}$.
In particular, it is free of rank $\e(\G)-\v(\G)+1$. Moreover, the natural mapping $\Loop_{v}(\G)\lra
\pi_{1}(\Sk(\G),v)$ descends to a group isomorphism $i:\RL_{v}(\G)\stackrel{\sim}{\lra}\pi_{1}(\Sk(\G),v)$.
\end{lemma}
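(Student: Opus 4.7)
The plan is to establish separately that the loops $\{l_{e,T}:e\in (\E\setminus T)^+\}$ generate $\RL_v(\G)$ and that they do so freely, then to deduce the isomorphism with $\pi_1(\Sk(\G),v)$ by comparing generators on both sides. The rank formula then follows from a quick edge count.

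For generation, I would start from an arbitrary reduced loop $l=e_1\ldots e_n\in\RL_v(\G)$ and insert the pairs $[\overline{e_i},v]_T[v,\overline{e_i}]_T^{-1}$ between consecutive edges (with the convention $\underline{e_1}=\overline{e_n}=v$). This presents $l$ as $\simeq$-equivalent to the concatenation $l_{e_1,T}\,l_{e_2,T}\ldots l_{e_n,T}$, using that $[\overline{e_i},v]_T[v,\underline{e_{i+1}}]_T$ reduces to $[\overline{e_i},\underline{e_{i+1}}]_T$ inside the tree. If $e_i\in T$, then $l_{e_i,T}$ is equivalent to the constant loop at $v$, so the corresponding factor disappears. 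Finally, the straightforward identity $l_{e^{-1},T}=l_{e,T}^{-1}$ allows one to rewrite every remaining factor with $e_i\in (\E\setminus T)^+$ using an exponent $\pm 1$.

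Freeness is the real crux and I would argue as follows. Consider a reduced word $w=l_{f_1,T}^{\epsilon_1}\ldots l_{f_k,T}^{\epsilon_k}$ with $f_i\in(\E\setminus T)^+$ and $(f_i,\epsilon_i)\neq(f_{i+1},-\epsilon_{i+1})$ for each $i$; the goal is to check that the reduced loop equivalent to the concatenation of these lassos is non-constant. Writing out the concatenation explicitly, it has the shape
\begin{equation*}
[v,\underline{f_1^{\epsilon_1}}]_T\cdot f_1^{\epsilon_1}\cdot[\overline{f_1^{\epsilon_1}},\underline{f_2^{\epsilon_2}}]_T\cdot f_2^{\epsilon_2}\cdot[\overline{f_2^{\epsilon_2}},\underline{f_3^{\epsilon_3}}]_T\cdots f_k^{\epsilon_k}\cdot[\overline{f_k^{\epsilon_k}},v]_T,
\end{equation*}
once each pair $[\cdot,v]_T[v,\cdot]_T$ between two successive lassos has been reduced using the uniqueness of paths in a tree. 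Since the edges $f_i^{\epsilon_i}$ lie outside $T$, they do not appear in any of the interpolating tree-paths, and therefore no reduction can erase an $f_i^{\epsilon_i}$ except by cancelling it against an adjacent $f_{i+1}^{\epsilon_{i+1}}$ separated by an empty tree-path. That would require $f_i=f_{i+1}$ and $\epsilon_i=-\epsilon_{i+1}$, contradicting the assumption that $w$ is reduced. Consequently the reduced form of the concatenation still contains all the letters $f_i^{\epsilon_i}$, hence is non-constant. The rank is then $\#(\E\setminus T)^+=\e(\G)-(\v(\G)-1)$, since a spanning tree on $\v(\G)$ vertices has $\v(\G)-1$ unoriented edges.

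For the last assertion, I would use that two loops of $\Loop_v(\G)$ related by an insertion or erasure of a sequence $ee^{-1}$ are homotopic in $\Sk(\G)$, so the natural map $\Loop_v(\G)\to\pi_1(\Sk(\G),v)$ descends to a well-defined homomorphism $i:\RL_v(\G)\to\pi_1(\Sk(\G),v)$. It is surjective because the skeleton is arcwise connected by paths in $\G$ and every continuous loop in a $1$-dimensional CW complex is homotopic to a combinatorial loop. By the classical computation of the fundamental group of a graph via a spanning tree, $\pi_1(\Sk(\G),v)$ is free on the homotopy classes of the loops $l_{e,T}$ for $e\in(\E\setminus T)^+$; the map $i$ sends the free basis of $\RL_v(\G)$ produced above onto this free basis, so it is an isomorphism. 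The main obstacle in this outline is really the freeness argument, specifically the careful verification that the letters $f_i^{\epsilon_i}$ survive all cancellations; the remaining steps are essentially bookkeeping.
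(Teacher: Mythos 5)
Your proof is correct, and the generation step, the rank count, and the final identification with $\pi_1(\Sk(\G),v)$ (collapsing the spanning tree, or equivalently invoking the classical spanning-tree computation) all match the paper's argument. Where you genuinely diverge is in the proof of freeness. The paper verifies the universal property: given any group $X$ and any choice of elements $x_e$ for $e\in\E\setminus T$ with $x_{e^{-1}}=x_e^{-1}$, extended by $x_e=1$ on $T$, the assignment $e_1\ldots e_n\mapsto x_1\ldots x_n$ is invariant under insertion and erasure of $ee^{-1}$ and hence descends to a homomorphism $\RL_v(\G)\to X$ sending $l_{e,T}$ to $x_e$; freeness is immediate. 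You instead argue directly that a reduced word in the lassos has non-constant reduced form, by observing that the letters $f_i^{\epsilon_i}$ lie outside $T$ while all interpolating segments lie inside $T$, so cancellation of an $f_i^{\epsilon_i}$ forces an adjacent inverse pair, contradicting reducedness. Your argument is sound --- to make it fully airtight one should note that if $f_i^{\epsilon_i}$ cancels against some non-adjacent $f_j^{-\epsilon_i}$ then everything strictly between them must cancel first, and an innermost such pair is necessarily adjacent with empty tree-path between, which is exactly the excluded configuration. The trade-off is the usual one: the universal-property route is shorter and avoids all cancellation bookkeeping (the only check is that $x_ex_{e^{-1}}=1$), while your route is more concrete, produces the reduced normal form explicitly, and makes visible why the hypothesis $(f_i,\epsilon_i)\neq(f_{i+1},-\epsilon_{i+1})$ is exactly what is needed. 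Both are standard and complete proofs.
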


\begin{proof}If $l=e_{1}\ldots e_{n}$ belongs to $\Loop_{v}(\G)$, then $l\simeq l_{e_{1},T}\ldots
l_{e_{n},T}$. Hence, the loops $l_{e,T},e\in \E\setminus T$ generate $\RL_{v}(\G)$. Since $l_{e^{-1},T}=l_{e,T}^{-1}$ for all $e\in \E$, this implies that the loops $l_{e,T},e\in (\E\setminus T)^+$ generate $\RL_{v}(\G)$.
Now let $X$ be a group. Let $x=\{x_{e}:e\in \E\setminus T \}$ be a collection of elements of $X$ such that $x_{e^{-1}}=x_{e}^{-1}$ for all $e\in \E-T$. Complete the collection $x$ by setting $x_{e}=1$ for all $e\in T$. The mapping from $\Loop_{v}(\G)$ to $X$ which sends the loop $l=e_{1}\ldots e_{n}$ to $x_{1}\ldots x_{n}$ descends to a group homomorphism from $\RL_v(\G)$ to $X$ which sends $l_{e,T}$ to $x_{e}$ for all $e\in\E$. Thus, $\RL_{v}(\G)$ satisfies the universal property which characterizes freeness. Finally, since $T$ has $\v(\G)$ vertices, it has $\v(\G)-1$
unoriented edges. Hence, $\RL_{v}(\G)$ is free of rank $\e(\G)-\v(\G)+1$.

It is obvious that two equivalent loops are homotopic in $\Sk(\G)$. Hence, the morphism
$i$ is well defined. Let us use the letter $T$ to denote the subset $\bigcup_{e\in T} e$ of $\Sk(\G)$. This
subset is contractible and it is easy to check that $\Sk(\G)$ has the same homotopy type as $\Sk(\G)/T$, the topological
space obtained from $\Sk(\G)$ by identifying all the points of $T$. This topological space is a
bunch of circles, one for each element of $(\E\setminus T)^+$. Moreover, each loop $l_{e,T}$, composed with
the continuous projection $\Sk(\G)\lra \Sk(\G)/T$, becomes a loop which goes once around the
circle corresponding to $e$. Thus, the composition of $i$ with the isomorphism $\pi_{1}(\Sk(\G),v)\lra
\pi_{1}(\Sk(\G)/T,v)$ is an isomorphism, and $i$ is also an isomorphism. \end{proof}

\subsection{Graphs with one face}
\label{subsec: gr1}

By Proposition \ref{structure pregraph}, a graph with a single face on a connected surface determines a way of realizing this surface as the quotient of a disk by a suitable identification of its boundary. On the other hand, many non-isomorphic patterns with a single face give rise, when they are completely sewed, to homeomorphic surfaces. In this section, we discuss this fact in relation with the classical proof of the theorem of classification of surfaces (see Theorem \ref{classif}) by cut-and-paste. 

It is convenient to represent a pattern with one face by a word in a free group. This is what we explain now. For each integer $n\geq 1$, let us call {\em $n$-gon} the split pattern $(D,\G_n)$ formed by a closed disk $D$ and a graph $\G_n$ with $n$ unoriented edges on the boundary of $D$. This split pattern is unique up to homeomorphism. 

 \begin{definition}\label{admissible class}
Consider a set $X$ and let $\langle X \rangle$ denote the free group over $X$. 

1. Let $w$ be a an element of $\langle X \rangle$.  Write $w$ as a reduced word $x_1\ldots x_n$ with $x_{1},\ldots,x_{n}\in X\cup X^{-1}$. We say that $w$  is {\em admissible} if $w$ is cyclically reduced, that is, if $x_n\neq x_1^{-1}$ and each letter of $X$ appears at most twice in $w$, that is, for each $x\in X$, 
$$\#\{i\in \{1,\ldots,n\} : x_{i}\in \{x,x^{-1}\}\}\leq 2.$$
We say that an admissible word $w$ is {\em closed} if no letter appears exactly once in $w$.
\end{definition}

The fact that a word is admissible is not changed if this word is submitted to a circular permutation of its letters nor if it is replaced by its inverse. Of course, it is not changed either by changing the names of the letters: the set $X$ plays no special role and we identify two words which differ only by relabelling the letters which constitute them. We define now a correspondence between admissible words and graphs with one face.

\begin{definition} \label{mot W}
1. Let $M$ be a compact surface endowed with a graph $\G$. Assume that $\G$ has a single face and that
each vertex of $\G$ is the starting point of at least two distinct edges. Let $\E^+$ be an orientation of $\G$. Then each cycle which represents the boundary of the unique face of $\G$ is a cyclically reduced word in the letters of $\E^+$, that is, an element of $\langle \E^+\rangle$. We define
$$W(M,\G)=\{w\in \langle \E^+\rangle : w \mbox{ is a facial cycle of } \G \}.$$

2. Let $w=x_1\ldots x_{n}$ be an admissible word of length $n$. Let $(D,\G_{n})$ be an $n$-gon. Write $\E_n=\{e_1^{\pm 1},\ldots,e_{n}^{\pm 1}\}$ in such a way that $e_1\ldots e_{n}$ represents $\partial D$. Let $\iota_w$ be the involution of $\E_n$ defined as follows:
$$\forall i\in\{1,\ldots,n\}, \iota_w(e_i)=\left\{\begin{array}{cl} e_j^{\epsilon} & \mbox{ if there exists } j\neq i \mbox{ and } \epsilon=\pm 1 \mbox{ such that } x_i= x_j^\epsilon \\ e_i & \mbox{ otherwise.} \end{array}\right.$$ 
The closed compact surface obtained by completely sewing the pattern $(D,\G_{n},\iota_w)$ is said to be {\em associated} with $w$ and we denote it by $\Sigma(w)$.
\end{definition}
\index{admissible!word}

Let $M$ be a compact surface. It follows from the definitions that for all graph $\G$ with a single face on $M$ and for all $w\in W(M,\G)$, the surface $\Sigma(w)$ is homeomorphic to $M$. On the other hand, there are in general many admissible words which are not in $W(M,\G)$ whose associated surface is homeomorphic to $M$. Ignoring the precise set to which the letters of the words that we consider belong, we define a set of words as follows:
$$W(M)=\{w  \mbox{ admissible word } : M(w) \mbox{ is homeomorphic to } M\}.$$
Each word of $W(M)$ belongs to $W(M,\G)$ for some graph $\G$, for instance the graph constructed by sewing the pattern associated to this word. 

\begin{proposition}\label{graph to graph} Let $M$ be a compact surface. Let $\G_1$ and $\G_2$ be two graphs on $M$. There exists a homeomorphism $f:M\to M$ which preserves each connected component of $\partial M$, and which is orientation-preserving if $M$ is oriented, and a finite sequence of graphs $\G_{1,0},\ldots,\G_{1,r}$ such that $\G_{1,0}=\G_1$ and $\G_{1,r}=f(\G_2)$, and such that for all $i\in \{0,\ldots,r-1\}$, $\G_{1,i+1}$ is deduced from $\G_{1,i}$ by erasure of an edge in the sense of Proposition \ref{erase edge} or by adjunction of an edge in the sense of Proposition \ref{add edge}.
\end{proposition}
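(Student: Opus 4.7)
The plan is to reduce both $\G_1$ and $\G_2$ to a common canonical form by the allowed moves, and then identify them by a homeomorphism. This mirrors the classical cut-and-paste proof of the classification of surfaces (Theorem \ref{classif}).

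First, I would reduce each graph to one with a single face on each connected component of $M$. Given a graph $\G$ with at least two faces on some component, the connectedness of the skeleton guaranteed by condition $4'$ of Proposition \ref{equiv disks} forces some interior edge $e$ to be adjacent to two distinct faces; Proposition \ref{erase edge}(1) then erases $e$, merging the two faces. Iterating until only one face remains per component, I obtain a graph whose boundary edges have never been touched (each of them is adjacent to a single face). By Proposition \ref{structure pregraph}, such a one-face graph realizes each component of $M$ as a polygon with boundary edges identified in pairs, its facial cycle being an admissible word in the sense of Definition \ref{admissible class}. The cut-and-paste arguments used to prove Theorem \ref{classif} transform this word into a canonical form — $[a_1,b_1]\cdots[a_g,b_g]$ in the orientable case, $a_1^2\cdots a_g^2$ in the non-orientable case, with extra factors recording each boundary component — through a finite sequence of elementary polygon transformations. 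Each such transformation is realized in our setting by first adjoining a diagonal inside the unique face to split it in two (Proposition \ref{add edge}), and then erasing the edge that has become adjacent to both new faces (Proposition \ref{erase edge}(1)); the vertex contractions needed to reach the minimal-vertex canonical form are performed by Proposition \ref{erase edge}(2), once a tail vertex has been produced. The resulting canonical graph $\G^{\mathrm{can}}$ has combinatorial structure depending only on the topological type of $M$.

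Once both $\G_1$ and $\G_2$ have been reduced to canonical forms $\G_1^{\mathrm{can}}$ and $\G_2^{\mathrm{can}}$ with identical combinatorial data, Proposition \ref{bord combi} — together with Theorem \ref{diffeo bord} to handle orientation and the boundary — produces a homeomorphism $f : M \to M$, preserving each connected component of $\partial M$ and orientation-preserving if $M$ is oriented, such that $f(\G_2^{\mathrm{can}}) = \G_1^{\mathrm{can}}$. The required sequence is then obtained by concatenating the reduction of $\G_1$ to $\G_1^{\mathrm{can}}$ with the reversal of the reduction of $\G_2$ to $\G_2^{\mathrm{can}}$, conjugated by $f$. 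Each adjunction is inverted by erasing the newly adjoined edge (which is adjacent to the two distinct new faces it has just created), and each erasure is inverted by re-adjoining the removed edge.

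The main obstacle is the second step: verifying that every elementary polygon transformation required by the classical cut-and-paste proof can actually be implemented by our two restricted moves — especially in the non-orientable case, where it corresponds to the gluing of a M\"obius band (cf.\ Proposition \ref{connected sum}), and in the presence of boundary components, which must remain fixed throughout. A subsidiary difficulty is to arrange that each adjunction needed is admissible, namely that the two vertices joined by the new diagonal always lie on the boundary of the face in which it is drawn, so that Proposition \ref{add edge} applies.
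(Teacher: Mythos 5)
Your proposal follows essentially the same route as the paper: erase edges to reduce each graph to a single face per component, realize the cut-and-paste moves of the classical classification proof (Massey) as an adjunction of a diagonal followed by an erasure of the edge now adjacent to both new faces, and conclude from the equality of the resulting admissible words via the uniqueness of split patterns. The "main obstacle" you flag is exactly the step the paper disposes of by observing that the general cutting-and-pasting operation is precisely one admissible adjunction followed by one admissible erasure, so your plan is sound and matches the paper's argument.
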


\begin{proof} By erasing enough edges of $\G_1$ and $\G_2$, we can transform them into two graphs with a single face and of which every vertex is the initial point of at least two distinct edges. Such graphs determine two words which we denote by $w_1$ and $w_2$.

The theorem of classification of surfaces as it is proved in \cite{Massey} asserts that, by repeated operations of cutting and pasting, $w_1$ and $w_2$ can be put under one of the standard words $[a_1,a_2]\ldots [a_{\rg-1},a_{\rg}]$ or $a_1^2 \ldots a_{\rg}^2$ if $M$ is closed of genus $\rg$, or the same words multiplied by a word of the form $d_1 c_1 d_1^{-1} \ldots d_\p c_\p d_\p^{-1}$ if $M$ has a boundary, with $c_1,\ldots,c_\p$ corresponding to the $\p$ boundary components of $M$.

The general operation of cutting and pasting, described at the level of a split pattern $(M',\G')$ of $\G$, consists in choosing two vertices $v_1$ and $v_2$ on the boundary of $M'$ and a pair of edges ${e,\iota(e)}$ which are identified by $\iota$ and separated by $v_1$ and $v_2$. One then adds to $\G'$ an edge inside $M'$ which joins $v_1$ to $v_2$, identifies $e$ and $\iota(e)$ and removes the joint of this identification (see Figure \ref{cutandpaste}). Seen on $M$, these operations can be described simply as follows:  add an edge to $\G_1$ joining $v_1$ to $v_2$, thanks to Proposition \ref{add edge}, thus creating two faces, and remove the edge $e$, thus retrieving a graph with a single face. 

\begin{figure}[h!]
\begin{center}
\scalebox{0.85}{\includegraphics{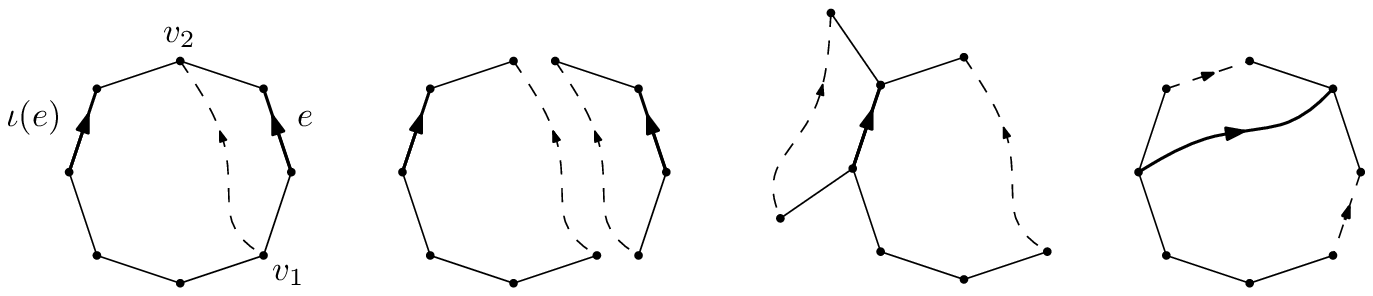}}
\caption{The basic operation of cutting and pasting}
\end{center}\label{cutandpaste}
\end{figure}

Thus, by successively adding and erasing edges to $\G_1$ and $\G_2$, we may arrive at a point where $W(M,\G_1)=W(M,\G_2)$. Then, by Proposition \ref{structure pregraph}, there exists a homeomorphism of $M$ which sends $\G_1$ to $\G_2$.  \end{proof}

\section{Riemannian metrics}
\subsection{Measured and Riemannian marked surfaces}
In the setting of Markovian holonomy fields, the scale of time is provided by a means to measure areas on each surface that one considers.

\begin{definition} Let $M$ be a smooth compact surface. A {\em measure of area} on $M$ is a smooth non-vanishing density on $M$, that is, a Borel measure which has a smooth positive density with respect to the Lebesgue measure in any coordinate chart.
\end{definition}
\index{area}

A gluing is a diffeomorphisms outside a negligible subset. Hence, a measure of area on a surface determines a measure of area on any other surface obtained by splitting this surface along a curve.

\begin{definition} Let $M$ be a smooth compact surface endowed with a measure of area denoted by $\vol$. Let $l$ be a mark on $M$ and let $\Spl_l(M)$ be the surface obtained by splitting $M$ along $l$. Let $f:\Spl_l(M) \to M$ be the associated gluing. Then $\vol \circ f$ is a measure of area on $\Spl_l(M)$ which we denote by $\Spl_l(\vol)$.
\end{definition}

On an oriented surface, a measure of area is also the same thing as a non-vanishing differential $2$-form. We are going to work with surfaces endowed with a specific measure of area, on which we will choose  Riemannian metrics. We would like these two structures to be compatible. In fact, we have the following result.

\begin{proposition} \label{nice metric} Let $(M,\CS)$ be a marked surface. Let $\vol$ be a measure of area on $M$. There exists a Riemannian metric on $M$ whose Riemannian volume is $\vol$ and such that the curves of $\CS\cup\BS(M)$ are
closed geodesics. 
\end{proposition}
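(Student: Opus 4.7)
The plan is to reduce to a local construction near each curve of $\CS \cup \BS(M)$, glue with an arbitrary background metric, and then correct the total volume by a conformal factor that is identically $1$ near every curve so as not to destroy the geodesic property. Since the curves in $\CS \cup \BS(M)$ are pairwise disjoint smooth $1$-submanifolds, disjoint tubular neighborhoods $U_\gamma$ exist, and the problem splits naturally.

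The key local step is the following. For a mark $\gamma$ with an orientable tubular neighborhood, choose a diffeomorphism $(s,t) \mapsto \phi(s,t) \in U_\gamma$ with $\phi(s,0)=\gamma(s)$, and write $\phi^*\vol = w(s,t)\, ds\, dt$ for some smooth positive $w$. Declare the local metric to be
$$g_\gamma = ds^2 + w(s,t)^2\, dt^2.$$
The Riemannian volume of $g_\gamma$ is then exactly $w\, ds\, dt = \vol$ on $U_\gamma$. Along $\{t=0\}$ one has $g_{ss}=1$, $g_{st}=0$, hence $\partial_s g_{ss} = 0$ and $\partial_s g_{st} = 0$, so both Christoffel symbols $\Gamma^s_{ss}$ and $\Gamma^t_{ss}$ vanish on $\gamma$, which shows that $\gamma$ is a geodesic (parametrized by arc length, hence closed). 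For a boundary component the same formula works with $t\in[0,\varepsilon)$. For a mark whose tubular neighborhood is a M\"obius band, I would either perform the construction on the orientation double cover and push down---noting that both $ds^2$ and $w^2 dt^2$ are invariant under the deck involution $(s,t)\mapsto (s+L/2,-t)$ once $w$ is read off from the globally defined density $\vol$---or equivalently patch the formula on local orientable charts and observe that it is preserved by coordinate changes of the form $(s,t)\mapsto (\pm s +c,\pm t)$.

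Next I would globalize. Pick smaller tubular neighborhoods $U'_\gamma \Subset U_\gamma$, take any Riemannian metric $g_1$ on $M$, and glue using a smooth partition of unity subordinate to $\{U_\gamma\}\cup\{M\setminus\bigcup_\gamma \overline{U'_\gamma}\}$, weighting the symmetric tensors $g_\gamma$ and $g_1$. A positive convex combination of positive-definite bilinear forms is positive-definite, so the result is a Riemannian metric $g_2$ on $M$ that equals $g_\gamma$ on $U'_\gamma$; in particular every curve of $\CS \cup \BS(M)$ is already a closed geodesic for $g_2$.

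Finally, let $\rho = d\vol/d\mathrm{vol}_{g_2}$, a smooth positive function with $\rho\equiv 1$ on $\bigcup_\gamma U'_\gamma$, and set $g=\rho\, g_2$. In dimension two this conformal rescaling gives $\mathrm{vol}_g = \rho\cdot \mathrm{vol}_{g_2} = \vol$, while $g=g_2$ identically on each $U'_\gamma$, so the geodesic property of the curves is preserved. I expect the only real subtlety to be the local construction around marks admitting only a M\"obius neighborhood; once that invariance issue is dispatched, everything else is a routine partition-of-unity and conformal-scaling argument.
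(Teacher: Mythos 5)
Your proof is correct, and it takes a genuinely different route from the paper's. The paper proceeds in the opposite order: it first fixes the total volume globally by a conformal rescaling $\gamma_1=e^f\gamma_0$, then interpolates with flat product metrics $(dr^2+d\theta^2)_i$ in tubes around the marks to make them geodesic (which destroys the volume identity inside the tubes), and then restores the volume there by two explicit diffeomorphisms, one adjusting the radial coordinate and one the angular coordinate, chosen so as to preserve the marks. It handles non-orientable surfaces by passing to the orientation double cover and averaging $\frac{1}{2}(\gamma+\alpha^*\gamma)$ (the marks remain geodesic for the average because being a geodesic for two metrics implies being a critical point of the sum of the two energy functionals), and handles boundary by capping each boundary circle with a disk. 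Your key simplification is to choose the local model $ds^2+w(s,t)^2\,dt^2$ with $w$ read off from $\vol$ itself, so that the local metric has \emph{exactly} the prescribed volume form while making every circle $\{t=\mathrm{const}\}$ a unit-speed geodesic ($g_{ss}\equiv 1$ and $g_{st}\equiv 0$ force $\Gamma^s_{ss}=\Gamma^t_{ss}=0$; note the computation also uses $\partial_t g_{ss}=0$, which holds since $g_{ss}$ is identically $1$, not merely $1$ along the curve). This makes the final correction a conformal factor equal to $1$ near every curve, eliminating the paper's volume-restoring diffeomorphisms entirely, and it lets you treat the M\"obius-neighborhood and boundary cases inside the local step rather than by global doubling and capping. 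The only price is the deck-invariance check for $w$ on the orientation double cover of a M\"obius neighborhood, which you correctly dispatch since a density transforms by $|\det|$ and hence $w\circ\sigma=w$.
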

\index{Riemannian metric}

Before we prove this proposition, let us state a definition.

\begin{definition}\label{riem mark surf} A {\em marked Riemannian surface} is a quadruple $(M,\vol,\gamma,\CS)$, where $(M,\vol,\CS)$ is a marked surface endowed with a smooth density and $\gamma$ is a Riemannian metric on $M$ with Riemannian volume $\vol$ and such that the curves of $\CS\cup\BS(M)$ are closed geodesics. 
\end{definition}

\begin{proof}In this proof, we denote by $\vol_{\gamma}$ the Riemannian volume of a Riemannian metric $\gamma$ on $M$. Let us first assume that $M$ is orientable and has no boundary. We write $\CS=\{l_{1}^{\pm 1},\ldots,l_{p}^{\pm 1}\}$.  

Let $\gamma_{0}$ be any Riemannian metric on $M$. Let $f$ be the smooth real function on $M$ such
that $\vol=e^{f}\vol_{\gamma_{0}}$. Set $\gamma_{1}=e^{f}\gamma_{0}$. The Riemannian volume of
$\gamma_{1}$ is $\vol$.

For each $i\in \{1,\ldots,p\}$, let $T_{i}$ denote a neighbourhood of $l_{i}$ diffeomorphic to
$[-1,1]\times \RK/2\pi\Z$ such that $l_{i}$ corresponds to $\{0\} \times \RK/2\pi\Z$. We assume that
$T_{1},\ldots,T_{p}$ are pairwise disjoint. For each $i\in \{1,\ldots,p\}$, we denote by $(r,\theta)_{i}$
the natural coordinates in $T_{i}$. 

Let $\varphi:[-1,1]\lra [0,1]$ be a smooth function such that $\varphi([-1,-\frac{3}{4}]\cup
[\frac{3}{4},1])=\{0\}$ and $\varphi([-\frac{1}{2},\frac{1}{2}])=\{1\}$. Let $\Phi$ be the smooth
real function on $M$ defined by
$$\Phi(m)=\left\{\begin{array}{l} \varphi(r) \mbox{ if } m=(r,\theta)_{i} \mbox{ for some } i\in
\{1,\ldots,p\} \cr
0 \mbox{ otherwise.}\end{array}\right.$$
For each $i\in \{1,\ldots,p\}$, consider the Riemannian metric $(dr^2 +d\theta^2)_{i}$ in $T_{i}$
and set
$$\gamma_{2,i}=\frac{\int_{T_{i}} \Phi \; \vol_{\gamma_{1}}}{\int_{T_{i}} \Phi\;
\vol_{(dr^{2}+d\theta^{2})_i}} \;( dr^{2}+ d\theta^{2})_{i}.$$
Finally, define $\gamma_{2}$ by $\gamma_{2}=(1-\Phi) \gamma_{1}+ \Phi \sum_{i=1}^{p} \gamma_{2,i}.$
The Riemannian volume of $\gamma_{2}$ coincides with $\vol$ outside the set
$\bigcup_{i=1}^{p}\{(r,\theta)_{i} \in T_{i} : |r|\leq \frac{3}{4}\}$. Moreover, the total volume
of $T_{i}$ is the same for $\vol$ and $\vol_{\gamma_{2}}$. Finally, $l_{1},\ldots,l_{p}$
are geodesic for $\gamma_{2}$.

For each $i\in \{1,\ldots,p\}$, and all $r\in [-1,1]$, set $V_{i}(r)=\int_{0}^{r}\int_{0}^{2\pi}
\vol$. It is understood that $V_{i}(r)<0$ when $r<0$. Similarly, set $V_{i,\gamma_{2}}(r)=\int_{0}^{r}\int_{0}^{2\pi}
\vol_{\gamma_{2}}$. The functions $V_{i}$ and $V_{i,\gamma_{2}}$ are both smooth, increasing, equal
to $0$ at $r=0$ and they coincide on $[-1,-\frac{3}{4}]\cup [\frac{3}{4},1]$. Define a
diffeomorphism $\rho$ of $M$ by setting
$$\rho(m)=\left\{\begin{array}{l} (V_{i}^{-1}(V_{i,\gamma_{2}}(r)),\theta)_{i} \mbox{ if }
m=(r,\theta)_{i} \mbox{ for some } i\in \{1,\ldots,p\} \cr m \mbox{ otherwise.}\end{array}\right.$$
The metric $\gamma_{3}=(\rho^{-1})^{*}\gamma_{2}$ satisfies $\vol_{\gamma_{3}}=\rho_{*}
\vol_{\gamma_{2}}$. Hence, the volume of any cylinder $[-r,r]\times \RK/2\pi\Z$, $r\in
[0,1]$ is the same for $\vol_{\gamma_{3}}$ and for $\vol$. Moreover, since $\rho$ preserves the
curves $l_{1},\ldots,l_{p}$, they are still geodesic for $\gamma_{3}$.

Let $D$ and $D_{3}$ be the two smooth functions defined on $T_{1}\cup \ldots \cup T_{p}$ such that
$\vol = D dr\wedge d\theta$ and $\vol_{\gamma_{3}}=D_{3} dr\wedge d\theta$. For each $i\in
\{1,\ldots,p\}$, define
$$A_{i}(r,\theta)=\int_{0}^{\theta} D((r,\xi)_{i}) \; d\xi \mbox{ and } A_{i,\gamma_{3}}(r,\theta)=\int_{0}^{\theta}
D_{3}((r,\xi)_{i}) \; d\xi.$$
By construction of $\gamma_{3}$, we have for each $i\in \{1,\ldots,p\}$ and all $r\in [-1,1]$ the
equality $A_{i}(r,2\pi)=A_{i,\gamma_{3}}(r,2\pi)$. It is easy to check that the mapping $\alpha$
from $M$ to itself defined by
$$\alpha(m)=\left\{\begin{array}{l} (r,A_{i}^{-1}(A_{i,\gamma_{3}}(\theta)))_{i} \mbox{ if }
m=(r,\theta)_{i} \mbox{ for some } i\in \{1,\ldots,p\} \cr m \mbox{ otherwise}\end{array}\right.$$
is a diffeomorphism. Set $\gamma_{4}=(\alpha^{-1})^*\gamma_{3}$. Then by construction,
$\vol_{\gamma_{4}}$ and $\vol$ give the same volume to any subset of $T_{i}$ which is a rectangle
in the coordinates $(r,\theta)_{i}$. Hence, they agree on $T_{1}\cup \ldots \cup T_{p}$, hence on $M$.
Since $\alpha$ preserves the curves $l_{1},\ldots,l_{p}$, they are still geodesic for
$\gamma_{4}$. Thus, $\gamma_{4}$ has the desired properties. 

Let us explain how the result extends to surfaces with boundary and non-orientable surfaces. Let $M$ be a non-orientable surface without boundary. Then there exists an orientable double of $M$, that is, an orientable surface $D(M)$ and a smooth mapping $f:D(M)\to M$ which is a covering of degree $2$. This surface $D(M)$ can for instance be constructed as the unitary frame bundle of the real line bundle $\bigwedge^2 T^*M$ for some Riemannian metric on $M$. The density $\vol$ and the marks of $M$ can be lifted through $f$. This yields an orientable marked surface $(D(M),D(\CS))$ endowed with a density $D(\vol)$ whose total area is equal to twice that of $\vol$.  The result that we have proved above applied on $D(M)$ yields a Riemannian metric $\gamma$ with Riemannian volume $D(\vol)$ and such that the curves of $D(\CS)$ are geodesics. Let $\alpha$ be the non-trivial automorphism of the covering $f:D(M)\to M$, that is, the diffeomorphism of $D(M)$ which exchanges the sheets of the covering. Then the Riemannian metric $\alpha^* \gamma$ has Riemannian volume $\alpha^* D(\vol)=D(\vol)$ and makes the curves of $\alpha^{-1}(D(\CS))=D(\CS)$ geodesics. Since the equations of geodesics are linear in the metric, the curves of $D(\CS)$ are also geodesic for the metric $\frac{1}{2} (\gamma + \alpha^* \gamma)$. This metric has also Riemannian volume $D(\vol)$. Moreover, it is invariant by $\alpha$, hence descends to a Riemannian metric on $M$ with the desired properties.

Finally, if $M$ has a boundary, then we may glue a disk along each boundary component of $M$ and extend $\vol$ to a measure of area on the surface without boundary thus obtained. \end{proof}

\subsection{Partially ordered sets of graphs}

The set of graphs on a compact surface carries a natural partial order.

\begin{definition} Let $M$ be a topological compact surface. Let $\G_1$ and $\G_2$ be two graphs on $M$. We say that 
$\G_{2}$ is finer than $\G_{1}$ and write $\G_{1}\preccurlyeq\G_{2}$ if $\Curve(\G_{1})\subset
\Curve(\G_{2})$. 
\end{definition}
\index{graph!partial order on the set of}
\index{directed set of graphs|see{graph}}

It is straightforward that $\G_2$ is finer than $\G_1$ if and only if $\E_{1}\subset \Curve(\G_{2})$. The inclusion $\E_1\subset \E_2$ implies $\G_1\preccurlyeq \G_2$ but the converse is false. 

As a poset, the set of graphs on a surface has few good properties. In particular, it is not directed, which means that it contains pairs without upper bound. 

\begin{lemma}\label{not directed} Let $M$ be a topological compact surface. The set of graphs on $M$ endowed with the partial order $\preccurlyeq$ is not directed. In other words, there exist two graphs $\G_1$ and $\G_2$ on $M$ such that no third graph $\G$ satisfies $\G_1\preccurlyeq \G$ and $\G_2 \preccurlyeq \G$.
\end{lemma}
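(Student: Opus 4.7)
The plan is to exhibit two graphs $\G_1$ and $\G_2$ on $M$ containing two edges that meet at infinitely many isolated interior points. Because any common refinement $\G$ of $\G_1$ and $\G_2$ would have to contain those intersection points as vertices, and $\V(\G)$ is finite, no such common refinement can exist.

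First, I would pick any graph $\G_0=(\V_0,\E_0,\F_0)$ on $M$ (which exists by Rad\'o's theorem) and arrange, possibly after subdividing one edge of $\G_0$, that some face $F$ of $\G_0$ has two distinct vertices $a$ and $b$ on its boundary. By Proposition~\ref{add edge} there exists a simple rectifiable arc $\alpha$ from $a$ to $b$ with interior in $F$. Working in a tubular neighborhood of $\alpha$ inside $F$, identified with a strip in $\RK^2$ having $\alpha$ as its centerline, I would construct two simple rectifiable arcs $e_1,e_2:[0,1]\to\overline{F}$ from $a$ to $b$, with interiors in $F$, such that $e_1\cap e_2$ contains infinitely many isolated points, each interior to both arcs. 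A concrete choice is $e_1=\alpha$ and $e_2$ a piecewise-linear zig-zag around $\alpha$ whose horizontal steps and vertical excursions are summable, producing transverse crossings at a sequence of points accumulating at $b$. By Proposition~\ref{add edge}, the sets $\E_0\cup\{e_1,e_1^{-1}\}$ and $\E_0\cup\{e_2,e_2^{-1}\}$ are the edge sets of graphs $\G_1$ and $\G_2$ on $M$.

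Next, I would suppose for contradiction that $\G=(\V,\E,\F)$ is a graph with $\G_1\preccurlyeq \G$ and $\G_2\preccurlyeq \G$. Both $e_1$ and $e_2$ then belong to $\Curve(\G)$, so their images lie in $\Sk(\G)$. Let $p$ be any isolated point of $e_1\cap e_2$ that is an interior point of both arcs. If $p$ were not a vertex of $\G$, it would lie in the interior of a unique edge $e$ of $\G$, and by condition~$2$ of Definition~\ref{graph1} one could find an open neighborhood $U$ of $p$ in $M$ so small that $\Sk(\G)\cap U=e\cap U$, which is homeomorphic to an open interval. The images of $e_1$ and $e_2$ near $p$, being contained in $\Sk(\G)$, would then both sit inside this single embedded arc, and hence would coincide on an open neighborhood of $p$, contradicting the isolation of $p$ in $e_1\cap e_2$. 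So every such $p$ must be a vertex of $\G$, and as there are infinitely many such $p$ this contradicts the finiteness of $\V$.

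The main obstacle is the explicit planar construction of $e_1$ and $e_2$ with all four properties simultaneously (both simple, both rectifiable, both with interiors in $F$, and meeting at infinitely many isolated points in their interiors); this requires some care in choosing the step parameters but is a standard exercise in Euclidean geometry. Once those arcs are in hand, the topological argument above is immediate.
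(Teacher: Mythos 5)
Your overall strategy is the same as the paper's: produce two rectifiable edges that cross each other at infinitely many points, and conclude that no single graph can contain both. The paper works inside a coordinate disk with the explicit curves $e_1(t)=(t,0)$ and $e_2(t)=(t,t^2\sin\frac{\pi}{t})$, invokes Corollary~\ref{exist graph edge} to get a graph containing each one (so the two edges never need to attach to pre-existing vertices), and derives the contradiction from the \emph{faces}: the regions $A_k$ trapped between the two curves are infinitely many distinct bounded components of the complement of $e_1\cup e_2$, none of which can be covered by the nowhere dense set $\Sk(\G)$, so $\G$ would have infinitely many faces. You instead derive the contradiction from the \emph{vertices}: each isolated crossing point would have to be a vertex of the common refinement, because an interior point of a single edge $e$ has a neighbourhood in which $\Sk(\G)$ is contained in the image of the injective path $e$, and two injective arcs passing through an interior point of that image must overlap in a nondegenerate sub-arc there, contradicting isolation. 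That argument is correct (a continuous injection of an interval into the arc $e([0,1])\cong[0,1]$ is monotonic, so both images contain an open sub-arc around $p$), and it is a perfectly good, perhaps even more direct, way to finish.

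The one genuine soft spot is the construction of $e_2$. You propose to work ``in a tubular neighborhood of $\alpha$ inside $F$, identified with a strip in $\RK^2$ having $\alpha$ as its centerline,'' where $\alpha$ is the rectifiable arc produced by Proposition~\ref{add edge}. But such an identification is in general only a homeomorphism (the paper itself points out that it is not known whether a rectifiable edge can be straightened even locally by a bi-Lipschitz homeomorphism of the plane), and rectifiability and the lower speed bound in Definition~\ref{def path} are not preserved under mere homeomorphisms. So the zig-zag you build in the straightened strip need not transport back to a path on $M$. The fix is to do what the paper does: forget about attaching the two curves to vertices of a pre-existing graph, place both of them inside a single smooth coordinate disk using explicit formulas (your zig-zag, or the paper's $t\mapsto(t,t^2\sin\frac{\pi}{t})$, both of which are Lipschitz with speed bounded below and injective because the first coordinate is $t$), and then apply Corollary~\ref{exist graph edge} separately to $e_1$ and to $e_2$ to obtain $\G_1$ and $\G_2$. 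With that modification the rest of your argument goes through unchanged.
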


\begin{proof}Let $U$ be an open subset of $M$ identified through a homeomorphism with the disk of $\RK^2$ centred at the origin and of radius 2. Let $e_1$ and $e_2$ be the parametrized curves defined by
$$\forall t\in [0,1], \; e_1(t)=(t,0) \mbox{ and } e_2(t)=\left(t,t^2 \sin \frac{\pi}{t}\right).$$
For all $k\geq 1$, let $A_k$ denote the open domain of $\RK^2$ delimited by the restrictions of $e_1$ and $e_2$ to $[\frac{1}{k+1},\frac{1}{k}]$. The sets $A_k,k\geq 1$ are also the bounded connected components of the complement of the union of the ranges of $e_1$ and $e_2$ in the plane.

The curves $e_1$ and $e_2$ are edges, so by Lemma \ref{exist graph edge}, there exist two graphs $\G_1$ and $\G_2$ on $M$ such that $e_1$ is an edge of $\G_1$ and $e_2$ is an edge of $\G_2$. Assume that there exists a graph $\G$ such that $\G_1\preccurlyeq \G$ and $\G_2 \preccurlyeq \G$. Then $\Sk(\G)$ would contain the union of the ranges of $e_1$ and $e_2$.  Since the range of an edge has an empty topological interior, none of the sets $A_k$ would be contained in $\Sk(\G)$. Hence, $\G$ would have infinitely many faces. We have observed after Proposition \ref{structure pregraph} that this is impossible.\end{proof}
 
This fact will be a serious problem for us at a later stage. A better-behaved substitute for the set of graphs is the set of graphs with piecewise geodesic edges.  

\begin{definition} Let $(M,\CS)$ be a marked surface endowed with a Riemannian metric $\gamma$. We define $\GG_{\gamma}(M,\CS)$ as the
set of graphs on $(M,\CS)$ with piecewise geodesic edges, that is
$$\GG_{\gamma}(M,\CS)=\{\G=(\V,\E,\F) \mbox{ graph on } (M,\CS) : \E\subset \GPath_{\gamma}(M)\}.$$
\end{definition}
\index{GAG@$\GG_{\gamma}(M,\CS)$}

The set $\GG_\gamma(M,\CS)$ can be non-empty only if the marks on $M$ are geodesic curves. We know by Proposition \ref{nice metric} that it is always possible to choose a Riemannian metric on $M$ for which this is the case.

The next result states that $\GG_\gamma(M,\CS)$ is indeed a better set of graphs than the set of all graphs.

\begin{proposition} \label{geod graph} Let $(M,\CS)$ be a marked surface endowed with a Riemannian metric for which the marks are geodesic curves. Any finite family of piecewise geodesic paths on $M$ is a subset of $\Path(\G)$ for some graph $\G$ on $(M,\CS)$ with geodesic edges.

In particular, the poset $(\GG_\gamma(M,\CS),\preccurlyeq)$ is directed.
\end{proposition}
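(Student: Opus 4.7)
The plan is to turn the given finite family of piecewise geodesic paths into the edge set of a graph with geodesic edges by first extracting their underlying geodesic arcs, then organizing them into a pre-graph, and finally refining this pre-graph into a graph with the help of a suitable geodesic triangulation of $M$. To begin, I decompose each path of the family at its finitely many non-smooth interior points to obtain a finite family of geodesic arcs, and I enlarge this family by a decomposition of each mark of $\CS$ and of each boundary component of $M$ into geodesic arcs; these curves are closed geodesics in view of Definition \ref{riem mark surf}, and I split them at finitely many chosen interior points. Call the resulting finite family of geodesic arcs $\mathcal{A}$.

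Next, I would organize $\mathcal{A}$ into a pre-graph. The key local fact is that two geodesic arcs on $(M,\gamma)$ either share a maximal geodesic sub-arc or intersect in only finitely many points: at any accumulation point of intersections, the two arcs must share a common tangent direction at that point, and hence coincide on a neighbourhood by the uniqueness of geodesics with prescribed initial data. Using this, I first merge pairs of arcs that share a sub-arc by decomposing each into a ``before/shared/after'' concatenation and identifying the shared pieces, so that any two arcs in the updated family meet in only finitely many points. I then subdivide each arc at its finitely many mutual intersections with the others and at its own self-crossings. The resulting finite collection consists of simple geodesic paths meeting pairwise only at endpoints; together with their endpoints as vertices, it defines a pre-graph $\G_0 = (\V_0,\E_0,\F_0)$, in which each mark of $\CS$ and each boundary component of $M$ is represented by a loop of $\Loop(\G_0)$ by construction.

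The main obstacle is to refine $\G_0$ so that its faces become topological disks. For this, I choose a graph $T$ on $M$ with geodesic edges whose vertex set contains $\V_0$, which exists by starting from any sufficiently fine piecewise-geodesic triangulation of $(M,\gamma)$ as recalled after Definition \ref{graph1}, and, for each $v \in \V_0$ not already in $\Sk(T)$, splitting the triangle of $T$ containing $v$ by joining $v$ to the three vertices of that triangle with minimizing geodesics, which stay inside the triangle as soon as it is geodesically convex (a condition ensured by the positivity of the convexity radius of the compact surface $M$). I then overlay the skeletons of $\G_0$ and $T$, subdividing at their finitely many mutual intersection points, to obtain a pre-graph $\G_1$ with geodesic edges. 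Each face of $\G_1$ is contained in a single face $\Delta$ of $T$. Since $\V_0 \subset \Sk(T)$, any vertex of $\G_0$ lying in $\bar\Delta$ belongs to $\partial\Delta$, so each arc of $\Sk(\G_0)\cap \Delta$ has its endpoints on $\partial\Delta$, and $\Sk(\G_1)\cap\bar\Delta = \partial\Delta \cup (\Sk(\G_0)\cap\Delta)$ is a connected planar graph containing $\partial\Delta$. A straightforward induction on the number of chords, using the Jordan and Sch\"onflies theorems, shows that every connected component of $\bar\Delta \setminus \Sk(\G_1)$ is a topological open disk. Hence $\G_1$ is a graph on $(M,\CS)$ with geodesic edges, and by construction every path of the original family is a concatenation of edges of $\G_1$, so it belongs to $\Path(\G_1)$.

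The directedness of $(\GG_\gamma(M,\CS),\preccurlyeq)$ follows at once: given two graphs $\G,\G' \in \GG_\gamma(M,\CS)$, applying the first assertion to the finite family $\E \cup \E'$ of piecewise geodesic paths produces a graph $\G'' \in \GG_\gamma(M,\CS)$ with $\Curve(\G) \cup \Curve(\G') \subset \Curve(\G'')$, so that $\G \preccurlyeq \G''$ and $\G' \preccurlyeq \G''$.
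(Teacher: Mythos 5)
Your proof is correct, but it follows a genuinely different route from the paper's. The paper proceeds by induction on the number of paths: it starts from an existing graph $\G_0$ with geodesic edges, subdivides the new piecewise geodesic path into segments that are either contained in edges of $\G_0$ or have their interior in a face, lifts everything to a split pattern, and then restores the graph property by joining each loose segment to the boundary of its disk with an auxiliary geodesic and invoking the robust connectivity criterion $4'$ of Proposition \ref{equiv disks}; connectivity of the skeleton is the only thing it ever has to check. You instead build the whole pre-graph at once and superimpose it on an auxiliary fine geodesic triangulation $T$ whose vertex set absorbs all endpoints, self-crossings and mutual intersection points; this has the pleasant consequence that, inside each open triangle, the leftover arcs are pairwise disjoint chords, so the disk property of the faces follows from an elementary Jordan--Sch\"onflies induction rather than from Proposition \ref{equiv disks}. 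Your approach is also more explicit than the paper's about the key local fact that two geodesic arcs either share a common sub-arc or meet in finitely many points, which the paper uses implicitly when it asserts its decomposition of $c$. The one step you should firm up is the construction of $T$: the claim that the minimizing geodesics joining an interior point $v$ to the three vertices of its triangle stay inside that triangle needs the triangle itself to be geodesically convex, which is a property of sufficiently small geodesic triangles and not merely of small balls; alternatively you could join $v$ to $\Sk(T)$ by a single short minimizing geodesic (creating a degree-one vertex, which does not destroy the disk property of the face) and avoid the issue entirely. This is a point of detail at the same level of rigour as the paper's own treatment, not a gap in the argument. The deduction of directedness at the end is identical in both proofs.
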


\begin{proof}We have observed after the definition of a graph (Definition \ref{graph1}) that there exists a graph on $M$ with geodesic edges. Now, by induction on the number of curves in the finite family of curves that we consider, it
suffices to prove that, given a piecewise geodesic path $c$ and a graph $\G_{0}$ with
geodesic edges, there exists a graph $\G$ with geodesic edges such that $\Path(\G)\supset \Path(\G_{0})\cup \{c\}$.

For this, let us subdivide $c$ into a product of geodesic edges $e=e_{1}\ldots e_{m}$ in such a way that
each edge $e_{k}$ either is contained in one edge of $\G_{0}$ or has its interior contained in the
interior of a face. By adding finitely many vertices to $\G_{0}$, which means subdividing some of
its edges, we produce a new graph $\G_{1}$ which is such that each $e_{k}$ which is contained in an
edge of $\G_{0}$ is an edge of $\G_{1}$. Each other $e_{k}$ has its interior contained in a single face of $\G_{1}$. By lifting the curves $e_1,\ldots,e_m$ to a split pattern of $\G_1$, we reduce the problem to the case of a 
finite collection of geodesic segments contained in the interior of a disk with piecewise geodesic boundary. In this case, since the skeleton of $\G_1$ contains the boundary of the disk, it follows from Proposition \ref{equiv disks} that any pre-graph obtained by adding edges to $\G_1$ and whose skeleton is connected is a graph. Thus, it suffices to join one endpoint of each of the curves $e_1,\ldots,e_m$ to a point on the boundary of the disk by a geodesic segment and then to add a vertex at every point where two distinct geodesic curves meet. Hence, $\G_{1}$ can be refined into a graph $\G$ with geodesic edges such that $e_{1},\ldots,e_{m}$ belong to  $\Path(\G)$.

In order to prove that $\GG_\gamma(M,\CS)$ is directed, consider two graphs $\G_1$ and $\G_2$ with piecewise geodesic edges. The property that we have just proved applied to $\E_1\cup \E_2$ provides us with a graph which is finer than both $\G_1$ and $\G_2$. \end{proof}

\subsection{Approximation of graphs}

In this section, we prove that any graph can be approximated in a strong sense by a sequence of graphs with piecewise geodesic edges. We start by defining the lasso decomposition of a piecewise geodesic path, which is a variant of the more familiar operation of loop-erasure. Recall the definition of equivalence of paths and lassos (see
Section \ref{def RL}).

\begin{proposition} \label{lasso dec} Let $(M,\gamma)$ be a Riemannian compact surface. Let $c$ be an element of $\GPath_\gamma(M)$ such that $\underline{c}\neq
\overline{c}$. There 
exists in $\GPath_\gamma(M)$ a finite sequence of lassos $l_{1},\ldots,l_{p}$ with meanders $m_{1},\ldots,m_{p}$ and an
injective path $d$ with the same endpoints as $c$ such that\\
1. $c\simeq l_{1}\ldots l_{p} \; d$,\\
2. $\ell(c)\geq \ell(m_{1})+\ldots+\ell(m_{p})+\ell(d)$.\\
If $c$ is a loop, the same decomposition holds with the single difference that $d$ is a
simple loop. In both cases, we call $d$ the loop-erasure of $c$ and denote it by $\LE(c)$.
\end{proposition}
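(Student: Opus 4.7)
The plan is to read along $c$ from left to right and pinch off a lasso every time we complete a loop, the meander being the loop just closed and the spoke being the portion of $c$ already traversed.

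First, I reduce the statement to a purely combinatorial problem about walks in a graph. Writing $c$ as a concatenation of geodesic segments $\sigma_{1},\ldots,\sigma_{n}$, I subdivide them by adding finitely many intermediate vertices so that any two of the resulting geodesic subsegments either coincide as sets or meet only at endpoints. This is possible because two distinct Riemannian geodesic arcs either meet in a discrete set (finite on compact pieces) or share a common sub-arc (when they lie on the same maximal geodesic), in which case subdividing at the endpoints of the common sub-arc separates the overlap. After this finite subdivision, the skeleton $\bigcup_{i}\sigma_{i}$ is the image of a finite multigraph $G$ in $M$ in the sense of Definition \ref{graph1} (after grouping coinciding subsegments into a single edge), and $c$ is realized as a walk $e_{1}\ldots e_{N}$ in $G$ using geodesic edges of $\GPath_{\gamma}(M)$.

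Second, I apply combinatorial loop-erasure to this walk. Maintain a current injective sub-walk $d^{(i)}$ starting at $\underline{c}$. Process $e_{1},\ldots,e_{N}$ in order: if appending $e_{i}$ keeps the walk injective, set $d^{(i)}=d^{(i-1)}e_{i}$; otherwise $\overline{e_{i}}$ is already a vertex of $d^{(i-1)}$, which I write uniquely as $d^{(i-1)}=\alpha\beta$ with $\overline{\alpha}=\overline{e_{i}}$, and I record the lasso $l=\alpha m\alpha^{-1}$ whose meander is the simple loop $m=\beta e_{i}$, and reset $d^{(i)}=\alpha$. (The meander $m$ is indeed simple by the minimality in the choice of $\alpha$: any self-intersection inside $\beta e_{i}$ would have been detected at an earlier step.) After all $N$ edges have been processed I take $d=d^{(N)}$ and keep the lassos $l_{1},\ldots,l_{p}$ in the order they were produced. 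All objects belong to $\GPath_{\gamma}(M)$ because spokes and meanders are concatenations of sub-edges of $c$.

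Third, the equivalence $c\simeq l_{1}\cdots l_{p}d$ follows by induction on $i$: at a step where we pinch off a lasso, the path consumed so far rewrites as $l_{1}\cdots l_{i-1}\alpha\beta e_{i}=l_{1}\cdots l_{i-1}\alpha m=l_{1}\cdots l_{i-1}(\alpha m\alpha^{-1})\alpha\simeq l_{1}\cdots l_{i}\alpha=l_{1}\cdots l_{i}d^{(i)}$, the only move being the insertion of $\alpha^{-1}\alpha$. The length estimate is in fact an equality: each edge $e_{j}$ of the combinatorial decomposition of $c$ is assigned, by the erasure algorithm, to exactly one meander $m_{i}$ or to $d$, and the corresponding sub-arcs of $c$ are essentially disjoint intervals of the domain $[0,1]$, so $\ell(c)=\sum_{i=1}^{p}\ell(m_{i})+\ell(d)$.

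For the loop case $\underline{c}=\overline{c}$, the same algorithm is run, except that at the final step the walk returns to $\underline{c}$ and would collapse $d^{(N)}$ to the constant path. Rather than recording this final event as a meander and losing the topology of $c$, I declare the final simple loop at $\underline{c}$ to be $d$ itself and obtain $c\simeq l_{1}\cdots l_{p-1}d$ with $d$ a simple loop, which fits the statement. The main obstacle in this proof is really the first step: accommodating the fact that two geodesic segments of $c$ may coincide on a sub-interval. Once the image of $c$ has been turned into a bona fide finite graph, the remaining arguments are the standard loop-erasure combinatorics and an edge-by-edge bookkeeping for the length.
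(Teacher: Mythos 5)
Your overall strategy is the same as the paper's: put $c$ into a graph with geodesic edges (the paper simply invokes Proposition \ref{geod graph} for this instead of redoing the subdivision argument) and then perform loop-erasure by pinching off a lasso at each first self-intersection of the walk. There is, however, a genuine gap in the erasure step: you never reduce the walk, and your parenthetical claim that the meander $m=\beta e_{i}$ is automatically simple fails exactly when the walk backtracks. If $e_{i}$ is the inverse of the last edge of $d^{(i-1)}$, your rule writes $d^{(i-1)}=\alpha\beta$ with $\beta$ equal to that last edge and records the ``meander'' $\beta e_{i}=ee^{-1}$, which is not injective on $[0,1)$ and hence not a simple loop, so $\alpha\, e e^{-1} \alpha^{-1}$ is not a lasso. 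This self-intersection is not ``detected at an earlier step'': it is created at step $i$ itself, and it can also be created by a pinch (after you reset $d^{(i)}=\alpha$, the next edge may retrace the last edge of $\alpha$). The same degeneracy breaks your treatment of the loop case (for $c=e_{1}e_{2}e_{3}e_{1}^{-1}$ with $e_{2}e_{3}$ a simple loop, the final event of your algorithm tries to close the non-simple loop $e_{1}e_{1}^{-1}$), and it is why your assertion that the length estimate is an equality cannot stand: the apparent equality only holds if the degenerate $ee^{-1}$ blocks are counted as meanders, which they are not.

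The missing ingredient is the reduction step that the paper performs: before locating the first self-intersection, erase every occurrence of a sequence $ee^{-1}$, and do so again after each pinch, since pinching can create new such sequences. Reduction preserves the equivalence class of $c$, can only shorten it, and is precisely the source of the inequality $\ell(c)\geq \ell(m_{1})+\ldots+\ell(m_{p})+\ell(d)$ rather than an equality (for $c=ee^{-1}f$ one gets $p=0$, $d=f$ and $\ell(c)=2\ell(e)+\ell(f)>\ell(d)$). On a reduced walk the first returning sub-loop really is simple: its intermediate vertices are distinct, and the only way two of its edges could overlap is the excluded configuration $ee^{-1}$. With that correction inserted, your argument goes through and coincides with the proof in the paper.
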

\index{LALE@$\LE(c)$}
\index{lasso!decomposition of a path}
\index{loop erasure}

\begin{proof}By Lemma \ref{geod graph}, we may assume that $c$ is a path in a graph. By adding vertices to this graph, we may also assume that no edge of this graph is a simple loop. Let us write $c$ as a
product of edges: $c=e_{i_{1}}\ldots e_{i_{n}}$. We proceed by induction on $n$. If $n=1$, then
$c$ is its own loop-erasure. Now assume that $n>1$. If $c$ is not reduced, that is, if it contains
at least one sequence $ee^{-1}$, then we reduce it. This can only shorten $c$. Now set $m=\min\{j>1 :
\exists k\in\{1,\ldots,j-1\}, \overline{e_{i_{j}}}=\overline{e_{i_{k}}}\}$. This is the first time at
which $c$ hits itself. Assume that $\overline{e_{i_{m}}}=\underline{e_{i_{p}}}$ with $1\leq p<m$.
By definition of $m$, $p$ is uniquely determined by
this relation. Set $t=e_{i_{1}}\ldots e_{i_{p-1}}$, $m=e_{i_{p}}\ldots e_{i_{m}}$ and $c'=t
e_{i_{m+1}}\ldots e_{i_{n}}$. Then $c\simeq tmt^{-1}c'$. By construction, $m$ is a simple loop and
$c'$ is a path shorter than $c$ with the same endpoints. Moreover, $\ell(c)=\ell(c')+\ell(m)$. The
result follows. \end{proof}

\begin{proposition} \label{approx graph epsilon} Let $(M,\vol,\gamma,\CS)$ be a marked Riemannian surface.
Let $\G=(\V,\E,\F)$ be a graph on $(M,\CS)$. Let $\epsilon>0$ be a real number.
There exists a graph $\G'=(\V',\E',\F')$ on $(M,\CS)$ with piecewise geodesic edges and two bijections ${S}:\E \to \E'$ and ${S}:\F\to \F'$, denoted by the same letter, such that the following properties hold.\\
1. $\V'=\V$.\\
2. The bijection ${S}:\E\to\E'$ commutes with the inversion and preserves the endpoints : for all $e\in \E$,
${S}(e^{-1})={S}(e)^{-1}$, $\underline{{ S}(e)}=\underline{e}$ and $\overline{{S}(e)}=\overline{e}$.\\
3. The bijection $S:\Path(\G)\to\Path(\G')$ induced by $S$ is such that for all $F\in\F$, $\partial ({S}(F))={S}(\partial F)$.\\
4. For all $e\in \E$, $d_{\ell}(e,S(e))<\epsilon$ and for all $F\in \F$, $\vol((F\cup S(F))-(F\cap S(F)))<\epsilon$.
\end{proposition}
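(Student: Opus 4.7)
The plan is to build $\G'$ by replacing each edge of $\G$ with a nearby piecewise geodesic edge sharing the same endpoints, then to invoke Corollary \ref{c: limit pregraph} to see that the result is still a graph and Proposition \ref{aire faces} to produce the face bijection and control the symmetric differences. The main inputs are the dyadic piecewise geodesic approximation $\D_n$ of Definition \ref{Dn}, which by Proposition \ref{A dense} converges with fixed endpoints, the loop-erasure $\LE$ of Proposition \ref{lasso dec}, which trims self-intersections without increasing length, and Lemma \ref{cyclic order}, which characterizes the cyclic order of edges at a vertex in terms of their exit points from small balls.

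First, I fix an orientation $\E^+$ of $\G$. Edges of $\G$ that lie in $\Sk(\CS)\cup\partial M$ are already geodesic by Definition \ref{riem mark surf}; I set $S(e)=e$ on those. For the remaining edges, I choose a radius $\rho>0$, smaller than the injectivity radius of $\gamma$, such that the closed balls $\overline{B}(v,\rho)$, $v\in\V$, are pairwise disjoint, each edge $e$ with endpoint $v$ has a unique ``last exit'' parameter from $B(v,\rho)$ near each endpoint in the sense of Lemma \ref{cyclic order}, and no edge enters a ball $B(v,\rho)$ through an endpoint other than $v$. For each $e\in\E^+\setminus(\Sk(\CS)\cup\partial M)$, write $e=a_e\, e^\circ\, b_e$ where $a_e$ and $b_e$ are the portions of $e$ inside $\overline{B}(\underline{e},\rho)$ and $\overline{B}(\overline{e},\rho)$ respectively, and $e^\circ$ is the middle portion lying in the complement of the open balls. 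I define
$$S_n(e)=[\underline{e},e(s_e)]_\gamma \cdot \LE(\D_n(e^\circ)) \cdot [e(t_e),\overline{e}]_\gamma,$$
where $[{,}]_\gamma$ denotes the unique minimizing geodesic joining the indicated points inside the ball. Set $S_n(e^{-1})=S_n(e)^{-1}$. By Proposition \ref{A dense}, $\D_n(e^\circ)\to e^\circ$ with fixed endpoints; since $e^\circ$ is injective and has compact image, self-intersections of $\D_n(e^\circ)$ for $n$ large occur at parameters close to each other, so the erased lassos have vanishing diameter and length, and $\LE(\D_n(e^\circ))\to e^\circ$ in $d_\ell$ using the lower semi-continuity of length together with the bound $\ell\circ\LE\leq\ell$. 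Consequently $S_n(e)\to e$ in $d_\ell$.

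Next I argue that for $n$ large $\G_n=(\V,\{S_n(e):e\in\E\})$ is a graph. Since $\LE(\D_n(e^\circ))$ is simple and the two geodesic pieces added at the ends lie in disjoint balls where each $e$ is nearly radial, $S_n(e)$ is itself an edge for $n$ large. Distinct unoriented edges $\{e,e^{-1}\}$ and $\{e',{e'}^{-1}\}$ have middle portions $e^\circ,{e'}^\circ$ lying in disjoint compact subsets of $M\setminus\bigcup_v B(v,\rho)$; by uniform convergence their approximations stay in disjoint tubular neighborhoods for $n$ large, and the geodesic rays constructed inside each $B(v,\rho)$ meet only at $v$. Hence $\G_n$ is a pre-graph with the same vertex set and bijection of edges as $\G$ that converges uniformly to $\G$ with fixed endpoints and fixing boundary edges; Corollary \ref{c: limit pregraph} then gives that $\G_n$ is a graph on $(M,\CS)$ for $n$ large, yielding property~1 and property~2. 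Moreover, by the choice of $\rho$ and Lemma \ref{cyclic order}, the exit points of $S_n(e)$ from a fixed small ball around each vertex lie close to those of $e$ and thus share the same cyclic order, so the bijection $S_n$ preserves the cyclic order at every vertex.

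With cyclic orders preserved, Proposition \ref{aire faces} supplies a bijection $S_n:\F\to\F_n$ satisfying $\partial S_n(F)=S_n(\partial F)$, giving property~3. Property~4 then splits into two parts: the edge estimate $d_\ell(e,S_n(e))<\epsilon$ follows directly from the $d_\ell$-convergence $S_n(e)\to e$ established above; and the area estimate $\vol(F\dotplus S_n(F))<\epsilon$ follows from the inclusion $\limsup_n (F\dotplus S_n(F))\subset\Sk(\G)$ provided by Proposition \ref{aire faces}, together with the fact that $\vol(\Sk(\G))=0$ because the skeleton is a finite union of ranges of Lipschitz curves and $\vol$ is absolutely continuous with respect to Lebesgue measure in charts, so Fatou's lemma yields $\vol(F\dotplus S_n(F))\to 0$. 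Choosing $n$ large enough to satisfy both estimates simultaneously for the finitely many edges and faces of $\G$ furnishes the desired graph $\G'$. The principal obstacle is the joint construction of step two: ensuring simultaneously simplicity of each $S_n(e)$, absence of spurious crossings between different approximating edges, and preservation of cyclic orders at vertices, which is why the refinement is performed only on the middle portion $e^\circ$ of each edge and filled in by explicit geodesic rays near the vertices.
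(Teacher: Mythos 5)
Your proposal is correct and follows essentially the same route as the paper's proof: straighten each edge into a radial geodesic near its endpoints (using Lemma \ref{cyclic order} to control exit points and cyclic order), apply the dyadic approximation $\D_n$ and loop-erasure $\LE$ to obtain nearby piecewise geodesic edges, then invoke Corollary \ref{c: limit pregraph} for the graph property and Proposition \ref{aire faces} for the face bijection and area estimate. The only divergences are cosmetic — you apply $\D_n$ to the middle portion and glue on explicit geodesic rays rather than approximating the already-straightened edge $A_r(e)$, and the paper justifies the convergence $\LE(\D_n(\cdot))\to\cdot$ by an Arzelà–Ascoli subsequence extraction where you argue more directly — but the structure and the key lemmas are identical.
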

\index{graph!approximation by piecewise geodesic graphs}

Notice that it is not possible to simply take $S(e)=\LE(\D_n(e))$ for some large $n$. Indeed, the new edges thus produced may behave badly near the vertices. For example, several edges may form a complicated spiral near a vertex that they share. In this case, it is neither certain that their loop-erased dyadic piecewise geodesic approximations do not intersect each other nor that they leave the vertex in the same cyclic order as the original edges. \\

If $x,y\in M$ and $d(x,y)$ is smaller than the injectivity radius of $(M,\gamma)$, we denote by $[x,y]$ the segment of minimizing geodesic joining $x$ to $y$.\\

\begin{proof}Let $R$ be the injectivity radius of $M$. Let $r_0\in (0,R)$ be such that the balls $B(v,r_0),v\in\V$ are pairwise disjoint, an edge $e$ meets a ball $B(v,r_0)$ only if $v$ is an endpoint of $e$, and the cyclic order of the outcoming edges at every vertex $v$ is the cyclic order of their last exit points from the ball $B(v,r)$ for all $r\in(0,r_0)$. The existence of such an $r_0$ is granted by Lemma \ref{cyclic order}. Let us choose an orientation of each ball $B(v,r_0),v\in\V$.

For all real $r\in(0,r_0)$ and all $e\in\E$, define $t_e(r)=\sup\{t\in[0,1] : d(\underline{e},e(t))= r\}$. Observe that $1-t_{e^{-1}}=\inf\{t\in[0,1] : d(e(t),\overline{e})=r\}$. Define
$$A_r(e)=[\underline{e},e(t_e(r))] e_{|[t_e(r),1-t_{e^{-1}}(r)]} [e(1-t_{e^{-1}}(r)),\overline{e}].$$
The path $A_r(e)$ is the concatenation of three injective paths which meet only at their endpoints, so that it is injective. Thus, $A_r(e)$ is an edge with the same endpoints as $e$. Let $e$ and $e'$ be two edges such that $e'\notin\{e,e^{-1}\}$. The central portions of $A_r(e)$ and $A_r(e')$ are contained in $e$ and $e'$ respectively, so that they are disjoint. By the assumption made on $r$, they do not enter any ball $B(v,r),v\in\V$. Hence, $A_r(e)$ and $A_r(e')$ meet, if at all, in one of these balls and this can occur only at one of their endpoints. 

Let $e$ be an edge. The continuity and injectivity of $e$ imply that $t_e(r)\to 0$ as $r\to 0$.
Since  $d_\ell(A_r(e),e)\leq 2 (t_e(r)+t_{e^{-1}}(r)) \ell(e)$,  this implies that $A_r(e)$ tends to $e$ as $r$ tends to $0$. Moreover, one always has the inequality $\ell(A_r(e))\leq \ell(e)$.

Let $r\in(0,r_0)$ be fixed. Let $n_0\geq 1$ be an integer such that for all $e\in\E$, $2^{-n_0}\ell(e)<r<R$. For all integer $n\geq n_0$, define
$$C_{r,n}(e)=\D_n(A_r(e)).$$
The path $C_{r,n}(e)$ is piecewise geodesic, with the same endpoints as $e$, but it may not be injective, even for large $n$. On the other hand, it coincides with $A_r(e)$ near its endpoints, more precisely, on a segment of length at least $r-2^{-n}\ell(e)$. When $n$ tends to infinity, $C_{r,n}(e)$ converges to $A_r(e)$. We claim that for all $e,e'\in\E$ such that $e'\notin\{e,e^{-1}\}$, the paths $C_{r,n}(e)$ and $C_{r,n}(e')$ intersect only at some of their endpoints for $n$ large enough. Indeed, consider for all $e\in \E$ the segment 
$$A'_r(e)=\{m\in A_r(e) : d(m,\underline{e})\geq \frac{r}{2}, d(m,\overline{e})\geq \frac{r}{2}\}.$$
Choose $n_1\geq n_0$ such that $2^{-n_1}\max\{\ell(e):e\in\E\} < \min(\frac{r}{2},\frac{1}{2} \min\{d(A_r(e),A'_r(e')) : e,e'\in\E, e'\notin\{e,e^{-1}\}\})$. Choose $e,e'\in\E$ such that $e'\notin\{e,e^{-1}\}$. For all $n\geq n_1$, $C_{r,n}(e)$ and $C_{r,n}(e')$ are respectively contained in the sets $A_r(e)\cup \{ m \in M : d(m,A'_r(e))<2^{-n} \ell(e)\}$ and $A_r(e')\cup \{ m \in M : d(m,A'_r(e'))<2^{-n} \ell(e)\}$, whose intersection is the same as the intersection of $e$ and $e'$. 

For each integer $p\geq 1$, choose $r\in(0,r_0)$ such that $d_\ell(e,A_r(e))<\frac{1}{2p}$ for all $e\in\E$. Then choose $n\geq n_1$ such that $d_\ell(A_r(e),C_{r,n}(e))<\frac{1}{2p}$ for all $e$. Set $S^0_p(e)=C_{r,n}(e)$. Then for each $e\in \E$, the sequence $(S^0_p(e))_{p\geq 1}$ converges to $e$ with fixed endpoints, and satisfies $\ell(S^0_p(e))\leq \ell(e)$. For each $e\in\E$ and each $p\geq 1$, set $S^1_p(e)=\LE(S_p^0(e))$. We claim that a subsequence of the sequence $(S^1_p(e))_{p\geq 1}$ tends to $e$ when $p$ tends to infinity. 

Indeed the sequence $(S^1_p(e))_{p\geq 1}$ is uniformly bounded in length by $\ell(e)$. Hence, the paths being parametrized at constant speed, it is relatively compact in the uniform topology and we can extract a sequence $(S^1_{p_q}(e))_{q\geq 1}$ which converges uniformly to a path $\tilde e$. The image of $\tilde e$ is contained in the image of $e$ and it joins $\underline{e}$ to $\overline{e}$. Hence, the images of $\tilde e$ and $e$ coincide. In particular, $\ell(\tilde e)\geq \ell(e)$. Using the lower semi-continuity of the length with respect to the uniform convergence, we find
$$\ell(e)\leq \ell(\tilde e)\leq \liminf \ell(S^1_{p_q}(e))\leq \sup\ell(S^1_{p_q}(e))\leq \ell(e).$$
It follows from these inequalities that $\ell(S^1_{p_q}(e))$ converges to $\ell(e)$, hence $S^1_{p_q}(e)$ to $e$, as $q$ tends to infinity. Let us choose a subsequence $(p_q)_{q\geq 0}$ such that the convergence holds for each edge $e\in\E$ and define $S_q(e)=S^1_{p_q}(e)$ for all $q\geq 0$. 

For all $e\in\E$ and all $q\geq 0$, $S_{q}(e)$ is a piecewise geodesic edge with the same endpoints as $e$. Moreover, by construction, $S_{q}(e^{-1})=S_{q}(e)^{-1}$. If $e$ is geodesic, then $S_q(e)=e$. Finally, the set $\E_q=\{S_{q}(e):e\in\E\}$ is the set of edges of a pre-graph on $(M,\CS)$. By Corollary \ref{c: limit pregraph}, $\E_q$ is in fact the set of edges of a graph on $(M,\CS)$, which we denote by $\G_q$.

By construction, the bijection $S_q$ between $\E$ and $\E_q$ preserved the cyclic order at every vertex of $\V$. Let us apply Proposition \ref{aire faces}. Since $\vol(\Sk(\G))=0$, the graph $\G_q$ satisfies all the desired properties for $q$ large enough. \end{proof}

\chapter{Multiplicative processes indexed by paths}

In this chapter, we begin the study of the class of objects to which Markovian holonomy fields belong: stochastic processes indexed by paths on a compact surface and with values in a compact Lie group which satisfy a condition of multiplicativity. We discuss the canonical space of such processes, two distinct $\sigma$-fields on it, and prove the version of Kolmogorov's theorem that is best suited to our situation. We then study a kind of uniform measure on the canonical space, thus providing the first example of what will be called in the next chapter a discrete Markovian holonomy field. We conclude by constructing a set of generators of the group of reduced loops in a graph for which we are able to determine the finite-dimensional marginal of the uniform Markovian holonomy field. 

From now on, the expression {\em compact surface} will mean {\em smooth compact surface}. 

\section{Multiplicative functions}
\label{s: mult func}
Let $G$ be a group, on which we make for the moment no assumption at all.

\begin{definition}\label{def:multiplicative} Let $M$ be a compact surface. Let $P$ be a subset of
$\Path(M)$. A function $h:P\to G$ is said to be {\em
multiplicative} if $h(c^{-1})=h(c)^{-1}$ for all $c\in P$ such that $c,c^{-1} \in P$
and $h(c_{1}c_{2})=h(c_{2})h(c_{1})$
for all $c_{1},c_{2} \in P$ such that $\overline{c_{1}}=\underline{c_{2}}$ and $c_{1}c_{2}\in P$. The
set of multiplicative functions from $P$ to $G$ is denoted by $\M(P,G)$.
\end{definition}
\index{MAMul@$\M(\Path(M),G)$}
\index{multiplicative function}

For an explanation of the reversed order in this definition, see the introduction, Section \ref{ordre inverse}.

The gauge group is the symmetry group of the physical theory from which our objects are issued. 

\begin{definition}\label{def:gauge group} The group $G^{M}$ of all mappings from $M$ to $G$ is called
the {\em gauge group} of $M$ and it acts by {\em
gauge transformations} on the space $\M(\Path(M),G)$, as follows. If $j=(j_{m})_{m\in M}$ belongs
to $G^M$ and $h$ belongs to $\M(\Path(M),G)$, then $j\cdot h$ is defined by
$$ \forall c\in \Path(M), (j\cdot h)(c)=j_{\overline{c}}^{-1} h(c) j_{\underline{c}}.$$
More generally, given a subset $P$ of $\Path(M)$, the group $G^V$ acts on $\M(P,G)$ in the same
way, where $V$ is defined as the set of endpoints of the paths of $P$.
\end{definition}
\index{gauge group}

\begin{example} Assume that $l_1,\ldots,l_n$ are $n$ simple loops based at the same point $m$ of $M$. Write $L=\{l_1,\ldots,l_n\}$. The concatenation of two loops of $L$ is never a simple loop, hence never an element of $L$. If we assume moreover that the inverse of a loop of $L$ is never in $L$, then any $G$-valued function on $L$ is multiplicative, so that $\M(L,G)=G^L$.
The action of the gauge group on $\M(L,G)$ is simply the action of $G=G^{\{m\}}$ on $G^L$ by simultaneous conjugation of each factor. This fundamental example should be kept in mind when one reads Lemma \ref{chemins lacets}. It explains the importance of this action of $G$ in our context.
\end{example}

We cannot do much if we do not make a few assumptions on $G$. For the rest of this section, we assume that it is a compact topological group. We do not assume that it is connected, so that it can in particular be finite. Also, for the moment, we do not assume that it is a Lie group. The group $G$ carries its normalized Haar measure and we denote simply by $\int_G f(x) \; dx$ the integral of a function $f:G\to \RK$ with respect to this measure. 
\index{GAG@$G$}

Let $P$ be a subset of $\Path(M)$. The object of this paragraph is to discuss two natural $\sigma$-fields on $\M(P,G)$. The simplest one is the cylinder $\sigma$-field, denoted by $\C$, which is defined as the smallest $\sigma$-field which makes the evaluation mapping $h\mapsto h(c)$ measurable for all $c\in P$. The gauge group acts by bi-measurable transformations and it makes sense to speak of measures on $\M(P,G)$ which are invariant under gauge transformations.
\index{CCC@$\C$}

It is also natural to consider a smaller $\sigma$-field which consists in events which are invariant under the action of the gauge group. In order to discuss this $\sigma$-field, let us first associate an abstract graph to each subset of $\Path(M)$.

\begin{definition} An {\em abstract graph} is a pair of finite sets $(V,E)$, whose elements are called vertices and edges, endowed with two mappings $s,t:E\to V$, called respectively  {\em source} and {\em target}.

Let $P$ be a subset of $\Path(M)$. The {\em configuration graph} of $P$ is the abstract graph $(V,E,s,t)$ defined by setting $V= \bigcup_{c\in P} \{\underline{c}\} \cup \{\overline{c}\}\subset M$,  $E=P$ and, for each $e\in E$, $s(e)=\underline{e}$ and $t(e)=\overline{e}$.
\end{definition}
\index{graph!abstract}
\index{abstract graph}

In what follows, we will make use of the diagonal adjoint action of $G$ on $G^n$ defined by $g\cdot(x_1,\ldots,x_n)=(gx_1g^{-1},\ldots,gx_ng^{-1})$.
\index{conjugation!diagonal action by}

\begin{lemma}\label{chemins lacets} Let $M$ be a compact surface. Let $P$ be a subset of $\Path(M)$. Assume that $P$ is stable by concatenation and inversion, and the configuration graph of $P$ is connected. Let $m$ be a vertex of this configuration graph. Let $c_1,\ldots,c_n$ be $n$ elements of $P$. Let $f:G^{n}\to \RK$ be a continuous function such that the function $h\mapsto f(h(c_1),\ldots,h(c_n))$ is gauge-invariant on $\M (P,G)$. Then there exists $n$ loops $l_1,\ldots,l_n$ in $P$ all based at $m$ and a continuous function $\tilde f : G^n \to \RK$ invariant under the diagonal action of $G$ such that 
$$\forall h\in \M(P,G), \; f(h(c_1),\ldots,h(c_n))=\tilde f(h(l_1),\ldots,h(l_n)).$$

\end{lemma}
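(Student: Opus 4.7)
The plan is to convert each path $c_i$ into a loop $l_i$ based at $m$ by prepending and appending auxiliary paths, and to obtain $\tilde f$ as a conjugation average of $f$.

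First I would use the connectedness of the configuration graph, together with the stability of $P$ under concatenation and inversion, to choose for every endpoint $v$ of some $c_i$ an element $s_v \in P$ that is a path from $m$ to $v$. In particular $s_m$ is a loop at $m$: such a loop exists in $P$ since $m$ is a vertex of the configuration graph, so some $c \in P$ has $m$ as an endpoint and either $cc^{-1}$ or $c^{-1}c$ lies in $P$. I then set
\[
l_i = s_{\underline{c_i}}\, c_i\, s_{\overline{c_i}}^{-1},
\]
which lies in $P$ by the stability assumptions and is a loop at $m$. Multiplicativity of $h$ gives $h(l_i) = h(s_{\overline{c_i}})^{-1} h(c_i)\, h(s_{\underline{c_i}})$.

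Next I would define
\[
\tilde f(x_1,\ldots,x_n) = \int_G f(g^{-1}x_1 g, \ldots, g^{-1}x_n g)\, dg,
\]
which is continuous because $f$ is continuous on the compact group $G^n$, and manifestly invariant under the diagonal adjoint action of $G$.

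The heart of the proof is the identity $\tilde f(h(l_1),\ldots,h(l_n)) = f(h(c_1),\ldots,h(c_n))$. For each $g \in G$, define $j \in G^V$ by $j_v = h(s_v)\, g$ for every endpoint $v$ of some $c_i$ (extended arbitrarily elsewhere). A direct computation yields
\[
(j\cdot h)(c_i) = j_{\overline{c_i}}^{-1}\, h(c_i)\, j_{\underline{c_i}} = g^{-1} h(s_{\overline{c_i}})^{-1} h(c_i)\, h(s_{\underline{c_i}})\, g = g^{-1} h(l_i)\, g.
\]
Applying the gauge invariance of $h' \mapsto f(h'(c_1),\ldots,h'(c_n))$ to $h' = j\cdot h$ gives
\[
f(g^{-1}h(l_1)g,\ldots,g^{-1}h(l_n)g) = f(h(c_1),\ldots,h(c_n)),
\]
a quantity independent of $g$. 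Integrating over $G$ with the normalised Haar measure finishes the verification.

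The main obstacle is really only notational/bookkeeping: ensuring that all the $s_v$ and in particular $s_m$ can be chosen inside $P$, and that the resulting $l_i$ are again in $P$; both are immediate consequences of the connectedness of the configuration graph and the closure of $P$ under concatenation and inversion. Everything else is a one-line computation driven by the multiplicativity of $h$ and the gauge-invariance hypothesis on $f$.
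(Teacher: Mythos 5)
Your proof is correct and follows essentially the same route as the paper: connect each endpoint to the base point $m$ by a path in $P$, conjugate each $c_i$ into a loop $l_i$ at $m$, use a gauge transformation built from these connecting paths (and an extra factor $g$ at $m$) to show $f(h(c_1),\ldots,h(c_n))=f(g^{-1}h(l_1)g,\ldots,g^{-1}h(l_n)g)$, and average over $g$. The only cosmetic difference is that the paper selects the connecting paths via a spanning tree of the configuration graph and splits the argument into two gauge transformations, whereas you choose arbitrary paths $s_v\in P$ and fold everything into one transformation $j_v=h(s_v)g$; both are valid.
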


\begin{proof}Consider the configuration graph of $\{c_1,\ldots,c_n\}$. If it is not connected, let us choose one vertex in each connected component not containing $m$ and add to the collection $\{c_1,\ldots,c_n\}$ a path of $P$ joining $m$ to this vertex. That such a path exists follows from the assumptions made on $P$. The collection has become $\{c_1,\ldots,c_r\}$ for some $r\geq n$. We denote the configuration graph of this enlarged collection by $(V,E)$. 

Let $T\subset E$ be a spanning tree of $(V,E)$. For each $i\in \{1,\ldots,n\}$, the path $[m,\underline{c_i}]_T c_i [\overline{c_i},m]_T$ is a loop based at $m$. When written as a product of edges, it becomes a word in $c_1,\ldots,c_r$ and this word makes sense as an element of $P$. More precisely, it is a loop of $P$ based at $m$, which we denote by $l_i$. 

Let $h$ be an element of $\M(P,G)$. Let us define a gauge transformation $j\in G^M$ by setting $j(p)=1$ if $p\notin V$ and, for all $v\in V$, $j(v)=h([m,v]_T)$. If $e$ is an edge which belongs to $T$, then it is easy to check that $(j\cdot h)(e)=1$. Hence, for all $i\in \{1,\ldots,n\}$, $(j\cdot h)(l_i)=(j\cdot h)(c_i)$ and, by the invariance property of $f$, $f(h(c_1),\ldots,h(c_n))=f(h(l_1),\ldots,h(l_n))$. 

Choose $g\in G$. Since the loops $l_1,\ldots,l_n$ are all based at $m$, the action of the gauge transformation $j$ defined by $j(p)=1$ if $p\neq m$ and $j(m)=g^{-1}$ transforms $h(l_1),\ldots,h(l_n)$ into $gh(l_1)g^{-1},\ldots,gh(l_n)g^{-1}$. Let us define $\tilde f:G^n\to \RK$ by
$$\tilde f(x_1,\ldots,x_n)=\int_G f(gx_1 g^{-1},\ldots,gx_n g^{-1}) \; dg.$$
Then $f(h(l_1),\ldots,h(l_n))=\tilde f(h(l_1),\ldots,h(l_n))$ and $\tilde f$ is invariant under the diagonal action of $G$. The result is proved. \end{proof}

This result motivates the following definition. 

\begin{definition}\label{def inv sigma field} Let $P$ be a subset of $\Path(M)$ stable by concatenation and inversion. The {\em invariant $\sigma$-field} on $\M(P,G)$, denoted by $\I$, is the smallest $\sigma$-field such that for all $m\in M$, all integer $n\geq 1$, all finite collection $l_1,\ldots,l_n$ of loops based at $m$ and all continuous function $f:G^n\to \RK$ invariant under the diagonal action of $G$, the mapping $h\mapsto f(h(l_1),\ldots,h(l_n))$ is measurable.
\end{definition}
\index{IAI@$\I$}
\index{multiplicative function!invariant $\sigma$-field}

If $P$ can be written as the disjoint union of $P_1$ and $P_2$ which are both stable by concatenation and inversion and whose configuration graphs are disjoint, then $\M(P,G)$ is canonically isomorphic to $\M(P_1,G)\times \M(P_2,G)$ and the invariant $\sigma$-filed on $\M(P,G)$ is the tensor product of the invariant $\sigma$-fields on $\M(P_1,G)$ and $\M(P_2,G)$.


Let $M$ and $M'$ be two surfaces. Let $\psi : M'\to M$ be a smooth mapping. Let $P'$ be a subset of $\Path(M')$ and $P=\psi(P')$. Then $\psi$ induces a map from $P'$ to $P$, hence a map from $\M(P,G)$ to $\M(P',G)$.

\begin{lemma}\label{glu mes} Let $M$ and $M'$ be two surfaces. Let $\psi : M'\to M$ be a smooth mapping. Let $P'$ be a subset of $\Path(M')$ and $P=\psi (P')$. Then the induced map $\psi : \M(P,G) \to \M(P',G)$ is measurable with respect to the cylinder $\sigma$-fields, and also with respect to the invariant $\sigma$-fields.
\end{lemma}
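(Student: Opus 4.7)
The plan is to verify measurability by working with generating families of both $\sigma$-fields, using the fact that the induced map $\Psi: \M(P,G) \to \M(P',G)$ is defined by pullback: $\Psi(h)(c') = h(\psi(c'))$ for $h \in \M(P,G)$ and $c' \in P'$. One first checks that $\Psi$ is well-defined on multiplicative functions, which is immediate since $\psi$ sends $P'$ to $P$ and is compatible with concatenation and inversion of paths (any identity $c' = c'_1 c'_2$ or $c' = (c'')^{-1}$ in $P'$ yields the corresponding identity after applying $\psi$).

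For the cylinder $\sigma$-field, I would recall that $\C$ on $\M(P',G)$ is generated by the evaluation maps $\pi_{c'} : h' \mapsto h'(c')$ for $c' \in P'$. The key observation is the identity
\[
\pi_{c'} \circ \Psi = \pi_{\psi(c')},
\]
and since $\psi(c') \in P$, the right-hand side is by definition $\C$-measurable on $\M(P,G)$. Because $\C$ on $\M(P',G)$ is generated by these maps, $\Psi$ is $\C$-$\C$-measurable.

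For the invariant $\sigma$-field, I would use the definition directly: $\I$ on $\M(P',G)$ is generated by the maps $h' \mapsto f(h'(l'_1),\ldots,h'(l'_n))$ where $l'_1,\ldots,l'_n \in P'$ are loops based at a common point $m' \in M'$ and $f : G^n \to \RK$ is continuous and invariant under the diagonal conjugation action of $G$. Composing with $\Psi$ yields
\[
h \longmapsto f\bigl(h(\psi(l'_1)),\ldots,h(\psi(l'_n))\bigr),
\]
where the paths $\psi(l'_1),\ldots,\psi(l'_n)$ lie in $P$ and are all loops based at the common point $\psi(m') \in M$. This is exactly a generator of the invariant $\sigma$-field on $\M(P,G)$, so $\Psi^{-1}$ sends a generating family of $\I$ into $\I$, and the conclusion follows.

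The proof is essentially formal and no step should present any real obstacle; the main thing to get right is simply the bookkeeping that the generating sets used to define the two $\sigma$-fields pull back to generating sets of the same kind, which is automatic because $\psi$ preserves the property of being a loop based at a point and the hypothesis $P = \psi(P')$ guarantees that all pulled-back objects belong to the relevant spaces.
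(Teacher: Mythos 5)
Your proof is correct and follows essentially the same route as the paper: in both cases one checks that the evaluation maps generating the cylinder $\sigma$-field pull back under $\Psi$ to evaluation maps at paths of $P$, and that the generators of the invariant $\sigma$-field pull back to functions of the form $h\mapsto f(h(\psi(l'_1)),\ldots,h(\psi(l'_n)))$ with the $\psi(l'_i)$ loops of $P$ based at a common point. Nothing is missing.
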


\begin{proof}Let $f:G\to \RK$ be continuous and consider $c'\in P'$. The function $h\mapsto f(\psi(h)(c'))$ on $\M(P,G)$ is equal to the function $h\mapsto f(h(\psi(c')))$, which is measurable with respect to $\C$ because $\psi(c')$ belongs to $P$. This proves the first assertion. 

Now let $f:G^n\to\RK$ be continuous and invariant under the diagonal action of $G$ by conjugation. Let $l'_1,\ldots,l'_n$ be $n$ loops of $P'$ based at the same point. Then $\psi(l_1),\ldots,\psi(l_n)$ are $n$ loops of $P$ based at the same point and the function $h\mapsto f(\psi(h)(l'_1),\ldots,\psi(h)(l'_n))=f(h(\psi(l_1),\ldots,\psi(l_n)))$     
on $\M(P,G)$ is measurable with respect to $\I$. This proves the second assertion.\end{proof}

Let us conclude this paragraph by discussing the case of a graph. Let $M$ be a surface endowed with a graph $\G$. Let $\E^+$ be an orientation of $\G$. It is plain that a multiplicative function on $\Path(\G)$ is determined by its values on the edges $\E$ or even just those of $\E^+$. More precisely, the natural surjective mapping $\M(\Path(\G),G)\to \M(\E^+,G)$ induced by the inclusion $\E^+\subset \Path(\G)$ is one-to-one. Indeed, if $c$ belongs to $\Path(\G)$, then $c=e_1^{\epsilon_1}\ldots e_n^{\epsilon_n}$ for some $e_1,\ldots,e_n \in\E^+$ and $\epsilon_1,\ldots,\epsilon_n\in\{-1,1\}$. Then, for all multiplicative function $h$, one has $h(c)=h(e_n)^{\epsilon_n}\ldots h(e_1)^{\epsilon_1}$. 

Since the interior of an edge contains no vertex, the concatenation of at least two edges is never an edge. Hence, every mapping from $\E^+$ to $G$ is multiplicative. We will often make the identifications
$$\M(\Path(\G),G)=\M(\E,G)=\M(\E^+,G)=G^{\E^+}$$
without further comment. In particular, we will sometimes use a collection $(g_e)_{e\in\E^+}$ of elements of $G$ to denote an element of $\M(\Path(\G),G)$. 

\begin{proposition}\label{gen invariant sigma} Let $M$ be a compact surface endowed with a graph $\G$. Let $v$ be a vertex of $\G$. Let $\{l_1,\ldots,l_r\}$ be a generating subset of the group $\RL_v(\G)$ of reduced loops in $\G$ based at $v$. The invariant $\sigma$-field $\I$ on $\M(\Path(\G),G)$ is generated by the functions of the form $f(h(l_1),\ldots,h(l_r))$ where $f:G^r\to \RK$ is continuous and invariant by diagonal conjugation.
\end{proposition}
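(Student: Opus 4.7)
The plan is to prove both inclusions of $\sigma$-fields. The inclusion of the $\sigma$-field generated by the stated functions into $\I$ is immediate from Definition \ref{def inv sigma field}, so the substantive work is the reverse. I would start with a typical generator of $\I$, a function $h\mapsto f(h(m_1),\ldots,h(m_n))$ where $m_1,\ldots,m_n$ are loops in $\Path(\G)$ based at a common point $w$ and $f:G^n\to\RK$ is continuous and invariant under diagonal conjugation. Since every loop in $\Path(\G)$ is a concatenation of edges, its base point $w$ is necessarily a vertex of $\G$. The aim is to express this generator as $\tilde f(h(l_1),\ldots,h(l_r))$ for some continuous $\tilde f:G^r\to\RK$ invariant under diagonal conjugation.

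First I would reduce to loops based at $v$. By Proposition \ref{equiv disks}, the skeleton of $\G$ is connected, so there exists $\gamma\in\Path(\G)$ from $v$ to $w$. Then $\gamma m_i\gamma^{-1}\in \Path(\G)$ is a loop based at $v$, and multiplicativity of $h$ (keeping track of the order reversal $h(cd)=h(d)h(c)$) yields $h(\gamma m_i\gamma^{-1})=h(\gamma)^{-1}h(m_i)h(\gamma)$. Setting $g=h(\gamma)$, the tuple $(h(m_1),\ldots,h(m_n))$ is the diagonal conjugate by $g$ of $(h(\gamma m_1\gamma^{-1}),\ldots,h(\gamma m_n\gamma^{-1}))$, so the invariance of $f$ gives $f(h(m_1),\ldots,h(m_n))=f(h(\gamma m_1\gamma^{-1}),\ldots,h(\gamma m_n\gamma^{-1}))$. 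Next, I would replace each $\gamma m_i\gamma^{-1}$ by its reduced form $\bar m_i\in\RL_v(\G)$; because any multiplicative $h$ trivializes an inserted $ee^{-1}$, this substitution does not change the value of $h$.

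Now $\bar m_i\in\RL_v(\G)$ and $\{l_1,\ldots,l_r\}$ generates this group, so each $\bar m_i$ can be written as a finite product $l_{j_1^{(i)}}^{\epsilon_1^{(i)}}\cdots l_{j_{k_i}^{(i)}}^{\epsilon_{k_i}^{(i)}}$. Multiplicativity then gives $h(\bar m_i)=w_i(h(l_1),\ldots,h(l_r))$, where $w_i:G^r\to G$ is the fixed (continuous) word map
\[
w_i(x_1,\ldots,x_r)=x_{j_{k_i}^{(i)}}^{\epsilon_{k_i}^{(i)}}\cdots x_{j_1^{(i)}}^{\epsilon_1^{(i)}}.
\]
Defining $\tilde f(x_1,\ldots,x_r)=f(w_1(x_1,\ldots,x_r),\ldots,w_n(x_1,\ldots,x_r))$ produces a continuous function on $G^r$, and since conjugation by $g\in G$ is a group automorphism one has $w_i(gx_1g^{-1},\ldots,gx_rg^{-1})=gw_i(x_1,\ldots,x_r)g^{-1}$; the diagonal invariance of $f$ therefore transfers to $\tilde f$. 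Combining the three steps yields $f(h(m_1),\ldots,h(m_n))=\tilde f(h(l_1),\ldots,h(l_r))$, as required. No serious obstacle is anticipated: the only real subtlety is bookkeeping around the reversed multiplication convention, which is harmless because conjugation commutes with both products and inverses.
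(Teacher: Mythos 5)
Your proposal is correct and follows essentially the same route as the paper: reduce to loops based at $v$, write them (modulo the equivalence of paths) as words in the generators $l_1,\ldots,l_r$, and transfer the diagonal invariance of $f$ through the word maps using the equivariance of multiplication under conjugation. The only cosmetic difference is that you perform the change of base point directly by conjugating with a connecting path, where the paper delegates this step to Lemma \ref{chemins lacets}.
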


\begin{proof}By definition of the invariant $\sigma$-field and by Lemma \ref{chemins lacets}, it suffices to prove that for all $l'_1,\ldots,l'_n\in \Loop_v(\G)$ and all continuous $f':G^n\to \RK$ invariant by diagonal conjugation, the function $f'(h(l'_1),\ldots,h(l'_n))$ can be put under the form $f(h(l_1),\ldots,h(l_r))$ for some invariant function $f$. This is easily done by expressing, modulo the equivalence relation on paths, the loops $l'_1,\ldots,l'_n$ as words in the generators $l_1, \ldots,l_r$. One then uses the multiplicativity of the elements of $\M(\Path(\G),G)$ and the fact that the multiplication map $G^2\to G$ is equivariant with respect to the diagonal actions of $G$ by conjugation. \end{proof}

\section{Multiplicative families of random variables}

Let $M$ be a compact surface and $P$ a subset of $\Path(M)$. A probability measure on $(\M(P,G),\C)$ determines a family of $G$-valued random variables $(H_c)_{c\in P}$ which are just the evaluation functions on $\M(P,G)$, defined by $H_c(h)=h(c)$. These random variables form a multiplicative family in the sense that $H_{c^{-1}}=H_c^{-1}$ and $H_{c_1 c_2}=H_{c_2} H_{c_1}$ almost surely whenever this makes sense. If $P$ is countable, then the converse is true since one can dismiss the negligible event on which the equalities do not hold. We prove in this section that the converse is in fact true even if $P$ is not countable. Let us recall the definition of a projective family of probability spaces.

\begin{definition}\label{proj fam} A projective family of probability spaces is the data of the following ingredients. \\
$\bullet$ A partially ordered set $(\Lambda,\preccurlyeq)$.\\
$\bullet$ For each $\lambda\in\Lambda$, a probability space $(\Omega_\lambda,\C_\lambda,m_\lambda)$.\\
$\bullet$ For each pair $(\lambda,\mu)\in\Lambda^2$ such that $\lambda\preccurlyeq\mu$, a measurable mapping $\rho_{\lambda \mu}:\Omega_\mu \to \Omega_\lambda$.\\
These ingredients are assumed to satisfy the following conditions.\\
1. The poset $(\Lambda,\preccurlyeq)$ is directed : for all $\lambda,\mu\in\Lambda$, there exists $\nu\in\Lambda$ such that $\lambda\preccurlyeq \nu $ and $\mu\preccurlyeq\nu$. \\
2. For all $\lambda,\mu,\nu\in\Lambda$ such that $\lambda\preccurlyeq\mu\preccurlyeq\nu$, one has the equality $\rho_{\lambda\mu}\circ\rho_{\mu \nu}=\rho_{\lambda\nu}$.\\
3. For all $\lambda,\mu\in\Lambda$ such that $\lambda\preccurlyeq\mu$, one has $m_\mu \circ \rho_{\lambda\mu}^{-1}=m_\lambda$.
\end{definition}
\index{projective family of probability spaces}

Let us state a general result of existence and uniqueness.

\begin{proposition}\label{proj lim gen} We keep the notation of Definition \ref{proj fam}. Let $\Omega$ denote the set-theoretic projective limit of the family $(\{\Omega_\lambda\},\{\rho_{\lambda\mu}\})$, endowed for each $\lambda\in\Lambda$ with the canonical mapping $\rho_\lambda : \Omega\to\Omega_\lambda$. Let $\C$ denote the smallest $\sigma$-field on $\Omega$ which makes all the mappings $\rho_\lambda$ measurable. 

Assume that for each $\lambda\in\Lambda$, $(\Omega_\lambda,\C_\lambda)$ is a compact metric space endowed with the Borel $\sigma$-field. Assume also that for all $\lambda,\mu\in\Lambda$ such that $\lambda\preccurlyeq\mu$, the mapping $\rho_{\lambda\mu}$ is continuous.

Then there exists a unique probability measure $m$ on $\C$ such that for all $\lambda\in\Lambda$, one has $m \circ \rho_\lambda^{-1}=m_\lambda$.
\end{proposition}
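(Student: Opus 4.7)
Uniqueness is standard: the cylinder sets $\rho_\lambda^{-1}(A)$ with $\lambda \in \Lambda$ and $A \in \C_\lambda$ form a $\pi$-system that generates $\C$, and on this $\pi$-system every measure satisfying the compatibility condition~(3) must assign the value $m_\lambda(A)$, so Dynkin's $\pi$-$\lambda$ lemma rules out two distinct extensions.

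For existence I would appeal to the Riesz representation theorem. First, $\Omega$ is the intersection, inside the product $\prod_{\lambda \in \Lambda} \Omega_\lambda$, of the closed subsets $\{(\omega_\nu)_\nu : \rho_{\lambda\mu}(\omega_\mu) = \omega_\lambda\}$ associated with each pair $\lambda \preccurlyeq \mu$; by continuity of the transition maps these conditions are closed, and by Tychonoff the ambient product is compact Hausdorff, so $\Omega$ is itself a compact Hausdorff space. Moreover, a subbasis for the topology on $\Omega$ is $\{\rho_\lambda^{-1}(U) : \lambda \in \Lambda,\, U \subset \Omega_\lambda \text{ open}\}$, whence the Borel $\sigma$-field of $\Omega$ coincides with $\C$. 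Consider the unital, conjugation-stable subalgebra $\A_0 = \bigcup_{\lambda \in \Lambda} \rho_\lambda^\ast C(\Omega_\lambda)$ of $C(\Omega)$. Two distinct points of $\Omega$ differ at some coordinate $\lambda$ and are therefore separated by an element of $\A_0$, so by Stone--Weierstrass $\A_0$ is uniformly dense in $C(\Omega)$. Define a functional $L$ on $\A_0$ by $L(\rho_\lambda^\ast f) = \int f\, dm_\lambda$; directedness~(1) allows one to reduce any two representations of the same element of $\A_0$ to a common index via~(2), and the compatibility~(3) then shows that $L$ is well defined, linear, positive, and satisfies $L(1) = 1$.

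The crux of the argument is the norm estimate $|L(g)| \leq \|g\|_\infty$ for $g \in \A_0$. Writing $g = \rho_\lambda^\ast f$, this amounts to $\bigl|\int f\, dm_\lambda\bigr| \leq \sup_{\rho_\lambda(\Omega)} |f|$, which in turn follows once one establishes the lemma that $m_\lambda(\rho_\lambda(\Omega)) = 1$ for every $\lambda$. To prove this lemma, I would note that
\[
\rho_\lambda(\Omega) = \bigcap_{\mu \succcurlyeq \lambda} \rho_{\lambda\mu}(\Omega_\mu),
\]
a filtered decreasing intersection of compact subsets of $\Omega_\lambda$, each of which is of full $m_\lambda$-measure because $(\rho_{\lambda\mu})_\ast m_\mu = m_\lambda$. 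The compact metric structure of $\Omega_\lambda$ is then essential: the Hausdorff distance of $\rho_{\lambda\mu}(\Omega_\mu)$ to $\rho_\lambda(\Omega)$ tends to zero as $\mu$ runs through the directed set, so one can extract an increasing sequence $(\mu_n) \subset \Lambda$ with $\bigcap_n \rho_{\lambda\mu_n}(\Omega_{\mu_n}) = \rho_\lambda(\Omega)$, and countable additivity finishes the lemma.

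Once the norm estimate is in hand, $L$ extends by uniform continuity to a positive linear functional $\bar L$ on $C(\Omega)$ with $\bar L(1) = 1$, and the Riesz representation theorem on the compact Hausdorff space $\Omega$ produces a Borel probability measure $m$ with $\int g\, dm = \bar L(g)$ for all $g \in C(\Omega)$; applying this to functions of the form $\rho_\lambda^\ast f$ with $f$ continuous shows that $m \circ \rho_\lambda^{-1} = m_\lambda$, since the two Borel probabilities on $\Omega_\lambda$ give the same integral to every continuous function. The main obstacle I anticipate is precisely the lemma $m_\lambda(\rho_\lambda(\Omega)) = 1$: this ``surjectivity up to null sets'' is a form of Mittag-Leffler condition and relies critically on the metrizability of each $\Omega_\lambda$, without which the extraction of a countable cofinal subfamily in the directed intersection is not automatic.
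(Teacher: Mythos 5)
Your argument is essentially correct, but it follows a genuinely different route from the paper. The paper proceeds by Carath\'eodory's extension theorem: it defines a finitely additive set function on the algebra of cylinder sets $\bigcup_\lambda \rho_\lambda^{-1}(\C_\lambda)$, checks that it is well defined by showing $\rho_\xi(\Omega)\supset S_\xi$ (the support of $m_\xi$) via a finite-intersection-property argument, and establishes $\sigma$-additivity on the algebra by combining inner regularity of each $m_\lambda$ with the compactness of $\Omega$. You instead build a positive normalized functional on the dense subalgebra $\bigcup_\lambda \rho_\lambda^*C(\Omega_\lambda)$ and invoke Stone--Weierstrass and Riesz. Both proofs hinge on the same key fact, namely that $\rho_\lambda(\Omega)$ carries full $m_\lambda$-measure; the paper gets it from $\rho_\lambda(\Omega)\supset S_\lambda$ together with $m_\lambda(S_\lambda)=1$ (which uses metrizability, as does your extraction of a countable cofinal subsequence), while you get it from the decreasing net $K_\mu=\rho_{\lambda\mu}(\Omega_\mu)$ of full-measure compacta. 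Your route buys a cleaner $\sigma$-additivity step (it is absorbed into Riesz) at the price of some functional-analytic overhead; the paper's route is more elementary and self-contained, which is why the author chose to spell it out.

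Two points in your write-up need repair, though neither is fatal. First, the identity $\rho_\lambda(\Omega)=\bigcap_{\mu\succcurlyeq\lambda}\rho_{\lambda\mu}(\Omega_\mu)$ is stated as if obvious, but the inclusion $\supseteq$ is precisely the nontrivial compactness argument of the whole theorem: given $x$ in the right-hand side, one must exhibit a thread through $x$, which requires the finite-intersection property in $\prod_\kappa\Omega_\kappa$ together with directedness (this is the paper's argument that $s_\xi\in\rho_\xi(\Omega)$). You should prove it, and note that your functional $L$ is not even well defined before this lemma is available, since two representations $\rho_\lambda^*f=\rho_\mu^*g$ only yield functions agreeing on $\rho_\nu(\Omega)$ after reduction to a common index $\nu$. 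Second, the claim that the Borel $\sigma$-field of $\Omega$ coincides with $\C$ is false in general when $\Lambda$ is uncountable: $\C$ is the Baire $\sigma$-field of $\Omega$ (generated by $C(\Omega)$, since each $\Omega_\lambda$ is compact metric), which may be strictly smaller than the Borel $\sigma$-field. This costs nothing --- you simply restrict the Riesz measure to $\C$ --- but as written the assertion is wrong and should be amended.
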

\index{Kolmogorov's extension theorem}
 
This result belongs to a wide family of theorems whose prototype is due to Kolmogorov and whose common ground is Caratheodory's extension theorem. The most common versions assert the existence and uniqueness of $m$ under less restrictive conditions on the probability spaces but more restrictive conditions on the poset. Typically, $\Lambda$ is a subset of a finite-dimensional Euclidean space and the probability spaces are Polish space. The form which we have stated is in fact fairly easy to prove due to the strong assumptions which we make on the probability spaces. We think wiser to give a proof than to refer to several places in the literature from which the reader would have to collect the various pieces of the argument. \\

\begin{proof}Let $\Pi_\Lambda$ denote the Cartesian product of the sets $\Omega_\lambda$, $\lambda\in\Lambda$. Recall that the set-theoretic projective limit of the family $(\{\Omega_\lambda\},\{\rho_{\lambda\mu}\})$ is, by definition, the set
$$\Omega=\left\{(\omega_\lambda)_{\lambda\in\Lambda} \in\Pi_\Lambda : \forall \lambda,\mu\in\Lambda, \lambda\preccurlyeq\mu \Rightarrow \rho_{\lambda\mu}(\omega_\mu)=\omega_\lambda\right\}.$$
The set $\Pi_\Lambda$, endowed with the product topology, is a compact topological space of which $\Omega$, as intersection of closed sets, is a compact topological subspace. It is endowed with the continuous coordinate mappings $\rho_\lambda :\Omega\to\Omega_\lambda$, $\lambda\in\Lambda$.  

For each $\lambda\in\Lambda$, let $S_\lambda$ denote the closed support of the probability measure $m_\lambda$. It is a non-empty compact subset of $\Omega_\lambda$. Consider $\lambda\preccurlyeq \mu$. The equality $m_\mu\circ \rho_{\lambda\mu}^{-1}=m_\lambda$ implies that $\rho_{\lambda\mu}(S_\mu)=S_\lambda$. We claim that for all $\xi\in\Lambda$, $\rho_\xi(\Omega)\supset S_\xi$. Indeed, choose $\xi\in\Lambda$  and $s_\xi\in S_\xi$. Define, for all $\mu\preccurlyeq \nu$,
$\Omega(\mu,\nu;\xi)=\left\{(\omega_\lambda)_{\lambda\in\Lambda} \in\Pi_\Lambda : \rho_{\mu\nu}(\omega_\nu)=\omega_\mu , \omega_\xi=s_\xi \right\}$. Then on one hand,  $\bigcap_{\mu\preccurlyeq \nu} \Omega(\mu,\nu;\xi) = \rho_\xi^{-1}(s_\xi)$. On the other hand, the discussion above implies that no finite intersection of the sets $\Omega(\mu,\nu;\xi)$ is empty. Since these sets are compact, their intersection is non-empty. Hence $s_\xi\in \rho_\xi(\Omega)$.

Define a collection $\C_\Lambda$ of subsets of $\Omega$ by setting $\C_\Lambda=\bigcup_{\lambda\in\Lambda} \rho_\lambda^{-1}(\C_\lambda)$. The collection $\C_\Lambda$ is not a $\sigma$-field, but it is stable by complementation, finite unions and finite intersections. Consider $A\in\C_\Lambda$. Assume that $A=\rho_\lambda^{-1}(A_\lambda)$ and $A=\rho_\mu^{-1}(A_\mu)$ for some $\lambda,\mu\in\Lambda$ and $A_\lambda\in\C_\lambda$, $A_\mu\in\C_\mu$. Let $\nu$ be such that $\lambda\preccurlyeq \nu$ and $\mu\preccurlyeq \nu$. Then 
$A=\rho_\nu^{-1}(\rho_{\lambda\nu}^{-1}(A_\lambda))=\rho_\nu^{-1}(\rho_{\mu\nu}^{-1}(A_\mu))$. Since $\rho_\nu(\Omega)\supset S_\nu$, this equality implies $\rho_{\lambda\nu}^{-1}(A_\lambda) \cap S_\nu=\rho_{\mu\nu}^{-1}(A_\mu)\cap S_\nu$. Hence,
\begin{eqnarray*}
m_\lambda(A_\lambda)=m_\nu(\rho_{\lambda\nu}^{-1}(A_\lambda))=m_\nu(\rho_{\lambda\nu}^{-1}(A_\lambda) \cap S_\nu)=&&\\
&&\hskip -2.5cm m_\nu(\rho_{\mu\nu}^{-1}(A_\mu)\cap S_\nu)=m_\nu(\rho_{\mu\nu}^{-1}(A_\mu))=m_\mu(A_\mu).
\end{eqnarray*}
We have proved that $\rho_\lambda^{-1}(A_\lambda)=\rho_\mu^{-1}(A_\mu)$ implies $m_\lambda(A_\lambda)=m_\mu(A_\mu)$. Hence, for all $A\in\C_\Lambda$, it is legitimate to call $m(A)$ the common value of all $m_\lambda(A_\lambda)$ for $\lambda\in\Lambda$ and $A_\lambda\in\C_\lambda$ such that $A=\rho_\lambda^{-1}(A_\lambda)$. Thus, we have defined $m:\C_\Lambda\to[0,1]$. It is not difficult to check that $m$ is finitely additive.

We claim that $m$ is $\sigma$-additive on $\C_\Lambda$. In order to prove this, it is sufficient to prove that if $(A_n)_{n\geq 1}$ is a decreasing sequence of elements of $\C_\Lambda$ such that $\bigcap_{n\geq 1}A_n=\varnothing$, then $\lim m(A_n)=0$. Let us choose such a sequence $(A_n)_{n\geq 1}$. Let us also choose $\epsilon>0$. For each $n$, let us write $A_n=\rho_{\lambda_n}^{-1}(A_{\lambda_n})$ for some $\lambda_n\in \Lambda$ and some $A_{\lambda_n}\in \C_{\lambda_n}$. For each $n$, the inner regularity of the measure $m_{\lambda_n}$ implies the existence of a compact subset $K_{\lambda_n}$ of $A_{\lambda_n}$ such that $m_{\lambda_n}(A_{\lambda_n}-K_{\lambda_n})\leq \epsilon 2^{-n}$. Since $\Omega$ endowed with the trace of the product topology of $\Pi_\Lambda$ is compact, the sets $K_n=\rho_{\lambda_n}^{-1}(K_{\lambda_n})$, $n\geq 0$ are compact. Since $K_n\subset A_n$ for all $n\geq 1$, the intersection $\bigcap_{n\geq 1} K_n$ is empty. Hence, there exists $N\geq 1$ such that $\bigcap_{n=1}^N K_n=\varnothing$. Now, for each $\omega\in A_N$, there exists at least one $n\in\{1,\ldots,N\}$ such that $\omega\notin K_n$, so that $\omega\in A_n-K_n$. Hence, $m(A_N)\leq \sum_{n=1}^N m(A_n-K_n) \leq \epsilon$.  This proves that $m(A_n)$ tends to 0 when $n$ tends to infinity. 

Carath\'eodory's extension theorem asserts that $m$ admits a unique $\sigma$-additive extension to the $\sigma$-field generated by $\C_\Lambda$, which is by definition the smallest $\sigma$-field on $\Omega$ such that the mappings $\rho_\lambda$ are measurable. This extension of $m$, which we still denote by $m$, satisfies $m\circ\rho_\lambda^{-1}=m_\lambda$ for all $\lambda\in\Lambda$ by definition. \end{proof}

In our setting, this theorem can be applied as follows.

\begin{proposition} \label{take proj lim}Let $P$ be a subset of $\Path(M)$. Let $\mathcal F$ be a collection of finite subsets of $P$ whose union is $P$ and which, when ordered by the inclusion, is directed. For all $J\in {\mathcal F}$, let $m_J$ be a probability measure on $(\M(J,G),\C)$. Assume that the probability spaces $(\M(J,G),\C,m_J)$ endowed with the restriction mappings $\rho_{JK}:\M(K,G)\to\M(J,G)$ defined for $J\subset K$ form a projective system. Then there exists a unique probability measure $m$ on $(\M(P,G),\C)$ such that for all $J\subset P$, the image of $m$ by the restriction mapping $\rho_J:\M(P,G)\to \M(J,G)$ is $m_J$.
 
In particular, if $(H_c)_{c\in P}$ is a collection of $G$-valued random variables such that \\
1. $\forall c\in P, c^{-1}\in P \Rightarrow H_{c^{-1}}=H_c^{-1} \; a.s.$,\\
2. $\forall c_1,c_2\in P, c_1c_2\in P \Rightarrow H_{c_1c_2}=H_{c_2}H_{c_1} \; a.s. $,\\
then there exists a unique probability measure $m$ on $(\M(P,G),\C)$ such that the distribution of the canonical process under $m$ is the same as the distribution of $(H_c)_{c\in P}$.
\end{proposition}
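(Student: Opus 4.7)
The plan is to deduce the first assertion from Proposition \ref{proj lim gen}, and then reduce the second assertion to the first.

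For the first assertion, set $\Lambda = \mathcal F$ ordered by inclusion, which is directed by hypothesis. For each finite $J \subset P$, the set $\M(J,G)$ is a closed subset of $G^J$: indeed, it is cut out by the conditions $h(c^{-1}) = h(c)^{-1}$ for those $c \in J$ with $c^{-1} \in J$, and $h(c_1 c_2) = h(c_2) h(c_1)$ for those $(c_1, c_2) \in J^2$ with $c_1 c_2 \in J$, and each such condition is a closed condition in $G^J$ since multiplication and inversion in $G$ are continuous. Hence $\M(J,G)$ is a compact metric space (endowed with the Borel $\sigma$-field, which coincides with $\C$ here). For $J \subset K$ in $\mathcal F$, the restriction map $\rho_{JK} : \M(K,G) \to \M(J,G)$ is clearly continuous; compatibility of the family $(m_J)$ under these restrictions is precisely the hypothesis. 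Proposition \ref{proj lim gen} then yields a probability measure $\tilde m$ on the set-theoretic projective limit $\tilde \Omega$ of the family $(\M(J,G),\rho_{JK})$, endowed with the $\sigma$-field generated by the coordinate projections $\tilde \rho_J : \tilde \Omega \to \M(J,G)$. The key step that remains is to identify $(\tilde \Omega, \tilde \rho_J)$ with $(\M(P,G),\rho_J)$. Since $\bigcup_{J \in \mathcal F} J = P$, a point of $\tilde \Omega$ is exactly a collection of compatible finite-dimensional evaluations, i.e.\ a function $h : P \to G$; and the multiplicativity of $h$ is equivalent to its restriction to each finite $J$ belonging to $\M(J,G)$, since any given relation $h(c_1 c_2) = h(c_2) h(c_1)$ involves only the finitely many paths $\{c_1, c_2, c_1 c_2\}$, which by the directedness of $\mathcal F$ are all contained in some $J \in \mathcal F$. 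This canonical bijection intertwines the coordinate projections, and it sends the $\sigma$-field generated by the $\tilde\rho_J$ to $\C$; the measure $m$ on $(\M(P,G),\C)$ is the transport of $\tilde m$. Uniqueness follows from the uniqueness part of Proposition \ref{proj lim gen}.

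For the second assertion, given the multiplicative family $(H_c)_{c \in P}$, define for each finite $J \in \mathcal F$ the probability measure $m_J$ on $G^J$ as the law of the tuple $(H_c)_{c \in J}$. The multiplicative relations satisfied by the $H_c$ involve only finitely many paths of $J$, hence finitely many almost-sure equalities; their intersection still has probability one, so $m_J$ is supported on the closed subset $\M(J,G) \subset G^J$. Compatibility under restriction is immediate: for $J \subset K$, the law of $(H_c)_{c \in J}$ is the marginal of the law of $(H_c)_{c \in K}$. The first assertion then produces a probability measure $m$ on $(\M(P,G),\C)$ whose finite-dimensional marginals match those of $(H_c)_{c \in P}$, which is exactly the statement that the canonical process under $m$ has the same distribution as $(H_c)_{c \in P}$. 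Uniqueness again follows from the first assertion.

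The main obstacle I anticipate is not either of the two assertions in isolation but the identification of the projective limit with $\M(P,G)$ in the first part, together with the verification that the $\sigma$-field generated by the $\rho_J$ coincides with the cylinder $\sigma$-field $\C$ (which is the $\sigma$-field generated by the single-path evaluations $h \mapsto h(c)$). Both facts rely on the fact that the family $\mathcal F$ covers $P$ and is directed, so that a cylinder event depending on finitely many paths $c_1, \ldots, c_n$ can always be realised on some common $J \in \mathcal F$ containing $\{c_1, \ldots, c_n\}$; once this is observed, everything else is a routine chase through the definitions.
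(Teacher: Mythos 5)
Your proposal is correct and follows essentially the same route as the paper: both parts are deduced from Proposition \ref{proj lim gen} by identifying the set-theoretic projective limit of the compact spaces $\M(J,G)$ with $\M(P,G)$ and checking that the generated $\sigma$-field is the cylinder $\sigma$-field, and the second assertion is reduced to the first by taking the finite-dimensional laws of $(H_c)_{c\in J}$, which are supported on $\M(J,G)$ because only finitely many almost-sure relations are involved. The extra details you supply (closedness of $\M(J,G)$ in $G^J$, the role of directedness in covering finite subsets) are exactly the points the paper leaves implicit.
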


\begin{proof}Let $J,K$ be elements of $\mathcal F$ such that $J\subset K$. Then $\M(J,G)$ is a compact subset of $G^J$, actually a smooth compact submanifold. The evaluation mappings on $\M(J,G)$ generate both the topology and the $\sigma$-field $\C$. Hence, $\C$ is the Borel $\sigma$-field. Moreover, the restriction mapping $\M(K,G)\to \M(J,G)$ is continuous. 

Proposition \ref{proj lim gen} ensures the existence of a probability measure on the projective limit of the sets underlying our probability spaces, endowed with a certain $\sigma$-field. In the present case, the projective limit of the sets $\M(J,G)$ is easily identified with the set $\M(P,G)$ in such a way that the mappings $\rho_J:\M(P,G)\to\M(J,G)$ are simply the restrictions. Through this identification, the $\sigma$-field on which Proposition \ref{proj lim gen} constructs a measure is the usual cylinder $\sigma$-field. The first assertion follows.

Let $(H_c)_{c\in P}$ be a collection of random variables which satisfies the assumptions 1 and 2. For each finite subset $J$ of $P$, the distribution of $(H_c)_{c\in J}$ is a Borel probability measure on $G^J$, which we denote by $m_J$ and which is actually supported by $\M(J,G)$. By applying the first assertion to the collection of probability spaces $(\M(J,G),\C,m_J)$ where $J$ spans the collection of finite subsets of $P$, we find the desired probability measure $m$ on $\M(P,G)$. \end{proof}

\section{Uniform multiplicative functions on a graph}

Let $M$ be a compact surface. Let $\G=(\V,\E,\F)$ be a graph on $M$. In this paragraph, we discuss the uniform measure on $\M(\Path(\G),G)$ and some of its natural disintegrations. The disintegrations that we have in mind are associated with  random variables associated to marking curves or boundary components. Before we define the measures, let us set up some notation.

\subsection{Constraints on marked surfaces}

Let us denote by $\Conj(G)$ the set of conjugacy classes of $G$. The inversion map $x\mapsto x^{-1}$ on $G$ descends to an involution of ${\rm Conj}(G)$ which determines an action of $\Z/2\Z$ on ${\rm Conj}(G)$. 
\index{OOO@${\mathcal O}_x$}\index{CCC@$C$}
Recall that if $(M,\CS)$ is a marked surface, then $\Z/2\Z$ acts on $\CS\cup\BS(M)$ by reversing the orientation.

\begin{definition} Let $(M,\CS)$ be a marked surface. A {\em set of $G$-constraints} on $(M,\CS)$ is a $\Z/2\Z$-equivariant mapping $C:\CS\cup \BS(M)\to \Conj(G)$. 
\end{definition}
\index{surface!$G$-constraints}
\index{gc@$G$-constraints|see{surface}}
\index{constraints|see{surface}}

A set of $G$-constraints on a marked surface determines a set of $G$-constraints on any splitting of this surface. In the case of unary splittings, we need the following observation. If $\O$ is a conjugacy class of $G$, then the set of squares of elements of $\O$ is also a conjugacy class, which we denote by $\O^2$.

\begin{definition}\label{split C} Let $(M,\CS,C)$ be a marked surface endowed with a set of $G$-constraints. Consider $l\in\CS$. Let $f:\Spl_l(M)\to M$ be the elementary gluing with joint $\{l,l^{-1}\}$. The marked surface $(\Spl_l(M),\Spl_l(\CS))$ carries the set of $G$-constraints $\Spl_l(C)$ defined by $\Spl_l(C)(l')=C(f(l'))$ for all $l'\in \Spl_l(\CS)$, with the following exception: if $f$ is a unary gluing and $f(l')=l^{\pm 1}$, then $\Spl_l(C)(l')=C(l)^{\pm 2}$.
\end{definition}
\index{surface!surgery}

Consider a marked surface $(M,\CS,C)$ with $G$-constraints. Given $h\in\M(\Path(M),G)$, we say that $h$ {\em satisfies the constraints $C$} if 
\begin{equation}\label{h satisfies C}
\forall l\in \CS \cup \BS(M) , h(l)\in C(l).
\end{equation}

The set of elements of $\M(\Path(M),G)$ which satisfy the constraints $C$ is globally invariant under the action of the gauge group.

\subsection{Uniform measures}

\begin{definition}\label{def U free} Let $M$ be a compact surface. Let $\G=(\V,\E,\F)$ be a graph on $M$. Let $\E^+$ be an orientation of $\G$. The Haar measure on $G^{\E^+}$, seen as a probability measure on $(\M(\Path(\G),G),\C)$, is called the {\em uniform measure} and denoted by $\U^{\G}_{M,\varnothing}$.
\end{definition}
\index{multiplicative function!uniform}

Plainly, the uniform measure does not depend on the choice of $\E^+$. The reason for the subscript $\varnothing$ will become apparent soon. We would like now to incorporate boundary conditions and constraints along marking curves into the uniform measure $\U^{\G}_{M,\varnothing}$.

Let $\O\subset G$ be a conjugacy class. Let $n\geq 1$ be an integer.
The set 
$$\O(n)=\{(x_{1},\ldots,x_{n})\in G^{n} : x_{1}\ldots x_{n}\in \O\}$$
 is a $G^{n}$-homogeneous
space under the action 
$$(g_{1},\ldots,g_{n})\cdot (x_{1},\ldots,x_{n})=(g_{1}x_{1}g_{2}^{-1},\ldots,g_{n}x_{n}g_{1}^{-1}).$$
Let $\delta_{\O(n)}$
denote the extension to $G^n$ of the unique $G^n$-invariant probability measure on $\O(n)\subset G^{n}$.
In particular, $\O(1)=\O$ and $\delta_{\O(1)}$ is the $G$-invariant probability measure on $\O$,
which we also denote simply by $\delta_{\O}$.  For each element $x$ of $G$, we denote
by $\O_{x}$ the conjugacy class of $x$.
\index{DDdO@$\delta_{\O(n)}$}

\begin{lemma} Consider $f\in C^0(G^{n})$, $g\in C^0(G^{n-1})$, $u\in C^0(G)$ and a continuous map $h:G^{n-1}\to G$. Let $\O$
be a conjugacy class of $G$. The following relations hold. 
\begin{equation}\label{put constraint}
\int_{G^n} f\; d\delta_{\O(n)}=\int_{G^{n}}
f(x_{1},\ldots,x_{n-1},(x_{1}\ldots x_{n-1})^{-1}y)\; dx_{1}\ldots dx_{n-1} \delta_{\O}(dy).
\end{equation}
\begin{eqnarray} \nonumber
\int_{G^n} u(h(x_{1},\ldots,x_{n-1}) x_{1}\ldots x_{n} h(x_{1},\ldots,x_{n-1})^{-1})
g(x_{1},\ldots,x_{n-1}) \; \delta_{\O(n)}(dx_{1}\ldots dx_{n}) =&& \\
\label{put constraint 2} &&\hskip -8cm  \int_{G} u\; d\delta_{\O} \int_{G^{n-1}} g(x_{1},\ldots,x_{n-1})\; dx_{1}\ldots
dx_{n-1}.
\end{eqnarray}
\begin{equation}\label{contracte dO}
\int_{G^{n+1}} f(x_1x_2,x_3,\ldots,x_n) \; \delta_{\O(n+1)}(dg_1 \ldots dg_{n+1})=\int_{G^n} f \;\delta_{\O(n)}.
\end{equation}

\begin{equation}\label{inv perm circ}
\int_{G^n} f(x_{2},\ldots,x_{n},x_{1})\; \delta_{\O(n)}(dx_{1}\ldots dx_{n}) = 
\int_{G^n} f\; \delta_{\O(n)}.
\end{equation}
\begin{equation}\label{recover unif}
\int_{G} \left[\int_{G^{n}}f(x_{1},\ldots,x_{n}) \; \delta_{\O_{y}(n)}(dx_{1}\ldots dx_{n})\right]
\; dy=\int_{G^{n}} f(x_{1},\ldots,x_{n})\; dx_{1}\ldots dx_{n}.
\end{equation}
\end{lemma}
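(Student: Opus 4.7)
The strategy is to establish (\ref{put constraint}) first, then derive the remaining four identities from it by essentially routine manipulations with the Haar measure of $G$ and the conjugation-invariance of $\delta_{\O}$.

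The key observation for (\ref{put constraint}) is that the right-hand side defines a Borel probability measure on $G^n$ which is (i) supported on $\O(n)$, since the tuple $(x_1,\ldots,x_{n-1},(x_1\ldots x_{n-1})^{-1}y)$ has product $y\in\O$, and (ii) invariant under the $G^n$-action on $\O(n)$ defined in the statement. Invariance under left/right shifts of the first $n-1$ coordinates is exactly the bi-invariance of the Haar measure on $G$, while invariance under the shift of the last coordinate amounts to conjugation-invariance of $\delta_{\O}$. Since $\O(n)$ is a homogeneous space, it carries a unique $G^n$-invariant probability measure, and the right-hand side of (\ref{put constraint}) must therefore equal $\delta_{\O(n)}$.

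For (\ref{put constraint 2}), I would apply (\ref{put constraint}) and use that under this parametrization $x_1\ldots x_n=y\in\O$; the conjugate $h(x_1,\ldots,x_{n-1})\,y\,h(x_1,\ldots,x_{n-1})^{-1}$ still lies in $\O$, so by the conjugation-invariance of $\delta_{\O}$ the $y$-integral decouples and yields $\int_G u\,d\delta_{\O}$ times the Haar integral of $g$. Identity (\ref{inv perm circ}) follows similarly: $x_2\ldots x_n x_1$ is conjugate to $x_1\ldots x_n$, so the cyclic shift preserves the support $\O(n)$, and together with the translation-invariance of Haar it preserves the measure $\delta_{\O(n)}$; alternatively one can verify directly that the pushforward under the cyclic shift is $G^n$-invariant and conclude by uniqueness.

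For (\ref{contracte dO}) (reading the integrand as $f(x_1x_2,x_3,\ldots,x_{n+1})$, with the understanding that $dg_i$ stands for $dx_i$), observe that the map $(x_1,\ldots,x_{n+1})\mapsto(x_1x_2,x_3,\ldots,x_{n+1})$ is $G$-equivariant when suitable diagonal actions are taken, and that $x_1x_2\cdot x_3\cdots x_{n+1}=x_1\ldots x_{n+1}\in\O$; its pushforward of $\delta_{\O(n+1)}$ is thus a $G^n$-invariant probability measure on $\O(n)$, which must be $\delta_{\O(n)}$ by uniqueness. Finally, for (\ref{recover unif}) I would use the standard disintegration $\int_G F(y)\,dy=\int_G\bigl(\int_G F\,d\delta_{\O_y}\bigr)\,dy$, obtained by inserting a redundant conjugation into the Haar integral and applying Fubini; plugging this into the $y$-integral of (\ref{put constraint}) and then using translation-invariance to let $x_n$ range freely over $G$ yields the product Haar measure. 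The only genuine obstacle is the invariance check at the heart of (\ref{put constraint}); after that, all remaining identities reduce either to this parametrization, to conjugation-invariance of $\O$, or to the disintegration of Haar measure over conjugacy classes.
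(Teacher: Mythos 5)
Your proposal is correct and follows essentially the same route as the paper: identify $\delta_{\O(n)}$ as the unique $G^n$-invariant probability measure on the homogeneous space $\O(n)$, verify (\ref{put constraint}) by checking support and invariance of the right-hand side, and deduce the remaining identities from this parametrization, the conjugation-invariance of $\delta_\O$, equivariance of the contraction and cyclic-shift maps, and the disintegration of the Haar measure over conjugacy classes. The paper phrases the middle two identities via the explicit orbit-average formula $\int_{G^n} f(g_1 z_1 g_2^{-1},\ldots,g_n z_n g_1^{-1})\,dg_1\ldots dg_n$ rather than your pushforward-plus-uniqueness wording, but these are the same argument.
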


\begin{proof}The right-hand side of (\ref{put constraint}) defines a measure supported by $\O(n)$ and
invariant under the action of $G^{n}$. Hence, it is $\delta_{\O(n)}$. The relation
(\ref{put constraint 2}) follows easily from (\ref{put constraint}). 
The right-hand sides of (\ref{contracte dO}) and (\ref{inv perm circ}) are equal to 
$$\int_{G^{n}}
f(g_{1}z_{1}g_{2}^{-1},\ldots,g_{n}z_{n}g_{1}^{-1})\; dg_{1}\ldots dg_{n}$$ for all
$(z_{1},\ldots,z_{n})\in \O(n)$. The relation (\ref{contracte dO}) follows because the map $(x_1,\ldots,x_{n+1})\mapsto (x_1 x_2,x_3,\ldots,x_{n+1})$ maps $\O(n+1)$ into $\O(n)$, and (\ref{inv perm circ}) also follows because the set $\O(n)$
is stable by circular permutation of the variables. The relation (\ref{recover unif}) follows also from
this description of $\delta_{\O(n)}$ and a simple change of variables. \end{proof}

Let $(M,\CS,C)$ be a marked surface with a set of $G$-constraints, endowed with a graph $\G$. Let
us choose $q$ simple loops $l_{1},\ldots,l_{q}$ in $\Loop(\G)$ which represent the
unoriented cycles of $\CS\cup\BS(M)$, that is, such that
$\CS\cup\BS(M)=\{l_{1},l_{1}^{-1},\ldots,l_{q},l_{q}^{-1}\}$. Recall that for the sake of simplicity,
we denote in the same way a cycle and
the corresponding simple loop. We label the elements of $\E$ in such a way that $l_{i}=e_{i,1}\ldots
e_{i,n_{i}}$ for $i\in \{1,\ldots,q\}$. Let $\E^+$ be an orientation of $\G$ such that $e_{i,j}\in
\E^{+}$ for all $i\in\{1,\ldots,q\}$
and $j\in\{1,\ldots,n_{i}\}$. Let us label $e_{1},\ldots,e_{m}$ the other edges of $\E^+$. 

\begin{lemma}\label{def ugc} Let $(M,\CS,C)$ be a marked surface with $G$-constraints. Let $\G=(\V,\E,\F)$ be a graph on $(M,\CS)$. The {\em uniform measure on $\G^{\E^+}$ with $G$-constraints $C$} is defined as the probability measure
$$dg_{1}\otimes \ldots \otimes dg_{m}\otimes
\delta_{C(l_{1})(n_{1})}(dg_{1,n_{1}}\ldots
dg_{1,1})\otimes \ldots\otimes \delta_{C(l_{q})(n_{q})}(dg_{q,n_{q}}\ldots dg_{q,1}).$$
It is denoted by $\U^{\G}_{M,\CS,C}(dg)$.
The corresponding probability measure on $\M(\Path(\G),G)$, also denoted by $\U^{\G}_{M,\CS, C}$, depends neither on the choice of the simple loops which represent the marking of $M$ nor on the choice of $\E^{+}$. 
\end{lemma}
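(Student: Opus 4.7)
The plan is to reduce the problem to verifying invariance of the formula under three elementary types of relabeling moves, each of which can be handled by an explicit identity established earlier in the section.

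First I will verify that the formula defines a genuine probability measure on $G^{\E^+}$. The variables $\{g_{i,j} : 1 \leq i \leq q,\, 1\leq j\leq n_i\}$ together with $\{g_1,\ldots,g_m\}$ must index a partition of $\E^+$; this follows because the loops $l_1,\ldots,l_q$ are simple and have pairwise disjoint images, the latter being a consequence of the defining disjointness of the marks in $\CS$ and of the connected components of $\partial M$. Thus the tensor product of measures is well-defined.

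Next I will argue that any two admissible auxiliary data $(\E^+, l_1,\ldots,l_q, \text{edge decompositions})$ are linked by a finite sequence of three elementary moves: (i) reversing the orientation, inside $\E^+$, of some edge $e$ that does not lie on any loop $l_i$; (ii) cyclically shifting the edge decomposition of a single $l_i$, i.e.\ selecting a different starting vertex on the same oriented cycle; (iii) replacing $l_i$ by $l_i^{-1}$, with the corresponding simultaneous inversion in $\E^+$ of each $e_{i,j}$. That these moves generate all admissible changes follows from the fact that, by Lemma \ref{cycle couvert}, a simple loop in $\Loop(\G)$ representing a given oriented cycle is unique up to cyclic permutation of its edges. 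It then suffices to check that each move leaves unchanged the push-forward measure on $\M(\Path(\G),G)$.

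For move (i), the factor $dg_e$ is renamed $dg_{e^{-1}}$, and invariance of Haar measure on $G$ under $x\mapsto x^{-1}$ yields the conclusion. For move (ii), the cyclic shift corresponds to a cyclic permutation of the variables $g_{i,1},\ldots,g_{i,n_i}$; since the conjugacy class $C(l_i)$ is insensitive to the change of starting vertex, the invariance of $\delta_{\O(n)}$ under cyclic permutations recorded in (\ref{inv perm circ}) does the job. Move (iii) is the most substantial: the $\Z/2\Z$-equivariance of $C$ gives $C(l_i^{-1}) = C(l_i)^{-1}$, and the diffeomorphism $(x_1,\ldots,x_{n_i}) \mapsto (x_{n_i}^{-1},\ldots,x_1^{-1})$ of $G^{n_i}$ preserves Haar measure and sends the support of $\delta_{C(l_i)(n_i)}$, in the ordering dictated by (\ref{i: multi}), onto the support of $\delta_{C(l_i)^{-1}(n_i)}$; the uniqueness of the $G^{n_i}$-invariant probability measure on each orbit then forces the two measures to correspond under this diffeomorphism.

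The main obstacle is purely a matter of bookkeeping: one has to reconcile simultaneously the reversal of multiplication in passing from path concatenation to the group $G$ (equation (\ref{i: multi})), the ordering convention in $\delta_{\O(n)}(dg_{i,n_i}\ldots dg_{i,1})$, and the behavior of conjugacy classes under inversion. Once the conventions are consistently tracked, the three invariance checks are immediate from (\ref{inv perm circ}) and the standard properties of Haar measure.
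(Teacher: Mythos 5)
Your proof is correct and follows essentially the same route as the paper's: the independence from the choice of representative loops is reduced to the cyclic-permutation invariance of $\delta_{\O(n)}$ from (\ref{inv perm circ}), and the independence from the orientation $\E^+$ (including the reversal of a marked loop together with its edges) is reduced to the invariance of the Haar measure under inversion, which you phrase via the uniqueness of the $G^{n}$-invariant measure on the orbit $\O(n)$. Your decomposition into three elementary moves merely makes explicit the bookkeeping that the paper's two-sentence proof leaves implicit.
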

\index{UAUGM@$\U^\G_{M,\CS,C}$}
\index{multiplicative function!uniform with constraints|(}

\begin{proof}The invariance of the measures $\delta_{\O(n)}$ by cyclic permutation of the variables, granted
by (\ref{inv perm circ}), ensures that the measure $\U^{\G}_{M,\CS,C}$ does not depend on the
simple loops which we have chosen to
represent the cycles of the marking of $M$. Since the Haar measure on
$G$ is invariant by inversion, the measure induced on $\M(\Path(\G),G)$ by $\U^{\G}_{M,\CS,C}$ does not
depend on the choice of $\E^{+}$. \end{proof}

Let us state two basic properties of the measure $\U^{\G}_{M,\CS,C}$.

\begin{proposition} \label{support U}  \label{jauge unif} Recall the notation of Lemma \ref{def ugc}.\\
1. The event ${\mathcal N}=\{\exists l\in \CS \cup \BS(M), h(l)\notin C(l)\}$
satisfies $\U^{\G}_{M,\CS,C}({\mathcal N})=0$.\\
2. The action of the group $G^{\V}$ on $\M(\Path(\G),G)$ preserves the
probability measure $\U^{\G}_{M,\CS,C}$.
\end{proposition}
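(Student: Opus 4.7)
The plan for assertion 1 is essentially a direct verification that the construction of $\U^{\G}_{M,\CS,C}$ does what its name suggests. The measure is a product, and on the tuple of edges $(g_{i,n_i},\ldots,g_{i,1})$ belonging to the simple loop $l_i = e_{i,1}\cdots e_{i,n_i}$ it is $\delta_{C(l_i)(n_i)}$, supported by construction on those tuples with $g_{i,n_i}\cdots g_{i,1}\in C(l_i)$. By the reversal convention of Definition \ref{def:multiplicative}, $h(l_i)=h(e_{i,n_i})\cdots h(e_{i,1})=g_{i,n_i}\cdots g_{i,1}$, so $h(l_i)\in C(l_i)$ almost surely. The inverses $l_i^{-1}$ are taken care of by the $\Z/2\Z$-equivariance of $C$, since $h(l_i^{-1})=h(l_i)^{-1}\in C(l_i)^{-1}=C(l_i^{-1})$. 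This exhausts $\CS\cup\BS(M)$, so $\U^\G_{M,\CS,C}(\mathcal N)=0$.

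For assertion 2, the plan is to fix a gauge transformation $j=(j_v)_{v\in\V}$ and to check factor by factor that the product measure defining $\U^\G_{M,\CS,C}$ is preserved by $h\mapsto j\cdot h$. On an edge $e_k\in\E^+$ not lying on any of the loops $l_1,\ldots,l_q$, the value $g_k$ is distributed according to the Haar measure $dg_k$ on $G$, and the transformation $g_k\mapsto j_{\overline{e_k}}^{-1}g_k j_{\underline{e_k}}$ preserves it by bi-invariance. For the edges belonging to a loop $l_i$, I would label the successive endpoints $v_{i,0},v_{i,1},\ldots,v_{i,n_i}=v_{i,0}$ so that $\underline{e_{i,k}}=v_{i,k-1}$ and $\overline{e_{i,k}}=v_{i,k}$, and then rename the variables in the order in which they appear in $h(l_i)$, setting $x_s=g_{i,n_i+1-s}$ and $h_s=j_{v_{i,n_i+1-s}}^{-1}$ for $s\in\{1,\ldots,n_i\}$, indices mod $n_i$. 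A direct computation then shows that the gauge action turns into
\[(x_1,\ldots,x_{n_i})\longmapsto (h_1 x_1 h_2^{-1},\,h_2 x_2 h_3^{-1},\,\ldots,\,h_{n_i}x_{n_i}h_1^{-1}),\]
which is exactly the $G^{n_i}$-action on $\O(n_i)$ built into the definition of $\delta_{C(l_i)(n_i)}$, hence leaves it invariant.

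The only step requiring any real care is this bookkeeping step reconciling the edge-by-edge gauge action $(j\cdot h)(e)=j_{\overline e}^{-1}h(e)j_{\underline e}$ with the cyclic $G^n$-action on $\O(n)$. The two conventions interact through the reversal $h(e_1\cdots e_n)=h(e_n)\cdots h(e_1)$, and closing up the loop uses $v_{i,n_i}=v_{i,0}$, which is precisely what produces the cyclic $g_n x_n g_1^{-1}$ term at the end. Once this identification is set up, invariance under the gauge action on each loop follows immediately from the fact that $\delta_{\O(n)}$ is the unique $G^n$-invariant probability measure on $\O(n)$, and the full invariance of $\U^\G_{M,\CS,C}$ follows from the product structure (the edge sets of the pairwise disjoint simple loops $l_1,\ldots,l_q$ are disjoint, and so are they from the remaining edges $e_1,\ldots,e_m$).
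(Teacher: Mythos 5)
Your proof is correct and follows essentially the same route as the paper's: part 1 is read off from the definition of the product measure and the support of $\delta_{C(l_i)(n_i)}$, and part 2 rests on bi-invariance of the Haar measure for the unconstrained edges together with the $G^{n}$-invariance of $\delta_{\O(n)}$ for the edges along each loop. The only cosmetic difference is that the paper first reduces to gauge transformations supported at a single vertex (using that these generate $G^{\V}$) and then checks a special case of the cyclic $G^{n}$-action, whereas you handle a general $j\in G^{\V}$ at once; your index bookkeeping identifying the edge-wise gauge action with the cyclic action on $\O(n_i)$ is accurate.
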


\begin{proof}1. By definition of $\U^{\G}_{M,\CS,C}$, the support of $\U^{\G}_{M,\CS,C}$ is contained in the closed set
$\{\forall l\in \CS \cup \BS(M), h(l)\in C(l)\}={\mathcal N}^{c}$.

2. Choose $v\in \V$ and $x\in G$. Set $j_{w}=1$ if $w\neq v$ and $j_{v}=x$. If $v$ is not located
on a curve of $\CS$, then the translation invariance of the Haar measure implies that $j$ leaves
the measure $\U^\G_{M,\CS,C}$ invariant. Assume now that $v$ is on the curve $l_{1}$ which is
represented by the cycle $e_{1}\ldots e_{n}$. Assume that $e_{1}$ is outcoming at $v$ and $e_{n}$
is incoming. Then the action of $j$ translates the variables associated to the edges adjacent to
$v$ other than $e_{1}$ and $e_{n}$ and it replaces $(e_{n},\ldots,e_{1})$ by
$(x^{-1}e_{n},\ldots,e_{1}x)$. This leaves the measure $\delta_{C(l_{1})(n)}(dg_{n}\ldots dg_{1})$
invariant. Finally, the action of $j$ preserves the measure $\U^{\G}_{M,\CS,C}$. Since the group $G^{\V}$ is generated by elements which are equal to $1$ at all vertices but one, the result follows. \end{proof}

Let us state precisely the fact that the measures $\U^{\G}_{M,\CS,C}$ disintegrate each other.

We use the notation $\CS=\{l_{1},l_{1}^{-1},\ldots,l_{q},l_{q}^{-1}\}$. This set does not include the boundary components of $M$ anymore. Let us define $\CS_0=\varnothing$ and for each $r\in\{1,\ldots,q\}$, $\CS_r=\{l_{1},l_{1}^{-1},\ldots,l_{r},l_{r}^{-1}\}$. 
For each $r\in\{1,\ldots,q\}$,  any collection $(\O_{1},\ldots,\O_{r})$ of $r$
conjugacy classes of $G$ determines the set of $G$-constraints on $\CS_r$ which maps
$l_{i}$ on $\O_{i}$ for all $i\in\{1,\ldots,r\}$. We denote this set of constraints simply by  $(\O_{1},\ldots,\O_{r})$.

The following lemma is a direct consequence of the definition of $\U^{\G}_{M,\CS,C}$ and (\ref{recover unif}). It shows that the measures $\U^{\G}_{M,\CS,(\O_{1},\ldots,\O_{r},\O_{x_{r+1}},\ldots,\O_{x_{q}})}$
provide a regular disintegration of $\U^{\G}_{M,\CS_r,(\O_{1},\ldots,\O_{r})}$ with respect to the random
variables $h({l_{r+1}}),\ldots,h({l_{q}})$. 

\begin{lemma}\label{pre axiome 3} Let $r$ be an integer between $0$ and $q-1$. Let $\O_{1},\ldots,\O_{r}$
be $r$ conjugacy classes of $G$. Let $f:G^{q-r}\lra \RK$ be a continuous function. Then
\begin{eqnarray}
\nonumber  \int_{\M(\Path(\G),G)} f(h({l_{r+1}}),\ldots,h(l_{q})) \; \U^{\G}_{M,\CS_r,(\O_{1},\ldots,\O_{r})}(dh)= &&\\
&&\hskip -2cm \int_{G^{q-r}} f(x_{r+1},\ldots,x_{q})\; dx_{r+1}\ldots dx_{q}.
\end{eqnarray} 
\begin{equation}
\U^{\G}_{M,\CS_r,(\O_{1},\ldots,\O_{r})}=\int_{G^{q-r}}
\U^{\G}_{M,\CS,(\O_{1},\ldots,\O_{r},\O_{x_{r+1}},\ldots,\O_{x_{q}})}\; dx_{r+1}\ldots dx_{q}.
\end{equation}
\end{lemma}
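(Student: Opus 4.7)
The statement is essentially a bookkeeping exercise: both identities follow directly from the product structure of $\U^\G_{M,\CS,C}$ given by Lemma \ref{def ugc}, combined with equation (\ref{recover unif}), exactly as the text suggests. The plan is to prove the second equality first, and then deduce the first from it.

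For the second equality, I would write down $\U^\G_{M,\CS,(\O_1,\ldots,\O_r,\O_{x_{r+1}},\ldots,\O_{x_q})}$ using Lemma \ref{def ugc}: a Haar factor $dg_j$ on each of the $m$ unconstrained edges, a factor $\delta_{\O_i(n_i)}(dg_{i,n_i}\ldots dg_{i,1})$ for each $i\in\{1,\ldots,r\}$, a factor $\delta_{\O_{x_j}(n_j)}(dg_{j,n_j}\ldots dg_{j,1})$ for each $j\in\{r+1,\ldots,q\}$, and the fixed factors attached to the boundary components of $M$. All of these factors act on disjoint groups of edges, because the simple loops that represent the cycles of $\CS\cup\BS(M)$ are pairwise edge-disjoint (they represent pairwise disjoint submanifolds, so Lemma \ref{cycle couvert} prevents any edge shared with another such loop). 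Integrating against $dx_{r+1}\ldots dx_q$ and applying (\ref{recover unif}) once for each $j\in\{r+1,\ldots,q\}$ replaces $\int_G \delta_{\O_{x_j}(n_j)}\,dx_j$ by the Haar measure $dg_{j,n_j}\ldots dg_{j,1}$ on $G^{n_j}$. The resulting tensor product is precisely the expression of $\U^\G_{M,\CS_r,(\O_1,\ldots,\O_r)}$ given by Lemma \ref{def ugc} applied with the reduced marking $\CS_r$. This gives the second identity.

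For the first equality, I would integrate $f(h(l_{r+1}),\ldots,h(l_q))$ against both sides of the second identity. On the left this produces the left-hand side of the first identity. On the right, Fubini exchanges the outer $dx_{r+1}\ldots dx_q$ integration with the integral against $\U^\G_{M,\CS,(\O_1,\ldots,\O_r,\O_{x_{r+1}},\ldots,\O_{x_q})}$; applying (\ref{recover unif}) once more in each of the $q-r$ groups of variables (or, equivalently, observing directly from the product form above that under $\U^\G_{M,\CS_r,(\O_1,\ldots,\O_r)}$ each $h(l_j)=g_{j,n_j}\cdots g_{j,1}$ is a product of independent Haar variables, hence Haar-distributed, and that these variables are jointly independent by the edge-disjointness of the $l_j$) reduces the right-hand side to $\int_{G^{q-r}} f(x_{r+1},\ldots,x_q)\,dx_{r+1}\ldots dx_q$.

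There is no genuine obstacle. The only point meriting a moment of care is the edge-disjointness of the simple loops representing the cycles of $\CS\cup\BS(M)$, which ensures that the factors in the tensor product expression of $\U^\G_{M,\CS,C}$ genuinely depend on disjoint groups of edge-variables; this is what allows (\ref{recover unif}) to be applied factor by factor without any coupling between different $l_j$.
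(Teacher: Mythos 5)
Your proposal is correct and follows exactly the route the paper intends: the paper gives no written proof, merely asserting that the lemma is a direct consequence of the tensor-product form of $\U^{\G}_{M,\CS,C}$ in Lemma \ref{def ugc} together with (\ref{recover unif}), and your argument is a faithful elaboration of precisely that, including the (correct) observation that the pairwise disjointness of the curves of $\CS\cup\BS(M)$ makes the constrained factors act on disjoint groups of edge-variables.
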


\subsection{Surgery of uniform measures} Let us investigate the behaviour of the uniform measures that we have just defined under the basic operations of surgery. So far, we have used the letter $f$ to denote gluing maps and also test functions on $G$. In the proof of the next result, we need to use both. This is why we change our notation for gluings.

\begin{proposition}\label{prop markov unif} Let $(M,\CS,C)$ be a marked surface with $G$-constraints endowed with a graph $\G$. Let $(M',\CS',C')$ be a splitting of $M$ and let $\psi:M'\to M$ denote the gluing map. Let $\G'$ be the graph on $M'$ obtained by lifting $\G$. Then the mapping $\psi:(\M(\Path(\G),G),\I) \to (\M(\Path(\G'),G),\I')$ induced by $\psi$ satisfies
\begin{equation} \label{markov unif}
\U^{\G'}_{M',\CS',C'} = \U^\G_{M,\CS,C} \circ \psi^{-1}.
\end{equation}
\end{proposition}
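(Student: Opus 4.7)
The plan is to reduce to elementary gluings and then verify the equality on generators of $\I'$. By Definition~\ref{def gluing}, every gluing decomposes as a finite composition of elementary (binary or unary) gluings, and pushforwards compose, so it suffices to assume $\psi$ is elementary. By Proposition~\ref{gen invariant sigma} applied to each connected component $\G'_0$ of $\G'$, the invariant $\sigma$-field $\I'$ is generated by functions of the form $F(h') = \tilde f(h'(l'_1),\ldots,h'(l'_r))$ where $\{l'_1,\ldots,l'_r\}$ generates $\RL_{v'}(\G'_0)$ for some vertex $v'$ of $\G'_0$ and $\tilde f : G^r \to \RK$ is continuous and invariant under diagonal conjugation. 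It will therefore be enough to verify $\int F\, d(\U^\G_{M,\CS,C} \circ \psi^{-1}) = \int F\, d\U^{\G'}_{M',\CS',C'}$ for every such $F$.

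For a binary elementary gluing sewing a mark $l \in \CS$ with $C(l) = \O$, write $l = f_1 \cdots f_n$ in $\G$. The curve $l$ is covered in $\G'$ by two boundary loops $l_1, l_2$ with $\psi(l_1) = l$ and $\psi(l_2) = l^{-1}$, so that $C'(l_1) = \O$ and $C'(l_2) = \O^{-1}$. Using compatible orientations of $\G$ and $\G'$, each uniform measure is a product over its edges of Haar measures on ``interior'' edges and of $\delta$-factors over marks and boundary components. All these factors match bijectively under $\psi$ except at $l_1 \cup l_2$: under $\U^{\G'}_{M',\CS',C'}$ one has an independent product $\delta_{\O(n)} \otimes \delta_{\O^{-1}(n)}$ on the edges of $l_1$ and those of $l_2$, whereas under $\U^\G_{M,\CS,C} \circ \psi^{-1}$ the edges on $l_2$ are deterministically determined by those on $l_1$ via $\psi(h)(e_{1,j}) = h(f_j)$ and $\psi(h)(e_{2,j}) = h(f_{n+1-j})^{-1}$.

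The matching on $\I'$ will come from repeated use of identity~(\ref{put constraint 2}). Because $F$ is gauge-invariant at the vertices on $l_2$, one may integrate out $n-1$ of its edge variables against Haar measure on both sides, leaving only the total-holonomy constraint $h'(l_2) \in \O^{-1}$; the same treatment at $l_1$ reduces matters to the joint law of $(h'(l_1), h'(l_2))$ together with the holonomies of loops lying ``inside'' $M' \setminus (l_1 \cup l_2)$. Under $\U^{\G'}_{M',\CS',C'}$ the pair $(h'(l_1), h'(l_2))$ is independent with law $\delta_\O \otimes \delta_{\O^{-1}}$; under $\U^\G_{M,\CS,C} \circ \psi^{-1}$ it equals $(h(l), h(l)^{-1})$; however, once the ``spoke'' paths in $\G'$ connecting $v'$ to $l_1$ and $l_2$ are averaged against their independent Haar holonomies (using Proposition~\ref{jauge unif} together with one last application of~(\ref{put constraint 2})), the effective joint distribution on $\I'$ becomes $\delta_\O \otimes \delta_{\O^{-1}}$ under both measures. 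The unary case is analogous, with a single new boundary loop $l'$ covering $l$ twice, $C'(l') = \O^2$, and the relation $\psi(h)(l') = h(l)^2$ consistent with the pushforward of $\delta_\O$ under squaring.

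The main obstacle will be the rigorous implementation of this ``spoke-averaging'' argument: one must choose generators of $\RL_{v'}(\G'_0)$ adapted to $l_1, l_2$ (for instance via a spanning tree of $\G'_0$ containing paths from $v'$ to vertices on these boundaries), apply~(\ref{put constraint 2}) inductively to every edge on or near $l_1 \cup l_2$, and carefully track how the test function $\tilde f$ transforms under each Haar average. This essentially amounts to transporting the $G^\V$-invariance of $\U^\G$ (Proposition~\ref{jauge unif}) along paths in $\G'$, and is what prevents the proof from being a purely formal identification.
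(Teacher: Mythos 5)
Your proposal is correct in its essential mechanism and lands very close to the paper's own proof: in both arguments the decoupling of the two copies of the joint comes from the gauge-invariance of the test function at the vertices lying over the joint, combined with the fact that averaging a fixed point of $\O(n)$ over the action $(j_1,\ldots,j_n)\cdot(x_1,\ldots,x_n)=(j_1x_1j_2^{-1},\ldots,j_nx_nj_1^{-1})$ reproduces $\delta_{\O(n)}$ --- which is exactly the content of (\ref{put constraint 2}) and of the final displayed identity in the paper's binary case. Where you differ is in the organization. The paper never introduces generators of $\RL_{v'}(\G')$, spanning trees or spokes: it takes a $G^{\V'}$-invariant test function $f$, uses the product decomposition $\M(\E',G)=\M(\E'_\circ,G)\times\M(\E'_1,G)\times\M(\E'_2,G)$, writes $f(g_\circ,g_b,g_b)$ as its average over $j'_2\in G^{\V'_2}$, absorbs the action on $g_\circ$ into the $G^{\V}$-invariance of $U_\circ$ (Proposition \ref{jauge unif}), and is then left with one explicit identity on $G^{2n}$; the unary case is handled the same way with $\delta_{\O^2(2n)}$. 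This buys a short computation with no combinatorial bookkeeping. Your route through Proposition \ref{gen invariant sigma} and adapted generators would also work, but it forces you to recompute the joint law of the projected loops $\psi(l'_1),\ldots,\psi(l'_r)$ downstairs, which is precisely the ``main obstacle'' you flag. One caution about your framing: the decoupling is not really produced by averaging the \emph{spokes} against ``independent Haar holonomies'' --- a spoke may be trivial (if $v'$ lies on $l_1$) or may run along other marked curves whose edges carry $\delta$-factors rather than Haar measure. What does the work is the gauge average over $G^{\V'_2}$ acting on the edge variables of $l_2$ itself, which you also invoke; if you make that the engine of your induction rather than the spokes, the argument closes and essentially reproduces the paper's computation.
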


In this proposition, it is crucial that we consider the invariant $\sigma$-fields, not only for the general reason that one should consider only gauge-invariant quantities, but because the equality (\ref{markov unif}), although meaningful with cylinder $\sigma$-fields thanks to Lemma \ref{glu mes}, would simply be false. For instance, consider a binary gluing along two curves $b'_1$ and $b'_2$ with joint $b=f(b'_1)=f(b'_2)$. Then the event $\{h' : h'(b'_1)=h'(b'_2)\}$ belongs to $\C'$ and has measure zero. On the other hand, the pull-back by $\psi$ of this event is the event $\{h : h(b)=h(b)\}$ of $\C$ which has full measure. The point is that the first event does not belong to $\I'$.\\

\begin{proof} Let $f:\M(\Path(\G'),G)\to \RK$ be a continuous function invariant under the action of $G^{\V'}$. Then $f$ induces a continuous function $\tilde f:\M(\Path(\G),G)\to \RK$ and we need to prove that the following relation holds.
\begin{equation}\label{markov to prove}
\int_{\M(\Path(\G),G)} \tilde f \; d\U^{\G}_{M,\CS,C} =\int_{\M(\Path(\G'),G)} f \; d\U^{\G'}_{M',\CS',C'}.
\end{equation}

We treat separately the cases of binary and unary gluings.

{\em 1. Binary gluing} -- Let $\{b,b^{-1}\}$ be the joint of the gluing. Let $b'_1$ and $b'_2$ be the two boundary components of $M'$ which are identified by $f$, oriented in such a way that $f(b'_1)=f(b'_2)=b$. 

Let $\E_b$, $\E'_{1}$ and $\E'_{2}$ denote respectively the set of edges located on $b$, $b'_1$ and $b'_2$. These three sets are naturally identified by the gluing. Set $\E'_\circ=\E'-(\E'_1\cup \E'_2)$ and  $\E_\circ=\E - \E_b=f(\E'_\circ)$. The gluing identifies naturally $\E'_\circ$ with $\E_\circ$. With this notation, we have the partitions
\begin{equation} \label{partition de E}
\E=\E_\circ \cup \E_b \mbox{ and } \E'=\E'_\circ \cup \E'_1 \cup \E'_2.
\end{equation}

The partition of $\E$ above determines the equality $\M(\E,G)=\M(\E_\circ,G)\times \M(\E_{b},G)$, according to which we denote by $g=(g_\circ,g_{b})$ the generic element of $\M(\E,G)$. Similarly, we have the decomposition $\M(\E',G)=\M(\E'_\circ,G)\times \M(\E'_1,G)\times \M(\E'_2,G)$ and we write $g'=(g'_\circ, g'_1,g'_2)$ for the generic element of $\M(\E',G)$.  

With this notation and these identifications, the function $\tilde f$ is defined by the equality $\tilde f(g_\circ,g_b)=f(g_\circ,g_b,g_b)$.

Since each curve of $\CS$ is covered either by $\E_\circ$ or by $\E_{b}$, the decomposition of $\M(\E,G)$ above determines a decomposition of the measure $\U^{\G}_{M,\CS,C}$ as the tensor product
of two measures $U_\circ$ and $U_b$ on $\M(\E_\circ,G)$ and $\M(\E_b,G)$ respectively, each of which is invariant under the action of the gauge group $G^{\V}$. Let us assume that $b$ is the product of $n$ edges of $\E$.
Then $\M(\E_{b},G)$ can be identified with $G^{n}$ and the measure $U_b$ corresponds to
$\delta_{C(b)(n)}$ under this identification. 

Similarly, the measure $\U^{\G'}_{M',\CS',C'}$ splits as the tensor product of three measures $U'_\circ$, $U'_1$ and $U'_2$, on $\M(\E'_\circ,G)$, $\M(\E'_1,G)$ and $\M(\E'_2,G)$ respectively. The last two spaces can be identified with $G^n$ and the measures $U'_1$ and $U'_2$ correspond to $\delta_{C(b)(n)}$ under this identification. 

The measures $U_\circ$ and $U'_\circ$ correspond to each other via the identification of $\M(\E_\circ,G)$ and $\M(\E'_\circ,G)$. Hence, the equality which we need to prove is the following:

\begin{eqnarray*}
\int_{\M(\Path(\G),G)} f(g_\circ,g_b,g_b) \; U_\circ(dg_\circ) U_b(dg_b) =&& \\
&&\hskip -3cm  \int_{\M(\E_\circ,G)\times \M(\E_b,G)^2} f(g_\circ,g_1,g_2) \; U_\circ(dg_\circ) U_b(dg_1) U_b(dg_2).
\end{eqnarray*}

Let $\V'_2$ denote the subset of $\V'$ consisting of the vertices which lie on $b'_2$. The group $G^{\V'_2}$ is a compact Lie group and the invariant measure of its transitive action on the subset $C(b)(n)$ of $\M(\E'_2,G)$ is the measure $\delta_{C(b)(n)}=U'_2$. Let us denote by $j'_2$ the generic element of $G^{\V'_2}$ and by $dj'$ the Haar measure on this group. Since $f$ is invariant under the action of the gauge group, we have for all $(g'_\circ,g'_1,g'_2)$ in $\M(\Path(\G'),G)$ the equality $f(j'_2\cdot g'_\circ,g'_1,j'_2\cdot g'_2)=f(g'_\circ,g'_1,g'_2)$. Observe that some edges of $\E'_\circ$ have some of their endpoints in $\V'_2$, so that the term $g'_\circ$ is affected by the gauge transformation $j'_2$. On the contrary, the term $g'_1$ is not affected because $b'_1$ and $b'_2$ are disjoint. Hence, with the identifications made earlier,
\begin{eqnarray*}
\int_{\M(\Path(\G),G)} f(g_\circ,g_b,g_b) \; U_\circ(dg_\circ) U_b(dg_b) =&& \\
&&\hskip -2.5cm \int_{\M(\Path(\G),G)\times G^{\V'_2}} f(j'_2\cdot g_\circ,g_b,j'_2\cdot g_b) \; U_\circ(dg_\circ) U_b(dg_b) dj'_2.
\end{eqnarray*}
Since the measure $U_\circ$ is invariant under the action of $G^\V$, the last term is equal to
$$\int_{\M(\Path(\G),G)\times G^{\V'_2}} f(g_\circ,g_b,j'_2\cdot g_b) \; U_\circ(dg_\circ) U_b(dg_b) dj'_2.$$
It suffices now to prove an equality about the function $\int_{\M(\E_\circ,G)} f(g_\circ,\cdot,\cdot) \; U_\circ(dg_\circ)$ on $G^{2n}$. Indeed, let us denote this function by $u:G^{2n}\to \RK$. All we need to prove is that, for all conjugacy class $\O$, 
\begin{eqnarray*}
\int_{G^{2n}}u(g_1,\ldots,g_n,j_{1} g_1 j_2^{-1},\ldots,j_n g_n j_{1}^{-1}) \; \delta_{\O(n)}(dg_1\ldots dg_n) dj_1\ldots dj_n =&& \\
&&\hskip -2.5cm  \int_{G^{2n}} u \; d( \delta_{\O(n)} \otimes \delta_{\O(n)}).
\end{eqnarray*}
But the left-hand side is equal to 
$$\int_{G^n} \int_{G^n} u(g_1,\ldots,g_n,g_ {n+1},\ldots,g_{2n}) \delta_{\O_{g_1\ldots g_n}(n)}(dg_{n+1}\ldots dg_{2n}) \delta_{\O(n)} (dg_1\ldots dg_n),$$
hence to the right-hand side.\\

{\em 2. Unary gluing} -- Let $\{b,b^{-1}\}$ be the joint of the gluing. Let $b'$ be the component of the boundary of $M'$ such that $f(b')=b$. Let us write $b=e_1\ldots e_n$ and $b'=e'_{1,1}\ldots e'_{n,1}e'_{1,2}\ldots e'_{n,2}$ in such a way that $f(b'_{i,1})=f(b'_{i,2})=b_i$ for all $i\in\{1,\ldots,n\}$. 

Let $\E_b$ denote the set of edges located on $b$. Set $\E'_1=\{{e'_{1,1}}^{\pm 1},\ldots,{e'_{n,1}}^{\pm 1}\}$ and $\E'_2=\{{e'_{1,2}}^{\pm 1},\ldots, {e'_{n,2}}^{\pm 1}\}$. Set $\E'_\circ=\E'-(\E'_1\cup \E'_2)$  and $\E_\circ=\E-\E_b=f(\E'_\circ)$. We will identify freely $\E_\circ$ with $\E'_\circ$.

With this notation, the equalities (\ref{partition de E}) hold, as well as the subsequent decompositions of $\M(\E,G)$ and $\M(\E',G)$. The function $\tilde f$ is also still defined by the equality $\tilde f(g_\circ,g_b)=f(g_\circ, g_b,g_b)$. 

The decomposition $\U^\G_{M,\CS,C}=U_\circ \otimes U_b$ is valid just as in the binary case, but the decomposition of $\U^{\G'}_{M',\CS',C'}$ is now different. Indeed, this measure splits into the tensor product of $U'_\circ$ and a measure $U'_{12}$ on $\M(\E'_1,G)\times \M(\E'_2,G)$ which is $\delta_{C(b)^2(2n)}$ under the natural identification of $\M(\E'_1,G)\times \M(\E'_2,G)$ with $G^{2n}$. The formula which we need to prove is the following:
\begin{eqnarray*}
\int_{\M(\Path(\G),G)} f(g_\circ,g_b,g_b) \; U_\circ(dg_\circ)U_b(dg_b) =&& \\
&&\hskip -3cm  \int_{\M(\E_\circ,G)\times \M(\E'_1\cup \E'_2,G)} f(g_\circ,g_1,g_2)\; U_\circ(dg_\circ) U'_{12}(dg_1,dg_2).
\end{eqnarray*}
Let $\V'_{12}$ denote the set of vertices which lie on $b'$. By using the invariance of $f$ under the action of the subgroup $G^{\V'_{12}}$ of the gauge group and the invariance of the measure $U_\circ$ under the same action, we find just as in the binary case that the left-hand side of the equality to prove is equal to
$$\int_{\M(\E_\circ,G)\times \M(\E_b,G)\times G^{\V'_{12}}} f(g_\circ,j'_{12}\cdot g_b, j'_{12}\cdot g_b) \; U_\circ(dg_\circ)U_b(dg_b) dj'_{12}.$$
The notation here is misleading, since the two occurrences of $j'_{12}\cdot g_b$ do not denote the same object. Indeed, the two occurrences of $g_b$ in the arguments of $f$ are identified respectively with an element of $\M(\E'_1,G)$ and an element of $\M(\E'_2,G)$, on which $G^{\V'_{12}}$ acts differently. 

Now what we have to prove is really an equality about the function 
$$\int_{\M(\E_\circ,G)} f(g_\circ,\cdot,\cdot) \; U_\circ(dg_\circ)$$
 on $G^{2n}$. Let us call this function $u:G^{2n}\to \RK$. We need to prove that for all conjugacy class $\O$ in $G$ the following equality holds:
\begin{eqnarray*}
\int_{G^{3n}} u(j_0^{-1} g_1 j_1 ,\ldots, j_{n-1} g_n j_n, j_n^{-1} g_1 j_{n+1},\ldots,j_{2n-1}^{-1} g_n j_0) \; \delta_{\O(n)}(dg_1\ldots dg_n) dj_0 \ldots dj_{2n}&& \\
&& \hskip -4cm  = \int_{G^{2n}} u\; \delta_{\O^2 (2n)}.
\end{eqnarray*}
Recall that $\O^2$ is the conjugacy class constituted by the squares of the elements of $\O$. We claim that this equality holds for all continuous function $u$. Indeed, the integral
$$\int_{G^{2n}} u(j_0^{-1} g_1 j_1 ,\ldots, j_{n-1} g_n j_n, j_n^{-1} g_1 j_{n+1},\ldots,j_{2n-1}^{-1} g_n j_0) \;  dj_0 \ldots dj_{2n}$$
is the integral of $u$ with respect to the measure $\delta_{\O_{(g_1\ldots g_n)^2}(2n)}$, by the very definition of this measure as the invariant measure under the natural action of $G^{2n}$ on $\O_{(g_1\ldots g_n)^2}(2n)$. Hence, by a simple particular case of (\ref{put constraint 2}), the integral that we are computing is equal to
\begin{eqnarray*}
\int_{G^n}\left(\int_{G^{2n}} u(x) \delta_{\O_{(g_1\ldots g_n)^2}(2n)}(dx) \right)\delta_{\O(n)}(dg_1 \ldots dg_n)
=&& \\
&&\hskip -2cm  \int_{G^{2n}} u(x) \; \left( \int_G \delta_{\O_{g^2}(2n)} \delta_\O(dg) \right)(dx).
\end{eqnarray*}
The measure $\delta_{\O_g^2(2n)}$ is equal to $\delta_{O^2(2n)}$ for $\delta_\O$-almost all $g$. Hence, the measure between the brackets is $\delta_{\O^2(2n)}$. This concludes the proof. \end{proof}

Let us conclude this section by a much simpler result, which is the simplest instance of invariance under subdivision.

\begin{proposition}\label{inv subd unif} Let $(M,\CS,C)$ be a marked surface with $G$-constraints. Let $\G_1$ and $\G_2$ be two graphs on $(M,\CS)$. Assume that $\G_1\preccurlyeq \G_2$. Then the inclusion $\Path(\G_1)\subset \Path(\G_2)$ induces a measurable restriction map $r:\M(\Path(\G_2),G)\to \M(\Path(\G_1),G)$ which satisfies
$$\U^{\G_2}_{M,\CS,C} \circ r^{-1} = \U^{\G_1}_{M,\CS,C}.$$
\end{proposition}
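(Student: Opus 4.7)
My plan is to reduce the assertion to two elementary types of refinement and verify each by a short direct computation at the level of the defining Haar and $\delta_{\O(n)}$ factors.

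\textbf{Step 1: reduction to elementary moves.} Since $\G_1 \preccurlyeq \G_2$ means that every edge of $\G_1$ is a concatenation of edges of $\G_2$, every vertex of $\G_2$ is either a vertex of $\G_1$, an interior point of an edge of $\G_1$, or an interior point of a face of $\G_1$. I would build a finite chain
\[
\G_1 = \G^{(0)} \preccurlyeq \G^{(1)} \preccurlyeq \cdots \preccurlyeq \G^{(N)} = \G_2
\]
in which each step is either (a) the \emph{subdivision} of an edge of $\G^{(i)}$ by the insertion of a single new vertex in its interior, or (b) the \emph{adjunction} of an unoriented edge whose endpoints are already vertices of $\G^{(i)}$ and whose interior lies in a single face of $\G^{(i)}$. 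The move (b) is legitimate by Proposition \ref{add edge}. One begins by subdividing each edge of $\G_1$ at the vertices of $\G_2$ that lie in its interior, which takes one to an intermediate graph $\G_1'$; one then adds the remaining edges of $\G_2$ one at a time, using further subdivisions of edges of the current graph so that the endpoints of the edge to be adjoined are already vertices. Since the relation to be proved is transitive under composition of restriction maps, it suffices to handle each elementary step.

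\textbf{Step 2: subdivision.} Assume $\G^{(i+1)}$ is obtained from $\G^{(i)}$ by replacing an oriented edge $e$ by its decomposition $e = e_1' e_2'$. The restriction map $r_i$ keeps $g_f$ unchanged for $f \in \E^{(i)+} \setminus \{e\}$ and sends $(g_{e_1'}, g_{e_2'})$ to $g_e = g_{e_2'} g_{e_1'}$. If $e$ lies on no curve of $\CS \cup \BS(M)$, the variables $g_{e_1'}, g_{e_2'}$ contribute as Haar factors $dg_{e_1'} \, dg_{e_2'}$ to $\U^{\G^{(i+1)}}_{M,\CS,C}$, and translation invariance of Haar gives that their product is Haar-distributed and independent of the remaining coordinates, yielding $\U^{\G^{(i)}}_{M,\CS,C}$. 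If instead $e$ belongs to one of the representative loops $l_j = e_{j,1} \ldots e_{j,n_j}$, then $g_{e_1'}$ and $g_{e_2'}$ appear as two consecutive coordinates in a factor $\delta_{C(l_j)(n_j+1)}$, and formula (\ref{contracte dO}) states exactly that the pushforward of $\delta_{C(l_j)(n_j+1)}$ under the contraction of two consecutive coordinates to their product is $\delta_{C(l_j)(n_j)}$. Either way, $\U^{\G^{(i)}}_{M,\CS,C} = \U^{\G^{(i+1)}}_{M,\CS,C} \circ r_i^{-1}$.

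\textbf{Step 3: adjunction.} Assume now $\G^{(i+1)}$ is obtained from $\G^{(i)}$ by adjoining a new unoriented edge $\{e, e^{-1}\}$ whose interior lies in a face of $\G^{(i)}$. The restriction map $r_i$ simply forgets the coordinate $g_e$. Since $e$ lies in no curve of $\CS \cup \BS(M)$ (these are all represented in $\G^{(i)}$), the factor $g_e$ enters $\U^{\G^{(i+1)}}_{M,\CS,C}$ as an independent Haar factor $dg_e$, and integrating this factor over $G$ produces $\U^{\G^{(i)}}_{M,\CS,C}$.

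\textbf{Main obstacle.} The measure-theoretic content of each elementary step is essentially immediate; the only genuine difficulty is the combinatorial reduction in Step 1, namely that the intermediate objects $\G^{(i)}$ remain graphs on $(M,\CS)$ (each face must remain homeomorphic to a disk, and the marking cycles must remain represented). The subdivision move preserves these properties trivially, while each adjunction must be chosen so that the added edge's endpoints lie on the boundary of the same face and its interior lies in that face; Proposition \ref{add edge} then guarantees that the result is again a graph. The marking cycles are never affected since they are already present at the stage $\G_1$.
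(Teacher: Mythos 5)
Your measure-theoretic ingredients are exactly the ones the paper uses -- translation invariance of the Haar measure for the unconstrained edges, and formula (\ref{contracte dO}) for the coordinates sitting inside a factor $\delta_{C(l_j)(n_j)}$ -- but you reach them by a different and, as it turns out, unnecessary route. The paper's proof is a single global computation: orient both graphs, observe that $r:G^{\E_2^+}\to G^{\E_1^+}$ multiplies the components corresponding to the $\G_2$-edges constituting each $\G_1$-edge and forgets the rest, note that a product of independent uniform variables is uniform, and invoke (\ref{contracte dO}) along the marking curves and boundary components. No chain of intermediate graphs is needed, because nothing in the computation requires the intermediate objects to be graphs.

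Your Step 1 is where the extra scaffolding costs you something. The reduction to subdivisions and adjunctions is not quite achievable as you describe it: a vertex of $\G_2$ may lie in the interior of a face of $\G_1$, and such a vertex cannot be produced by subdividing edges of the current graph, so the edge of $\G_2$ ending there cannot be adjoined with both endpoints already vertices on the boundary of a face, which is what Proposition \ref{add edge} requires. One would have to allow a third elementary move -- adjoining a pendant edge ending at a degree-one vertex in the interior of a face (the inverse of the second case of Proposition \ref{erase edge}) -- whose effect on the measure is again just the integration of an independent Haar factor, so the conclusion survives; but as written the chain you build need not exist. Since the global computation sidesteps this entirely, I would recommend dropping the elementary-move decomposition: your Steps 2 and 3, performed simultaneously on all of $\E_2$ at once, already constitute the paper's proof.
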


\begin{proof}Let us choose an orientation for $\G_1$ and $\G_2$. The restriction map, seen as a map from $G^{\E_2^+}$ to $G^{\E_1^+}$, multiplies the components which correspond to the edges of $\E_2$ which constitute each edge of $\E_1$ and forgets about the components which correspond to edges which do not lie in the skeleton of $\E_1$.

Since the product of independent uniform variables on $G$ is still uniform, the only non-trivial thing to check is what happens along the marking curves or the boundary components. There, the invariance follows from (\ref{contracte dO}). \end{proof}

\index{multiplicative function!uniform with constraints|)}

\section{Tame generators of the group of reduced loops}
\label{sec structure} 

Consider a surface $(M,\varnothing,C)$ with $G$-constraints along its boundary, endowed with a graph $\G$. Our objective in this section is to exhibit a family of lassos which generates the group of reduced loops in $\G$ and to compute the distribution of the $G$-valued random variables associated with these lassos under the constrained uniform measure defined in the previous section. Moreover, we are going to do this in a way which is consistent with the partial order on the set of graphs.

\begin{definition} Let $M$ be a compact surface endowed with a graph $\G$. Let $v$ be a vertex of $\G$.
A lasso $l\in \RL_{v}(\G)$ is said to be {\em facial} if its meander represents a facial cycle of
$\G$. It is said to be {\em bounding} if its meander covers a connected component of $\partial M$. 
\end{definition}
\index{lasso!facial}
\index{lasso!bounding}

We want to prove the existence of systems of generators of $\RL_v(\G)$ which consist in one bounding lasso for each connected component of $\partial M$, one facial lasso for each face of $\G$, and as many supplementary lassos with non-contractible meander as the genus of $M$ (see Section \ref{subsec: classif}). 

Recall the notation $W(M)$ from Section \ref{subsec: gr1}. If $w$ is a word in some set of letters, we denote by $\overleftarrow{w}$ the word obtained by reversing the order of the letters of $w$.

\index{group of reduced loops!tame generators}
\begin{proposition}\label{tame generators} Let $M$ be a connected compact surface. Let $\rg=\rg(M)$ denote the reduced genus of $M$. Let $\p=\p(M)$ be the number of connected components of $\partial M$. Write $\BS(M)=\{b_1, b_1^{-1},\ldots,b_{\p},b_{\p}^{-1}\}$. If $M$ is oriented, we assume that $b_1,\ldots,b_{\p}$ bound $M$ positively. Let $\G$ be a graph on $M$. Let $v$ be a vertex of $\G$. Let $\f$ denote the number of faces of $\G$.

1. There exists, in the group $\RL_{v}(\G)$, $\rg$ lassos $a_{1},\ldots,a_{\rg}$; $\p$ bounding lassos
$c_{1},\ldots,c_{\p}$ whose meanders are equivalent to $b_1,\ldots,b_{\p}$ up to permutation; $\f$ facial lassos
$l_1,\ldots,l_{\f}$ whose meanders bound the $\f$ faces of $\G$, positively if $M$ is oriented; and there exists an word $w$ in the letters $a_1,\ldots,a_\rg$ which belongs to $W(M)$ such that the group $\RL_{v}(\G)$ admits the presentation
$$\langle a_{1},\ldots,a_{\rg},c_{1},\ldots,c_{\p},l_{1},\ldots,l_{\f} | w(a_{1},\ldots,a_{\rg})c_{1}\ldots c_{\p}=l_{1}\ldots l_{\f}\rangle$$
and, for all continuous function $f:G^{\rg+\p+\f}\to \RK$ and all set $C$ of $G$-constraints along the boundary components of $\G$, 
\begin{align*}
\int_{\M(\Path(\G),G)}&f(h(a_{1}),\ldots,h(a_{\rg}),h(c_{1}),\ldots,h(c_{\p}),h(l_{1}),\ldots,h(l_{\f}))\;
\U^{\G}_{M,\varnothing,C}(dh)
 \\
& = \int_{G^{\rg+\p+\f-1}}f(x_{1},\ldots,x_{\rg},y_{1},\ldots,y_{\p},z_{1},\ldots,z_{\f-1},z_{\f})\\
& \hskip 3cm  dx_{1}\ldots dx_{\rg}\delta_{C(b_{1})}(dy_{1})\ldots
\delta_{C(b_{\p})}(dy_{\p})dz_{1}\ldots dz_{\f-1},
\end{align*}
where we have set $z_{\f}=y_\p \ldots y_1 \overleftarrow{w}(x_{1},\ldots,x_{\rg}) (z_{\f-1}\ldots z_{1})^{-1}$.

A collection of lassos such as $\{a_1,\ldots,a_{\rg},c_1,\ldots,c_{\p},l_1,\ldots,l_{\f}\}$ will be called a {\em tame system of generators} associated with the word $w$.

2. Let $\G_1$ and $\G_2$ be two graphs on $M$ such that $\G_1\preccurlyeq \G_2$. Set $\f_1=\f(\G_1)$. Let $v$ be a vertex of $\G_1$. Let $\{a_1,\ldots,a_{\rg},c_1,\ldots,c_{\p},l_1,\ldots,l_{\f_1}\}$ be a tame system of generators of $\RL_v(\G_1)$ associated with the word $w$. Assume that the faces of $\G_1$ are labelled $\F_1=\{F^1_i : i\in \{1,\ldots,\f_1\}\}$ in such a way that for all $i\in\{1,\ldots,\f_1\}$, the meander of the lasso $l_i$ bounds $F^1_i$. For all $i\in \{1,\ldots,n\}$, let $\F_{2,i}=\{F^2\in \F_2 : F^2 \subset F^1_i\}$ be the set of faces of $\G_2$ contained in $F^1_i$ and set $\f_{2,i}=\#\F_{2,i}$.

Then there exists a set of facial lassos $\{l_{i,j} : i\in\{1,\ldots,\f_1\}, j\in \{1,\ldots,\f_{2,i}\}\}$ in $\RL_v(\G_2)$ such that for all $i,j$, the meander of the lasso $l_{i,j}$ bounds a face of $\F_{2,i}$, for all $i$ the equality $l_i=l_{i,1}\ldots l_{i,\f_{2,i}}$ holds and 
$$\{a_1,\ldots,a_{\rg},c_1,\ldots,c_{\p},l_{1,1},\ldots,l_{\f_1,\f_{2,\f_1}}\}$$
 is a tame system of generators of $\RL_v(\G_2)$ associated with the word $w$.

\end{proposition}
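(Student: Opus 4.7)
My plan is to prove Part 1 by induction on the number $\f$ of faces of $\G$, with the inductive step arising as the reverse of the edge-erasure operation of Proposition \ref{erase edge}, and the base case $\f=1$ handled through the classification of surfaces. For the inductive step with $\f \geq 2$, the connectedness of $M$ ensures the existence of an edge $e$ of $\G$ lying in the interior of $M$ and adjacent to two distinct faces $F_1, F_2$; Proposition \ref{erase edge} then produces $\G \setminus e$ with $\f - 1$ faces, in which the merged face carries the facial cycle $cd$, where $\partial F_1 = ce$ and $\partial F_2 = e^{-1} d$ in $\G$. The inductive hypothesis yields a tame system $(a_i, c_j, l_1, \ldots, l_{\f-1})$ of $\RL_v(\G \setminus e)$ associated with a word $w$, in which, after renumbering, $l_k$ is the facial lasso of the merged face. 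Writing $l_k = \gamma(cd)\gamma^{-1}$ for a spoke $\gamma$ from $v$ to the common starting vertex $\overline{e}$ of $c$ and $e^{-1}$, I would set $l'_k = \gamma(ce)\gamma^{-1}$ and $l''_k = \gamma(e^{-1}d)\gamma^{-1}$; these are facial lassos of $F_1$ and $F_2$ in $\RL_v(\G)$ and satisfy $l'_k l''_k = l_k$ by cancellation of $e e^{-1}$. Replacing $l_k$ by $(l'_k, l''_k)$ in the tame system produces a tame system for $\G$ with the same word $w$ and the presentation of the statement.

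For the base case $\f=1$, the group $\RL_v(\G)$ is free of rank $\rg + \p$ by Lemma \ref{lg is free} together with Euler's formula. I would first exhibit a standard graph $\G_{\textup{std}}$ on $M$ with a single vertex and edges $\{a_1, \ldots, a_\rg, c_1, \ldots, c_\p\}$ all based at that vertex, constructed via the sewing procedure of Definition \ref{mot W} applied to the pattern encoded by the word $w \cdot c_1 \cdots c_\p$ for a standard word $w$ realizing $M$ in $W(M)$; the tame system is then manifest from the construction. For the given $\G$, Proposition \ref{graph to graph} provides a homeomorphism $\varphi$ of $M$ preserving boundary components and a sequence of edge adjunctions and erasures connecting $\G$ to $\varphi(\G_{\textup{std}})$, along which the tame system may be transported via the inductive step above. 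This transport is the main obstacle of the proof: the moves must be chosen so as to preserve the identities of the $a_i$ and $c_j$, which can be arranged by first reducing $\G$ to a one-face graph by interior erasures as in the inductive step, and then invoking the cut-and-paste classification of surface words from the proof of Proposition \ref{graph to graph} to bring the resulting facial word into the standard form.

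The distribution formula is proved by the same induction. For the base case applied to $\G_{\textup{std}}$, Lemma \ref{def ugc} directly gives that the $h(a_i)$ are i.i.d.\ uniform on $G$ (the $a_i$ being interior edges) and the $h(c_j)$ are independently distributed as $\delta_{C(b_j)(1)} = \delta_{C(b_j)}$ (each $c_j$ being a single-edge boundary component), while the reversed-order multiplicativity gives $h(l_1) = h(c_\p)\cdots h(c_1) \overleftarrow{w}(h(a)) = y_\p \cdots y_1 \overleftarrow{w}(x_1, \ldots, x_\rg)$, matching the formula with $\f = 1$. For the inductive step from $\G \setminus e$ to $\G$, Proposition \ref{inv subd unif} ensures that the push-forward of $\U^\G_{M,\varnothing,C}$ to $\M(\Path(\G\setminus e), G)$ is $\U^{\G \setminus e}_{M,\varnothing,C}$, so the old tame variables have the inductive distribution. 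Since $e$ lies in the interior of $M$, under $\U^\G_{M,\varnothing,C}$ the variable $h(e)$ is Haar-distributed on $G$ and independent of the values on all other edges; consequently $h(l'_k)$, obtained from $h(e)$ by left and right multiplication by quantities depending only on the other edge values, is itself uniform on $G$ and independent of the old tame variables, with $h(l''_k)$ then determined by $h(l''_k)h(l'_k) = h(l_k)$. This is exactly the required joint law with one additional independent uniform factor.

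For Part 2, given $\G_1 \preccurlyeq \G_2$, I would factor the passage from $\G_1$ to $\G_2$ as a sequence of edge subdivisions (which leave the tame structure invariant up to reparametrization of the edges appearing in $a_i$, $c_j$, $l_i$) followed by edge adjunctions inside the faces of $\G_1$, since $\Path(\G_1) \subset \Path(\G_2)$ forbids any edge of $\G_2$ from strictly crossing an edge of $\G_1$ except at common vertices. Each adjunction inside a face $F^1_i$ refines the facial lasso $l_i$ into a product $l'_i l''_i$ exactly as in the inductive step of Part 1, and iterating until $F^1_i$ has been fully subdivided into its sub-faces $\F_{2,i}$ of $\G_2$ turns $l_i$ into a product $l_{i,1} \cdots l_{i, \f_{2,i}}$ of facial lassos of $\G_2$. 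The lassos $a_j$, $c_k$ and the word $w$ are carried unchanged from $\G_1$ to $\G_2$, producing the required tame system of $\G_2$ associated with the same word $w$.
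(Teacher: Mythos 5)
Your inductive step — splitting a facial lasso $l_k=\gamma(cd)\gamma^{-1}$ into $\gamma(ce)\gamma^{-1}\cdot\gamma(e^{-1}d)\gamma^{-1}$ when an interior edge $e$ separating two faces is restored, with the new loop contributing one extra independent Haar-distributed variable — is sound, and it is in substance what the paper does for the refinement statement (Lemma \ref{def w}, Corollary \ref{base avec faces}, and Part 2 of the proposition). The genuine gap is in your base case. You propose to obtain the word $w$ and the generators $a_1,\ldots,a_{\rg},c_1,\ldots,c_{\p}$ for a one-face graph by transporting the manifest tame system of a standard graph back to $\G$ along the chain of adjunctions and erasures supplied by Proposition \ref{graph to graph}. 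This transport fails for two concrete reasons. First, the intermediate graphs in that chain contain edges that are not paths in $\G$, so the loops you carry along the chain do not live in $\RL_v(\G)$; if you instead interpret the cut-and-paste moves purely algebraically as Nielsen transformations of the free group $\RL_v(\G_0)$, the resulting basis elements are arbitrary words in the old generators and there is no reason for the $a_i$ to be lassos or for the $c_j$ to be bounding lassos with meanders covering the boundary components, as the statement requires. Second, half of the steps in the chain go in the merging direction (erasing an edge, fusing two faces), and your splitting step only inverts cleanly when the two facial lassos to be fused are adjacent in the relation $w(a)c_1\cdots c_{\p}=l_1\cdots l_{\f}$ and share the same spoke based at $\overline{e}$; after a cut-and-paste move neither condition holds in general. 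You acknowledge this as ``the main obstacle'' but the sentence meant to resolve it only restates the plan.

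The paper sidesteps this entirely by never normalizing $w$ to standard form. It erases the edges of a spanning tree of the dual graph to reach a one-face graph $\G_0$, chooses a spanning tree $T$ of $\G_0$ avoiding one marked edge per boundary component, and reads the relation directly off the boundary of the unique face of $\G_0$: that boundary word, written in the lasso generators $l_{e,T}$, automatically uses each interior generator exactly twice and each boundary generator exactly once (Lemma \ref{relation}), so it is admissible and hence lies in $W(M)$ without any further reduction; the distribution statement then follows from the explicit independence argument of Proposition \ref{deux arbres}, including the careful ordering of boundary components needed to show the $h(l_{e_k,T})$ are independent with laws $\delta_{C(b_k)}$ — a point your sketch also does not address for a general one-face graph, since the spokes of the bounding lassos may traverse other boundary components. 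To repair your proof, replace the reduction to a standard word by this direct reading of the facial word of $\G_0$, keeping your face-splitting induction for the passage from $\G_0$ back to $\G$ and for Part 2.
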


We will use the notation $\rg=\rg(M)$, $\p=\p(M)$, and $\f=\f(\G)$ until the end of this chapter.

By Lemma \ref{lg is free}, we know that $\RL_v(\G)$ is free of rank $\e(\G)-\v(\G)+1$. By Euler's relation for the graph $\G$, which writes $\v(\G)-\e(\G)+\f(\G)=\chi(M)=2-\rg-\p$, the rank of $\RL_{v}(\G)$
can also be written as  $\rg+\p+\f-1$. In order to find families of generators whose cardinal
decomposes naturally as $\rg+\p+\f-1$, we introduce the dual graph of $\G$.

\begin{definition} Let $M$ be a compact surface endowed with a graph $\G$. Let $(M',\G',\iota,f)$ be a split pattern of $(M,\G)$. 
Let $\widehat \V$  denote the set of connected components of $M'$, which is in canonical bijection with $\F$. Then, set $\widehat\E=\{\{e', {e'}^{-1}\}:e'\in \E'\}$. For each element $\hat e=\{e',{e'}^{-1}\}$ of $\widehat \E$, define the source $s(\hat e)$ of  $\hat e$ as the connected component of $M'$ which contains $e'$  and the target $t(\hat e)$ of $\hat
e$ is the connected component of $M'$ which contains $\iota(e')$.

The abstract graph $\widehat \G=(\widehat \V,\widehat \E,s,t)$ is called the {\em dual graph} of $\G$.
\end{definition}
\index{graph!dual}
\index{GAGH@$\widehat \G$}

Properly speaking, the dual graph of $\G$ depends on the choice of the split pattern of $\G$ and it is unique only up to an obvious notion of isomorphism. We shall in fact choose a split pattern and work with the associated dual graph.

The involution $\iota$ of the split pattern induces an involution on $\widehat \E$, which we still denote by $\iota$. This involution is similar to an orientation reversal, but one should observe that it may have fixed points. For example, if an edge $e'\in \E'$ is sent by $f$ on a boundary component of $M$, then $\iota(e')=e'$ and the dual edge $\hat e=\{e',{e'}^{-1}\}$ also satisfies $\iota(\hat e)=\hat e$.

The orbits of $\iota$ on $\widehat \E$, which we call unoriented edges of the dual graph, correspond bijectively by  $f$ with the unoriented edges of the graph $\G$, that is, the pairs $\{e, e^{-1}\}$ for $e\in \E$.

\begin{definition} Let $M$ be a compact surface endowed with a graph $\G$. Let $(M',\G',\iota,f)$ be a split pattern of $(M,\G)$. Recall that a {\em spanning tree } of the dual graph $(\widehat V,\widehat E,s,t)$ is a subset $\widehat T\subset \widehat \E$ stable by the involution $\iota$ and such that any two vertices of the dual graph are joined by a unique injective path made with edges of $\widehat T$. 

Let $\widehat T$ be a spanning tree of the dual graph. An orientation of $M'$ is said to be {\em adapted to } $\widehat T$ if for all $\hat e=\{e, e^{-1}\} \in \widehat T$, the edges $e$ and $\iota(e)$ are neither both positively oriented nor both negatively oriented with respect to this orientation of $M'$.

Assume that $M'$ is endowed with an orientation adapted to $\widehat T$. Then we define the image by $f$ of an edge $\hat e=\{e',{e'}^{-1}\}$ of $\widehat T$ as an edge of $\G$ by setting $f(\hat e)=f(e')$ if $e'$ is positively oriented as a subset of $\partial M'$ and $f(\hat e)=f({e'}^{-1})$ otherwise. 
\end{definition}
\index{spanning tree}
\index{graph!spanning tree}

It is not difficult to check that there are exactly two orientations of $M'$, among the $2^\f$ possible, which are adapted to any given spanning tree in the dual graph. Moreover, for all $\hat e\in \widehat T$, we have the equality $f(\iota(\hat e))=f(\hat e)^{-1}$ in 
$\E$.


It follows from Proposition \ref{erase edge} that $\E\setminus f(\widehat T)$ is the set of edges
of a graph $\G_{0}$ on $M$ with a single face. This graph has $\e_{0}=\e(\G)-(\f(\G)-1)$ edges, so by Euler's
relation, it still has $\v_{0}=\v(\G)$ vertices. Moreover, its skeleton contains $\partial M$ and every edge of $\G$ which lies on $\partial M$ is also an edge of $\G_0$. Let $B^+\subset \E_0$ be a collection of edges comprising exactly one edge on each connected component of $\partial M$. If $M$ is oriented, we assume that the edges of $B$ bound $M$ positively. The set of edges of $\G_{0}$ located on $\partial M$ and which do not
belong to $B=B^+\cup (B^+)^{-1}$ form a cycle-free subgraph of $\G_{0}$. Hence, it is possible to
extend this subgraph to a spanning tree $T$ of $\G_0$ such that $T\cap B=\varnothing$ (see Figure \ref{ex tbrt}).

\begin{lemma}\label{lasso bounding} With the notation above, for each edge $e\in B$, the lasso $l_{e,T}$ is a bounding lasso whose meander covers the connected component of $\partial M$ on which $e$ lies and whose spoke contains no edge lying on this connected component.
\end{lemma}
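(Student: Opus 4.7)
Let $\partial_k$ be the connected component of $\partial M$ containing $e$. The plan is first to express $\partial_k$ as a cycle of edges of $\G_{0}$ with $e$ as its first edge, then to analyse the position of these edges inside $T$, and finally to read off the spoke and the meander of $l_{e,T}$.

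First I would use that $\partial M\subset \Sk(\G_{0})$: indeed, the splitting procedure which produces $\G_{0}$ from $\G$ only erases edges that belong to $f(\widehat T)$, whose edges sit in the interior of $M$, so every edge of $\G$ lying on $\partial M$ survives in $\G_{0}$. Applying Lemma \ref{cycle couvert} to the graph $\G_{0}$ and to the subset $\partial_{k}\subset \Sk(\G_{0})$, there exists a simple loop of $\CLoop(\G_{0})$ whose image is $\partial_{k}$, and every edge of $\G_{0}$ is either contained in $\partial_k$ or has no interior point on it. Choose a representative of this loop starting with $e$: this writes $\partial_{k}=f_{0}f_{1}\ldots f_{n-1}$ with $f_{0}=e$ and $f_{1},\ldots,f_{n-1}$ on $\partial_k$, so that these latter edges belong to the set of edges of $\G_{0}$ on $\partial M$ distinct from the one chosen in $B$ on $\partial_{k}$, and therefore belong to $T$ by construction.

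Next I would establish the following topological fact about $T$. Denote $V_k$ the set of vertices of $\partial_k$ and $u_i=\underline{f_i}$ for $i\in\{0,\ldots,n-1\}$. The edges $f_1,\ldots,f_{n-1}$ are the only edges of $T$ with both endpoints in $V_k$: any other edge of $T$ with an endpoint on $\partial_k$ must exit $V_k$, otherwise Lemma \ref{cycle couvert} applied to $\partial_k$ and $\G_0$ would force it to lie in $\partial_k$, hence to coincide with one of the $f_i$. Therefore removing $f_1,\ldots,f_{n-1}$ from $T$ splits it into exactly $n$ components, each containing exactly one $u_i$. Let $w_0=u_j$ be the unique vertex of $V_k$ in the component of $v$. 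The key claim is that for every $u\in V_k$, the path $[v,u]_T$ decomposes as $[v,w_0]_T\cdot[w_0,u]_T$, where the first factor lies entirely in the component of $w_0$ (in particular contains no edge on $\partial_k$) and the second factor is a sub-path of $f_{1}\ldots f_{n-1}$ inside $V_k$. Indeed, if $[v,u]_T$ entered $V_k$ at some $u_i$, left $V_k$, and re-entered at some $u_{i'}$, the resulting sub-path in $T$ between $u_i$ and $u_{i'}$ would be distinct from the sub-path of $f_{1}\ldots f_{n-1}$ between the same two vertices, contradicting that $T$ is a tree.

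Finally I would apply this decomposition to the vertices $\underline{e}=u_0$ and $\overline{e}=u_1$. The paths $[w_0,u_0]_T$ and $[w_0,u_1]_T$ travel along $f_{1}\ldots f_{n-1}$ starting from $u_j$ toward $u_0$ via $u_{j+1},u_{j+2},\ldots$ and toward $u_1$ via $u_{j-1},u_{j-2},\ldots$ respectively (with the obvious degenerate cases if $j\in\{0,1\}$); since $u_0$ and $u_1$ are on opposite sides of $w_0=u_j$ along the cycle deprived of the edge $e=f_0$, these two paths diverge already at $w_0$. Hence the branching vertex $w$ of Section \ref{def RL} coincides with $w_0$, so the spoke of $l_{e,T}$ is $[v,w_0]_T$, which contains no edge on $\partial_k$, and the meander is $f_jf_{j+1}\ldots f_{n-1}f_0f_1\ldots f_{j-1}$, a cyclic permutation of $\partial_k=f_0f_1\ldots f_{n-1}$, covering the connected component $\partial_k$ (and, when $M$ is oriented, with the positive orientation since $e\in B^+$). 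This shows that $l_{e,T}$ is a bounding lasso with the announced properties. The only delicate step is the topological claim on $T$ in the previous paragraph, and its proof reduces to the observation that $T$ has no cycle.
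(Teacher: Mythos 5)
Your proof is correct and follows essentially the same route as the paper: both arguments rest on the fact that the boundary edges of $\partial_k$ other than $e$ all lie in $T$ and form a path, so that there is a unique vertex $w_0$ of $\partial_k$ reachable from $v$ in $T$ without traversing an edge of $\partial_k$, which is then identified as the branching vertex of the lasso. The paper simply asserts the existence and uniqueness of this vertex, whereas you justify it via the component decomposition of $T$ after removing $f_1,\ldots,f_{n-1}$; this is a legitimate filling-in of a step the paper leaves implicit, not a different method.
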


\begin{proof}Let $b$ be the connected component of $\partial M$ on which $e$ lies. If the base point $v$ is located on $b$, then $l_{e,T}$ is a simple loop which represents the cycle $b$. Otherwise, since any two vertices located on $b$ can be joined by a path in $T$ which stays in $b$, there exists a unique vertex $w$ on $b$ which is joined to the base point $v$ by a path in $T$ with no edge lying on $b$. Hence, $l_{e,T}=[v,w]_T ([w,\underline{e}]_T e [\overline{e},w]_T) [w,v]_T$. The three paths between the brackets form the meander of the lasso, which is a simple loop covering $b$. \end{proof}

Let us carry on with our construction. We have a partition $\E_{0}=T \cup B \cup R$, where $R=\E_{0}\setminus(T\cup B)$. It follows from Euler's relation that $R$ contains exactly $g'$ unoriented edges. From the fact that $\E_{0}$
has $\v(\G)$ vertices, we deduce that $T$ is in fact a spanning tree of
$\G$. Hence, Lemma \ref{lg is free} implies the first assertion of the following result.

\begin{proposition} \label{deux arbres} Let $M$ be a compact surface endowed with a graph $\G$. Let $(M',\G',\iota,f)$ be a split pattern of $(M,\G)$. Let $\widehat T$ be a spanning tree of the dual graph of $\G$. Let $B^+$ be a collection of edges comprising exactly one edge on each connected component of $\partial M$, which we assume to be positively oriented if $M$ is oriented. Set $B=B^+\cup (B^+)^{-1}$. Let $T$ be a spanning tree of $\E\setminus f(\widehat T)$ such that $T\cap B=\varnothing$. Set $R=\E\setminus(f(\widehat
T)\cup B\cup T)$. Choose an orientation $\E^+$ of  $\G$ such that $B^+\subset \E^+$. Write 
$B^+=\{e_1,\ldots,e_{\p}\}$. Let $b_1,\ldots,b_{\p}$ be the boundary components of $M$ enumerated in
such a way that for all $i\in\{1,\ldots,{\p}\}$, $b_i$ is the meander of the lasso $l_{e_i,T}$. Then $R^+$, $B^+$ and $f(\widehat T)^+$ contain respectively $\rg$, $\p$ and $\f-1$ edges and the following properties hold.

1. The group $\RL_{v}(\G)$ is freely generated by the loops $\{l_{e,T} : e \in R^+\cup
B^+ \cup f(\widehat T)^+ \}$. 

2. Under the probability measure $\U^{\G}_{M,\varnothing,C}$ on $\M(\Path(\G),G)$, the
collection of random variables $\{h(l_{e,T}) : e\in R^+ \cup B^+ \cup f(\widehat T)^+\}$ is a
collection of independent variables. For all $e\in R^+ \cup f(\widehat T)^+$, the variable $h(l_{e,T})$
is uniformly distributed on $G$ and for all $i\in\{1,\ldots,\p\}$, the variable
$h(l_{c_i,T})$ has the distribution $\delta_{C(b_i)}$.
\end{proposition}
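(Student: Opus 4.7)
The plan is to first dispose of the counting assertion and then part $1$, which are purely combinatorial, and then to spend the real effort on the distributional statement of part $2$. By construction $|B^+|=\p$; since $\widehat T$ is a spanning tree of the dual graph, which has $\f$ vertices, $|f(\widehat T)^+|=\f-1$; and the graph $\G_0=\G\setminus f(\widehat T)$ has the same vertex set as $\G$, so $T$ is a spanning tree of $\G$ and $|T^+|=\v-1$. Together with Euler's relation $\v-\e+\f=2-\rg-\p$ this gives $|R^+|=\e-(\v-1)-\p-(\f-1)=\rg$. Part $1$ is then immediate from Lemma \ref{lg is free} applied to the spanning tree $T$, since $(\E\setminus T)^+$ is the disjoint union of $R^+$, $B^+$ and $f(\widehat T)^+$.

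For part $2$, I would identify $\M(\Path(\G),G)$ with $G^{\E^+}$ and use the change of variables
$$\Phi:(g_e)_{e\in\E^+}\longmapsto\Bigl((g_e)_{e\in T^+},\,(h(l_{e,T}))_{e\in(\E\setminus T)^+}\Bigr),$$
which is a bijection because $h(l_{e,T})=h([\overline{e},v]_T)\,g_e\,h([v,\underline{e}]_T)$ is obtained from $g_e$ by a left-and-right translation whose parameters depend only on the tree values. Crucially, because $B^+$ contains exactly one edge of each boundary component and $T$ was chosen to contain all the remaining edges of $\partial M$, the measure $\U^\G_{M,\varnothing,C}$ factorises as a tensor product over the $\p$ boundary components and the set of interior edges: for each $i$, the $n_i$ edges of $b_i$ carry the factor $\delta_{C(b_i)(n_i)}$, and every edge not on $\partial M$ carries an independent Haar factor. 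The aim is to show that the pushforward of this measure under $\Phi$ is the product, independently of the $T^+$-marginal, of a Haar measure on each $R^+\cup f(\widehat T)^+$-coordinate and a copy of $\delta_{C(b_i)}$ on each $B^+$-coordinate.

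For $e\in R^+\cup f(\widehat T)^+$, the edge $e$ lies in the interior of $M$, so $g_e$ carries an independent Haar factor; the bi-invariance of Haar measure shows that, conditionally on all tree values, $h(l_{e,T})$ is Haar-distributed, and this conditional statement immediately yields joint Haar-independence. For $e=e_i\in B^+$, I would invoke Lemma \ref{lasso bounding} to write $l_{e_i,T}=s_i b_i s_i^{-1}$ with a spoke $s_i$ that uses no edge of $b_i$; consequently $h(l_{e_i,T})=h(s_i)^{-1}h(b_i)h(s_i)$, where $h(s_i)$ is a function of the tree values not lying on $b_i$. Applying the invariance formula \eqref{put constraint 2} to the factor $\delta_{C(b_i)(n_i)}$ (with $h(s_i)$ playing the role of the function of the $n_i-1$ tree variables on $b_i$, after a cyclic relabelling justified by \eqref{inv perm circ}) gives that, conditionally on the $T^+$-marginal, $h(l_{e_i,T})$ has distribution $\delta_{C(b_i)}$ and is independent of it.

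The main obstacle is the bookkeeping that turns these conditional statements into a genuinely joint independence: one must verify that the spokes $s_i$ for distinct boundary components use disjoint portions of the tree data on $\partial M$, and that the Haar-bi-invariance applied to interior non-tree edges does not interfere with the conjugation arguments on boundary components. Once the measure has been correctly decomposed along the partition $\E=T\cup B\cup R\cup f(\widehat T)$ and the invariance identities have been applied factor by factor, integrating out the $T^+$-coordinates yields the claimed joint distribution of $\{h(l_{e,T}):e\in R^+\cup B^+\cup f(\widehat T)^+\}$.
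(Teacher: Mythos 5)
Your counting argument, your part 1, and your treatment of the lassos indexed by $R^+\cup f(\widehat T)^+$ all match the paper's proof. The gap is in the joint independence of the boundary lassos, and it sits exactly where you park it as ``bookkeeping''. Two problems there. First, formula (\ref{put constraint 2}) does not apply as you describe: by Lemma \ref{lasso bounding} the spoke $s_i$ contains no edge of $b_i$, so $h(s_i)$ is a function of tree variables \emph{off} $b_i$, not of the $n_i-1$ tree variables on $b_i$; the conjugation must be handled by freezing those external variables, not by the literal statement of (\ref{put constraint 2}). Second, and more seriously, the verification you propose --- that the spokes for distinct boundary components use disjoint portions of the boundary tree data --- is false in general: the spoke of $l_{e_j,T}$ may perfectly well traverse edges lying on $b_i$ for $i\neq j$ (the paper points this out explicitly), and distinct spokes always share an initial segment out of $v$. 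Consequently $h(s_j)$ can depend on $h(b_i)$, and the factor-by-factor invariance identities do not by themselves yield joint independence.

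There are two ways to close this. The paper's route is a combinatorial lemma: since $T$ is a tree, the relation ``the spoke of $l_{e_j,T}$ meets $b_i$'' cannot contain a cycle among the boundary components, so one may reorder them so that for each $k$ the spokes of $l_{e_{k+1},T},\ldots,l_{e_{\p},T}$ avoid $b_k$, and then peel off the boundary components one at a time. Alternatively, your own conditional setup can be completed without that lemma by conditioning on the \emph{entire} $T^+$-marginal at once: under this conditioning every $h(s_i)$ is a constant, the variables $h(e_1),\ldots,h(e_{\p})$ are conditionally independent because $\U^\G_{M,\varnothing,C}$ factorises over boundary components and interior edges, each $h(b_i)$ is conditionally $\delta_{C(b_i)}$-distributed by (\ref{put constraint}), and conjugation by a constant preserves $\delta_{C(b_i)}$; since these conditional laws do not depend on the conditioning, the unconditional joint independence follows. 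Either route works, but as written your proposal neither states nor proves the fact that actually makes the argument go through.
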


\begin{figure}[h!]
\begin{center}
\scalebox{1}{\includegraphics{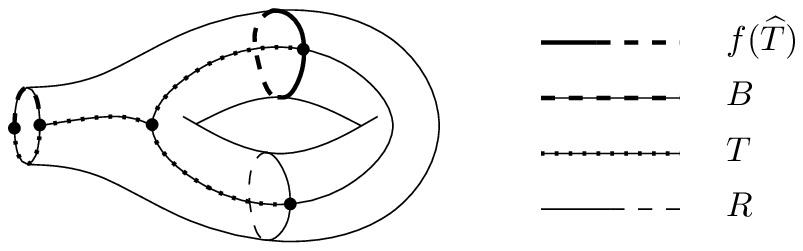}}
\caption{An example of a partition of $\E$ as $R\cup B\cup f(\widehat T)\cup T$}\label{ex tbrt}
\end{center}
\end{figure}

\begin{proof}Let us prove the second assertion. By definition of the measure $\U^{\G}_{M,\varnothing,C}$, the random variables $\{h(e) : e\in R^+ \cup f(\widehat T)^+\}$ are independent, uniformly distributed on $G$, and independent of
$\{h(e) : e\in \E^+\setminus (R^+\cup f(\widehat T)^+)\}$. Hence, the variables $\{h(l_{e,T}) : e\in R^+ \cup f(\widehat T)^+\}$ are independent, uniformly distributed and independent of the variables $\{h(l_{e,T}) : e\in B^+\}$, which do not involve any edge of $R\cup f(\widehat T)$. 

Now choose $i\in\{1,\ldots,\p\}$. Consider the loop $l_{e_i,T}$ and the corresponding bounding cycle $b_i$. By Lemma \ref{lasso bounding}, the spoke of the lasso $l_{e_i,T}$ does not involve any edge lying on the boundary component $b_i$ of $M$. However, it may involve edges located on other boundary components of $M$. 

We claim that for every subset $I\subset \{1,\ldots,\p\}$, there exists $i_0\in I$ such that for all $j\in I-\{i_0\}$, the spoke of $l_{e_j,T}$ does not involve edges located on $b_{i_0}$. Assume to the contrary that for some subset $I$ there does not exist such an $i_0$. Then there would exist $i_1,\ldots,i_{k-1} \in I$ all distinct and $i_k=i_1$ such that for all $j\in\{1,\ldots,k-1\}$, the spoke of the lasso $l_{e_{i_{j+1}},T}$ involves an edge which lies on the boundary component $b_{i_{j}}$. This would in particular imply that for each $j\in \{1,\ldots,k-1\}$ there exists a path in $T$ from $b_{i_j}$ to $b_{i_{j+1}}$ and, since $i_k=i_1$, that $T$ contains a cycle.

By relabelling the boundary components of $M$, we may assume that for all $k\in\{1,\ldots,\p\}$, the element $k$ of the subset $\{k,\ldots,\p\}\subset \{1,\ldots,\p\}$ has the property described above. Since under $\U^{\G}_{M,\varnothing,C}$, the random variable $h(b_k)$ has the distribution $\delta_{C(b_k)}$ and is independent of the variables $h(e)$ for $e\in \E$ not located on $b_k$, the variable $h(l_{e_k,T})$ itself has the distribution $\delta_{C(b_k)}$ and is independent of $h(l_{e_{k+1},T}),\ldots,h(l_{e_{\p},T})$. This implies easily the result. \end{proof}

With the proof of Proposition \ref{tame generators} in mind, the next step is to express the loops $l_{e,T}$ for $e\in f(\widehat T)$ in function of facial lassos. The exact way in which these lassos decompose into products of facial lassos depends on, and in fact encodes completely, the geometry of the tree $\widehat T$. 

Let $M'$ be endowed with an orientation adapted with $\widehat T$. Let $\hat v=M'_F$ be a vertex of $\widehat \G$ which corresponds to a face $F$. The set of edges of $\widehat \G$ whose source is $\hat v$ is in one-to-one correspondence with the set of edges of $\G'$ located on the boundary of $M'_F$ and which bound it positively. This set carries a natural cyclic order, which is the order in which the edges are traversed by a cycle bounding $M'_F$. By restriction, the set of edges of $\widehat T$ which share $\hat v$ as their source is endowed with a cyclic order.

Let us root $\widehat T$ by not only choosing a reference vertex but also by choosing among the
edges issued from this vertex which one is the first. The simplest way to do this is to choose a
vertex of $\G'$. This determines a root vertex for $\widehat T$, namely the connected component
of $M'$ which carries this vertex, and this also breaks the cyclic symmetry of the edges issued from
this connected component, which are now totally ordered.  

The object that we are now contemplating is a tree (that is, an abstract graph without simple cycle) endowed with a distinguished vertex, a total order on the edges issued from this distinguished vertex and a cyclic order on the set of edges issued from any other vertex. Such an object is called a rooted planar tree and it has a canonical representation as a set of words of integers, according to a formalism due to J. Neveu. Let us simply describe how the vertices are labelled by words of integers, that is, finite sequences of non-negative integers.

The root vertex is labelled by the empty word $\varnothing$. Let $k(\varnothing)$ be the number of children of $\varnothing$, that is, the number of vertices to which it is joined by an edge. These vertices are labelled by
words of length 1, namely $1,2,\ldots,k(\varnothing)$, according to the total order on these edges.

Then, consider a vertex labelled by a word $u=u_{1}\ldots u_{n}$. The integer $n$ is called the {\em height} of $u$ and it is denoted by $h(u)$. Let $\pi(u)=u_{1}\ldots u_{n-1}$ denote the predecessor of $u$ and let $k(u)$ denote the
number of vertices other than $\pi(u)$ to which $u$ is joined by an edge. Then the edge from $u$
to $\pi(u)$, denoted by $u\pi(u)$, breaks the cyclic symmetry among the edges issued from $u$ and
determines a total order on the $k(u)$ other edges issued from $u$. The $k(u)$ targets of these edges
are labelled $u_{1}\ldots u_{n}1, \ldots, u_{1}\ldots u_{n}k(u)$ in this order.

Thus, the choice of a rooting of $\widehat T$ and an adapted orientation of $M'$ determines a labelling of $\F$ by words of integers (see Figure \ref{neveu}). As we shall see now, it determines also a specific facial lasso for each face of $\G$ and one of the loops $l_{e,T}$ for each face distinct from the root face.
\index{planar tree}

\begin{figure}[h!]
\begin{center}
\scalebox{1}{\includegraphics{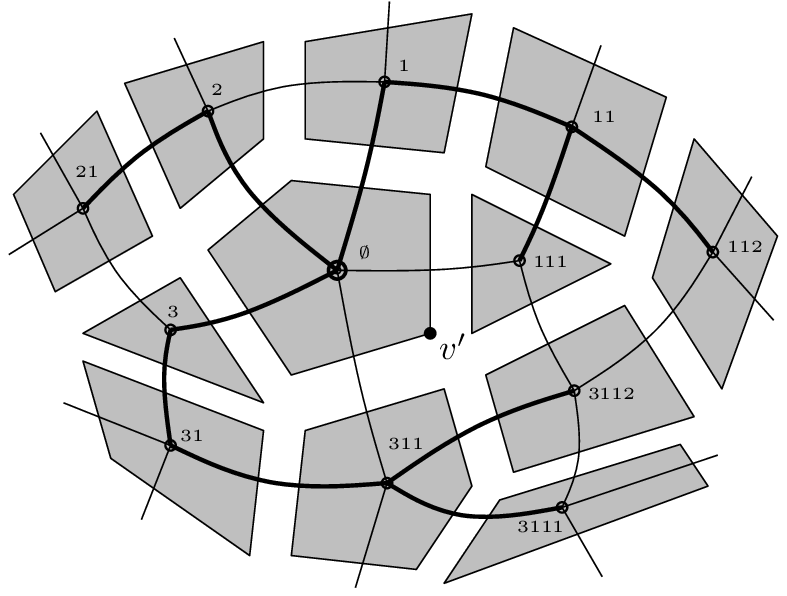}}
\caption{The labelling of the faces of a graph by words of integers. The vertex $v'$ determines the rooting of $\widehat T$.}
\label{neveu}
\end{center}
\end{figure}

\begin{definition}\label{lupiu} Let $M$ be a compact surface endowed with a graph $\G$. Let $(M',\G',\iota,f)$ be a split pattern of $(M,\G)$. Let $\widehat T$ be a spanning tree of the dual graph of $\G$. Endow $M'$ with an orientation adapted to $\widehat T$. Choose a root for $\widehat T$. Label the faces of $\G$ with words of integers accordingly.

Consider a face $F$ of $\G$ labelled by the word $u$. If $u=\varnothing$, let $\partial_{\widehat T} F$
be the unique representative of $\partial M'_{F}$ which starts at the root vertex chosen in $\V'$ and bounds positively $\partial M'_{F}$. Otherwise, consider the edge $u\pi(u)$ of $\widehat T$, which we identify with an edge of $\G'$ which bounds $M'_F$ positively. Let $\partial_{\widehat T}F$ be the unique simple loop representing $\partial M'_{F}$ which starts with this edge.  Write $\partial_{\widehat T}F$ as a product of edges $e_{1}\ldots e_{r}$. We define the facial lasso $l_{u}\in \RL_v(\G)$ by $l_{u}=l_{e_{1},T}\ldots l_{e_{r},T}$. 

Consider a face distinct from the face labelled by $u\neq \varnothing$. The edge $u\pi(u)$ of $\widehat T$ determines the edge $f(u\pi(u))$ of $\G$ and we define $l_{u,\pi(u)}=l_{f(u\pi(u)),T}$. We also define $l_{\varnothing,\pi(\varnothing)}$ as the constant loop at $v$. 
\end{definition}

Let $u$ and $u'$ be two vertices of $\widehat T$. We say that $u$ is a {\em prefix} of $u'$ if there exists a word of integers $u''$ such that $u'=uu''$, the concatenation of $u$ and $u''$. Genealogically, this can be phrased by saying that $u$ is an ancestor of $u'$.

\begin{lemma} \label{def w} Recall the notation of Definition \ref{lupiu}. Let $F$ be a face of $\G$ labelled by $u$. There exist $k(u)+1$ elements $t_{u,\pi(u)},t_{u,u1},\ldots,t_{u,uk(u)}$ of the subgroup  of $\RL_{v}(\G)$ generated by $\{l_{e,T}:e\in R\cup B\}$ such that 
\begin{equation}\label{derive integre}
l_{u}=l_{u,\pi(u)} t_{u,\pi(u)}^{-1} l_{u1,u}^{-1}t_{u,u1}^{-1}\ldots l_{uk(u),u}^{-1}t_{u,uk(u)}^{-1}.
\end{equation}

Moreover, there exists an element $t$ of the subgroup of $\RL_{v}(\G)$ generated by $\{l_v : u \mbox{ prefix of } v\}$ and $\{l_{e,T} : e\in R\cup B\}$ such that
\begin{equation}\label{derive integre 2}
l_u=l_{u,\pi(u)} t.
\end{equation}
\end{lemma}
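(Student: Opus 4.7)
The plan is to compute $l_u$ directly by reading off the edges along $\partial_{\widehat T}F$ and separating them into two groups: those whose $f$-image lies in $f(\widehat T)$, and those whose $f$-image lies in $R\cup B\cup T$.

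To establish (\ref{derive integre}), I would first analyze the structure of $\partial M'_F$ at the vertex $\hat v = M'_F$ of $\widehat \G$. By the planar rooted structure of $\widehat T$, exactly $k(u)+1$ edges of $\G'$ on $\partial M'_F$ are mapped by $f$ into $f(\widehat T)$: one for each edge of $\widehat T$ incident to $u$. When traversed along $\partial_{\widehat T}F$ starting from the distinguished edge, these appear in the cyclic order dictated by the planar structure, namely $u\pi(u),\, u\cdot u1,\, u\cdot u2,\ldots,u\cdot uk(u)$ (the term $u\pi(u)$ disappears when $u=\varnothing$, in which case one uses the convention $l_{\varnothing,\pi(\varnothing)}=1$). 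Writing $\partial_{\widehat T}F = e'_0 s'_0 e'_1 s'_1 \cdots e'_{k(u)} s'_{k(u)}$, where each $e'_j$ is one of these $\widehat T$-edges and each $s'_j$ is a (possibly empty) concatenation of edges of $\G'$ whose $f$-images lie in $R\cup B\cup T$, and applying $f$ followed by Definition \ref{lupiu}, one obtains
$$l_u = l_{f(e'_0),T}\cdot [s'_0]\cdot l_{f(e'_1),T}\cdot [s'_1]\cdots l_{f(e'_{k(u)}),T}\cdot [s'_{k(u)}],$$
where $[s'_j]$ abbreviates the product of $l_{e,T}$ over the edges of $f(s'_j)$. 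Now I would identify these factors: $l_{f(e'_0),T}=l_{u,\pi(u)}$ directly from the definition, and $l_{f(e'_j),T}=l_{uj,u}^{-1}$ for $j\geq 1$ because the orientation of $M'$ is adapted to $\widehat T$, which forces $f(u\cdot uj)=f(uj\cdot u)^{-1}$. Since $l_{e,T}$ is equivalent to the constant loop for $e\in T$, every $[s'_j]$ is already in the subgroup generated by $\{l_{e,T}:e\in R\cup B\}$, and setting $t_{u,\pi(u)}^{-1}=[s'_0]$ and $t_{u,uj}^{-1}=[s'_j]$ for $j\geq 1$ yields (\ref{derive integre}).

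For (\ref{derive integre 2}), I would proceed by induction from leaves towards the root of $\widehat T$. If $k(u)=0$, the formula (\ref{derive integre}) degenerates to $l_u=l_{u,\pi(u)}t_{u,\pi(u)}^{-1}$, and $t=t_{u,\pi(u)}^{-1}$ already belongs to the subgroup generated by $\{l_{e,T}:e\in R\cup B\}$. Otherwise, I apply the induction hypothesis to each child: for $j\in\{1,\ldots,k(u)\}$, one has $l_{uj}=l_{uj,u}\cdot t^{(j)}$ with $t^{(j)}$ in the subgroup generated by $\{l_v:uj\text{ prefix of }v\}\cup\{l_{e,T}:e\in R\cup B\}$, so $l_{uj,u}^{-1}=t^{(j)}l_{uj}^{-1}$ belongs to the subgroup generated by $\{l_v:u\text{ prefix of }v\}\cup\{l_{e,T}:e\in R\cup B\}$. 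Substituting into (\ref{derive integre}) and observing that every remaining factor $t_{u,\cdot}^{-1}$ already lies in the second, smaller generating set, one concludes that the product of all factors to the right of $l_{u,\pi(u)}$ is a single element $t$ of the desired subgroup.

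The main obstacle will be the careful bookkeeping around the assertion $l_{f(e'_j),T}=l_{uj,u}^{-1}$ for $j\geq 1$: this requires unwinding what it means for the orientation of $M'$ to be adapted to $\widehat T$ and verifying that the edge of $\partial M'_F$ corresponding to the $j$-th child is indeed sent by $f$ to the inverse of the edge $f(uj\cdot u)\in\E$. Once this compatibility between the planar structure of $\widehat T$ and the orientation conventions is pinned down, everything else reduces to a direct reading of $\partial_{\widehat T}F$ together with an induction on the depth of descendants.
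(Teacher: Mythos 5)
Your proposal is correct and follows essentially the same route as the paper's proof: one reads off $\partial_{\widehat T}F$ as the distinguished edge $u\pi(u)$ followed by the edges of $\widehat T$ joining $u$ to its children, interleaved with edges of $T\cup R\cup B$ (the $T$-lassos being equivalent to the constant loop), and then proves (\ref{derive integre 2}) by induction from the leaves of $\widehat T$ towards the root using the prefix observation. Your explicit check that the $j$-th child contributes $l_{f(uj\cdot u),T}^{-1}=l_{uj,u}^{-1}$ because the orientation of $M'$ is adapted to $\widehat T$ (so that $f(\iota(\hat e))=f(\hat e)^{-1}$) is precisely the point the paper leaves implicit.
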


\begin{proof}Let us write $\partial_{\widehat T}F=e_{1}\ldots e_{n}$. By definition, $e_{1}$ is the edge
of $\G$ which corresponds to the edge $u\pi(u)$ of $\widehat T$. Hence, $l_{e_{1},T}=l_{u,\pi(u)}$. Then, the list
$(e_{2},\ldots,e_{n})$ consists of the images by $f$ of the edges of $\widehat T$ which join $u$
to $u1,\ldots,uk(u)$, in this order, intermixed with some edges of $T$ and some edges of $R\cup
B$. The claimed expression for $l_{u}$ follows. In the case where $F$ is the root of $\widehat
T$, the edge $e_{1}$ does not play a special role, it is either the image by $f$ of the edge which
joins $\varnothing$ to $1$ in $\widehat T$, or an edge of $T\cup R\cup B$. 

We say that a vertex $u$ of $\widehat T$ is a {\em leaf} if $k(u)=0$. The second expression of $l_u$ reduces to the first if $u$ is a leaf. Let us now consider a vertex $u$ and assume that (\ref{derive integre 2}) holds for each vertex of which $u$ is the predecessor, that is, for $u1,\ldots,uk(u)$. For every vertex $v$, the fact that $uj$ is a prefix of $v$ for some $j\in \{1,\ldots,k(u)\}$ implies that $u$ is a prefix of $v$. Hence, by applying (\ref{derive integre}) to $u$ and then (\ref{derive integre 2}) to $u1,\ldots,uk(u)$, we find that (\ref{derive integre 2}) holds for $u$.

By induction along $\widehat T$, the second expression of $l_u$ holds for all $u\in \widehat T$. \end{proof}

\begin{corollary}\label{base avec faces} Recall the notation of Proposition \ref{deux arbres} and Definition \ref{lupiu}. The group $\RL_{v}(\G)$ is freely generated by $\{l_{e,T} : e\in R^+ \cup B^+\}$ and $\{l_{u} : u\neq \varnothing\}$. Moreover, under $\U^\G_{M,\varnothing,C}$, the random variables $\{h(l_{u}): u\neq\varnothing \}$ are mutually independent, uniformly distributed and independent of $\{h(l_{e,T}) : e\in R^+\cup B^+\}$.
\end{corollary}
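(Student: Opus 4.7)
The plan is to deduce the corollary from Proposition \ref{deux arbres} and Lemma \ref{def w} by a triangular change of free generators along the rooted planar tree $\widehat T$, leaves first. Setting $S = \{l_{e,T} : e \in R^+ \cup B^+\} \cup \{l_u : u \neq \varnothing\}$, I would first show $\langle S\rangle = \RL_v(\G)$. By Proposition \ref{deux arbres} it suffices to show that $l_{e,T}\in\langle S\rangle$ for every $e\in f(\widehat T)^+$; since each such $e$ corresponds, up to inversion, to a unique non-root vertex of $\widehat T$ via $e = f(u\pi(u))$, this amounts to proving $l_{u,\pi(u)} \in \langle S\rangle$ for every non-root $u$. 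I would proceed by induction on the leaf-height of $u$ in $\widehat T$: for a leaf $u$ formula (\ref{derive integre}) reduces to $l_u = l_{u,\pi(u)}\, t_{u,\pi(u)}^{-1}$, whence $l_{u,\pi(u)} = l_u\, t_{u,\pi(u)} \in \langle S\rangle$. For the inductive step, (\ref{derive integre}) rearranges to
\[
l_{u,\pi(u)} = l_u\, t_{u,uk(u)}\, l_{uk(u),u}\, t_{u,u(k(u)-1)}\, l_{u(k(u)-1),u}\, \cdots\, t_{u,u1}\, l_{u1,u}\, t_{u,\pi(u)};
\]
each $l_{uj,u}$ equals $l_{uj,\pi(uj)}$ (since $\pi(uj)=u$) and therefore belongs to $\langle S\rangle$ by the induction hypothesis applied to the children $u1,\ldots,uk(u)$, while the $t_{u,\cdot}$ factors lie in $\langle l_{e,T}:e\in R^+\cup B^+\rangle\subset\langle S\rangle$. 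Since $|S|=\rg+\p+(\f-1)$ coincides with the rank of the free group $\RL_v(\G)$ given by Lemma \ref{lg is free}, the canonical surjection from the free group on $S$ onto $\RL_v(\G)$ is an isomorphism by the Hopf property of finitely generated free groups, and $S$ is a free basis.

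For the probabilistic statement, enumerate the non-root vertices of $\widehat T$ as $u_1,\ldots,u_{\f-1}$ in a depth-first order so that every descendant of $u_k$ occurs with index strictly less than $k$. For each $k$ define $\epsilon_k\in\{\pm 1\}$ and $e_k\in f(\widehat T)^+$ by $l_{u_k,\pi(u_k)}=l_{e_k,T}^{\epsilon_k}$, and set $X_k=h(l_{e_k,T})$. By Proposition \ref{deux arbres}, the family $(X_k)_{1\leq k\leq\f-1}$ together with the family $\{h(l_{e,T}):e\in R^+\cup B^+\}$ is mutually independent, each $X_k$ is Haar-uniform on $G$, and each $h(l_{e_i,T})$ for $e_i\in B^+$ is distributed as $\delta_{C(b_i)}$. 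Applying (\ref{derive integre 2}) and the multiplicativity $h(ab)=h(b)h(a)$ yields
\[
h(l_{u_k}) = h(t_k)\, X_k^{\epsilon_k},
\]
where $h(t_k)$ is measurable with respect to the $\sigma$-field generated by $\{h(l_{u_j}):j<k\}\cup\{h(l_{e,T}):e\in R^+\cup B^+\}$, and in particular is independent of $X_k$. A direct induction on $k$, using the invariance of the Haar measure under left multiplication and under inversion, shows that conditionally on $\{h(l_{e,T}):e\in R^+\cup B^+\}$ the variables $h(l_{u_1}),\ldots,h(l_{u_{\f-1}})$ are mutually independent and Haar-uniform on $G$. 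This is exactly the claimed distribution.

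The main obstacle is purely combinatorial: one must handle carefully the bookkeeping of signs and orientations connecting the oriented edges $u\pi(u)$ of $\widehat T$, their images in $\E$, and the chosen orientation $\E^+$ of $\G$, and ensure that the depth-first enumeration of $\widehat T$ really does place every descendant strictly before its ancestor. Once this is set up, the argument reduces to a triangular change of variables on $G^{\f-1}$ that manifestly preserves the product Haar measure.
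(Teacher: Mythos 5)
Your proof is correct and follows essentially the same route as the paper's: both trade $l_{u,\pi(u)}$ for $l_{u}$ along the rooted tree $\widehat T$ via the identities of Lemma \ref{def w}, and then transport the independence statement of Proposition \ref{deux arbres} through this triangular substitution. The only cosmetic differences are that you run the induction upward from the leaves and justify freeness by the rank count together with the Hopf property, whereas the paper runs a descending induction on the height and observes directly that each substitution is a Nielsen move taking a free basis to a free basis.
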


\begin{proof} Let us call the height of $\widehat T$ and denote by $h(\widehat T)$ the maximal height of a vertex of $\widehat T$, which is necessarily a leaf. We claim that for all integer $n\geq 1$, $\RL_{v}(\G)$ is freely generated by the set $X(n)$ defined by
$$X(n)=\{l_{e,T} : e\in R^+ \cup B^+\} \cup \{l_{u,\pi(u)} : h(u)<n\} \cup \{l_{u} : h(u)\geq n \}.$$

For $n=1$, it is the statement which we wish to prove. For $n> h(\widehat T)$, it is the content of
Proposition \ref{deux arbres}. The fact that the claim is true for all $n$ is easily proved by descending induction.
Indeed, for all $n\geq 1$,  the set $X(n)$ is deduced from $X(n+1)$ by replacing, for all labels $u$ of $\widehat T$ such that $h(u)=n$, the loop $l_{u,\pi(u)}$ by the facial lasso $l_u$. If $X(n+1)$ is a basis of $\RL_v(\G)$, then it follows immediately from (\ref{derive integre 2}) that  $X(n)$ is another basis of $\RL_v(\G)$. 

Since the sets $\{v : u \mbox{ prefix of } v\}$ are disjoint for the distinct labels $u$ of height $n$, the assertion on the distribution of the variables $\{h(l) : l\in X(n)\}$ follows easily from (\ref{derive integre 2}) and the distribution of $\{h(l) : l\in X(n+1)\}$.  \end{proof}

At this point, we have proved most of the first assertion of Proposition \ref{tame generators}. We have exhibited a family of generators of the group $\RL_v(\G)$ which generate it freely. We know that if we add one element to this family, we get a presentation of $\RL_v(\G)$ with one relation. The next result helps us to get a relation of the form that we expect.

Lemma \ref{def w} defines $t_{u,v}$ for each ordered pair of adjacent vertices $(u,v)$ in $\widehat T$, and also an extra element $t_{\varnothing,\pi(\varnothing)}$. Now for each pair of vertices $(u,u')$ of $\widehat T$, not necessarily adjacent, let us define $t_{u,u'}=t_{v_1,v_2}\ldots t_{v_{m-1},v_m}$, where $u=v_1,\ldots,v_m=u'$ is the unique injective path in $\widehat T$ from $u$ to $u'$.

\begin{lemma} \label{relation} Endow the set of vertices of $\widehat T$ with the lexicographic order associated to the reversed order on ${\mathbb N}$. Enumerate its elements accordingly: $\F=\{u_1\leq \ldots \leq u_\f\}$. Then
\begin{equation}\label{the relation}
l_{u_1} t_{u_1,u_2} l_{u_2} t_{u_2,u_3} \ldots t_{u_{\f-1},u_\f} l_{u_\f} t_{u_\f,\varnothing} t_{\varnothing, \pi(\varnothing)}=1.
\end{equation}
Moreover, the loop $(t_{u_1,u_2}t_{u_2,u_3} \ldots t_{u_{\f-1},u_\f}t_{u_\f,\varnothing} t_{\varnothing, \pi(\varnothing)})^{-1}$ is the boundary of the unique face of $\G_{0}$. It is equal to a word in the lassos $\{l_{e,T} : e\in R^+ \cup B^+\}$ where each lasso of the set $\{l_{e,T}:e\in R^+\}$ appears exactly twice, possibly sometimes with exponent $-1$, and each lasso of the set $\{l_{e,T}:e\in B^+\}$ appears exactly once, possibly with exponent $-1$. 
If $M$ is oriented, then each lasso of the set $\{l_{e,T}:e\in B^+\}$ appears with exponent $1$.
\end{lemma}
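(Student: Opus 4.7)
My plan is to prove the relation (\ref{the relation}) by iteratively unfolding the recursive expression (\ref{derive integre}) from Lemma~\ref{def w}, starting from $u = \varnothing$. The first observation is that the lexicographic order on the vertices of $\widehat T$ associated with the reversed order on $\mathbb{N}$ coincides with the order of first visits during the depth-first walk around $\widehat T$ that, at each vertex $u$, explores children in the decreasing order $uk(u), u(k(u)-1), \ldots, u1$. This is exactly the order in which the factors $l_{ui,u}^{-1}$ appear when we apply (\ref{derive integre}) to $u$, reading the product from left to right; this is the combinatorial key which matches the algebraic recursion with the enumeration in the statement.

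For the relation itself, I would proceed by induction on the size of $\widehat T$. The base case, where $\widehat T$ has a single vertex, reduces to the equality $l_\varnothing t_{\varnothing,\pi(\varnothing)} = 1$, which is exactly (\ref{derive integre}) at $\varnothing$ combined with the fact that $l_{\varnothing,\pi(\varnothing)}$ is the constant loop. For the inductive step, applying (\ref{derive integre}) at $\varnothing$ rewrites $l_\varnothing$ as a product involving the factors $l_{j,\varnothing}^{-1}$ for $j = 1, \ldots, k(\varnothing)$ intermixed with transport loops. Each $l_{j,\varnothing}$ is then expanded using the inductive hypothesis applied to the subtree rooted at $j$: the formula for the subtree produces a product of facial lassos $l_u$ (for $u$ in the subtree) separated by transport loops, the whole concatenation being equivalent, up to the outer transport correction, to $l_{j,\varnothing}$. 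Piecing these together, and verifying the telescoping of transport loops at the junctions between siblings and between parents and children in $\widehat T$, produces exactly (\ref{the relation}).

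Once (\ref{the relation}) is established, the identification of the loop $\beta := (t_{u_1,u_2}\cdots t_{u_\f,\varnothing}t_{\varnothing,\pi(\varnothing)})^{-1}$ with the boundary of the unique face of $\G_0 = \G \setminus f(\widehat T)$ follows geometrically. The graph $\G_0$ arises from $\G$ by successive applications of Proposition~\ref{erase edge} to each edge of $f(\widehat T)$. Following the reverse of the depth-first order, each erasure merges two adjacent faces along their common edge; by the first conclusion of Proposition~\ref{erase edge}, the facial cycle of the merged face is obtained by concatenating $ce$ and $e^{-1}d$ to get $cd$, with the erased edge $e$ cancelling. The cumulative effect yields the single facial cycle of $\G_0$ as precisely the word one reads off from (\ref{the relation}) after dropping the $l_{u_i}$ factors, namely $\beta$. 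To conclude, I would note that the skeleton of $\G_0$ is $T \cup B \cup R$; since $T$ is a spanning tree, each $e \in T$ contributes a trivial lasso $l_{e,T} = 1$ and therefore drops out of the word; each edge of $R$ is traversed exactly twice on the boundary of the unique face of $\G_0$ (the face sitting on both sides of $e$, since $R$ lies strictly in the interior of $M$); and each edge of $B$ is traversed exactly once (the face lying on one side, $M$ on the other). In the oriented case, the convention that $B^+$ consists of edges bounding $M$ positively together with the fact that $\E^+$ also respects this orientation guarantees that each $l_{e,T}$ for $e \in B^+$ occurs with exponent $+1$.

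The main obstacle is the combinatorial bookkeeping in the inductive step: one must verify that the iterated unfolding produces the facial lassos in the exact lexicographic order required, with the transport loops $t_{u_i, u_{i+1}}$ between consecutive $l_{u_i}$ and $l_{u_{i+1}}$ being precisely those corresponding to the unique injective path in $\widehat T$ from $u_i$ to $u_{i+1}$. This requires carefully matching the cyclic order of the children at each vertex (encoded by the planar structure of $\widehat T$) with the left-to-right order of the factors $l_{uk(u),u}^{-1} t_{u,uk(u)}^{-1} \cdots l_{u1,u}^{-1} t_{u,u1}^{-1}$ appearing in the expansion (\ref{derive integre}), and tracking the transport corrections as one descends from a parent to a child and then back up to the next sibling.
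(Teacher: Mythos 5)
Your proof is correct and follows essentially the same route as the paper: both arguments iteratively unfold the recursion (\ref{derive integre}) starting from $u=\varnothing$ (the paper organizes the unfolding by height levels of $\widehat T$ rather than by structural induction on subtrees, but this is only a difference of bookkeeping), and both identify the product of transport loops with the facial cycle of the one-faced graph $\G_0$ to read off the multiplicities of the lassos $l_{e,T}$ for $e\in R^+\cup B^+$.
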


\begin{proof}The equation (\ref{derive integre}) can be rewritten as 
$$l_{u,\pi(u)}=l_u t_{u,uk(u)} l_{uk(u),u} \ldots t_{u,u1} l_{u1,u} t_{u,\pi(u)}.$$
Let us apply this relation to $u=\varnothing$. We find
\begin{equation}\label{1=}
1=l_\varnothing t_{\varnothing, k(\varnothing)} l_{k(\varnothing),\varnothing} \ldots t_{\varnothing,1} l_{1,\varnothing} t_{\varnothing,\pi(\varnothing)}.
\end{equation}
Let us define, for all integer $n\geq 1$ and all vertex $u$ of $\widehat T$ such that $h(u)\leq n$ an element $\tilde l_u^{(n)}$ of $\RL_v(\G)$  by setting
$$\tilde l_u^{(n)} = \left\{ \begin{array}{ll} l_u & \mbox{if } h(u)<n \\ l_{u,\pi(u)} & \mbox{if } h(u)=n. \end{array}\right.$$
Also, for each $n$, let $u^n_1\leq \ldots \leq u^n_{r_n}$ be the vertices of $\widehat T$ of height at most $n$ listed in the lexicographic order corresponding to the reversed order on $\mathbb N$. Then (\ref{1=}) can be written as
$$1=\tilde l^{(1)}_{u^1_1} t_{u^1_1,u^1_2}\tilde l^{(1)}_{u^1_2} \ldots  t_{u^1_{r_1-1},u^1_{r_1}}\tilde l^{(1)}_{u^1_{r_1}} t_{\varnothing,\pi(\varnothing)}.$$

By applying (\ref{derive integre}) iteratively to the terms of the form $l_{u,\pi(u)}$ in this equality, one finds that the equality 
$$1=\tilde l^{(n)}_{u^n_1} t_{u^n_1,u^n_2}\tilde l^{(n)}_{u^n_2} \ldots  t_{u^n_{r_1-1},u^n_{r_1}}\tilde l^{(n)}_{u^n_{r_n}} t_{\varnothing,\pi(\varnothing)}$$
holds for all $n\geq 1$. Here, $t_{u,u'}$ has the meaning explained before the statement of Lemma \ref{relation}.
For $n$ larger than the height of $\widehat T$, this formula is exactly what we wanted to prove.

Finally, $(t_{u_1,u_2}t_{u_2,u_3} \ldots t_{u_{\f-1},u_\f}t_{u_\f,\varnothing} t_{\varnothing, \pi(\varnothing)})^{-1}$ is the product of the loops $l_{e,T}$ where $e$ goes around the unique face of the pattern of $\G_0$ obtained from $M'$ by sewing the edges of $\widehat T$. The result follows. \end{proof}

We are now ready to prove the main result of this section.\\

\begin{proof}[Proof of Proposition \ref{tame generators}]  Let us consider the relation given by Lemma \ref{relation}. Let us define, for all $i\in\{1,\ldots,\f\}$, 
$$l_i= (t_{u_i,u_{i+1}} \ldots t_{u_{\f-1},u_{\f}} t_{u_{\f},\varnothing}t_{\varnothing,\pi(\varnothing)})^{-1} l_{u_i} (t_{u_i,u_{i+1}} \ldots t_{u_{\f-1},u_{\f}} t_{u_{\f},\varnothing}t_{\varnothing,\pi(\varnothing)}).$$
Then the relation (\ref{the relation}) becomes
\begin{equation}\label{the relation 2}
(t_{u_1,u_2}\ldots t_{u_{\f-1},u_{\f}}t_{u_{\f},\varnothing}t_{\varnothing,\pi(\varnothing)})^{-1}=l_1\ldots l_{\f}.
\end{equation}
By Corollary \ref{base avec faces}, $\RL_v(\G)$ is freely generated by $\{l_{e,T} : e\in R^+\cup B^+\}\cup \{l_1,\ldots,l_{\f-1}\}$ and, under $\U^\G_{M,\varnothing,C}$, the variables $h(l_1),\ldots,h(l_{\f-1})$ are independent, uniformly distributed and independent of $\{h(l_{e,T}) : e\in R^+\cup B^+\}$. 

Let us write $B^+=\{b_1,\ldots,b_{\p}\}$. By the last part of Lemma \ref{relation}, there exists $\p+1$ elements $t_0,\ldots,t_{\p}$ of the subgroup generated by $\{l_{e,T} : e\in R^+\}$, $\p$ signs $\epsilon_1,\ldots,\epsilon_{\p} \in \{-1,1\}$, and a permutation $\sigma \in \Sy_{\p}$ such that 
$$(t_{u_1,u_2}\ldots t_{u_{\f-1},u_{\f}}t_{u_{\f},\varnothing}t_{\varnothing,\pi(\varnothing)})^{-1}=t_0 l_{b_{\sigma(1)},T}^{\epsilon_1} t_1 \ldots t_{\p-1} l_{b_{\sigma(\p)},T}^{\epsilon_{\p}} t_{\p},$$
with $\epsilon_1=\ldots=\epsilon_{\p}=1$ if $M$ is oriented. 

By Lemma  \ref{relation} again, the loop $t_0\ldots t_{\p}$ can be written as a word in $\{l_e : e\in R^+\}$ where each loop appears exactly twice, possibly sometimes with exponent $-1$. Let us name $a_1,\ldots,a_{\rg}$ the loops $\{l_e : e\in R^+\}$ and let $w$ denote the element of the free group of rank $\rg=\# R^+$ such that $w(a_1,\ldots,a_{\rg})=t_0\ldots t_{\p}$. Let us also define, for all $k\in\{1,\ldots,\p\}$, $c_i=(t_i \ldots t_{\p})^{-1} l_{b_{\sigma(i)},T}^{\epsilon_i} (t_i \ldots t_{\p})$. 
Then the relation (\ref{the relation 2}) becomes
\begin{equation}\label{the relation 3}
w(a_1,\ldots,a_{\rg})  c_1\ldots c_{\p}=l_1\ldots l_\f.
\end{equation}

The first assertion result now from Proposition \ref{deux arbres}, Corollary \ref{base avec faces} and the definition of $l_1,\ldots,l_{\f}$ and $c_1,\ldots,c_{\p}$.\\

2. Let us prove the second assertion. By adding vertices to $\G_1$, we do not change the group $\RL_v(\G_1)$ nor the distribution of the associated random variables, according to Lemma \ref{inv subd unif}. Hence, we may add to $\G_1$ the vertices of $\G_2$ located on $\Sk(\G_1)$ and, without loss of generality, assume that $\E_1\subset \E_2$. 

For the sake of simplicity, let us treat the case where only one face of $\G_1$ contains the interior of an edge of $\E_2\setminus \E_1$. Once the result is proved under this restrictive assumption, the general result follows by iteration.

Let us assume that only the face $F_1$ contains the interior of some edges of $\G_2$. The choice of the face $F_1$ is a simple matter of convenience, we do not use anything specific to this face. 

Let $(M',\G'_1,\iota,f)$ be a split pattern of $(M,\G_1)$. Let $M'_{F_1}$ be the closure of the preimage by $f$ of $F_1$. Let $\G'_2$ be the restriction to $M'_{F_1}$ of the lift to $M'$ of $\G_2$. Let $v_1$ be the finishing point of the spoke of the lasso $l_1$. Let us choose a vertex $v'_1$ of $\G'_2$ which is sent by $f$ on $v_1$. 

The first assertion of the proposition that we are proving applied to the graph $\G'_2$ on the disk $M'_{F_1}$, whose reduced genus is $0$, provides us with $\f_1$ facial lassos $l'_{1},\ldots,l'_{\f_{2,1}}$ based at $v'_1$ which bound the faces of $\G'_2$ and such that such that $l'_{1}\ldots l'_{\f_{2,1}}$ bounds $M'_2$. By projecting these lassos on $M$ by $f$ and conjugating them by the spoke of $l_1$, we get lassos based at $v$ which we denote by $l_{1,1},\ldots,l_{1,\f_{2,1}}$. 

Let $\Gen_1$ denote the tame set of generators of $\RL_v(\G_1)$ that we are given. Let us prove that the set $\Gen_2$ of loops obtained by replacing $l_1$ by $l_{1,1},\ldots,l_{1,\f_{2,1}}$ in $\Gen_1$ is a set which generates $\RL_v(\G_2)$. Let $c$ be a loop based at $v$ in $\G_2$. Let us split $c$ into a concatenation of paths which are either paths in $\G_1$ or concatenation of edges of $\E_2\setminus \E_1$. We get an expression of the form $c=c_1d_1 \ldots c_n d_n$, where the paths $c_1,\ldots,c_n$ are  in $\G_1$ and the paths $d_1,\ldots,d_n$ are concatenations of edges of $\E_2\setminus \E_1$. Choose $k\in\{1,\ldots,n\}$ and consider the path $d_k$. It can be lifted in a unique way to a path in $\G'_2$, which we denote by $d'_k$. Even if $d_k$ is a loop, $d'_k$ needs not be a loop. However, there exist two paths $a'_k$ and $b'_k$ in $\G'_2$ which stay on the boundary of $M'_{F_1}$ and such that $a'_k d'_k b'_k$ is a loop based at $v'_1$. Let us write $a_k=f(a'_k)$ and $b_k=f(b'_k)$, and denote the spoke of the lasso $l_1$ by $s_1$. Finally, let $f_k$ be a path in $\G_1$ from $\overline{d_k}$ to $v$.  We have the equality in $\RL_v(\G_2)$
$$c=(c_1 a_1^{-1} s_1^{-1})  [ s_1 a_1 d_1 b_1 s_1^{-1} ]  (s_1 b_1^{-1} f_1) \ldots ( f_{n-1}^{-1} c_n a_n^{-1} s_1^{-1}) [s_1 a_n d_n b_n s_1^{-1}] (s_1 b_n^{-1}).$$ 
The loops between brackets are loops of $\RL_v(\G_1)$ and the loops between square brackets are the image by $f$ of loops in $\G'_2$, conjugated by the spoke of $l_1$, hence, equal to words in the loops $l_{1,1},\ldots,l_{1,\f_{2,1}}$. 
Hence, the loops of $\Gen_2$ generate $\RL_v(\G_2)$. They satisfy the equation
$$w(a_1,\ldots,a_{\rg}) c_1 \ldots c_{\p}= l_{1,1} \ldots l_{1,\f_1} l_2 \ldots l_{\f_1}.$$
In particular, the set $\Gen_2\setminus \{l_{1,1}\}$ for instance has cardinal $\rg+\p+\f(\G_2)-1$ and generates  $\RL_v(\G_2)$ which is free of rank $\rg+\p+\f(\G_2)-1$. Hence, it is a free basis of this group (see Proposition 2.7 in Chapter 1 of \cite{LyndonSchupp}). 

There remains to determine the distribution of $\{h(l) : l\in\Gen_2\}$ under $\U^{\G_2}_{M,\varnothing,C}$. It follows from the way in which the loops $l_{1,1},\ldots,l_{1,\f_{2,1}}$ were constructed that the random variables $h(l_{1,1}),\ldots,h(l_{1,\f_{2,1}-1})$ are independent, uniformly distributed, and independent of $\sigma(h(l) : l\in\Gen_1)$. Moreover, $h(l_1)$ is independent of $\sigma(h(l) : l \in\Gen_1\setminus \{l_1\})$. Hence, the three $\sigma$-fields
$$\sigma(h(l_{1,1}),\ldots,h(l_{1,\f_{2,1}-1})) ,\; \sigma(h(l_1)) ,\; \sigma(h(l) : l \in\Gen_1\setminus \{l_1\})$$
are independent. Since $h(l_{1,1}),\ldots,h(l_{1,\f_{2,1}-1})$ and $h(l_1)$ are uniformly distributed, and $l_{1,\f_{2,1}}= (l_{1,1} \ldots l_{1,\f_{2,1}-1})^{-1}l_1$, it follows that $h(l_{1,1}),\ldots,h(l_{1,\f_{2,1}})$ are uniformly distributed and independent of $ \sigma(h(l) : l \in\Gen_2\setminus \{l_{1,1},\ldots,l_{1,\f_{2,1}}\})$. \end{proof}

\chapter{Markovian holonomy fields}

In this chapter, which is the core of this work, we define Markovian holonomy fields and their discrete analogues. We prove in full generality that the partition functions of a discrete Markovian holonomy field do not depend on the graph in which they are computed. We then prove the first main result of this work, which asserts that any discrete Markovian holonomy field which satisfies some regularity conditions can be extended in a unique way to Markovian holonomy field. 

\section{Definition} \label{section def MHF}

\begin{definition} A {\em measured marked surface with $G$-constraints} is a quadruple $(M,\vol,\CS,C)$ where $(M,\CS)$ is a marked surface, $\vol$ is a smooth non-vanishing density on $M$ and $C$ is a set of $G$-constraints on $(M,\CS)$.

Two measured marked surfaces (resp. orientable measured marked surfaces) with $G$-constraints $(M,\vol,\CS,C)$
and $(M',\vol',\CS',C')$ are {\em isomorphic} if there exists a diffeomorphism (resp. an
orientation preserving diffeomorphism) $\psi:M\to M'$ such that $\psi_{*}\vol=\vol'$, $\psi$
sends each curve of $\CS$ to a curve of $\CS'$ and, for all $l\in \CS$, $C'(\psi(l))=C(l)$.  
\end{definition}

From now on, we will make the assumption that $G$ is a compact Lie group, not necessarily connected. If $\dim G\geq1$, we endow $G$ with a bi-invariant  Riemannian metric which we normalize in such a way that the Riemannian volume of $G$ is 1. In this way, the Riemannian density coincides with the normalized Haar measure on $G$. If $G$ is finite, we endow it with the distance $d_{G}(x,y)=\delta_{x,y}$ and with the uniform probability measure. 

The set $\Conj(G)$ of conjugacy classes of $G$ inherits the quotient topology from $G$ and the
corresponding Borel $\sigma$-field. The space of $G$-constraints on a marked surface $(M,\CS)$,
which we denote by $\Const_{G}(M,\CS)$, is a subset of $\Conj(G)^{\CS\cup \BS(M)}$ and thus carries the trace
topology and $\sigma$-field. 
\index{CAconj@$\Conj(G)$}
\index{CAconst@$\Const(G)$}

Let us introduce some more notation. Let $(M,\vol,\CS,C)$ be a measured marked surface with $G$-constraints. Let
$l$ be a curve which belongs to $\CS\cup \BS(M)$. Let $x$ be an element of $G$. We define a new set of $G$-constraints $C_{l\mapsto x}$ as follows: we set $C_{l\mapsto x}(l)=\O_{x}$, $C_{l\mapsto x}(l^{-1})=\O_{x^{-1}}$ and $C_{l\mapsto x}=C$ on $\CS\cup \BS(M) - \{l,l^{-1}\}$. The main definition of this work is the following.

\index{HAHF@$\HF_{M,\vol,\CS,C}$}
\index{Markovian holonomy field!definition}
\begin{definition} \label{def MHF} A $G$-valued two-dimensional Markovian holonomy field is the data,
for
each quadruplet $(M,\vol,\CS,C)$ consisting of a marked surface endowed with a density and a set of
$G$-constraints, of a finite measure $\HF_{M,\vol,\CS,C}$ on $(\M(\Path(M),G),\I)$ such
that the following properties are satisfied.\\

\Ax{1}. For all $(M,\vol,\CS,C)$, $\HF_{M,\vol,\CS,C}\left(\exists l\in \CS \cup \BS(M), h(l)\notin
C(l)\right)=0$.\\

\Ax{2}.  For all $(M,\vol,\CS)$ and all event $\Gamma$ in the invariant $\sigma$-field of
$\M(\Path(M),G)$, the function $C\mapsto \HF_{M,\vol,\CS,C}(\Gamma)$ is a measurable function on the set $\Const_G(M,\CS)$.\\

\Ax{3}. For all $(M,\vol,\CS,C)$ and all $l\in \CS$,
$$\HF_{M,\vol,\CS-\{l,l^{-1}\},C_{|\BS(M)\cup\CS-\{l,l^{-1}\}}}=\int_{G}
\HF_{M,\vol,\CS,C_{l\mapsto x}} \; dx.$$

\Ax{4}. Let $\psi:(M,\vol,\CS,C)\to (M',\vol',\CS',C')$ be a homeomorphism such that $\vol \circ \psi^{-1}=\vol'$, $\psi(\CS)=\CS'$ and $C=C'\circ \psi$. Let $l_1,\ldots,l_n$ be loops based at the same point on $M$. Assume that their images $l'_1,\ldots,l'_n$ by $\psi$ are also rectifiable loops. Then, for all continuous function $f:G^n\to G$ invariant under the diagonal action of $G$ by conjugation,
\begin{eqnarray*}
\int_{\M(\Path(M),G)} f(h(l_1),\ldots,h(l_n)) \; \HF_{M,\vol,\CS,C}(dh) =&&\\
&&\hskip -4cm  \int_{\M(\Path(M'),G)} f(h'(l'_1),\ldots,h'(l'_n)) \; \HF_{M',\vol',\CS',C'}(dh').
\end{eqnarray*}
In particular, if $\psi$ is a diffeomorphism, then the mapping from
$\M(\Path(M'),G)$ to $\M(\Path(M),G)$ induced by $\psi$ sends the measure $\HF_{M',\vol',\CS',C'}$
to the measure $\HF_{M,\vol,\CS,C}$.\\

\Ax{5}. For all $(M_{1},\vol_{1},\CS_{1},C_{1})$ and $(M_{2},\vol_{2},\CS_{2},C_{2})$, one has the
identity 
$$\HF_{M_{1}\cup M_{2},\vol_{1}\cup \vol_{2},\CS_{1}\cup \CS_{2},C_{1}\cup C_{2}}=
\HF_{M_{1},\vol_{1},\CS_{1},C_{1}}\otimes \HF_{M_{2},\vol_{2},\CS_{2},C_{2}}.$$

\Ax{6}. For all $(M,\vol,\CS,C)$, all $l\in \CS$ and all gluing $\psi:\Spl_l(M)\to M$ along $l$, one has
$$\HF_{{\Spl}_{l}(M),\Spl_l(\vol),\Spl_l(\CS),\Spl_l(C)}=\HF_{M,\vol,\CS,C}\circ \psi^{-1}.$$

\Ax{7}. For all $(M,\vol,\varnothing,C)$ and for all $l\in \BS(M)$, 
$$\int_{G} \HF_{M,\vol,\varnothing,C_{l\mapsto x}}(1) \; dx=1.$$

A $G$-valued two-dimensional {\em oriented} Markovian holonomy field is the data, for each
quadruplet $(M,\vol,\CS,C)$ consisting of an {\em oriented} marked surface endowed with a density and
a set of $G$-constraints, of a finite measure $\HF_{M,\vol,\CS,C}$ on the measurable space $(\M(\Path(M),G),\I)$ such
that the seven properties above are satisfied.
\end{definition}

It is important to notice that the measures $\HF$ are {\em not} probability measures in
general. They are finite measures, whose masses carry a lot of information about the holonomy
field. It is actually possible that they characterize it completely, but this is a question which has yet to be answered.

Let us discuss briefly the significance of these axioms. The axioms A$_1$, A$_2$ and A$_3$ express
the fact that the measure $\HF_{M,\vol,\CS,C}$, seen as a function of the $G$-constraints, provides a regular disintegration of $\HF_{M,\vol,\varnothing,C_{|\BS(M)}}$ with respect to the value of the holonomy field along the curves of $\CS$. The simple
expression of A$_3$ is permitted by the fact that we consider finite measures rather than probability
measures. The axiom A$_4$ expresses the invariance of the field under area-preserving diffeomorphisms.
The axioms A$_5$ and A$_6$ express the Markov property of the field. Finally, A$_7$ is a normalization axiom.
Without it, if $\HF$ was a given Markovian holonomy field, then for any real $\alpha$, the measures $e^{\alpha \vol(M)} \HF_{M,\vol,\CS,C}$ would constitute another Markovian holonomy field. 

Our purpose is not to study Markovian holonomy fields in full generality. In the rest of this paper, we are going to
make strong regularity assumptions and investigate the corresponding fields. 

Recall that $d_G$ denotes a bi-invariant distance on $G$. Let $c$ and $c'$ be two paths with the same endpoints. Then, although $h_{c}$ and $h_{c'}$ are not measurable with respect with the invariant $\sigma$-field $\I$, unless $c$ and $c'$ are loops, the function $h\mapsto d_{G}(h(c),h(c'))$ is $\I$-measurable, because it can also be written as $h\mapsto d_G(1,h(c)^{-1}h(c'))=d_G(1,h(c'c^{-1}))$ and $c'c^{-1}$ is a loop.

\index{Markovian holonomy field!stochastically continuous}
\index{Markovian holonomy field!Fellerian}
\index{Markovian holonomy field!regular}

\begin{definition} \label{def SCFMHF}Let $\HF$ be a $G$-valued two-dimensional Markovian holonomy
field. \\
1. We say that
$\HF$ is {\em stochastically continuous} if, for all $(M,\vol,\CS,C)$ and for all sequence
$(c_{n})_{n\geq 0}$ of elements of $\Path(M)$ which converges with fixed endpoints to $c\in
\Path(M)$,
$$\int_{\M(\Path(M),G)} d_{G}(h(c_{n}),h(c))\; \HF_{M,\vol,\CS,C}(dh)
\build{\lra}_{n\to\infty}^{}0.$$
2. We say that $\HF$ is {\em Fellerian} if, for all  $(M,\vol,\CS)$, the function 
$$(t,C)\mapsto \HF_{M,t\vol,\CS,C}(1)$$
defined on $\RK^*_{+}\times \Const_{G}(M,\CS)$ is continuous.\\
3. We say that $\HF$ is {\em regular} if it is both stochastically continuous and Fellerian.
\end{definition}

In the definition of stochastic continuity, we use $L^1$ convergence of $G$-valued random variables.
Since $G$ is compact, this is equivalent to convergence in probability and to convergence in $L^p$ for any
$p\in[1,+\infty)$. 

From now on, the expression {\em (regular) Markovian holonomy field} will stand for
{\em two-dimensional $G$-valued (regular) Markovian holonomy field}.

\section{Discrete Markovian holonomy fields}

It is not easy to construct a Markovian holonomy field. Indeed, one has to construct a stochastic process indexed by loops. To do this, one must naturally specify the finite-dimensional marginals of this process. Thus, to each finite collection of loops, one has to associate a probability measure on some power of $G$. But unlike points on a time interval, loops on a surface may form a very complicated picture. In fact, in most cases, it is impossible to determine a probability measure from a finite set of loops. A way around this problem is to start by describing a restriction of the process to a class of loops which are nice enough, like piecewise geodesic loops, and then to extend the process by continuity. In fact, the very first step is to build a process indexed by the set of loops in a graph for every graph on a surface. This is what we call a discrete holonomy field.

\index{Markovian holonomy field!discrete}
\begin{definition} \label{def discrete MHF} A $G$-valued two-dimensional discrete Markovian holonomy field is the data,
for each measured marked surface $(M,\vol,\CS,C)$ with $G$-constraints, and each graph $\G$ on $(M,\CS)$, of a finite measure $\DF^\G_{M,\vol,\CS,C}$ on $(\M(\Path(\G),G),\I)$ such that the following properties hold.

\AxD{1}. For all $(M,\vol,\CS,C)$ and all $\G$, $\DF^\G_{M,\vol,\CS,C}\left(\exists l\in \CS \cup \BS(M), h(l)\notin
C(l)\right)=0$.\\

\AxD{2}.  For all $(M,\vol,\CS)$, all $\G$  and all event $\Gamma$ in the invariant $\sigma$-field of
$\M(\Path(\G),G)$, the function $C\mapsto \DF^\G_{M,\vol,\CS,C}(\Gamma)$ is a measurable function on $\Const_G(M,\CS)$.\\

\AxD{3}. For all $(M,\vol,\CS,C)$, all $\G$ and all $l\in \CS$,
$$\DF^\G_{M,\vol,\CS-\{l,l^{-1}\},C_{|\BS(M)\cup\CS-\{l,l^{-1}\}}}=\int_{G}
\DF^\G_{M,\vol,\CS,C_{l\mapsto x}} \; dx.$$

\AxD{4}. Consider $(M,\vol,\CS,C)$ and $(M',\vol',\CS',C')$, endowed respectively with $\G$ and $\G'$. Let $\psi:M \to M'$ be a homeomorphism. Assume that $\vol \circ \psi^{-1}=\vol'$, $\psi(\CS)=\CS'$, $C'\circ \psi=C$ and $\psi(\G)=\G'$. Then the mapping from $\M(\Path(\G'),G)$ to $\M(\Path(\G),G)$ induced by $\psi$ satisfies
$$\DF^{\G'}_{M',\vol',\CS',C'}\circ \psi^{-1}=\DF^\G_{M,\vol,\CS,C}.$$

\AxD{5}. For all $(M_{1},\vol_{1},\CS_{1},C_{1})$ and $(M_{2},\vol_{2},\CS_{2},C_{2})$, endowed respectively with two graphs $\G_1$ and $\G_2$ one has the identity 
$$\DF^{\G_1\cup\G_2}_{M_{1}\cup M_{2},\vol_{1}\cup \vol_{2},\CS_{1}\cup \CS_{2},C_{1}\cup C_{2}}=
\DF^{\G_1}_{M_{1},\vol_{1},\CS_{1},C_{1}}\otimes \DF^{\G_2}_{M_{2},\vol_{2},\CS_{2},C_{2}}.$$

\AxD{6}. For all $(M,\vol,\CS,C)$, all $\G$, all $l\in \CS$ and all gluing $\psi:\Spl_l(M)\to M$ along $l$, one has
$$\DF^{\Spl_l(\G)}_{{\Spl}_{l}(M),\Spl_l(\vol),\Spl_l(\CS),\Spl_l(C)}=\DF^\G_{M,\vol,\CS,C}\circ \psi^{-1}.$$

\AxD{7}. For all $(M,\vol,\varnothing,C)$, all $\G$ and all $l\in \BS(M)$, 
$$\int_{G} \DF^\G_{M,\vol,\varnothing,C_{l\mapsto x}}(1) \; dx=1.$$

\AxD{I}. For all $(M,\vol,\CS,C)$, all $\G_1$ and $\G_2$ graphs on $(M,\CS)$ such that $\G_1\preccurlyeq \G_2$, 
the restriction map $r:\M(\Path(\G_2),G)\to \M(\Path(\G_1),G)$ satisfies
$$\DF^{\G_2}_{M,\vol,\CS,C} \circ r^{-1}=\DF^{\G_1}_{M,\vol,\CS,C}.$$

A $G$-valued two-dimensional discrete {\em oriented} Markovian holonomy field is the data, for each
{\em oriented} measured marked surface $(M,\vol,\CS,C)$ with $G$-constraints and all graph $\G$ on $(M,\CS)$, of a finite measure $\DF^\G_{M,\vol,\CS,C}$ on $(\M(\Path(\G),G),\I)$ such
that the properties above are satisfied.
\end{definition}

The axiom \AxD{I} is specific to discrete holonomy fields. It is an axiom of consistency and is usually called the property of invariance under subdivision. 

\begin{lemma} Any (oriented) Markovian holonomy field determines a discrete (oriented) Markovian holonomy field. 
\end{lemma}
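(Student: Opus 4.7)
The plan is to define $\DF^{\G}_{M,\vol,\CS,C}$ as the image of $\HF_{M,\vol,\CS,C}$ under the restriction map $r_{\G}:\M(\Path(M),G)\to \M(\Path(\G),G)$ induced by the inclusion $\Path(\G)\subset \Path(M)$. The first thing to check is that $r_{\G}$ is measurable as a map $(\M(\Path(M),G),\I)\to(\M(\Path(\G),G),\I)$; by Proposition \ref{gen invariant sigma}, the invariant $\sigma$-field on the target is generated by functions of the form $f(h(l_1),\ldots,h(l_n))$ with $l_1,\ldots,l_n$ loops of $\Loop(\G)$ based at a common vertex and $f$ invariant under diagonal conjugation, and these pull back to the same kind of functions on the source, which are $\I$-measurable by the very definition of the invariant $\sigma$-field. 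Hence $\DF^{\G}_{M,\vol,\CS,C}$ is a well-defined finite measure on $(\M(\Path(\G),G),\I)$.

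The next step is to verify the axioms \AxD{1}--\AxD{7} one by one, each of which will follow mechanically from the corresponding axiom for $\HF$ together with the fact that, since $\G$ is a graph on $(M,\CS)$, every curve of $\CS\cup\BS(M)$ is represented by a loop of $\G$. For \AxD{1}, the event considered is the pull-back by $r_\G$ of the corresponding event on the ambient canonical space, which has $\HF$-measure zero by A$_1$. For \AxD{2}, any $\Gamma\in \I$ on $\M(\Path(\G),G)$ pulls back to an invariant event on $\M(\Path(M),G)$, so the measurability of $C\mapsto \DF^{\G}_{M,\vol,\CS,C}(\Gamma)$ reduces to A$_2$. Axiom \AxD{3} follows from A$_3$ since pushforward commutes with the integration of a measurable family of measures. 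Axioms \AxD{5} and \AxD{7} are immediate: $r_{\G_1\cup \G_2}$ factors through $r_{\G_1}\otimes r_{\G_2}$, and the total mass is preserved under pushforward. Axiom \AxD{6} follows from A$_6$ together with the obvious compatibility $r_{\Spl_l(\G)}\circ \tilde\psi=\psi\circ r_\G$, where $\tilde\psi:\M(\Path(M),G)\to\M(\Path(\Spl_l(M)),G)$ and $\psi:\M(\Path(\G),G)\to\M(\Path(\Spl_l(\G)),G)$ are the maps induced by the gluing.

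For \AxD{4}, given a homeomorphism $\psi:(M,\vol,\CS,C)\to(M',\vol',\CS',C')$ with $\psi(\G)=\G'$, one invokes A$_4$ applied to finite families of loops of $\Loop(\G)$, all based at one vertex, and notices that their images by $\psi$ are concatenations of edges of $\G'$ and are therefore rectifiable. Since the invariant $\sigma$-field on $\M(\Path(\G'),G)$ is generated by conjugation-invariant functions of such families (Proposition \ref{gen invariant sigma}), the equality $\DF^{\G'}_{M',\vol',\CS',C'}\circ\psi^{-1}=\DF^{\G}_{M,\vol,\CS,C}$ follows. Finally, axiom \AxD{I} is a tautology: if $\G_1\preccurlyeq \G_2$, then $r_{\G_1}=r_{\G_1,\G_2}\circ r_{\G_2}$, where $r_{\G_1,\G_2}:\M(\Path(\G_2),G)\to\M(\Path(\G_1),G)$ is the restriction, hence
\[
\DF^{\G_2}_{M,\vol,\CS,C}\circ r_{\G_1,\G_2}^{-1}=\HF_{M,\vol,\CS,C}\circ r_{\G_2}^{-1}\circ r_{\G_1,\G_2}^{-1}=\HF_{M,\vol,\CS,C}\circ r_{\G_1}^{-1}=\DF^{\G_1}_{M,\vol,\CS,C}.
\]
The only step that requires any thought is the measurability of $r_{\G}$ with respect to the invariant $\sigma$-fields and the use of the restricted form of A$_4$ (only for loops); everything else is formal. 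In the oriented setting, one restricts all statements to oriented surfaces and orientation-preserving homeomorphisms, and the same argument applies verbatim.
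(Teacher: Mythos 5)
Your proof is correct and follows the same route as the paper: define the discrete field by pushing $\HF_{M,\vol,\CS,C}$ forward under the restriction maps $r_\G$ and observe that each axiom \AxD{1}--\AxD{7} is inherited from the corresponding axiom of $\HF$, with \AxD{I} reducing to the consistency $r_{\G_1}=r_{\G_1,\G_2}\circ r_{\G_2}$ of the restrictions. The paper's own proof is just a terse version of this; your extra care about the measurability of $r_\G$ for the invariant $\sigma$-fields and about the rectifiability hypothesis in A$_4$ is welcome but not a departure.
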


\begin{proof}For all surface $M$ endowed with a graph $\G$, the set $\Path(\G)$ is a subset of $\Path(M)$. Hence, a Markovian holonomy field determines by restriction a discrete Markovian holonomy field, except perhaps for the axiom \AxD{I}. In fact, this axiom is satisfied by the restriction of a Markovian holonomy field because the finite dimensional marginals of a stochastic process constitute a consistent system of probability measures. \end{proof}

Our main goal in this chapter is to prove a result in the other direction and to extend, when it is possible, a discrete holonomy field to a continuous one. For the moment, let us discuss briefly the discrete holonomy fields for themselves. It turns out that we have already constructed a fundamental example of discrete Markovian holonomy field.

\begin{proposition} \label{U DMHF} The family of measures $\U^\G_{M,\CS,C}$ is a discrete Markovian holonomy field. We call it the {\em uniform $G$-valued discrete holonomy field}.
\end{proposition}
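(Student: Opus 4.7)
The plan is to check the eight axioms \AxD{1}--\AxD{7} and \AxD{I} one by one, and the main observation is that most of the substantive work has already been done in the preceding sections, so the proof is largely a matter of assembling existing results. Axiom \AxD{1} is the first assertion of Proposition \ref{support U}. Axiom \AxD{I} (invariance under subdivision) is precisely Proposition \ref{inv subd unif}. Axiom \AxD{6} is the content of Proposition \ref{prop markov unif}, where both binary and unary splittings have been treated. Axiom \AxD{3} is Lemma \ref{pre axiome 3} in the case $r=q-1$, applied to one mark at a time.

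Axioms \AxD{4}, \AxD{5} and \AxD{7} require almost nothing. For \AxD{4}, the measure $\U^\G_{M,\CS,C}$ defined in Lemma \ref{def ugc} depends only on the combinatorial data of $\G$ (namely an orientation $\E^+$ and the decomposition of each loop representing a cycle of $\CS\cup\BS(M)$ into a product of edges), together with the constraints $C$ and the Haar measure of $G$; a homeomorphism $\psi:M\to M'$ with $\psi(\G)=\G'$, $\psi(\CS)=\CS'$ and $C'\circ\psi=C$ preserves all this data, so it maps one formula of Lemma \ref{def ugc} onto the other. For \AxD{5}, the sets of edges, marking cycles and boundary components of $\G_1\cup \G_2$ decompose as disjoint unions, and since $\V_1\cap \V_2=\varnothing$, the defining formula of Lemma \ref{def ugc} immediately factorizes as a tensor product. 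Axiom \AxD{7} is trivial since $\U^\G_{M,\vol,\varnothing,C_{l\mapsto x}}$ is a probability measure for every $x\in G$, so its total mass is identically $1$ and $\int_G 1\,dx=1$.

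The only axiom that demands something genuinely new is \AxD{2}: we must show that for every event $\Gamma\in\I$, the function $C\mapsto \U^\G_{M,\CS,C}(\Gamma)$ is Borel measurable on $\Const_G(M,\CS)$. By a monotone class argument and the fact that $\I$ is countably generated (being the invariant $\sigma$-field of the distribution of finitely many edge variables, cf.\ Proposition \ref{gen invariant sigma}), it suffices to check that for every continuous function $f:G^{\#\E^+}\to \RK$, the integral $\int f\, d\U^\G_{M,\CS,C}$ depends continuously on $C$. Inspection of the definition of $\U^\G_{M,\CS,C}$ in Lemma \ref{def ugc} reduces this to showing that for each integer $n\geq 1$, the map $\O\mapsto \delta_{\O(n)}$ from $\Conj(G)$ into the space of probability measures on $G^n$ (with the weak topology) is continuous. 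This in turn reduces, via \eqref{put constraint}, to the continuity of $\O\mapsto \delta_\O$; and writing $\delta_\O$ as the pushforward of the Haar measure by $g\mapsto gxg^{-1}$ for any choice of $x\in\O$ shows that $\int_G u\, d\delta_\O=\int_G u(gxg^{-1})\, dg$ depends continuously on $x$, hence on $\O\in\Conj(G)$ with the quotient topology.

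The only potential difficulty in the whole argument lies in \AxD{2}, and even there the hard work has essentially been done by the identities \eqref{put constraint}--\eqref{recover unif} that describe how the invariant measures $\delta_{\O(n)}$ are built from $\delta_\O$ and Haar measure. Once these are in hand, the required continuity, hence measurability, is routine.
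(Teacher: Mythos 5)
Your proof is correct and follows essentially the same route as the paper's: each axiom is verified by citing the corresponding result already established (Proposition \ref{support U} for \AxD{1}, Proposition \ref{prop markov unif} for \AxD{6}, Proposition \ref{inv subd unif} for \AxD{I}, the disintegration identity (\ref{recover unif}) for \AxD{3}, and direct inspection of the defining formula for \AxD{4}, \AxD{5}, \AxD{7}). For \AxD{2} you prove the slightly stronger statement that $C\mapsto\U^\G_{M,\CS,C}(\Gamma)$ is continuous, via the continuity of $\O\mapsto\delta_{\O}$, where the paper only invokes explicit measurability of the expression through Proposition \ref{gen invariant sigma} and the definition of $\delta_{\O(n)}$; this is a harmless refinement of the same reduction.
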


\begin{proof}The measure $\U^\G_{M,\CS,C}$ is a probability measure on the cylinder $\sigma$-field of $\M(\Path(\G),G)$. By restriction, it defines a measure on the invariant $\sigma$-field. The axiom \AxD{1} is satisfied by Proposition \ref{support U}. By Proposition \ref{gen invariant sigma} and the definition of the measures $\delta_{\O(n)}$, it is possible to write $\U_{M,\CS,C}(\Gamma)$ as an expression which is explicitly measurable with respect to the $G$-constraints. Hence, the axiom \AxD{2} is satisfied. Axiom \AxD{3} is satisfied by (\ref{recover unif}). That axioms \AxD{4}, \AxD{5} and \AxD{7} hold is straightforward. Axiom \AxD{6} is satisfied thanks to Proposition \ref{prop markov unif}. The invariance property \AxD{I} holds by Proposition  \ref{inv subd unif}.
\end{proof}

This discrete Markovian holonomy field is very special in that is consists only in probability measures. 

\index{Markovian holonomy field!partition functions}
\begin{definition} Let $\DF$ be a collection of finite measures as in Definition \ref{def discrete MHF}, which does not necessarily satisfy any of the axioms listed in this definition. To each $(M,\CS,\vol,C)$ and each graph $\G$, we associate the number
$$Z^\G_{M,\vol,\CS,C}=\DF^\G_{M,\vol,\CS,C}(1),$$
which is called the {\em partition function}.
\end{definition}

We have said earlier that these numbers carry a lot of information about the field. A crucial property is that they do not depend on the graph $\G$.

\begin{proposition}\label{456 Z} Let $\DF$ be a collection of finite measures as in Definition \ref{def discrete MHF}, which satisfies the axioms \AxD{4}, \AxD{5}, \AxD{6} and \AxD{I}. Consider $(M,\vol,\CS,C)$ and two graphs $\G_1$ and $\G_2$ on $(M,\CS)$. Then
\begin{equation}\label{egalite des Z}
Z^{\G_1}_{M,\vol,\CS,C}=Z^{\G_2}_{M,\vol,\CS,C}.
\end{equation}
\end{proposition}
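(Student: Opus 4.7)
The strategy is to connect $\G_1$ to $\G_2$ through a finite chain of graphs on $(M,\CS)$ along which the partition function is preserved at each step. The key tools are Proposition \ref{graph to graph} as the combinatorial bridge together with the axioms \AxD{I}, \AxD{4}, and \AxD{5}.

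First, by \AxD{5} the partition function factors over connected components of a disjoint union, so I may assume that $M$ is connected.

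Second, I apply Proposition \ref{graph to graph} to obtain a homeomorphism $f\colon M\to M$ preserving each connected component of $\partial M$ (and orientation if $M$ is oriented) together with a finite sequence of graphs $\G_1=\G_{1,0},\G_{1,1},\ldots,\G_{1,r}=f(\G_2)$ in which each step is an edge adjunction or an edge erasure. Since $\G_1$ and $\G_2$ are both graphs on $(M,\CS)$, the edge operations can be arranged so that no edge contained in a curve of $\CS$ is ever erased---as Proposition \ref{erase edge} explicitly permits---so that every intermediate graph remains a graph on $(M,\CS)$. An edge adjunction yields $\G_{1,i}\preccurlyeq\G_{1,i+1}$, because the added edge joins existing vertices and every edge of $\G_{1,i}$ is still a path of $\G_{1,i+1}$; an edge erasure yields the reverse inequality. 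In either case, axiom \AxD{I} applied to the corresponding restriction map and passed to total masses gives $Z^{\G_{1,i}}_{M,\vol,\CS,C}=Z^{\G_{1,i+1}}_{M,\vol,\CS,C}$, so telescoping yields
$$
Z^{\G_1}_{M,\vol,\CS,C}=Z^{f(\G_2)}_{M,\vol,\CS,C}.
$$

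Third, axiom \AxD{4} must be used to identify $Z^{f(\G_2)}_{M,\vol,\CS,C}$ with $Z^{\G_2}_{M,\vol,\CS,C}$. This is where the main obstacle lies: the homeomorphism $f$ supplied by Proposition \ref{graph to graph} preserves the boundary components but need not preserve the extra data $(\vol,\CS,C)$, so \AxD{4} does not apply directly with the same measured marked surface on both sides. My plan for overcoming this is to strengthen Proposition \ref{graph to graph} so that $f$ can be chosen in the stabilizer $\mathrm{Diff}(M,\vol,\CS,C)$. This amounts to arguing that any two measured marked surfaces with $G$-constraints of the same topological type, same total area, and same constraint values are isomorphic via a structure-preserving diffeomorphism: Moser's lemma matches the densities, while Theorem \ref{diffeo bord} together with an isotopy extension argument matches the marking curves with their constraint values. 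Composing the $f$ originally produced with such a corrective diffeomorphism and readjusting the sequence from the second step by a further application of Proposition \ref{graph to graph}, one obtains a structure-preserving $\tilde f$ together with an edge-operation sequence $\G_1\to\tilde f(\G_2)$ inside $(M,\CS)$. Then \AxD{4} applied to $\tilde f$ yields $Z^{\tilde f(\G_2)}_{M,\vol,\CS,C}=Z^{\G_2}_{M,\vol,\CS,C}$, closing the chain $Z^{\G_1}=Z^{\tilde f(\G_2)}=Z^{\G_2}$.
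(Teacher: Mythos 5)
Your overall strategy---reduce to a connected surface, join $\G_1$ to $\G_2$ by a chain of edge adjunctions and erasures via Proposition \ref{graph to graph}, apply \AxD{I} along the chain and \AxD{4} at the end---is the same as the paper's. The genuine gap is in your treatment of the marks $\CS$. Proposition \ref{graph to graph} is a statement about graphs on $M$, not on $(M,\CS)$: its proof begins by erasing edges until a single face remains and then cuts and pastes towards a normal form, and neither the intermediate graphs nor the final homeomorphism $f$ are required to respect $\CS$ (the intermediate skeletons need not contain the marking curves, and $f$ is only guaranteed to preserve the boundary components). Your claim that ``the edge operations can be arranged so that no edge contained in a curve of $\CS$ is ever erased'' is not something Proposition \ref{graph to graph} provides, and it is not enough anyway, since you would also need $f(\CS)=\CS$. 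Without all of this the chain breaks: the measure $\DF^{\G}_{M,\vol,\CS,C}$ is only defined, and \AxD{I} only applies, when $\G$ is a graph on $(M,\CS)$. The paper avoids the problem with a move you omit, and which explains why \AxD{6} appears among the hypotheses: it first splits $M$ along every curve of $\CS$, which by \AxD{6} leaves the partition function unchanged and turns the marks into boundary components. After this reduction one has $\CS=\varnothing$, and Proposition \ref{graph to graph}---which does preserve boundary components, hence the constraints $C$---applies without further precautions. You should insert this reduction before invoking Proposition \ref{graph to graph}.

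Your third step does isolate a real subtlety on which the paper is silent: the homeomorphism $f$ need not satisfy $\vol\circ f^{-1}=\vol$, so \AxD{4} identifies $Z^{\G_2}_{M,\vol,\varnothing,C}$ with $Z^{f(\G_2)}_{M,\vol\circ f^{-1},\varnothing,C}$ rather than with $Z^{f(\G_2)}_{M,\vol,\varnothing,C}$. Your idea of correcting $f$ into an area-preserving map is the right one, but as written it is only a sketch and does not quite work: $f$ is merely a homeomorphism, so $\vol\circ f^{-1}$ need not be a smooth density and Moser's argument does not apply verbatim; one must either arrange for the final homeomorphism to be smooth before correcting it, or compose with an area-preserving self-homeomorphism fixing $\partial M$ componentwise. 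This secondary point deserves a sentence of justification, but the essential missing ingredient in your argument is the preliminary splitting along $\CS$.
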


If there exists a graph $\G_3$ such that $\G_1\preccurlyeq \G_3$ and $\G_2\preccurlyeq \G_3$, then the equality (\ref{egalite des Z}) is a straightforward consequence of the axiom \AxD{I}. Unfortunately, by Lemma \ref{not directed}, the set of graphs on $M$ is not directed and such a graph $\G_3$ does not always exist.\\

\begin{proof}By Proposition \ref{complete splitting}, it is possible to split $M$ along each of the curves of $\CS$. Thus, there exists a measured surface $(M',\vol',\varnothing,C')$ with $G$-constraints and no marks and a gluing $f:M'\to M$ whose joint is $\CS$. We can lift $\G_1$ to a graph $\G_1'$ on $M'$ and the axiom \AxD{6} enforces the equality $Z^{\G'_1}_{M',\vol',\varnothing,C'}=Z^{\G_1}_{M,\vol,\CS,C}$. Since we can do the same for the graph $\G_2$, it suffices to prove the result when $\CS=\varnothing$. Then, the axiom \AxD{5} implies that the partition function associated to $\G_1$ (resp. $\G_2$) is the product of the partition functions associated to the connected components of $M$ endowed with the corresponding restrictions of $\G_1$ (resp. $\G_2$). 

Hence, it suffices to prove the result when $M$ is connected and $\CS=\varnothing$. In this case, the axiom \AxD{I} implies that we can remove or add edges to $\G_1$, in the sense of Propositions \ref{erase edge} and \ref{add edge}, without altering the partition function.

By Proposition \ref{graph to graph}, by such transformations we can go from  $\G_1$ to a graph which is sent to $\G_2$ by a homeomorphism of $M$. By \AxD{4}, this implies the result. \end{proof}

Let $\DF$ be a discrete holonomy field. In order to produce a Markovian holonomy field from $\DF$, the first natural step is to apply Proposition \ref{proj lim gen} to put together the measures which $\DF$ associates to a directed subset of the set of graphs on a surface. For this, we need to consider Riemannian metrics. 

Let $(M,\vol,\gamma,\CS)$ be a Riemannian marked surface (see Definition \ref{riem mark surf}). The axiom \AxD{I} and Proposition \ref{take proj lim} imply that the collection of the measures $\DF^{\G}_{M,\vol,\CS,C}$, where $\G$ spans the set $\GG_\gamma(M,\CS)$ of graphs with piecewise geodesic edges, determines a finite measure on $\M(\GPath_\gamma(M),G)$, where $\GPath_\gamma(M)$ is the set of piecewise geodesic paths on $M$.

\begin{definition} Let $\DF$ be a discrete holonomy field. Let $(M,\vol,\gamma,\CS,C)$ be a Riemannian measured marked surface with $G$-constraints. The finite measure on $\M(\GPath_\gamma(M),G)$ obtained by taking the projective limit of the measures $\DF^\G_{M,\vol,\CS,C}$ where $\G$ spans $\GG_\gamma(M,\CS)$ is denoted by $\DF^{\gamma}_{M,\vol,\CS,C}$.
\end{definition}

At this point, there are two things to do. Firstly, one needs to extend the measure $\DF^\gamma_{M,\vol,\CS,C}$ to $\M(\Path(M),G)$ for all $(M,\vol,\CS,C)$ and secondly, one needs to prove that the result of this procedure would have been the same with another Riemannian metric. The first step requires some regularity from the holonomy field.

\index{Markovian holonomy field!discrete!stochastically $\frac{1}{2}$-H\"{o}lder continuous}
\begin{definition} Let $\DF$ be a discrete holonomy field. We say that $\DF$ is {\em stochastically $\frac{1}{2}$-H\"{o}lder continuous} if the following property holds.

Let $(M,\vol,\gamma,\CS,C)$ be a Riemannian marked surface with $G$-constraints. 
Then there exists a constant $K>0$ such that for all graph $\G$ on $M$ and all simple loop $l\in \Path(\G)$ with
$\ell(l)\leq K^{-1}$ bounding a disk $D$,
$$\int_{\M(\Path(\G),G)} d_{G}(1,h(l)) \; \DF^\G_{M,\vol,\CS,C}(dh) \leq K \sqrt{\vol(D)}.$$
\end{definition}

Our unique example so far of a discrete holonomy field, the uniform holonomy field, does unfortunately not satisfy this property. Indeed, it assigns uniform random variables to arbitrary small simple loops. We will construct other more regular discrete holonomy fields later. 

The second step of the program outlined above requires another regularity condition, which is more global but less quantitative. 

\index{Markovian holonomy field!discrete!continuously area-dependent}
\index{Markovian holonomy field!discrete!Fellerian}
\index{Markovian holonomy field!discrete!regular}
\begin{definition}\label{def regular DHF} Let $\DF$ be a discrete holonomy field. 

1. We say that $\DF$ is {\em continuously area-dependent} if the following property holds.

Let $(M,\vol,\CS,C)$ be a marked surface with $G$-constraints. Let $(\G_n)_{n\geq 0}$ be a sequence of graphs on $(M,\CS)$. Let $(\psi_n)_{n\geq 0}$ be a sequence of homeomorphisms of $M$. Assume that $\psi_n(\G)=\G_n$ for all $n\geq 0$ and, for all face $F$ of $\G$, $\vol(\psi_n(F))$ tends to $\vol(F)$ as $n$ tends to infinity.  

Then, denoting by $\psi_n$ the induced map $\M(\Path(\G_n),G)\to \M(\Path(\G),G)$, we have the weak convergence
$$\DF^{\G_n}_{M,\vol,\CS,C} \circ \psi_n^{-1} \build{\Longrightarrow}_{n\to\infty}^{} \DF^{\G}_{M,\vol,\CS,C}.$$

2. We say that that $\DF$ is {\em Fellerian} if for all measured marked surface $(M,\vol,\CS)$, the mapping which to a set of $G$-constraints $C$ associates the number $Z_{M,\vol,\CS,C}$ is continuous on $\Const_G(M,\CS)$.

3. We say that $\DF$ is {\em regular} if it is stochastically $\frac{1}{2}$-H\"{o}lder continuous, continuously area-dependent and Fellerian.
\end{definition}

It is tempting to conjecture that a stochastically $\frac{1}{2}$-H\"{o}lder continuous discrete Markovian holonomy field is continuously area-dependent. At least, the two properties are not equivalent, as our unique example so far shows.
For the moment, let us state our main result concerning the construction of Markovian holonomy fields.

\begin{theorem}\label{main existence} Every regular discrete Markovian holonomy field is the restriction of a unique regular Markovian holonomy field.
\end{theorem}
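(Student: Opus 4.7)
The plan is to prove uniqueness first and then construct the extension in three stages: set-up on piecewise geodesic paths via the projective limit already at our disposal; extension to all rectifiable paths using the stochastic $\frac{1}{2}$-H\"older bound; and finally independence of the auxiliary Riemannian metric using continuous area-dependence.

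For uniqueness, suppose $\HF$ and $\HF'$ are two regular Markovian holonomy fields restricting to $\DF$. Fix $(M,\vol,\CS,C)$, endow $M$ with a metric $\gamma$ adapted to $(\vol,\CS,\BS(M))$ as in Proposition~\ref{nice metric}, and observe that the families of finite-dimensional marginals of $\HF$ and $\HF'$ on $\M(\Path(\G),G)$ for $\G\in\GG_\gamma(M,\CS)$ are determined by $\DF$ via the axiom \AxD{I} and Proposition~\ref{take proj lim}. Hence $\HF$ and $\HF'$ agree on all events of the invariant $\sigma$-field generated by piecewise geodesic loops. By stochastic continuity and Proposition~\ref{A dense}, for any rectifiable loop $l$ the variable $h(l)$ is the $L^1$ limit of $h(\D_n(l))$, so the two measures agree on the $\sigma$-field generated by the evaluations at all rectifiable loops based at a common point. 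By Lemma~\ref{chemins lacets} applied componentwise, this $\sigma$-field is all of $\I$, proving $\HF=\HF'$.

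For existence, fix $(M,\vol,\CS,C)$ and a Riemannian metric $\gamma$ as above. Proposition~\ref{geod graph} ensures that $(\GG_\gamma(M,\CS),\preccurlyeq)$ is directed, so by \AxD{I} and Proposition~\ref{take proj lim} there is a unique finite measure $\DF^\gamma_{M,\vol,\CS,C}$ on $\M(\GPath_\gamma(M),G)$ (for the invariant $\sigma$-field) whose marginals are the $\DF^\G_{M,\vol,\CS,C}$. To extend, fix $c\in\Path(M)$ and consider the dyadic piecewise geodesic approximations $\D_n(c)$. For $n$ large each pair $\D_n(c)$, $\D_{n+1}(c)$ has identical endpoints, agrees off a bounded number of geodesic triangles of area $O(2^{-2n})$, and their difference is a product of at most $2^n$ small simple loops bounding disks. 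Applying the stochastic $\frac{1}{2}$-H\"older bound to each such loop and combining the bounds carefully (using multiplicativity and the bi-invariance of $d_G$), one obtains a summable control of $\int d_G(h(\D_n(c)),h(\D_{n+1}(c)))\,d\DF^\gamma$. Hence $(h(\D_n(c)))$ is Cauchy in $L^1$ and its limit $H_c$ is independent of the approximating sequence by a similar argument. The family $(H_c)_{c\in\Path(M)}$ is multiplicative almost surely by passage to the limit in the multiplicativity of $h$ on $\GPath_\gamma(M)$. Applying Proposition~\ref{take proj lim} to its finite-dimensional marginals yields a finite measure $\HF^\gamma_{M,\vol,\CS,C}$ on $(\M(\Path(M),G),\I)$.

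To show that $\HF^\gamma$ does not depend on $\gamma$, pick a second admissible metric $\gamma'$. Given any finite family of paths $c_1,\ldots,c_n$ and any tolerance $\epsilon>0$, Proposition~\ref{approx graph epsilon} provides, after enlarging to a common underlying graph, a piecewise geodesic graph $\G_\epsilon\in\GG_{\gamma'}(M,\CS)$ together with a bijection $S_\epsilon$ of edges approximating the $c_i$ and their faces to within $\epsilon$ in both $d_\ell$ and area. By continuous area-dependence (applied after realising $S_\epsilon$ by a homeomorphism provided by Theorem~\ref{diffeo bord} and a standard interpolation near each edge), the joint law of $(h(S_\epsilon(c_i)))$ under $\DF^{\G_\epsilon}_{M,\vol,\CS,C}$ converges weakly to that of $(H_{c_i})$ under $\HF^{\gamma'}$, while by the H\"older continuity of $\HF^\gamma$ it also converges to the joint law under $\HF^\gamma$. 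Hence $\HF^\gamma=\HF^{\gamma'}$, and we denote the common measure by $\HF_{M,\vol,\CS,C}$.

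Finally, the axioms of Definition~\ref{def MHF} are verified: A1, A2, A3, A5 and A7 pass to the limit from their discrete counterparts \AxD{1}, \AxD{2}, \AxD{3}, \AxD{5}, \AxD{7}; axiom A4 follows from \AxD{4} together with metric-independence (approximate the diffeomorphism's action on rectifiable loops by its action on piecewise geodesic ones with respect to a pulled-back metric); axiom A6 follows from \AxD{6} after splitting along the geodesic mark $l$ using Proposition~\ref{nice metric} to ensure $\Spl_l$ interacts well with $\gamma$. Stochastic continuity follows from the H\"older estimate combined with Proposition~\ref{main topology}, and the Fellerian property is inherited from the Fellerian property of $\DF$. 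The main technical obstacle is the $L^1$-Cauchy estimate for $(h(\D_n(c)))$: a naive sum over $2^n$ triangles each contributing $K\cdot 2^{-n}$ gives only boundedness, so one must exploit the conditional independence implicit in axiom \AxD{6} together with Proposition~\ref{tame generators} to extract the cancellation that yields summability. Once this control is in place, the rest of the argument is routine manipulation of limits and projective systems.
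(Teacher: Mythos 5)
Your overall architecture coincides with the paper's: projective limit over $\GG_\gamma(M,\CS)$, extension to $\Path(M)$ via the dyadic approximations $\D_n(c)$ and the stochastic $\frac{1}{2}$-H\"older bound, metric-independence via Proposition~\ref{approx graph epsilon} and continuous area-dependence, then axiom-checking by density of piecewise geodesic loops. The uniqueness argument is also essentially the paper's.

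However, there is a genuine gap at the step you yourself flag as the main obstacle, and the mechanism you propose to close it is not the right one. The Cauchy estimate for $\bigl(h(\D_n(c))\bigr)_n$ does \emph{not} come from any conditional independence supplied by axiom \AxD{6} or from Proposition~\ref{tame generators}; the discrete axioms give no independence across faces (a discrete holonomy field may be arbitrarily correlated), and in the paper the whole extension step (Theorem~\ref{main extension}) is a purely deterministic statement about a single multiplicative function $H:\GPath_\gamma(M)\to\Gamma$ into a complete metric group, applied pathwise to the $L^1$ space of $G$-valued variables. The actual source of the gain is geometric: a refined isoperimetric inequality (Lemma~\ref{isop}) for a Jordan curve of length $2R+r$ one side of which is a \emph{geodesic} segment of length $R$, giving an enclosed area $O\bigl(r^{1/2}(R+r)^{3/2}\bigr)$ rather than $O\bigl((R+r)^2\bigr)$. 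Combined with the lasso decomposition relative to a geodesic (Proposition~\ref{lasso fin}), the $\frac12$-H\"older hypothesis, and H\"older's inequality over the $2^n$ pieces, this yields
$$d\bigl(H(\D_n(c)),H(\D_m(c))\bigr)\le K\,\ell(c)^{3/4}\,\bigl|\ell(c)-\ell(\D_n(c))\bigr|^{1/4}$$
uniformly in $m\ge n$ (Propositions~\ref{estim 2} and~\ref{cauchy}); the right-hand side tends to $0$ because $\ell(\D_n(c))\to\ell(c)$. Your ``naive sum of $2^n$ terms of size $2^{-n}$'' diagnosis is correct, but without the geodesic-sided isoperimetric inequality the excess over the naive bound is not extracted, and no appeal to the probabilistic axioms will substitute for it. A secondary, smaller issue: in the axiom-verification stage, A$_2$ is not a direct limit of \AxD{2} but requires a monotone class argument together with the continuity in $C$ of the approximating functionals computed in piecewise geodesic graphs, as in the paper's proof.
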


The proof of this theorem occupies the rest of this chapter.

\section{An abstract extension theorem}

The core of the proof of Theorem \ref{main existence} is the next result, which we formulate in a way which is mostly independent of the context of Markovian holonomy fields. 

\begin{theorem}\label{main extension} Let $(M,\gamma)$ be a compact Riemannian surface. Let $\vol$ denote the Riemannian volume of $\gamma$. Let $(\Gamma,d)$ be a complete metric group on which translations and inversion are isometries. Let $H \in \M(\GPath_\gamma(M), \Gamma)$ be a multiplicative function. Assume that there exists $K> 0$ such that for all simple loop $l\in \GPath_\gamma(M)$ bounding a disk $D$ and such that $\ell(l)\leq K^{-1}$, the inequality $d(1,H(l))\leq K \sqrt{\vol(D)}$ holds.

Then $H$ admits a unique extension to an element of $\M(\Path(M),G)$, also denoted by $H$, such that if a sequence $(c_n)_{n\geq 0}$ of paths converges with fixed endpoints to a path $c$, then $H(c_n)\build{\lra}_{n\to\infty}^{} H(c)$.
\end{theorem}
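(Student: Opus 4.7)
The plan is to extend $H$ from $\GPath_\gamma(M)$ to all of $\Path(M)$ by continuity, exploiting that $\GPath_\gamma(M)$ is dense in $\Path(M)$ for the $d_\ell$-topology (Proposition \ref{A dense}). For each $c\in\Path(M)$, I would consider its dyadic piecewise-geodesic approximations $\D_n(c)$ from Definition \ref{Dn}, which converge to $c$ with fixed endpoints. The goal is to show that $(H(\D_n(c)))_{n\gg 0}$ is a Cauchy sequence in the complete metric group $(\Gamma,d)$, and to define $H(c)$ as its limit. Uniqueness will be automatic: any continuous extension must agree with $H$ on the dense set $\GPath_\gamma(M)$.

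Parametrizing $c$ at constant speed, one decomposes, using multiplicativity and a lasso rearrangement in the spirit of Proposition \ref{lasso dec},
\[
\D_{n+1}(c)\,\D_n(c)^{-1} \;\simeq\; \delta_0\,\delta_1\cdots\delta_{2^n-1},
\]
where each $\delta_k$ is a lasso based at the common endpoint $\underline{c}$ whose meander is a small geodesic triangle $T_k^n$ with vertices $c(k/2^n)$, $c((2k+1)/2^{n+1})$, $c((k+1)/2^n)$. For $n$ large, each $T_k^n$ is a simple loop bounding a geodesic disk $D_k^n$ of area $A_k^n=O(4^{-n}\ell(c)^2)$ and length $O(2^{-n}\ell(c))$, so the hypothesis and bi-invariance of $d$ yield
\[
d(H(\D_{n+1}(c)),H(\D_n(c))) \;\leq\; K\sum_{k=0}^{2^n-1}\sqrt{A_k^n}.
\]
The main obstacle is precisely the summability of this estimate: the naive Cauchy--Schwarz bound $\sum_k\sqrt{A_k^n}\leq \sqrt{2^n\sum_k A_k^n}=O(\ell(c))$ is $n$-independent (the individual areas are borderline worst-case $4^{-n}$, making $2^n$ contributions of size $2^{-n}$ each). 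To obtain a summable bound in $n$, one must exploit additional structure: for almost every dyadic parameter the triangle $T_k^n$ becomes far thinner than worst case (of area $o(4^{-n})$) because $c$ is differentiable a.e., and a dominated-convergence argument combined with the uniform control $A_k^n\leq C4^{-n}\ell(c)^2$ should yield $\sum_k\sqrt{A_k^n}\to 0$ with a rate allowing summation. Equivalently, one may replace the triangle-by-triangle estimate by a bound on the combined non-simple loop $\delta_0\cdots\delta_{2^n-1}$ after cutting it at its self-intersections into simple components whose number and aggregate area together give a summable decay, using that the total ``swept'' area between $\D_n(c)$ and $\D_{n+1}(c)$ is $O(2^{-n}\ell(c)^2)$.

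Once the limit $H(c):=\lim_n H(\D_n(c))$ is defined, the remaining properties follow. Multiplicativity: if $c=c_1c_2$ with $\overline{c_1}=\underline{c_2}$, then the concatenation $\D_n(c_1)\D_n(c_2)$ is piecewise geodesic with fixed endpoints converging to $c$, and the same Cauchy argument, applied uniformly to $c_1$, $c_2$ and $c_1c_2$, gives
\[
H(c)=\lim_n H(\D_n(c_1)\D_n(c_2))=\lim_n H(\D_n(c_2))H(\D_n(c_1)) = H(c_2)H(c_1),
\]
and similarly $H(c^{-1})=H(c)^{-1}$. Continuity with fixed endpoints: given $c_m\to c$ in $d_\ell$, I would compare $H(c_m)$ to $H(c)$ via $H(\D_n(c_m))$ and $H(\D_n(c))$, using the Cauchy estimate uniformly in $m$ (valid since $\sup_m\ell(c_m)<\infty$) to handle the dyadic tails, and the continuity of $H$ on each fixed dyadic level (the map $\GPath_\gamma(M)\to\Gamma$ given by $H$ is continuous when the graph combinatorics is fixed and vertices move slightly, by the same small-triangle estimate). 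This yields continuity of the extension, completes the proof, and retroactively gives uniqueness of the extension satisfying the stated continuity property.
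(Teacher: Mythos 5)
There is a genuine gap, and it sits exactly where you place it: the summability of the level-to-level estimate. Your telescoping scheme needs $\sum_n d(H(\D_{n+1}(c)),H(\D_n(c)))<\infty$, and the only uniform bound available for a general rectifiable $c$ is $\sum_k\sqrt{A_k^n}=O(\ell(c))$, which does not decay. Your proposed repair — a.e.\ differentiability of $c$ plus dominated convergence — can at best give $\sum_k\sqrt{A_k^n}\to 0$ with \emph{no rate}, and a sequence of level-to-level increments that merely tends to $0$ does not make the telescoping series converge, hence does not give the Cauchy property. (For $C^2$ curves one does get $\sqrt{A_k^n}\sim 2^{-3n/2}$ and summability, but for a Lipschitz path there is no uniform second-order control, and the alternative sketch via cutting the combined loop at self-intersections is not carried out.) So the crux of the theorem is identified but not proved.

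The paper resolves this by avoiding the telescoping altogether. The key input is a refined isoperimetric inequality (Lemma \ref{isop}): a Jordan curve of length $2R+r$ that \emph{contains a geodesic segment of length $R$} bounds area at most $K\,r^{1/2}(R+r)^{3/2}$ — small when the excess length $r$ is small, even if $R$ is not. Combined with the lasso decomposition of an injective piecewise geodesic path against the geodesic segment joining its endpoints (Proposition \ref{lasso fin}) and H\"older's inequality, this yields Proposition \ref{estim 2}:
\begin{equation*}
d(H(c),H(s))\leq K\,\ell(c)^{3/4}\,|\ell(c)-\ell(s)|^{1/4}
\end{equation*}
for any piecewise geodesic $c$ with the same endpoints as a minimizing geodesic $s$. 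One then compares $\D_n(c)$ and $\D_m(c)$ \emph{directly} for arbitrary $m\geq n$: writing $\D_m(c)$ as a concatenation of $2^n$ piecewise geodesic blocks over the level-$n$ geodesic segments and applying the estimate above together with H\"older gives
\begin{equation*}
d(H(\D_n(c)),H(\D_m(c)))\leq K\,\ell(c)^{3/4}\,|\ell(c)-\ell(\D_n(c))|^{1/4},
\end{equation*}
a bound depending on $n$ only through the length defect $\ell(c)-\ell(\D_n(c))\to 0$. This gives the Cauchy property with no summability or rate required. If you want to salvage your argument, you should replace the consecutive-level triangle estimate by this kind of direct two-level comparison, and the essential new ingredient you need is the $r^{1/2}(R+r)^{3/2}$ isoperimetric bound for curves pinned along a geodesic. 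Your treatment of multiplicativity, continuity with fixed endpoints, and uniqueness is in line with the paper's and would go through once the Cauchy step is fixed.
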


Let us state right now the application of this theorem to the extension of holonomy fields.

\begin{corollary}\label{cor : def DF(g)} Let $\DF$ be a stochastically $\frac{1}{2}$-H\"{o}lder continuous discrete Markovian holonomy field. Let $(M,\vol,\gamma,\CS,C)$ be a Riemannian marked surface 
with $G$-constraints. There exists a unique probability measure $\DF^{(\gamma)}_{M,\vol,\CS,C}$ on $(\M(\Path(M),G),\C)$ whose image by the restriction mapping $\M(\Path(M),G)\to\M(\GPath_\gamma(M),G)$ is the measure $\DF^{\gamma}_{M,\vol,\CS,C}$ and such that for all sequence $(c_n)_{n\geq 0}$ of paths which converges with fixed endpoints to a path $c$, one has
$$\int_{\M(\Path(M),G)} d_G(h(c_n),h(c)) \; \DF^{(\gamma)}_{M,\vol,\CS,C}(dh)\build{\lra}_{n\to\infty}^{} 0.$$
\end{corollary}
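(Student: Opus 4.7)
The plan is to apply Theorem \ref{main extension} pathwise under the measure $\DF^{\gamma}_{M,\vol,\CS,C}$ and then push the measure forward onto $\M(\Path(M),G)$.

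First, I would establish that for $\DF^{\gamma}_{M,\vol,\CS,C}$-almost every $h$, the multiplicative function $h:\GPath_\gamma(M)\to G$ satisfies the Hölder-type hypothesis of Theorem \ref{main extension} with some random constant $K(h)$. The given property of stochastic $\frac{1}{2}$-Hölder continuity is only an $L^1$-moment bound, so upgrading to a pointwise bound uniform in the loop requires an extra ingredient. Concretely I would pick a scale-by-scale countable dense family $(l_j)$ of small simple loops bounding disks $D_j$ --- for example, the facial loops of a refining sequence of graphs in $\GG_\gamma(M,\CS)$, using that this poset is directed by Proposition \ref{geod graph} --- and combine Markov's inequality with Borel-Cantelli to obtain control of the form $d_G(1,h(l_j))\leq K(h)\vol(D_j)^{\alpha}$ for some $\alpha$ slightly less than $\frac12$. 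Higher moments needed for summability are obtained from the $L^1$ bound together with the fact that $d_G$ is bounded on the compact group $G$. One then extends the estimate from the countable family $(l_j)$ to all small simple loops in $\GPath_\gamma(M)$ by approximation.

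Second, on the full-measure set produced above, Theorem \ref{main extension} applied to the complete metric group $(G,d_G)$ yields a unique extension $\tilde h\in\M(\Path(M),G)$ of $h$, continuous in the sense that $c_n\to c$ with fixed endpoints implies $\tilde h(c_n)\to\tilde h(c)$ in $G$. Since the extension is constructed as an explicit pointwise limit along dyadic approximants $\D_n(c)$ (Proposition \ref{A dense}), the map $h\mapsto\tilde h$ is measurable from $(\M(\GPath_\gamma(M),G),\C)$ into $(\M(\Path(M),G),\C)$.

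Third, define $\DF^{(\gamma)}_{M,\vol,\CS,C}$ as the pushforward of $\DF^{\gamma}_{M,\vol,\CS,C}$ under this map. Its image under the restriction $\M(\Path(M),G)\to\M(\GPath_\gamma(M),G)$ is $\DF^{\gamma}_{M,\vol,\CS,C}$ by construction. The $L^1$-continuity assertion is then immediate from the pathwise continuity of $\tilde h$ combined with dominated convergence, since $d_G$ is uniformly bounded on the compact group $G$. For uniqueness, any measure $\mu$ satisfying the conclusion has the same image as $\DF^{(\gamma)}_{M,\vol,\CS,C}$ under the restriction to $\GPath_\gamma(M)$; and for any further path $c\in\Path(M)$, the random variable $h\mapsto h(c)$ is determined $\mu$-almost surely as an $L^1$-limit of $h\mapsto h(\D_n(c))$, which is already fixed by the restriction. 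Combining these observations identifies all finite-dimensional marginals and hence the measure itself.

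The main obstacle is the first step: the $L^1$ estimate on $d_G(1,h(l))$ is a statement about the law of $h$ for each fixed $l$, whereas the hypothesis of Theorem \ref{main extension} is a statement about individual sample paths holding uniformly across an uncountable family of loops. Bridging this gap --- extracting a uniform modulus of continuity from scale-by-scale moment estimates and then extending it by density to all small simple loops --- is the technical heart of the argument.
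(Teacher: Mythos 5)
Your overall architecture (apply Theorem \ref{main extension}, push forward, conclude by density) is the right one, but the way you set up the application of Theorem \ref{main extension} contains a genuine gap, and it is precisely at the step you yourself flag as the technical heart. You propose to apply the extension theorem \emph{pathwise}, to individual sample functions $h:\GPath_\gamma(M)\to G$, which forces you to upgrade the $L^1$ bound $\int d_G(1,h(l))\,\DF^\G_{M,\vol,\CS,C}(dh)\le K\sqrt{\vol(D)}$ to an almost-sure bound $d_G(1,h(l))\le K(h)\vol(D)^{\alpha}$ holding simultaneously for \emph{all} small simple piecewise geodesic loops. Your Borel--Cantelli scheme does not deliver this. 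First, the summability fails: Markov's inequality gives $\P\bigl(d_G(1,h(l_j))>K'\vol(D_j)^{\alpha}\bigr)\le K\vol(D_j)^{\frac12-\alpha}/K'$, and for any refining sequence of graphs the number of faces of area $\approx 4^{-n}$ grows like $4^{n}$, so the series $\sum_j \vol(D_j)^{\frac12-\alpha}$ diverges for every $\alpha>-\frac12$. Second, the suggested remedy via higher moments is vacuous: boundedness of $d_G$ on the compact group only yields $\E[d_G(1,H_l)^p]\le C^{p-1}K\sqrt{\vol(D)}$, still of order $\vol(D)^{1/2}$ rather than $\vol(D)^{p/2}$, so no Kolmogorov-type gain is available from the stated hypothesis. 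Third, even granting control on a countable family, the extension ``by approximation to all small simple loops'' presupposes continuity of $l\mapsto h(l)$ on $\GPath_\gamma(M)$ (which is uncountable, e.g.\ geodesic triangles with arbitrary vertices), i.e.\ the very property being established.

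The paper avoids all of this by a different choice of the target group in Theorem \ref{main extension}: it takes $\Gamma=L^1\bigl(\M(\GPath_\gamma(M),G),\C,\DF^{\gamma}_{M,\vol,\CS,C};G\bigr)$, the complete metric group of $G$-valued random variables with $d(H_1,H_2)=\E[d_G(H_1,H_2)]$ (on which translations and inversion are isometries), and applies the extension theorem \emph{once}, to the single multiplicative function $c\mapsto H_c$. With this choice the stochastic $\frac12$-H\"older continuity of $\DF$ is \emph{literally} the hypothesis $d(1,H(l))\le K\sqrt{\vol(D)}$ of Theorem \ref{main extension}; no pathwise upgrade is needed, and the conclusion is exactly the $L^1$ statement claimed in the corollary (note that the corollary asserts only $L^1$ continuity, not almost-sure pathwise continuity, so your stronger pathwise route is not required). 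The remaining steps of your proposal --- pushing forward via Proposition \ref{take proj lim} and deducing uniqueness from the density of $\GPath_\gamma(M)$ in $\Path(M)$ for convergence with fixed endpoints (Proposition \ref{A dense}) --- are correct and agree with the paper.
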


\begin{proof}The canonical process  $(H_{c})_{c\in \GPath_\gamma(M)}$ on $(\M(\GPath_\gamma(G),G),\C,\DF^{\gamma}_{M,\vol,\CS,C})$ can be seen as a mapping from $\GPath_\gamma (M)$ to the set $\Gamma=L^1(\M(\GPath_\gamma(M),G),{\mathcal C},\DF^{\gamma}_{M,\vol,\CS,C};G)$. In general, the set $L^1(\Omega,\A,\P;G)$ of $G$-valued random variables on any probability space $(\Omega,\A,\P)$ is a group for the multiplication of random variables. The metric $d(H_{1},H_{2})=\E[d_{G}(H_{1},H_{2})]$ makes it a complete metric space. Both structures are compatible in that the product and inverse mappings are continuous. Moreover, the translations and the inversion map are isometries. 

The assumption of stochastic H\"{o}lder continuity ensures that the regularity condition is satisfied by the family $(H_{c})_{c\in \GPath_\gamma(M)}$. Hence, we can apply Theorem \ref{main extension}. It produces a family of random variables $(H_c)_{c\in\Path(M)}$ which is multiplicative and continuous with respect to the convergence of paths with fixed endpoints. Proposition \ref{take proj lim} applied to this family asserts that there exists a probability measure on $(\M(\Path(M),G),\C)$ under which the canonical process has the distribution of $(H_c)_{c\in\Path(M)}$. 

The uniqueness of the measure $\DF^{(\gamma)}_{M,\vol,\CS,C}$ follows from Proposition \ref{A dense} which asserts  that $\GPath_\gamma(M)$ is dense in $\Path(M)$ for the convergence with fixed endpoints.  \end{proof}

The rest of this section is devoted to the proof of Theorem \ref{main extension}. A basic tool for this proof is the lasso decomposition of a piecewise geodesic path (see Proposition \ref{lasso dec}).

\begin{proposition}\label{estim 1} Under the assumptions of Theorem \ref{main extension}, there
exists a constant $K> 0$ such that for every loop $l\in \GPath_\gamma(M)$ with $\ell(l)\leq K^{-1}$, $d(1,H(l))\leq K \ell(l)$.
\end{proposition}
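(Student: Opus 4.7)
The plan is to leverage the lasso decomposition from Proposition \ref{lasso dec} to reduce the estimate on an arbitrary piecewise geodesic loop to the estimate on the simple loops provided by the hypothesis, and then to use the local Euclidean geometry of $(M,\gamma)$ to convert the area bound into a length bound.

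First, given $l \in \Loop(\G) \cap \GPath_\gamma(M)$ (seen as a path with equal endpoints), apply Proposition \ref{lasso dec} to write $l \simeq l_1 \ldots l_p d$, where each $l_i = s_i m_i s_i^{-1}$ is a lasso with simple meander $m_i$, $d$ is a simple loop, and $\ell(m_1)+\ldots+\ell(m_p)+\ell(d) \leq \ell(l)$. Because $H$ is multiplicative on $\GPath_\gamma(M)$ and equivalent paths yield the same value (since the relation $\simeq$ is generated by insertions of sequences $ee^{-1}$, which trivialize under any multiplicative function), one has
\[
H(l) = H(d)\,H(l_p)\cdots H(l_1), \qquad H(l_i) = H(s_i)^{-1} H(m_i) H(s_i).
\]
Since translations and inversion are isometries of $(\Gamma,d)$, $d(1,H(l_i)) = d(1,H(m_i))$, and by the triangle inequality
\[
d(1,H(l)) \leq d(1,H(d)) + \sum_{i=1}^p d(1,H(m_i)).
\]
This reduces everything to bounding $d(1,H(m))$ for an arbitrary simple piecewise geodesic loop $m$ of length at most $\ell(l)$.

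Next I would show that any sufficiently short simple loop bounds a disk whose area is quadratically controlled by its length. Let $R>0$ be the injectivity radius of $(M,\gamma)$. If $\ell(m) < R/2$, then the range of $m$ lies in a single geodesic ball $B(m(0),\ell(m))$, and this ball is diffeomorphic, via the exponential map, to a Euclidean disk on which the metric is quasi-isometric to the flat one with constants depending only on $\gamma$. In this chart, $m$ is a Jordan curve and thus bounds a topological disk $D \subset B(m(0),\ell(m))$, whence there exists a constant $C_\gamma>0$, depending only on $(M,\gamma)$, such that
\[
\vol(D) \leq C_\gamma \,\ell(m)^2.
\]
(One can either quote the classical isoperimetric inequality on the disk, or simply bound $\vol(D)$ by the area of the geodesic ball of radius $\ell(m)$.) We must also check that such a simple loop belongs to $\GPath_\gamma(M)$, which it does by construction since $m$ is a sub-piece of $l$.

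Now let $K_0$ be the constant from the hypothesis of Theorem \ref{main extension}. Choose $K' > 0$ large enough so that $K'^{-1} \leq \min(R/2, K_0^{-1})$. For any loop $l \in \GPath_\gamma(M)$ with $\ell(l) \leq K'^{-1}$, each meander $m_i$ and the loop $d$ have length at most $\ell(l) \leq K'^{-1} \leq K_0^{-1}$, so the hypothesis applies and yields
\[
d(1,H(m_i)) \leq K_0 \sqrt{\vol(D_i)} \leq K_0 \sqrt{C_\gamma}\,\ell(m_i),
\]
and similarly for $d$. Summing and using $\sum_i \ell(m_i) + \ell(d) \leq \ell(l)$ gives $d(1,H(l)) \leq K_0 \sqrt{C_\gamma}\,\ell(l)$. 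Choosing $K = \max(K', K_0\sqrt{C_\gamma})$ concludes the proof.

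The main obstacle is the geometric step: controlling the area of the disk bounded by a short simple loop by the square of its length. Once this local quadratic isoperimetric estimate is in hand (which is a purely Riemannian statement, independent of $H$), the lasso decomposition handles the combinatorial passage from simple to arbitrary loops cleanly, because the key quantity $d(1,H(\cdot))$ is conjugation-invariant.
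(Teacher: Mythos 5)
Your proof is correct and follows essentially the same route as the paper: lasso decomposition (Proposition \ref{lasso dec}) plus multiplicativity and the isometry of translations/inversion to reduce to simple loops, then a local isoperimetric inequality $\sqrt{\vol(D)}\leq C\,\ell(m)$ combined with the hypothesis of Theorem \ref{main extension}. If anything, your reduction of $d(1,H(l_i))$ to $d(1,H(m_i))$ via conjugation invariance, so that the sum of meander lengths (rather than full lasso lengths) is what gets compared to $\ell(l)$, is slightly more careful than the paper's wording of the same step.
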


\begin{proof}Assume first that $l$ is a simple loop. If $\ell(l)$ is small enough, then $l$ bounds a disk,
which we denote by $D$. Moreover, a local isoperimetric inequality holds on $M$: if $\ell(l)$ is
small enough, say $\ell(l)<L$, then $\sqrt{\vol(D)}\leq K_{1} \ell(l)$ for some constant $K>0$.
Then, if $K$ denotes the constant given by the assumptions of Theorem \ref{main extension},
$d(1,H(l))\leq K \sqrt{\vol(D)}\leq K K_{1}\ell(l)$. The result is proved in this case. 

Let us now treat the general case. Let us apply Proposition \ref{lasso dec} to find the lasso
decomposition $l=l_{1}\ldots l_{p} d$ of $l$. By the multiplicativity of $H$, which is part of the
assumptions of Theorem \ref{main extension}, $H(l)=H(d)H(l_{p})\ldots H(l_{1})$. Since the distance
$d$ on $\Gamma$ is invariant by left translations and inversion, we have
$$\forall x,y\in \Gamma, \; d(1,xy)=d(x^{-1},y) \leq d(1,x^{-1})+ d(1,y)=d(1,x)+d(1,y).$$
Hence, $d(1,H(l))\leq d(1,H(d))+\sum_{i=1}^{p} d(1,H(l_{i}))\leq K
\ell(d)+K\sum_{i=1}^{p}\ell(l_{i})$. By Proposition \ref{lasso dec}, $\ell(d)+
\sum_{i=1}^{p}\ell(l_{i})\leq \ell(l)$ and the result follows. \end{proof}

This result tells us that if $l$ is a simple piecewise geodesic loop close to the constant loop, then $H(l)$ is close to the image by $H$ of the constant loop. Our next generalizes this statement to the case of a piecewise geodesic path which is close to a geodesic segment.

\begin{proposition} \label{estim 2} Under the assumptions of Theorem \ref{main extension}, there
exists a constant $K> 0$ such that the following property holds.
Let $s\in \GPath_\gamma(M)$ be a segment of minimizing geodesic. Let $c$ be a piecewise geodesic path with
the same endpoints as $s$. Assume that $\ell(c)\leq K^{-1}$. Then
$$d(H(c),H(s))\leq K \ell(c)^{\frac{3}{4}} |\ell(c)-\ell(s)|^{\frac{1}{4}}.$$ 
\end{proposition}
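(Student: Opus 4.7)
Let $l = cs^{-1}$ and set $\epsilon = \ell(c) - \ell(s) \ge 0$. Since the metric on $\Gamma$ is bi-invariant, $d(H(c), H(s)) = d(1, H(l))$, and the goal is to bound this by $K' \ell(c)^{3/4} \epsilon^{1/4}$. The key geometric input is a tube estimate: if $c'$ is any path from $a$ to $b$ with $\ell(c')$ small compared to the injectivity radius, and $s'$ is the unique minimizing geodesic from $a$ to $b$, then every point of $c'$ lies within distance $C_1 \sqrt{\ell(c')(\ell(c') - \ell(s'))}$ of $s'$. In the Euclidean model this follows from the fact that the constraint $d(a,p) + d(p,b) \le \ell(c')$ confines $p$ to an ellipse with foci $\{a,b\}$ and minor semi-axis $\frac{1}{2}\sqrt{\ell(c')^2 - \ell(s')^2}$; the Riemannian statement then follows by a Rauch-type comparison.

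A first reduction disposes of the self-intersections of $c$. Proposition~\ref{lasso dec} applied to $c$ gives $c \simeq l_1^{(c)} \cdots l_p^{(c)} d_c$, where $d_c$ is an injective path from $a$ to $b$ and each $l_i^{(c)}$ is a lasso with simple meander $m_i^{(c)}$. Then $H(cs^{-1})$ differs from $H(d_c s^{-1})$ by a product of conjugates of the $H(m_i^{(c)})^{\pm 1}$, so Proposition~\ref{estim 1} and $\sum_i \ell(m_i^{(c)}) \le \ell(c) - \ell(d_c) \le \ell(c) - \ell(s) = \epsilon$ give
\[
d(1, H(cs^{-1})) \le d(1, H(d_c s^{-1})) + K\epsilon.
\]
Since $\epsilon \le \ell(c)$ implies $K\epsilon \le K\ell(c)^{3/4}\epsilon^{1/4}$, this term is absorbed into the desired bound, and I may therefore assume that $c$ is simple.

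With $c$ simple, the self-intersections of $l = cs^{-1}$ occur only at the transverse crossings of $c$ with $s$ in their interiors, and the lasso decomposition $l \simeq l_1 \cdots l_q\, d$ has the crucial structural property that each simple sub-loop (the meanders $m_i$ and the loop-erasure $d$) factors as $\alpha\sigma^{-1}$, where $\alpha$ is a subarc of $c$ and $\sigma$ a subarc of $s$ sharing the same endpoints. Writing $\alpha_i, \sigma_i$ for the subarcs attached to $m_i$ (and $\alpha_d, \sigma_d$ for those attached to $d$), the $\alpha_i$ form a partition of $c$ and the $\sigma_i$ a partition of $s$ (at the crossing points), so $\sum_i \ell(\alpha_i) \le \ell(c)$ and $\sum_i(\ell(\alpha_i) - \ell(\sigma_i)) \le \epsilon$, each summand being non-negative since $\sigma_i$ is a minimizing geodesic between its endpoints. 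Applying the tube estimate to each pair $(\alpha_i, \sigma_i)$, the simple loop $m_i$ is contained in a tube of width $h_i := C_1\sqrt{\ell(\alpha_i)(\ell(\alpha_i) - \ell(\sigma_i))}$ around $\sigma_i$, so the disk $D_i$ it bounds satisfies $\vol(D_i) \le C_2 \ell(m_i) h_i \le C_3 \ell(\alpha_i)^{3/2}(\ell(\alpha_i) - \ell(\sigma_i))^{1/2}$. The hypothesis of Theorem~\ref{main extension} then gives
\[
d(1, H(l)) \le K\sum_i \sqrt{\vol(D_i)} \le K'' \sum_i \ell(\alpha_i)^{3/4} (\ell(\alpha_i) - \ell(\sigma_i))^{1/4},
\]
and H\"older's inequality with exponents $4/3$ and $4$ closes the bound, yielding $\sum_i \ell(\alpha_i)^{3/4}(\ell(\alpha_i)-\ell(\sigma_i))^{1/4} \le \ell(c)^{3/4} \epsilon^{1/4}$, as required.

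The main technical obstacle is justifying the structural claim that, with $c$ simple, each simple sub-loop in the lasso decomposition of $cs^{-1}$ factors cleanly as $\alpha\sigma^{-1}$ with $\alpha_i$ and $\sigma_i$ partitioning $c$ and $s$ respectively. This requires a careful analysis of the loop-erasure algorithm applied to the concatenation of two simple paths sharing only their endpoints, and may demand a mild perturbation of $c$ so that all its crossings with $s$ are transverse. A secondary technical point is the ``thin tube'' area inequality $\vol(D) \le C_2 \ell(m) h$ for a simple loop in a tube of width $h$ around a geodesic, which follows from a projection of the enclosed disk onto the geodesic provided $h$ is small compared to the injectivity radius.
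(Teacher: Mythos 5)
Your proof is correct and follows the same skeleton as the paper's: first absorb the self-intersections of $c$ by loop-erasure and Proposition \ref{estim 1} (the error $K\epsilon$ is dominated by $K\ell(c)^{3/4}\epsilon^{1/4}$ exactly as you say), then decompose $cs^{-1}$ into simple loops $\alpha_i\sigma_i^{-1}$ with the $\alpha_i$ partitioning $c$ and the $\sigma_i$ partitioning $s$, bound each enclosed area by $C\,\ell(\alpha_i)^{3/2}(\ell(\alpha_i)-\ell(\sigma_i))^{1/2}$, and close with H\"older. Two remarks on where you diverge. First, the ``structural claim'' you single out as the main technical obstacle is precisely the paper's Proposition \ref{lasso fin}, which is proved without any transversality or perturbation of $c$: one dissects $c$ and $s$ at the set $c\cap s$ (a union of isolated points and closed subarcs of $s$), obtains a permutation $j$ matching the pieces, and extracts the matching arcs from the \emph{records} of the sequence $j(1),\dots,j(n)$; so that gap is genuinely fillable and needs no genericity assumption. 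Second, your area estimate is proved differently: the paper's Lemma \ref{isop} is an isoperimetric statement obtained by identifying the extremal configuration (geodesic segment plus circular arc) and computing its area, whereas you confine the whole loop to the ellipse $\{p : d(a,p)+d(p,b)\le \ell(\alpha_i)\}$, whose minor semi-axis is of order $\sqrt{\ell(\alpha_i)(\ell(\alpha_i)-\ell(\sigma_i))}$, and bound the area of the enclosed disk by length times tube width. Both yield the same exponents $\tfrac32$ and $\tfrac12$; your confinement argument avoids the extremal analysis but, like the paper's, still needs a Euclidean-to-Riemannian comparison (normal coordinates at an endpoint, with distortion controlled by compactness of $M$), and one should note, as you implicitly do, that any point outside the tube lies in the unbounded component of the complement of the loop, so the bounded disk is indeed contained in the tube. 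No genuine gap.
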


The assumptions imply that $s$ is shorter than $c$. If we take $\ell(s)=0$, then $c$
is a loop and we recover the estimate of Proposition \ref{estim 1}.

In order to prove this proposition, we use a decomposition result similar to the lasso
decomposition and the following isoperimetric inequality. Needless to say, the constant $\frac{\pi}{3\sqrt{2}}$ which appears in this Euclidean case is not optimal.

\begin{lemma} \label{isop} Let $R,r\geq 0$ be real numbers. Let $J$ be a
rectifiable Jordan curve of length $2R+r$ in the Euclidean plane such that a piece of this curve is a segment
of length $R$. Then the area $A$ of the bounded component of $\RK^2\setminus J$ satisfies the
inequality 
$$A \leq \frac{\pi}{3\sqrt{2}}r^{\frac{1}{2}}(R+r)^{\frac{3}{2}}.$$

Let $M$ be a compact Riemannian surface. There exists a constant $K>0$ such that the following
property holds. Let ${ J}$ be a rectifiable Jordan curve of length $2R+r<K^{-1}$ such that
a piece of this curve is a segment of minimizing geodesic of length $R$. Then the area $A$ of the
smallest disk bounded by ${J}$ satisfies the inequality 
$$A \leq K r^{\frac{1}{2}}(R+r)^{\frac{3}{2}}.$$
\end{lemma}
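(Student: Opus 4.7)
The Euclidean case will be handled by an ellipse argument. Let $p_1, p_2$ be the endpoints of the straight segment of length $R$ in $J$, and let $\alpha$ denote the remaining arc of $J$, of length $R+r$. For any point $q$ on $\alpha$, splitting $\alpha$ at $q$ into two subarcs of lengths $s_1, s_2$ with $s_1 + s_2 = R+r$, the triangle inequality gives $|p_1 q| + |q p_2| \leq s_1 + s_2 = R+r$. Hence $\alpha$, and in fact the whole of $J$, lies in the closed ellipse $E$ with foci $p_1, p_2$ and major axis $R+r$, whose semi-axes are $a = (R+r)/2$ and $b = \frac{1}{2}\sqrt{r(2R+r)}$. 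Since $J$ additionally lies in a closed half-plane $H$ determined by the line through $p_1, p_2$ (the one containing $\alpha$), it is contained in the half-ellipse $E^+ = E \cap H$. The set $E^+$ is convex and compact, so its complement in $\RK^2$ is connected and unbounded, and hence contained in the unbounded component of $\RK^2 \setminus J$. Consequently the bounded component of $\RK^2 \setminus J$ is contained in $E^+$, giving $A \leq |E^+| = \tfrac{\pi}{2} ab = \tfrac{\pi}{8}(R+r)\sqrt{r(2R+r)}$. Using $\sqrt{2R+r} \leq \sqrt{2(R+r)}$ yields $A \leq \tfrac{\pi}{4\sqrt{2}}\, r^{1/2}(R+r)^{3/2}$, which is bounded above by $\tfrac{\pi}{3\sqrt{2}}\, r^{1/2}(R+r)^{3/2}$.

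For the Riemannian case, the plan is to mimic the above argument intrinsically. Let $s$ be the minimizing geodesic segment of length $R$ contained in $J$, with endpoints $p_1, p_2$, and let $\alpha$ be the complementary arc. For $2R+r$ smaller than a constant $K^{-1}$ depending on an upper bound $\kappa$ on the sectional curvature of $M$ and a lower bound on its injectivity radius, the same triangle inequality shows that $J$ lies inside the Riemannian ellipse $E = \{q \in M : d_M(p_1,q) + d_M(q,p_2) \leq R+r\}$. At such small scales, Riemannian normal coordinates at the midpoint of $s$ make an ambient neighbourhood of $J$ bilipschitz to a Euclidean ball with constant $1 + O(\kappa(R+r)^2)$, so $E$ is diffeomorphic to a disk with connected unbounded complement, and $\alpha$ lies on a definite side of $s$. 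Repeating the topological step of the Euclidean argument in this chart shows that the smallest disk $D$ enclosed by $J$ is contained in the one-sided Riemannian half-ellipse, which in turn sits inside the one-sided tubular neighbourhood of $s$ of Fermi half-width $w = \tfrac{1}{2}\sqrt{r(2R+r)}\,(1+O(\kappa(R+r)^2))$. Using Fermi coordinates along $s$ and the Jacobi equation, the area of this tubular neighbourhood can be computed as $\int_0^R \!\!\int_0^w \sqrt{\det g}(t,u)\,du\,dt = Rw\,(1 + O(\kappa w^2))$. Combining these estimates and using $R \leq R+r$ together with $\sqrt{2R+r} \leq \sqrt{2(R+r)}$ yields $A \leq K r^{1/2}(R+r)^{3/2}$ for a constant $K$ depending only on $M$.

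The main obstacle lies in making rigorous the topological step in the Riemannian setting, namely that $D$ is contained in the one-sided Riemannian half-ellipse, and then carrying through the comparison estimates between Riemannian and Euclidean lengths, widths and areas in normal and Fermi coordinates. All the curvature-induced error terms take the multiplicative form $1 + O(\kappa(R+r)^2)$, so that for $2R+r < K^{-1}$ with $K$ large enough they can simply be absorbed into the overall constant $K$ in the final bound.
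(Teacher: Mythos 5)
Your Euclidean argument takes a genuinely different route from the paper's: the paper reduces to the extremal configuration (straight segment plus circular arc) and computes, whereas you trap the curve inside the ellipse with foci $p_1,p_2$ and focal sum $R+r$. That containment is correct and attractive, and the full-ellipse bound $A\leq\pi ab=\frac{\pi}{4}(R+r)\sqrt{r(2R+r)}\leq \frac{\pi}{2\sqrt 2}\,r^{1/2}(R+r)^{3/2}$ already gives the right functional form. But the step you need in order to reach the stated constant is false: the arc $\alpha$ is only forbidden from meeting the \emph{open segment} $(p_1,p_2)$, not the whole line through $p_1$ and $p_2$, so it may cross that line beyond the endpoints and $J$ need not lie in a closed half-plane. (Take $p_1=(0,0)$, $p_2=(1,0)$ and let $\alpha$ leave $p_1$ upwards, swing around to the left of $p_1$ into the lower half-plane, and return to $p_2$ from below; the enclosed region then contains points on both sides of the line.) Consequently the bounded component need not sit in the half-ellipse $E^+$, and your constant $\frac{\pi}{4\sqrt2}$ is not justified; what survives is $\frac{\pi}{2\sqrt2}$, which is \emph{larger} than the $\frac{\pi}{3\sqrt2}$ of the statement (indeed $\frac{\pi}{4}(R+r)\sqrt{r(2R+r)}$ always exceeds $\frac{\pi}{3\sqrt2}r^{1/2}(R+r)^{3/2}$). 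Since the paper itself remarks that its constant is not optimal and only the form $r^{1/2}(R+r)^{3/2}$ is used later, this is a defect of constants rather than of substance, but as written you have not proved the stated inequality.

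The Riemannian half is where the more serious gap lies, and you flag it yourself. The intrinsic containment $J\subset\{q: d_M(p_1,q)+d_M(q,p_2)\leq R+r\}$ is fine (same triangle inequality applied to Riemannian lengths), but the reduction of the area of this Riemannian ellipse to a Fermi-tube computation again invokes the one-sidedness that fails in general, ignores the two caps of the ellipse protruding past the endpoints of $s$ (the ellipse extends a distance $r/2$ beyond each focus along the major axis, so it is not contained in the tube over $t\in[0,R]$), and, most importantly, the topological step identifying the smallest disk bounded by $J$ with a subset of this region is left as an acknowledged obstacle rather than carried out. The paper's own route is much shorter: work in normal coordinates centred at an endpoint of the geodesic piece, so that this piece becomes a genuine Euclidean segment and all lengths and areas are distorted by factors bounded uniformly over $M$; then quote the Euclidean case. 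You should either complete your comparison estimates (bounding the area of the Riemannian ellipse by that of a metric neighbourhood of $s$ of radius comparable to $\frac{1}{2}\sqrt{r(2R+r)}$) or switch to this reduction; as it stands the second assertion is not proved.
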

\index{isoperimetric inequality}

\begin{figure}[ht!]
\begin{center}
\scalebox{1}{\includegraphics{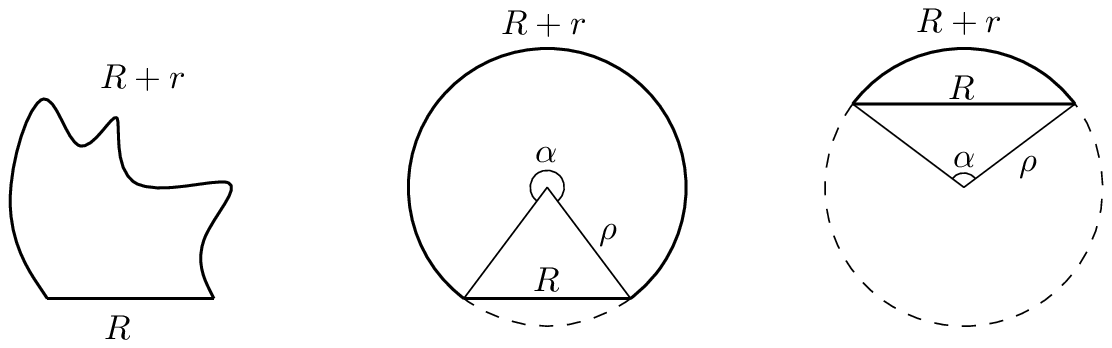}}
\caption{The generic case and two optimal cases.}
\end{center}
\end{figure}

\begin{proof}The Riemannian case can easily be deduced from the Euclidean case by working in normal
coordinates at one end point of the geodesic part of $J$. The compactness of $M$ ensures
that the resulting distortions of lengths and areas are bounded.

Let us consider the Euclidean case. Under the constraints on ${J}$, $A$ is maximal when ${
J}$ is the concatenation of a segment of length $R$ and an arc of circle of length $R+r$. In this case, let $\rho$ be the
radius of this circle and $\alpha \in [0,2\pi)$ the angle under which the arc of circle is seen from the centre of
the circle. Then $R+r=\rho \alpha$ and $R=2\rho \sin \frac{\alpha}{2}$. Now one has the relations
\begin{eqnarray}
&& A=\frac{\rho^2}{2}(\alpha-\sin \alpha)\leq \frac{\rho^2\alpha^3}{12}=\frac{\alpha}{12}(R+r)^2, \label{la 1}\\
&& \frac{\sin \frac{\alpha}{2}}{\frac{\alpha}{2}}=\frac{R}{R+r}. \label{la 2}
\end{eqnarray}
One checks easily that for all $x\in [0,\pi]$,  $\frac{\sin x}{x}\leq 1-\frac{x^2}{\pi^2}$. Hence, (\ref{la 2}) implies $\alpha\leq 2\pi \sqrt{\frac{r}{R}}$, and since $\alpha \in [0,2\pi)$, we have $\alpha\leq 2\pi \left(\sqrt{\frac{r}{R}} \wedge 1\right)$. Combining this with (\ref{la 1}), we find
$$A\leq \frac{\pi}{6} (R+r)^{\frac{3}{2}} \sqrt{r} \left(\sqrt{1+\frac{r}{R}} \wedge \sqrt{1+\frac{R}{r}}\right) \leq \frac{\pi}{3\sqrt{2}} r^{\frac{1}{2}} (R+r)^{\frac{3}{2}}.$$
This is the expected result. \end{proof}

The generalization of the lasso decomposition that we need is the following. 

\begin{proposition} \label{lasso fin} Let $M$ be a Riemannian compact surface. Let $s$ be a geodesic segment on $M$ and $c$ an injective piecewise
geodesic path with
the same endpoints as $s$. It is
possible to decompose $c$ and $s$ as concatenations $c=c_{1}\ldots c_{p}$ and $s=s_{1}\ldots
s_{p}$ in such a way that, for each $k=1,\ldots,p$, $c_{k}=s_{k}$ or $c_{k}s_{k}^{-1}$ is a simple
loop. In particular, if we set $l_{k}=(c_{1}\ldots c_{k-1}) c_{k}s_{k}^{-1}(c_{1}\ldots c_{k-1})^{-1}$,
then $l_{k}$ is a lasso and $cs^{-1}\simeq l_{1}\ldots l_{p}$. 
\end{proposition}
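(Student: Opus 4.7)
The plan is to construct the decomposition by a greedy algorithm that advances simultaneously along $c$ and along $s$. First I would parametrise $s:[0,L]\to M$ and $c:[0,1]\to M$ injectively, and note that since $s$ is a single geodesic segment and $c$ has finitely many geodesic pieces, the set $c\cap s$ decomposes as a finite union of isolated points and closed sub-arcs of $s$. Injectivity of $s$ permits one to define, on $c^{-1}(s([0,L]))$, the continuous function $\tau$ characterised by $s(\tau(u))=c(u)$, and injectivity of $c$ makes it a bijection onto its image.

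Starting from $(u_0,t_0)=(0,0)$, I define $(u_{k+1},t_{k+1})$ inductively. If there exists $\varepsilon>0$ with $c|_{[u_k,u_k+\varepsilon]}=s|_{[t_k,t_k+\varepsilon]}$, in which case I say that $c$ \emph{slides forward} on $s$ from $u_k$, I take $\varepsilon^{*}$ maximal with this property and set $u_{k+1}=u_k+\varepsilon^{*}$, $t_{k+1}=t_k+\varepsilon^{*}$, $c_{k+1}=s_{k+1}$. Otherwise, I set
\[
u_{k+1}=\inf\{u>u_k:c(u)\in s\text{ and }\tau(u)>t_k\},\qquad t_{k+1}=\tau(u_{k+1}),
\]
and take $c_{k+1}=c|_{[u_k,u_{k+1}]}$, $s_{k+1}=s|_{[t_k,t_{k+1}]}$. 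A short analysis of the local picture at $u_k$, using the piecewise-geodesic structure of $c$ and the failure of the forward-slide condition, shows that this infimum is strictly greater than $u_k$ and is attained; injectivity of $c$ then forces $\tau(u_{k+1})>t_k$ strictly.

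The key verification is that in the non-slide case the loop $c_{k+1}s_{k+1}^{-1}$ is simple. Indeed, any $u\in(u_k,u_{k+1})$ with $c(u)\in s$ satisfies $\tau(u)\le t_k$ by minimality of $u_{k+1}$, so $c(u)\in s([0,t_k])$; but this set meets $s([t_k,t_{k+1}])$ only at $s(t_k)=c(u_k)$, which is excluded by $c$-injectivity. Hence $c|_{(u_k,u_{k+1})}\cap s([t_k,t_{k+1}])=\varnothing$, and $c_{k+1}$ and $s_{k+1}$ meet only at their common endpoints, which makes the loop simple. Termination follows at once from the finiteness of the components of $c\cap s$, since each step strictly increases $u_k$ and consumes at least one such component. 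The lasso property of $l_k$ then follows from $c_k s_k^{-1}$ being simple with spoke $c_1\cdots c_{k-1}$, and the equivalence $cs^{-1}\simeq l_1\cdots l_p$ reduces to a telescoping identity in the free group of reduced loops.

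The main obstacle I anticipate is the case in which $c$ traverses some sub-arc of $s$ in the reverse direction, for then the endpoints of this arc cannot both be used as cut-points on $s$ without violating the forward order of $s_1,\ldots,s_p$. The strict inequality $\tau(u)>t_k$ built into the definition of $u_{k+1}$ is precisely what handles this: the backward excursion of $c$ is absorbed into the interior of the next simple loop, and the algorithm only introduces a new cut-point when $c$ re-emerges on $s$ at a parameter strictly past the current checkpoint $t_k$.
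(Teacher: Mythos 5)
Your proof is correct and is essentially the paper's argument recast as a forward-scanning algorithm: the cut points you produce (the successive first returns of $c$ to a point of $s$ strictly past the current checkpoint $t_k$) are exactly the records of the permutation $j$ that the paper reads off after dissecting $c$ and $s$ at all of their intersection points, and your verification that $c_{k+1}s_{k+1}^{-1}$ is simple --- interior points of $c_{k+1}$ lying on $s$ must lie on $s([0,t_k])$, which meets $s([t_k,t_{k+1}])$ only at $c(u_k)$ --- is the paper's verification as well. One small caveat about your last line, which concerns the statement as printed rather than your construction: with spokes $c_1\cdots c_{k-1}$ it is the reversed product that telescopes, $cs^{-1}\simeq l_p\cdots l_1$, the stated order $l_1\cdots l_p$ requiring spokes $s_1\cdots s_{k-1}$ instead; either version yields the bound $d(H(c),H(s))\le\sum_k d\bigl(1,H(c_ks_k^{-1})\bigr)$ for which the proposition is actually used.
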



\begin{proof}Consider the set $c([0,1])\cap s([0,1])$. It is a reunion of isolated points and closed
subintervals of $s([0,1])$. Let $V$ be the reunion of these isolated points and the end points of
these intervals. The set $V$ contains the two end points of $s$. Set $n=\# V-1$. Then $V$ dissects
both $c$ and $s$ into $n$ edges: $c=e_{1}\ldots e_{n}$ and $s=f_{1}\ldots f_{n}$. 

For each $i\in \{1,\ldots, n\}$, define $j(i)$ by the relation $\overline{e_{i}}=\overline{f_{j(i)}}$.
Since
$c$ is injective, the mapping $j$ from $\{1,\ldots,n\}$ to itself is one-to-one. Hence, it is a
permutation. We look for the records in the sequence $j(1),j(2),\ldots, j(n)$. Define $I=\{i, 1\leq i
\leq n : j(i)=\max(j(1),\ldots,j(i))\}$ and write $I=\{i_{1}\leq \ldots\leq i_{p}\}$. Set
$J=j(I)=\{j(i_{1})\leq \ldots \leq j(i_{p})\}$. Observe that $i_{p}=n=j(i_{p})$. Set
$i_{0}=j(i_{0})=0$. For each $k=1,\ldots,p$, set $c_{k}=e_{i_{k-1}+1}\ldots
e_{i_{k}}$ and $s_{k}=f_{j(i_{k-1})+1}\ldots f_{j(i_{k})}$. By construction, $c=c_{1}\ldots c_{p}$
and $s=s_{1}\ldots s_{p}$. 

Choose $k\in \{1,\ldots,p\}$ and consider $c_{k}=e_{i_{k-1}+1}\ldots e_{i_{k}}$. Assume first that
$i_{k}=i_{k-1}+1$. Then $c_{k}=e_{i_{k}}$. Either this edge is contained in $s$, in which case
$c_{k}=e_{i_{k}}=f_{j(i_{k})}=s_{k}$, or it meets $s$ only at its endpoints, which are also those
of $s_{k}$. In this case, $c_{k}s_{k}^{-1}$ is a simple loop. Assume now that $i_{k}\geq
i_{k-1}+2$. We claim that
any point of $c_{k}$ other than one of its end points which is located on $s$ is in fact located on
one of the edges $f_{1},\ldots,f_{j(i_{k-1})}$. Indeed, if this was not the case, there would
exist $l\in\{i_{k-1}+1,\ldots,i_{k}-1\}\neq \varnothing$ such that $\overline{e_{l}}$ is located on
$s$ between $\overline{e_{i_{k}}}$ and $\overline{s}$. But then we would have $j(l)>j(i_{k})$ which
contradicts the definition of $i_{k}$. Hence, in this case, $c_{k}s_{k}^{-1}$ is a simple loop.\end{proof}

We are now ready to prove Proposition \ref{estim 2}.\\

\begin{proof}[Proof of Proposition \ref{estim 2}] Assume that $c$ and $s$ are shorter than
the bound $K^{-1}$ of Proposition \ref{estim 1}. Let $\LE(c)$ be the loop-erasure of $c$. By the
properties of the lasso decomposition of $c$ (Proposition \ref{lasso dec}) and Proposition \ref{estim 1}, 
$$ d(H(c),H(\LE(c)))\leq K(\ell(c)-d(c(0),c(1)))=K |\ell(c)-\ell(s)|.$$
Now we are reduced to consider $c'=\LE(c)$ which is an injective path. Let
$c'=c'_{1}\ldots c'_{p}$ and $s=s_{1}\ldots s_{p}$ be the decomposition given by
Proposition \ref{lasso fin}.
We have 
$$d(H(c'),H(s))\leq \sum_{i=1}^{p} d(1,H(c'_{i}s_{i}^{-1})) \leq K \sum_{i=1}^{p}
\sqrt{A_{i}},$$
where $A_{i}$ is the area enclosed by $c'_{i}s_{i}^{-1}$. By Lemma \ref{isop} and since $\ell(s)\leq \ell(c')\leq \ell(c)$,
$$A_{i}\leq K \ell(c_{i})^{\frac{3}{2}} |\ell(c_{i})-\ell(s_{i})|^{\frac{1}{2}}.$$
By H\"{o}lder inequality, it follows that
$$d(H(c'),H(s))\leq K \ell(c)^{\frac{3}{4}}|\ell(c)-\ell(s)|^{\frac{1}{4}}.$$
Hence, 
\begin{eqnarray*}
d(H(c),H(s)) &\leq& K \left(|\ell(c)-\ell(s)|+\ell(c)^{\frac{3}{4}}|\ell(c)-\ell(s)|^{\frac{1}{4}}\right) \\
&\leq& 2K \ell(c)^{\frac{3}{4}} (\ell(c)-\ell(s))^{\frac{1}{4}}.
\end{eqnarray*}
 \end{proof}

The estimate given by Proposition \ref{estim 2} will allow us to associate an element
of $\Gamma$ to every element of $\Path(M)$. Recall the definition of the dyadic piecewise geodesic approximation of a path (Definition \ref{Dn}). For a given path $c$, $\D_{n}(c)$ is in general only defined for $n$ larger than a certain integer
$n_{0}(c)$. Nevertheless, by Proposition \ref{A dense}, the sequence $(\D_{n}(c))_{n\geq n_{0}(c)}$ converges to $c$ in
1-variation. 

\begin{proposition} \label{cauchy} Let $c\in \Path(M)$. Under the assumptions of Theorem \ref{main extension}, the sequence $(H(\D_{n}(c)))_{n\geq n_{0}(c)}$
is a Cauchy sequence in $\Gamma$.
\end{proposition}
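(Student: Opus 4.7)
Fix $c\in\Path(M)$ parametrized at constant speed, set $L=\ell(c)$, and let $K$ denote a constant absorbing those of Propositions \ref{estim 1} and \ref{estim 2}; enlarge $n_0(c)$ so that for all $n\geq n_0(c)$ one has $2^{-n}L<K^{-1}$ and $2^{-n}L$ is less than the injectivity radius of $(M,\gamma)$. The plan is to bound $d(H(\D_n(c)),H(\D_m(c)))$ for all $n\geq m\geq n_0(c)$ by a quantity which tends to $0$ as $m\to\infty$ uniformly in $n$.

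Fix $m\leq n$. Both $\D_m(c)$ and $\D_n(c)$ admit natural decompositions compatible with the dyadic subdivision of $[0,1]$ at scale $2^{-m}$: write
\[
\D_m(c)=\sigma_{m,0}\ldots\sigma_{m,2^m-1},\qquad \D_n(c)=\tau_{0}\ldots\tau_{2^m-1},
\]
where $\tau_k$ is the concatenation of the $2^{n-m}$ consecutive geodesic segments of $\D_n(c)$ corresponding to the subinterval $[k2^{-m},(k+1)2^{-m}]$. Each $\tau_k$ is a piecewise geodesic path with the same endpoints as the minimizing geodesic segment $\sigma_{m,k}$, and $\ell(\tau_k)\leq 2^{-m}L<K^{-1}$. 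Using the multiplicativity of $H$ together with the bi-invariance of $d$ on $\Gamma$, a standard telescoping between the products $H(\tau_{2^m-1})\ldots H(\tau_0)$ and $H(\sigma_{m,2^m-1})\ldots H(\sigma_{m,0})$ yields
\[
d(H(\D_n(c)),H(\D_m(c)))\leq \sum_{k=0}^{2^m-1} d(H(\tau_k),H(\sigma_{m,k})).
\]

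Now I apply Proposition \ref{estim 2} to each pair $(\tau_k,\sigma_{m,k})$: writing $b_k=\ell(\tau_k)$ and $a_k=\ell(\tau_k)-\ell(\sigma_{m,k})\geq 0$,
\[
d(H(\tau_k),H(\sigma_{m,k}))\leq K\, b_k^{3/4}\, a_k^{1/4}.
\]
An application of H\"older's inequality with conjugate exponents $4/3$ and $4$ gives
\[
\sum_k b_k^{3/4}a_k^{1/4}\leq \Bigl(\sum_k b_k\Bigr)^{3/4}\Bigl(\sum_k a_k\Bigr)^{1/4}=\ell(\D_n(c))^{3/4}\bigl(\ell(\D_n(c))-\ell(\D_m(c))\bigr)^{1/4},
\]
and since $\ell(\D_n(c))\leq L$, we obtain the clean estimate
\[
d(H(\D_n(c)),H(\D_m(c)))\leq K\, L^{3/4}\,\bigl(\ell(\D_n(c))-\ell(\D_m(c))\bigr)^{1/4}.
\]

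To conclude, recall from Proposition \ref{A dense} that $\ell(\D_n(c))\to\ell(c)$; moreover the sequence $(\ell(\D_n(c)))_{n\geq n_0(c)}$ is non-decreasing, because refining the dyadic subdivision can only increase the total length of the piecewise geodesic approximation (by the triangle inequality applied to each pair of successive segments). Hence for every $\varepsilon>0$ there exists $m_0\geq n_0(c)$ such that $\ell(c)-\ell(\D_m(c))<\varepsilon$ for all $m\geq m_0$, and therefore $d(H(\D_n(c)),H(\D_m(c)))\leq KL^{3/4}\varepsilon^{1/4}$ for all $n\geq m\geq m_0$. This shows that $(H(\D_n(c)))_{n\geq n_0(c)}$ is a Cauchy sequence in $\Gamma$. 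The only delicate point is the telescoping step, which requires the full bi-invariance of $d$; the rest is bookkeeping plus a single application of H\"older's inequality to convert the pointwise bound of Proposition \ref{estim 2} into a bound governed by the global length defect $\ell(c)-\ell(\D_m(c))$.
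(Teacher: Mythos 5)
Your proof is correct and follows essentially the same route as the paper's: decompose the finer approximation $\D_n(c)$ into blocks matching the geodesic segments of $\D_m(c)$, apply Proposition \ref{estim 2} blockwise, and combine with H\"older's inequality to bound the sum by a power of the global length defect $\ell(c)-\ell(\D_m(c))$, which vanishes by Proposition \ref{A dense}. The monotonicity of $n\mapsto\ell(\D_n(c))$ and the telescoping via bi-invariance of $d$ are exactly the points the paper also uses (the latter implicitly), so there is nothing to add.
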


\begin{proof}Let $K$ be the constant given by Proposition \ref{estim 2}. Let $n_{1}(c)$ be an integer such
that $2^{-n_{1}(c)}\ell(c)<K^{-1}$. Choose $m\geq n\geq \max(n_{0}(c),n_{1}(c))$. 
Write $\D_{n}(c)=\sigma_{0}\ldots \sigma_{2^n-1}$ according to
the notation of Definition \ref{Dn}. Write also $\D_{m}(c)=\eta_{0}\ldots\eta_{2^n-1}$, where
for each $k\in \{0,\ldots,2^n-1\}$, $\eta_{k}=\D_{m-n}(c_{|[k2^{-n},(k+1)2^{-n}]})$.
By Proposition \ref{estim 2}, for all $k\in \{0,\ldots,2^n-1\}$, $d(H(\eta_{k}),H(\sigma_{k}))\leq K
\ell(\eta_{k})^{\frac{3}{4}}|\ell(\eta_{k})-\ell(\sigma_{k})|^{\frac{1}{4}}.$
Hence, by H\"{o}lder inequality, and since $\ell(\D_{n})(c)\leq \ell(\D_{m}(c))\leq \ell(c)$,
$$d(H(\D_{n}(c)),H(\D_{m}(c)))\leq K \ell(c)^\frac{3}{4} |\ell(c)-\ell(\D_{n}(c))|^\frac{1}{4}.$$
The result follows now from the fact that $\ell(\D_{n}(c))$ converges to $\ell(c)$. \end{proof}

By Proposition \ref{cauchy} and the assumption that $(\Gamma,d)$ is complete, it is now
legitimate to set the following definition.

\begin{definition} For each $c\in \Path(M)-\GPath_\gamma(M)$, we define $H(c)$ by
$$H(c)=\lim_{n\to \infty} H(\D_{n}(c)).$$
\end{definition}

\begin{proposition} The mapping $H:\Path(M)\lra \Gamma$ thus defined is continuous at fixed
endpoints.
\end{proposition}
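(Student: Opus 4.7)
The plan is to extend Proposition \ref{estim 2} to arbitrary rectifiable paths and then reduce the continuity of $H$ to a splicing argument. First, given a rectifiable path $c$ with the same endpoints as a minimizing geodesic segment $s$ and with $\ell(c)\le K^{-1}$, apply Proposition \ref{estim 2} to the dyadic piecewise geodesic approximations $\D_n(c)$, which share the endpoints of $c$, to get $d(H(\D_n(c)),H(s))\le K\ell(\D_n(c))^{3/4}|\ell(\D_n(c))-\ell(s)|^{1/4}$. Since $H(\D_n(c))\to H(c)$ by definition and $\ell(\D_n(c))\to\ell(c)$ by Proposition \ref{A dense}, passing to the limit yields the estimate
$$d(H(c),H(s))\le K\,\ell(c)^{3/4}\,|\ell(c)-\ell(s)|^{1/4}\qquad(\star)$$
for every sufficiently short rectifiable $c$, not just piecewise geodesic ones.

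Next, consider a sequence $(c_k)_{k\ge 0}$ converging to $c$ with fixed endpoints; parametrize everything at constant speed. Fix $\epsilon>0$ and choose $n$ so large that $2^{-n}\ell(c)<K^{-1}/2$ and $2K\ell(c)^{3/4}(\ell(c)-\ell(\D_n(c)))^{1/4}<\epsilon$, which is possible since $\ell(\D_n(c))\to\ell(c)$. Write $p_i=c(i2^{-n})$, $q_i^k=c_k(i2^{-n})$, and let $s^i$ be the minimizing geodesic from $p_{i-1}$ to $p_i$, so that $\D_n(c)=s^1\ldots s^{2^n}$. By Lemma \ref{dl unif} applied to $c_k\to c$, one has $q_i^k\to p_i$ for every $i$, so for $k$ large one chooses minimizing geodesics $\tau_i^k$ from $p_i$ to $q_i^k$ (taking $\tau_0^k$ and $\tau_{2^n}^k$ constant since the endpoints are fixed). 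Setting $c^i=c_{|[(i-1)2^{-n},i2^{-n}]}$, $c_k^i=(c_k)_{|[(i-1)2^{-n},i2^{-n}]}$ and $\tilde c_k^i=(\tau_{i-1}^k)^{-1}\,c_k^i\,\tau_i^k$, each $\tilde c_k^i$ has the same endpoints $(p_{i-1},p_i)$ as $s^i$ and $c^i$, and $c_k\simeq \tilde c_k^1\cdots\tilde c_k^{2^n}$.

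By the multiplicativity of $H$ and the bi-invariance of $d$,
$$d(H(c_k),H(c))\le \sum_{i=1}^{2^n}\bigl[d(H(\tilde c_k^i),H(s^i))+d(H(s^i),H(c^i))\bigr].$$
For $k$ large all pieces involved are short enough that $(\star)$ applies. Using H\"older's inequality with exponents $4/3$ and $4$, together with $\sum_i \ell(c^i)=\ell(c)$ and $\sum_i(\ell(c^i)-\ell(s^i))=\ell(c)-\ell(\D_n(c))$, one gets
$$\sum_i d(H(s^i),H(c^i))\le K\ell(c)^{3/4}(\ell(c)-\ell(\D_n(c)))^{1/4};$$
the analogous bound for the $\tilde c_k^i$-summand tends to the same quantity as $k\to\infty$, because $\ell(c_k^i)=2^{-n}\ell(c_k)\to 2^{-n}\ell(c)=\ell(c^i)$ and $\ell(\tau_i^k)\to 0$. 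Therefore $\limsup_k d(H(c_k),H(c))\le 2K\ell(c)^{3/4}(\ell(c)-\ell(\D_n(c)))^{1/4}<\epsilon$, and $\epsilon$ being arbitrary gives the result.

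The main obstacle is the splicing step: since the dyadic points $q_i^k$ of $c_k$ do not coincide with the $p_i$, a direct piece-by-piece comparison is impossible, and the geodesic connectors $\tau_i^k$ are needed to realign $c_k$'s intermediate endpoints with those of $c$ at a negligible length cost. One must then verify that inserting these connectors does not corrupt the H\"older-type length bound, which is guaranteed by the uniform convergence of the arc length (Lemma \ref{dl unif}) combined with $\ell(c_k)\to\ell(c)$.
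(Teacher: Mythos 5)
Your argument is correct, and its core — cutting $c$ at its dyadic points, realigning the corresponding pieces of $c_k$ with geodesic connectors $\tau_i^k$ of vanishing length, comparing each realigned piece to the geodesic segment $s^i$ via Proposition \ref{estim 2}, and resumming with H\"older — is exactly the paper's splicing argument. The one genuine difference is at the boundary of the two proofs' scopes: the paper only runs this argument for approximating sequences $(c_n)$ that are \emph{piecewise geodesic} (so that Proposition \ref{estim 2} applies verbatim to each spliced piece) and dismisses the passage to arbitrary sequences as ``an elementary argument,'' whereas you first upgrade the estimate of Proposition \ref{estim 2} to arbitrary rectifiable paths by taking limits along $\D_n$ (your $(\star)$), which lets you treat a general sequence $(c_k)$ directly and also absorbs the paper's separate step $d(H(\D_m(c)),H(c))<\epsilon/2$ into the same bound. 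This is a clean way to close the gap the paper leaves implicit; the only care needed in $(\star)$ is that for $c$ already piecewise geodesic it must reduce to Proposition \ref{estim 2} itself rather than to the limiting definition of $H(c)$, which it does. (Minor slip: with $\tau_i^k$ oriented from $p_i$ to $q_i^k$, the spliced piece should read $\tau_{i-1}^k\,c_k^i\,(\tau_i^k)^{-1}$; this does not affect anything.)
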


Notice that we have not proved yet that $H$ is continuous even on $\GPath_\gamma(M)$.\\

\begin{proof}Take $c\in \Path(M)$ and consider a sequence $(c_{n})_{n\geq 0}$ in $\GPath_\gamma(M)$ converging to
$c$ with fixed endpoints. We claim that $H(c_{n})$ converges to $H(c)$. An elementary argument
shows that this implies the continuity of $H$ with fixed endpoints on $\Path(M)$.

Choose $\epsilon>0$. Choose an integer $m\geq 0$ such that $d(H(\D_{m}(c)),H(c))<\frac{\epsilon}{2}$. 
Such an integer exists by definition of $H(c)$. Now for each $n\geq 0$ and each $k\in \{0,\ldots,
2^{m}\}$, let $\eta_{n,k}$ be the geodesic segment joining $c_{n}(k 2^{-m})$ to
$c(k2^{-m})$. If $k\neq 2^m$, let us also denote by $c_{n,k}$ the portion of $c_{n}$
parametrized by the interval $[k2^{-m},(k+1)2^{-m}]$. Observe that $\eta_{n,0}$ and $\eta_{n,2^m}$
are constant paths. The simple equivalence 
$$c_{n}=c_{n,0}\ldots c_{n,2^{m}}\simeq (\eta_{n,0}^{-1} c_{n,0}\eta_{n,1})\ldots
(\eta_{n,k}^{-1}c_{n,k}\eta_{n,k+1})\ldots (\eta_{n,2^m-1}c_{n,2^m-1}\eta_{n,2^m})$$
implies the following inequality:
$$d(H(c_{n}),H(\D_{m}(c)))\leq \sum_{k=0}^{2^m-1}
d(H(\eta_{n,k}^{-1}c_{n,k}\eta_{n,k}),H(\sigma_{k})),$$
where $\D_m(c)=\sigma_0 \ldots \sigma_{2^m-1}$ is the decomposition given by the definition of $\D_m(c)$. 
The path $\eta_{n,k}^{-1}c_{n,k}\eta_{n,k}$ is piecewise geodesic and shares the same endpoints as
the segment $\sigma_{k}$. Hence we can apply Proposition \ref{estim 2} to find
$$d(H(\eta_{n,k}^{-1}c_{n,k}\eta_{n,k}),H(\sigma_{k}))\leq \! K \!
\left(\ell(c_{n,k})+2d_{\infty}(c_{n},c)\right)^\frac{3}{4}\! \left(\ell(c_{n,k})-\ell(\sigma_{k})+2d_{\infty}(c_{n},c)\right)^{\frac{1}{4}}\! .$$
By H\"{o}lder inequality again, 
$$d(H(c_{n}),H(\D_{m}(c)))\leq  K\!  \left(\ell(c_{n}) +
2^{m+1}d_{\infty}(c_{n},c)\right)^{\frac{3}{4}}\left(\ell(c_{n})-\ell(c)+2^{m+1}d_{\infty}(c_{n},c)\right)^{\frac{1}{4}}.$$
Since $\ell(c_{n})$ converges to $\ell(c)$ and $d_{\infty}(c_{n},c)$ tends to 0, the right hand
side tends to zero as $n$ tends to infinity. For $n$ large enough, it is smaller than
$\frac{\epsilon}{2}$. For such $n$, $d(H(c_{n}),H(c))<\epsilon$. \end{proof}

\begin{remark} The factors $2^{m+1}$ in the last expression are very unpleasant, because they give
the feeling
that $H$ is not uniformly continuous on $\Path(M)$. In fact, the last proof reveals that, on $\Path(M)$
endowed with the distance $d_{\ell}$, $H$ is uniformly continuous on subsets where
$\ell\circ \D_{n}$ converges uniformly to $\ell$. It is likely that a much better result can be
achieved by considering the stronger distance $d_{1}$ on $\Path(M)$. Ideally, one could expect $H$ to be
$\frac{1}{4}$-H\"{o}lder continuous on $(\Path(M),d_{1})$. I have not been able to prove or disprove
this statement.
\end{remark}

\section{Extension of discrete holonomy fields}

Let $(M,\vol,\gamma,\CS,C)$ be a Riemannian marked surface with $G$-constraints. Starting from a discrete Markovian holonomy field $\DF$ satisfying a regularity condition, we have constructed a measure $\DF^{X,(\gamma)}_{M,\vol,\CS,C}$ on $(\M(\Path(M),G),\C)$ (see
Corollary \ref{cor : def DF(g)}). The construction of this measure involves a Riemannian metric and we must now
prove that the result is independent of this choice. We start by identifying
the distribution of $(H_{c})_{c\in \Path(\G)}$ under $\DF^{X,(\gamma)}_{M,\vol,\CS,C}$ for an
arbitrary graph $\G$. 

\begin{proposition} \label{ident dist} Let $\DF$ be a stochastically $\frac{1}{2}$-H\"{o}lder continuous and continuously area-dependent discrete Markovian holonomy field. Let $(M,\vol,\gamma,\CS,C)$ be a Riemannian marked surface
with $G$-constraints. Let $\G=(\V,\E,\F)$ be a graph on $(M,\CS)$. Recall the notation $\DF^{(\gamma)}_{M,\vol,\CS,C}$ from Corollary \ref{cor : def DF(g)}.\\
1. The distribution of $(H_{e})_{e\in \E}$ under $\DF^{(\gamma)}_{M,\vol,\CS,C}$ is $\DF^{\G}_{M,\vol,\CS,C}$.\\
2. The measure $\DF^{(\gamma)}_{M,\vol,\CS,C}$ does not
depend on the Riemannian metric $\gamma$. We denote it by $\DF_{M,\vol,\CS,C}$.\\
3. The distribution of $(H_{e})_{e\in \E}$
under $\DF_{M,\vol,\CS,C}$ is $\DF^{\G}_{M,\vol,\CS,C}$.
\end{proposition}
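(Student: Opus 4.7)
I would prove the three assertions in sequence.

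\medskip

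\textbf{Assertion 1, piecewise-geodesic case.} When $\G\in\GG_\gamma(M,\CS)$, assertion 1 is essentially built into the construction of $\DF^{(\gamma)}_{M,\vol,\CS,C}$. By Proposition \ref{take proj lim}, the image of $\DF^\gamma_{M,\vol,\CS,C}$ under restriction to $\M(\Path(\G),G)$ is $\DF^\G_{M,\vol,\CS,C}$, and Corollary \ref{cor : def DF(g)} is set up so that this marginal is preserved when one extends to $\M(\Path(M),G)$.

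\medskip

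\textbf{Assertion 1, general case.} For an arbitrary graph $\G=(\V,\E,\F)$, I would invoke Proposition \ref{approx graph epsilon} to produce a sequence $(\G_n)$ in $\GG_\gamma(M,\CS)$ together with bijections $S_n:\E\to\E_n$ and $S_n:\F\to\F_n$ satisfying $\V_n=\V$, $\underline{S_n(e)}=\underline{e}$, $\overline{S_n(e)}=\overline{e}$, $\partial S_n(F)=S_n(\partial F)$, $d_\ell(e,S_n(e))\to 0$, and $\vol(F\dotplus S_n(F))\to 0$. For $n$ large, these combinatorial data exactly match those of $\G$, so one can realize $S_n$ by a homeomorphism $\psi_n:M\to M$ that fixes the vertices (hence the marking curves, which are already geodesic and so fixed by $S_n$) and sends $\G$ to $\G_n$; then $\vol(\psi_n(F))=\vol(S_n(F))\to\vol(F)$ for each $F$. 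By the piecewise-geodesic case applied to $\G_n$, the joint law under $\DF^{(\gamma)}_{M,\vol,\CS,C}$ of the tuple $(H_{S_n(e)})_{e\in\E}$ is exactly the pushforward $\DF^{\G_n}_{M,\vol,\CS,C}\circ\psi_n^{-1}$. I would then let $n\to\infty$: the stochastic continuity in Corollary \ref{cor : def DF(g)}, applied edgewise since $S_n(e)\to e$ with fixed endpoints, sends the left-hand side in distribution to the law of $(H_e)_{e\in\E}$, while the continuous area-dependence axiom of $\DF$ (Definition \ref{def regular DHF}) sends the right-hand side weakly to $\DF^\G_{M,\vol,\CS,C}$. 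Uniqueness of weak limits yields assertion 1.

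\medskip

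\textbf{Assertions 2 and 3.} For assertion 2 I would show that any two measures $\DF^{(\gamma_1)}_{M,\vol,\CS,C}$ and $\DF^{(\gamma_2)}_{M,\vol,\CS,C}$ give the same joint law to every finite tuple of evaluation random variables, which suffices since $\C$ is generated by these evaluations. Given paths $c_1,\ldots,c_k$, the dyadic approximations $\D_m(c_i)$ of Definition \ref{Dn} are $\gamma_1$-piecewise geodesic and, by Proposition \ref{A dense}, converge to $c_i$ with fixed endpoints; by Proposition \ref{geod graph} they can all be taken to lie in $\Path(\G_m)$ for a common graph $\G_m\in\GG_{\gamma_1}(M,\CS)$. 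Assertion 1 applied to each of the two metrics identifies the joint law of $(H_{\D_m(c_i)})_i$ with the marginal of the same measure $\DF^{\G_m}_{M,\vol,\CS,C}$ on both sides. Letting $m\to\infty$ and invoking once more the stochastic continuity of $\DF^{(\gamma_1)}$ and $\DF^{(\gamma_2)}$ yields identical joint laws of $(H_{c_i})_i$, hence assertion 2. Assertion 3 is then immediate from 1 and 2.

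\medskip

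\textbf{Main obstacle.} The crux of the argument is the step in the general case of assertion 1 where the combinatorial approximation $S_n$ must be upgraded to a genuine homeomorphism $\psi_n$ of $M$ with $\psi_n(\G)=\G_n$, since that is precisely the form in which the continuous area-dependence axiom is stated. This hinges on the fact that the bijections produced by Proposition \ref{approx graph epsilon} preserve the full topological data of the graph (endpoints, cyclic orders at vertices via the face-boundary condition $\partial S_n(F)=S_n(\partial F)$, and facial cycles); from this one may construct $\psi_n$ face by face using Sch\"onfliess-type extensions, isotoping each edge $e$ to $S_n(e)$ inside an arbitrarily small neighbourhood and extending by the identity outside, which is possible for large $n$ precisely because $d_\ell(e,S_n(e))\to 0$.
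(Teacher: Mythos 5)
Your proposal is correct and follows essentially the same route as the paper: approximate $\G$ by piecewise geodesic graphs via Proposition \ref{approx graph epsilon}, upgrade the bijections $S_n$ to homeomorphisms $\psi_n$ preserving $\CS$, identify the law of $(H_{S_n(e)})_{e\in\E}$ with $\DF^{\G_n}_{M,\vol,\CS,C}\circ\psi_n^{-1}$, and pass to the limit using stochastic continuity on one side and continuous area-dependence on the other; assertion 2 then follows from density of $\GPath_\gamma(M)$ and the continuity guaranteed by Theorem \ref{main extension}. The only difference is that you supply more detail than the paper on the Sch\"onfliess-type construction of $\psi_n$, which the paper simply asserts from the preservation of cyclic orders at vertices.
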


\begin{proof}1. For each $n\geq 1$, let $\G_n$ be the graph produced by Proposition \ref{approx graph epsilon} with $\epsilon=n^{-1}$. Let $S_{n}:\E\lra \E_n$ denote the corresponding bijection. By the stochastic continuity of the process $(H_{c})_{c\in \Path(M)}$ under
$\DF^{(\gamma)}_{M,\vol,\CS,C}$, which follows from Theorem
\ref{main extension}, the distribution of $(H_{e})_{e\in \E}$ is the limit of the distributions
of the families $(H_{S_{n}(e)})_{e\in \E}$ as $n$ tends to infinity. 

Since $S_n$ preserves the cyclic order at each vertex of $\G$, there exists for each $n$ a homeomorphism of $M$ which preserves $\CS$ and which sends $\G$ on $\G_n$ and which induces the bijection $S_n$. Let $\psi_n$ be such a homeomorphism. Since for each face $F$ of $\G$, the boundary of $\psi_n(F)$ is $\psi_n(\partial F)=S_n(\partial F)=\partial S_n(F)$, the face $\psi_n(F)$ is $S_n(F)$.  

Moreover, the distribution of the family $(H_{S_{n}(e)})_{e\in \E}$ under $\DF^{(\gamma)}_{M,\vol,\CS,C}$ is the distribution of the same family under $\DF^{\G_n}_{M,\vol,\CS,C}$, hence the distribution of $(H_{e})_{e\in \E}$ under $\DF^{\G_n}_{M,\vol,\CS,C}\circ \psi_n^{-1}$, where $\psi_n$ denotes the map induced by $\psi_n$ from $\M(\Path(\G_n),G)$ to $\M(\Path(\G),G)$.

By the fourth assertion of Proposition \ref{approx graph epsilon}, $\vol(\psi_n(F))$ tends to $\vol(F)$ as $n$ tends to infinity for all $F\in \F$. Hence, the assumption that $\DF$ is continuously area-dependent implies that the distribution of the family $(H_{S_{n}(e)})_{e\in \E}$ under $\DF^{(\gamma)}_{M,\vol,\CS,C}$ converges weakly to $\DF^\G_{M,\vol,\CS,C}$ as $n$ tends to infinity.

2. Let $\gamma$ and $\gamma'$ be two Riemannian metrics on $(M,\vol,\CS)$. By
Lemma \ref{geod graph} and the property that we have just proved, the distributions of $(H_{c})_{c\in
\GPath_{\gamma}(M)}$ under $\DF^{X,(\gamma)}_{M,\vol,\CS,C}$ and
$\DF^{X,(\gamma')}_{M,\vol,\CS,C}$ agree. Since
$\GPath_{\gamma}(M)$ is dense in $\Path(M)$ for the convergence with fixed endpoints (see Proposition \ref{A dense}),
the continuity property granted by Theorem \ref{main extension} implies that
$\DF^{X,(\gamma)}_{M,\vol,\CS,C}=\DF^{X,(\gamma')}_{M,\vol,\CS,C}$.

3. This property follows immediately from the first two. \end{proof}

We can now finish the proof of the main theorem of this section.\\

\begin{proof}[Proof of Theorem \ref{main existence}] Let $\DF$ be a regular discrete Markovian holonomy field. By applying Corollary \ref{cor : def DF(g)} and Proposition \ref{ident dist}, we get for all measured marked surface with $G$-constraints $(M,\vol,\CS,C)$ a finite measure $\DF_{M,\vol,\CS,C}$ on $(\M(\Path(M),G),\C)$ which by restriction produces a measure on the invariant $\sigma$-field. The total mass of the measure $\DF_{M,\vol,\CS,C}$ is the common value of the masses of the measures $\DF^\G_{M,\vol,\CS,C}$ for all graph $\G$ on $(M,\CS)$, which we have denoted by $Z_{M,\vol,\CS,C}$. In particular, this mass is finite.

Now, we check that the seven axioms of Definition \ref{def MHF} are satisfied. We choose a measured marked surface with $G$-constraints $(M,\vol,\CS,C)$. We endow $(M,\CS,C)$ with a Riemannian metric $\gamma$ and use without further comment the fact, granted by Proposition \ref{ident dist}, that $\DF_{M,\vol,\CS,C}=\DF^{(\gamma)}_{M,\vol,\CS,C}$. \\

A$_1$. Let $\mathcal N$ denote the event $\{\exists l\in \CS \cup \BS(M), h(l)\notin C(l)\}$.
Let $\G$ be a graph on $(M,\CS)$. By Proposition \ref{ident dist} and the axiom \AxD{1} for $\DF$,
$$\DF_{M,\vol,\CS,C}({\mathcal N})=\DF^{\G}_{M,\vol,\CS,C}({\mathcal N})=0.$$

A$_2$. The set of bounded measurable functions from $(\M(\Path(M),G),\I)$ to $\RK$ whose integral against $\DF_{M,\vol,\CS,C}$ depends measurably of the $G$-constraints $C$ is a vector space which contains the constant functions and is stable by uniformly bounded monotone limit. Thus, by a monotone class argument, and by definition of the invariant $\sigma$-field $\I$, in order to show that this space contains all bounded functions measurable with respect to $\I$, it suffices to show that it contains all functions of the form $h\mapsto f(h(l_1),\ldots,h(l_n))$ where $l_1,\ldots,l_n$ are loops based at the same point and $f:G^n\to G$ is continuous and invariant under the diagonal action of $G$ by conjugation. 

Let us choose $l_1,\ldots,l_n$ and $f$ as above. For each $i\in\{1,\ldots,n\}$ and all $m\geq 1$ large enough, let $\D_m(l_i)$ denote the dyadic piecewise geodesic approximation of $l_i$ of order $m$. Let us define a function $F$ and a sequence of functions $F_m$ on the set $\Const_{G}(M,\CS)$ of $G$-constraints on $(M,\CS)$ as follows : 
$$F(C)=\int_{\M(\Path(M),G)} f(h(l_1),\ldots,h(l_n)) \; \DF_{M,\vol,\CS,C}(dh) $$
and, for all $m\geq 1$,
$$F_m(C)=\int_{\M(\Path(M),G)} f(h(\D_m(l_1)),\ldots,h(\D_m(l_n))) \; \DF_{M,\vol,\CS,C}(dh).$$
Our goal is to prove that the function $F$ is measurable. For all $C\in \Const_{G}(M,\CS)$, the fact that $\D_m(l_i)$ converges to $l_i$ with fixed endpoints implies, according to the conclusion of Theorem \ref{main extension}, that $H_{\D_m(l_i)}$ converges in probability to $H_{l_i}$, so that $F_m(C)$ tends to $F(C)$ as $m$ tends to infinity. Hence, it suffices to prove that $F_m$ is measurable for $m$ large enough. In fact we claim that $F_m$ is continuous as soon as it is defined. Indeed, for each $m$, $F_m(C)$ can be computed in a piecewise geodesic graph. There, the dependence in $C$ can be made explicit and all functions in the integrand are continuous. The result follows.\\

A$_3$. Just as in the last point, it suffices to check the equality when it integrates a function of the form $h\mapsto f(h(l_1),\ldots,h(l_n))$, where $l_1,\ldots,l_n$ are piecewise geodesic and $f$ is invariant by diagonal conjugation. Hence, it suffices to prove the equality for $\DF^{\G}_{M,\vol,\CS,C}$ for any graph $\G$ with piecewise geodesic edges. In this last case, the property follows from the axiom \AxD{3} satisfied by $\DF$.\\

A$_4$. The metric $\gamma'=(\psi^{-1})^*\gamma$ is a Riemannian metric on $(M',\vol',\CS')$ in the sense of Proposition \ref{nice metric}. We need to prove that the image measure of $\DF^{(\gamma)}_{M,\vol,\CS,C}$ by the mapping induced by $\psi$ is $\DF^{(\gamma')}_{M',\vol',\CS',C'}$. Again, we may restrict ourselves to functions of the form $h\mapsto f(h(l_1),\ldots,h(l_n))$, where $l_1,\ldots,l_n$ are piecewise geodesic, hence to the discrete measures associated to graphs with piecewise geodesic edges. Let $\G$ be a graph on $(M,\CS)$ with piecewise geodesic edges. Then $\G'$, the graph constituted by the images by $\psi$ of the edges of $\G$, is a graph on $(M',\CS')$ with piecewise geodesic edges and, by the axiom \AxD{4} for $\DF$, the distribution of $(H_e)_{e\in\E}$ under $\DF^{\G}_{M,\vol,\CS,C}$ is the same as the distribution of $(H_{e'})_{e'\in\E'}$ under $\DF^{\G'}_{M',\vol',\CS',C'}$. The property follows.\\

\Ax{5}, \Ax{6} and \Ax{7} follow respectively from the axiom \AxD{5}, \AxD{6} and \AxD{7} satisfied by $\DF$ and the same approximation argument as in the previous points.\\

The fact that the new Markovian holonomy field $\DF$ is stochastically continuous is a part of the conclusion of Theorem
\ref{main extension}. Finally, the Feller property follows from the Feller property of $\DF$. \end{proof}

\chapter{L\'evy processes and Markovian holonomy fields}

In this chapter, we apply the extension theorem proved in the previous chapter to construct a whole family of Markovian holonomy fields. Before that, we study the partition functions of an arbitrary regular Markovian holonomy field and prove that they are completely determined by a L\'evy process on the group $G$ with some nice properties, essentially a continuous density. We then construct a Markovian holonomy field for each such L\'evy process. The case of the Brownian motion on a connected Lie group yields the Yang-Mills measure.

\section{The partition functions of a Markovian holonomy field}
\label{par fun mar hol}

In this section, we establish some fundamental properties satisfied by the masses of the finite
measures which constitute a Markovian holonomy field. 

To start with, we describe the isomorphism classes of connected surfaces with $G$-constraints on
the boundary. If $M$ is oriented, we denote by $\BS^+(M)$ the subset of $\BS(M)$ which consists in
the curves which have the orientation induced by that of $M$. 

\begin{proposition}\label{classif surf mark} Let $(M,\vol,\varnothing,C)$ and $(M',\vol',\varnothing,C')$ be two connected measured marked surfaces with $G$-constraints. If $M$ and $M'$ are orientable, we assume that they are oriented. They are isomorphic if and only if the following
conditions hold simultaneously.\\
1. $M$ and $M'$ are homeomorphic.\\
2. $\vol(M)=\vol'(M')$.\\
3. If $M$ and $M'$ are oriented, there exists a bijection $\psi:\BS^+(M)\to\BS^+(M')$ such that
$C=C'\circ \psi$ on $\BS^+(M)$.\\
3'. If $M$ and $M'$ are non-orientable, there exists a $\Z/2\Z$-equivariant bijection
$\psi:\BS(M)\to \BS(M')$ such that $C=C'\circ \psi$.
\end{proposition}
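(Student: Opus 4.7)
The direction ``isomorphic $\Rightarrow$ (1)--(3)/(3$'$)'' is immediate from the definition of isomorphism (with $\psi$ induced by the diffeomorphism itself). The substance lies in the converse, which I would prove in three steps: first produce a diffeomorphism $M\to M'$, then correct it so that it realizes the prescribed bijection $\psi$ of boundary components, and finally correct it so that it sends $\vol$ to $\vol'$.

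Step~1 (topological realization). By the smoothing theorem stated after the definition of a surface, condition (1) implies the existence of a diffeomorphism $\varphi_0:M\to M'$. In the oriented case, if $\varphi_0$ is orientation-reversing, the last assertion of Theorem~\ref{classif} provides an orientation-reversing self-diffeomorphism of $M'$ with which to post-compose. Thus we may assume $\varphi_0$ is orientation-preserving when $M$ and $M'$ are oriented.

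Step~2 (matching boundary components). The diffeomorphism $\varphi_0$ induces a $\Z/2\Z$-equivariant bijection $\varphi_0:\BS(M)\to \BS(M')$ which in general differs from $\psi$. Define $\sigma=\psi\circ\varphi_0^{-1}$, a $\Z/2\Z$-equivariant permutation of $\BS(M')$ (in the oriented case, preserving $\BS^+(M')$). Since all (positively oriented) boundary circles of $M'$ are pairwise diffeomorphic by orientation-preserving maps, one can build a diffeomorphism $\tau_0$ of the one-dimensional manifold $\partial M'$ which realizes $\sigma$ on the set of connected components; in the oriented case $\tau_0$ is orientation-preserving. Theorem~\ref{diffeo bord} then extends $\tau_0$ to a diffeomorphism $\tau$ of $M'$ (orientation-preserving in the oriented case). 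Replacing $\varphi_0$ by $\varphi_1=\tau\circ\varphi_0$, we obtain a diffeomorphism $M\to M'$ (orientation-preserving in the oriented case) which realizes $\psi$ on $\BS(M)$, and which therefore satisfies $C'\circ\varphi_1=C$ on all of $\BS(M)$ by the $\Z/2\Z$-equivariance of $C$ and $C'$.

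Step~3 (matching the volume). Consider the density $\omega_1=\varphi_1^*\vol'$ on $M$; by condition (2), it has the same total mass as $\vol$. I would invoke a Moser-type argument, exactly of the kind already used in the proof of Proposition~\ref{nice metric}, to produce a diffeomorphism $\chi:M\to M$ which fixes $\partial M$ pointwise (and is isotopic to the identity, in particular orientation-preserving in the oriented case) such that $\chi^*\omega_1=\vol$. Concretely one first uses local diffeomorphisms supported in collar neighbourhoods of the components of $\partial M$ (constructed exactly as in the proof of Proposition~\ref{nice metric}) to reduce to the case where $\omega_1$ and $\vol$ coincide in a neighbourhood of $\partial M$ and have equal total mass, and then applies the standard relative Moser trick on the resulting pair of densities. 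The composition $\varphi=\varphi_1\circ\chi$ is then a diffeomorphism $M\to M'$ satisfying $\varphi_*\vol=\vol'$, $\varphi(\BS)=\BS'$ with $C'\circ\varphi=C$, and preserving the orientation in the oriented case: it is the sought isomorphism.

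The only non-mechanical point is Step~3: one must ensure that the adjustment of the volume form can be performed without disturbing either the boundary-component correspondence built in Step~2 or the orientation. Both requirements are handled by the relative version of Moser's argument (``fix $\partial M$ pointwise''), and all technicalities needed here are exactly those already deployed in the proof of Proposition~\ref{nice metric}, so no new analytic input is required.
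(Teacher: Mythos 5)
The paper states Proposition~\ref{classif surf mark} without proof, so there is nothing to compare your argument against; it has to be judged on its own terms, and on those terms it is correct and complete. The decomposition into three corrections (smooth the homeomorphism, permute the boundary components, adjust the volume) is the natural one, and each step is properly supported by results the paper does supply: the smoothing theorem for the existence of $\varphi_0$, the last assertion of Theorem~\ref{classif} to fix the orientation, and Theorem~\ref{diffeo bord} to extend the boundary permutation $\tau_0$ — noting, as you do implicitly, that in the oriented case $\sigma$ preserves $\BS^+(M')$ so that $\tau_0$ can be chosen orientation-preserving in the sense required by that theorem. The only external input is the relative Moser theorem in Step~3, which is indeed the one genuinely non-formal point; it holds for densities on a compact (possibly non-orientable) surface with boundary with a diffeomorphism fixing $\partial M$ pointwise, given equal total masses, and condition~2 supplies exactly that hypothesis. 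One small caveat: your claim that all technicalities of Step~3 are "exactly those already deployed in the proof of Proposition~\ref{nice metric}" is a slight overstatement — that proof builds its volume-matching diffeomorphisms by explicit fibrewise rescaling in collar coordinates rather than by the Moser flow, and it never needs the global relative statement — but either route works, and citing Moser--Banyaga directly is cleaner. Note also that fixing $\partial M$ pointwise is more than is strictly needed (the constraints $C$ live on boundary components, not points), but it costs nothing and keeps Step~2 undisturbed.
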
 
\index{surface!classification}

We use this result to associate to every Markovian holonomy field a family of functions of one or
several variables in ${\rm Conj}(G)$.  

Let $\HF$ be a  Markovian holonomy field. Let $g$ and $p$ be two non-negative integers, with $g$ even.
Let $t$ be a positive real number. Recall the notation of Section \ref{sec: surfaces}. Let $M$ be a surface homeomorphic to $\Sigma^+_{p,g}$, endowed with a density $\vol$ of total area $t$. Let $b_{1},\ldots,b_{p}$
denote the positively oriented
connected components of $\partial M$. Let $x_{1},\ldots,x_{p}$ be $p$ elements of $G$. Let $C$ be
the unique set of $G$-constraints on $(M,\varnothing)$ such that, for all $i\in\{1,\ldots,p\}$,
$C(b_{i})=\O_{x_{i}}$. By Proposition \ref{classif surf mark} and the axiom \Ax{4}, the number $\HF_{(M,\vol,\varnothing,C)}(1)$
depends on $M$, $\vol$ and $C$ only through $g,p,t$, the unordered list
$\O_{x_{1}},\ldots,\O_{x_{p}}$ and the fact that $M$ is orientable. Hence, it is legitimate to set
$$Z_{p,g,t}^{+}(x_{1},\ldots,x_{p})=\HF_{(M,\vol,\varnothing,C)}(1).$$
This defines a symmetric function $Z^+_{p,g,t}$ of $p$ conjugacy classes of $G$. If the Markovian field is Fellerian, then this function  is continuous
with respect to $(t,x_{1},\ldots,x_{p})$. If $p=0$, $Z^+_{0,g,t}$ is just a number, namely the
total mass of the measure
$\HF_{(M,\vol,\varnothing,\varnothing)}$ where $M$ is the connected sum of $g$ tori endowed with a
density of total area $t$.

Similarly, let $g$ and $p$ be two integers, respectively positive and non-negative. Let $M$ be a surface homeomorphic to $\Sigma^-_{p,g}$, endowed with a density $\vol$ of total area $t$. Let
$b_{1},\ldots,b_{p}$ denote the disjoint connected components of $\partial M$ endowed with an
arbitrary orientation. Let $x_{1},\ldots,x_{p}$
be $p$ elements of $G$. Let $C$ be the unique set of $G$-constraints on $(M,\varnothing)$ such
that, for all $i\in\{1,\ldots,p\}$, $C(b_{i})=\O_{x_{i}}$. When $\HF$ is not oriented, we define
$$Z_{p,g,t}^{-}(x_{1},\ldots,x_{p})=\HF_{(M,\vol,\varnothing,C)}(1).$$
Again, if $p=0$, $Z^-_{0,g,t}$ is just the total mass of the measure
$\HF_{(M,\vol,\varnothing,\varnothing)}$ where $M$ is the connected sum of $g$ projective planes
endowed with a density of total area $t$.

\index{ZZZ@$Z^+_{p,g,t},Z^-_{p,g,t}$}
\begin{definition} Let $\HF$ be a Markovian holonomy field. The functions 
$$Z^\epsilon_{p,g,t} : G^{p}\to \RK^*_{+},$$
where $(\epsilon,p,g,t)$ spans $(\{+\}\times \NK \times 2\NK \times \RK^*_{+})\cup(\{-\} \times \NK \times \NK^* \times \RK^*_{+})$, are called the {\em partition functions} of
the field $\HF$. 
\end{definition}
\index{Markovian holonomy field!partition function|(}

In the rest of this section, we fix a Markovian holonomy field $\HF$ and study its partition
functions. They are infinitely many but the Markov property of the field implies that they satisfy an
infinite set of relations and that they are in fact completely determined by a small number of
them. 

Let us introduce several operations on functions of conjugacy classes of $G$. Firstly, we identify
functions of a conjugacy class of $G$ and functions on $G$ which are constant on the conjugacy
classes. Thus, we may speak of continuous or integrable functions of a
conjugacy class. This point of view is consistent with our previous definition of a topology and
$\sigma$-field on ${\rm Conj}(G)$ (see Section \ref{section def MHF}). Of
particular interest is the space of square-integrable functions of one conjugacy class of $G$, which
we identify with the space $L^2(G)^G$ of conjugation-invariant square-integrable functions on $G$. If
$p\geq 1$ is an integer, we identify the elements of the $p$-th symmetric tensor power $\Sym^p\left(L^2(G)^{G}\right)$ with symmetric functions of $p$ conjugacy classes. We use the shorthand notation $S^p(G)$ for
$\Sym^p\left(L^2(G)^{G}\right)$.

\begin{definition} For all integers $p,q,r\geq 1$ and $s\geq 2$, the three 
linear mappings $\upsilon :S^r(G)\to S^{r-1}(G)$ , $\beta_1:S^s(G)\to S^{s-2}(G)$ and $\beta_2 : S^p(G)\otimes S^q(G)\to S^{p-1}(G)\otimes S^{q-1}(G)$ are defined by
\begin{eqnarray}
\forall f\in S^r(G), && (\upsilon f)(x_1,\ldots,x_{r-1})=\int_G f(x_1,\ldots,x_{r-1},x^2) \; dx,\\
\forall f\in S^s(G), && (\beta_1 f)(x_1,\ldots,x_{s-2})=\int_G f(x_1,\ldots,x_{s-2},x,x^{-1}) \; dx,
\end{eqnarray}
and, for all $f\in S^p(G)$  and all $f'\in S^q(G)$, 
\begin{equation}
(\beta_2 (f\otimes f'))(x_1,\ldots,x_{p-1},y_1,\ldots,y_{p-1})=\int_G f(x_1,\ldots,x_{p-1},z) f'(y_1,\ldots,y_{q-1},z^{-1}) \; dz.
\end{equation}
\end{definition}

We will now use these linear mappings to formulate the relations between the partition functions of the holonomy field. The operation $\upsilon$ expresses the transformation of the partition function under a unary gluing, and the contractions $\beta_1$ and $\beta_2$ correspond to binary gluings, respectively of two boundary components which lie on the same connected component of the surface and  
two boundary components which lie on two distinct connected components.

Recall that we use the notation $\epsilon\wedge\epsilon'$ for $\epsilon,\epsilon'\in \{-,+\}$ with the following meaning
: $\epsilon\wedge\epsilon'=+$ if $\epsilon=\epsilon'=+$ and $\epsilon\wedge\epsilon'=-$ in all
other cases.

\begin{proposition} \label{algebre Z} Let $(Z^\pm_{p,g,t})_{p,g\geq 0, t>0}$ 
be the partition functions of a Markovian holonomy field $\HF$. \\
1. For all $(\epsilon,g) \in (\{+\}\times 2\NK)\cup (\{-\}\times \NK^*)$, all integers $p\geq
q\geq 1$ and
all real $t>0$, the function $Z^{\epsilon}_{p,g,t}$ is square-integrable with respect to any $q$ of
its variables for any value of the $p-q$ other. \\
2. The following equality holds:
\begin{equation}\label{crosscap}
\upsilon(Z^\epsilon_{p,g,t})=Z^-_{p-1,g+1,t}.
\end{equation}
Moreover, if $p\geq 2$, then
\begin{equation} \label{handle}
\beta_1(Z^\epsilon_{p,g,t})=Z^\epsilon_{p-2,g+2,t}.
\end{equation}
Finally, for all $(\epsilon',g') \in (\{+\}\times 2\NK)\cup (\{-\}\times \NK^*)$, all $p'\geq 1$ and all real $t'>0$, the following equality holds:
\begin{equation}\label{circle}
\beta_2(Z^{\epsilon}_{p,g,t}\otimes Z^{\epsilon'}_{p',g',t'})=Z^{\epsilon\wedge \epsilon'}_{p+p'-2,g+g',t+t'}.
\end{equation}
\end{proposition}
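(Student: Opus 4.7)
The plan is to establish the three identities by direct applications of the surgery axioms \Ax{3}, \Ax{5}, \Ax{6}, and then to derive the square-integrability statement of part 1 from a doubling construction combining the identities $\beta_2$ and $\beta_1$.

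For the identity $\beta_2$: let $M_1$ and $M_2$ be connected surfaces of types $(\epsilon,p,g,t)$ and $(\epsilon',p',g',t')$ with a distinguished boundary component on each, and let $M$ be the surface obtained by gluing these two boundaries; by Proposition \ref{connected sum}, $M$ has type $(\epsilon\wedge\epsilon',\,p+p'-2,\,g+g',\,t+t')$. Take $M$ with a mark $l$ at the joint and fixed boundary constraints $\O_{x_i}$ and $\O_{y_j}$. Axiom \Ax{6} identifies the mass of $\HF_{M,\vol,\{l,l^{-1}\},C_{l\mapsto z}}$ with the mass of $\HF_{\Spl_l(M),\ldots,\Spl_l(C)}$; axiom \Ax{5} factors the latter as $Z^\epsilon_{p,g,t}(x_1,\ldots,x_{p-1},z)\cdot Z^{\epsilon'}_{p',g',t'}(y_1,\ldots,y_{p'-1},z^{-1})$, the inversion $z\mapsto z^{-1}$ reflecting that the two boundary components of $\Spl_l(M)$ carry opposite orientations; and axiom \Ax{3} integrates $z$ out to recover $Z^{\epsilon\wedge\epsilon'}_{p+p'-2,g+g',t+t'}$. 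The identity $\beta_1$ is proved identically, taking $M$ to be the binary gluing of two boundaries of a single connected surface; one does not need \Ax{5}, and the topological calculation gives the new type $(\epsilon,p-2,g+2,t)$. The identity $\upsilon$ is the unary-gluing analogue: here Definition \ref{split C} prescribes $\Spl_l(C)(l')=C(l)^2$, producing the $z\mapsto x^2$ substitution in the integrand, and the resulting surface is forced to be non-orientable of type $(-,p-1,g+1,t)$.

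For square-integrability, take $M_1,M_2$ both isomorphic to a surface of type $(\epsilon,p,g,t)$ and glue them along $q$ pairs of boundaries in sequence: the first gluing applies $\beta_2$ and produces a connected surface of type $(\epsilon,2p-2,2g,2t)$, and each of the subsequent $q-1$ gluings is a $\beta_1$ within this single connected surface, so after $q$ gluings we obtain a surface $M''$ of type $(\epsilon,2p-2q,2g+2(q-1),2t)$ whose partition function value is finite at every point. Iterating the identities of part 2 yields
\begin{equation*}
Z^{\epsilon}_{2p-2q,\,2g+2(q-1),\,2t}(x_1,\ldots,x_{p-q},y_1,\ldots,y_{p-q}) = \int_{G^q} Z^\epsilon_{p,g,t}(\vec x,\vec z)\, Z^\epsilon_{p,g,t}(\vec y,\vec z^{\,-1})\, d\vec z,
\end{equation*}
where $(\vec x,\vec z)=(x_1,\ldots,x_{p-q},z_1,\ldots,z_q)$. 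To convert this into an $L^2$ estimate, use the symmetry $Z^\epsilon_{p,g,t}(u_1,\ldots,u_p)=Z^\epsilon_{p,g,t}(u_1^{-1},\ldots,u_p^{-1})$, which follows from axiom \Ax{4} applied to an orientation-reversing self-diffeomorphism of $M$: such a map exists by Theorem \ref{classif} for orientable $M$, and in the non-orientable case Theorem \ref{diffeo bord} allows one to invert the $z$-variables individually. Specializing $\vec y=\vec x^{-1}$ (orientable case) or $\vec y=\vec x$ (non-orientable case) then yields the pointwise bound $\int_{G^q} Z^\epsilon_{p,g,t}(\vec x,\vec z)^2\, d\vec z<\infty$.

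The main obstacle will be making the orientation-reversal argument rigorous when the Markovian holonomy field is oriented: since only orientation-preserving isomorphisms are admissible in that category, one cannot apply \Ax{4} to an orientation-reversing self-map of $M$ directly, and must instead use an oriented mirror copy $\bar M$, obtaining the doubling identity in the modified form $Z^+_{M''}(\vec x,\vec y)=\int Z^+_M(\vec x,\vec z)\, Z^+_{\bar M}(\vec y,\vec z^{\,-1})\, d\vec z$ and relating $Z^+_{\bar M}$ to $Z^+_M$ via an orientation-preserving identification of $M_2$ with $\bar M$.
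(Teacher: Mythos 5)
Your proposal is correct and follows essentially the same route as the paper: the three identities are obtained by exactly the same combinations of axioms \Ax{3}, \Ax{5}, \Ax{6} applied to unary and binary gluings (the paper writes out the unary case and declares the binary ones similar, whereas you write out $\beta_2$), and the square-integrability is deduced by the same doubling trick, identifying $\int_{G^q} Z^2$ with $(\beta_1^{q-1}\circ\beta_2)(Z\otimes Z)$ evaluated at $(x_{q+1},x_{q+1}^{-1},\ldots,x_p,x_p^{-1})$ via the inversion symmetry $Z^\epsilon_{p,g,t}(x_1,\ldots,x_p)=Z^\epsilon_{p,g,t}(x_1^{-1},\ldots,x_p^{-1})$. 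The orientation-reversal point you flag at the end is dispatched in the paper by a direct appeal to Theorem \ref{diffeo bord} together with \Ax{4} (whose statement does not restrict $\psi$ to be orientation-preserving), so your mirror-copy workaround, while harmless, is not needed.
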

\index{Kolmogorov-Chapman equation}

\begin{proof}Let us start by proving the second assertion. Choose $\epsilon,p,g,t$ as above. It follows from the axiom \Ax{2} that $Z^\epsilon_{p,g,t}:G^p\to \RK^*_+$ is a measurable function. Since it takes non-negative values, the integral which defines $\upsilon (Z^\epsilon_{p,g,t})$ is well defined, possibly infinite. Let us prove that the identity (\ref{crosscap}) holds.

Let $M$ be a surface homeomorphic to $\Sigma^\epsilon_{p,g}$, endowed with a surface measure $\vol$ of total area $t$.
Consider $b\in \BS(M)$ and let $f:M\to M_1$ be a unary gluing along $b$, with joint $\{l,l^{-1}\}=\{f(b),f(b^{-1})\}$. Thus, $M=\Spl_l(M_1)$. The surface $M_1$ is not orientable and it has $p-1$ boundary components. According to the observation made after Definition \ref{def gluing}, it is homeomorphic to the connected sum of a projective plane and the surface obtained by gluing a disk along one boundary component of $M$. Thus, it is homeomorphic to $\Sigma^-_{p-1,g+1}$. Finally, $\vol$ induces on $M_1$ a surface measure $\vol_1$ with total area $t$.

Let $C$ be a set of $G$-constraints on $(M,\{l,l^{-1}\})$. Applying axioms \Ax{3} and then \Ax{6} gives us the relation
\begin{eqnarray*}
\HF_{M_1,\vol_1,\varnothing,C}(1)&=&\int_G \HF_{M_1,\vol_1,\{l,l^{-1}\},C_{l\mapsto x}}(1) \; dx\\
&=& \int_G \HF_{M,\vol,\varnothing,\Spl_l(C_{l\mapsto x})}(1) \; dx.
\end{eqnarray*}

Recall from Definition \ref{split C} that, since we are considering a unary gluing, $\Spl_l(C_{l\mapsto x})$ puts the constraint $\O_{x^2}$ on $b\in \BS(M)$. Translating the last relation in terms of the partition functions gives (\ref{crosscap}).

The proofs of the relations (\ref{handle}) and (\ref{circle}) are very similar. For the relation (\ref{handle}), one considers a binary gluing in which one identifies two boundary components of a connected surface. If the surface is orientable, the two boundary components are identified by an orientation-reversing diffeomorphism. The result of this gluing can be described as the connected sum of a torus and the surface obtained by gluing two disks along two boundary components of the original surface. Thus, it has two boundary components less, and reduced genus increased by $2$. For the relation (\ref{circle}), one considers a binary gluing in which one identifies two boundary components located on two distinct connected components of a surface. Proposition \ref{connected sum} settles the issue of orientation and reduced genus of the resulting surface.

The assertion of square-integrability follows  from (\ref{handle}) and (\ref{circle}) as we shall see now. Firstly, observe that Theorem \ref{diffeo bord} and the axiom \Ax{4} imply that the value of a partition function is not affected by simultaneously replacing each argument by its inverse. Now, for all $x_{q+1},\ldots,x_p \in G$,
\begin{eqnarray*}
&& \int_{G^{q}} Z^\epsilon_{p,g,t}(x_1,\ldots,x_p)^2 \; dx_{1}\ldots dx_q  \\ 
&& \hskip 3cm = \int_{G^{q}} Z^\epsilon_{p,g,t}(x_1,\ldots,x_p) Z^\epsilon_{p,g,t}(x_1^{-1},\ldots,x_p^{-1})\; dx_{1}\ldots dx_q \\
&& \hskip 3cm = (\beta_1^{q-1} \circ \beta_2)(Z^\epsilon_{p,g,t} \otimes Z^\epsilon_{p,g,t})(x_{q+1},x_{q+1}^{-1},\ldots,x_p,x_p^{-1}) \\
&& \hskip 3cm  = Z^\epsilon_{2(p-q),2(g+q-1),2t}(x_{q+1},x_{q+1}^{-1},\ldots,x_p,x_p^{-1}).
\end{eqnarray*}
The last number is indeed finite and the assertion is proved. \end{proof}

\begin{corollary} The partition functions of a Markovian holonomy field are completely determined by the functions $(Z^+_{1,0,t})_{t>0}$ and $(Z^+_{3,0,t})_{t>0}$.
\end{corollary}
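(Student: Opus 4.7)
The plan is to express every partition function $Z^\epsilon_{p,g,t}$ as a finite composition of the operations $\upsilon$, $\beta_1$, $\beta_2$ of Proposition~\ref{algebre Z} applied to copies of $Z^+_{1,0,\cdot}$ and $Z^+_{3,0,\cdot}$ at various positive areas. Topologically this amounts to decomposing every compact surface into disks, three-holed spheres, and, via $\upsilon$, M\"obius bands; the identities (\ref{crosscap}), (\ref{handle}) and (\ref{circle}) then translate each topological gluing into an integral relation on the partition functions.

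First I would handle the orientable spheres with holes $Z^+_{p,0,t}$ for $p\geq 1$ by induction on $p$: the cases $p=1$ and $p=3$ are given, and for any other $p\geq 2$ one has $Z^+_{p,0,t}=\beta_2(Z^+_{3,0,s}\otimes Z^+_{p-1,0,t-s})$ for any $s\in(0,t)$, using (\ref{circle}). Next, for $p\geq 0$ and $k\geq 1$, I would obtain $Z^+_{p,2k,t}=\beta_1^k(Z^+_{p+2k,0,t})$ from (\ref{handle}), which is legitimate since every intermediate surface has at least $p+2\geq 2$ boundary components. The only orientable case not covered by these two steps is the sphere, for which $Z^+_{0,0,t}=\beta_2(Z^+_{1,0,s}\otimes Z^+_{1,0,t-s})$.

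Second, for $Z^-_{p,g,t}$ with $p\geq 0$ and $g\geq 1$, I would write $g=r+2k$ with $r\in\{1,2\}$ chosen so that $g-r$ is non-negative and even, and use (\ref{crosscap}) to set $Z^-_{p,g,t}=\upsilon^r(Z^+_{p+r,\,g-r,\,t})$; the right-hand side has already been built in the first step.

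The main obstacle is not the existence of these reductions but their coherence. A given surface admits many distinct pants-and-disks decompositions, so the algorithm above depends on choices (how to split the total area among the elementary pieces, in what order to glue and cap). These choices must in the end be irrelevant, and for us they automatically are, because each $Z^\epsilon_{p,g,t}$ is defined intrinsically by the holonomy field before any decomposition is chosen; Proposition~\ref{algebre Z} supplies precisely the Kolmogorov--Chapman-type compatibility needed for the various expressions to agree, so no further work is required.
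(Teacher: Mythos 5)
Your proposal is correct and follows essentially the same route as the paper: build $Z^+_{p,g,t}$ from one disk and $p+g-1$ three-holed spheres via $\beta_2$ and $\beta_1$, recover the sphere by gluing two disks, and obtain the non-orientable partition functions by applying $\upsilon$ once or twice to an orientable one. The paper merely packages your induction into the single closed formula $Z^+_{p,g,t}=(\beta_1^{\frac{g}{2}}\circ\beta_2^{p+g-1})\bigl(Z^+_{1,0,\frac{t}{p+g}}\otimes (Z^+_{3,0,\frac{t}{p+g}})^{\otimes(p+g-1)}\bigr)$, and, as you observe, the intrinsic definition of the partition functions makes the choice of decomposition immaterial.
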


\begin{proof}For all non-negative $p$ and $g$, such that $g$ is even and $p+g>0$, the following identity holds:
\begin{equation}\label{Z+ Z1}
Z^+_{p,g,t}=(\beta_1^{\frac{g}{2}} \circ \beta_2^{p+g-1})\left(Z^+_{1,0,\frac{t}{p+g}} \otimes {Z^+_{3,0,\frac{t}{p+g}}}^{\otimes (p+g-1)} \right).
\end{equation}
Then, the equalities
\begin{equation}
\forall p,g\geq 0\; , \; \; Z^-_{p,2g+1,t}=\upsilon(Z^+_{p+1,2g,t}) \mbox{ and } Z^-_{p,2g+2,t}=\upsilon^2(Z^+_{p+2,2g,t})
\end{equation}\label{Z- Z1}
determine all non-orientable partition functions. Finally, the equality
$$Z^+_{0,0,t}=\beta_2(Z^+_{1,0,\frac{t}{2}}\otimes Z^+_{1,0,\frac{t}{2}})$$
determines the only remaining partition function. \end{proof}

An important consequence of Proposition \ref{algebre Z} is that for all $\epsilon,p,g,t$, the function
$Z^{\epsilon}_{p,g,t}$ is the density with respect to the Haar measure of a measure on $G^p$. By the axiom \Ax{7}, it is a probability measure. In the next proposition, we start to study the behaviour of these
probability measures as $t$ tends to $0$, under the assumption that the holonomy field is
stochastically continuous. Recall that we denote by $1$ the unit element of $G$ and that we use the notation
$\delta_{\O_{x}}$ for the unique $G$-invariant probability measure on $\O_{x}$.


\begin{proposition} \label{tqft limite} Let $(Z^{\pm}_{p,g,t})_{p,g,t}$ be the partition functions
of a
stochastically continuous Markovian holonomy field. Then, as $t$ tends to 0, one has the following
weak convergences of measures on $G$.\\
1. $Z^+_{1,0,t}(x)\; dx \build{\Longrightarrow}_{t\to 0}^{} \delta_{1}$.\\
2. $\forall x,y\in G, Z^+_{3,0,t}(x,y,z^{-1})\; dz\build{\Longrightarrow}_{t\to
0}^{}\delta_{\O_{x}}*\delta_{\O_{y}}= \int_{G^2}\delta_{vxv^{-1} wyw^{-1}}\; dv dw$.
\end{proposition}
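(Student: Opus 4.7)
My plan is to deduce both assertions from the stochastic continuity of $\HF$ applied to a shrinking loop on a fixed ambient surface, combined with the Markov decomposition coming from axioms $\Ax{3}$ and $\Ax{6}$, the normalization $\Ax{7}$, the Fellerian continuity of the partition functions, and Fubini's theorem.

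For (1), fix a disk $D$ of area $T$ with boundary constraint $\O_x$, pick an interior point $p\in D$, and for each $t\in(0,T)$ a simple loop $\gamma_t$ based at $p$ of length tending to zero, bounding a sub-disk of area $t$. Then $\gamma_t$ converges to the constant loop at $p$ with fixed endpoints in the sense of Definition \ref{def cv fe}, and stochastic continuity gives, for every continuous $f:G\to\RK$,
\[
\int f(h(\gamma_t))\,\HF_{D,\vol,\varnothing,\O_x}(dh)\;\longrightarrow\;f(1)\,Z^+_{1,0,T}(x).
\]
Splitting $D$ along $\gamma_t$ into an inner disk of area $t$ and an outer annulus of area $T-t$, axioms $\Ax{3}$ and $\Ax{6}$ identify the left-hand side with $\int_G f(z)\,Z^+_{1,0,t}(z)\,Z^+_{2,0,T-t}(x,z^{-1})\,dz$; the Fellerian property then replaces $Z^+_{2,0,T-t}$ by $Z^+_{2,0,T}$ with an error uniform in $z$. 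I next integrate the resulting convergence in $x$ against the Haar measure on $G$, which is legal by dominated convergence since $Z^+_{2,0,T}$ is continuous and hence bounded on the compact group. Fubini combined with two applications of $\Ax{7}$ -- one for the annulus, one for the disk -- collapses the left-hand side to $\int_G f(z)\,Z^+_{1,0,t}(z)\,dz$ and the right-hand side to $f(1)$, giving the claimed weak convergence.

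For (2), I build the analogous construction inside a fixed sphere $M$ of area $A$. Fix two disjoint sub-disks $D_\alpha,D_\beta\subset M$ of areas $a,b$ with boundary loops $\alpha,\beta$ and a fixed rectifiable arc $c$ joining a point of $\alpha$ to a point of $\beta$. Choose a sequence of simple loops $\gamma_t$ enclosing $D_\alpha\cup D_\beta$ whose complement in $M\setminus(D_\alpha\cup D_\beta)$ is a pair of pants of area $t$, constructed as the boundary of a tubular neighbourhood of $\alpha\cup c\cup\beta$ of shrinking thickness, so that $\gamma_t$ converges with fixed endpoints to the rectifiable loop $\alpha c\beta c^{-1}$. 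Splitting $M$ along $\alpha,\beta,\gamma_t$ gives four pieces -- $D_\alpha$, $D_\beta$, the pair of pants of area $t$, and the complementary disk of area $A-a-b-t$ -- and axioms $\Ax{3}$, $\Ax{6}$ express the distribution of $H_{\gamma_t}$ under $\HF_{M,\vol,\{\alpha,\beta\},\{\O_x,\O_y\}}$ as a density whose dependence on $t$ is carried by $Z^+_{3,0,t}$. For central $f$, multiplicativity gives $H_{\gamma_0}=H_{c^{-1}}H_\beta H_c H_\alpha$, and gauge invariance of $\HF$ at the endpoint of $c$ allows one to replace $H_c$ by the identity in any central computation, at the cost of replacing $H_\beta$ by an arbitrary conjugate $\tilde H_\beta$ in the same conjugacy class; the conditional expectation of $f(H_{\gamma_0})$ given the conjugacy classes $\O_x,\O_y$ thus equals $\int f(vu)\,d\delta_{\O_y}(v)\,d\delta_{\O_x}(u)=\int f\,d(\delta_{\O_x}*\delta_{\O_y})$. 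Combining this identification of the limit with the density produced by the Markov decomposition, using Feller to freeze the outer disk area and sending $a,b\to 0$ so that by (1) the disk factors $Z^+_{1,0,a}(\cdot^{-1})$ and $Z^+_{1,0,b}(\cdot^{-1})$ become approximate identities concentrated at the prescribed conjugacy classes, and running a Fubini/$\Ax{7}$ manipulation parallel to the one from (1) to eliminate the remaining auxiliary factor, yields after the substitution $w\mapsto z^{-1}$ the claimed convergence $Z^+_{3,0,t}(x,y,z^{-1})\,dz\Rightarrow\delta_{\O_x}*\delta_{\O_y}$. The main obstacle is the combined control of the pants-degeneration geometry -- one must exhibit explicit rectifiable loops $\gamma_t$ whose enclosed pair of pants has area exactly $t$ and whose limit in the $d_\ell$ sense is the specified non-simple rectifiable loop -- together with the gauge-theoretic identification of the limit holonomy, which is only clean on the class of central functions measurable for the invariant $\sigma$-field $\I$.
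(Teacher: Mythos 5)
Part 1 of your argument is essentially the paper's: a loop shrinking to a point inside a fixed disk, the decomposition $\int_G f(z)\,Z^+_{1,0,t}(z)\,Z^+_{2,0,T-t}(x,z^{-1})\,dz$ obtained from \Ax{3}, \Ax{5}, \Ax{6}, and stochastic continuity to identify the limit as $f(1)\,Z^+_{1,0,T}(x)$. One caveat: the proposition assumes only stochastic continuity, so your appeal to the Fellerian property to replace $Z^+_{2,0,T-t}$ by $Z^+_{2,0,T}$ invokes a hypothesis you do not have. It is also unnecessary: integrate over the boundary constraint $x$ \emph{before} letting $t\to 0$; then $\int_G Z^+_{2,0,T-t}(x,z^{-1})\,dx=1$ exactly, by \Ax{7}, for every $t$, the left-hand side collapses to $\int_G f(z)\,Z^+_{1,0,t}(z)\,dz$, and dominated convergence (with dominating function $\Vert f\Vert_\infty\, Z^+_{1,0,T}$, integrable by \Ax{7} rather than by continuity) gives the limit $f(1)$. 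This is exactly what the paper does.

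Part 2 has a genuine gap in the global setup. You work on a closed sphere with the two disks $D_\alpha,D_\beta$ filled in and propose to send $a,b\to 0$ so that the disk factors $Z^+_{1,0,a}(\cdot^{-1})$ and $Z^+_{1,0,b}(\cdot^{-1})$ become ``approximate identities concentrated at the prescribed conjugacy classes.'' This cannot work: part (1) says precisely that $Z^+_{1,0,a}(u)\,du\Rightarrow\delta_1$, so a shrinking filled disk forces the holonomy of its boundary to concentrate at the \emph{identity}, not at an arbitrary prescribed class $\O_x$; and if instead you impose $h(\alpha)\in\O_x$ as a mark constraint via \Ax{3}, the disk contributes the fixed number $Z^+_{1,0,a}(x)$, which is not an approximate identity in any variable and which you cannot divide out (its positivity is established only much later in the paper, using the present proposition). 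Moreover, on a closed sphere the outer cap contributes a factor $Z^+_{1,0,A-a-b-t}(w^{-1})$ inside the $w$-integral with no free boundary variable left, so no manipulation ``parallel to the one from (1)'' via \Ax{7} can eliminate it. The paper avoids both problems by taking the ambient surface to be the three-holed sphere itself, with $\O_x$ and $\O_y$ as genuine boundary constraints on the two inner circles and a third, free, outer boundary constraint $\O_z$: the decomposition then reads $\int_G Z^+_{3,0,t_r}(x,y,v)f(v)\,Z^{+}_{2,0,T-t_r}(v^{-1},z)\,dv$, and integrating over $z$ kills the annulus factor by \Ax{7}. Your identification of the limiting holonomy --- gauge invariance at an endpoint of the connecting arc makes its holonomy uniform, whence the limit $\int_G f(xwyw^{-1})\,dw=\int f\,d(\delta_{\O_x}*\delta_{\O_y})$ for central $f$ --- is the correct key step and coincides with the paper's.
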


\begin{proof}1. Let $M$ be the disk of radius 1 centred at the origin in $\RK^2$ endowed with
the Lebesgue measure, which we denote by $\vol$. We denote by $\partial M$ the positively oriented
boundary of $M$. For each $r\in [0,1]$, let $s_{r}$ be the path which goes straight from the origin
to the point $(r,0)$ and let $c_{r}$ be the loop based at $(r,0)$ which goes once counterclockwise
around the circle of radius $r$ centred at the origin. For all $y\in G$, we denote by $(h_{c})_{c\in \Path(M)}$ the
canonical process on $\M(\Path(M),G)$, we consider the measure
$\HF_{M,\vol,\varnothing,\partial M \mapsto y}$ on $\M(\Path(M),G)$ and we denote by $\E_{y}$ the
corresponding expectation. Let $f$ be a continuous invariant function on $G$. We compute
$\E_{y}[f(h_{s_{r}c_{r}s_{r}^{-1}})]$. By the multiplicativity property of $h$, it is equal to
$\E_{y}[f(h_{s_{r}}^{-1}h_{c_{r}}h_{s_{r}})]=\E_{y}[f(h_{c_{r}})]$.  By the axiom \Ax{3}, we can disintegrate this expectation with respect to the value of $h(c_r)$. We find
\begin{eqnarray*}
\E_{y}[f(h_{c_{r}})]&=&\int_{G\times \M(\Path(M),G)} f(h(c_r)) \HF_{M,\vol,\{c_r,c_r^{-1}\},(\partial M \mapsto y,c_r\mapsto x)}(dh) dx\\
&=& \int_G f(x) \HF_{M,\vol,\{c_r,c_r^{-1}\},(\partial M \mapsto y,c_r\mapsto x)}(1) dx.
\end{eqnarray*}
We use now the axioms \Ax{6} and \Ax{5} to split $M$ along $c_r$ and we find
$$\E_{y}[f(h_{c_{r}})]=\int_{G}Z^+_{1,0,\pi r^2}(x) f(x) Z^{+}_{2,0,\pi (1-r^2)}
(x^{-1},y)\; dx.$$
By integrating over $y$ and using the axiom \Ax{7}, we find
\begin{equation}\label{1,0,t cv}
\int_{G} \E_{y}[f(h_{c_{r}})]\; dy = \int_{G}Z^+_{1,0,\pi r^2}(x) f(x).
\end{equation}
Our goal is now to prove that the left-hand side tends to $f(1)$ as $r$ tends to $0$. For this, we
use the stochastic continuity of the holonomy field. Indeed, as $r$ tends to 0, the loop
$s_{r}c_{r}s_{r}^{-1}$ converges with fixed endpoints to the constant loop $c_{0}$ at the origin.
Hence, for all $y\in G$, $\E_{y}[f(h_{c_{r}})]\to \E_{y}[f(h_{c_{0}})]$ as $r$ tends to $0$.  By
the multiplicativity of $h$ and the fact that $c_{0}=c_{0}c_{0}^{-1}$,
the mapping $h_{c_{0}}:\Path(M)\to G$ is identically equal to 1. Hence,  $\E_{y}[f(h_{c_{r}})]
\build{\lra}_{r\to 0}^{}f(1) Z^+_{1,0,\pi}(y) $.
In order to integrate this convergence with respect to $y$, we use the fact that
$$\left|\E_{y}[f(h_{c_{r}})]\right|\leq \parallel f \parallel_{\infty}
Z^+_{1,0,\pi}(y)$$
and the right-hand side is continuous, hence integrable, with respect to $y$. Hence, the
dominated convergence theorem applies and we deduce that the left-hand side of (\ref{1,0,t cv})
tends to $f(1)$ as $r$ tends to 0.

2. Let $M$ be the closed disk of radius 4 centred at the origin in $\RK^2$ from which one has removed
the two open disks of radius 1 centred respectively at the points $\alpha=(2,0)$ and
$\beta=(-2,0)$. We endow $M$ with some density denoted by $\vol$.
Let $a$ (resp. $b$) be the loop which starts at $(1,0)$ (resp. $(-1,0)$) and goes once around the
circle of radius 1 centred at $\alpha$ (resp. $\beta$), counterclockwise. Let $d$ be the path
which goes straight from $(1,0)$ to $(-1,0)$.
Choose $r\in (0,1)$. Consider the union of the two closed disks of radius $1+r$ centred at $\alpha$
and $\beta$ and the rectangle $[-1,1]\times [-r,r]$. Let $c_{r}$ be the loop which starts at
$(2-\sqrt{1+2r},r)$ and bounds this domain with positive orientation. Let $s_{r}$ be the path
which goes straight from $(1,0)$ to $(2-\sqrt{1+2r},r)$. As $r$ tends to 0, the loop
$s_{r}c_{r}s_{r}^{-1}$ converges with fixed endpoints to the loop $dbd^{-1}a$. However, in order to
apply our axioms, we need to replace $c_{r}$ by a loop based at the same point and whose image is
a smooth submanifold of $M$. We do this in such a way that the convergence of $s_{r}c_{r}s_{r}^{-1}$ to 
$dbd^{-1}a$ is preserved. 

We consider the measure $\HF_{M,\vol,\varnothing,C_{x,y,z}}$ on $\M(\Path(M),G)$, where
$C_{x,y,z}$ is characterized by the fact that $C(a)=x$, $C(b)=y$ and $C$ maps the circle of radius
4 centred at the origin to $z$. We denote the corresponding expectation by $\E_{x,y,z}$. Let $f$
be a continuous invariant function on $G$. For all $r\in (0,1)$, we have
$$\E_{x,y,z}[f(h_{s_{r}c_{r}s_{r}^{-1}})]=\int_{G}Z^{+}_{3,0,t_{r}}(x,y,v) f(v)
Z^{+}_{2,0,T-t_{r}}(v^{-1},y)\; dv,$$
where $t_{r}$ is the area of the domain delimited by $a$, $b$ and $c_{r}$ and $T$ is the total
area of $M$. By the same arguments as in the previous proofs, the left-hand side converges as $r$
tends to 0
to $\E_{x,y,z}[f(h_{dbd^{-1}a})]$ and the convergence is dominated with
respect to $z$. Since the two endpoints of $d$ are distinct and the measure $\HF_{M,\vol,\varnothing,C_{x,y,z}}$
is invariant under gauge transformations, the distribution of $h_{d}$ is both left and
right-invariant on $G$. Thus, $h_{d}$ has the uniform distribution on $G$. Hence,
$\E_{x,y,z}[f(h_{dbd^{-1}a})]=\int_{G}f(xwyw^{-1})\; dw.$ By integrating with respect to $z$, we find
$$\int_{G}Z^{+}_{3,0,t_{r}}(x,y,z)f(z) \; dz \build{\lra}_{r\to 0}^{} \int_{G} f(xwyw^{-1})\; dw.$$
\end{proof}

We will use this result to prove that the partition functions a stochastically continuous Markovian holonomy field are completely determined by the functions $(Z^+_{1,0,t})_{t>0}$. Let us introduce two probability measures on $G$.

\index{EAeta@$\eta,\kappa$}
\begin{definition} \label{kappasigma} Let $\eta$ and $\kappa$ be the two invariant probability
measures on $G$ defined respectively by the fact that for all continuous function $f$ on $G$,
\begin{equation}
\int_{G} f \; d\eta =\int_{G^2} f(aba^{-1}b^{-1})\; da db \; \mbox{
and } \; \int_{G} f \; d\kappa = \int_{G} f(a^2)\; da.
\end{equation}
\end{definition}

The letters $\eta$ and $\kappa$ correspond to the words {\em handle} and {\em cross-cap}. We start by proving some important properties of these measures.

Let $\irrep(G)$ denote the set of isomorphism classes of irreducible representations of $G$ over $\CK$.
Given $\alpha \in \irrep(G)$ with character $\chi_{\alpha}$ and a measure $\mu$ on $G$, the Fourier
coefficient $\hat \mu (\alpha)$ is defined by $\hat\mu(\alpha)=\int_{G} \overline{\chi_{\alpha}}
\; d\mu$. Recall that an irreducible representation is said to be complex if its
character is not real valued, otherwise real (resp. quaternionic) if it preserves a non-degenerate
symmetric (resp. skew-symmetric) complex bilinear form.

If $\mu$ is a measure on $G$, we denote by $\mu^{\vee}$ the measure defined by $\int_{G} f \;
d\mu^{\vee}=\int_{G}f(g^{-1})\; \mu(dg)$. By an invariant measure we mean a measure which is
invariant by conjugation. 

\begin{lemma} \label{convol k s}Let $\mu,\nu,\xi$ be three invariant probability measures on $G$. If $\hat\mu(\alpha)=0$ for every complex representation $\alpha$, then $\mu*\nu=\mu*\nu^{\vee}$. In particular,
$\kappa * \xi * \nu= \kappa * \xi * \nu^{\vee}$. Moreover, $\kappa*\eta=\kappa^{*3}$.
\end{lemma}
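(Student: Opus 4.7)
The plan is to reduce everything to Fourier analysis on $G$ via the Peter--Weyl theorem. The basic observation, which I would state as a preliminary, is that for any invariant probability measure $\mu$ on $G$ and any $\alpha\in \irrep(G)$, the operator $T^\alpha_\mu = \int_G \alpha(g)\,\mu(dg)$ commutes with $\alpha$, hence by Schur's lemma is a scalar $c_\mu(\alpha)\,\mathrm{Id}$ with $c_\mu(\alpha) = \frac{1}{d_\alpha}\int\chi_\alpha\,d\mu$, where $d_\alpha = \dim\alpha$. Because convolution translates into operator composition, this gives the master formula
\begin{equation*}
\int_G \chi_\alpha\,d(\mu*\nu) = \frac{1}{d_\alpha}\left(\int_G \chi_\alpha\,d\mu\right)\!\left(\int_G \chi_\alpha\,d\nu\right)
\end{equation*}
for any two invariant probability measures $\mu,\nu$. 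Since invariant measures are determined by their values on characters, all subsequent identities will be checked representation by representation.

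For the first assertion, I would use that $\chi_\alpha(g^{-1}) = \overline{\chi_\alpha(g)}$, which coincides with $\chi_\alpha(g)$ precisely when $\alpha$ is real or quaternionic, so $\int\chi_\alpha\,d\nu^\vee = \int\chi_\alpha\,d\nu$ for such $\alpha$. The hypothesis $\hat\mu(\alpha) = 0$ for complex $\alpha$ translates, using $\overline{\chi_\alpha} = \chi_{\alpha^*}$ and the fact that $\alpha\mapsto \alpha^*$ permutes complex representations, into $\int\chi_\alpha\,d\mu = 0$ for every complex $\alpha$. Plugging these facts into the master formula: for complex $\alpha$, both $\int\chi_\alpha\,d(\mu*\nu)$ and $\int\chi_\alpha\,d(\mu*\nu^\vee)$ vanish; for real or quaternionic $\alpha$, they coincide because $\nu$ and $\nu^\vee$ have the same character integrals. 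Peter--Weyl then gives $\mu*\nu = \mu*\nu^\vee$.

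For the ``in particular'' clause, the crucial fact is that $\int \chi_\alpha\,d\kappa = \int_G \chi_\alpha(a^2)\,da$ is exactly the Frobenius--Schur indicator $\mathrm{FS}(\alpha)\in\{1,-1,0\}$, which vanishes precisely on complex representations. Applying the master formula to $\kappa*\xi$, its character integral is $\frac{\mathrm{FS}(\alpha)}{d_\alpha}\int\chi_\alpha\,d\xi$, which still vanishes on complex $\alpha$, so $\kappa*\xi$ satisfies the hypothesis of the first assertion and thus commutes the pair $(\nu,\nu^\vee)$ as required.

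For $\kappa * \eta = \kappa^{*3}$, the decisive computation is $\int\chi_\alpha\,d\eta = \tfrac{1}{d_\alpha}$ for every $\alpha\in\irrep(G)$. I would derive this by applying the projection identity $\int_G \alpha(a)X\alpha(a)^{-1}\,da = \tfrac{\mathrm{tr}(X)}{d_\alpha}\mathrm{Id}$ to $X=\alpha(b)$, then integrating in $b$ using the character orthogonality $\int|\chi_\alpha|^2 = 1$ to get $\int\alpha(aba^{-1}b^{-1})\,da\,db = \tfrac{1}{d_\alpha^2}\mathrm{Id}$ and taking the trace. Combined with $\int\chi_\alpha\,d\kappa = \mathrm{FS}(\alpha)$, the master formula yields
\begin{equation*}
\int\chi_\alpha\,d(\kappa*\eta) = \frac{\mathrm{FS}(\alpha)}{d_\alpha^2},\qquad \int\chi_\alpha\,d\kappa^{*3} = \frac{\mathrm{FS}(\alpha)^3}{d_\alpha^2},
\end{equation*}
and these agree for every $\alpha$ because $\mathrm{FS}(\alpha)\in\{0,\pm 1\}$ implies $\mathrm{FS}(\alpha)^3 = \mathrm{FS}(\alpha)$. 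Peter--Weyl finishes the argument. The only non-routine step is really the identification of $\hat\kappa$ with the Frobenius--Schur indicator and the derivation of $\int\chi_\alpha\,d\eta = 1/d_\alpha$; both are classical, so I do not expect any genuine obstacle.
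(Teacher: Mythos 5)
Your proof is correct and follows essentially the same route as the paper: compute the Fourier coefficients $\hat\eta(\alpha)=1/\dim\alpha$ and $\hat\kappa(\alpha)\in\{1,0,-1\}$ (the Frobenius--Schur indicator), use the multiplicativity $\widehat{\mu*\nu}(\alpha)=\hat\mu(\alpha)\hat\nu(\alpha)/\dim\alpha$, and conclude by the fact that invariant measures are determined by their character integrals. You merely supply more detail than the paper on the two classical computations it leaves to the reader.
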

\index{conjugation!measures invariant by}

\begin{proof} The Fourier coefficients of $\eta$ and $\kappa$ can be computed easily by using the elementary theory of characters: 
$$\forall \alpha \in \irrep(G), \; \hat\eta(\alpha)=\frac{1}{\dim \alpha} \; \mbox{ and }
\; 
\hat\kappa(\alpha)=\left\{\begin{array}{rl} 1 &\mbox{ if } \alpha \mbox{ is real,} \\0 &\mbox{ if }
\alpha \mbox{ is complex,} \\ -1 &\mbox{ if } \alpha \mbox{ is quaternionic.}\end{array}\right.$$
The  Fourier coefficients of the convolution product of two measures is given by the relation $\widehat{\mu * \mu'}(\alpha)=\hat\mu(\alpha)\hat\mu'(\alpha)/\dim \alpha$. Moreover, the Fourier coefficients of $\nu^{\vee}$ are the complex conjugate of those of $\nu$. Hence, on real and quaternionic representations, whose character is real, the Fourier coefficients of $\nu$ are real and agree with those of $\nu^{\vee}$. Hence, $\mu*\nu$ and $\mu*\nu^\vee$ have the same Fourier coefficients. Since they are both invariant, they are equal. The equality $\kappa * \xi * \nu= \kappa * \xi * \nu^{\vee}$ follows immediately. The last assertion is proved by computing the Fourier coefficients of both sides. \end{proof}

\begin{remark} We invite the reader to compare the equality $\kappa * \eta = \kappa^{* 3}$ with the fact that the connected sum of a projective plane and a torus is homeomorphic to the connected sum of three projective planes (or the connected sum of a projective plane and a Klein bottle).
\end{remark}
\index{Klein bottle}

In the next proposition, we use the notation $\mu(f)$ for the integral of a function $f$ against a
measure $\mu$ and we denote by $*$ the convolution of probability measures. 

\begin{proposition} \label{explicit Z} The partition functions of a regular Markovian holonomy field are completely
determined by the functions $Z^+_{1,0,t}$ for $t>0$. One has the following explicit formulas.

For all $p\geq 0$, all $g\geq 0$ even, all $t>0$ and all $x_{1},\ldots,x_{p}\in G$, one has
\begin{equation}\label{valeur Z+}
Z^{+}_{p,g,t}(x_{1},\ldots,x_{p})=\eta^{*\frac{g}{2}}*\delta_{\O_{x_{1}}}*\ldots*\delta_{\O_{x_{p}}}(Z^{+}_{1,0,t}).
\end{equation}
Moreover, for all $p\geq 0$, all $g> 0$, all $t>0$ and all $x_{1},\ldots,x_{p}\in G$, one has
\begin{equation}\label{valeur Z-}
Z^{-}_{p,g,t}(x_{1},\ldots,x_{p})=\kappa^{*g}*\delta_{\O_{x_{1}}}*\ldots*\delta_{\O_{x_{p}}}(Z^{+}_{1,0,t}).
\end{equation}
\end{proposition}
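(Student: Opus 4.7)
My plan is a cascade of inductions grounded in the algebraic relations of Proposition \ref{algebre Z} and the limit identities of Proposition \ref{tqft limite}, with two elementary lemmas about invariant measures as the basic bookkeeping. Throughout, I freely use that each partition function is conjugation-invariant in every argument, so $\delta_{\O_x}(f) = f(x)$ whenever $f$ is invariant; that invariant probability measures on $G$ commute under convolution; and the smearing identity $\int_G \delta_{\O_z}\, \mu(dz) = \mu$ for any invariant measure $\mu$, which follows at once by testing on an invariant function and using the conjugation-invariance of $\mu$. These combine into the key computational identity $\int_G (\delta_{\O_z} * \nu)\, \mu(dz) = \mu * \nu$ for invariant $\mu,\nu$, which drives the inductive step below.

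First I would handle the orientable case by induction on $p\geq 1$ with $g\geq 0$ even and fixed. The base case $Z^+_{1,0,t}(x) = \delta_{\O_x}(Z^+_{1,0,t})$ is immediate from invariance. For the inductive step, attach a pair of pants of small area $s$ via relation (\ref{circle}) to obtain, for every $s>0$,
\begin{equation*}
Z^+_{p+1,g,t+s}(x_1,\ldots,x_{p+1}) = \int_G Z^+_{3,0,s}(x_1,x_2,z^{-1})\, Z^+_{p,g,t}(z,x_3,\ldots,x_{p+1})\,dz.
\end{equation*}
As $s\to 0$, Proposition \ref{tqft limite} ensures the integrator $Z^+_{3,0,s}(x_1,x_2,z^{-1})\,dz$ converges weakly to $\delta_{\O_{x_1}} * \delta_{\O_{x_2}}$; the Feller property supplies continuity of $Z^+_{p,g,t}$ in $z$ and of $Z^+_{p+1,g,\cdot}$ at $t$ needed to pass to the limit on both sides. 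The inductive hypothesis, the smearing-convolution identity above, and commutativity of invariant convolutions then produce the formula for $Z^+_{p+1,g,t}$. Induction on $g$ is next carried out using $\beta_1(Z^+_{p+2,g,t}) = Z^+_{p,g+2,t}$ from (\ref{handle}): the identity $\int_G \delta_{\O_y} * \delta_{\O_{y^{-1}}}\, dy = \eta$, verified by comparing invariant measures against Definition \ref{kappasigma}, supplies the extra $\eta$ factor needed to pass from $\eta^{*g/2}$ to $\eta^{*(g+2)/2}$. The closed-surface case $Z^+_{0,g,t}$ for $g\geq 2$ is covered simultaneously by $\beta_1(Z^+_{2,g-2,t})$; the remaining edge case $Z^+_{0,0,t}$ comes from gluing two disks via $\beta_2$ and letting one area tend to zero, using Proposition \ref{tqft limite}.1 to recover $Z^+_{0,0,t} = Z^+_{1,0,t}(1)$.

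For the non-orientable case, I would start from the orientable formulas and apply (\ref{crosscap}) in the form of (\ref{Z- Z1}), performing $\upsilon$ once or twice according to the parity of $g$. Each $\upsilon$-integration produces the identity $\int_G \delta_{\O_{y^2}}\, dy = \kappa$, again verified by comparison on invariant functions against Definition \ref{kappasigma}, contributing one factor of $\kappa$. Lemma \ref{convol k s} then provides $\kappa * \eta = \kappa^{*3}$, which by iteration in the commutative algebra of invariant measures yields $\eta^{*k} * \kappa^{*j} = \kappa^{*(2k+j)}$ for every $j\geq 1$; this collapses the resulting product of $\eta$'s and $\kappa$'s to the required $\kappa^{*g}$. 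The main obstacle throughout the argument is the limit $s\to 0$ in the inductive step of the orientable case: it is the only point where regularity of the field enters beyond the algebraic axioms, and reconciling the weak convergence of Proposition \ref{tqft limite} with the pointwise continuity of both sides requires the full strength of both stochastic continuity and the Feller property.
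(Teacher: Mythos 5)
Your proposal is correct and follows essentially the same route as the paper: induction on $p$ at genus zero driven by the pair-of-pants gluing (\ref{circle}) and the $s\to 0$ limit of Proposition \ref{tqft limite} (with the Feller property controlling the limit), then $\beta_1$ and the identity $\int_G \delta_{\O_y}*\delta_{\O_{y^{-1}}}\,dy=\eta$ for the genus, the two-disk gluing for $Z^+_{0,0,t}$, and finally $\upsilon$, $\int_G\delta_{\O_{y^2}}\,dy=\kappa$ and $\kappa*\eta=\kappa^{*3}$ for the non-orientable case. The only cosmetic difference is that you keep the total area at $t+s$ and let $s\to 0$ with a fixed test function, where the paper splits the area as $(t-s)+s$ and invokes uniform convergence of $Z^+_{p-1,0,t-s}$.
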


\begin{proof}Let us start by proving (\ref{valeur Z+}) when $g=0$ and $p>0$, by induction on $p$. For $p=1$, it is a consequence of the fact that $Z^+_{1,0,t}$ is invariant by conjugation. Assume that $p>1$ and the result has been proved for $Z^+_{p-1,0,t}$. Then, for all $x_1,\ldots,x_p\in G$ and all $s\in (0,t)$, (\ref{circle}) yields
$$Z^+_{p,0,t}=\int_G Z^+_{p-1,0,t-s}(x_1,\ldots,x_{p-2},y) Z^+_{3,0,s}(x_{p-1},x_p,y^{-1})\; dy.$$
Since the Markovian holonomy field that we consider is Fellerian, the function 
$$(s,x_1,\ldots,x_{p-2},y)\mapsto Z^+_{p-1,0,t-s}(x_1,\ldots,x_{p-2},y)$$
is continuous on the compact set $[0,\frac{t}{2}]\times G^{p-1}$. Hence, when $s$ tends to $0$, it converges uniformly as a function on $G^{p-1}$ towards $Z^+_{p-1,0,t}$. Thus, using the convergence proved in Proposition \ref{tqft limite}, we find
$$Z^+_{p,0,t}=\int_G  Z^+_{p-1,0,t}(x_1,\ldots,x_{p-2},y) \; (\delta_{O_{x_{p-1}}} * \delta_{O_{x_p}})(dy).$$
Using the induction hypothesis, we find
\begin{align*}
Z^+_{p,0,t} &= \int_G \left(\int_{G^{p-1}} Z^+_{1,0,t}(w_1 \ldots w_{p-2} z) \;  \prod_{i=1}^{p-2} \delta_{\O_{x_i}}(dw_i) \delta_{O_y}(dz) \right) (\delta_{O_{x_{p-1}}} * \delta_{O_{x_p}})(dy)\\
&= \int_{G^p} Z^+_{1,0,t}(w_1 \ldots w_{p}) \prod_{i=1}^{p} \delta_{\O_{x_i}}(dw_i)
\end{align*}
because $\delta_{O_{x_{p-1}}} * \delta_{O_{x_p}}$ is already an invariant measure on $G$. This is the expected result. 

Let us now treat the case where $p+g>0$. We have 
\begin{eqnarray*}
Z^+_{p,g,t}(x_1,\ldots,x_p) &=& \beta_1^{\frac{g}{2}}(Z^+_{p+g,0,t})(x_1,\ldots,x_p)\\
&=& \int_{G^{\frac{g}{2}}} Z^+_{p+g,0,t}(x_1,\ldots,x_p,y_1,y_1^{-1},\ldots,y_{\frac{g}{2}},y_{\frac{g}{2}}^{-1}) 
\; dy_1 \ldots dy_{\frac{g}{2}} \\
&& \hskip -3.5cm = \int_{G^{p+\frac{3g}{2}}} Z^+_{1,0,t} (w_1\ldots w_p z_1 z'_1 \ldots z_{\frac{g}{2}} 
z'_{\frac{g}{2}}) \; \prod_{i=1}^{p} \delta_{\O_{x_i}}(dw_i) \prod_{i=1}^{\frac{g}{2}} 
\delta_{\O_{y_i}}(dz_i)\delta_{\O_{y_i^{-1}}}(dz'_i) \prod_{i=1}^{\frac{g}{2}} dy_i.
\end{eqnarray*}
The result follows now from the equality $\int_G \delta_{\O_y} * \delta_{\O_{y^{-1}}} \; dy =\eta$ which one checks easily using the elementary properties of the Haar measure.

In order to prove (\ref{valeur Z+}), we still have to prove that $Z^+_{0,0,t}=Z^+_{1,0,t}(1)$. This follows from the equality $Z^+_{0,0,t}=\int_G Z^+_{1,0,t-s}(y) Z^+_{1,0,s}(y^{-1})\; dy$, Proposition \ref{tqft limite} and the argument of uniform convergence that we have already used above.  

By (\ref{Z- Z1}) and the first part of this proof, we have for all $p,g\geq 0$, all $t>0$ and all $x_1,\ldots,x_p\in G$,
\begin{eqnarray*}
Z^-_{p,2g+1,t}(x_1,\ldots,x_p) &=& \eta^{*g}*\delta_{\O_{x_1}}*\ldots*\delta_{\O_{x_p}}* \left(\int_G \delta_{\O_{y^2}} \; dy \right) (Z^+_{1,0,t})\\
&=& \kappa *  \eta^{*g}*\delta_{\O_{x_1}}*\ldots*\delta_{\O_{x_p}} (Z^+_{1,0,t}),
\end{eqnarray*}
and
\begin{eqnarray*}
Z^-_{p,2g+2,t}(x_1,\ldots,x_p) &=& \eta^{*g}*\delta_{\O_{x_1}}*\ldots*\delta_{\O_{x_p}}* \left(\int_{G^2} \delta_{\O_{y_1^2}}*\delta_{\O_{y_2^2}} \; dy_1 dy_2 \right) (Z^+_{1,0,t})\\
&=& \kappa^{*2} *  \eta^{*g}*\delta_{\O_{x_1}}*\ldots*\delta_{\O_{x_p}} (Z^+_{1,0,t}).
\end{eqnarray*}
The result is now a consequence of the last assertion of Proposition \ref{convol k s}. \end{proof}

It is now easy to complete the result obtained in Proposition \ref{tqft limite}. We leave the proof of the following corollary to the reader. 

\begin{corollary} \label{full tqft limite} Let $(Z^{\pm}_{p,g,t})_{p,g,t}$ be the partition functions
of a regular Markovian holonomy field. Then, as $t$ tends to 0, one has the following
weak convergences of measures on $G$.\\
1. For all $p\geq 0$, all $g\geq 0$ even and all $x_1,\ldots x_{p-1}\in G$,
$$Z^+_{p,g,t}(x_1,\ldots,x_{p-1},x^{-1})\; dx \build{\Longrightarrow}_{t\to 0}^{} \eta^{*\frac{g}{2}}*\delta_{\O_{x_1}}*\ldots *\delta_{\O_{x_{p-1}}}.$$
2. For all $p\geq 0$, all $g> 0$ and all $x_1,\ldots x_{p-1}\in G$,
 $$Z^-_{p,g,t}(x_1,\ldots,x_{p-1},x^{-1})\; dx\build{\Longrightarrow}_{t\to
0}^{} \kappa^{*g}*\delta_{\O_{x_1}}*\ldots *\delta_{\O_{x_{p-1}}}.$$
\end{corollary}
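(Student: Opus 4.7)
The plan is to reduce both weak convergences to the single fact, established in Proposition~\ref{tqft limite}(1), that $Z^+_{1,0,t}(x)\,dx \Rightarrow \delta_1$ as $t \to 0$, using the explicit formulas (\ref{valeur Z+}) and (\ref{valeur Z-}) of Proposition~\ref{explicit Z}. Consider first the orientable case, and set
$$\mu^+ := \eta^{*g/2} * \delta_{\O_{x_1}} * \cdots * \delta_{\O_{x_{p-1}}},$$
an invariant probability measure on $G$. Since $Z^+_{p,g,t}$ is a class function in each of its arguments, the measure $\nu^+_t(dx) := Z^+_{p,g,t}(x_1,\ldots,x_{p-1},x^{-1})\,dx$ is itself invariant under conjugation. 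Because the limit $\mu^+$ shares this invariance, weak convergence $\nu^+_t \Rightarrow \mu^+$ is equivalent to the same convergence tested only against continuous class functions $\phi$: indeed, for any continuous $f$, its average $\bar f(x) = \int_G f(gxg^{-1})\,dg$ is a continuous class function with $\int f\,d\nu^+_t = \int \bar f\,d\nu^+_t$ (and likewise for $\mu^+$).

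For such a $\phi$, Proposition~\ref{explicit Z} and Fubini (legitimate since everything is continuous on the compact group $G$) give
$$\int_G \phi(x)\,\nu^+_t(dx) = \int_G d\mu^+(u) \int_G\int_G \phi(x)\, Z^+_{1,0,t}(uv)\, d\delta_{\O_{x^{-1}}}(v)\,dx.$$
Realizing $\delta_{\O_{x^{-1}}}$ as the push-forward of Haar measure under $g \mapsto gx^{-1}g^{-1}$, and exploiting the class-invariance of both $Z^+_{1,0,t}$ and $\phi$ via the successive changes of variable $\tilde x = g^{-1}xg$ (so $x = g\tilde x g^{-1}$, $dx = d\tilde x$) and $y = u\tilde x^{-1}$, the inner double integral (for fixed $u$) collapses to
$$I_t(u) := \int_G \phi(y^{-1}u)\, Z^+_{1,0,t}(y)\,dy.$$

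Since $y\mapsto \phi(y^{-1}u)$ is continuous and $Z^+_{1,0,t}(y)\,dy \Rightarrow \delta_1$ by Proposition~\ref{tqft limite}(1), we have $I_t(u) \to \phi(u)$ pointwise in $u$; moreover $|I_t(u)| \leq \|\phi\|_\infty$ uniformly because $Z^+_{1,0,t}(y)\,dy$ is a probability measure, as follows from axiom \Ax{7} applied to a disk. Dominated convergence with respect to the finite measure $\mu^+$ then yields
$$\int_G \phi(x)\,\nu^+_t(dx) \longrightarrow \int_G \phi(u)\,d\mu^+(u),$$
which proves assertion~1. The non-orientable case is handled verbatim, with (\ref{valeur Z-}) replacing (\ref{valeur Z+}) and the invariant probability measure $\mu^- := \kappa^{*g} * \delta_{\O_{x_1}} * \cdots * \delta_{\O_{x_{p-1}}}$ replacing $\mu^+$; no new idea is needed beyond the elementary convolution manipulations just performed, which is presumably why the authors leave this corollary as an exercise. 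The only step demanding any care, were one to write out the argument in full, is the change-of-variables computation isolating $I_t(u)$; even there the calculation is entirely routine once $\phi$ is taken to be a class function.
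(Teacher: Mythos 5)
Your proof is correct: the paper explicitly leaves this corollary to the reader, and your argument assembles exactly the intended ingredients, namely the formulas of Proposition \ref{explicit Z}, the convergence $Z^+_{1,0,t}(x)\,dx \Rightarrow \delta_1$ from Proposition \ref{tqft limite}, the fact (noted after Proposition \ref{algebre Z}) that $Z^+_{1,0,t}$ integrates to $1$, and the reduction to class functions via conjugation-invariance of both $\nu^{\pm}_t$ and the limit measures. The change-of-variables step isolating $I_t(u)$ checks out (conjugating the argument of the class function $Z^+_{1,0,t}$ and then translating absorbs the $\delta_{\O_{x^{-1}}}$ integration), and the non-orientable case does indeed go through verbatim with $\kappa^{*g}$ in place of $\eta^{*g/2}$.
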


\begin{remark} In the proofs so far, we have used the axiom \Ax{7} only for cylinders. Let us call {\rm A}$'_7$ the axiom \Ax{7} restricted to cylinders. By using Propositions \ref{tqft limite} and \ref{explicit Z}, one could easily prove now that \Ax{7} could be replaced by {\rm A}$'_7$ without affecting the notion of regular Markovian holonomy field.
\end{remark}
\index{Markovian holonomy field!partition function|)}

\section{The L\'evy process associated to a Markovian holonomy field}
\index{L\'evy process|(}

In the previous section, we have reduced the description of the partition functions of a regular Markovian holonomy field to the description of the one-parameter family of functions $Z^{1}_{1,0,t}:G\to[0,+\infty)$, $t>0$. This allows us to state a classification result.
 
\begin{proposition} \label{hol levy} Let $\HF$ be a regular Markovian holonomy field. Then the probability
measures $(Z^+_{1,0,t}(x)dx)_{t>0}$ on $G$ are the one-dimensional distributions of a unique conjugation-invariant L\'evy process issued from the unit element. If the Markovian field is not oriented, then the distribution 
of this L\'evy process is invariant by inversion. Moreover, this L\'evy process determines
completely the partition functions of $\HF$.
\end{proposition}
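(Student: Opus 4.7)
The strategy is to identify the family $\mu_t:=Z^+_{1,0,t}(x)\,dx$, $t>0$, as a continuous convolution semigroup of conjugation-invariant probability measures on $G$ weakly converging to $\delta_1$ as $t\downarrow 0$; such a family is in bijection with the law of a unique, time-homogeneous, conjugation-invariant L\'evy process on $G$ started at $1$, and this will give the existence and uniqueness assertions. The last assertion—complete determination of the partition functions by this process—will then be just a rereading of Proposition \ref{explicit Z}.

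The core step is the semigroup identity $\mu_{s+t}=\mu_s\ast\mu_t$ for $s,t>0$. I would obtain it by cutting a disk of area $s+t$ along an interior simple loop that bounds a subdisk of area $t$. This is realized by splitting the disk along a mark $l$, whose split pattern is the disjoint union of a disk of area $t$ and an annulus of area $s$. Applying axioms \Ax{3}, \Ax{5} and \Ax{6} exactly as in the proof of (\ref{circle}) in Proposition \ref{algebre Z}, one gets
\begin{equation}\label{semi1}
Z^+_{1,0,s+t}(x)=\int_G Z^+_{1,0,t}(y)\,Z^+_{2,0,s}(y^{-1},x)\,dy.
\end{equation}
Now Proposition \ref{explicit Z} gives $Z^+_{2,0,s}(x_1,x_2)=\delta_{\O_{x_1}}\!\ast\delta_{\O_{x_2}}(Z^+_{1,0,s})$, and a direct change of variables using the invariance of Haar measure and the conjugation-invariance of $Z^+_{1,0,t}$ turns (\ref{semi1}) into $\mu_{s+t}(f)=(\mu_s\ast\mu_t)(f)$ first for every conjugation-invariant continuous $f$. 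Since both $\mu_{s+t}$ and $\mu_s\ast\mu_t$ are invariant measures and invariant measures are determined by their integrals against invariant continuous functions, the semigroup identity follows.

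The remaining properties are then easy. Conjugation invariance of each $\mu_t$ is built into the definition of $Z^+_{1,0,t}$. Weak continuity of $t\mapsto\mu_t$ on $(0,\infty)$ follows from the Feller axiom, and weak convergence $\mu_t\Rightarrow\delta_1$ as $t\downarrow 0$ is precisely the content of Proposition \ref{tqft limite}(1). Together with the semigroup property, this data defines a unique conjugation-invariant L\'evy process on $G$ issued from $1$ whose one-dimensional marginals are the $\mu_t$ (e.g.\ by Kolmogorov's extension theorem applied to the compatible family $\mu_{t_1}\otimes\mu_{t_2-t_1}\otimes\dots$). When $\HF$ is not oriented, Proposition \ref{classif surf mark} allows one to identify an orientable surface with its orientation-reversal; this swaps each boundary orientation and forces $Z^+_{1,0,t}(x)=Z^+_{1,0,t}(x^{-1})$, i.e.\ $\mu_t=\mu_t^\vee$, which is equivalent to inversion-invariance of the process. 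Finally, the partition functions are all given as explicit functionals of the family $(Z^+_{1,0,t})_{t>0}$ by formulas (\ref{valeur Z+}) and (\ref{valeur Z-}) of Proposition \ref{explicit Z}, so the process determines them.

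The main point requiring care is the transition from (\ref{semi1}) to the clean convolution identity: one must correctly unwind the formula $Z^+_{2,0,s}(y^{-1},x)=\int_G Z^+_{1,0,s}(y^{-1}uxu^{-1})\,du$, perform the substitution $z=y^{-1}uxu^{-1}$, and invoke conjugation-invariance of the test function on both sides. Once this algebraic step is carried out cleanly, the rest of the statement is a direct combination of results already proved earlier in the paper.
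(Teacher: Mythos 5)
Your proposal is correct and follows essentially the same route as the paper: the semigroup identity $\mu_s\ast\mu_t=\mu_{s+t}$ is obtained from the gluing relation (\ref{circle}) for a disk and a cylinder combined with the formula $Z^+_{2,0,s}(x,y)=\int_G Z^+_{1,0,s}(xwyw^{-1})\,dw$ from Proposition \ref{explicit Z}, the convergence $\mu_t\Rightarrow\delta_1$ comes from Proposition \ref{tqft limite}, inversion-invariance in the non-oriented case from an orientation-reversing diffeomorphism via \Ax{4}, and the last assertion is Proposition \ref{explicit Z}. The only cosmetic difference is that you read the key computation from $Z^+_{1,0,s+t}$ toward the convolution while the paper reads it in the opposite direction.
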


It is conceivable that a regular Markovian holonomy field is completely determined by its associated L\'evy process, but we have warned the reader in the introduction that we are not yet able to settle this question.\\

\begin{proof}It suffices to prove that the probability measures $\nu_{t}=Z^+_{1,0,t}(x)\; dx$ form a
convolution semigroup. Let us fix $s,t>0$. By (\ref{valeur Z+}),
we have $Z^+_{2,0,s}(x,y)=\int_{G}Z^+_{1,0,t}(xwyw^{-1})$. Now,
\begin{align*}
\nu_{t}*\nu_{s}&=\int_{G} Z^{+}_{1,0,t}(x)Z^{+}_{1,0,s}(x^{-1}y)\; dx \; dy\\
&= \int_{G^2} Z^{+}_{1,0,t}(wxw^{-1})Z^{+}_{1,0,s}(w^{-1}x^{-1}wy)\; dwdx\;  dy\\
&=\int_{G} Z^{+}_{1,0,t}(x) Z^{+}_{2,0,s}(x^{-1},y)\; dx \; dy\\
&= \beta_2(Z^{+}_{1,0,t} \otimes Z^{+}_{2,0,s})(y)\\
&= Z^{+}_{1,0,s+t}(y)\; dy=\nu_{t+s}.
\end{align*}
Proposition \ref{tqft limite} ensures that $\nu_{t}$ tends to the Dirac mass at the unit element
as $t$ tends to 0. Moreover, the conjugation invariance of the partition functions implies that
the measure $\nu_{t}$ is invariant for all $t\geq 0$.

If the Markovian field is not orientable, then for all $t$, it follows from the axiom \Ax{4} applied to an
orientation-reversing diffeomorphism of a disk of area $t$ that
$Z^+_{1,0,t}(x)=Z^+_{1,0,t}(x^{-1})$. It follows that the 1-dimensional distributions of the L\'evy
process are invariant by inversion, hence the distribution of the process itself.

The fact that the measures $(\nu_{t})_{t\geq 0}$ determine the partition functions is the content
of Proposition \ref{explicit Z}.
\end{proof}

Let us recall some classical facts about L\'evy processes in compact Lie groups and use them to prove that the function $Z^+_{1,0,t}$ is positive on the connected component of the identity of $G$ for all $t>0$. Our constant reference in this section is the book of M. Liao \cite{Liao}.

Let $X$ be an arbitrary L\'evy process on $G$ with a conjugation-invariant distribution. Let us describe briefly the
generator of $X$. Let $\gg$ be the Lie algebra of $G$. Let $A:G\lra \gg$ be a smooth mapping such that $A(1)=0$, $d_{1}A=\id_{\gg}$ and, for all $x,y\in G$,
$A(xyx^{-1})=\Ad(x)A(y)$. For example, let $r>0$ be such that the exponential mapping
is a diffeomorphism from the ball $B(0,r)$ in $\gg$ to the ball $B(1,r)$ in $G$. Let $\log$ denote
the inverse mapping. Let $\varphi:[0,+\infty)\lra [0,1]$ be a smooth function with compact support
contained in $[0,r/2]$ and equal to $1$ in a neighbourhood of $0$. Then $A(x)=\log(x)
\varphi(d_{G}(1,x))$ satisfies the required properties. 

In what follows, we identify the elements of $\gg$ with left-invariant vector fields on $G$.

\begin{proposition}\label{generateur X} Let $X$ be a L\'evy process on $G$ whose distribution is invariant by conjugation. Let $\{A_{1},\ldots,A_{d}\}$ denote a basis of $\gg$. Let $\zz$
denote the
centre of $\gg$. There exists a symmetric non-negative definite matrix $(a_{jk})_{j,k\in\{1,\ldots
d\}}$, an element $A_{0}\in \zz$, and a Borel measure $\Pi$ on $G$ which satisfies $\Pi(\{1\})=0$ and
$\int_{G}d_{G}(1,x)^2\;\Pi(dx)<+\infty$, such that the generator $L$ of $X$ is the following : for all $f\in C^2(G)$,
all $g\in G$, 
$$Lf(g)=\frac{1}{2}\sum_{j,k=1}^{d}a_{jk}(A_{j}A_{k}f)(g) +
(A_{0}f)(g) + \int_{G}\left[f(gh)-f(g)-(A(h)f)(g)\right]\; \Pi(dh).$$
The differential operator $L_{D}=\frac{1}{2}\sum_{j,k=1}^{d}a_{jk} A_{j}A_{k}$ and the measure
$\Pi$ are both invariant by conjugation. They are called respectively the diffusive part of the
generator of $X$ and the L\'evy measure of $X$. Both are independent of the choice of the mapping
$A:G\lra \gg$.
\end{proposition}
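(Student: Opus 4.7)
The plan is to obtain this statement as a direct consequence of the classical Hunt representation of the generator of a Lévy process on a Lie group, refined by the symmetry assumption. I would not try to redo Hunt's theorem from scratch: the existence of \emph{some} decomposition of the form
\[
Lf(g)=\frac12\sum_{j,k=1}^{d} b_{jk}(A_jA_kf)(g)+(X_0f)(g)+\int_G\!\bigl[f(gh)-f(g)-(A(h)f)(g)\bigr]\,\Pi(dh),
\]
with $(b_{jk})$ symmetric and non-negative definite, $X_0\in\gg$, and $\Pi$ a Borel measure on $G$ satisfying $\Pi(\{1\})=0$ and $\int_G d_G(1,h)^2\,\Pi(dh)<+\infty$, is a classical theorem of Hunt which I would quote from Liao's monograph. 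Everything in the statement then has to be extracted by specialising to conjugation-invariant processes and by checking which pieces of the decomposition are intrinsic.

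Next, I would exploit conjugation-invariance. For each $g_0\in G$, write $\iota_{g_0}(g)=g_0gg_0^{-1}$ and let $\iota_{g_0}^*$ denote pull-back. The hypothesis on $X$ means that its transition semigroup commutes with $\iota_{g_0}^*$, hence so does $L$: $L(f\circ\iota_{g_0})=(Lf)\circ\iota_{g_0}$ for every $f\in C^2(G)$. Substituting $f\circ\iota_{g_0}$ in the Hunt formula and using that left-invariant vector fields transform by the adjoint representation, together with the assumed equivariance $A(g_0hg_0^{-1})=\Ad(g_0)A(h)$, the displayed formula rewrites as a new Hunt decomposition with coefficients $(b'_{jk})$, drift $X'_0$ and measure $\Pi'$ obtained from $(b_{jk})$, $X_0$, $\Pi$ via $\Ad(g_0)$. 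By the uniqueness part of Hunt's theorem (which I would also quote), these two decompositions of the single operator $L$ must coincide term by term. This gives three conditions, valid for every $g_0\in G$: the symmetric bilinear form with matrix $(b_{jk})$ on $\gg^*$ is $\Ad$-invariant, the measure $\Pi$ is invariant by conjugation, and $\Ad(g_0)X_0=X_0$ for every $g_0$, i.e.\ $X_0$ belongs to the centre $\zz$ of $\gg$. Renaming $X_0=A_0$ and $b_{jk}=a_{jk}$ yields the stated form.

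Finally, the last clause (independence of the choice of $A$) follows by comparing two admissible choices $A$ and $A'$: the difference $A-A'$ is a smooth $\gg$-valued function on $G$ which vanishes to first order at $1$ and is $\Ad$-equivariant. Injected into the integral term, it produces a bounded, well-defined extra contribution which, thanks to the conjugation-invariance of $\Pi$ just established, takes values in the $\Ad$-fixed subspace $\zz$ of $\gg$ and therefore modifies only the drift $A_0$, leaving $L_D=\frac12\sum a_{jk}A_jA_k$ and $\Pi$ unchanged. Intrinsicality of $\Pi$ can also be seen directly by testing $L$ on functions $f\in C^2(G)$ supported in a compact subset of $G\setminus\{1\}$, for which the diffusion and drift terms vanish, so that $Lf(1)=\int_G f(h)\,\Pi(dh)$; intrinsicality of $L_D$ then comes from isolating the second-order part of $L-\int[f(\cdot h)-f-Af]\,\Pi(dh)$.

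The only real obstacle is the uniqueness of the Hunt decomposition, which is the key to turning the commutation relation into pointwise identifications of $(a_{jk})$, $A_0$ and $\Pi$, and to proving independence on $A$. Once uniqueness is granted (via test functions with small support, concentrating separately each of the three ingredients), the rest of the argument is a purely algebraic unpacking of how $\Ad(g_0)$ acts on each term.
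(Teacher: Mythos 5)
Your proposal is correct and follows essentially the same route as the paper: both quote Hunt's representation theorem from Liao's monograph and then use the conjugation-invariance of the generator, together with the $\Ad$-equivariance of the chosen cutoff $A$, to force the drift $A_0$ into the centre $\zz$. The paper is merely terser — it takes the invariance of $L_D$ and $\Pi$ directly from Liao and only spells out the $A_0\in\zz$ step, whereas you rederive all three invariance statements from the uniqueness of the Hunt decomposition; this is a difference of bookkeeping, not of method.
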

\index{PAPi@$\Pi$}
\index{conjugation!L\'evy process invariant by}
\index{L\'evy process!generator}

\begin{proof}The unique point in which this presentation differs from that of \cite{Liao} is the fact that
$A_{0}\in \zz$. The mapping $A:G\lra \gg$ has been chosen to be equivariant under the adjoint
action of $G$. This makes the third term of the generator invariant by conjugation. Since $L_{D}$ 
is also invariant, the second term must be invariant as well. This implies that $A_{0}$ belongs to
the invariant subspace of $\gg$ under the adjoint action, that is, $\zz$. \end{proof}


\index{QQQ@$Q_t$}
Let us now assume that, for all $t>0$, the distribution of $X_{t}$
has a density with respect to the Haar measure on $G$, which we denote by $Q_{t}$. 
The function $Q_{t}$ is a central function and, if $X$ is invariant by inversion, it satisfies the
relation $Q_{t}(x)=Q_{t}(x^{-1})$ for all $t>0$ and all $x\in G$.

Let $\irrep(G)$ denote the set of isomorphism classes of irreducible representations of $G$. For
each $\alpha\in \irrep(G)$, let $\chi_{\alpha}:G\lra \CK$ denote the character of $\alpha$. Also,
set 
\begin{equation}\label{coeffs fourier}
\lambda_{\alpha}=-\frac{(L_{D}\chi_{\alpha})(1)}{\chi_{\alpha}(1)},
\delta_{\alpha}=-\frac{(A_{0} \chi_{\alpha})(1)}{\chi_{\alpha}(1)}
\mbox{ and } \pi_{\alpha}=\int_{G}\left(1-\frac{\chi_{\alpha}(x)}{\chi_{\alpha}(1)}\right)\; \Pi(dx).
\end{equation}
The results of \cite[Chapter 4]{Liao} show that $Q_t$ is square-integrable 
for all $t>0$ if and only if, for all $t>0$,
\begin{equation}
\label{cond dens}
\sum_{\alpha\in \irrep(G)} e^{-(\lambda_{\alpha}+\delta_{\alpha}+\pi_{\alpha}) t} \;\chi_{\alpha}(1)^{2}
<+\infty.
\end{equation}

It is also proved in \cite{Liao} that $Q_{t}$ is square-integrable for all $t>0$ if and only if $(t,x)\mapsto
Q_{t}(x)$ is continuous on $(0,+\infty)\times G$. Let us assume that these equivalent properties are
satisfied. In this case, the following expansion is uniformly absolutely convergent on
$[\eta,+\infty)\times G$ for all $\eta>0$: 
\begin{equation}\label{densite char}
Q_{t}(x)=\sum_{\alpha\in \irrep(G)} e^{-(\lambda_{\alpha}+\delta_{\alpha}+\pi_{\alpha}) t} \;\chi_{\alpha}(1)
\overline{\chi_{\alpha}(x)}.
\end{equation}

In the following result, we use the compactness of $G$ to prove that, in this situation, $Q_t$ is positive for all $t>0$.

\begin{proposition} \label{densite positive} Let $G$ be a compact connected Lie group. Let $(X_{t})_{t\geq
0}$ be a L\'evy process on $G$ issued from $1$ and invariant in law by conjugation. Assume that,
for all $t>0$, the distribution of $X_{t}$ has a square-integrable density $Q_{t}$ with respect to
the Haar measure on $G$. Then $(t,x)\mapsto Q_{t}(x)$ is a continuous function on
$(0,+\infty)\times G$ and
$$\forall t>0, \forall x\in G , \; Q_{t}(x)>0.$$
\end{proposition}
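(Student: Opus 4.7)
The plan is to address continuity and pointwise positivity separately. The continuity of $(t,x)\mapsto Q_t(x)$ on $(0,\infty)\times G$ is essentially Fourier-theoretic and was recalled in the paragraphs preceding the statement: the square-integrability of $Q_t$ for every $t>0$ translates via Parseval into the summability condition (\ref{cond dens}), which in turn makes the character expansion (\ref{densite char}) converge absolutely and uniformly on every set of the form $[\eta,\infty)\times G$ with $\eta>0$, thereby yielding the desired jointly continuous version. For this I would simply invoke the cited Chapter~4 of \cite{Liao}.

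For the positivity I would introduce the sets $U_t:=\{x\in G:Q_t(x)>0\}$. By continuity of $Q_t$ these sets are open, by conjugation-invariance of the law of $X_t$ they are stable under conjugation, and from the convolution identity $Q_{s+t}(x)=\int_G Q_s(y)Q_t(y^{-1}x)\,dy$ and the continuity of the non-negative integrand one checks the semigroup-like inclusion $U_s\cdot U_t\subset U_{s+t}$: if $y_0\in U_s$ and $z_0\in U_t$ satisfy $y_0z_0=x$, then $y\mapsto Q_s(y)Q_t(y^{-1}x)$ is continuous and strictly positive at $y=y_0$, hence on a whole neighbourhood of $y_0$, so the integral is strictly positive. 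The first substantive step is then to prove that $\Sigma:=\overline{\bigcup_{t>0}U_t}$ equals $G$: by the above inclusion $\Sigma$ is a closed sub-semigroup of the compact group $G$, hence (by Numakura's classical theorem) a closed subgroup; since every $U_t$ is a non-empty open subset of $G$, $\Sigma$ has positive Haar measure, and a closed subgroup of positive Haar measure in a connected Lie group must equal the whole group.

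The hard part, and the main obstacle, is to upgrade $\Sigma=G$ to $U_t=G$ for every fixed $t>0$. The plan is to write $Q_t=Q_{t/2}*Q_{t/2}$ and to observe that $Q_t(x)>0$ if and only if the open set $U_{t/2}\cap x\cdot U_{t/2}^{-1}$ has positive Haar measure. Since $U_{t/2}$ has positive Haar measure, Steinhaus's theorem gives that $U_{t/2}\cdot U_{t/2}^{-1}$ contains an open neighbourhood $V$ of the identity; combining this with the conjugation-invariance of $U_{t/2}$, the connectedness of $G$ (so that the open conjugation-invariant symmetric neighbourhood $V$ of $1$ generates $G$), and the density of $\bigcup_{s>0}U_s$ in $G$ obtained above, I would bootstrap to show $U_{t/2}\cap xU_{t/2}^{-1}$ is non-empty, and therefore open and of positive Haar measure, for every $x\in G$. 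The delicate step, which I expect to be the real source of difficulty, is making the last bootstrapping rigorous: without any assumption of invariance under inversion, one does not directly control $U_{t/2}^{-1}$ inside the family $(U_s)_{s>0}$, and the argument must genuinely combine conjugation-invariance, Steinhaus's theorem and connectedness rather than rely on symmetry of the process.
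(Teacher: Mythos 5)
Your continuity claim and the final convolution-square trick match the paper, and your observation that $U_sU_t\subset U_{s+t}$, so that $\overline{\bigcup_{t>0}U_t}$ is a closed subsemigroup of $G$, hence (Numakura) a closed subgroup of positive measure, hence all of $G$, is correct. But it proves the wrong thing: it gives density of the \emph{union} of the positivity sets, whereas the convolution argument $Q_t(x)=\int_G Q_{t/2}(y)Q_{t/2}(y^{-1}x)\,dy>0$ requires density of the \emph{single} set $U_{t/2}$ (equivalently $U_{t/2}\cdot U_{t/2}=G$). Your proposed bridge via Steinhaus does not close this gap: since $U_{t/2}$ is already open, $U_{t/2}U_{t/2}^{-1}$ trivially contains a neighbourhood of $1$, and the subgroup it generates involves alternating products with $U_{t/2}^{-1}$, a set over which — as you yourself note — the semigroup structure and conjugation-invariance give no control. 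Nothing in your outline rules out the a priori possibility that each $U_t$ is a small conjugation-invariant open set drifting around $G$ as $t$ varies, dense only in the aggregate.

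The paper fills exactly this hole with structure theory rather than with Steinhaus. First it shows cheaply that $Q_t>0$ everywhere for all large $t$ (an open set $U$ with $Q_1>0$ on $U$ gives $Q_n>0$ on $U^n$, and $U^{n_0}=G$ for some $n_0$ by compactness and connectedness — note this is already stronger than your $\Sigma=G$). Then it decomposes the generator as $L=L_D+A_0+L_J$ into commuting diffusive, central drift and jump parts, so that $X_t\stackrel{(d)}{=}\exp(tA_0)X^D_tX^J_t$; the supports $D$ and $J$ of $X^D_t$ and $X^J_t$ are $t$-independent closed \emph{normal} subgroups (normality from conjugation-invariance, and $J$ is a group rather than a monoid by compactness). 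One then checks $\overline{S_t}=DJ\,\overline{S_t}$ for every $t$, and the eventual everywhere-positivity forces $DJ=G$, whence $\overline{S_t}=G$ for every fixed $t$ and the final step goes through. If you want to complete your proof you will need this decomposition (or an equivalent substitute); the purely measure-theoretic route you sketch cannot see why a fixed-time positivity set must be dense.
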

\index{L\'evy process!with positive density}

\begin{proof}The continuity property follows from the results presented above. We focus on the assertion of positivity.

We claim that there exists $t_{0}> 0$ such that $Q_{t}(x)>0$ for all $x\in G$ and all
$t\geq t_{0}$. Indeed, since $\int_{G}Q_{1}(x)\; dx=1$, there is an open subset $U$ of $G$ on which
$Q_{1}$ is positive. Hence, for all $n\geq 1$, $Q_{n}$ is positive on $U^n$. Since $G$ is a
compact topological group, there exists $n_{0}>1$ such that $U^{n_{0}}=G$. Then $t_{0}=n_{0}$
satisfies the expected property.

Let $L=L_{D}+A_{0}+L_{J}$ be the generator of $X$ written as the sum of the diffusive part, a
drift and the jump part. Since $A_{0}$ belongs to $\zz$ and $L_{D}$ is invariant by conjugation,
these three operators commute to each other. Let $X^{D}$ and $X^{J}$ be independent L\'evy processes on $G$
with respective generators $L_{D}$ and $L_{J}$. Then we have the identity in distribution
\begin{equation}\label{id dist}
\forall t>0, X_{t}\stackrel{(d)}{=}\exp(tA_{0}) X^D_{t} X^J_{t}.
\end{equation}
The term $\exp(tA_{0})$ modifies the subset of $G$ where $Q_{t}$ is positive by a simple translation.
Hence, we may and will assume that $A_{0}=0$.  

The topological support of the distribution of $X^D_{t}$ does not depend on $t$. We denote 
it by $D$. It is the closure of the exponential of a Lie subalgebra
of $\gg$ which depends on $L_{D}$. Since $L_{D}$ is invariant by conjugation, $D$ is a closed normal subgroup of $G$.

The topological support of the distribution of $X^J_{t}$ does not depend on $t$ either and we denote
it by $J$. It is the closure of the submonoid of $G$ generated the topological support of $\Pi$.
Since $G$ is compact, the closure of the submonoid generated by any element of $G$ contains the inverse of this element. Hence, $J$ is also the closed subgroup generated by the support of $\Pi$. Since
$\Pi$ is invariant, $J$ is also a closed normal subgroup of $G$. In particular, $DJ=JD$ is a
closed subgroup of $G$.

For each $t>0$, set $S_{t}=\{x\in G : Q_{t}(x)>0\}$. We claim that $\overline{S_{t}}= DJ
\overline{S_{t}}$. Indeed, consider $x\in S_{t}$, $d\in D$ and $j\in J$. Let $U,V,W$ be three open
neighbourhoods of $x,d,j$ respectively. We claim that $\int_{UVW}Q_{t}(y)\;dy>0$. Since $(t,x)\mapsto
Q_{t}(x)$ is continuous, there exists $\epsilon>0$ such that $Q_{t-\epsilon}(x)>0$. Now,
$\P(X^D_{\epsilon}\in V)>0$ and $P(X^J_{\epsilon}\in W)>0$. Hence, by (\ref{id dist}), $\P(X_{\epsilon}\in VW)>0$
and
$$\int_{UVW}Q_{t}(y)\; dy \geq \int_{U}Q_{t-\epsilon}(x)\; dx \int_{VW}Q_{\epsilon}(y)\; dy >0.$$
Since this holds for any choice of $U,V,W$, the integral of $Q_{t}$ over any neighbourhood of
$xdj$ is positive. Hence, $xdj\in \overline{S_{t}}$. The claimed equality follows. 

Now, it follows from (\ref{id dist}), after the simplification $A_0=0$, that $\P(\forall t\geq 0, X_{t}\in DJ)=1$. If the
inclusion $DJ\subset G$ was a strict one, we would find a contradiction with the fact that $Q_{t}$ is
eventually everywhere positive on $G$. Hence, $DJ=G$. 

Putting our results together, we find $\overline{S_{t}}=G$ for all $t>0$. Now choose $t>0$, $x\in 
G$ and consider the mapping $y\mapsto Q_{t/2}(y)Q_{t/2}(y^{-1}x)$. It vanishes on
$(G-S_{t/2}) \cup x(G-S_{t/2})^{-1}$ which is the union of two closed sets with empty interior.
This set has thus empty interior, so the mapping which we consider is continuous, non-negative and not
identically zero. By integrating it with respect to $y$, we find $Q_{t}(x)>0$.\end{proof}

If $G$ is not connected, let $G^{0}$ denote the connected component of $1$. It is a normal
subgroup of $G$ and the quotient group $G/G^{0}$ is finite. The measure $\Pi$ 
induces a measure on the group $G/G^{0}$ which is finite excepted possibly on the unit element. This measure, restricted to the complement of the unit element, is the jump measure of the projection of $X$ on this finite group. 

\begin{corollary}\label{support densite} Let $G$ be a compact Lie group. Let $(X_{t})_{t\geq 0}$ be a L\'evy process on $G$ which satisfies the assumptions of Proposition \ref{densite positive}. Let $G^0$ denote the connected component of the unit element of $G$. Let $H$ be the subgroup of $G$ generated by $G^0$ and the support of $\Pi$.
Then for all $t\geq 0$, $X_t\in H$ almost surely and for all $t>0$, all $x\in H$, $Q_t(x)>0$.
\end{corollary}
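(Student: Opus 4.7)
The plan is to reduce the corollary to the connected case treated in Proposition~\ref{densite positive}, using two auxiliary observations: first, that $H$ is a closed normal subgroup of $G$; second, that the projected process on $H/G^{0}$ is an irreducible Markov chain.

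First I would establish that $H$ is closed and normal. Since $G^{0}$ is open and closed and the support of $\Pi$ is conjugation invariant (by Proposition~\ref{generateur X}), $H$ is a finite union of cosets of $G^{0}$, hence closed, and is stable by conjugation, hence normal. Next I would show $X_{t}\in H$ almost surely by applying the generator formula of Proposition~\ref{generateur X} to an arbitrary continuous $f:G\to\RK$ that factors through $\pi:G\to G/H$. Because $f$ is locally constant, all derivatives $A_{j}A_{k}f$ and $A_{0}f$ vanish; because $\supp\Pi\subset H$, the integrand $f(gh)-f(g)-(A(h)f)(g)$ of the jump part also vanishes. Hence $Lf=0$, so $\pi(X_{t})$ is a constant Markov process on the finite group $G/H$, equal to $\pi(X_{0})=[1]$; equivalently, $X_{t}\in H$ a.s.

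To prove positivity of $Q_{t}$ on $H$, I would mimic the proof of Proposition~\ref{densite positive} in the (disconnected) compact Lie group $H$. Exactly as there, one reduces to $A_{0}=0$ via the factorization $X_{t}=\exp(tA_{0})X^{D}_{t}X^{J}_{t}$ (note $\exp(tA_{0})\in G^{0}\subset H$ since $A_{0}\in\zz$); one then has the identity $\overline{S_{t}}=DJ\overline{S_{t}}$, where $D\subset G^{0}$ is the support of $X^{D}_{t}$ and $J\subset H$ is the closed subgroup generated by $\supp\Pi$. Once it is proved that $Q_{t}>0$ on $H$ for some sufficiently large $t$, the classical dichotomy argument of Proposition~\ref{densite positive} forces $DJ=H$: otherwise $H\setminus DJ$ would be a non-empty open subset of $H$ avoided by $X_{t}$ a.s., contradicting eventual positivity. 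From $DJ=H$ and the non-emptiness of $\overline{S_{t}}$ one deduces $\overline{S_{t}}=H$ for all $t>0$, and the final argument about $y\mapsto Q_{t/2}(y)Q_{t/2}(y^{-1}x)$ being continuous, nonnegative and not identically zero on $H$ yields $Q_{t}(x)>0$ for all $x\in H$.

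The key new step, and the main obstacle, is the proof of eventual positivity of $Q_{t}$ on all of $H$. The plan here is as follows. Since $\int_{H}Q_{1}=1$ and $Q_{1}$ is continuous, there is a non-empty open $V\subset H$ with $Q_{1}>0$ on $V$, and by convolution $V^{n}\subset\{Q_{n}>0\}$ for every $n\geq 1$. I would then exploit the finite quotient $H/G^{0}$: the projected process is a continuous-time Markov chain whose jump measure, being the image of $\Pi$, has support generating $H/G^{0}$ by the very definition of $H$, so the chain is irreducible. Hence $P(\pi_{0}(X_{1})=[h])>0$ for every $[h]\in H/G^{0}$, so $V$ meets every $G^{0}$-coset in $H$ in a non-empty open set. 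In particular $V_{0}=V\cap G^{0}$ is a non-empty open subset of the connected compact Lie group $G^{0}$, so by the connected case of Proposition~\ref{densite positive} applied inside $G^{0}$ there is $m$ with $V_{0}^{m}=G^{0}$. Then $V\cdot V_{0}^{m}=\bigcup_{h\in V}hG^{0}=H$ (because $V$ hits every coset), hence $V^{m+1}=H$ and $Q_{m+1}>0$ on $H$. The semigroup property then yields $Q_{t}>0$ on $H$ for all $t\geq m+1$, which is exactly the input needed to close the argument.
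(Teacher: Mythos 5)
Your proof is correct, but it takes a genuinely different route from the paper's. The paper does not re-run the argument of Proposition \ref{densite positive} inside the disconnected group $H$: it splits the L\'evy measure as $\Pi=\Pi^0+(\Pi-\Pi^0)$, where $\Pi^0$ is the restriction to $G^0$, and writes $X_t\stackrel{d}{=}X^0_tX^J_t$ with $X^0$ a L\'evy process on the \emph{connected} group $G^0$ and $X^J$ an independent compound-Poisson factor with the finite jump measure $\Pi-\Pi^0$. The one substantive lemma is that adding or subtracting a finite measure of mass $m$ to $\Pi$ changes each Fourier coefficient $\pi_\alpha$ by at most $2m$, so the square-integrability criterion (\ref{cond dens}) survives and Proposition \ref{densite positive} applies to $X^0$ as a black box; positivity on all of $H$ then follows by convolving the everywhere-positive density on $G^0$ with the law of $X^J_t$, whose support is the closed subgroup generated by $\mathrm{supp}(\Pi-\Pi^0)$. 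Your approach instead keeps $X$ intact and isolates the single step of the connected-case proof that fails when $H$ is disconnected — a power of a non-empty open set need not cover $H$ — and repairs it by combining irreducibility of the projected finite chain on $H/G^0$ (so $\{Q_1>0\}$ meets every coset) with the covering fact inside $G^0$. The paper's route is shorter and modular, and the perturbation lemma on $\pi_\alpha$ is of independent interest; yours is more self-contained (no Fourier analysis of the density condition), makes explicit why every component is reached, and adds a clean martingale proof that $X_t\in H$ a.s., which the paper leaves implicit in ``the conclusion follows easily.'' Two presentational points: for the claim that $V$ meets every coset you should take $V$ to be the full open set $\{x\in H: Q_1(x)>0\}$ rather than an arbitrary non-empty open subset of it; and the covering fact $V_0^m=G^0$ is not the statement of Proposition \ref{densite positive} but the auxiliary compactness argument used inside its proof — cite it as such.
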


\begin{proof}Let $\Pi^0$ denote the restriction of $\Pi$ to $G^0$. Since $G^0$ is a normal subgroup of $G$, both $\Pi^0$ and $\Pi-\Pi^0$ are L\'evy measures on $G$ invariant by conjugation. Moreover, $\Pi-\Pi^0$ is a finite measure. Let $X^0$ be the L\'evy process whose generator is that of $X$ in which $\Pi$ is replaced by $\Pi^0$. It is a L\'evy process in $G^0$. Let $X^J$ be the pure jump process with jump measure $\Pi-\Pi^0$. Then the generators of $X^0$ and $X^J$ commute, so that we have in distribution, for all $t\geq 0$, $X_t \build{=}_{}^{d} X^0_t X^J_t$.

For all $\alpha\in\widehat G$, we have $\sup_{x\in G} |\chi_\alpha(x)|=\chi_\alpha(1)$. Hence, changing the measure $\Pi$ by adding or subtracting to it a finite measure of mass $m$ changes each coefficient $\pi_\alpha$ by at most  $2m$. The condition (\ref{cond dens}) is not affected by such a change, so neither is the existence of a square-integrable density. This proves that the process $X^0$ satisfies the assumptions, hence the conclusions, of Proposition \ref{densite positive}. 

The set of connected components of $G$ visited by the process $X^J$ is the set of the connected components of the elements of the submonoid of $G$ generated by the support of $\Pi-\Pi^0$. Since $G$ is compact, this submonoid is also the subgroup generated by the same set. The conclusion follows easily. \end{proof}

\begin{corollary} Let $\HF$ be a regular Markovian holonomy field. There exists a subgroup $H$ of $G$ which contains the connected component of the unit element and  such that 
\begin{equation}\label{Z positive}
\forall t>0, \forall p,g\geq 0,\forall x_1,\ldots,x_p \in H, Z^+_{p,g,t}(x_1,\ldots,x_p)>0.
\end{equation}
\end{corollary}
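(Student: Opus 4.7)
The plan is to reduce the positivity of every $Z^{+}_{p,g,t}$ to the positivity of the single density $Z^{+}_{1,0,t}=Q_{t}$ via the convolution formula of Proposition \ref{explicit Z}, and to apply Corollary \ref{support densite} to identify the natural candidate for $H$. First I would invoke Proposition \ref{hol levy} to obtain the conjugation-invariant L\'evy process $X$ whose one-dimensional densities are the functions $Q_{t}=Z^{+}_{1,0,t}$. Because $\HF$ is Fellerian, the function $(t,x)\mapsto Q_{t}(x)$ is continuous, hence bounded on the compact group $G$, and in particular square-integrable for every $t>0$. Corollary \ref{support densite} then applies and yields a subgroup $H$ of $G$, generated by the connected component $G^{0}$ of the identity together with the support of the L\'evy measure $\Pi$, with the property that $Q_{t}(x)>0$ for every $t>0$ and every $x\in H$.

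The next step is to observe that $H$ is in fact a normal subgroup of $G$: $G^{0}$ is normal in every Lie group, and because $\Pi$ is invariant by conjugation, its support is stable by conjugation, so the subgroup generated by $G^{0}$ and $\mathrm{supp}(\Pi)$ is stable by conjugation. Normality of $H$ will be essential to ensure that for every $x_{i}\in H$ the whole conjugacy class $\O_{x_{i}}$ is contained in $H$, which in turn guarantees that $\delta_{\O_{x_{i}}}$ is supported in $H$. I also need to check that $\eta$ is supported in $H$: the map $(a,b)\mapsto aba^{-1}b^{-1}$ is continuous and equals $1$ at $(1,1)$, so its image is connected and contains $1$, hence is contained in $G^{0}\subset H$; therefore $\mathrm{supp}(\eta)\subset G^{0}\subset H$.

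Now I apply Proposition \ref{explicit Z}, which gives
\begin{equation*}
Z^{+}_{p,g,t}(x_{1},\ldots,x_{p})=\bigl(\eta^{*g/2}*\delta_{\O_{x_{1}}}*\cdots*\delta_{\O_{x_{p}}}\bigr)(Q_{t}).
\end{equation*}
Since $H$ is a subgroup and all of the measures $\eta,\delta_{\O_{x_{1}}},\ldots,\delta_{\O_{x_{p}}}$ are supported in $H$, the convolution $\mu=\eta^{*g/2}*\delta_{\O_{x_{1}}}*\cdots*\delta_{\O_{x_{p}}}$ is a probability measure supported in $H$. Since $H$ contains $G^{0}$, it is open in $G$, and being a union of connected components of the compact group $G$, it is also closed, hence compact. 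The function $Q_{t}$ is continuous and strictly positive on $H$, so it attains a strictly positive minimum $c_{t}>0$ on $H$. Consequently
\begin{equation*}
Z^{+}_{p,g,t}(x_{1},\ldots,x_{p})=\int_{H} Q_{t}\,d\mu\ \geq\ c_{t}\,\mu(H)=c_{t}>0,
\end{equation*}
which is the desired conclusion.

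The argument is therefore essentially a bookkeeping exercise once the positivity of $Q_{t}$ on $H$ is in hand; the only real point requiring care is the identification of $H$ as a normal subgroup into which all the relevant auxiliary measures push, and this is where I expect most of the (mild) work to lie. The positivity of $Q_{t}$ itself is the genuinely analytic input, but it is already available through Corollary \ref{support densite}; no further probabilistic estimate is needed.
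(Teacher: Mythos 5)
Your route is the same as the paper's: obtain the L\'evy process from Proposition \ref{hol levy}, check that it satisfies the hypotheses of Corollary \ref{support densite} to get a subgroup $H\supset G^0$ on which $Q_t=Z^+_{1,0,t}$ is positive, and then feed this into the convolution formula of Proposition \ref{explicit Z}. Most of the bookkeeping (normality of $H$, hence $\O_{x_i}\subset H$; $H$ clopen and compact, so $Q_t\geq c_t>0$ on $H$) is correct and is exactly the detail the paper leaves implicit.

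There is, however, one genuinely wrong step: the claim that $\mathrm{supp}(\eta)\subset G^0$ because the image of $(a,b)\mapsto aba^{-1}b^{-1}$ is connected. When $G$ is disconnected, $G\times G$ is disconnected, so the image of a continuous map on it need not be connected; concretely, for a finite non-abelian group ($G^0=\{1\}$) the set of commutators is strictly larger than $\{1\}$, and in general a commutator $aba^{-1}b^{-1}$ lands in the component determined by the commutator of $[a],[b]$ in the finite group $G/G^0$, which need not be trivial. So $\mathrm{supp}(\eta)\subset H$ can fail, and your assertion that $\mu=\eta^{*g/2}*\delta_{\O_{x_1}}*\cdots*\delta_{\O_{x_p}}$ is supported in $H$ is not justified. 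The conclusion survives because you do not need $\mathrm{supp}(\mu)\subset H$: since $Q_t\geq 0$ everywhere and $Q_t\geq c_t>0$ on $H$, it suffices that $\mu(H)>0$. This holds because $\eta(H)\geq\eta(G^0)\geq [G:G^0]^{-2}>0$ (on the positive-probability event that both uniform variables lie in $G^0$, their commutator lies in $G^0$), $H$ is a subgroup so a product of factors each lying in $H$ lies in $H$, and each $\delta_{\O_{x_j}}$ has full mass in $H$ by normality. With that substitution, $Z^+_{p,g,t}(x_1,\ldots,x_p)=\int_G Q_t\,d\mu\geq c_t\,\mu(H)>0$ and the proof is complete.
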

\index{Markovian holonomy field!partition function}

\begin{proof}By Propositions \ref{hol levy} and \ref{algebre Z}, the L\'evy process associated with $\HF$ satisfies the assumptions of Corollary \ref{support densite}. Hence, (\ref{Z positive}) holds for $Z^+_{1,0,t}$. The general case follows by Proposition \ref{explicit Z}. \end{proof}

From now on, we will always assume that $H=G$.

\begin{definition}\label{def levy adm} Let $(X_t)_{t\geq 0}$ be a L\'evy process on $G$. We say that $X$ is {\em admissible} if it is issued from $1$, invariant in law by conjugation, and if for all $t>0$ the distribution of $X_t$ admits a continuous density $Q_t$ with respect to the Haar measure on $G$, such that the function $(t,x)\mapsto Q_t(x)$ is continuous and positive on $\RK^*_+\times G$.
\end{definition}
\index{L\'evy process!admissible}

Let us discuss briefly the existence of a square-integrable density for the distribution of $X$. If $G$ is a finite
group, this condition is always satisfied. An admissible L\'evy process in this case is
simply a continuous-time Markov chain on $G$ whose jump
distribution is invariant by conjugation and has a support which generates $G$. In the case of the
symmetric group, where every element is conjugated to its inverse, this invariance property implies
that the jump distribution, hence the distribution of $X$, is also invariant by inversion.

If $G$ is connected and $\dim G\geq 1$, an assumption under which the condition (\ref{cond dens}) is always
satisfied is the ellipticity of $X$. In general, the hypoellipticity is sufficient to ensure the
existence of a density, but a conjugation-invariant hypoelliptic process is necessarily elliptic.
Indeed, if $G$ is Abelian, hypoellipticity is equivalent to ellipticity and if $G$ is simple,
the invariance of $X$ implies that the diffusive part of the generator of $X$ must be a
non-negative multiple of the Laplace operator, hence elliptic or zero. The general case is a
combination of these two.

In the case where the process is not elliptic, the distribution of $X$ may or may not have a
density, depending on the jumps of $X$. The discussion of ellipticity and hypo-ellipticity above implies that if $X$ is not elliptic and has no jumps, then $X$ has no square-integrable density. The remark made in the course of the proof of Corollary \ref{support densite} implies that this is still true if the  L\'evy measure of $X$ is finite.

Let us conclude this section by giving an example of an admissible pure jump processes. Let us work on $SU(2)$. Choose a real $s$ and consider the measure $\Pi(dx)=d(1,x)^{s} \; dx$. Since the dimension of $SU(2)$ is $3$, the integral $\int_{SU(2)} d(1,x)^2 \; \Pi(dx)$ converges if and only if $s>-5$ and $\Pi$ is a finite measure for $s>-3$.  
The irreps of $SU(2)$ are labelled by their dimension which can be any positive integer. Accordingly, the Fourier coefficient $\pi_n$, which is given, thanks to Weyl's integration formula, by
$$\pi_n=\frac{2}{\pi}\int_0^\pi \left(1-\frac{\sin(n\theta)}{n\sin \theta}\right) \sin^2(\theta) \theta^s \; d\theta,$$
is non-negative and grows faster than a constant times $n^{-s-3}$. In particular, if $s<-3$, the series $\sum_{n\geq 1} e^{-\pi_n t}$ converges and the condition (\ref{cond dens}) is satisfied.  Finally, for all $s\in (-5,-3)$, the pure jump process on $SU(2)$ with L\'evy measure $\Pi(dx)=d(1,x)^{s} \; dx$ is admissible.
\index{L\'evy process!pure jump admissible}
\index{L\'evy process|)}

\section{A Markovian holonomy field for each L\'evy process}

In this section, we prove the following theorem, which is one of the main results of the present work. Recall Definitions \ref{def SCFMHF} and  \ref{def levy adm}, and Proposition \ref{hol levy}.

\begin{theorem}\label{levy HF} Every admissible L\'evy process is the L\'evy process associated to a regular Markovian holonomy field.  
\end{theorem}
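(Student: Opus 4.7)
The plan is to reverse the analysis of the previous section: starting from an admissible L\'evy process $(X_t)_{t\ge 0}$ with densities $Q_t$, I will construct partition functions, then a discrete Markovian holonomy field, and finally invoke Theorem \ref{main existence} to extend it to a regular holonomy field whose associated L\'evy process is $X$.

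First, I define the candidate partition functions by the formulas which Proposition \ref{explicit Z} forces on us: set $Z^+_{1,0,t}=Q_t$ and, for $(p,g,t)$ in the appropriate ranges,
\begin{align*}
Z^+_{p,g,t}(x_1,\ldots,x_p)&=\eta^{*\frac{g}{2}}\!*\!\delta_{\O_{x_1}}\!*\!\cdots\!*\!\delta_{\O_{x_p}}(Q_t),\\
Z^-_{p,g,t}(x_1,\ldots,x_p)&=\kappa^{*g}\!*\!\delta_{\O_{x_1}}\!*\!\cdots\!*\!\delta_{\O_{x_p}}(Q_t).
\end{align*}
The admissibility of $X$ guarantees that these are strictly positive continuous functions of all their arguments, jointly in $t>0$. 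A direct computation using the convolution semigroup property $Q_s*Q_t=Q_{s+t}$ and Lemma \ref{convol k s} (to handle non-orientable surgery when $X$ is not assumed symmetric, which forces $g\geq 1$ in the non-orientable case so that a factor $\kappa$ is always present) shows that these functions satisfy the algebraic relations of Proposition \ref{algebre Z}, namely \eqref{crosscap}, \eqref{handle} and \eqref{circle}, together with the normalization $\int_G Z^+_{1,0,t}\,dx=1$.

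Next, for a measured marked surface $(M,\vol,\CS,C)$ with $G$-constraints and a graph $\G$ on $(M,\CS)$, I define the measure $\DF^\G_{M,\vol,\CS,C}$ by specifying its density with respect to the uniform measure $\U^\G_{M,\varnothing,C_{|\partial M}}$ in terms of a tame system of generators. Choose a vertex $v$, pick a tame system $\{a_1,\ldots,a_\rg,c_1,\ldots,c_\p,l_1,\ldots,l_\f\}$ of $\RL_v(\G)$ associated with a word $w\in W(M)$ as in Proposition \ref{tame generators}, and define the law of $(h(a_i),h(c_j),h(l_k))$ by weighting the uniform distribution from Proposition \ref{tame generators} by the product
\[\prod_{k=1}^{\f}Q_{\vol(F_k)}\!\bigl(h(\partial F_k)\bigr)\]
and disintegrating against the $G$-constraints on the marks $\CS$. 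Because of Proposition \ref{explicit Z}'s formula, this simply reproduces the correct partition functions on elementary pieces; the cyclic invariance of $Z^+$, its symmetry under relabelling of faces, and the fact that the product above is expressed through facial cycles (which, by Lemma \ref{faces meme bord} and Proposition \ref{bord combi}, are intrinsic) ensure that the definition does not depend on the choice of tame system. Axioms \AxD{1}, \AxD{2}, \AxD{4}, \AxD{5}, \AxD{7} are then essentially tautological. Axiom \AxD{3} follows from \eqref{recover unif}. For the gluing axiom \AxD{6}, the key computation is that performing a binary or unary splitting adds a face, or modifies the cycle decomposition of the boundary, in exactly the way that the relations \eqref{handle}, \eqref{circle}, \eqref{crosscap} describe; invariance by subdivision \AxD{I} reduces, using Proposition \ref{erase edge}, to the single identity $\int_G Q_s(xy)Q_t(y^{-1}z)\,dy=Q_{s+t}(xz)$, which is precisely the convolution semigroup property. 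These verifications are formally identical in structure to the computation in Proposition \ref{prop markov unif} for the uniform field, with the additional density factors absorbed consistently thanks to the semigroup law.

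Finally, I verify that $\DF$ is regular. The Feller property is immediate since $Q_t$ is jointly continuous in $(t,x)$ on $\RK_+^\ast\times G$ and positive, hence bounded below on compact subsets of $(0,\infty)$, so the normalization constants also depend continuously on $(t,C)$. Continuous area-dependence follows from the same joint continuity applied face by face to the explicit density formula. Stochastic $\tfrac12$-H\"older continuity is the main analytic step: for a simple loop $l$ bounding a disk $D$ of small area $t=\vol(D)$, the distribution of $h(l)$ under $\DF^\G_{M,\vol,\CS,C}$ is, conditionally on everything outside $D$, a density proportional to $Q_{t}$ times a bounded factor coming from the rest of the graph. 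Using $\int d(1,x)^2\,\Pi(dx)<\infty$, Proposition \ref{generateur X} yields $\E[d_G(1,X_t)^2]\le Kt$ for small $t$, and Cauchy–Schwarz gives the required bound $\E[d_G(1,h(l))]\le K\sqrt{t}$; the main obstacle here, and the step requiring the most care, is ensuring that the conditional Radon–Nikodym factor remains uniformly bounded as $t\to 0$, which follows from the continuity and positivity of all the partition functions on the fixed Riemannian surface. Theorem \ref{main existence} then produces a regular Markovian holonomy field extending $\DF$, and by construction $Z^+_{1,0,t}=Q_t$, so the associated L\'evy process furnished by Proposition \ref{hol levy} is exactly $X$.
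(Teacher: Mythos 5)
Your proposal is correct and follows essentially the same route as the paper: the discrete field is the density $\prod_{F\in\F}Q_{\vol(F)}(h(\partial F))$ against the constrained uniform measure, the discrete axioms reduce to the surgery properties of $\U^{\G}_{M,\CS,C}$ plus the convolution semigroup law, regularity comes from the $\E[d_G(1,X_t)]\le K\sqrt t$ estimate together with a uniform bound on the conditional Radon--Nikodym factor coming from the rest of the surface, and Theorem \ref{main existence} concludes. The only differences are presentational: you front-load the explicit partition functions of Proposition \ref{explicit Z} and phrase the definition through a tame system of generators, whereas the paper defines the density intrinsically on $\M(\Path(\G),G)$ and computes the partition functions afterwards.
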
 

Whether this regular Markovian holonomy field is unique is a natural question which we hope to be able to answer in a future work. 

In order to prove this theorem, we use the results of the previous chapter. We start by constructing a discrete Markovian holonomy field, prove that it is regular and extend it to a Markovian holonomy field.

\subsection{A discrete Markovian holonomy field}

Let $X$ be an admissible L\'evy process. Let $Q_t$ denote the density of the distribution of $X_t$. Let $(M,\vol,\CS,C)$ be a connected measured marked surface with $G$-constraints. Let $\G$ be a graph on $(M,\CS)$. For each face $F$ of $\G$, recall 
that $\partial F$ is a cycle, oriented or non-oriented depending on the orientability of
$M$. Assume first that $M$ is orientable. For each $h\in \M(\Path(\G),G)$, different choices of the origin of $\partial F$ lead to different elements $h(\partial F)$ of $G$, but these elements belong to the same conjugacy class of $G$. Hence, for all $t>0$, the assumption that the distribution of $X_{t}$ is invariant by conjugation
makes the positive real number $Q_{t}(h(\partial F))$ well-defined. 

If $M$ is non-orientable, then $h(\partial F)$ is defined only up to conjugation and inversion.
In this case, we make the further assumption that the distribution of $X$ is invariant by inversion. Then, for all $t>0$, the non-negative real number $Q_{t}(h(\partial F))$ is also well-defined.

\begin{definition} \label{discrete YM def} Let $X$ be an admissible L\'evy process. Let $(M,\vol,\CS,C)$ be a measured marked surface with $G$-constraints. Let $\G$ be a graph on $(M,\CS)$. We define the following measure
on $(\M(\Path(\G),G),\C)$:
$$\DF^{X,\G}_{M,\vol,\CS,C}(dh)= \prod_{F\in\F}Q_{\vol(F)}(h(\partial F)) \; \U^{\G}_{M,\CS,C}(dh).$$
We denote the collection of these measures by $\DF^X$.
\end{definition}
\index{DADF@$\DF^{X,\G}_{M,\vol,\CS,C}$}
\index{Markovian holonomy field!discrete|(}

\begin{proposition}\label{Ax 1-6} Let $X$ be an admissible L\'evy process. The collection of measures $\DF^X$ satisfies the axioms \AxD{1} to \AxD{6} of a discrete Markovian holonomy field.
\end{proposition}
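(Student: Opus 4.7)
The overall strategy is to leverage Proposition \ref{U DMHF}, which asserts that the family $\U^\G_{M,\CS,C}$ itself satisfies axioms \AxD{1}--\AxD{7}, together with the observation that $\DF^{X,\G}_{M,\vol,\CS,C}$ is absolutely continuous with respect to $\U^\G_{M,\CS,C}$ with the explicit density $\Phi^\G_{M,\vol}(h)=\prod_{F\in\F}Q_{\vol(F)}(h(\partial F))$. Almost every axiom will reduce to the corresponding axiom for $\U$ together with the fact that the density is either unchanged or undergoes a transparent transformation under the relevant surgery operation. Since $Q_t$ is continuous and (by Corollary \ref{support densite}) positive on $G$, and since a graph has finitely many faces, $\Phi^\G_{M,\vol}$ is bounded, measurable with respect to the invariant $\sigma$-field $\I$ (because $h(\partial F)$ is defined only modulo conjugation, or modulo conjugation and inversion when $M$ is non-orientable, which is handled by the invariance of $Q_t$ under conjugation and, in the non-oriented case, the assumed invariance under inversion), and enters all the identities without obstruction.

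For the easy axioms: \AxD{1} is immediate from Proposition \ref{support U} since $\DF^{X,\G}_{M,\vol,\CS,C}\ll \U^\G_{M,\CS,C}$. For \AxD{2} one observes that $\Phi^\G_{M,\vol}$ is independent of $C$, so $C\mapsto \DF^{X,\G}_{M,\vol,\CS,C}(\Gamma)=\int \mathbf{1}_\Gamma \Phi^\G_{M,\vol}\, d\U^\G_{M,\CS,C}$ inherits measurability from the uniform field. Axiom \AxD{3} follows by pulling the unaffected factor $\Phi^\G_{M,\vol}$ outside the disintegration established for $\U$. Axiom \AxD{5} holds because if $\G=\G_1\cup\G_2$ on $M_1\cup M_2$, then $\F=\F_1\sqcup\F_2$ and $\Phi^{\G_1\cup\G_2}_{M_1\cup M_2,\vol_1\cup\vol_2}$ factorises as $\Phi^{\G_1}_{M_1,\vol_1}\Phi^{\G_2}_{M_2,\vol_2}$; combine this with the tensor-product form of $\U$. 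For \AxD{4}, a homeomorphism $\psi:M\to M'$ satisfying the hypotheses induces a bijection between faces of $\G$ and faces of $\G'=\psi(\G)$ which preserves area (because $\psi_*\vol=\vol'$) and boundary cycles (because $\psi(\partial F)=\partial\psi(F)$ for any split pattern, by Definition \ref{graph polygon}); hence $\Phi^{\G'}_{M',\vol'}(h')=\Phi^\G_{M,\vol}(\psi^* h')$, and the axiom follows from the corresponding axiom for $\U$.

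The main work is \AxD{6}. Fix $l\in\CS$ and the gluing $\psi:\Spl_l(M)\to M$. By hypothesis on the marks of a graph on $(M,\CS)$, the curve $l$ is the image of a simple loop in $\Loop(\G)$, and the lifted graph $\Spl_l(\G)$ on $\Spl_l(M)$ has, away from $\psi^{-1}(l)$, exactly the same edges as $\G$. The key geometric observation, which I would establish by working in a split pattern of $(M,\G)$ as in Proposition \ref{structure pregraph}, is that the faces of $\G$ are in canonical bijection with the faces of $\Spl_l(\G)$: splitting along a curve that is already covered by the skeleton creates no new face. Under this bijection, corresponding faces have the same area (because $\Spl_l(\vol)=\vol\circ\psi$) and the same boundary cycle, read via $\psi$. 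Consequently $\Phi^{\Spl_l(\G)}_{\Spl_l(M),\Spl_l(\vol)}=\Phi^\G_{M,\vol}\circ\psi$ on $\M(\Path(\Spl_l(\G)),G)$, and axiom \AxD{6} reduces to the identity $\U^{\Spl_l(\G)}_{\Spl_l(M),\Spl_l(\CS),\Spl_l(C)}=\U^\G_{M,\CS,C}\circ\psi^{-1}$ supplied by Proposition \ref{prop markov unif}.

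The main obstacle is the verification of face preservation under splitting in \AxD{6}, especially in the unary case, where one must check that the face structure and area–weighted product are unaffected by the gluing of a M\"obius band along $l$; the fact that the distribution of $X$ is assumed invariant by inversion when $M$ is non-orientable is what ensures $Q_{\vol(F)}(h(\partial F))$ makes sense unambiguously for the non-oriented cycle $\partial F$ and is unchanged by the splitting.
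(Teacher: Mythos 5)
Your proposal is correct and follows essentially the same route as the paper: both reduce every axiom to the corresponding property of the uniform field $\U^\G_{M,\CS,C}$ established in Propositions \ref{U DMHF} and \ref{prop markov unif}, using the fact that $\DF^{X,\G}_{M,\vol,\CS,C}$ has the bounded, constraint-independent density $\prod_{F\in\F}Q_{\vol(F)}(h(\partial F))$, and both handle \AxD{6} by observing that this density on the split surface pulls back through the gluing map to the density on the original surface. Your added detail on why the face structure, areas and boundary cycles are preserved under splitting along a mark covered by the skeleton is exactly the step the paper leaves implicit.
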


\begin{proof}For each quadruple $(M,\vol,\CS,C)$, $\DF^{X,\G}_{M,\vol,\CS,C}$ is a measure on the invariant $\sigma$-field of $\M(\Path(\G),G)$. It has a bounded density with respect to the probability measure $\U^\G_{M,\CS,C}$, so that it is a finite measure. Let us prove that the axioms \AxD{1} to \AxD{6} are satisfied. 

The fact that the discrete Markovian holonomy field $\U$ satisfies \AxD{1} and \AxD{3} implies immediately that $\DF^X$ also satisfies them. The argument used for $\U$ in the proof of Proposition \ref{U DMHF} shows that $\DF^X$ satisfies \AxD{2}. Let $\psi$ be a homeomorphism as in the statement of the axiom \AxD{4}. The measure $\DF^{X,\G}_{M,\vol,\CS,C}$ depends only on the combinatorial structure of the graph $\G$, on the cycles which represent the curves of $\CS$,  on the set of $G$-constraints and finally the boundaries and the areas of the faces of $\G$. These characteristics are all preserved by the homeomorphism $\psi$. The axiom \AxD{5} is obviously satisfied. Let us finally check that $\DF^X$ satisfies \AxD{6}. 

Let us denote by $M'$ the surface ${\Spl}_{l}(M)$ and by $\G'$ the graph $\Spl_l(\G)$. Let us also denote by
$D':\M(\Path(\G'),G)\to \RK$ the density of the measure $\DF^{X,\G'}_{M',\vol',\CS',C'}$ with respect to $\U^{\G'}_{M',\CS',C'}$. Then $D'\circ \psi:\M(\Path(\G),G)\to \RK$ is the density of $\DF^{X,\G}_{M,\vol,\CS,C}$ with respect to $\U^{\G}_{M,\CS,C}$. Hence, the property follows from Proposition \ref{prop markov unif}. \end{proof}

We prove now that the collection of measures that we consider satisfies the property of invariance under subdivision.

\begin{proposition}\label{Ax I} Let $X$ be an admissible L\'evy process. The collection of measures $\DF^{X}$ satisfies the axiom \AxD{I} of a discrete Markovian holonomy field.
\end{proposition}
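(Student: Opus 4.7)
The plan is to exploit the factorisation $\DF^{X,\G}_{M,\vol,\CS,C} = D^\G \cdot \U^\G_{M,\CS,C}$, where $D^\G(h) = \prod_{F\in\F} Q_{\vol(F)}(h(\partial F))$, and to reduce subdivision invariance of $\DF^X$ to the semigroup identity $Q_s \ast Q_t = Q_{s+t}$. Since Proposition \ref{inv subd unif} already supplies $r_* \U^{\G_2}_{M,\CS,C} = \U^{\G_1}_{M,\CS,C}$, the desired equality $r_* \DF^{X,\G_2}_{M,\vol,\CS,C} = \DF^{X,\G_1}_{M,\vol,\CS,C}$ is equivalent to the conditional expectation identity
\[
\E_{\U^{\G_2}_{M,\CS,C}}\bigl[D^{\G_2}\,\bigm|\,r\bigr] \;=\; D^{\G_1}\circ r \qquad \U^{\G_2}_{M,\CS,C}\text{-a.s.}
\]

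Next I would localise this identity to a single face. Since every face of $\G_2$ is contained in a unique face of $\G_1$, the density $D^{\G_2}$ factors as $\prod_{F_1\in\F(\G_1)} D^{\G_2}_{F_1}$ with $D^{\G_2}_{F_1}(h) = \prod_{F_2\subset F_1} Q_{\vol(F_2)}(h(\partial F_2))$. Under $\U^{\G_2}_{M,\CS,C}$, the $\G_2$-edge variables associated with distinct faces of $\G_1$ are conditionally independent given $r$ (this is the content of Proposition \ref{prop markov unif} applied after temporarily treating $\Sk(\G_1)$ as a set of marks), so the conditional expectation factors over $\F(\G_1)$. It thus suffices to prove that for each face $F_1$ of $\G_1$, identifying $\overline{F_1}$ with a closed disk and $\G_2|_{\overline{F_1}}$ with a graph on it,
\[
\int D^{\G_2}_{F_1}(h)\; d\U(h) \;=\; Q_{\vol(F_1)}(h(\partial F_1)),
\]
where $\U$ is the uniform measure on $\overline{F_1}$ constrained so that the product of holonomies along each $\G_1$-edge of $\partial F_1$ equals the given value.

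I would prove this disk identity by induction on the number $k$ of edges of $\G_2|_{\overline{F_1}}$ strictly interior to $F_1$. The base case $k=0$ is immediate: the sole face is $F_1$ itself (up to subdivision of $\partial F_1$, which leaves $h(\partial F_1)$ unchanged). For the induction step, Proposition \ref{erase edge} supplies two types of erasable edge: (i) an interior edge $e$ adjacent to two distinct faces $F_2^{(i)}, F_2^{(j)}$ with $\partial F_2^{(i)} = ce$ and $\partial F_2^{(j)} = e^{-1}d$, whose erasure merges them into a face with boundary $cd$ and area $\vol(F_2^{(i)}) + \vol(F_2^{(j)})$; or (ii) an interior edge $e$ terminating at a degree-one vertex, with $\partial F_2^{(i)} \in \{cee^{-1}, ce^{-1}e\}$, whose erasure leaves boundary $c$ and area unchanged. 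A dichotomy shows that such an edge always exists: if the interior subgraph contains a cycle, Jordan's theorem yields an edge of type (i); if it is a forest, an interior leaf yields an edge of type (ii). Applying the inductive hypothesis to the graph with $e$ erased reduces the problem to integrating $h(e)$ out of the two $Q$-factors incident to $e$.

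For a type (i) erasure, writing $s=\vol(F_2^{(i)})$, $t=\vol(F_2^{(j)})$ and $u=h(e)$, the key computation is
\[
\int_G Q_s(u\,h(c))\,Q_t(h(d)\,u^{-1})\,du \;=\; Q_{s+t}(h(d)h(c)),
\]
which follows from centrality of $Q_t$ (turning $Q_t(h(d)u^{-1})$ into $Q_t(u^{-1}h(d))$), the change of variable $v = u\,h(c)$ combined with a second use of centrality, and the convolution identity $Q_s \ast Q_t = Q_{s+t}$; since $h(\partial F^{\text{merged}}) = h(cd) = h(d)h(c)$, this closes the induction. The type (ii) case is trivial because $h(\partial F_2^{(i)}) = h(c)$ does not depend on $u$. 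The main obstacle I anticipate is the combinatorial verification that a removable edge always exists within each face, coupled with careful tracking of the reversed-multiplication convention $h(c_1c_2) = h(c_2)h(c_1)$ in boundary-cycle computations; in the non-orientable case, $h(\partial F)$ is only defined up to inversion, but this is handled by the inversion invariance of $Q_t$ built into the admissibility hypothesis on $X$ in that setting.
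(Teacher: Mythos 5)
Your proposal is correct and follows essentially the same route as the paper: factor $\DF^{X,\G}$ as a density against the uniform measure, use Proposition \ref{inv subd unif} to reduce to a per-face identity on a disk, and close the induction by integrating out one edge at a time via Proposition \ref{erase edge}, the centrality of $Q_t$ and the semigroup identity $Q_s*Q_t=Q_{s+t}$; your key convolution computation is exactly the one in the paper.

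Two small points. First, your dichotomy for the existence of a removable edge is not quite right as stated: a single chord of the disk gives an interior subgraph that is a forest with no interior leaf, yet the removable edge is of type (i), not (ii). The correct split is on the number of faces of $\G_2$ inside $F_1$ rather than on cycles in the interior subgraph --- the paper runs an outer induction on the number of faces (merging two adjacent faces when there are at least two, which always have a common interior edge) and, in the single-face case, extracts a degree-one interior vertex from Euler's formula applied to a split pattern. Second, the paper ends with an explicit reduction of the case $\E_1\not\subset\E_2$ to the case $\E_1\subset\E_2$ by inserting an intermediate graph and invoking the contraction identity for the measures $\delta_{\O(n)}$; your parenthetical ``up to subdivision of $\partial F_1$'' gestures at this but deserves the same explicit treatment.
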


\begin{proof}Consider $(M,\vol,\CS,C)$ endowed with two graphs $\G_1$ and $\G_2$ such that $\G_1\preccurlyeq \G_2$.
Let $r:\M(\Path(\G_2),G)\to \M(\Path(\G_1),G)$ denote the restriction map.
Let us first make the assumption that $\E_{1}\subset \E_{2}$ and choose orientations $\E_1^+$ and $\E_2^+$ of $\G_1$ and $\G_2$ such that $\E_{1}^+\subset \E_{2}^+$. The restriction map $r$ can be thought of as a map from $G^{\E_2^+}$ to $G^{\E_1^+}$. Let us write $\E_2^+=\E_1^+ \cup (\E_2^+\setminus \E_1)^+$ and decompose the generic element of $G^{\E_2^+}$ as $g=(g_1,g_2)$ accordingly. With this notation, $r(g_1,g_2)=g_1$. Let $f:G^{\E_{1}^+}\to \RK$ be a continuous function. We need to prove that 
$$\int_{G^{\E_{2}^{+}}} f (g_1) \;\DF^{X,\G_{2}}_{M,\vol,\CS,C}(dg_1,dg_2)=\int_{G^{\E_{1}^{+}}} f (g_1) \;\DF^{X,\G_{1}}_{M,\vol,\CS,C}(dg_1)$$

We are going to perform the integration on the left-hand side with respect to $dg_2$. For this, let us observe that the curves of $\CS$ belong to $\Path(\G_{1})$, so that the measure $\U^{\G_2}_{M,\vol,\CS,C}(dg_1,dg_2)$ on $G^{\E_2^+}$ can be written as $\U^{\G_1}_{M,\vol,\CS,C}(dg_1) dg_2$. This is in fact an instance of Proposition \ref{inv subd unif}. Hence, if we put together the faces of $\G_{2}$ according to the
face of $\G_{1}$ in which they are contained, we find the following expression for our integral:
\begin{equation}\label{subdivise}
\int_{G^{\E_{1}^+}} f(g_1) \prod_{F_{1}\in\F_{1}}\left[ \int \prod_{\substack{F_{2}\in
\F_{2}\\ F_{2}\subset F_{1}}} Q_{\vol(F_{2})}(g(\partial F_{2})) \prod_{\substack{e\in \E_{2}^+\setminus \E_{1}^+ \\ e\subset F_{1}}} dg_{e} \right] \; \U^{\G_{1}}_{M,\CS,C}(dg_1).
\end{equation}

The integral between the brackets is over $G^{\{e\in \E_{2}^+\setminus \E_{1}^+ : e \subset  F_{1}\}}$. It
suffices to prove that this integral is equal to $Q_{\vol(F_{1})}(h_{\partial F_{1}}(g))$.

We proceed then by induction on the number of faces of $\G_{2}$ contained in $F_{1}$. Let us assume first that this number is 1 and denote by $F_2$ the unique face of $\G_2$ contained in $F_1$. In order to treat this case, we proceed by induction on the number of edges of $\G_{2}$ whose interior is contained in $F_{1}$. If this number is zero, then $F_{1}=F_2$  and the expression between the brackets is exactly $Q_{\vol(F_{1})}(g(\partial F_{1}))$. Now let us assume that there is at least one edge of $\G_{2}$ whose interior is contained in
$F_{1}$. Let us consider a split pattern $M'$ of $(M,\G_1)$ and let $M'_{F_1}$  denote the connected component corresponding to $F_1$. Let $\G'_2$ be the graph on $M'$ induced by $\G_2$. The restriction of $\G'_2$ to $M'_{F_1}$ is a graph with a single face on a disk. By Euler's formula, this graph has the same number of edges and vertices. By assumption, there is at least one vertex of degree at least 3 on the boundary of $M'_{F_1}$. 
Hence, $\G'_2$ has at least one vertex of degree 1, which must be in the interior of $M'_{F_1}$ and hence is sent to a vertex of $\G_2$ of degree 1 contained in $F_1$. Let $e$ be an edge of $\G_{2}$ adjacent to a vertex $v$ of degree 1. The cycle $\partial F_{2}$ contains either the sequence $ee^{-1}$ or the sequence $e^{-1}e$. This sequence can be removed without affecting the value of the expression between the brackets. The cycle $\partial F_{2}$ with the sequence removed is the boundary of the face $F_{2}\cup e((0,1))\cup \{v\}$ of the graph whose set of edges is $\E_{2}\setminus \{e,e^{-1}\}$. This graph has one edge less inside $F_{1}$ than $\G_{2}$
and the result follows by induction.

Let $n\geq 2$ be an integer and let us assume that the
result has been proved when $F_{1}$ contains at most $n-1$ faces of $\G_{2}$. Consider the case
where $F_{1}$ contains $n$ faces of $\G_{2}$.

Let $F_2,F'_2$ be two distinct adjacent faces of $\G_{2}$ which are both
contained in $F_{1}$. The boundaries of $F_2$ and $F'_2$ are respectively of the form $e_{1}\ldots e_{k} e''$ and $(e'')^{-1}e'_{1}\ldots e'_{l}$, where
$\{e_{1},\ldots,e_{k},e'_{1},\ldots,e'_{l}\}\subset \E_{2}^+$ and $e''\in
\E_{2}^{+}\setminus \E_{1}^{+}$. When we integrate with respect to the component of $g$ corresponding to $e''$ between the brackets in (\ref{subdivise}), we find
$$\int_{G} Q_{\vol(F_2)}(g_{e''}g_{e_k}\ldots g_{e_1}) Q_{\vol(F'_2)}(g_{e'_l}\ldots
g_{e'_1}( g_{e''})^{-1}) \; dg_{e''},$$
which, by the Markov property of $X$, is equal to $Q_{\vol(F\cup
F')}(g_{l}'\ldots g_{1}' g_{k}\ldots g_{1})$. We are thus reduced to the graph obtained from
$\G_{2}$ by merging $F_2$ and $F'_2$ along the edge $e''$. By Proposition \ref{erase edge}, the result
of this operation is indeed a graph. The induction hypothesis applied to this new graph yields the desired result.

Finally, let us treat the case where $\E_{1}\not\subset \E_{2}$. In this case, there are vertices
of $\G_{2}$ located on the edges of $\G_{1}$ which are not vertices of $\G_{1}$. Adding these
vertices to $\G_{1}$ and splitting its edges accordingly produces a new graph $\G$ such that $\G_1\preccurlyeq \G \preccurlyeq \G_2$ and $\E\subset \E_2$.
It remains to prove that the restriction $r:\M(\Path(\G),G)\to \M(\Path(\G_1),G)$ sends the measure $\DF^{X,\G}_{M,\vol,\CS,C}$ to $\DF^{X,\G_1}_{M,\vol,\CS,C}$. This follows at once from (\ref{contracte dO}). \end{proof}

Combining Propositions \ref{456 Z}, \ref{Ax 1-6} and \ref{Ax I}, we find the following result.

\begin{proposition} Let $X$ be an admissible L\'evy process. Let $Z^{X,\G}_{M,\vol,\CS,C}$ denote the partition functions associated with the collection of measures $\DF^{X}$. Let $(M,\vol,\CS,C)$ be a measured marked surface with $G$-constraints. Then $Z^{X,\G}_{M,\vol,\CS,C}$ does not depend on the graph $\G$. We denote it by $Z^X_{M,\vol,\CS,C}$.
\end{proposition}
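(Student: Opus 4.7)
The statement is an immediate consequence of the three propositions just established, so the proof proposal is essentially a bookkeeping exercise rather than a new argument. My plan is to invoke Proposition \ref{456 Z}, which asserts that for any collection of finite measures of the type appearing in Definition \ref{def discrete MHF} that satisfies the axioms \AxD{4}, \AxD{5}, \AxD{6} and \AxD{I}, the associated partition functions depend only on the isomorphism class of the measured marked surface with $G$-constraints and not on the choice of graph.

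First I would verify that the collection $\DF^X$ falls under the hypotheses of Proposition \ref{456 Z}. Proposition \ref{Ax 1-6} already gives us axioms \AxD{1} through \AxD{6}, which in particular includes the three gluing/diffeomorphism axioms \AxD{4}, \AxD{5}, \AxD{6} that enter into the proof of Proposition \ref{456 Z}. Proposition \ref{Ax I} then provides the crucial invariance under subdivision \AxD{I}, whose proof relied on the Chapman--Kolmogorov (semigroup) property of the densities $Q_t$ and on the behaviour of the uniform measure with respect to edge subdivision. Together these four axioms are exactly what Proposition \ref{456 Z} requires.

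Applying Proposition \ref{456 Z} then directly yields that for any $(M,\vol,\CS,C)$ and any two graphs $\G_1$ and $\G_2$ on $(M,\CS)$, one has $Z^{X,\G_1}_{M,\vol,\CS,C}=Z^{X,\G_2}_{M,\vol,\CS,C}$. Hence it is legitimate to denote this common value by $Z^X_{M,\vol,\CS,C}$. There is no real obstacle here, as the heavy lifting has been done in establishing the three cited propositions; the only thing to note is that the proof of Proposition \ref{456 Z} proceeds by first reducing to the case $\CS=\varnothing$ via \AxD{6}, then to a connected surface via \AxD{5}, then using \AxD{I} together with the edge-removal/adjunction invariance (Propositions \ref{erase edge} and \ref{add edge}) and the classification-based Proposition \ref{graph to graph} together with \AxD{4} to identify any two graphs --- all steps whose applicability to $\DF^X$ is guaranteed by the preceding two propositions.
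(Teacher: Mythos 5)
Your proposal is correct and follows exactly the paper's argument: the paper derives this proposition by combining Proposition \ref{456 Z} with Propositions \ref{Ax 1-6} and \ref{Ax I}, which together verify the axioms \AxD{4}, \AxD{5}, \AxD{6} and \AxD{I} needed for graph-independence of the partition function. Nothing is missing.
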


We are now going to compute this partition function. For this, we start by associating a probability measures on $G$ to each connected surface $(M,\varnothing,C)$ with $G$-constraints along the boundary. Recall the definition of the measures $\eta$ and $\kappa$ from Definition \ref{kappasigma} and the properties that they satisfy proved in Proposition \ref{convol k s}.  

\begin{definition}\label{def m} Let $(M,\varnothing,C)$ be a connected surface with $G$-constraints along the boundary. If $M$ is oriented, write $\BS^+(M)=\{b_1,\ldots,b_p\}$ and set
$$\m_{M,\varnothing,C}=\eta^{*\frac{\rg(M)}{2}}*\delta_{C(b_1)}*\ldots*\delta_{C(b_p)}.$$
If $M$ is non-orientable, write $\BS(M)=\{b_1^{\pm 1},\ldots,b_p^{\pm 1}\}$ and set
$$\m_{M,\varnothing,C}=\kappa^{*\rg(M)}*\delta_{C(b_1)}*\ldots*\delta_{C(b_p)}.$$
\end{definition}

\begin{remark}\label{larem} 1. The second definition is meaningful thanks to Lemma \ref{convol k s}. Indeed, the orientation chosen on the boundary components of $M$ does not affect the definition of $\m_{M,\varnothing,C}$. \\
2. Assume that $(M,\varnothing,C)$ is an oriented surface with $G$-constraints. Write $M^\vee$ for the same surface with the opposite orientation. Then $\m_{M^\vee,\varnothing,C}=\m_{M,\varnothing,C}^\vee$.
\end{remark}

\begin{lemma}\label{mesure m} Let $M$ be a connected compact surface. \\
1. The mapping which to a set $C$ of $G$-constraints on $\BS(M)$ associates the probability measure $\m_{M,\varnothing,C}$ on $G$ is continuous from $\Const_G(M,\varnothing)$ to the space of probability measures on $G$ endowed with the topology of weak convergence.\\
2. For all $b\in \BS(M)$, the measure $\int_G \m_{M,\CS,C_{b\to x}}\; dx$ is the Haar measure on $G$.
\end{lemma}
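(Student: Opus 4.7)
\medskip

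\noindent\emph{Plan of proof.} For part 1, I would reduce the continuity statement to two elementary facts: (i) the map $\Conj(G)\to\mathcal{P}(G)$, $\O\mapsto\delta_\O$, is continuous with respect to weak convergence, and (ii) convolution is jointly continuous on the space of probability measures on $G$ endowed with the topology of weak convergence. For (i), given $f\in C(G)$, define $\tilde f(x)=\int_G f(gxg^{-1})\,dg$; this is a continuous conjugation-invariant function on $G$, hence descends to a continuous function on $\Conj(G)$, and by definition of $\delta_\O$ one has $\delta_\O(f)=\tilde f(x)$ for any $x\in\O$. Thus $\O\mapsto\delta_\O(f)$ is continuous for every $f\in C(G)$, which is (i). For (ii), joint continuity of convolution on a compact group is standard. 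Composing the continuous map $C\mapsto(\delta_{C(b_1)},\ldots,\delta_{C(b_p)})$ with iterated convolution by the fixed measures $\eta^{*\rg/2}$ or $\kappa^{*\rg}$ yields the continuity claimed.

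For part 2, I would exploit the fact that all the measures entering the definition of $\m_{M,\varnothing,C}$ (namely $\eta$, $\kappa$, and each $\delta_{C(b_i)}$) are conjugation-invariant. The convolution of central probability measures is commutative: indeed, a central $\mu$ satisfies $\mu*\delta_x=\delta_x*\mu$ for every $x\in G$ (use invariance to write $\int f(xy)\,\mu(dy)=\int f(xyx^{-1}x)\,\mu(dy)=\int f(zx)\,\mu(dz)$), and this extends to all central measures by integration. Consequently, in the expression for $\m_{M,\varnothing,C_{b\mapsto x}}$ I may move the factor $\delta_{\O_x}$ to the right, obtaining $\m_{M,\varnothing,C_{b\mapsto x}}=\nu*\delta_{\O_x}$ where $\nu$ is independent of $x$. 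Then
\[
\int_G \m_{M,\varnothing,C_{b\mapsto x}}\,dx \;=\; \nu*\Bigl(\int_G \delta_{\O_x}\,dx\Bigr).
\]
The inner integral equals the Haar measure $dx$ on $G$: for $f\in C(G)$, Fubini and translation invariance give $\int_G\int_G f(gxg^{-1})\,dg\,dx=\int_G f(x)\,dx$. Finally, convolution of any probability measure with the Haar measure returns the Haar measure (since Haar is translation invariant), so $\nu*dx=dx$, which is the claim.

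The arguments are essentially formal manipulations with central measures on a compact group; the only genuinely substantive input is the commutativity of convolution of conjugation-invariant measures, which is both the hinge of part 2 and already used implicitly in Definition~\ref{def m} (see Remark~\ref{larem} and Lemma~\ref{convol k s}). No step presents a real obstacle.
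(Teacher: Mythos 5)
Your proposal is correct and follows essentially the same route as the paper, which proves part 1 by citing the continuity of $x\mapsto\delta_{\O_x}$ together with the continuity of convolution, and part 2 by citing the identity $\int_G\delta_{\O_x}\,dx=dx$. You have merely supplied the details (descent of $\tilde f$ to $\Conj(G)$, commutativity of convolution of central measures to isolate the factor $\delta_{\O_x}$, and absorption of Haar under convolution) that the paper leaves implicit.
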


\begin{proof}1. This property follows from the continuity of the mapping $x\mapsto \delta_{\O_x}$ and the continuity of the convolution of measures.

2. This assertion follows from the fact that $\int_G \delta_{\O_x} \; dx$ is the Haar measure on $G$. \end{proof}

\begin{proposition}\label{calcul de Z} Let $X$ be an admissible L\'evy process. Consider the collection of measures $\DF^{X}$. Let $(M,\vol,\CS,C)$ be a connected measured marked surface. Let $(M',\vol',\varnothing,C')$ be a split tubular pattern of $(M,\CS,C)$ endowed with the induced $G$-constraints. Let $M'_1,\ldots,M'_s$ be the connected components of $M'$ and for each $i\in \{1,\ldots,s\}$, let $(M'_i,\varnothing,C'_i)$ be the associated connected surface with constraints. If $M$ is oriented, then $M'$ carries the induced orientation. If $M$ is non-orientable, then let us assign an arbitrary orientation to each orientable connected component of $M'$. Then the following equality holds : 
\begin{equation}\label{valeur ZX}
Z^{X}_{M,\vol,\CS,C}=\prod_{i=1}^s \int_G Q_{\vol'(M'_i)} \; d\m_{M'_i,\varnothing,C'_i}.
\end{equation}
\end{proposition}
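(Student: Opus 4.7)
The plan is to proceed by a two-step reduction followed by an explicit computation in a well-chosen graph. First, using axiom \AxD{6} applied successively to each mark of $\CS$ (together with the definition of a split tubular pattern), I would reduce to the case where $\CS=\varnothing$, replacing $(M,\vol,\CS,C)$ by its split tubular pattern $(M',\vol',\varnothing,C')$. Axiom \AxD{5} then factors the partition function as a product over the connected components $M'_1,\ldots,M'_s$ of $M'$. Hence it suffices to show, for a single connected measured surface $(M,\vol,\varnothing,C)$ with $G$-constraints, the identity
\begin{equation}\label{cible}
Z^X_{M,\vol,\varnothing,C} = \int_G Q_{\vol(M)}\; d\m_{M,\varnothing,C}.
\end{equation}

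To prove \eqref{cible}, I would fix any graph $\G$ on $M$ with faces $F_1,\ldots,F_\f$ and apply Proposition \ref{tame generators}: choose a tame system of generators $\{a_1,\ldots,a_\rg,c_1,\ldots,c_\p,l_1,\ldots,l_\f\}$ of $\RL_v(\G)$ associated with a word $w \in W(M)$, labelled so that the meander of $l_i$ bounds $F_i$. Since the meander of $l_i$ and the facial cycle $\partial F_i$ differ only by conjugation by the spoke of $l_i$, and since $Q_t$ is conjugation-invariant (and inversion-invariant when $M$ is not orientable), the density of $\DF^{X,\G}_{M,\vol,\varnothing,C}$ with respect to $\U^\G_{M,\varnothing,C}$ can be rewritten as $\prod_{i=1}^\f Q_{\vol(F_i)}(h(l_i))$. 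The joint distribution formula of Proposition \ref{tame generators} then gives
\begin{align*}
Z^X_{M,\vol,\varnothing,C} = \int_{G^{\rg+\p+\f-1}} & \Bigl[\prod_{i=1}^{\f-1} Q_{\vol(F_i)}(z_i)\Bigr]\, Q_{\vol(F_\f)}(z_\f) \\
&\quad dx_1\ldots dx_\rg\, \delta_{C(b_1)}(dy_1)\ldots \delta_{C(b_\p)}(dy_\p)\, dz_1\ldots dz_{\f-1},
\end{align*}
with $z_\f = y_\p\ldots y_1\, \overleftarrow{w}(x_1,\ldots,x_\rg)(z_{\f-1}\ldots z_1)^{-1}$.

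Next, I would perform the bi-invariant change of variables $u_k = z_k z_{k-1}\cdots z_1$, so that $z_1 = u_1$, $z_k = u_k u_{k-1}^{-1}$ for $2\le k\le \f-1$, and $z_\f = y_\p\ldots y_1\overleftarrow{w}(x) u_{\f-1}^{-1}$. Iteratively applying the semigroup identity $\int_G Q_s(v) Q_t(v^{-1}z)\,dv = Q_{s+t}(z)$ (which expresses the Markov property of the L\'evy process $X$) to integrate out $u_1,\ldots,u_{\f-1}$ collapses the product of heat-kernel factors to a single factor of $Q_{\vol(M)}$, producing
\begin{equation*}
Z^X_{M,\vol,\varnothing,C} = \int_{G^{\rg+\p}} Q_{\vol(M)}\bigl(y_\p\ldots y_1\,\overleftarrow{w}(x_1,\ldots,x_\rg)\bigr)\, dx_1\ldots dx_\rg\, \delta_{C(b_1)}(dy_1)\ldots \delta_{C(b_\p)}(dy_\p).
\end{equation*}

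It remains to identify this expression with the right-hand side of \eqref{cible}. Taking $w$ to be a standard relator (namely $[a_1,b_1]\cdots[a_{\rg/2},b_{\rg/2}]$ in the orientable case and $a_1^2\cdots a_\rg^2$ in the non-orientable case), the distribution of $\overleftarrow{w}(x_1,\ldots,x_\rg)$ when $x_1,\ldots,x_\rg$ are i.i.d.\ Haar-uniform is precisely $\eta^{*\rg/2}$ or $\kappa^{*\rg}$ by the very definitions of $\eta$ and $\kappa$; since these measures, together with the $\delta_{C(b_i)}$, are all invariant by conjugation, they commute under convolution, yielding exactly $\m_{M,\varnothing,C}$ as defined in Definition \ref{def m}. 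The main obstacle is ensuring that a tame system of generators can indeed be built with a standard word $w$: I would address this by applying Proposition \ref{graph to graph} to transport an arbitrary graph to one whose associated word is standard (this uses only edge adjunctions and erasures, which preserve $Z^X$ by Propositions \ref{456 Z}, \ref{Ax 1-6}, \ref{Ax I}), or alternatively by verifying directly that the distribution of $\overleftarrow{w}$ applied to i.i.d.\ uniform variables depends only on the equivalence class $w\in W(M)$ under the cut-and-paste moves used to prove Theorem \ref{classif}.
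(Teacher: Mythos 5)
Your proposal is correct and follows essentially the same route as the paper: reduce to a connected unmarked surface via \AxD{6} and \AxD{5}, then compute the partition function in a graph whose associated word is the standard relator, recognising $\eta^{*\rg/2}$ or $\kappa^{*\rg}$ and the $\delta_{C(b_i)}$ in the resulting integral. The only difference is that the paper short-circuits your face-collapsing step (the change of variables and iterated use of the semigroup property) by invoking Proposition \ref{456 Z} to work directly with a single-face graph in canonical form, which is exactly the cut-and-paste reduction you describe as your fallback.
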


\begin{proof}When $M$ is non-orientable, there is a choice made in assigning an orientation to each orientable connected component of $M'$. However, in this case, the distribution of $X$, hence the function $Q_t$ for all $t>0$, is invariant by inversion. Hence, by the second part of Remark \ref{larem}, the right-hand side of (\ref{valeur ZX}) is well defined. 

The proof of this equality is of the same vein as that of Proposition \ref{456 Z}. By \Ax{6}, $Z^{X}_{M,\vol,\CS,C}=Z^X_{M',\vol',\varnothing,C'}$. Then, by \Ax{5}, $Z^X_{M',\vol',\varnothing,C'}=\prod_{i=1}^s 
Z^X_{M'_i,\vol',\varnothing,C'_i}$. The problem is thus reduced to the case of a connected surface with $G$-constraints along the boundary. 

Let us assume that $M$ is connected and $\CS=\varnothing$. In order to compute the partition function in this case, we choose a graph on $M$ with a single face and, by cutting and pasting, transform it so that the boundary of its unique face has a canonical form. Then, we find, for instance if $M$ is non-orientable of reduced genus $g\geq 1$ with $p$ boundary components,
\begin{eqnarray*}
Z^{X}_{M,\vol,\varnothing,C}&=&\int_{G^{g+2p}} Q_{\vol(M)}(a_1^2 \ldots a_g^2 u_1 c_1 u_1^{-1} \ldots u_p c_p u_p^{-1})\\
&&\hskip 3.1cm  da_1 \ldots da_g du_1 \ldots du_p \delta_{C(b_1)}(dc_1) \ldots \delta_{C(b_p)}(dc_p)\\
&=& \int_{G^{g+p}} Q_{\vol(M)}(a_1^2 \ldots a_g^2  c_1  \ldots  c_p) da_1 \ldots da_g \delta_{C(b_1)}(dc_1) \ldots \delta_{C(b_p)}(dc_p)\\
&=&\int_{G} Q_{\vol(M)} d\m_{M,\varnothing,C}.
\end{eqnarray*}
The other cases are similar. \end{proof}

We can summarize our results.

\begin{proposition}\label{DFX dhf} Let $X$ be an admissible L\'evy process. The collection of measures $\DF^{X}$ defined in Definition \ref{discrete YM def} is a discrete Markovian holonomy field. 
\end{proposition}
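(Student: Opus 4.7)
The proposition asserts that the family $\DF^X$ satisfies the eight axioms \AxD{1}--\AxD{7} and \AxD{I} of Definition \ref{def discrete MHF}. Most of the work is already done: Proposition \ref{Ax 1-6} verifies \AxD{1} through \AxD{6}, and Proposition \ref{Ax I} verifies the invariance under subdivision \AxD{I}. So the plan reduces to checking the normalization axiom \AxD{7}.

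The strategy is to compute the partition function explicitly and then integrate. By Propositions \ref{456 Z}, \ref{Ax 1-6} and \ref{Ax I}, the partition function $Z^X_{M,\vol,\CS,C}$ does not depend on the chosen graph $\G$, so \AxD{7} is a statement about the partition functions alone. Moreover, by \AxD{5}, which we have already established, the partition function factorizes over the connected components of $M$; combined with a straightforward splitting argument using \AxD{6}, this reduces the verification of \AxD{7} to the case where $M$ is connected and $\CS=\varnothing$. In that case, Proposition \ref{calcul de Z} gives the explicit formula
\begin{equation*}
Z^{X}_{M,\vol,\varnothing,C}=\int_G Q_{\vol(M)}\; d\m_{M,\varnothing,C}.
\end{equation*}

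Now fix $l\in \BS(M)$ and consider the family of constraints $C_{l\mapsto x}$, $x\in G$. Applying the above formula and Fubini's theorem,
\begin{equation*}
\int_G Z^{X}_{M,\vol,\varnothing,C_{l\mapsto x}}\; dx
=\int_G Q_{\vol(M)}(y)\left(\int_G \m_{M,\varnothing,C_{l\mapsto x}}(dy)\; dx\right).
\end{equation*}
The inner integration in brackets is exactly the content of Lemma \ref{mesure m}(2), which asserts that $\int_G \m_{M,\varnothing,C_{l\mapsto x}}\; dx$ is the Haar measure on $G$. Substituting this, we get $\int_G Q_{\vol(M)}(y)\; dy$, which equals $1$ since $Q_t$ is the density of the probability distribution of $X_t$ with respect to the Haar measure. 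This is the required equality \AxD{7}.

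The only step requiring some care is the reduction to the connected case: in the disconnected setting one writes $M=M_1\cup M_2$ with $l\in\BS(M_1)$, uses \AxD{5} to factorize, and observes that Proposition \ref{calcul de Z} provides the same calculation on $M_1$ alone after an appropriate splitting. There is no genuine obstacle here, just bookkeeping; the heart of the matter is the interplay between Lemma \ref{mesure m}(2) and the fact that $Q_t$ integrates to one on $G$. Once \AxD{7} is established, combining with Propositions \ref{Ax 1-6} and \ref{Ax I} completes the verification that $\DF^X$ is a discrete Markovian holonomy field.
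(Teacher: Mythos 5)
Your proof is correct and follows essentially the same route as the paper: axioms \AxD{1}--\AxD{6} from Proposition \ref{Ax 1-6}, \AxD{I} from Proposition \ref{Ax I}, and \AxD{7} from the combination of Proposition \ref{calcul de Z}, the second assertion of Lemma \ref{mesure m}, and the normalization $\int_G Q_t(x)\,dx=1$. The only cosmetic remark is that \AxD{7} is stated only for $\CS=\varnothing$, so that part of your reduction is automatic; the rest of your bookkeeping matches the paper's argument.
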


\begin{proof}For all $(M,\vol,\CS,C)$, the measure $\DF^{X,\G}_{M,\vol,\CS,C}$ is a measure on the cylinder $\sigma$-field of $\M(\Path(\G),G)$, hence it determines by restriction a measure on the invariant $\sigma$-field. By Proposition \ref{calcul de Z}, it is a finite measure. 

The collections of these measures satisfies the axioms \AxD{1} to \AxD{6} by Proposition \ref{Ax 1-6}, \AxD{I} by Proposition \ref{Ax I} and \AxD{7} by the combination of Proposition \ref{calcul de Z}, the second assertion of Lemma \ref{mesure m} and the fact that for all $t>0$, $\int_G Q_t(x)\; dx =1$. \end{proof}

\subsection{A Markovian holonomy field}

Our next goal is to prove that the discrete Markovian holonomy field $\DF^X$ is regular in the sense of Definition \ref{def regular DHF}.

\begin{proposition}\label{cad f} Let $X$ be an admissible L\'evy process. The discrete Markovian holonomy field $\DF^X$ is continuously area-dependent and Fellerian.
\end{proposition}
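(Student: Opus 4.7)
My plan is to treat the two properties separately, exploiting the explicit formulas and the identification of discrete measures as densities against uniform measures.

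For the Fellerian property, the natural approach is to start from the explicit formula proved in Proposition \ref{calcul de Z}, which writes $Z^X_{M,\vol,\CS,C}$ as a finite product of integrals of the form $\int_G Q_{\vol'(M'_i)}\, d\m_{M'_i,\varnothing,C'_i}$ taken over the connected components of a split tubular pattern of $(M,\CS)$. The combinatorial data of this split pattern does not depend on $C$, so one only has to examine how each factor depends on the $G$-constraints. The assignment $C\mapsto C'$ (passage to the induced constraints on the split pattern) is continuous, since in the binary case it is just a relabelling and in the unary case it involves the map $\O\mapsto \O^{2}$, which is continuous from $\Conj(G)$ to itself. By the first part of Lemma \ref{mesure m}, each map $C'_i\mapsto\m_{M'_i,\varnothing,C'_i}$ is continuous for the topology of weak convergence of probability measures. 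Since $Q_{\vol'(M'_i)}$ is a fixed bounded continuous function on $G$ (it is continuous by admissibility of $X$), the integral $\nu\mapsto\int_G Q_{\vol'(M'_i)}\,d\nu$ is continuous on the space of probability measures for the weak topology. A finite product of continuous functions being continuous, the Fellerian property follows.

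For the continuously area-dependent property, the plan is to pull back and reduce everything to uniform convergence of densities on a fixed reference space. Using the bijections induced by $\psi_n$ between vertices, edges and faces of $\G$ and $\G_n$, the map $\psi_n:\M(\Path(\G_n),G)\to\M(\Path(\G),G)$ is a homeomorphism. The axiom \AxD{4} satisfied by the uniform discrete Markovian holonomy field $\U$ (Proposition \ref{U DMHF}) gives $\U^{\G_n}_{M,\CS,C}\circ\psi_n^{-1}=\U^\G_{M,\CS,C}$. Since for each face $F\in\F$ one has $\partial\psi_n(F)=\psi_n(\partial F)$ and hence $h(\partial F_n)=(\psi_n(h))(\partial F)$ when $F_n=\psi_n(F)$, the image measure reads
\begin{equation*}
\DF^{X,\G_n}_{M,\vol,\CS,C}\circ\psi_n^{-1}(dh)
=\prod_{F\in\F}Q_{\vol(\psi_n(F))}(h(\partial F))\;\U^\G_{M,\CS,C}(dh).
\end{equation*}
By hypothesis $\vol(\psi_n(F))\to\vol(F)>0$ for every $F\in\F$. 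The function $(t,x)\mapsto Q_t(x)$ is continuous on $(0,+\infty)\times G$ and $G$ is compact, so uniform continuity on a compact neighbourhood of the form $[\tfrac12\vol(F),2\vol(F)]\times G$ gives $\sup_{x\in G}|Q_{\vol(\psi_n(F))}(x)-Q_{\vol(F)}(x)|\to 0$. The finite product of densities therefore converges uniformly in $h$, and is bounded above uniformly in $n$ by a constant. Bounded convergence then yields $\int f\,d(\DF^{X,\G_n}\circ\psi_n^{-1})\to\int f\,d\DF^{X,\G}$ for every bounded measurable $f$ on $\M(\Path(\G),G)$, which is stronger than the required weak convergence.

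The main obstacle is essentially verifying the pull-back formula above, that is, checking carefully that the uniform measure, the face-boundary random variables and the areas transform coherently under $\psi_n$. Once this bookkeeping is done, both properties are soft consequences of the continuity of $Q_t$ in both variables and of the continuous dependence of $\m$ on the constraints; no new probabilistic input is needed.
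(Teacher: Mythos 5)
Your proposal is correct and follows essentially the same route as the paper: the Fellerian property is deduced from the explicit partition-function formula of Proposition \ref{calcul de Z} together with the continuity statement of Lemma \ref{mesure m}, and continuous area-dependence is obtained from the same pull-back identity $\DF^{X,\G_n}_{M,\vol,\CS,C}\circ\psi_n^{-1}(dh)=\prod_{F\in\F}Q_{\vol(\psi_n(F))}(h(\partial F))\,\U^{\G}_{M,\CS,C}(dh)$ and the uniform continuity of $(t,x)\mapsto Q_t(x)$ on compact subsets of $\RK^*_+\times G$. You supply slightly more bookkeeping than the paper (e.g.\ noting that the uniform measure is insensitive to the non-area-preserving nature of $\psi_n$, since it depends only on the combinatorial data), but the argument is the same.
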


\begin{proof}Recall the notation of Definition \ref{def regular DHF}. For each $n\geq 0$, we have on $\M(\Path(\G),G)$ the equality of measures
$$\DF^{X,\G_n}_{M,\vol,\CS,C}\circ \psi_n^{-1} (dh) = \prod_{F\in\F}Q_{\vol(\psi_n(F))}(h(\partial F)) \; \U^{\G}_{M,\CS,C}(dh).$$
By assumption, $\vol(\psi_n(F))$ tends to $\vol(F)$ for each $F\in \F$. Moreover, for all segment $[s,t]\subset \RK^*_+$, the mapping $(t,x)\mapsto Q_t(G)$ is uniformly continuous on $[s,t]\times G$. Hence, for all face $F\in \F$, $Q_{\vol(\psi_n(F))}$ converges uniformly to $Q_{\vol(F)}$ as $n$ tends to infinity. The fact that $\DF^X$ is continuously area-dependent follows.

The fact $\DF^X$ is Fellerian, that is, that the partition function $Z^X_{M,\vol,\CS,C}$ depends continuously on $C\in \Const_G(M,\CS)$, follows at once from Proposition \ref{calcul de Z} and the first assertion of Corollary \ref{mesure m}. \end{proof}

In order to prove that $\DF^X$ is stochastically $\frac{1}{2}$-H\"{o}lder continuous, we need to establish the corresponding property for the L\'evy process $X$. 

\begin{proposition} \label{sqrt X} Let $(X_{t})_{t\geq 0}$ be a L\'evy process on the compact
Lie group $G$ issued from 1. Then there exists a constant $K$ such that
$$\forall t\geq 0, \; \E\left[d_{G}(1,X_{t})\right] \leq K \sqrt{t}.$$
\end{proposition}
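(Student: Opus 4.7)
The plan is to use a Dynkin-type identity applied to a well-chosen smooth test function. Since $G$ is compact, the diameter $D$ of $G$ is finite and $\mathbb{E}[d_G(1,X_t)] \leq D$ for all $t$; consequently the estimate is trivial for $t$ bounded away from $0$, so it is enough to prove it for $t \in (0, t_0]$ for some $t_0 > 0$, and then to absorb both regimes into a single constant $K$.

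First I would construct a function $f \in C^\infty(G, \mathbb{R}_+)$ such that $f(1) = 0$, $f(x) > 0$ for $x \neq 1$, and
\[
\forall x \in G, \quad d_G(1,x)^2 \leq C_0\, f(x)
\]
for some constant $C_0 > 0$. This can be done, for instance, by embedding $G$ in some $U(N)$ via a faithful representation and setting $f(x) = \|x - 1\|^2_{HS}$, where $\|\cdot\|_{HS}$ is the Hilbert--Schmidt norm; near $1$ this is equivalent to $d_G(1,\cdot)^2$ (both are comparable to a positive-definite quadratic form on $\mathfrak{g}$ via the exponential), and on the compact complement of a neighbourhood of $1$ both $f$ and $d_G(1,\cdot)^2$ are bounded above and below by positive constants, which gives the global inequality by compactness.

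Next I would apply Dynkin's formula to $f$: the process
\[
M_t = f(X_t) - f(1) - \int_0^t (Lf)(X_s)\, ds
\]
is a martingale, and $f(1) = 0$, so $\mathbb{E}[f(X_t)] = \int_0^t \mathbb{E}[(Lf)(X_s)]\, ds$. The heart of the argument is to show that $Lf$ is bounded on $G$. Using the form of the generator (as in Proposition \ref{generateur X}, with $A : G \to \mathfrak{g}$ a fixed smooth cut-off map equal to $\log$ near $1$), the diffusive part $\frac{1}{2}\sum a_{jk}(A_j A_k f)$ and the drift $A_0 f$ are bounded on $G$ since $f \in C^2(G)$ and $G$ is compact. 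For the jump part, a second-order Taylor expansion of the smooth function $y \mapsto f(gy)$ at $y = 1$, combined with the fact that $A(h) = \log h$ for $h$ close enough to $1$, yields a constant $C_1$ independent of $g$ such that
\[
|f(gh) - f(g) - (A(h)f)(g)| \leq C_1\, d_G(1,h)^2
\]
for all $h$ in a neighbourhood of $1$, while for $h$ outside this neighbourhood the integrand is bounded (since $f$ is bounded and $A$ has compact support) and $\Pi$ is finite there. The condition $\int_G d_G(1,h)^2\, \Pi(dh) < +\infty$ from Proposition \ref{generateur X} then guarantees that the jump term of $Lf$ is bounded uniformly in $g$.

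Combining these estimates, $|Lf| \leq C_2$ on $G$, so $\mathbb{E}[f(X_t)] \leq C_2\, t$. Applying Jensen's inequality and the inequality $d_G(1,x)^2 \leq C_0 f(x)$ established earlier,
\[
\mathbb{E}[d_G(1,X_t)] \leq \sqrt{\mathbb{E}[d_G(1,X_t)^2]} \leq \sqrt{C_0\, \mathbb{E}[f(X_t)]} \leq \sqrt{C_0 C_2}\,\sqrt{t},
\]
which gives the desired bound for small $t$; taking $K$ to also dominate $D/\sqrt{t_0}$ handles large $t$. The main technical point is the uniform bound on the jump part of $Lf$: the statement of Proposition \ref{generateur X} gives $\int d_G(1,h)^2\, \Pi(dh) < \infty$ precisely to make this kind of second-order compensation work, and one has to verify carefully that the Taylor remainder is controlled uniformly in the base point $g$, which follows from the smoothness of $f$ on compact $G$ together with the group translation invariance of the derivatives of $f$ expressed through left-invariant vector fields.
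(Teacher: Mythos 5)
Your proof is correct and follows essentially the same route as the paper's: both construct a smooth nonnegative function vanishing at $1$ that dominates $d_G(1,\cdot)^2$ up to a constant (the paper uses $\delta=\sum_i a_i^2$ built from coordinate functions, you use a Hilbert--Schmidt norm via a faithful representation), apply the It\^o/Dynkin martingale identity to get $\E[f(X_t)]\leq \Vert Lf\Vert_\infty t$, and conclude by Jensen's inequality. Your explicit verification that the jump part of $Lf$ is bounded uniformly in the base point, using the second-moment condition on $\Pi$, fills in a step the paper leaves implicit when it writes $\Vert L\delta\Vert_\infty$; the splitting into small and large $t$ is unnecessary since the bound $\E[f(X_t)]\leq \Vert Lf\Vert_\infty t$ already holds for all $t\geq 0$, but it is harmless.
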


This property follows from Lemma 3.5 in the book of M. Liao \cite{Liao}, but we still offer a short proof.\\

\begin{proof}We use the It\^{o} formula for L\'evy process on Lie groups, which has been
proved by Applebaum and Kunita \cite{ApplebaumKunita}. We borrow the statement from
\cite[Section 1.4]{Liao}. In fact we use the following weak statement. Let $L$ be the generator of
$X$. Let $f$ be a smooth function on $G$. Then $f$ belongs to the domain of $L$ and there exists a
$L^2$ martingale $M^f$ such that, for all $t\geq 0$,
\begin{equation}\label{martingale}
f(X_{t})=f(X_{0})+M^f_{t}+\int_{0}^{t} Lf(X_{s})\; ds.
\end{equation}
This is the equation (1.18) of \cite{Liao}. We apply it to a function $f$ which is close to the
function $d_{G}(1,\cdot)^2$.

Let $\{A_{1},\ldots,A_{d}\}$ be a basis of the Lie algebra of $G$, which we identify with the
space of left-invariant vector fields on $G$. Let $a_{1},\ldots,a_{d}$
be smooth functions on $G$ such that for all $i,j\in\{1,\ldots,d\}$, $a_{i}(1)=0$ and
$A_{i}a_{j}(1)=\delta_{ij}$. Set $\delta=\sum_{i=1}^{d} a_{i}^{2}$.
It follows readily from the definition of $\delta$ and the fact that $G$ is compact that there
exists a constant $K_{1}$ such that for all $x\in G$, $d_{G}(1,x)^2 \leq K_{1} \delta(x)$. Now
(\ref{martingale}) implies that $\E[\delta(X_{t})]\leq \parallel\! L\delta \!\parallel_{\infty} t$.
Hence, by Jensen's inequality, $\E[d_{G}(1,X_{t})]^2\leq K_{1}\parallel \! L\delta
\! \parallel_{\infty} t$ for all $t\geq 0$.  \end{proof}

In order to deduce the stochastic H\"{o}lder continuity of $\DF^X$ from this property, we need to be able to compare the values of some integrals with and without $G$-constraints. Actually, we introduce random holonomy fields with free boundary conditions. Recall the definition of the uniform measure $\U^\G_{M,\varnothing}$ (Definition \ref{def U free}).  

\begin{definition} Let $X$ be an admissible L\'evy process. Let $(M,\vol)$ be a measured surface endowed with a graph $\G$. We define the measure $\DF^{X,\G}_{M,\vol,\varnothing,\varnothing}$ by setting
$$\DF^{X,\G}_{M,\vol,\varnothing,\varnothing}(dh)= \prod_{F\in\F}Q_{\vol(F)}(h(\partial F)) \; \U^{\G}_{M,\varnothing}(dh).$$
\end{definition}

In the following proofs, we use the fact that the functions $t\mapsto \sup\{Q_t(x) : x\in G\}$ and $t\mapsto \inf\{Q_t(x) : x\in G\}$ are respectively non-increasing and non-decreasing. This follows from $(Q_t)_{t>0}$ being a convolution semigroup of positive continuous functions.

\begin{lemma}\label{relax constraint} Let $(M,\vol,\CS,C)$ be a measured marked surface with $G$-constraints. Let $\G$ be a graph on $(M,\CS)$. 
Consider  ${\E_1}\subset \E$ and ${\F_1}\subset \F$. Assume that ${\E_1}={\E_1}^{-1}$. Assume that for each  $l\in \CS\cup\BS(M)$, at least one edge of ${\E_1}$ is located on $l$, and each face adjacent to an edge of $\E_1$ belongs to $\F_1$. Set $r=\sharp {\F_1}$ and $A=\min\{\vol(F):F\in{\F_1}\}$. Set $K=\sup\left\{\frac{Q_A(x)}{Q_A(y)} : x,y\in G\right\}.$

Let $f:\M(\E,G)\to [0,+\infty)$ be a non-negative continuous function. Assume that $f$ factorizes through the restriction map $\M(\E,G)\to \M(\E\setminus \E_1,G)$. Then
$$ K^{-r}\! \int_{\M(\E,G)} f \;d\DF^{X,\G}_{M,\vol,\varnothing,\varnothing} \leq  \int_{\M(\E,G)} f\; d\DF^{X,\G}_{M,\vol,\CS,C} \leq K^r\! \int_{\M(\E,G)} f \;d\DF^{X,\G}_{M,\vol,\varnothing,\varnothing}.$$
\end{lemma}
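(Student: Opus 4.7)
The plan is to factor the density which converts $\U^{\G}_{M,\CS,C}$ (resp. $\U^{\G}_{M,\varnothing}$) into $\DF^{X,\G}_{M,\vol,\CS,C}$ (resp. $\DF^{X,\G}_{M,\vol,\varnothing,\varnothing}$) as a product of two terms. Write $D''(h) = \prod_{F \in \F \setminus \F_1} Q_{\vol(F)}(h(\partial F))$ and $D'''(h) = \prod_{F \in \F_1} Q_{\vol(F)}(h(\partial F))$, so that the density of Definition \ref{discrete YM def} equals $D'' D'''$. The hypothesis that every face adjacent to an edge of $\E_1$ belongs to $\F_1$ implies that for each $F \in \F \setminus \F_1$ the cycle $\partial F$ involves no edge of $\E_1$, so $h \mapsto D''(h)$, like $f$, factors through the restriction map $\M(\E,G) \to \M(\E \setminus \E_1,G)$.

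Next, I would establish uniform bounds on $D'''$. The convolution identity $Q_{s+t}(x) = \int_G Q_s(y) Q_t(y^{-1}x)\,dy$, combined with $\int_G Q_s = 1$, gives $\inf_G Q_t \leq Q_{s+t}(x) \leq \sup_G Q_t$ for all $x$, so the ratio $\sup_G Q_t/\inf_G Q_t$ is non-increasing in $t$. Since $\vol(F) \geq A$ for every $F \in \F_1$, this yields $\sup_G Q_{\vol(F)}/\inf_G Q_{\vol(F)} \leq K$ for each such $F$. Setting $\alpha = \prod_{F \in \F_1} \inf_G Q_{\vol(F)}$ and $\beta = \prod_{F \in \F_1} \sup_G Q_{\vol(F)}$, one has $\alpha \leq D'''(h) \leq \beta$ for every $h$ and $\beta/\alpha \leq K^r$; both $\alpha$ and $\beta$ are finite and positive because $X$ is admissible.

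The key step is to verify that $\U^{\G}_{M,\CS,C}$ and $\U^{\G}_{M,\varnothing}$ have the same projection on $\M(\E \setminus \E_1, G)$, namely the product of Haar measures. For $\U^{\G}_{M,\varnothing}$ this is immediate from Definition \ref{def U free}. For $\U^{\G}_{M,\CS,C}$, the factorization in Lemma \ref{def ugc} reduces the claim to showing that, for each cycle $l_i$ in $\CS \cup \BS(M)$, the projection of $\delta_{C(l_i)(n_i)}$ onto the coordinates corresponding to edges of $l_i$ not lying in $\E_1$ is the Haar measure. Since $\E_1$ contains at least one edge of $l_i$, cyclic invariance (\ref{inv perm circ}) lets me assume the omitted coordinate is the last one, whereupon (\ref{put constraint}) shows that the marginal on the remaining $n_i - 1$ coordinates is Haar; a further marginal of a Haar measure is again Haar.

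Once these three ingredients are in place, the lower bound follows from the chain
\begin{align*}
\int f\,d\DF^{X,\G}_{M,\vol,\CS,C} &= \int f D'' D'''\,d\U^{\G}_{M,\CS,C} \geq \alpha \int f D''\,d\U^{\G}_{M,\CS,C} \\
&= \alpha \int f D''\,d\U^{\G}_{M,\varnothing} \geq \frac{\alpha}{\beta}\int f\,d\DF^{X,\G}_{M,\vol,\varnothing,\varnothing},
\end{align*}
where nonnegativity of $f D''$ and its measurability with respect to $\E \setminus \E_1$ are used in the two equalities, and the last inequality uses $D''' \leq \beta$; the upper bound follows symmetrically by exchanging the roles of $\alpha$ and $\beta$. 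The main obstacle is the third step: correctly identifying the marginal of $\U^{\G}_{M,\CS,C}$ on $\E \setminus \E_1$. Once this coincidence of marginals is established, the rest is merely bookkeeping with the bounds on $D'''$.
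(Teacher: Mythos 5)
Your proof is correct and follows essentially the same route as the paper's: both arguments rest on the fact that the marginal of $\U^\G_{M,\CS,C}$ on the coordinates of $\E\setminus\E_1$ is the Haar measure (which you justify via (\ref{put constraint}) and (\ref{inv perm circ}), exactly as the paper does implicitly when it writes the constrained measure as Haar on $\E\setminus\E_1$ times a conditional law), together with the monotonicity of $t\mapsto \sup Q_t/\inf Q_t$ to extract the factor $K^{r}$. The only cosmetic differences are that the paper first reduces to the case where $\E_1$ has one unoriented edge per curve and then compares the conditional integral of $\prod_{F\in\F_1}Q_{\vol(F)}$ against two probability measures via $\max u/\min u$, whereas you sandwich that product uniformly between $\alpha$ and $\beta$ — the same estimate in a slightly different wrapping.
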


\begin{proof}Increasing the number of edges in $\E_1$ can only increase $\F_1$, hence make $A$ smaller and $K$ larger. So, without loss of generality, we may assume that $\E_1$ contains exactly one non-oriented edge on each curve $l\in\CS\cup\BS(M)$ and $\F_1$ is exactly the set of faces adjacent to these non-oriented edges.

Let us choose an orientation $\E^+$ of $\G$ and identify $\M(\E,G)$ with $G^{\E^+}$.  Let us enumerate $\F_1$ as $\{F_1,\ldots,F_r\}$. Let us denote the generic element of $G^{\E^+}$ as $g=(g_1,g_2)$ according to the partition $\E=\E_1 \cup (\E\setminus \E_1)$. The assumption on $f$ expresses that $f(g_1,g_2)$ depends only on $g_2$. 

By (\ref{put constraint}), the integration against $\U^{\G}_{M,\CS,C}$ can be decomposed into the integration with respect to the Haar measure on $\M(\E\setminus \E_1,G)$ and then with respect to the explicitly known conditional distribution of $g_1$ given $g_2$, which we denote by $\U^\G_{M,\CS,C}(dg_1|g_2)$.
 
\begin{align*}
&\int_{\M(\E,G)} f\; \DF^{X,\G}_{M,\vol,\CS,C} = \int_{\M(\E,G)} f(g) \prod_{F\in\F} Q_{\vol(F)}(g(\partial F)) \; \U^\G_{M,\CS,C}(dg)=\\
& \int_{G^{(\E\setminus \E_1)^+}} \! f(g_2) \! \prod_{F\in\F\setminus \F_1} Q_{\vol(F)}(g_2 (\partial F)) \left[ \int_{G^{\E_1^+}}\! \prod_{F\in\F_1} Q_{\vol(F)}(g (\partial F)) \; \U^\G_{M,\CS,C}(dg_1 | g_2) \right]  dg_2.
\end{align*}

Changing the probability measure with respect to which the integral between the brackets is taken can at most multiply the integral by $\frac{\max u}{\min u}$ and most divide it by the same number, where $u$ denotes the integrand. In the present situation, the definition of $K$ implies that $\frac{\max u}{\min u} \leq K^r$.

Hence, focusing for example on the upper bound, we have
\begin{eqnarray*}
\int_{\M(\E,G)} f\; \DF^{X,\G}_{M,\vol,\CS,C} &\leq & K^r \int_{G^{\E^+}} f(g_2) \prod_{F\in\F} Q_{\vol(F)}(g(\partial F)) \; dg_1 dg_2\\
&=& \int_{\M(\E,G)} f \; d\DF^{X,\G}_{M,\vol,\varnothing, \varnothing}.
\end{eqnarray*}

The derivation of the lower bound is similar. \end{proof}

\begin{proposition}\label{1/2 holder} Let $X$ be an admissible L\'evy process. The discrete Markovian field $\DF^X$ is stochastically $\frac{1}{2}$-H\"{older} continuous.
\end{proposition}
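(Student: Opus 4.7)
The plan is to compute the marginal distribution of $h(l)$ under $\DF^{X,\G}_{M,\vol,\CS,C}$ and then invoke Proposition \ref{sqrt X} on the L\'evy process. By a local isoperimetric inequality on the compact Riemannian surface $(M,\gamma)$, I may restrict attention to simple loops $l$ so short that the disk $D$ they bound satisfies $\vol(D)\leq c\,\ell(l)^2$, contains no curve of $\CS$, and lies in a single connected component of a split tubular pattern of $(M,\CS)$; the constants involved depend only on $(M,\vol,\gamma,\CS)$.

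The core step is the identity, valid for every central continuous $f:G\to \RK$,
\begin{equation*}
\int f(h(l))\,d\DF^{X,\G}_{M,\vol,\CS,C} = \int_G f(x)\,Q_{\vol(D)}(x)\,Z^X_{M_2,\vol,\CS,C_{l\to x}}\,dx,
\end{equation*}
where $M_2 = M\setminus \mathrm{int}(D)$ carries the restriction of $\vol$ and $l$ has become one of its boundary components. I would prove this directly from the definition of $\DF^X$: partition the oriented edges of $\G$ into those strictly inside $D$, those composing $l$, and those outside $\bar D$. Since every constraint from $\CS \cup \BS(M)$ involves only edges of the last class, $\U^\G_{M,\CS,C}$ factors as the product of a Haar measure on the first two classes and a constrained measure on the third. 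Integrating out the interior edges of $D$ produces a function $\Phi(g_l)$ of the $l$-edges alone, which is gauge-invariant at each vertex on $l$; by matching central moments against $Q_{\vol(D)}$ via the identity $Z^X_{D,\vol,\varnothing,l\to x} = Q_{\vol(D)}(x)$ (Proposition \ref{calcul de Z} applied to the disk, using $\m_{D,\varnothing,l\to x} = \delta_{\O_x}$ and the centrality of $Q_t$), one identifies $\Phi(g_l) = Q_{\vol(D)}(h(l))$. The remaining integrand is then the density of $\DF^X$ on $M_2$ with the added boundary constraint $l\to x$.

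Proposition \ref{calcul de Z} applied to $M_2$ expresses $Z^X_{M_2,\vol,\CS,C_{l\to x}}$ as a product of factors $\int Q_{\vol(M'_i)}\,d\m_{M'_i,\varnothing,C'_i}$, each bounded above by $\sup_y Q_{\vol(M'_i)}(y)$. For $\vol(D)$ small enough, every $\vol(M'_i)$ stays above half of its value on the split pattern of $(M,\CS)$, and since $t\mapsto \sup_y Q_t(y)$ is finite and non-increasing on $\RK^*_+$, this yields a bound $C'$ uniform in $x$, in $\G$ and in $l$, depending only on $(M,\vol,\CS)$. Combining this with $\int_G d_G(1,x)\,Q_{\vol(D)}(x)\,dx = \E[d_G(1,X_{\vol(D)})] \leq K_1\sqrt{\vol(D)}$ from Proposition \ref{sqrt X} yields the required H\"{o}lder estimate.

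The main obstacle is the rigorous justification of the identity in the second paragraph: because $l$ is only a Lipschitz simple loop in $\Path(\G)$, it is not a smooth submanifold of $M$, so the topological splitting axiom \AxD{6} cannot be invoked directly along $l$ to reduce the integral to a product over $D$ and $M_2$. The argument must therefore be carried out combinatorially at the level of the graph $\G$, using the explicit density formula for $\DF^X$, the product structure of $\U^\G_{M,\CS,C}$ off the constrained cycles, and the full set of axioms \AxD{1}--\AxD{6} and \AxD{I} already verified for $\DF^X$ on the disk $D$ equipped with its induced graph.
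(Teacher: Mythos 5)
Your overall architecture is sound and your final estimate is essentially the one the paper uses: disintegrate along $l$, identify the disk factor as $Q_{\vol(D)}(h(l))$, bound the complementary partition function uniformly in the boundary value, and finish with $\int_G d_G(1,x)Q_{\vol(D)}(x)\,dx=\E[d_G(1,X_{\vol(D)})]\leq K_1\sqrt{\vol(D)}$ from Proposition \ref{sqrt X}. The combinatorial proof of your key identity (integrating out the edges interior to $D$ via the semigroup property, as in the proof of the axiom \AxD{I}) is also viable when it applies.

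The genuine gap is the first reduction: you cannot restrict attention to simple loops whose disk $D$ is disjoint from $\CS\cup\BS(M)$. The definition of stochastic $\frac{1}{2}$-H\"older continuity quantifies over \emph{all} simple loops $l\in\Path(\G)$ with $\ell(l)\leq K^{-1}$, for \emph{all} graphs $\G$ on $(M,\CS)$. Shortness of $l$ prevents $D$ from containing an entire curve of $\CS$, but not from meeting one: a short simple loop may cross a mark transversally, or share edges with a mark or a boundary component, in which case $D$ contains an arc of that curve. Your identity then breaks at its first step, because the constraint measure $\delta_{C(l_1)(n_1)}$ on the edges of the mark $l_1$ couples edges interior to $D$, edges on $l$, and edges exterior to $\overline{D}$, so $\U^\G_{M,\CS,C}$ no longer factors as ``Haar on the first two classes times a constrained measure on the third,'' and the marginal of $h(l)$ is not $Q_{\vol(D)}(x)\,Z^X_{M_2,\vol,\CS,C_{l\mapsto x}}\,dx$. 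This is precisely the case the paper's proof is built to handle: it subdivides each constrained curve $l_i$ into three edges $e_{i,1}e_{i,2}e_{i,3}$, chooses the length threshold so that $l$ avoids at least one of the three edges of every mark it meets, and invokes Lemma \ref{relax constraint} to replace $\DF^{X,\G}_{M,\vol,\CS,C}$ by the free-boundary measure $\DF^{X,\G}_{M,\vol,\varnothing,\varnothing}$ at the cost of a factor $K_{X,A}^{2q}$ (finite by admissibility, since $\inf_G Q_A>0$). After that relaxation, your clean disintegration applies. Without this step, or an equivalent argument controlling the constrained measure near $l$, your proof only establishes the estimate for loops staying away from the marks and the boundary, which is not enough.
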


\begin{proof}Let $(M,\vol,\gamma,\CS,C)$ be a measured marked surface with $G$-constraints. Write $\CS\cup\BS(M)=\{l_{1},l_1^{-1},\ldots,l_{q},l_q^{- 1}\}$. Let $M_{1},\ldots,M_{s}$ denote the connected components of $M\setminus \CS$. Set $A=\frac{1}{2}\min\{\vol(M_{i}):i\in \{1,\ldots,s\}\}$. For each $i\in \{1,\ldots,q\}$, let us write $l_{i}$ as the product of three edges: $l_{i}=e_{i,1}e_{i,2}e_{i,3}$. Let $L>0$ be such that any Riemannian ball of radius smaller than $L$ intersects at most one curve of $\CS\cup\BS(M)$ and at most two of the edges $\{e_{i,j} : i\in\{1,\ldots,q\}, j\in\{1,2,3\}\}$, and has a Riemannian area smaller than $A$.

Let $l$ be a piecewise geodesic loop such that $\ell(l)< L$ and $l$ bounds a disk which we denote
by $D$. If $l$ bounds two disks, we choose the one included in the ball of radius $L$ centred at the
basepoint of $l$. By assumption on $L$, there is at most one $i\in \{1,\ldots,q\}$ such that $l$ meets one of the edges $e_{i,1},e_{i,2}, e_{i,3}$ and it does not meet the three of them. We may assume that $l$ meets none of the edges $e_{i,j}$ except possibly $e_{1,2}$ and $e_{1,3}$.

Let $\G$ be a graph on $(M,\CS)$ such that the edges $e_{i,j}$ are edges of $\G$. By repeated applications of Proposition  \ref{erase edge}, we may assume that $\G$ has exactly one face in each connected component of $M\setminus (\CS\cup \{l\})$. The number of these components depends on the relative position of $l$ and the curve $l_{1}$. Nevertheless, $\G$ has the following property: each face adjacent to one of the edges $e_{1,1},\ldots,e_{p,1}$ or their inverses has an area greater or equal to $A$. 

Let us define $\E_1=\{e_{1,1},e_{1,1}^{-1},\ldots,e_{q,1},e_{q,1}^{-1}\}$ and $\F_1$ as the subset of $\F$ consisting of all faces adjacent to an edge of $\E_1$. We have $\sharp F_1 \leq 2q$. Set $K_{X,A}=\sup\left\{\frac{Q_A(x)}{Q_A(y)} : x,y\in G\right\}$. Then, by Lemma \ref{relax constraint},
\begin{equation}\label{eqeq}
\int_{\M(\Path(\G),G)} \! d_G(1,h(l)) \; \DF^{X,\G}_{M,\vol,\CS,C}(dh) \leq  K_{X,A}^{2q} \int_{\M(\Path(\G),G)} \! d_G(1,h(l)) \; \DF^{X,\G}_{M,\vol,\varnothing,\varnothing}(dh).
\end{equation}
By the axiom \AxD{I}, we can remove edges from $\G$ so that it becomes a graph $\G_1$ with only two faces, $D$ and another one, denoted by $F$, of area $\vol(M\setminus D)$, without altering the value of the integral above. Hence, by proposition \ref{sqrt X},
\begin{align*}
\mbox{l.h.s. of } (\ref{eqeq}) 
&\leq  K_{X,A}^{2q} \int_{\M(\Path(\G_1),G)} \! d_G(1,h(l)) Q_{\vol(D)}(h(l)) Q_{\vol(M-D)}(h(\partial F)) \; \U^{\G}_{M,\varnothing}(dh)\\
&\leq  K_{X,A}^{2q+1} \int_{G}  d_{G}(1,x) Q_{\vol(D)}(x) \;
dx\\
&= K_{X,A}^{2q+1} \E\left[d_{G}(1,X_{\vol(D)})\right] \\
&\leq K  K_{X,A}^{2q+1} \sqrt{\vol(D)}.
\end{align*}

This is the expected result. \end{proof}

We can conclude this chapter by proving Theorem \ref{levy HF}.

\begin{proof}[Proof of Theorem \ref{levy HF}] Let $X$ be an admissible L\'evy process. Let $\DF^X$ be defined by Definition \ref{discrete YM def}.  By Proposition \ref{DFX dhf}, it is a discrete Markovian holonomy field. By Propositions \ref{cad f} and \ref{1/2 holder}, it is regular. By theorem \ref{main existence}, $\DF^X$ is the restriction of a regular Markovian holonomy field, which we denote by $\HF^X$. By Proposition \ref{calcul de Z}, the L\'evy process associated with $\HF^X$ is indeed $X$.
\end{proof}
\index{Markovian holonomy field!discrete|)}

\chapter{Random ramified coverings}

In this chapter, we investigate the Markovian holonomy field that we have associated to a L\'evy
process in the case where $G$ is a finite group. In this case, the structure of the L\'evy process
is particularly simple. It is a continuous time Markov chain with a jump distribution invariant by
conjugation and, depending on the orientation issue, by inversion.

It turns out that in this case, the canonical process associated to the Markovian holonomy field is
the process of monodromy in a random ramified covering picked under a probability measure which
depends in a simple way on the L\'evy process. This is consistent with the usual heuristic
interpretation of the Yang-Mills measure as a probability measure on the space of connections on a
principal bundle. Indeed, ramified coverings can be naturally interpreted as discrete models for
principal bundles, endowed with a connection which is flat everywhere but at the ramification points,
where it is concentrated.  

\section{Ramified $G$-bundles} 

Let us choose once for all a finite group $G$. Let $(M,\vol,\varnothing,C)$ be a measured surface
with $G$-constraints on the boundary. For the sake of simplicity, we treat the case $\CS=\varnothing$.

Let $Y\subset M\setminus\partial M$ be a finite subset. A principal
$G$-bundle over $M-Y$ is a smooth covering $\pi:P\to M\setminus Y$ of $M\setminus Y$ by a surface $P$ on which $G$
acts freely on the right, by smooth automorphisms of covering 
and transitively on each fibre. The surface $P$ is not compact unless $Y=\varnothing$ and in
general it is not connected. Two
$G$-bundles $\pi:P\to M\setminus Y$ and $\pi':P'\to M\setminus Y$ are
isomorphic if there exists a $G$-equivariant diffeomorphism $h:P\to P'$ such that $\pi'\circ h=\pi$.

A ramified bundle over $M$ with ramification locus $Y$ is a continuous mapping $\pi:P\to M$ from a
surface $P$ such that the restriction of $\pi$ to $\pi^{-1}(M\setminus Y)$ is a covering and, for all $y\in Y$ and all $p\in
\pi^{-1}(y)$, there exists a neighbourhood $U$ of $p$ and an integer $n\geq 1$ such that the mapping
$\pi_{|U}:(U,p)\to (\pi(U),y)$ is conjugated to the mapping $z\mapsto z^{n} : (\CK,0)\to (\CK,0)$.
The integer $n$ is called the order of ramification of $p$. We assume that for all $y\in Y$, there
exists $p\in\pi^{-1}(y)$ whose order of ramification is at least $2$. Two ramified coverings $\pi:P\to M$ and
$\pi':P'\to M$ are isomorphic if there exists a homeomorphism $h:P\to P'$ such that $\pi'\circ h=\pi$.
\index{ramified principal bundle}

From the classical fact that the only connected coverings of finite degree of $\CK^*$ are, up to
isomorphism, the mappings $z\mapsto z^n: \CK^* \to \CK^*$ for $n\geq 1$, it follows that a
principal $G$-bundle $\pi:P\to M\setminus Y$ can always be extended to a ramified covering of $M$ by a
suitable compactification of $P$, and that any two such extensions give rise to isomorphic ramified
coverings. Moreover, it is possible to endow the total space of the ramified covering with a
differentiable structure in such a way that the covering map is smooth. 

\begin{definition} A {\em ramified principal $G$-bundle over
$M$ with ramification locus $Y$} is a smooth ramified covering $\pi:R\to M$ of $M$ with ramification
locus $Y$, together with an action of $G$ on $\pi^{-1}(M\setminus Y)$ which endows the restriction of $\pi$ 
to $\pi^{-1}(M\setminus Y)$ with the structure of a principal $G$-bundle. 

Two ramified principal $G$-bundles $\pi:R\to M$ and $\pi:R'\to M$ with ramification locus $Y$ are
{\em isomorphic} if their restrictions to $M-Y$ are isomorphic as principal $G$-bundles.
\end{definition}

\begin{remark} By the discussion before the definition, two isomorphic ramified $G$-bundles are also
isomorphic as ramified coverings. However, an isomorphism of ramified coverings between two
ramified $G$-bundles is not necessarily an isomorphism of ramified $G$-bundles. Consider for
example, for $n\geq 3$, the trivial $\Z/n\Z$-bundle $R=M\times \Z/n\Z$. The group $\Sy_{n}$ acts on
$R$ by permuting the sheets and this is an action by automorphisms of covering. Nevertheless, only
a cyclic permutation of the sheets is an isomorphism of $\Z/n\Z$-bundle. In general, the group of
covering automorphisms of a principal $G$-bundle is bigger than $G$. This is related to the
fact that the total space of the covering is not always connected, so that the group of covering
automorphisms does not always act freely.
\end{remark}

\begin{remark} In the case where $G$ is the symmetric group $\Sy_{n}$, ramified
$G$-bundles are the same thing as ramified coverings of degree $n$. Indeed, let $\pi:P\to M$ be a principal $\Sy_{n}$-bundle, ramified over $Y$. The group $\Sy_{n}$ acts naturally on $\{1,\ldots,n\}$ and the associated bundle $P\times_{\Sy_{n}} \{1,\ldots,n\}$, which is the quotient of $P\times \{1,\ldots,n\}$ by the relation 
$$(p,k)\simeq (p',k') \Leftrightarrow \exists \sigma \in \Sy_{n},
(p',k')=(p\sigma,\sigma^{-1}(k)),$$
is a ramified covering of $M$ of degree $n$ with ramification locus $Y$. 

Conversely, let $\pi:R\lra M$ be a ramified covering of degree $n$ with ramification locus $Y$. Let $m$ be a point of $M \setminus Y$. By a {\em labelling} of $R$ at $m$ we mean a bijection between $\pi^{-1}(m)$ and the set $\{1,\ldots,n\}$.
For each $m\in M\setminus Y$, let $\Lab_{m}(R)$ denote the set of labellings of $R$ at $m$. The group
$\Sy_{n}$ acts on the right, transitively and freely on $\Lab_{m}(R)$. Then $\Lab(R)=\cup_{m\in
M}\Lab_{m}(R)$ endowed with the natural topology and projection on $M$, is a principal
$\Sy_{n}$-bundle over $M$ ramified over $Y$. At a point $y\in Y$, the ramification type of $R$ is
a partition of $n$ and the monodromy of $\Lab(R)$ around $y$ is the corresponding conjugacy class
of $\Sy_{n}$.

It is easy to check that, if $P$ is a $\Sy_{n}$-bundle, then $\Lab(P\times_{\Sy_{n}}\{1,\ldots,n\})$ is canonically isomorphic to $P$. 
\end{remark}

Let $\RB(M)$ (resp. $\RB(M,Y)$) denote the set of isomorphism classes of ramified principal
G-bundles over $M$ (resp. with ramification locus $Y$). 
\index{RARM@$\RB(M),\RB(M,Y)$}

\begin{definition} A {\em based ramified $G$-bundle} is a pair $(R,p)$ where $\pi:R\to M$ is a
ramified bundle and $p\in R$ is a point such that $\pi(p)$ does not belong to the ramification
locus of $R$. The pair $(R,p)$ is said to be {\em based at $\pi(p)$}.

Two based ramified $G$-bundles $(R,p)$ and $(R',p')$ are isomorphic if there exists an isomorphism
$f:R\to R'$ of ramified $G$-bundles such that $f(p)=p'$.
\end{definition}

The importance of this notion comes from the face that the automorphism group of a based ramified bundle is trivial.

Let $m$ be a point of $M$. We denote by $\BRB_{m}(M)$ (resp. $\BRB_{m}(M,Y)$) the set of
isomorphism
classes of based ramified $G$-bundles based at $m$ (resp. with ramification locus $Y$). 
\index{RARMM@$\RB_m(M),\RB_m(M,Y)$}



\section{Monodromy of ramified $G$-bundles} 
\index{ramified principal bundle!monodromy}
\label{sec monodromy}

Consider $R\in \RB(M,Y)$. Choose $m\in M\setminus Y$. Choose
$p\in
\pi^{-1}(m)$. For each loop $l\in \Loop_{m}(M)$ which
does not meet $Y$, the lift of $l$ starting at $p$ finishes at $pg$ for a unique $g\in G$, called
the monodromy of $R$ along $l$ with respect to $p$. This monodromy depends only on the homotopy
class of $l$ in $M-Y$. Hence, the choice of $p$ determines a group homomorphism
$\hol_{p}:\pi_{1}(M\setminus Y,m)\lra G$, which characterizes the based ramified $G$-bundle $(R,p)$
up to isomorphism. Another choice of $p$ would lead to another homomorphism, which differs from $\hol_{p}$ by composition by an inner
automorphism of $G$. The class of the homomorphism $\hol_{p}$ modulo inner automorphisms of $G$
characterizes the ramified $G$-bundle $R$ up to isomorphism (see \cite{Steenrod}).

The words {\em monodromy} and {\em holonomy} are synonymous in this work, but we use the first in the context of ramified bundles and the second in the framework of Markovian holonomy fields. 

An automorphism of the ramified $G$-bundle $R$ is completely determined by the point to which it sends $p$. This point is of the form $pg$ for a unique $g$, hence the choice of $p$ allows us also to identify $\Aut(R)$ with a subgroup of $G$ which we denote by $\Aut_{p}(R)$. Let $\Hol_p(R)$ be the image of the homomorphism $\hol_{p}:\pi_{1}(M\setminus Y,m)\to G$. Then $\Aut_p(R)$ is the centralizer of $\Hol_p(R)$. Again, changing $p$ to $ph$ for some $h\in G$
conjugates $\Hol_p(R)$ and $\Aut_{p}(R)$ by $h$.
\index{ramified principal bundle!automorphisms}

In order to study $\RB(M,Y)$, it is convenient to choose a system of generators of $\pi_{1}(M\setminus Y,m)$. To do this, let us first assume that $Y$ is not empty and set $k=\#Y$. Let us choose on $M$ a graph $\G$ such that $m$ is a vertex of $\G$ and each face of $\G$ contains exactly one point of $Y$.  Throughout this chapter, we use the notation $\rg=\rg(M)$, $\p=\p(M)$, and $\f=\f(\G)$ when there is no ambiguity. In the present situation, $\f=k$. By Lemma \ref{lg is free}, the group $\RL_v(\G)$ is naturally isomorphic to $\pi_1(M\setminus Y,m)$. According to Proposition \ref{tame generators}, let us choose a tame system $\Gen=\{a_1,\ldots,a_{\rg},c_1,\ldots,c_{\p},l_1,\ldots,l_k\}$ of generators of $\RL_v(\G)$, associated with a certain word $w$ in the free group of rank $\rg$. 
\index{GAG@$\mathscr G$}
\index{OOOR@$\O(R,y)$}
For all ramification point $y\in Y$, we denote by $\O(R,y)$ the conjugacy class of the monodromy
along the facial lasso whose meander goes around $y$. This is also the conjugacy class of the
monodromy along any small loop which circles once around $y$, positively if $M$ is oriented. In particular, it does not depend on the choice of $\G$. 

Recall that the surface $M$ is endowed with a set $C$ of $G$-constraints along its boundary. Thus,
to each oriented connected component $b$ of $\partial M$, the $G$-constraints $C$ associate a
conjugacy class $C(b)$ of $G$. Let us write $\BS(M)=\{b_1,b_1^{-1},\ldots,b_\p,b_\p^{-1}\}$ and, for
all $i\in\{1,\ldots,\p\}$, $\O_i=C(b_i)$. We define the sets $\RB(M,C)$,
 (resp. $\RB(M,Y,C)$, $\BRB_m(M,C)$, $\BRB_m(M,Y,C)$) as the sets of isomorphism classes of
ramified $G$-bundles (resp. with ramification locus $Y$, based at $m$, based at $m$ with
ramification locus $Y$) such that the monodromy along $b_i$ belongs to $\O_i$ for all $i\in\{1,\ldots, \p\}$.  

Let us use the information gathered so far to build concrete models for the various spaces of
isomorphism classes of ramified $G$-bundles. Let us define
\begin{eqnarray*}
 \H(M,k,C,w)=&&\\
&& \hskip -2cm \left\{(a_{1},\ldots,a_{\rg},c_{1},\ldots,c_{\p},d_{1},\ldots,d_{k})
\in G^{\rg}\times \O_1 \times \ldots \times \O_\p \times (G\setminus \{1\})^{k} :\right.\\
&&\hskip 4cm\left.  \vbox{\vspace{0.4cm}}w(a_{1},\ldots, a_{\rg}) c_{1}\ldots c_{\p} d_{1}\ldots
d_{k}=1\right\}.
\end{eqnarray*}
\index{HAH@$\H(M,k,C,w)$}
We denote by $a_{i},c_{i},d_{i}:\H(M,k,C,w)\to G$ the obvious coordinate mappings. 

If $Y$ is empty, then we choose $\G$ with a single face. Then the appropriate concrete model is the following space:
\begin{eqnarray*}
\H(M,0,C,w)=&&\\
&& \hskip-2cm \left\{(a_{1},\ldots,a_{\rg},c_{1},\ldots,c_{\p})
\in G^{\rg}\times \O_1 \times \ldots \times \O_\p : w(a_{1},\ldots, a_{\rg}) c_{1}\ldots c_{\p} =1\right\}.
\end{eqnarray*}

The group $G$ acts on $\H(M,k,C,w)$ by simultaneous conjugation on each factor. Let us
consider 
the following diagram :
\begin{equation}\label{identif}
\xymatrix{\BRB_m(M,Y,C) \ar[r]^{\sim} \ar[d] & \H(M,k,C,w)\ar[d] \\
\RB(M,Y,C)\ar[r]^{\sim} &\H(M,k,C,w)/G}
\end{equation}
The vertical arrow on the left is the map which forgets the base point. The vertical arrrow on the
right is the quotient map. The top horizontal arrow is given by the monodromy with respect to the 
base point along the elements of $\Gen$. The bottom horizontal arrow is also given by this
monodromy, but since no base point is specified, it is defined up to global conjugation.

This diagram is commutative and, according to the discussion at the beginning of this section, its
horizontal arrows are bijections.

The preimage of an element $R\in \RB(M,Y,C)$ by the vertical arrow consists in $\frac{\# G}{\# \Aut(R)}$
elements. It follows that, for all function $f:\RB(M,Y,C)\lra \CK$, which can alternatively be seen
as an invariant function on $\H(M,k,C)$, we have the counting formula
\begin{equation}\label{counting}
\sum_{R\in\RB(M,Y,C)}\frac{1}{\# \Aut(R)}\; f(R) =\frac{1}{\#G}\sum_{(R,p)\in\RB_{m}(M,Y,C)} f(R)=
\frac{1}{\# G}\sum_{h\in\H(M,k,C,w)} f(h).
\end{equation}
\index{ramified principal bundle!counting}

\section{Measured spaces of ramified $G$-bundles} Let us start by putting a topology on the sets
of ramified $G$-bundles. For each based ramified $G$-bundle $(R,p)$ based at $\pi(p)=m$ with
ramification locus $Y$, and each open subset $U$ of $M\setminus \{m\}$ containing $Y$, we define
$$ {\mathcal V}((R,p),U)=\{(R',p')\in \BRB_m(M,C) : (R,p)_{| M\setminus U} \simeq (R',p')_{| M\setminus U}
\mbox{ as } G\mbox{-bundles}\}.$$
The sets ${\mathcal V}((R,p),U)$ form a basis of a topology on $\BRB_m(M,C)$ and
from now on we consider this space endowed with that topology. Similarly, we endow $\RB(M,C)$
with the topology generated by the sets 
$$ {\mathcal V}(R,U)=\{R'\in \RB(M,C) : R_{| M\setminus U} \simeq R'_{| M\setminus U}\},$$
where $U$ contains the ramification locus of $R$. These topologies make the projection
$\BRB_{m}(M,C)\to \RB(M,C)$ continuous. However, observe that the number of ramification points
is not a continuous function with respect to these topologies, it is only lower semi-continuous. In
fact, these topologies are the roughest which make the monodromy along any loop on $M$ a continuous
functions on its definition set. 
\index{ramified principal bundle!topology}

\index{YYY@$\Y(M)$}
Let ${\Y(M)}$ denote the set of finite subsets of $M$. For each $k\geq 0$, let $\Delta_{k}\subset
M^k$ denote the subset of $M^k$ on which at least two components are equal. We endow $\Y(M)$ with the
topology which makes the bijection $\Y(M)\simeq \bigsqcup_{k\geq 0} (M^k\setminus \Delta_{k})/\Sy_{k}$ a
homeomorphism. Once again, the natural mapping $\Ram: \RB(M,C)\lra \Y(M)$ which
associates to a covering its ramification locus is not continuous. 

It is now time to introduce the L\'evy process. Let $X$ be a continuous-time
Markov chain on $G$, with jump measure invariant by conjugation, and also invariant by inversion
if $M$ is non-orientable. Its L\'evy measure $\Pi$ is a finite invariant measure supported by $G\setminus \{1\}$. 

We denote by $\Pi_1$ the probability measure $\frac{\Pi}{\Pi(G)}$ on $G$. We define now the weight of a ramified $G$-bundle with respect to $\Pi$. Recall that if $R$ is ramified at $y$, then $\O(R,y)$ denotes the conjugacy class of the monodromy of a small circle around $y$, positively oriented if $M$ is oriented.

\begin{definition} Consider $R\in \RB(M)$. Let $Y$ denote the ramification locus of $R$. The {\em
$\Pi$-weight} of $R$ is the non-negative real number $\Pi_1(R)$defined as follows : 
$$\Pi_1 (R)=\prod_{y\in Y} \frac{\Pi_1(\O(R,y))}{\# \O(R,y)}.$$
If $R$ is represented by an element $h$ of $\H(M,k,C,w)$, then $\Pi_1(R)=\prod_{i=1}^{k}\Pi_1(\{d_{i}(h)\})$. 
\end{definition}
\index{PAPi@$\Pi(R)$}
\index{ramified principal bundle!weight}

The notion of weight of a ramified $G$-bundle allows us to define positive measures on the spaces
of bundles. The choice of the normalization will be justified by later results.

\begin{definition}\label{def mes RB}  The Borel measure $\BRBm_{M,m,Y,C}^{X}$ on $\BRB_m(M,Y,C)$ is
defined by 
$$\BRBm_{M,m,Y,C}^{X}=\frac{\#G^{1-\rg}}{\#\O_{1}\ldots \#\O_{\p}}\sum_{(R,p)\in\BRB_m(M,Y,C)}
\Pi_1(R)\; \delta_{(R,p)}.$$
By the left vertical arrow of (\ref{identif}), this measure is projected on the Borel
measure $\RBm_{M,Y,C}^{X}$ on $\RB(M,Y,C)$  defined by
$$\RBm_{M,Y,C}^{X}=\frac{\#G^{2-\rg}}{\#\O_{1}\ldots \#\O_{\p}}\sum_{R\in\RB(M,Y,C)}
\frac{\Pi_1(R)}{\# \Aut(R)}\; \delta_{R}.$$
\end{definition}
\index{RARBmeas@$\BRBm_{M,m,Y,C}^{X},\RBm_{M,Y,C}^{X}$}

Thanks to the counting formula (\ref{counting}), we can roughly bound above the total mass of $\BRBm_{M,m,Y,C}^{X}$ by
\begin{equation}\label{borne masse}
\BRBm_{M,m,Y,C}^{X}(1)\leq \frac{\#G^{1-\rg}}{\#\O_{1}\ldots \O_{p}} \sum_{h\in \H(M,k,C)}
\Pi_1 (h)\leq \#G.
\end{equation}

Our next objective is to put measures on $\RB_m(M,C)$ and $\RB(M,C)$, the sets in which the
ramification locus is not fixed. We have endowed both spaces with topologies. Thus, they carry a
Borel $\sigma$-field. Let ${\mathcal M}_{+}(\RB(M,C))$ and $\M_{+}(\RB_m(M,C))$ denote the spaces of
positive Borel measures on $\RB(M,C)$ and $\RB_{m}(M,C)$ respectively, endowed with the
topology of weak convergence.

\begin{proposition} \label{continu en Y} The mapping from $\Y(M)$ to $\M_{+}(\RB(M,C))$ which
sends $Y$ to $\RBm_{M,Y,C}^{X}$ is continuous. Similarly, the mapping from $\Y(M)$ to
 $\M_{+}(\RB_m(M,C))$ which sends $Y$ to $\BRBm_{M,m,Y,C}^{X}$
is continuous on its definition set.
\end{proposition}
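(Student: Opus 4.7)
The plan is to reduce the continuity statement to a purely combinatorial observation, exploiting the fact that the concrete model $\H(M,k,C,w)$ depends on $Y$ only through its cardinality $k$. I will treat the case of $\BRBm^X_{M,m,Y,C}$; the statement for $\RBm^X_{M,Y,C}$ follows by pushing forward via the continuous projection $\BRB_m(M,C)\to\RB(M,C)$.

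First, because $\Y(M)=\bigsqcup_{k\geq 0}(M^k\setminus\Delta_k)/\Sy_k$ as a topological disjoint union, it suffices to check continuity of $Y\mapsto \BRBm^X_{M,m,Y,C}$ at a point $Y_0$ of the stratum $\Y_k(M)$ of $k$-element subsets. Fix such a $Y_0=\{y_1^0,\ldots,y_k^0\}$. Using Corollary \ref{exist graph edge} and Proposition \ref{erase edge}, choose a graph $\G_0$ on $M$ whose vertex set contains $m$ and which has exactly $k$ faces $F_1,\ldots,F_k$, with $y_i^0\in F_i$ for each $i$. Fix a tame system of generators $\Gen_0=\{a_1,\ldots,a_{\rg},c_1,\ldots,c_{\p},l_1,\ldots,l_k\}$ associated to a word $w$, as provided by Proposition \ref{tame generators}, with the meander of $l_i$ bounding $F_i$. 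This yields the bijection $\varphi_{Y_0}:\BRB_m(M,Y_0,C)\xrightarrow{\sim}\H(M,k,C,w)$ of diagram (\ref{identif}).

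Next, let $V_0$ be a neighborhood of $Y_0$ in $\Y_k(M)$ small enough that every $Y=\{y_1,\ldots,y_k\}\in V_0$ satisfies $y_i\in F_i$ for all $i$. For such $Y$, the same graph $\G_0$, the same generating system $\Gen_0$ and the same word $w$ produce a bijection $\varphi_Y:\BRB_m(M,Y,C)\xrightarrow{\sim}\H(M,k,C,w)$, because the topological type of the complement of the ramification locus relative to $\G_0$ is unchanged. Set $\Phi_Y=\varphi_Y^{-1}\circ\varphi_{Y_0}:\BRB_m(M,Y_0,C)\to\BRB_m(M,Y,C)$. The heart of the argument is the following claim: for every $(R,p)\in\BRB_m(M,Y_0,C)$ and every open $U\subset M\setminus\{m\}$ containing $Y_0$, there is a neighborhood $V(R,U)\subset V_0$ of $Y_0$ such that for all $Y\in V(R,U)$ one has $Y\subset U$ and $\Phi_Y(R,p)\in\mathcal V((R,p),U)$. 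Indeed, for $Y$ close enough to $Y_0$ one has $Y\subset U$, and both $(R,p)$ and $\Phi_Y(R,p)$, viewed as principal $G$-bundles over $M\setminus U$, are classified up to based isomorphism by the monodromy homomorphism $\pi_1(M\setminus U,m)\to G$; but by construction of $\varphi_{Y_0}$ and $\varphi_Y$, this monodromy is obtained in both cases by reading off the same values on the loops of $\Gen_0$, which all lie in $M\setminus U$ for $Y$ small enough. Hence the two restricted bundles are based-isomorphic, as required.

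Finally, the $\Pi$-weight of a ramified bundle depends only on the conjugacy classes of its local monodromies around the ramification points, i.e.\ only on the coordinates $d_1,\ldots,d_k$ of its image in $\H(M,k,C,w)$; thus $\Pi_1(\Phi_Y(R,p))=\Pi_1(R)$ for every $Y\in V_0$. Combining this with the definition (\ref{def mes RB}) of $\BRBm^X_{M,m,Y,C}$, which involves the prefactor $\#G^{1-\rg}/(\#\O_1\cdots\#\O_\p)$ independent of $Y$, yields for every continuous $f:\BRB_m(M,C)\to\RK$
\begin{equation*}
\int f\,d\BRBm^X_{M,m,Y,C}=\frac{\#G^{1-\rg}}{\#\O_1\cdots\#\O_\p}\sum_{(R,p)\in\BRB_m(M,Y_0,C)}\Pi_1(R)\,f(\Phi_Y(R,p)).
\end{equation*}
The sum is finite, bounded uniformly in $Y$ by (\ref{borne masse}) applied to any $Y\in V_0$, and by the claim above together with continuity of $f$ each term converges as $Y\to Y_0$ to the corresponding term with $\Phi_Y$ replaced by the identity, giving $\int f\,d\BRBm^X_{M,m,Y,C}\to\int f\,d\BRBm^X_{M,m,Y_0,C}$.

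The main obstacle is the verification of the claim in the second paragraph: one must be certain that the bijection $\Phi_Y$ produces a bundle that is genuinely close in the topology generated by the sets $\mathcal V((R,p),U)$, which is stronger than mere coincidence of combinatorial data. The key input is that for $Y\subset U$ the inclusion $\Gen_0\subset\Loop_m(M\setminus U)$ holds, so that a common set of generators of $\pi_1(M\setminus U,m)$ computes the monodromy of both bundles and Steenrod's classification applies.
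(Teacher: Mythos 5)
Your proof is correct, but it takes a genuinely different route from the paper's. The paper proves the \emph{unbased} statement directly: given $Y$ and a nearby $Y'$, it chooses a diffeomorphism $\phi$ of $M$ equal to the identity outside a union $U$ of small balls around the points of $Y$ and sending $y_i$ to $y'_i$; then $R\mapsto \phi\circ\pi$ is a bijection $\RB(M,Y,C)\to\RB(M,Y',C)$ preserving the $\Pi$-weight and the automorphism group, and $R$ and $\phi(R)$ are \emph{tautologically} isomorphic over $M\setminus U$, so uniform continuity of $f$ on the finite set $\RB(M,Y,C)$ gives the estimate at once. You instead transport bundles through the combinatorial model $\H(M,k,C,w)$ attached to a graph adapted to $Y_0$, and verify closeness by Steenrod's classification of the restrictions to $M\setminus U$ via their monodromy homomorphisms. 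What the paper's diffeomorphism buys is that the isomorphism off $U$ comes for free and no graph, generating system, or appeal to the classification theorem is needed; what your version buys is that it stays entirely inside the combinatorial framework of Section \ref{sec monodromy} already used to define the measures, and it makes the $Y$-independence of the weight completely transparent. Two small points to tighten: (i) your claim needs the remark that it suffices to check it for $U$ ranging over a neighbourhood basis of $Y_0$ (disjoint unions of small balls), since $\mathcal V((R,p),U')\subset\mathcal V((R,p),U)$ for $U'\subset U$, and for such $U$ the loops of $\Gen_0$ lie in $M\setminus U$ and generate $\pi_1(M\setminus U,m)$; (ii) deducing the unbased statement by pushing forward the based one only yields continuity at those $Y_0$ avoiding the fixed base point $m$, so to cover all of $\Y(M)$ you should observe that $\RBm^{X}_{M,Y,C}$ is independent of the base point and choose, for each $Y_0$, a base point outside $Y_0$. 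The paper sidesteps (ii) by proving the unbased statement first.
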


\begin{proof}We prove only the first statement. The second one is very similar.

By definition of the topology on $\Y(M)$, it suffices to prove that the mapping from
$M^k-\Delta_{k}$ to $\M_{+}(\RB(M,C))$ which sends $Y=(y_{1},\ldots,y_{k})$ to
$\RBm_{M,Y,C}^{X}$ is continuous for all $k\geq 0$.
Consider $k\geq 0$, $Y=\{y_{1},\ldots,y_{k}\}$ and a bounded continuous function
$f:\RB(M,C)\lra \RK$. Choose $\epsilon>0$. For simplicity, assume that $M$ is endowed
with a Riemannian metric.

Since $\RB(M,Y,C)$ is a finite set, the continuity of $f$ implies the existence of $r>0$ such
that the balls $B(y_{i},r)$ are contained in $M\setminus \partial M$, pairwise disjoint and such that
the neighbourhood $U=B(y_{1},r)\times  \ldots \times B(y_{k},r)$ of $Y$ in $M^k\setminus \Delta_{k}$
satisfies 
$$\forall R \in \RB(M,Y,C), \forall R' \in {\mathcal V}(R,U) ,\;  |f(R')-f(R)| <
\frac{\epsilon}{\# \RB(M,Y,C)\;  \Pi(G)}.$$
Let $Y'=\{y'_{1},\ldots,y'_{k}\}$ be an element of $U$. Let $\phi$ be a diffeomorphism of $M$ such
that $\phi_{|M\setminus U}=\id_{M\setminus U}$ and $\phi(y_{i})=y'_{i}$ for all $i\in\{1,\ldots,k\}$. For each
bundle $\pi:R\lra M$ belonging to $\RB(M,Y,C)$, the bundle $\phi(R)=(\phi\circ \pi :
R\lra M)$ belongs to $\RB(M,Y',C)$. Replacing $\phi$ by its inverse in the definition of
$\phi:\RB(M,Y,C)\lra \RB(M,Y',C)$ yields the inverse mapping, hence $\phi$ is
a bijection. Moreover, for each $i\in\{1,\ldots,k\}$, $\O(\phi(R),y'_{i})=\O(R,y_{i})$,
so that $\Pi(\phi(R))=\Pi(R)$. Also, the conjugation by $\phi$ determines an isomorphism
between $\Aut(R)$ and $\Aut(\phi(R))$. Finally, $R$ and $\phi(R)$ are isomorphic outside $U$.
Altogether,
$$\left| \RBm_{M,Y',C}^{X}(f)-\RBm_{M,Y,C}^{X}(f)\right| \leq
\sum_{R\in\RB(M,Y,C)} \frac{\Pi(R)}{\#\Aut(R)}\; \left|
f(\phi(R))-f(R)\right|<\epsilon.$$
Since $k$, $Y$, $f$ and $\epsilon$ were arbitrary, the result follows. \end{proof}

We choose for the ramification locus a very simple probability distribution which incorporates the measure $\vol$ on $M$. Let $\PPP$ be the distribution of a Poisson point process of intensity $\Pi(G) \vol$ on $M$. It
is a Borel probability measure on $\Y(M)$. Moreover, for all $m\in M$, $\PPP(\{Y : m\in Y\})=0$.
According to Proposition \ref{continu en Y}, the following definition is legitimate.
\index{XXXi@$\Xi$}

\begin{definition} The Borel measures $\RBm_{M,\vol,C}^{X}$ on $\RB(M,C)$
and $\BRBm_{M,m,\vol,C}^{X}$ on $\BRB_m(M,C)$ are
defined by
$$\RBm_{M,\vol,C}^{X}=\int_{\Y(M)} \RBm_{M,Y,C}^{X}\; \PPP(dY) =
\int_{\Y(M)} \left(\sum_{R\in
\RB(M,Y,C)} \frac{\Pi(R)}{\#\Aut(R)}\; \delta_{R} \right)\; \PPP(dY),$$
\begin{align*}
\BRBm_{M,m,\vol,C}^{X}&=\int_{\Y(M)} \BRBm_{M,m,C}^{X}\;
\PPP(dY) \\
&= \int_{\Y(M)} \left(\frac{1}{\# G}\sum_{(R,p)\in \BRB_m(M,Y,C)}  \Pi(R) \;
\delta_{(R,p)} \right)\; \PPP(dY).
\end{align*}
\end{definition}
\index{ramified principal bundle!random}

Since $\PPP(\{Y:m\in Y\})=0$, the subset of $\RB(M,C)$ which consists in bundles ramified over
$m$ is negligible for the measure $\RBm_{M,\vol,C}^{X}$. Hence, the measure $\BRBm_{M,m,\vol,C}^{X}$
projects on $\RBm_{M,\vol,C}^{X}$ by the left vertical arrow of the diagram (\ref{identif}). In
particular, these measures have the same total mass. Thanks to (\ref{borne masse}), this total mass is finite.
Hence, $\RBm_{M,\vol,C}^{X}$ and $\BRBm_{M,m,\vol,C}^{X}$ are finite measures. We will denote by $\NRBm_{M,\vol,C}^{X}$ and $\NBRBm_{M,m,\vol,C}^{X}$ the
corresponding probability measures.

Although this is not absolutely necessary, let us compute $\BRBm_{M,m,\vol,C}^{X}(1)$. Recall that (\ref{densite char}) gives an expression of the density of the $1$-dimensional marginals of the L\'evy process $X$ with respect to the uniform measure on $G$: setting, for all $\alpha \in \irrep(G)$, $\widehat \Pi(\alpha)=\sum_{x\in G} \overline{\chi_\alpha(x)} \Pi(\{x\})$, we have
\begin{equation}\label{Qt fini}
\forall t>0, \forall x\in G, \; Q_t(x)=e^{-t\Pi(G)} \sum_{\alpha\in \irrep(G)} e^{t\frac{\widehat\Pi(\alpha)}{\chi_\alpha(1)}} \chi_\alpha(1) \chi_\alpha(x).
\end{equation}
In the present context, this equality can be checked by an elementary computation, using the following formula, which we will need again later and which is proved by using the standard properties of characters.

\begin{lemma}\label{magique} For all $k\geq 1$ and all $x\in G$, the following equality holds:
$$\sum_{x_1,\ldots,x_k \in G} \Pi(\{x_1\}) \ldots \Pi(\{x_k\}) {\mathbbm 1}_{x_1\ldots x_k=x} =\frac{1}{\# G} \sum_{\alpha\in \irrep(G)} \left(\frac{\widehat \Pi(\alpha)}{\chi_\alpha(1)}\right)^k \chi_\alpha(1)\chi_\alpha(x).$$
\end{lemma}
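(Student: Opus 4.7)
The plan is to recognize that the left-hand side is precisely $\Pi^{*k}(\{x\})$, the $k$-fold convolution of the conjugation-invariant probability-like measure $\Pi$ on $G$ evaluated at $x$, and then to establish the identity by Fourier analysis on the finite group $G$. Since $\Pi$ is conjugation-invariant, so is $\Pi^{*k}$, hence the function $x \mapsto \Pi^{*k}(\{x\})$ is a class function and can be expanded in the orthonormal basis of irreducible characters:
\[
\Pi^{*k}(\{x\}) = \sum_{\alpha\in\irrep(G)} \langle \Pi^{*k},\chi_\alpha\rangle\,\chi_\alpha(x), \qquad \langle f,\chi_\alpha\rangle = \frac{1}{\#G}\sum_{y\in G} f(y)\overline{\chi_\alpha(y)}.
\]
So the problem reduces to computing $\langle \Pi^{*k},\chi_\alpha\rangle$.

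The key step is Schur's lemma applied to the dual representation $\rho^*$ of an irreducible $\alpha$, whose character is $\overline{\chi_\alpha}$. Because $\Pi$ is conjugation-invariant, the operator
\[
T_\alpha := \sum_{x\in G} \Pi(\{x\})\,\rho^*(x)
\]
commutes with every $\rho^*(g)$, hence by Schur's lemma $T_\alpha = \lambda_\alpha\,\mathrm{Id}$ with $\lambda_\alpha = \tfrac{1}{\chi_\alpha(1)}\mathrm{tr}(T_\alpha) = \tfrac{1}{\chi_\alpha(1)}\sum_{x} \Pi(\{x\})\overline{\chi_\alpha(x)} = \widehat\Pi(\alpha)/\chi_\alpha(1)$. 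Since $\rho^*$ is a homomorphism, multiplying $k$ copies of $T_\alpha$ and using $\rho^*(x_1)\cdots\rho^*(x_k) = \rho^*(x_1\cdots x_k)$ gives
\[
\sum_{x_1,\ldots,x_k\in G} \Pi(\{x_1\})\cdots\Pi(\{x_k\})\,\rho^*(x_1\cdots x_k) = \lambda_\alpha^k\,\mathrm{Id}.
\]
Taking the trace and noting $\mathrm{tr}\,\rho^*(y) = \overline{\chi_\alpha(y)}$ yields
\[
\sum_{x_1,\ldots,x_k} \Pi(\{x_1\})\cdots\Pi(\{x_k\})\,\overline{\chi_\alpha(x_1\cdots x_k)} = \lambda_\alpha^k\,\chi_\alpha(1).
\]

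Substituting this into the formula for $\langle \Pi^{*k},\chi_\alpha\rangle$ gives $\langle \Pi^{*k},\chi_\alpha\rangle = \tfrac{\chi_\alpha(1)}{\#G}\bigl(\widehat\Pi(\alpha)/\chi_\alpha(1)\bigr)^k$, and plugging back into the character expansion produces exactly the right-hand side. There is no serious obstacle here — the only thing to watch is the bookkeeping between $\rho$ and $\rho^*$ (i.e.\ between $\chi_\alpha$ and $\overline{\chi_\alpha}$), which is dictated by the chosen convention $\widehat\Pi(\alpha) = \sum_x \Pi(\{x\})\overline{\chi_\alpha(x)}$ used elsewhere in the paper, in particular in formula \eqref{Qt fini}.
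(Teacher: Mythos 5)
Your proof is correct and is precisely the standard character-theoretic argument that the paper invokes without detail (it merely says the lemma "is proved by using the standard properties of characters"). The identification of the left-hand side as $\Pi^{*k}(\{x\})$, the expansion of this class function in irreducible characters, and the computation of the Fourier coefficient via Schur's lemma applied to $T_\alpha=\sum_x\Pi(\{x\})\rho^*(x)$ are exactly the intended steps, and your bookkeeping of $\chi_\alpha$ versus $\overline{\chi_\alpha}$ is consistent with the convention $\widehat\Pi(\alpha)=\sum_x\overline{\chi_\alpha(x)}\Pi(\{x\})$ used in the paper.
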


We can now compute the mass of $\BRBm_{M,m,\vol,C}^{X}(1)$.

\begin{proposition} The total mass of the measure $ \BRBm_{M,m,\vol,C}^{X}$ is equal to
\begin{equation}\label{masse BB}
\frac{1}{{\# G}^{\rg} \prod_{i=1}^{\p} \# \O_i} \sum_{\substack {a_1,\ldots,a_{\rg} \in G  \\ c_1\in \O_1,\ldots,c_{\p} \in \O_{\p}}} Q_{\vol(M)}(w(a_1\ldots a_{\rg}) c_1 \ldots c_{\p}),
\end{equation}
which, with the notation of Definition \ref{def m}, is none other than
$$\int_G Q_{\vol(M)} \; d\m_{M,\varnothing,C}.$$
\end{proposition}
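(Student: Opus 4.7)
The plan is to unpack the definition of $\BRBm^X_{M,m,\vol,C}$ layer by layer. The outer layer is the Poisson integration over $\Y(M)$, the middle layer is the sum over based ramified bundles with fixed ramification locus $Y$, and the innermost layer will be recognized, after interchanging summations, as the compound Poisson distribution of the L\'evy process $X$ evaluated on a specific element of $G$.

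First I would compute the mass of $\BRBm^X_{M,m,Y,C}$ for a fixed finite set $Y \subset M\setminus \partial M$ with $\#Y = k$. Using the bijection $\BRB_m(M,Y,C) \simeq \H(M,k,C,w)$ given by the top horizontal arrow of the diagram~(\ref{identif}), together with the fact that for a bundle $R$ represented by $h \in \H(M,k,C,w)$ one has $\Pi_1(R) = \prod_{j=1}^k \Pi_1(\{d_j(h)\})$, the definition of $\BRBm^X_{M,m,Y,C}$ immediately gives
\begin{equation*}
\BRBm^X_{M,m,Y,C}(1) = \frac{\#G^{1-\rg}}{\prod_{i=1}^{\p} \#\O_i} \sum_{h \in \H(M,k,C,w)} \prod_{j=1}^k \Pi_1(\{d_j(h)\}).
\end{equation*}
Crucially, this expression depends on $Y$ only through its cardinality $k$, because the inner sum is purely algebraic.

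Next I would perform the Poisson integration. Since $\PPP$ is a Poisson point process of intensity $\Pi(G)\vol$ and the integrand depends only on $\#Y$, one gets a sum over $k \geq 0$ with weight $e^{-\Pi(G)\vol(M)}(\Pi(G)\vol(M))^k/k!$. Absorbing $\Pi(G)^k$ into the weight $\prod \Pi_1(\{d_j\})$ to restore $\prod \Pi(\{d_j\})$, and interchanging the sum over $k$ with the sum over the coordinates $a_1,\ldots,a_\rg$ and $c_1,\ldots,c_\p$, one obtains
\begin{equation*}
\BRBm^X_{M,m,\vol,C}(1) = \frac{\#G^{1-\rg}}{\prod_{i=1}^{\p} \#\O_i} \sum_{\substack{a_i \in G\\ c_i \in \O_i}} e^{-\Pi(G)\vol(M)} \sum_{k\geq 0} \frac{\vol(M)^k}{k!} \! \sum_{d_1,\ldots,d_k} \!\! \mathbbm{1}_{d_1\ldots d_k = y(a,c)} \prod_j \Pi(\{d_j\}),
\end{equation*}
where $y(a,c) = (w(a_1,\ldots,a_\rg) c_1\ldots c_\p)^{-1}$ is forced by the defining relation of $\H(M,k,C,w)$.

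The decisive step is to identify the inner triple sum as the probability that the compound Poisson L\'evy process $X_{\vol(M)}$, starting from $1$ and with jump measure $\Pi$, equals $y(a,c)$. Conditioning on the number of jumps, this probability is exactly $e^{-\Pi(G)\vol(M)}\sum_k \frac{\vol(M)^k}{k!}\sum_{d_1,\ldots,d_k}\mathbbm{1}_{d_1\cdots d_k = y}\prod\Pi(\{d_j\})$, which by the definition of the density $Q_t$ equals $Q_{\vol(M)}(y)/\#G$. Substituting this back and simplifying the prefactor $\#G^{1-\rg}/\#G = \#G^{-\rg}$ yields the announced formula, up to the question of whether $Q_{\vol(M)}$ is evaluated at $w(a)c_1\ldots c_\p$ or at its inverse. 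The main obstacle is precisely this last point: one must check that the conjugation invariance of $Q_t$, combined with either the inversion-invariance of $X$ in the non-orientable case or a change of variables $a_i \leftrightarrow a_{\rg-i+1}$ exploiting the commutator structure of $w$ in the orientable case (together with stability of each $\O_i$ and of the summation over each $a_i$), allows the two expressions to be identified. The final equality with $\int_G Q_{\vol(M)}\,d\m_{M,\varnothing,C}$ is then just a rereading of Definition~\ref{def m}: the normalized sum over $a$'s of $f(w(a)\,\cdot\,)$ is the convolution of $f$ with $\eta^{*\rg/2}$ or $\kappa^{*\rg}$, and the normalized sums over $c_i \in \O_i$ introduce the factors $\delta_{C(b_i)}$.
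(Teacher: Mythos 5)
Your proposal follows the same skeleton as the paper's proof: fix $Y$ with $\#Y=k$, transport the mass computation to $\H(M,k,C,w)$ via the diagram (\ref{identif}), observe that the result depends on $Y$ only through $k$, and then integrate against the Poisson law $\PPP$. The one step where you genuinely diverge is the evaluation of the inner sum over $k$ and $(d_1,\ldots,d_k)$: the paper first applies the character identity of Lemma \ref{magique} at fixed $k$ and then resums the exponential series against the Fourier expansion (\ref{Qt fini}) of $Q_t$, whereas you recognize the whole expression at once as $\P(X_{\vol(M)}=y)=Q_{\vol(M)}(y)/\# G$ by conditioning the compound Poisson process on its number of jumps. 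Your route is shorter and makes the appearance of $Q_{\vol(M)}$ conceptually transparent; the paper's route has the advantage of setting up Lemma \ref{magique}, which it reuses in the proof of Theorem \ref{holo mono}. The prefactor bookkeeping ($\# G^{1-\rg}\cdot\frac{1}{\# G}=\# G^{-\rg}$) and the final rereading via Definition \ref{def m} are as in the paper.

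The one soft spot is precisely the ``main obstacle'' you isolate. You are right that the displayed relation defining $\H(M,k,C,w)$ gives $d_1\cdots d_k=(w(a)c_1\cdots c_\p)^{-1}$; note that the paper's own proof simply writes the indicator as ${\mathbbm 1}_{d_1\cdots d_k=w(a_1\ldots a_\rg)c_1\cdots c_\p}$ and never meets the issue. But your proposed resolution in the oriented case is incomplete: conjugation invariance of $Q_t$ together with the change of variables on the $a_i$'s (in effect, $\eta^\vee=\eta$ because the inverse of a commutator is a commutator) disposes of $w(a)^{-1}$, yet the remaining substitution $c_i\mapsto c_i^{-1}$ replaces $\delta_{\O_i}$ by $\delta_{\O_i^{-1}}$, and $\O_i^{-1}\neq\O_i$ in general, so ``stability of each $\O_i$'' is not available. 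What your argument actually produces is $\m_{M,\varnothing,C}^{\vee}(Q_{\vol(M)})$ rather than $\m_{M,\varnothing,C}(Q_{\vol(M)})$; these coincide in the non-orientable case (where $X$ is assumed invariant by inversion) but not by any symmetry argument in the oriented case with nontrivial boundary constraints. The correct way out is not an extra symmetry but a normalization of the bijection in (\ref{identif}): the monodromy is an anti-homomorphism, and once the tame generators are read through it (cf.\ the word $\overleftarrow{w}$ and the reversed products in Proposition \ref{tame generators}), the coordinates can be chosen so that the relation among monodromies is exactly $d_1\cdots d_k=w(a)c_1\cdots c_\p$ with $c_i\in\O_i$ --- which is what the paper's indicator tacitly assumes. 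You should either fix that convention at the outset or carry the inversion consistently into the statement; as written, the last step of your argument does not close in the oriented case.
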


\begin{proof} Choose $Y\in \Y(M)$. Set $k=\# Y$. Choose a graph $\G$, a vertex $v$ of $\G$ and a tame system of generators of $\RL_v(\G)$ associated with some word $w$, like we did in Section \ref{sec monodromy}. This determines a bijection $\R_m(M,Y,C) \simeq \H(M,k,C,w)$. By the counting formula (\ref{counting}), 
\begin{align*}
\BRBm_{M,m,Y,C}^{X}(1) &=\frac{{\# G}^{1-\rg}}{\prod_{i=1}^{\p} \# \O_i} \sum_{h\in \H(M,k,C,w)} \prod_{i=1}^k \Pi_1(\{d_i(h)\})\\
&\hskip -1cm =\frac{{\# G}^{1-\rg}}{\prod_{i=1}^{\p} \# \O_i} \sum_{\substack {a_1,\ldots,a_{\rg} \in G  \\ c_1\in \O_1,\ldots,c_{\p} \in \O_{\p}}} \sum_{d_1,\ldots,d_k \in G} \prod_{i=1}^k \frac{\Pi(\{d_i(h)\})}{\Pi(G)} {\mathbbm 1}_{d_1\ldots d_k= w(a_1 \ldots a_\rg) c_1 \ldots c_\p}.
\end{align*}
By Lemma \ref{magique}, this is equal to
$$\frac{1}{{\# G}^{\rg}\prod_{i=1}^{\p} \# \O_i} \sum_{\substack {a_1,\ldots,a_{\rg} \in G  \\ c_1\in \O_1,\ldots,c_{\p} \in \O_{\p}\\ \alpha \in \irrep(G)}} \frac{1}{\Pi(G)^k} \left(\frac{\widehat \Pi(\alpha)}{\chi_\alpha(1)}\right)^k \chi_\alpha(1)\chi_\alpha(w(a_1 \ldots a_\rg) c_1 \ldots c_\p).$$
Integrating this expression with respect to $Y$ under the probability measure $\Xi$ amounts to replacing $k$ by a Poisson random variable with parameter $\Pi(G) \vol(M)$ and taking the expectation. Using (\ref{Qt fini}), we find that this expectation is equal to (\ref{masse BB}). \end{proof}

\section{The monodromy process as a Markovian holonomy field} 

Let $m$ be a point of $M$. Let $l\in \Loop_{m}(M)$ be a loop based at $m$. Since $l$ is
rectifiable, its range is negligible for the measure $\vol$. Hence, the ramification locus of a
ramified $G$-bundle based at $m$ distributed according to the probability measure $\NBRBm_{M,m,\vol,C}^{X}$
is almost surely disjoint from the range of $l$. The mapping $P_{l}:\RB_{m}(M,C)\to G$ which sends
a pair $(R,p)$ to the monodromy of $R$ along $l$ with respect to $p$
is defined on the subset where
the ramification locus is disjoint from $l$ and thus is a well-defined random variable under the
probability measure $\NBRBm_{M,m,\vol,C}^{X}$.

Let $l_{1},l_{2}\in \Loop_{m}(M)$ be two loops. Let $(R,p)$ be an element of $\R_{m}(M,C)$. Let
$g_{1}$ and $g_{2}$ be the monodromies of $l_{1}$ and $l_{2}$ respectively. Let us compute the
monodromy of $l_{1}l_{2}$. The point $p$ is sent to $pg_{1}$ by the parallel transport along
$l_{1}$. Then, on one hand the parallel transport along $l_{2}$ sends $p$ to $pg_{2}$ and on the
other hand the the parallel transport commutes to the action of $G$ on the right on $\pi^{-1}(m)$.
Thus, the parallel transport along $l_{2}$ sends $pg_{1}$ to $pg_{2}g_{1}$. It appears that
monodromies are multiplied in the reversed order of concatenation. Coming back to the
probabilistic setting, this implies that
$$\forall l_{1},l_{2}\in \Loop_{m}(M)\; , \; \; P_{l_{1}l_{2}}=P_{l_{2}}P_{l_{1}} \mbox{ almost
surely}.$$
It is even easier to check that for all $l\in \Loop_{m}(M)$, $P_{l^{-1}}=P_{l}^{-1}$ almost surely.

Thanks to Proposition \ref{take proj lim}, these two relations ensure that the collection of
random variables $(P_{l})_{l\in\Loop_{m}(M)}$ defined on the probability space
$(\BRB_{m}(M,C),\NBRBm_{M,m,\vol,C}^{X})$ determines a probability measure on the space
$(\M(\Loop_{m}(M),G),\C)$. We denote this probability measure by $\NMF_{M,m,\vol,C}^{X}$.
By restriction, this probability measure is also defined on the invariant $\sigma$-field and, by
Lemma \ref{chemins lacets}, determines a probability measure on the measurable space
$(\M(\Path(M),G),\I)$, which we denote by $\NMF_{M,(m),\vol,C}^{X}$. Finally, we define a finite
measure on $(\M(\Path(M),G),\I)$ by
$$\MF_{M,(m),\vol,C}^{X}=\BRBm_{M,\vol,C}^{X}(1) \NMF_{M,(m),\vol,C}^{X}.$$

\begin{lemma} The measure $\MF_{M,(m),\vol,C}^{X}$ on $(\M(\Path(M),G),\I)$ does not
depend on the point $m$. We denote it by $\MF_{M,\vol,C}^{X}$.
\end{lemma}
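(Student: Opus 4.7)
The plan is to exploit the fact that, although the base point $m$ enters into the definition of $\MF^{X}_{M,(m),\vol,C}$ through the construction of the holonomy maps $P_{l}$ on $(\BRB_{m}(M,C),\BRBm^{X}_{M,m,\vol,C})$, the resulting measure is carried by the invariant $\sigma$-field $\I$ on $\M(\Path(M),G)$, which by Lemma \ref{chemins lacets} and Proposition \ref{gen invariant sigma} is generated by conjugation-invariant functions of monodromies along loops based at any single point. This built-in gauge-invariance is what will absorb the change of base point.

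First I would fix two points $m,m'\in M$ and a smooth (say rectifiable) path $c\in \Path(M)$ from $m$ to $m'$. For each finite set $Y\subset M$ disjoint from the range of $c$, parallel transport along $c$ defines a map
\[
\tau_{c}^{Y}:\BRB_{m}(M,Y,C)\longrightarrow \BRB_{m'}(M,Y,C),\qquad (R,p)\longmapsto (R,\widetilde c(1)),
\]
where $\widetilde c$ is the lift of $c$ to $R$ starting at $p$. This is well defined because $c$ avoids the ramification locus, and it is a bijection because parallel transport along $c^{-1}$ provides an inverse. Crucially, $\tau_{c}^{Y}$ preserves the underlying ramified $G$-bundle $R$, and hence preserves its ramification locus $Y$, its automorphism group and its $\Pi$-weight $\Pi_{1}(R)$. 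Consulting Definition \ref{def mes RB}, this yields $(\tau_{c}^{Y})_{*}\BRBm^{X}_{M,m,Y,C}=\BRBm^{X}_{M,m',Y,C}$. Since the range of $c$ is $\vol$-negligible, the Poisson process $\PPP$ puts no mass on the set of $Y$'s meeting $c$, and integrating with respect to $\PPP$ gives $(\tau_{c})_{*}\BRBm^{X}_{M,m,\vol,C}=\BRBm^{X}_{M,m',\vol,C}$, where $\tau_{c}$ is defined $\BRBm^{X}_{M,m,\vol,C}$-almost everywhere. In particular both measures have the same total mass.

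Next I would identify how $\tau_{c}$ transports the monodromy functionals. For any loop $l\in\Loop_{m'}(M)$, the path $c\,l\,c^{-1}$ is a loop at $m$, and a direct computation using the $G$-equivariance of the lift shows
\[
P_{l}(\tau_{c}(R,p))=P_{clc^{-1}}(R,p)
\]
whenever both sides are defined. Now fix any loops $l_{1},\dots,l_{n}\in\Loop_{m'}(M)$ and any continuous $\tilde f:G^{n}\to\RK$ invariant under the diagonal action of $G$ by conjugation. Set $l'_{i}=clc^{-1}\in\Loop_{m}(M)$. Using the multiplicativity relations \eqref{i: inv} and \eqref{i: multi}, any $h\in\M(\Path(M),G)$ satisfies $h(l'_{i})=h(c)h(l_{i})h(c)^{-1}$, so by the invariance of $\tilde f$,
\[
\tilde f(h(l_{1}),\dots,h(l_{n}))=\tilde f(h(l'_{1}),\dots,h(l'_{n})).
\]
Integrating against $\MF^{X}_{M,(m),\vol,C}$ and applying the definition of this measure, then the transport identity above, then the change of variables under $\tau_{c}$, and finally the definition of $\MF^{X}_{M,(m'),\vol,C}$, one gets
\[
\int \tilde f(h(l_{1}),\dots,h(l_{n}))\,d\MF^{X}_{M,(m),\vol,C}(h)=\int \tilde f(h(l_{1}),\dots,h(l_{n}))\,d\MF^{X}_{M,(m'),\vol,C}(h).
\]
By Lemma \ref{chemins lacets} and Proposition \ref{gen invariant sigma}, such integrands generate $\I$ under bounded pointwise convergence; a monotone class argument concludes that the two measures agree on $\I$.

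The main obstacle, though essentially a technicality, is to handle the almost-sure domain of $\tau_{c}$: one needs the Poisson ramification locus to avoid $c$ with full probability, for which rectifiability (hence $\vol$-negligibility) of the range of $c$ is crucial; connectedness of $M$ guarantees such a $c$ exists. A secondary, purely bookkeeping point is to check carefully the convention-dependent identity $P_{l}(\tau_{c}(R,p))=P_{clc^{-1}}(R,p)$, since the reversed multiplication order in \eqref{i: multi} is easy to mis-apply, but once the convention of Section \ref{ordre inverse} is respected this becomes a one-line verification from the $G$-equivariance of parallel transport.
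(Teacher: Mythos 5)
Your proposal is correct and follows essentially the same route as the paper: both reduce to conjugation-invariant functions of monodromies along loops conjugated to a common base point, and both use parallel transport along a connecting path as a $\Pi$-weight-preserving bijection between the spaces of based ramified bundles (the negligibility of the path for the Poisson ramification locus being the same technical point in each case). The only discrepancy is the cosmetic one you already flag: with the convention of \eqref{i: multi} one gets $h(cl_ic^{-1})=h(c)^{-1}h(l_i)h(c)$ rather than its inverse conjugation, which is harmless since $\tilde f$ is conjugation-invariant.
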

\index{MAMF@$\MF_{M,\vol,C}^{X}$}

\begin{proof} By Definition \ref{def inv sigma field}, it suffices to show that, if $l_{1},\ldots,l_{n}$ are loops on $M$ based at the same point $m_{0}$, and $f:G^n\to  \CK$ is a function invariant under the action of $G$ by diagonal
conjugation, then the distribution of $f(h(l_{1}),\ldots,h(l_{n}))$ under
$\NMF_{M,(m),\vol,C}^{X}$ does not depend on $m$. By definition, this distribution is that of
$f(P_{cl_{1}c^{-1}},\ldots,P_{cl_{n}c^{-1}})$ under $\BRBm_{M,m,\vol,C}^{X}$, where $c$ is an
arbitrary path from $m$ to $m_{0}$. Let $m$ and $m'$ be two points. Let us chose a path $c$ from
$m$ to $m_{0}$ and a path $c'$ from $m'$ to $m$. It suffices to prove that the distributions of 
$f(P_{cl_{1}c^{-1}},\ldots,P_{cl_{n}c^{-1}})$ under $\BRBm_{M,m,\vol,C}^{X}$ and 
$f(P_{c'cl_{1}c^{-1}{c'}^{-1}},\ldots,P_{c'cl_{n}c^{-1}{c'}^{-1}})$ under $\BRBm_{M,m',\vol,C}^{X}$
coincide. 

Let $Y$ be a finite subset of $M$ which does not meet $c'$. Let $(R,p')$ be an element of
$\BRB_{m'}(M,Y,C)$. Then, for each $i\in\{1,\ldots,n\}$, the monodromy of $R$ along
$c'cl_{i}c^{-1}{c'}^{-1}$ relatively to $p'$ is equal to the monodromy of $R$ along
$cl_{i}c^{-1}$ relatively to the image of $p'$ by parallel transport along $c'$, which we denote
by $p$. Thus, it suffices to prove that the mapping $\RB_{m'}(M,Y,C)\to \RB_{m}(M,Y,C)$ which
sends $(R,p')$ to $(R,p)$, where $p$ is the image of $p'$ by parallel transport along $c'$, sends
the measure $\BRBm_{M,m',Y,C}$ to the measure $\BRBm_{M,m,Y,C}$. This follows from the definition 
of these measures and the fact that the mapping which we consider is a bijection which preserves the
$\Pi$-weight. \end{proof}

The main result of this section is the following.

\begin{theorem} \label{holo mono} The finite measures $\HF^{X}_{M,\vol,\varnothing,C}$ and $\MF^{X}_{M,\vol,C}$
on the measurable space $(\M(\Path(M),G),\I)$ are equal. 
\end{theorem}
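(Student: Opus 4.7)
The plan is to show equality of the two finite measures on $(\M(\Path(M),G),\I)$ by checking they induce the same integral against every bounded measurable function which generates the $\sigma$-field. By Proposition \ref{gen invariant sigma}, it suffices to fix a graph $\G$ on $M$, a base vertex $v$, a generating set of $\RL_v(\G)$, and to verify the equality for all continuous conjugation-invariant functions of the holonomies along these generators. My choice of generators is a tame system $\Gen=\{a_1,\ldots,a_{\rg},c_1,\ldots,c_{\p},l_1,\ldots,l_{\f}\}$ associated with a word $w$, as furnished by Proposition \ref{tame generators}, so that the generators are freely independent modulo the relation $w(a_1,\ldots,a_{\rg})c_1\ldots c_{\p}=l_1\ldots l_{\f}$ and, crucially, each facial lasso $l_i$ has a meander bounding exactly one face $F_i$.

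For the holonomy field side, the restriction of $\HF^X_{M,\vol,\varnothing,C}$ to $\Path(\G)$ is the discrete field $\DF^{X,\G}_{M,\vol,\varnothing,C}$ by Proposition \ref{ident dist}. Its density against the constrained uniform measure is $\prod_{i=1}^{\f}Q_{\vol(F_i)}(h(\partial F_i))$. Applying Proposition \ref{tame generators} to push this density through the change of variables to the generators, and noting that $h(l_i)=h(\partial F_i)$ up to conjugation by the spoke (hence the value of any invariant function is unaffected), I obtain an explicit integral of $f$ times $\prod_i Q_{\vol(F_i)}(\cdot)$ against the independent coordinates $(a_1,\ldots,a_{\rg},c_1,\ldots,c_{\p},l_1,\ldots,l_{\f-1})$, with $l_{\f}$ determined by the tame relation and the $c_i$ distributed on their respective conjugacy classes.

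For the monodromy side, I condition on the Poisson ramification locus $Y$ with intensity $\Pi(G)\vol$. Almost surely $Y$ avoids $\Sk(\G)$ and distributes among the faces as independent Poisson processes, with $k_i=\#(Y\cap F_i)$. Inside each face $F_i$, the restriction of $\pi^{-1}(F_i\setminus Y)$ is a flat $G$-bundle over a $k_i$-punctured disk, so the monodromy $h(l_i)$ equals a product $d_{i,k_i}\ldots d_{i,1}$ of monodromies around each ramification point inside $F_i$ (the ordering determined by the planar structure of $F_i$ and the choice of spoke for $l_i$). The $\Pi$-weight of such a bundle is $\prod_{i,j}\Pi_1(\{d_{i,j}\})$, and the counting formula (\ref{counting}) rewrites the sum over $\RB_m(M,Y,C)$ as a sum over $\H(M,|Y|,C,w)$ with the prefactor $\#G^{1-\rg}/\prod\#\O_j$. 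Integrating $f$ applied to the holonomies along $\Gen$ amounts, therefore, to summing over the free generators $a_1,\ldots,a_{\rg}\in G$, $c_j\in \O_j$, and over all $(k_i,d_{i,1},\ldots,d_{i,k_i})_i$ subject to $\prod_i (d_{i,k_i}\ldots d_{i,1})=l_1\ldots l_{\f}=w(a)\cdot c_1\ldots c_{\p}$.

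The two expressions will coincide after one identifies, for each face $F_i$, the face-local factor coming from the monodromy side as $Q_{\vol(F_i)}$ evaluated at the corresponding group element. This is precisely the content of Lemma \ref{magique} combined with the spectral formula (\ref{Qt fini}): averaging the Poisson number of points $k_i$ in $F_i$ against the sum over $d_{i,*}\in G$ weighted by $\prod\Pi(\{d_{i,j}\})$ with fixed product $g_i$ gives exactly $e^{-\Pi(G)\vol(F_i)}\sum_{k\geq0}\frac{\vol(F_i)^k}{k!}\sum_{d_1\ldots d_k=g_i}\prod\Pi(\{d_j\})=Q_{\vol(F_i)}(g_i)$. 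Factoring the Poisson normalizations across the $\f$ faces, collecting constants, and checking that $\prod_i \#G^{-1}$ coming from $\Pi_1=\Pi/\Pi(G)$ matches the $\#G^{1-\rg}/\prod\#\O_j$ prefactor against the normalization of the tame-generator distribution under $\U^\G_{M,\varnothing,C}$ (second assertion of Proposition \ref{tame generators}), yields the equality of the two integrals. The main obstacle I anticipate is purely combinatorial-topological: verifying that the monodromy of $l_i$ decomposes as the product of the monodromies around the ramification points in $F_i$ in the correct cyclic order corresponding to the facial cycle, so that the domain of summation on the monodromy side matches term-by-term with the evaluation of $\prod Q_{\vol(F_i)}(h(\partial F_i))$, in both the orientable and non-orientable cases; once this identification is made, the computation collapses via Lemma \ref{magique} to the desired identity.
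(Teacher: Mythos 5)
Your computational core — tame generators, refinement of the graph so that each face contains exactly one ramification point, the counting formula (\ref{counting}), Lemma \ref{magique}, and Poisson integration face by face — is exactly the route the paper takes, and the "combinatorial-topological obstacle" you flag is resolved there by the second assertion of Proposition \ref{tame generators} (the facial lasso $l_i$ factors as $l_{i,1}\ldots l_{i,k_i}$ over a refinement), so that part of your plan goes through.

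There is, however, a genuine gap in your opening reduction. You invoke Proposition \ref{gen invariant sigma} to claim that it suffices to check agreement on the holonomies of a generating set of $\RL_v(\G)$ for a single fixed graph $\G$. But that proposition describes the invariant $\sigma$-field of $\M(\Path(\G),G)$, whereas the theorem asserts equality of measures on $(\M(\Path(M),G),\I)$, whose invariant $\sigma$-field is generated by $f(h(l_1),\ldots,h(l_n))$ for \emph{arbitrary} rectifiable loops $l_1,\ldots,l_n$ based at a common point — and an arbitrary finite family of rectifiable loops does not lie in any graph. To pass from agreement on loops in graphs to equality on $\I$ you must approximate arbitrary loops by piecewise geodesic ones (Proposition \ref{A dense}, then Proposition \ref{geod graph} to put them in a common graph) and then take limits, which requires \emph{both} measures to be stochastically continuous. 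For $\HF^X$ this is built into its construction via Theorem \ref{main extension}, but for $\MF^X$ it is a separate, non-trivial fact: one must show that $P_{l_n}\to P_l$ in measure when $l_n\to l$ with fixed endpoints, which the paper proves (Proposition \ref{MF stoch cont}) by observing that the Poisson ramification locus almost surely avoids a tubular neighbourhood of $l$, within which $l_n$ and $l$ are eventually homotopic, hence have equal monodromy. Without this step your argument establishes the equality of the two measures only on the sub-$\sigma$-field generated by loops in graphs, not on all of $\I$. (A minor bookkeeping point: the factor $\#G^{-1}$ per face in the final matching of constants comes from Lemma \ref{magique}, not from the normalization $\Pi_1=\Pi/\Pi(G)$, whose $\Pi(G)^{-k}$ cancels against the Poisson intensity $\Pi(G)\vol$.)
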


This theorem expresses, at least when the surfaces carry only $G$-constraints along their boundary, the fact that the holonomy process associated with the Markovian holonomy field $\HF^{X}$ is the monodromy process associated to a random ramified $G$-bundle taken under the appropriate distribution.

The proof of this theorem consists in two main steps. In the first step, we prove that the
monodromy process is stochastically continuous. Then, we prove that the holonomy process and the
monodromy process coincide in distribution on the set of piecewise geodesic loops for some
Riemannian metric on $M$.

\begin{proposition}\label{MF stoch cont} The measure $\MF^{X}_{M,\vol,C}$ is stochastically
continuous, in the sense that is satisfies the first property of Definition \ref{def SCFMHF}.
\end{proposition}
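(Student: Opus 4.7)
The plan is to fix a sequence $(c_n)$ of paths converging to $c$ with fixed endpoints and to show that the integral $\int d_G(h(c_n), h(c))\, \MF^X_{M,\vol,C}(dh)$ tends to $0$. The first step is to reduce the integrand to the monodromy of a loop. Choose a base point $m\in M$ and any fixed path $s$ from $m$ to $\underline c=\underline{c_n}$; set $l_n = s c_n c^{-1} s^{-1}$, which is a loop based at $m$. Using the multiplicativity relations and the bi-invariance of $d_G$, one checks that
$$d_G(h(c_n), h(c)) = d_G(1, h(l_n))$$
almost surely under $\MF^X_{M,\vol,C}$, and the right-hand side is $\I$-measurable.

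Next, unpack the definition of $\MF^X_{M,\vol,C}$. By construction, under this measure, $h(l_n) = P_{l_n}(R,p)$ where $(R,p)$ is a random based ramified $G$-bundle whose ramification locus $Y$ is distributed according to the Poisson point process $\PPP$ of intensity $\Pi(G)\vol$ on $M$. I will disintegrate the integral along $\PPP$ and argue that for $\PPP$-almost every $Y$, the integrand tends to $0$. The key observation is that since $c$, $c_n$ and $s$ are all rectifiable, their ranges have vanishing two-dimensional Hausdorff measure, hence vanishing $\vol$-measure. Consequently, $\PPP$-almost surely $Y$ is finite and disjoint from $\{m\}\cup s([0,1]) \cup c([0,1])$.

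The heart of the argument is the following homotopy statement: for $\PPP$-a.e.\ $Y$ as above, there exists $n_0(Y)$ such that for all $n\ge n_0(Y)$ the loop $l_n$ is contractible in $M\setminus Y$ as a loop based at $m$. To prove this, pick $\delta>0$ so that the $\delta$-neighborhood of $s([0,1])\cup c([0,1])$ is disjoint from $Y$ (possible since this set is compact in $M\setminus Y$). By Lemma \ref{dl unif} (combined with the fact that $c_n\to c$ in $d_\ell$), for $n$ large enough the constant-speed reparametrizations satisfy $d_\infty(c_n,c)<\delta/2$, in particular less than the injectivity radius of a fixed background metric on $M$. For such $n$, the map $H(s,t)=\exp_{c(t)}(s\log_{c(t)}c_n(t))$ defines a continuous homotopy with fixed endpoints between $c$ and $c_n$ whose image lies in the $\delta$-neighborhood of $c([0,1])$, hence in $M\setminus Y$. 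Therefore $c_n c^{-1}$ is contractible in $M\setminus Y$, and conjugation by the fixed path $s$ shows that $l_n$ is contractible based at $m$. Since the monodromy of a ramified bundle depends only on the homotopy class of the loop in the complement of its ramification locus, it follows that $P_{l_n}(R,p)=1$ for every bundle $R$ with ramification locus $Y$, and hence $d_G(1,h(l_n))=0$ for $n\ge n_0(Y)$.

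The conclusion then follows by dominated convergence: the integrand $d_G(1,h(l_n))$ is bounded by the (finite) diameter of the compact group $G$, the total mass $\MF^X_{M,\vol,C}(1)$ is finite by the bound (\ref{borne masse}) and integration against $\PPP$, and we have just shown the integrand tends to $0$ pointwise under the joint distribution of $(Y,R)$. The only delicate point in the argument is the geometric/homotopical step in the third paragraph: one has to verify that small perturbations in the $d_\ell$ sense of an arbitrary rectifiable path do not change the homotopy class in the complement of a fixed finite set; fortunately, uniform convergence of the constant-speed parametrizations (a consequence of convergence in $d_\ell$) together with a geodesic interpolation in a tubular neighborhood of $c$ suffices.
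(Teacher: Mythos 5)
Your proof is correct and follows essentially the same route as the paper's: both rest on the facts that the Poisson ramification locus almost surely avoids a neighbourhood of the limit path and that monodromy is a homotopy invariant in the complement of the ramification locus, with the homotopy supplied by geodesic interpolation between uniformly close constant-speed parametrizations. The only difference is cosmetic — you condition on the locus $Y$ first and conclude by dominated convergence, whereas the paper quantifies over $\epsilon$ and proves convergence in measure directly.
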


\begin{proof}It suffices to prove that for all $m\in M$, all $l\in \Loop_{m}(M)$ and all sequence
$(l_{n})_{n\geq 0}$ of loops based at $m$ converging to $l$, the sequence $(P_{l_{n}})_{n\geq
0}$ converges in measure to $P_{l}$. Let us
endow $M$ with a Riemannian metric and choose $m$, $l$ and $(l_{n})_{n\geq 0}$ as above. We assume
that all loops are parametrized at constant speed, so that the sequence of parametrized paths
$(l_{n})_{n\geq 0}$ converges uniformly to $l$.

Choose $\epsilon>0$. For each $r>0$, let ${\mathcal N}_{r}(l)$ denote the $r$-neighbourhood
of the image of $l$. Since the distribution of the ramification locus $\Ram(R)$ of $R$ under
the finite measure $\BRBm^{X}_{M,m,\vol,C}$ is absolutely continuous with respect to $\Xi$,
$$\lim_{r\to 0} \BRBm^{X}_{M,m,\vol,C}\left(\{(R,p) : \Ram(R)\cap {\mathcal N}_{r}(l) \neq
\varnothing \}\right) = 0.$$
Choose $r>0$ such that this probability is smaller than $\epsilon$. Assume also that $r$ is
smaller than the convexity radius of our Riemannian metric on $M$. Finally, let $n_{0}$ be such
that $n\geq n_{0}$ implies $d_{\infty}(l_{n},l)<r$. Then, if $n\geq n_{0}$, $l_{n}$ and $l$
are homotopic inside ${\mathcal N}_{r}(l)$ which, with probability greater than $1-\epsilon$,
does not contain any ramification point. Hence,
$$\forall n\geq n_{0}, \; \BRBm^{X}_{M,m,\vol,C}\left(\{(R,p): P_{l_{n}}(R)\neq P_{l}(R)\}\right) <\epsilon.$$
Since $\epsilon$ is arbitrary, this proves that $P_{l_{n}}$ converges to $P_{l}$ in
measure. \end{proof}

Theorem \ref{holo mono} asserts the equality of two finite measures. We
consider the two stochastic processes $(H_{l})_{l\in \Loop_{m}(\G)}$ and $(P_{l})_{l\in\Loop_{m}(\G)}$ which
are both the canonical process on $\M(\Loop_{m}(M),G)$, the first considered under the measure
$\HF^{X}_{M,\vol,\varnothing,C}$ and the second under the measure
$\MF^{X}_{M,(m),\vol,C}$. Although these measures are not in general probability measures, we use
the language of stochastic processes for $H$ and $P$.

By Proposition \ref{MF stoch cont} and Theorem \ref{main extension}, it
suffices to endow $M$ with a Riemannian metric and to show that the restrictions of $P$ and $H$ to
piecewise geodesic loops agree in distribution. For this, as we have already observed several
times, it suffices to show that they agree in distribution when restricted to the set of loops in a
graph with piecewise geodesic edges, or in fact any graph.

\begin{proposition} Let $\G=(\V,\E,\F)$ be a graph on $M$ such that $m\in \V$. The families of
random variables $(P_{l})_{l\in\Loop_{m}(\G)}$ and $(H_{l})_{l\in \Loop_{m}(\G)}$
have the same distribution.
\end{proposition}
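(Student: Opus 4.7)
The plan is to reduce the equality of finite measures on $(\M(\Path(\G),G),\I)$ to the equality of finite-dimensional marginals along a carefully chosen system of generators, then to do an explicit computation on both sides and match the resulting combinatorial expressions term by term.

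First, by Proposition \ref{gen invariant sigma} combined with Proposition \ref{tame generators}, it suffices to compare the joint laws of $(H_l)_{l\in\Gen}$ and $(P_l)_{l\in\Gen}$ for a tame system $\Gen=\{a_1,\ldots,a_{\rg},c_1,\ldots,c_{\p},l_1,\ldots,l_{\f}\}$ of generators of $\RL_m(\G)$ associated with some word $w$, paired with continuous functions on $G^{\rg+\p+\f}$ invariant under diagonal conjugation. Each element of $\Gen$ lies in $\Sk(\G)$, which has $\vol$-measure zero, hence is $\Xi$-almost surely disjoint from the ramification locus, so $P$ is well defined on $\Gen$.

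Second, the holonomy side. By definition, the restriction of $\HF^X_{M,\vol,\varnothing,C}$ to $\Path(\G)$ equals $\DF^{X,\G}_{M,\vol,\varnothing,C}$, which has density $\prod_{F\in\F}Q_{\vol(F)}(h(\partial F))$ against $\U^\G_{M,\varnothing,C}$. On the tame system the facial boundary $\partial F_i$ is represented by $l_i$, and Proposition \ref{tame generators} gives the explicit distribution of $(H_{a_i},H_{c_j},H_{l_k})_{k\leq\f-1}$ with $H_{l_\f}$ determined by the relation. Since $G$ is finite, $X$ is a pure-jump Markov chain with L\'evy measure $\Pi$, and one has the Poisson-type expansion
\begin{equation*}
Q_t(x)=\# G\cdot e^{-t\Pi(G)}\sum_{k\geq 0}\frac{t^k}{k!}\sum_{d_1,\ldots,d_k\in G}\prod_{j=1}^k\Pi(\{d_j\})\,\mathbbm{1}_{d_1\cdots d_k=x}.
\end{equation*}
Substituting this for each face $F_i$ turns the expectation into a sum indexed by tuples $(k_1,\ldots,k_\f)\in\NK^\f$ and families $(d^i_j)_{1\leq j\leq k_i}$ in $G$, with the relation of Proposition \ref{tame generators} rewritten as an identity between $w(a)c_1\cdots c_\p$ and a product of the $d^i_j$'s.

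Third, the monodromy side. Condition on the ramification locus $Y$: under $\Xi$ the counts $k_i:=\#(Y\cap F_i)$ are independent Poisson of parameters $\Pi(G)\vol(F_i)$, and given the $k_i$, the points are independent and uniform in each face. For a realization of $Y$, let $\G'$ be the refinement of $\G$ obtained by adding the points of $Y$ as vertices together with auxiliary edges so that each face of $\G'$ contains at most one ramification point. By Proposition \ref{tame generators} Part 2 applied to $\G\preccurlyeq\G'$, the given tame system extends to one for $\G'$ in which each $l_i$ decomposes as a product of facial lassos whose meanders encircle individual ramification points $y^i_j$; the monodromies of these small lassos are precisely the free variables $d^i_j$ of the parametrization $\BRB_m(M,Y,C)\simeq\H(M,\sum_i k_i,C,w)$ of Section \ref{sec monodromy}. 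Combining Definition \ref{def mes RB} with the counting formula (\ref{counting}) yields an explicit formula for the expectation under $\MF^X$ in the same indexing as on the holonomy side.

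Fourth, matching the expressions. The Poisson factors $e^{-\Pi(G)\vol(F_i)}(\Pi(G)\vol(F_i))^{k_i}/k_i!$ coming from $\Xi$ on the monodromy side cancel with the $\Pi(G)$ denominators from the $\Pi_1$-weights of ramification points, leaving $e^{-\Pi(G)\vol(F_i)}\vol(F_i)^{k_i}/k_i!$ and a product $\prod_{i,j}\Pi(\{d^i_j\})$; exactly the same factors emerge from the expansion of $Q_{\vol(F_i)}$ on the holonomy side. The prefactor $\#G^{1-\rg}/(\#\O_1\cdots\#\O_\p)$ in $\BRBm^X_{M,m,Y,C}$ matches the product of $\# G$-factors from the Poisson expansions with the normalization constant of $\U^\G_{M,\varnothing,C}$ restricted to $\Gen$. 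Finally, the relation $w(a)c_1\cdots c_\p d_1\cdots d_k=1$ defining $\H(M,\sum k_i,C,w)$ coincides, via Proposition \ref{tame generators} Part 2 applied to $\G'$, with the constraint determining $H_{l_\f}$ on the holonomy side. The main obstacle is precisely this last bookkeeping: the convention $h(l_1l_2)=h(l_2)h(l_1)$ reversing concatenation into multiplication, together with the conjugations by spokes of the lassos $l_{i,j}$, must be tracked carefully. The content of Proposition \ref{tame generators} Part 2 is exactly that a consistent choice of tame refinement can be made, and this ensures that all orderings and conjugations cancel correctly between the two computations.
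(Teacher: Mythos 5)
Your proposal is correct and follows essentially the same route as the paper: reduce to a tame system of generators, compute the holonomy marginal via Proposition \ref{tame generators}, and on the monodromy side condition on the ramification locus, refine the graph so each face holds at most one ramification point, and use the parametrization by $\H(M,k,C,w)$ together with the counting formula before integrating over $Y$. The only difference is cosmetic: where the paper collapses the sum over jump sequences using the character identity of Lemma \ref{magique} and then recognizes $Q_t$ via its Fourier expansion, you expand $Q_{\vol(F_i)}$ directly as a Poisson sum over jump chains and match term by term, which verifies the same identity from the other direction.
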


\begin{proof}It suffices to prove that the equality holds when the processes are restricted to a family of
loops which generate the group $\RL_{m}(M)$. Consider a tame family of generators $\Gen=\{a_1,\ldots,a_{\rg},c_1,\ldots,c_{\p},l_1,\ldots,l_{\f}\}$ of $\RL_m(\G)$ associated with a word $w$. The loop $l_{\f}$ is a function of all other loops, so that it suffices to compute the distribution of
$H=(H_{a_{1}},\ldots,H_{a_{\rg}},H_{c_{1}},\ldots,H_{c_{\p}}, H_{l_{1}},\ldots,H_{l_{{\f}-1}})$. 
Let us choose
$h=(g_{a_{1}},\ldots,g_{a_{\rg}},g_{c_{1}},\ldots,g_{c_{\p}},g_{l_{1}},\ldots,g_{l_{{\f}-1}})$
in $G^{\rg}\times \O_{1}\times \ldots \times \O_{\p}\times G^{{\f}-1}$. By
Proposition \ref{tame generators}, 
\begin{align*}
\HF^X_{M,\vol,\varnothing,C}(H=h)&=  \frac{\#G^{1-\rg-{\f}}}{\prod_{i=1}^{\p}\#\O_{i}} \prod_{i=1}^\f Q_{\vol(F_{i})}(g_{l_{i}}),
\end{align*}
where we have set  $g_{l_{\f}}=w(g_{a_1},\ldots,g_{a_{\rg}}) g_{c_1}\ldots
g_{c_{\p}} (g_{l_1}\ldots g_{l_{{\f}-1}})^{-1}$.

Now let us compute the corresponding quantity for the monodromy field. Let $Y$ be a finite subset
of $M$ which does not meet $\G$. Let refine $\G$ inside each face which meets $Y$ in order to get a new graph $\G'$, finer than $\G$, such that each face of $\G'$ either does not meet $Y$ and is equal to a face of $\G$, or contains exactly one point of $Y$. 

By applying Proposition \ref{tame generators} in each face of a split pattern of $\G$, we can construct a tame family of generators $\Gen'$ of the group of reduced loops of $\G'$ which is finer than $\Gen$ in the sense that for each face $F$, the facial lasso of $\Gen$ corresponding to $F$ is the product in a certain order of the facial lassos of $\Gen'$ corresponding to the faces of $\G'$ contained in $F$. 

For each $i\in \{1,\ldots,{\f}\}$, let us write $Y_{i}=Y\cap F_{i}=\{y_{i,1},\ldots,y_{i,k_{i}}\}$.  Let $\{l_{i,j} : i\in\{1,\ldots,{\f}\}, j\in\{1,\ldots,k_i\}\}$ be the set of facial lassos of $\Gen'$ indexed accordingly. We may assume that for all $i\in\{1,\ldots,{\f}\}$, $l_i=l_{i,1}\ldots l_{i,k_i}$· 

The family $\Gen'$ determines a bijection between $\RB_m(M,Y,C)$ and $\H(M,k,C,w)$. This allows us to compute
the distribution of the random variable
$$M=(M_{a_{1}},\ldots,M_{a_{\rg}},M_{c_{1}}, 
\ldots,M_{c_{\p}},M_{l_{1}},\ldots,M_{l_{{\f}-1}})$$
 under $\BRBm^X_{M,m,Y,C}$. We find
\begin{align}
\BRBm^X_{M,m,Y,C} (M=h) &=\nonumber \\
&\hskip -1.5cm  \frac{\#G^{1-\rg}}{\prod_{i=1}^{p} \#\O_{i}} \prod_{i=1}^{{\f}} \sum_{g_{l_{i,1}},\ldots,g_{l_{i,k_i}} \in G}
 \Pi_1(\{g_{l_{i,1}}\})\ldots \Pi_1(\{g_{l_{i,k_{i}}}\}) {\mathbbm 1}_{g_{l_{i,k_i}}\ldots g_{l_{i,1}}=g_{l_i}}.
\label{Mgamma}
\end{align}
Using Lemma \ref{magique}, we find that the quantity (\ref{Mgamma}) is equal to
$$\frac{\#G^{1-\rg-{\f}}}{\prod_{i=1}^{\p}\#\O_{i}} 
\prod_{i=1}^{{\f}} \sum_{\alpha_{i} \in \irrep(G)} \left(  \frac{\widehat\Pi(\alpha_{i})}{\Pi(G)\chi_{\alpha_{i}}(1)}\right)^{k_{i}}
\chi_{\alpha_{i}}(1) \chi_{\alpha_{i}}(g_{l_i}).$$
By integrating this expression with respect to $Y$ under the measure $\Xi$, we find
\begin{align*}
\BRBm_{M,m,\vol,C}^{X}(M=h) &= \frac{\#G^{1-\rg-\f}}{\prod_{i=1}^{\p} \#\O_{i}}
\prod_{i=1}^{{\f}} e^{-\vol(F_i) \Pi(G)} \sum_{\alpha \in \irrep(G)} e^{\vol(F_i) \frac{\widehat \Pi(\alpha)}{\chi_\alpha (1)}} \chi_\alpha(1) \chi_\alpha(g_{l_i}) \\
&= \frac{\#G^{1-\rg-\f}}{\prod_{i=1}^{\p} \#\O_{i}}
\prod_{i=1}^{{\f}} Q_{\vol(F_{i})}(g_{l_i}).
\end{align*}
This proves that  $\BRBm^X_{M,m,\vol,C}(M=h)=\HF_{M,\vol,\varnothing,C}^{X}(H=h)$. \end{proof}

\printindex

\bibliographystyle{amsplain}
\bibliography{BiblioCHM}

\end{document}